\numberwithin{equation}{section}
\newcommand{\nc}{\newcommand}
\nc{\one}{\mbox{\bf 1}}
\nc{\invtensor}{\underset{\leftarrow}{\otimes}}
\nc{\const}{\operatorname{const}}
\nc{\ad}{\operatorname{ad}}
\nc{\cl}{\operatorname{cl}}
\nc{\ev}{\operatorname{ev}}
\nc{\tr}{\operatorname{tr}}
\nc{\Gr}{\mathscr{K}}
\nc{\rGr}{\operatorname{rGr}}
\nc{\atyp}{\operatorname{atyp}}
\nc{\tp}{\operatorname{top}}
\nc{\rank}{\operatorname{rank}}
\nc{\corank}{\operatorname{corank}}
\nc{\codim}{\operatorname{codim}}
\nc{\sdim}{\operatorname{sdim}}
\nc{\nult}{\operatorname{Mult}}
\nc{\ds}{\operatorname{ds}}
\nc{\tail}{\operatorname{tail}}
\nc{\Howl}{\operatorname{Howl}}
\nc{\triv}{\operatorname{triv}}
\nc{\spn}{\operatorname{span}}
\nc{\Sym}{\operatorname{Sym}}
\nc{\Core}{\operatorname{Core}}
\nc{\id}{\operatorname{id}}
\nc{\Id}{\operatorname{Id}}
\nc{\adm}{\operatorname{adm}}
\nc{\Ree}{\operatorname{Re}}
\nc{\ree}{\operatorname{re}}
\nc{\Ima}{\operatorname{Im}}
\nc{\ima}{\operatorname{im}}
\nc{\htt}{\operatorname{ht}}
\nc{\at}{\operatorname{at}}
\nc{\an}{\operatorname{an}}
\nc{\Sp}{\operatorname{Sp}}
\nc{\Sk}{\operatorname{Sk}}
\nc{\str}{\operatorname{str}}
\nc{\Ker}{\operatorname{Ker}}
\nc{\rker}{\operatorname{rKer}}
\nc{\osp}{\mathfrak{osp}}
\nc{\sgn}{\operatorname{sgn}}
\nc{\F}{\operatorname{F}}
\nc{\Mod}{\operatorname{Mod}}
\nc{\DS}{\operatorname{DS}}
\nc{\Soc}{\operatorname{Soc}}
\nc{\Inj}{\operatorname{Inj}}
\nc{\Hom}{\operatorname{Hom}}
\nc{\End}{\operatorname{End}}
\nc{\supp}{\operatorname{supp}}
\nc{\smult}{\operatorname{smult}}
\nc{\Dyn}{\operatorname{Dyn}}
\nc{\Card}{\operatorname{Card}}
\nc{\Ann}{\operatorname{Ann}}
\nc{\Arc}{\operatorname{Arc}}
\nc{\arc}{\operatorname{arc}}
\nc{\Ind}{\operatorname{Ind}}
\nc{\Coind}{\operatorname{Coind}}
\nc{\Int}{\operatorname{Int}}
\nc{\hwt}{\operatorname{hwt}}
\nc{\rk}{\operatorname{rank}}
\nc{\ch}{\operatorname{ch}}
\nc{\sch}{\operatorname{sch}}
\nc{\mdim}{\operatorname{mdim}}
\nc{\Stab}{\operatorname{Stab}}
\nc{\Snow}{\operatorname{Snow}}
\nc{\Irr}{\operatorname{Irr}}
\nc{\Spec}{\operatorname{Spec}}
\nc{\Res}{\operatorname{Res}}
\nc{\res}{\operatorname{res}}
\nc{\Aut}{\operatorname{Aut}}
\nc{\Ext}{\operatorname{Ext}}
\nc{\Prec}{\operatorname{Prec}}
\nc{\Fract}{\operatorname{Fract}}
\nc{\gr}{\operatorname{gr}}
\nc{\vol}{\operatorname{vol}}
\nc{\diag}{\operatorname{diag}}
\nc{\deff}{\operatorname{def}}
\nc{\core}{\operatorname{core}}
\nc{\HC}{\operatorname{HC}}
\nc{\Ch}{\operatorname{Ch}}
\nc{\Sch}{\operatorname{Sch}}
\nc{\dpth}{\operatorname{dpth}}
\nc{\aff}{\operatorname{aff}}
\nc{\pari}{\operatorname{par}}
\nc{\pos}{\operatorname{pos}}
\nc{\Ad}{\operatorname{Ad}}
\nc{\wdchi}{\widetilde{\chi}}
\nc{\wdH}{\widetilde{H}}
\nc{\wdN}{\widetilde{N}}
\nc{\wdM}{\widetilde{M}}
\nc{\wdO}{\widetilde{O}}
\nc{\wdR}{\widetilde{R}}
\nc{\wdV}{\widetilde{V}}
\nc{\wdC}{\widetilde{C}}
\nc{\fin}{\operatorname{fin}}
\nc{\nonzero}{\operatorname{nonzero}}
\nc{\Nonzero}{\operatorname{Nonzero}}
\nc{\diam}{\operatorname{diam}}
\nc{\Obj}{\operatorname{Obj}}
\nc{\Dglie}{\operatorname{{\mathcal D}glie}}
\nc{\Fin}{\operatorname{{\mathcal F}in}}
\nc{\pr}{\operatorname{pr}}
\nc{\is}{\operatorname{iso}}
\nc{\Adm}{\operatorname{\mathcal{A}dm}}
\nc{\fg}{\mathfrak g}
\nc{\Sg}{{\cS(\fg)}}
\nc{\Shg}{{\cS(\fhg)}}
\nc{\Ug}{{\cU(\fg)}}
\nc{\Uhg}{{\cU(\fhg)}}
\nc{\Sh}{{\cS(\fh)}}
\nc{\Uh}{{\cU(\fh)}}
\nc{\Uhh}{{\cU(\fhh)}}
\nc{\Zg}{{{\mathcal{Z}}(\fg)}}
\nc{\Vir}{{\mathcal{V}ir}}
\nc{\NS}{{\mathcal{N}S}}
\nc{\tZg}{{\widetilde{\mathcal Z}({\mathfrak g})}}
\nc{\Zk}{{\mathcal Z}({\mathfrak k})}
\nc{\Up}{{\mathcal U}({\mathfrak p})}
\nc{\Ah}{{\mathcal A}({\mathfrak h})}
\nc{\Ag}{{\mathcal A}({\mathfrak g})}
\nc{\Ap}{{\mathcal A}({\mathfrak p})}
\nc{\Zp}{{\mathcal Z}({\mathfrak p})}
\nc{\cR}{\mathcal R}
\nc{\cS}{\mathcal S}
\nc{\cP}{\mathcal P}
\nc{\cT}{\mathcal{T}}
\nc{\cC}{\mathcal C}
\nc{\cA}{\mathcal A}
\nc{\cV}{\mathcal V}
\nc{\cU}{\mathcal U}
\nc{\cZ}{\mathcal Z}
\nc{\cM}{\mathcal M}
\nc{\cL}{\mathcal L}
\nc{\cF}{\mathcal F}
\nc{\cB}{\mathcal{B}}
\nc{\fo}{\mathfrak o}
\nc{\fa}{\mathfrak a}
\nc{\CO}{\mathcal O}
\nc{\CR}{\mathcal R}
\nc{\cK}{\mathcal{K}}
\nc{\cW}{\mathcal{W}}
\nc{\bM}{\mathbf{M}}
\nc{\bL}{\mathbf{L}}
\nc{\bN}{\mathbf{N}}
\nc{\zq}{\mathpzc q}
\nc{\fl}{\mathfrak l}
\nc{\fn}{\mathfrak n}
\nc{\fm}{\mathfrak m}
\nc{\fp}{\mathfrak p}
\nc{\fh}{\mathfrak h}
\nc{\ft}{\mathfrak t}
\nc{\fk}{\mathfrak k}
\nc{\fb}{\mathfrak b}
\nc{\fc}{\mathfrak c}
\nc{\fs}{\mathfrak s}
\nc{\fB}{\mathfrak B}
\nc{\vareps}{\varepsilon}
\nc{\varesp}{\varepsilon}
\nc{\veps}{\varepsilon}
\nc{\fsl}{\mathfrak{sl}}
\nc{\fgl}{\mathfrak{gl}}
\nc{\fso}{\mathfrak{so}}
\nc{\fosp}{\mathfrak{osp}}
\nc{\fsp}{\mathfrak{sp}}
\nc{\fq}{\mathfrak q}
\nc{\fsq}{\mathfrak{sq}}
\nc{\fpsq}{\mathfrak{psq}}
\nc{\fpq}{\mathfrak{pq}}
\nc{\fz}{\mathfrak{z}}
\nc{\fpsl}{\mathfrak{psl}}
\nc{\fhg}{\hat{\fg}}
\nc{\fhn}{\hat{\fn}}
\nc{\fhh}{\hat{\fh}}
\nc{\fhb}{\hat{\fb}}
\nc{\hrho}{\hat{\rho}}
\nc{\hsl}{\hat{\fsl}}
\nc{\fpo}{\mathfrak{po}}
\nc{\dirlim}{\underset{\rightarrow}{\lim}\,}
\nc{\nen}{\newenvironment}
\nc{\ol}{\overline}
\nc{\ul}{\underline}
\nc{\ra}{\rightarrow}
\nc{\lra}{\longrightarrow}
\nc{\Lra}{\Longrightarrow}
\nc{\bo}{\bar{1}}
\nc{\Lla}{\Longleftarrow}
\nc{\Llra}{\Longleftrightarrow}
\nc{\thla}{\twoheadleftarrow}
\nc{\lang}{(}
\nc{\rang}{)}
\nc{\hra}{\hookrightarrow}
\nc{\iso}{\overset{\sim}{\lra}}
\nc{\ssubset}{\underset{\not=}{\subset}}
\nc{\vac}{|0\rangle}
\nc{\simka}{{\ \scriptscriptstyle _{{\sim}}^\text{\tiny{k}}\ }}
\nc{\Thm}[1]{Theorem~\ref{#1}}
\nc{\Prop}[1]{Proposition~\ref{#1}}
\nc{\Lem}[1]{Lemma~\ref{#1}}
\nc{\Cor}[1]{Corollary~\ref{#1}}
\nc{\Conj}[1]{Conjecture~\ref{#1}}
\nc{\Claim}[1]{Claim~\ref{#1}}
\nc{\Defn}[1]{Definition~\ref{#1}}
\nc{\Exa}[1]{Example~\ref{#1}}
\nc{\Rem}[1]{Remark~\ref{#1}}
\nc{\Note}[1]{Note~\ref{#1}}
\nc{\Quest}[1]{Question~\ref{#1}}
\nc{\Hyp}[1]{Hypoth\`ese~\ref{#1}}
\begin{document}

\setcounter{section}{0}
\setcounter{tocdepth}{1}
\title{(Quasi-)admissible modules over symmetrizable Kac-Moody superalgebras}
\author{Maria Gorelik}
\address{M.G.: Department of Mathematics, Weizmann Institute of Science,
Rehovot 761001, Israel; maria.gorelik@weizmann.ac.il}
\author{Victor G.~Kac}
\address{V. K.:  Department of Mathematics, MIT, 77 Mass. Ave, Cambridge, MA 02139;
kac@mit.edu}

\begin{abstract} 
The theory of admissible modules over symmetrizable anisotropic
Kac-Moody superalgebras, introduced by Kac and Wakimoto in late 80's, is
a well-developed subject with many applications, including
representation theory of vertex algebras.
Recently this theory was developed in a more general setup
by Gorelik and Serganova. In the present paper we develop
in this more general setup the theory of admissible modules over
arbitrary symmetrizable Kac-Moody superalgebras.
\end{abstract}


\medskip

\keywords{Kac-Moody superalgebra, quasi-admissible and admissible weight, vertex algebra, Enright functor}

\maketitle

\section{Introduction}\label{sect:intro}
Let $\fg(A)$ be a Kac-Moody 
algebra over $\mathbb{C}$, with a symmetrizable generalized Cartan matrix $A$,
and let $\fh$ be its Cartan subalgebra. Let $\Delta^{\ree}$ (resp., $\Delta^{\ree}_+$) denote the set of all (resp., positive)
real roots, and $W=\langle s_{\alpha}|\ \alpha\in \Delta^{\ree}\rangle$ be the Weyl group. Let $(\cdot , \cdot)$ be  
a non-degenerate  symmetric invariant bilinear form on $\fg(A)$, defined by the symmetrized $A$, see~\cite{Kbook}, Chapter 2.
Let 
$X:=\{\lambda\in\fh^*|\ (\lambda,\alpha)\geq 0\ \text{ for all, but finite number of }\alpha\in \Delta^{\ree}_+ \text{ and $(\lambda,\alpha)\not=0$ for any  isotropic root $\alpha$}\}$. 
Hereafter, for $a\in\mathbb{C}$ we write $a\geq 0$ if either $\Ree a>0$ or $\Ree a=0$ and $\Ima\ a \geq 0$.
See~\cite{Kbook}, Chapters 1--5 for basic definitions and results on Kac-Moody algebras.

For $\lambda\in\fh^*$ let ${R}_{\lambda}=\{\alpha\in \Delta^{\ree}|\ 
\langle\lambda,\alpha^{\vee}\rangle\in\mathbb{Z}\}$,
let $R_{\lambda}^+=R_{\lambda}\cap \Delta^{\ree}_+$, 
and $W_{\lambda}=\langle s_{\alpha}|\ \alpha\in R_{\lambda}\rangle\subset W$.
Let $D=\prod_{\alpha\in\Delta_+}(1-e^{-\alpha})^{\dim \fg_{\alpha}}$ be the Weyl denominator. 
Let $L(\lambda)$ denote the irreducible
$\fg(A)$-module with highest weight $\lambda$, see~\cite{Kbook}, Chapter 9.
In~\cite{KW88}, Section 2, the following theorem was proved.

\subsection{}
\begin{thm}{thm:0.1}
Let $\lambda\in\fh^*$ be such that  
\begin{equation}\label{eq:0.1}
\lambda+\rho\in X,\end{equation}
\begin{equation}\label{eq:0.2}
\langle \lambda+\rho,\alpha^{\vee}\rangle>0\ \ \text{ for all } \alpha\in R^+_{\lambda}.
\end{equation}
Then
\begin{equation}\label{eq:0.3}
e^{\rho}D\ch L(\lambda)=\sum\limits_{w\in W_{\lambda}} (\det w) e^{w(\lambda+\rho)}.
\end{equation}
\end{thm}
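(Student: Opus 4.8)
The plan is to translate \eqref{eq:0.3} into an identity in the Grothendieck group of category $\mathcal{O}$ and then to pin down the coefficients using Kac--Kazhdan linkage together with a skew-invariance argument for the numerator. Since $\langle\rho,\alpha^\vee\rangle\in\mathbb{Z}$ for every real root, the integrality condition defining $R_\lambda$ is the same for $\lambda$ and for $\lambda+\rho$. Using the elementary identity $e^\rho D\,\ch M(\mu)=e^{\mu+\rho}$ for a Verma module $M(\mu)$, the formula \eqref{eq:0.3} is equivalent to
\[
[L(\lambda)]=\sum_{w\in W_\lambda}(\det w)\,[M(w\cdot\lambda)],\qquad w\cdot\lambda:=w(\lambda+\rho)-\rho,
\]
in a suitable completion of the Grothendieck group. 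By \eqref{eq:0.2} the weight $\lambda+\rho$ is regular dominant for $W_\lambda$, so the weights $w(\lambda+\rho)$, $w\in W_\lambda$, are pairwise distinct and $\lambda$ is the unique maximal element of the dot-orbit $W_\lambda\cdot\lambda$ with respect to $\le$.

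First I would fix the support. By the Kac--Kazhdan theorem the composition factors $L(\mu)$ of $M(\lambda)$ satisfy $\mu\le\lambda$, $(\mu+\rho,\mu+\rho)=(\lambda+\rho,\lambda+\rho)$, and $\mu+\rho$ is obtained from $\lambda+\rho$ by a chain of reflections along positive real roots lying in $R_\lambda$. Here \eqref{eq:0.1} enters twice: the requirement $(\lambda+\rho,\alpha)\ne0$ for isotropic $\alpha$ removes any linkage through isotropic roots, while the almost-dominance $(\lambda+\rho,\alpha)\ge0$ for all but finitely many $\alpha\in\Delta^{\ree}_+$ forces the highest weights of all composition factors to lie in the single orbit $W_\lambda\cdot\lambda$ and makes that orbit locally finite below $\lambda$. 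Consequently each $[M(w\cdot\lambda)]$ expands, unitriangularly with respect to $\le$, into the classes $[L(w'\cdot\lambda)]$ with $w'\in W_\lambda$; this matrix is invertible over $\mathbb{Z}$, and inverting it gives $[L(\lambda)]=\sum_{w}c_w[M(w\cdot\lambda)]$ with $c_1=1$ and $c_w\in\mathbb{Z}$.

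It then remains to prove $c_w=\det w$, equivalently that the numerator $F:=e^\rho D\,\ch L(\lambda)=\sum_w c_w\,e^{w(\lambda+\rho)}$ is $W_\lambda$-skew-invariant. I would obtain this by induction on the length of $w$ in the Coxeter group $W_\lambda$, using the Jantzen sum formula for $M(\lambda)$: by \eqref{eq:0.1} its right-hand side carries no isotropic contributions and involves only Verma modules $M(\mu)$ with $\mu\in W_\lambda\cdot\lambda$ and $\mu<\lambda$, which determines the multiplicities $c_w$ and produces the alternation of signs. Equivalently one may exhibit the BGG-type resolution whose $k$-th term is $\bigoplus_{\ell(w)=k}M(w\cdot\lambda)$, so that $\det w=(-1)^{\ell(w)}$ supplies the signs; its existence rests on the strong-linkage embeddings $M(w'\cdot\lambda)\hookrightarrow M(w\cdot\lambda)$ for $w'\ge w$, again available because the whole block is governed by the single orbit $W_\lambda\cdot\lambda$.

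The main obstacle is precisely this last step. The shortcut available in the integrable case --- deducing skew-invariance of $F$ from the $W_\lambda$-invariance of $\ch L(\lambda)$ --- is unavailable here, since $\lambda$ need not be dominant for $\fg(A)$ and $L(\lambda)$ is in general not integrable in the $R_\lambda$-directions: one may have $\langle\lambda,\alpha^\vee\rangle<0$ for an $R_\lambda$-simple root $\alpha$, so that $\ch L(\lambda)$ fails to be $W_\lambda$-invariant even though $F$ remains skew-invariant. The alternating signs must therefore be extracted from the internal structure of the integral block rather than from any symmetry of the character itself, and the almost-dominance hypothesis \eqref{eq:0.1} is exactly what guarantees the finiteness and the absence of isotropic linkage that let the inductive Jantzen (or BGG) argument converge and close.
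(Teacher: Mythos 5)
Your reduction is sound and matches the standard setup: under \eqref{eq:0.1} the remark in \S 3 (following \cite{DGK}, Proposition 5.2) shows that no Kac--Kazhdan linkage through imaginary roots can occur, so every composition factor of $M(\lambda)$ has highest weight in $W_{\lambda}\cdot\lambda$, the Verma-to-simple transition matrix is unitriangular, and \eqref{eq:0.3} becomes equivalent to the $W_{\lambda}$-anti-invariance of $e^{\rho}D\ch L(\lambda)$. You also correctly identify that the usual shortcut (anti-invariance from integrability) is unavailable because $L(\lambda)$ need not be integrable in the $R_{\lambda}$-directions. (For the record, the present paper does not reprove this theorem; it quotes it from \cite{KW88}.)

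The genuine gap is that neither of your two proposed mechanisms closes this last step. The Jantzen sum formula gives only
$\sum_{k\geq 1}\ch M(\lambda)_k=\sum_{\alpha\in R^+_{\lambda}}\ch M(s_{\alpha}\cdot\lambda)$,
a statement about the \emph{sum} of all filtration layers; to extract $\ch M(\lambda)_1=\ch M(\lambda)-\ch L(\lambda)$ from it you would need to know the characters of the lower simples $L(w\cdot\lambda)$ \emph{and} in which layers they sit --- exactly the Kazhdan--Lusztig-type information the formula does not contain, so the claimed induction on $\ell(w)$ does not get off the ground. The BGG-resolution route has the same defect one level up: strong-linkage embeddings $M(w'\cdot\lambda)\hookrightarrow M(w\cdot\lambda)$ only produce a complex, and its \emph{exactness} is the whole content of the theorem; in the classical (Garland--Lepowsky) construction exactness is proved \emph{using} integrability of $L(\lambda)$, which is precisely what fails here. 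The missing idea --- the one used in \cite{KW88} and systematized in this paper and in \cite{GSsnow} --- is the Enright (completion) functor. If $\gamma$ is a simple root of $R_{\lambda}$ that is also simple in $\Delta$, then \eqref{eq:0.2} gives $\langle\lambda,\gamma^{\vee}\rangle\in\mathbb{Z}_{\geq 0}$, $f_{\gamma}$ acts locally nilpotently on $L(\lambda)$, and $s_{\gamma}$-anti-invariance follows as in \Lem{lem:Nabc}. If $\gamma$ is not simple in $\Delta$, one finds (as in \Lem{lem:gammanotinpi}) a simple root $\alpha\in\Sigma$ with $\alpha\notin R_{\lambda}$ and $s_{\alpha}\gamma<\gamma$; since $\langle\lambda,\alpha^{\vee}\rangle\notin\mathbb{Z}$, the Enright functor $\mathcal{C}_{\alpha}$ is an equivalence on the relevant subcategory sending $L(\lambda)$ to $L(s_{\alpha}(\lambda+\rho)-\rho)$ and intertwining $e^{\rho}D\ch(-)$ with the natural $s_{\alpha}$-action (\Lem{lem:Enright}, formula \eqref{eq:EnrightD}), so anti-invariance for $s_{\gamma}$ transfers to anti-invariance for $s_{s_{\alpha}\gamma}$ and one concludes by induction on height, exactly as in the proof of \Thm{thm:criterionN}. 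Without this (or an equally strong substitute, such as Fiebig's block equivalence, which postdates the theorem), your argument asserts the alternation of signs rather than proving it.
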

 Note that, by this theorem, if $\lambda\in\fh^*$ satisfies~(\ref{eq:0.1}), (\ref{eq:0.2}), then 
 \begin{equation}\label{eq:0.4}
e^{\rho}D\ch L(\lambda)\ \text{ is $W_{\lambda}$-anti-invariant.}
\end{equation}

We call $\lambda$ satisfying~(\ref{eq:0.1}), (\ref{eq:0.2}) a {\em quasi-admissible weight}, and,
as in~\cite{KW88}, we call $\lambda$ an {\em admissible weight} if, in addition, the $\mathbb{Q}$-span of $R_{\lambda}$ coincides with the $\mathbb{Q}$-span of $\Delta^{\ree}$.

\subsection{}
If $\fg=\fg(A)$ is an affine Lie algebra (see~\cite{Kbook}, Chapter 6--8), then  condition~(\ref{eq:0.1})
means 
\begin{equation}\label{eq:0.5}
k+h^{\vee}>0,
\end{equation}
where $k=\lambda(K)$, the level of $\lambda$, and $h^{\vee}$ is the dual Coxeter number; condition~(\ref{eq:0.2}) means 
\begin{equation}\label{eq:0.6}
\langle\lambda+\rho,\alpha^{\vee}\rangle\not\in\mathbb{Z}_{\leq 0}\ \ \text{ for all }\alpha\in \Delta^{\ree}_+,
\end{equation}
or, equivalently, that $\lambda+\rho$ is the maximal element in its $W$-orbit.  It follows from 
the character formula~(\ref{eq:0.3}) that $e^{a\delta} \ch L(\lambda)$ is a ratio of Jacobi forms for a suitable constant
$a$, having non-positive weight if $\lambda$ is quasi-admissible,
and zero weight if   $\lambda$ is admissible (see~\cite{KW89}). Thus, the (normalised)
character of the $\fg$-module $L(\lambda)$ with admissible highest weight $\lambda$ is a modular  function.

\subsection{}
A result, similar to~\Thm{thm:0.1}, was also proved in~\cite{KW88} for anisotropic Kac-Moody superalgebras $\fg=\fg(A,\tau)$,
introduced in~\cite{K78}.  Here $A$ is a symmetrizable generalized Cartan matrix, satisfying, in addition to the
usual requirements, that all entries in the $i$-th row
are even if $i\in\tau$, and $\tau$ is the set of indices, for which the Chevalley generators $e_i$ and $f_i$ are odd. 
For these superalgebras, ~\Thm{thm:0.1} still holds, with the following change in the definition of $R_{\lambda}$ 
(see~\cite{KW88}, Section 6): let $\Delta^{\ree}_{ev}$ and $\Delta^{\ree}_{odd}$ denote the sets
of even and odd real roots, and let $R_{\lambda}=R_{\lambda;ev}\cup R_{\lambda;odd}$,
where
$$R_{\lambda;ev}=\{\alpha\in \Delta^{\ree}_{ev}|\ \langle \lambda+\rho,\alpha^{\vee}\rangle\in \mathbb{Z}, \alpha/2\not\in \Delta^{\ree}\}, 
R_{\lambda;odd}=\{\alpha\in \Delta^{\ree}_{odd}|\ \langle \lambda+\rho,\alpha^{\vee}\rangle\in \mathbb{Z}_{odd}\}.$$

\subsection{}
Let $\CO_{\adm}$ denote the full subcategory of the category $\CO$, whose objects are modules over a symmetrizable
Kac-Moody algebra, for which all irreducible subquotients are quasi-admissible. 
Using~\cite{DGK}, it is proved in~\cite{KW89}
that the category $\CO_{\adm}$ is semisimple.  The same proof works for all anisotropic Kac-Moody superalgebras.

\subsection{}
The theory of admissible modules over a non-twisted affine Lie algebras $\fg$ is related to representation theory
of vertex algebras in the following way. Recall that the $\fg$-module $L(k\Lambda_0)$ carries a structure of a simple vertex 
algebra $V_k(\fg)$. A number $k\in\mathbb{C}$ is called an {\em admissible level}
for $\fg$ and $V_k(\fg)$ if $k\Lambda_0$ is an admissible weight.

For a simple Lie algebra $\dot{\fg}$ an admissible level $k$ of the corresponding non-twisted affine Lie algebra $\fg$ and the simple vertex algebra $V_k(\fg)$ is of the form
$k=-h^{\vee} +\frac{p}{q}$, where $p$ and $q$ are coprime positive integers, and $k$ is either
{\em principal}, when $q$ is coprime with the lacety $r^{\vee}$ of $\dot{\fg}$ and 
$p\geq h^{\vee}$, 
or {\em subprincipal}, when $q$ is divisible by $r^{\vee}>1$ and 
$p\geq h$,
where $h^{\vee}$ and $h$ are the dual Coxeter number and the Coxeter number of 
$\fg$ respectively~\cite{KW08}.
An admissible weight $\lambda$ is called {\em principal} (resp., {\em subprincipal}) admissible
if  its level $k$ is principal (resp,. subprincipal) admissible and
$R_{\lambda}$ is isometric to $R_{k\Lambda_0}$.
(Subsets $S$ and $S_1$ of vector spaces with symmetric bilinear forms are called {\em isometric}
if there exists a linear map $\psi$, preserving the bilinear forms,
of the span of $S$ to the span of $S_1$, which restricts to a bijection between $S$ and $S_1$
\footnote{Notice that, in our definition, the relation "$S$ is isometric to $S_1$" is  transitive, but not symmetric, since the map
$\psi$ may have a non-zero kernel.}.)

Adamovi\'c and Milas in~\cite{AM}  proved for ${\fg}=\mathfrak{sl}_2^{(1)}$, that for $k$, such that
$k\Lambda_0$ is an admissible weight for ${\fg}$, a ${\fg}$-module $L(\lambda)$ is actually a $V_k(\fg)$-module if and only if
$\lambda$ is an admissible weight of level $k$, and they 
stated a conjecture  for arbitrary affine Lie algebras.
This conjecture was proved by Arakawa in~\cite{A}, using~\cite{AM}. This is the following theorem.

{\em Arakawa Theorem.} Let $k$ be such that $k\Lambda_0$ is an admissible weight of a non-twisted affine Lie 
algebra ${\fg}$. Then an irreducible highest weight ${\fg}$-module $L(\lambda)$ is a $V_k(\fg)$-module if and only if 
$\lambda$ is an admissible weight of level $k$, such that $R_{\lambda}$ is isometric to  $R_{k\Lambda_0}$.

This theorem was extended to the only anisotropic simple Lie superalgebras 
$\dot{\fg}$, which are not  Lie algebras,
$\mathfrak{osp}(1|2n)$ in~\cite{GSsnow}.

\subsection{}\label{1.6}
In the present paper we study quasi-admissible 
(called snowflake in~\cite{GSsnow}) and admissible $\fg$-modules
over arbitrary  symmetrizable Kac-Moody superalgebras.

Recall that for any $\ell\times\ell$ matrix $A=(a_{ij})$
over $\mathbb{C}$ one associates a Lie algebra $\fg(A)$
with Cartan subalgebra $\fh$ and Chevalley generators $e_i,f_i$, $i=1\ldots,\ell$, described in~\cite{Kbook}, Chapter 1,
and called a {\em contragredient Lie algebra}. Given a subset $\tau\subset\{1,\ldots,\ell\}$,
one associates to the pair $(A,\tau)$ a contragredient Lie superalgebra
by letting the $e_i$ and $f_i$ with $i\in\tau$  be odd elements~\cite{Ksuper},\cite{K78}.
The contragredient Lie superalgebra structure  is called {\em integrable} if all its Chevalley generators
are ad-locally nilpotent. An integrable contragredient Lie algebra is called a Kac-Moody algebra
if $a_{ii}\not=0$ for all $i$.

If $a_{ii}=0$ for some $i$, then the Lie superalgebra $\fg(A,\tau)$ can be given another structure of
a contragredient Lie superalgebra by applying the odd reflexion,
 corresponding to the index $i$.
The superalgebra $\fg(A,\tau)$ is called a {\em Kac-Moody superalgebra} if all
contragredient Lie superalgebra structures, obtained from it by a sequence of odd reflexions, 
are integrable and $a_{ii}\not=0$ for $i\not\in\tau$ for all of them~\cite{S}.

\subsection{}
If for the matrix $A$ there exists a non-singular diagonal matrix $D=diag(d_1,\ldots,d_{\ell})$
such that the matrix $DA$ is symmetric, $A$ is called {\em symmetrizable}.
In this case $\fg(A,\tau)$  is called  symmetrizable, and this Lie superalgebra
carries a (essentially unique) non-degenerate supersymmetric invariant bilinear form $(\cdot,\cdot)$, whose restriction to $\fh$ is non-degenerate (cf.~\cite{Kbook}, Chapter 2).

\subsection{}
The class of symmetrizable indecomposable Kac-Moody superalgebras consists of three classes~\cite{S}:
\begin{itemize}
\item[(a)]  finite-dimensional, classified in~\cite{Ksuper};
\item[(b)] anisotropic, i.e. those integrable $\fg(A,\tau)$
for which all the diagonal entries 
of $A$ are non-zero, studied in~\cite{K78};
\item[(c)] symmetrizable indecomposable infinite-dimensional contragredient Lie superalgebras
of finite growth (=Gelfand-Kirillov dimension), 
classified in~\cite{vdLeur} in the super case and in~\cite{K68}
in the non-super case; they are called {\em symmetrizable affine Lie superalgebras} 
since all of them can be obtained by the same construction
as the affine Lie algebras~\cite{K68},\cite{vdLeur}.
\end{itemize}

 \subsection{}
  Admissible modules over affine Lie algebras, i.e. those $L(\lambda)$, 
  such that $\lambda$ satisfies~(\ref{eq:0.5}), (\ref{eq:0.6})
 and, in addition, $\mathbb{Q}R_{\lambda}=\mathbb{Q}\Delta^{\ree}$, have been
classified in~\cite{KW89}. It was proved there that the span of the (normalised) characters
of admissible modules  for given level is a finite-dimensional 
$SL_2(\mathbb{Z})$-invariant space (this is called modular
invariance property).
Admissible modules over some affine Lie superalgebras and their characters
 were studied a series of papers
starting with~\cite{KW01} through~\cite{KW17}.

It was shown in these papers that, though modular
invariance fails, for a modification of the 
 characters of these modules it is restored.

\subsection{} It follows from~(\ref{eq:0.3}) that for an anisotropic Kac-Moody 
superalgebra $\fg$ and a $\fg$-module $L(\lambda)$ with 
a quasi-admissible highest weight $\lambda$ we have 
\begin{equation}\label{eq:0.7}
D e^{\rho} \ch L(\lambda) \text{ is $W_{\lambda}$-anti-invariant.}\end{equation}
In~\cite{GSsnow} Gorelik and Serganova found another approach to quasi-admissible modules
$L(\lambda)$ over anisotropic symmetrizable Kac-Moody superalgebras, defining them by an analogue of the property~(\ref{eq:0.7}),
and the property
\begin{equation}\label{eq:0.8}
\lambda \text{ is non-critical}.
\end{equation}
(We find this term more natural than "snowflake", used in~\cite{GSsnow}.)
The analogue of the property~(\ref{eq:0.7}) is anti-invariance
of $D_{\pi} e^{\rho_{\pi}}\ch L(\lambda)$ with respect to the group $W_{\lambda}\cap W[\pi]$
where $\pi$ stands for the set of simple roots of the "largest" component of $\Delta_{\ol{0}}$.

Condition~(\ref{eq:0.8}) means that $2(\lambda+\rho,\alpha)\not=(\alpha,\alpha)$
for all positive imaginary roots $\alpha$. For a (symmetrizable) affine Kac-Moody 
superalgebra, this simply means that $k$ is a non-critical level: $k+h^{\vee}\not=0$,
which is weaker than~(\ref{eq:0.5}). It is proved in ~\cite{GSsnow}
that for an anisotropic symmetrizable Kac-Moody superalgebra $\fg$
and a non-critical weight $\lambda$ conditions~(\ref{eq:0.2}) and~(\ref{eq:0.7})
are actually equivalent.

Note also that admissible modules over Kac-Moody superalgebras were introduced in~\cite{KRW03} and classified for $\fsl(2|1)^{(1)}$ there.
For $\fg=\fsl(m|n)^{(1)}$ this definition coincides with that of the present paper.

\subsection{}
Our main results are the following:
\begin{itemize}
\item[$\bullet$]  \Thm{thm:Deltalambda}, 
stating that for any non-critical $\lambda\in\fh^*$ of a
Kac-Moody superalgebra $\fg$
each indecomposable component of the "integral root system"
$\Delta_{\lambda+\rho}$ is the set of real roots of a symmetrizable
Kac-Moody superalgebra $\fg'$, 
unless this component is of type $A(1|1)^{(2)}$ (which may happen only
if $\fg=A(2m-1|2n-1)^{(2)}$).
\item[$\bullet$] \Thm{thm:criterion}, stating
that quasi-admissibility of a $\fg$-module $L(\lambda)$ is equivalent to partial 
integrability (i.e., integrability with respect to 
the "largest" component of $\fg'_{\ol{0}}$) of the $\fg'$-module 
$L'(\lambda')$, where $\fg'$ is a Kac-Moody superalgebra,
if $\Delta_{\lambda+\rho}$ does not have indecomposable components  of type $A(1|1)^{(2)}$.
\item[$\bullet$] Theorem~\ref{thm:arak}, stating conditions on a $V^k(\fg)$-module
of admissible level $k$ to descend to the  $V_k(\fg)$-module, reducing the problem for the affine Kac-Moody superalgebra $\fg$ to its affine subalgebra
$\fg^{\#}$.
\item[$\bullet$] 
Arakawa-type Theorem~\ref{thm:cor:arak} for affine Kac-Moody superalgebras.
\item[$\bullet$] \Prop{prop:VkVk}, describing the maximal ideal
of $V^k(\fg)$ of admissible level $k$, as demonstrated in~\Prop{prop:sl21} in the case $\fg=\fsl(2|1)^{(1)}$ (cf.~\cite{Qing}),
which can be used in the computation of the Zhu algebra of $V_k(\fg)$.
\item[$\bullet$] \Prop{prop:boundary} on $V_k(\fg)$-modules for the boundary admissible level $k$ (cf.~\cite{Qing} for $\fg=\fsl(2|1)^{(1)}$).
\end{itemize}

We classify in $\S$\ref{classadmneBmn} and Section~\ref{classadmBmn} the admissible levels
of non-twisted affine Kac-Moody superalgebras 
${\fg}$, which are not Lie algebras
and are not of type $D(2|1,a)^{(1)}$:
\begin{itemize}
\item[$\bullet$]
For ${\fg}=A(m|n)^{(1)}$, 
$D(m|n)^{(1)}$ ($(m,n)\not=(2,1)$), $G(3)^{(1)}$, $F(4)^{(1)}$
any admissible level is principal admissible and is of
the form $k=-h^{\vee}+\frac{p}{q}$,
where $p,q$ are coprime positive integers, $p\geq h^{\vee}$,
and  $q$ is coprime with the lacety of $\pi$ (i.e. 
$q$ is odd for $F(4)^{(1)}$, $D(m|n)^{(1)}$ with $n\geq m$, and $q$
is not divisible by $3$ for $G(3)^{(1)}$).

\item[$\bullet$] Let ${\fg}=B(m|n)^{(1)}$ with $m\leq n$ (so that $h^{\vee}=n-m+\frac{1}{2}$). The level 
$k$ is principal admissible if and only if $k=-h^{\vee}+\frac{p}{2q}$,
where  $p,q$ are coprime odd positive integers and $p\geq 2h^{\vee}$.
If $m\not=0$ and $(m,n)\not=(1,1)$, then any admissible level is principal admissible.
For $B(0|n)^{(1)}$,  $k$ is subprincipal admissible if and only if $k=-h^{\vee}+\frac{p}{2q}$, where  $p,q$ are coprime  positive integers, $pq$ is even and $p\geq n$.
For $B(1|1)^{(1)}$,  $k$ is subprincipal admissible if and only if $k=-h^{\vee}+\frac{p}{2q}$, where  $p,q$ are coprime  positive integers and  $p$ is even.

\item[$\bullet$] Let ${\fg}=B(m|n)^{(1)}$ with $m>n>0$ (so that $h^{\vee}=2(m-n)-1$). The level 
$k$ is principal admissible if and only if $k=-h^{\vee}+\frac{p}{q}$,
where  $p,q$ are coprime  positive integers, $q$ is odd,  and $p\geq h^{\vee}$.
The level 
$k$ is subprincipal admissible if and only if $k=-h^{\vee}+\frac{p}{q}$,
where  $p,q$ are coprime  positive integers, $q$ is even,  and $p\geq 2(m-n)$.
\end{itemize}

\subsubsection{Comment}\label{commentintro}
The conditions on $p$ can be unified by the formula: $p\geq u$, where $u$
is the dual Coxeter number of $\fg'$ for $k\Lambda_0+\rho$.
If $k$ is principal  admissible, then  $\Delta_{k\Lambda_0+\rho}\cong {\Delta}^{\ree}$
so $u=h^{\vee}$. 

It turns out that if $k$ is subprincipal admissible, then  $\Delta_{k\Lambda_0+\rho}$ can be described as follows:
we recall that  the set of simple roots for $\fg$ takes the form
${\Sigma}=\dot{\Sigma}\cup\{\delta-\theta\}$
where $\dot{\Sigma}$ is the set of simple roots for 
$\dot{\fg}$ and $\theta$ is the maximal root for $\dot{\fg}$; 
the set of simple roots for $\fg'$ is of the form
$\dot{\Sigma}\cup\{\delta-\theta'\}$ where $\theta'$
is a short root. We have the following table
(where $\fg^L$ stands for the Langlands dual of $\fg$ - the 
Dynkin diagram of $\fg^L$ is obtained from the Dynkin diagram of
$\fg$ by reversing all arrows):

$$\begin{array}{|c||l|l|l|l|c|c|}
 \hline
\fg & B^{(1)}_m& C_m^{(1)}  &   G_2^{(1)}& F_4^{(1)} & B(0|n)^{(1)} & B(m|n)^{(1)}, m>0 \\
 \hline
\fg' & D_{m+1}^{(2)}& A_{2m-1}^{(2)} & D_4^{(3)} & E_6^{(2)} & A(0|2n-1)^{(2)} & D(m+1|n)^{(2)}\\
 \hline
 \fg^L &  A_{2m-1}^{(2)} & D_{m+1}^{(2)}& D_4^{(3)} & E_6^{(2)} & - & - \\ 
 \hline
 u & 2m & 2m & 6 & 12 & n-\frac{1}{2} & \frac{n-m}{2} \text{ if }n\geq m,\  m-n  \text{ if } m>n\\
 \hline
 \end{array}$$
For $n>0$ the Dynkin diagram 
obtained from the Dynkin diagram of
$B(m|n)^{(1)}$  by reversing all arrows 
is not the Dynkin diagram of a Kac-Moody superalgebra,
so $\fg^L$ is not defined in this case.
Notice that $A_{2m-1}^{(2)}$, $ D_{m+1}^{(2)}$ have the same dual Coxeter number (=$2n$),
so  $u=h^{\vee,L}$ is the dual Coxeter number of $\fg^L$ in all cases except for
 $B(m|n)^{(1)}$ with $n>0$.

\subsection{Index of definitions and notation} \label{sec:app-index}
Throughout the paper the ground field is $\mathbb{C}$. We will frequently use the following notation.

\begin{center}
\begin{tabular}{lcl} 
$N_{\nu}$, $\Omega(N)$ ,
$\CO^{\inf}$, base for a triangular decomposition, $\CO_{\Sigma}$, $\cR(\Sigma)$, $\CO$,
$\CO^{\fin}$ & & \ref{triangular}\\
Weyl denominator $D$ & & \ref{Weyldenominator} \\
reflexion:=$r_{\alpha}\Sigma$ & & \ref{reflexion}\\
spine:=$\Sp$, principal roots:=$\Sigma_{\pr}$, Weyl group:=$W$ & & \ref{Spine}\\ 
roots real:=${\Delta}^{\ree}$,  $\ol{\Delta}^{\ree}$,
isotropic:=$\Delta^{\is}$, non-isotropic:=$\Delta^{\an}$;  imaginary:=${\Delta}^{\ima}$
& & \ref{Spine}\\ 
the Weyl vector $\rho$ & & \ref{Weylvector}\\
 $W[E]$, natural action of the Weyl group & & \ref{Weylgroup}\\
non-critical weight, non-critical module, short root & & \ref{shortroot}\\
$\ol{R}_{\lambda},\  R_{\lambda},\ \Delta_{\lambda}$ & & \ref{Deltalambda}\\
$\Delta_N$ & & \ref{defn:DeltaL}\\
$\psi$ & & \ref{KMsubsystem}\\
$L',\fg'$ & & \ref{glambda}\\
$U(\lambda)$ & & \ref{Ulambda}\\
$\pi$-integrable & & \ref{defn:pi-integrable}\\
$\fg_{\pi}$  & & \ref{newnot}\\
$\rho_{\pi}$, $D_{\pi}$ & & \ref{Wnatural}\\
$\cl$, $\pi^{\#}$,  $\Delta^{\#}$ & & \ref{notataff}\\
$V^k$ & & \ref{vacmod}\\
(principal) admissible level & & \ref{admlevel}\\
$V^k(\fg)$, $V_k(\fg)$, $\CO(\fg)^k$, $\CO^{\inf}(\fg)^k$, $\fg^{\#}$ & & \ref{notvertex}
\end{tabular}
\end{center}

{\em Acknowledgment.}
We are grateful to  Vladimir Hinich and Vera Serganova for 
important suggestions and observations. V.K. was partially 
supported by Simons collaboration grant and by ISF Grant 1957/21.
M. G. was partially supported by  ISF Grant 1957/21 
and by the Minerva foundation with funding from 
the Federal German Ministry for Education and Research.

\section{Contragredient Lie superalgebras $\fg(A,\tau)$}
\subsection{Definition of $\fg(A,\tau)$}
Let $I$ be an index set, which we assume to be finite, unless otherwise stated, and let $A=(a_{ij})_{i,j\in I}$
be a matrix with entries in $\mathbb{C}$.
A triple $(\fh,\Sigma, \Sigma^{\vee})$, such that $\fh$ is a vector space, $\Sigma=\{\alpha_i\}_{i\in I}$ 
(resp., $\Sigma^{\vee}=\{h_i\}_{i\in I}$)
 is a linearly independent subset of $\fh^*$ (resp., $\fh$), such that
 $$\alpha_j(h_i)=a_{ij}, \ \ i,j\in I,$$
is called a {\em realization} of the matrix $A$. 

A realization of $A$ is called {\em reduced} if $\fh$ cannot be replaced by a strictly smaller subspace; if $I$ is finite, then 
for the reduced realization of $A$ one has:
$$\dim \fh=|I|+corank A.$$

Let $\tau$ be a subset of $I$.  Define the Lie superalgebra
$\tilde{\fg}(A,\tau)$ generated by  elements $e_i,f_i$, $i\in I$, and $\fh$, where $\fh$ consists of even elements and $e_i,f_i$ are
even if and only if $i\not\in\tau$, subject to the following relations  ($i,j\in I$):
\begin{equation}\label{eq:Chevalleyrel}
\begin{array}{lll}
(a) & & [h,h']=0\ \text{ for }h,h'\in\fh;\ \ \  [h,e_i]=\alpha_i(h)e_i,\ \ 
 [h,f_i]=-\alpha_i(h)f_i \text{ for }h\in\fh; \\
(b) & & [e_i,f_j]=\delta_{ij}h_i.
\end{array}\end{equation}

Let $J(A,\tau)$ be the sum of all ideals of $\tilde{\fg}(A,\tau)$,
intersecting $\fh$ trivially, and let
$$\fg(A,\tau):=\tilde{\fg}(A,\tau)/J(A,\tau).$$
This is called the {\em 
contragredient Lie superalgebra} with  the Cartan datum $(A,\tau)$. Elements $e_i,f_i$,
 $i\in I$ are called the {\em Chevalley generators}. The Lie algebras
 $\fg(A,\emptyset)$ were introduced in~\cite{K68}, and the general $\fg(A,\tau)$ in~\cite{Ksuper}. 
 
 Note that replacing each $h_i$ by $b_ih_i$, where $b_i$ are non-zero numbers, produces an isomorphic contragredient Lie superalgebra,
 whose Cartan matrix is $A_1=diag(b_i)_{i\in I} A$.
 The Cartan matrices $A$ and $A_1$ are called equivalent.
 
 We will follow~\cite{Kbook},
 Chapters 1 and 2 on basic properties of $\fg(A)$, they remain
 valid for $\fg(A,\tau)$, except that Proposition 1.7 (b) is valid if $A\not=(0)$.

 \subsection{Triangular and root space decompositions}\label{base}

We have the triangular decomposition
$$\fg(A,\tau)=\fn^-\oplus \fh\oplus \fn^+,$$
where $\fn^-$ (resp., $\fn^+$) is generated by $f_i$ (resp., $e_i$),
$ i\in I$. Furthermore, $\fn^+$ and $\fn^-$
are normalized by the Cartan subalgebra $\fh$,
so that we have the root space decomposition:
$$\fn^{\pm}=\bigoplus_{\alpha\in \pm \Delta^+(\Sigma)} \fg_{\alpha},
\ \text{ where }  
\fg_{\alpha}=\{a\in \fg|\ [h,a]=\alpha(h) a\}.
$$
 The set $\Delta^+(\Sigma)=\{\alpha\in\fh^*|\ 0\not=\fg_{\alpha}\subset \fn^+\}$ is called the set of
 {\em positive roots}, corresponding to the base $\Sigma$,
 and the set $\Delta=-\Delta^+(\Sigma)\coprod\Delta^+(\Sigma)$
is the set of {\em all roots}. Note that
$\Delta^+(\Sigma)=\Delta\cap \mathbb{Z}_{\geq 0}\Sigma$,
and that $\dim \fg_{\alpha}<\infty$ for all $\alpha\in\Delta$.
The set $\Sigma$ is  called a {\em base} of $\Delta$.

We define a linear map
$p:\mathbb{Z}\Delta\to \mathbb{Z}_2$ given by
$p(\alpha_i)=1$ if $i\in\tau$ and $p(\alpha_i)=0$
if $i\not\in\tau$. Then all vectors in $\fg_{\alpha}$
are even (resp., odd) if $p(\alpha)=0$  (resp., $p(\alpha)=1$).
Thus $\Delta=\Delta_{\ol{0}}\coprod\Delta_{\ol{1}}$.


\subsection{Odd reflexions}
We start from the following lemma.

\subsubsection{}
\begin{lem}{lem:ass}
One has in $\fg(A,\tau)$
$$\begin{array}{lcl}
(a) & & [e_s,e_s]=0=[f_s,f_s]\ \ \text{ if } s\in \tau \text{ and } a_{ss}=0, \text{ hence }\  (\ad e_s)^2=(\ad f_s)^2=0.\\
(b) & & [e_s,e_s]\not=0\ \text{ and }[f_s,f_s]\not=0\ \ \text{ if } s\in \tau \text{ and } a_{ss}\not=0.\\
(c) & & [e_i,e_j]=0=[f_i,f_j]\ \ \text{ if } a_{ij}=0=a_{ji}  \text{ and } i\not=j.\\
(d) & & [e_i,e_j]\not=0\ \text{ and } [f_i,f_j]\not=0\ \ \text{ if } a_{ij}\not=0 \text{ and } i\not=j.
\end{array}$$
\end{lem}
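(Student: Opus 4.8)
The plan is to treat the vanishing statements (a), (c) and the non-vanishing statements (b), (d) by two dual mechanisms, both resting on the description of $J(A,\tau)$ as the sum of all ideals of $\tilde{\fg}(A,\tau)$ meeting $\fh$ trivially, together with the triangular decomposition $\tilde{\fg}(A,\tau)=\tilde{\fn}^-\oplus\fh\oplus\tilde{\fn}^+$ (valid for $\tilde{\fg}$ exactly as in \cite{Kbook}, Ch.~1), where $\tilde{\fn}^{+}$ (resp.\ $\tilde{\fn}^-$) denotes the subalgebra generated by the $e_i$ (resp.\ $f_i$). The point of this reduction is that, to prove $x=0$ in $\fg(A,\tau)$ for a weight vector $x\in\tilde{\fn}^+$, it suffices to show that the ideal of $\tilde{\fg}(A,\tau)$ it generates meets $\fh$ trivially.

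The engine of the vanishing statements is the following sublemma: \emph{if $v\in\tilde{\fn}^+$ is an $\fh$-weight vector with $[f_j,v]=0$ for all $j\in I$, then the ideal $\langle v\rangle$ generated by $v$ lies inside $\tilde{\fn}^+$, whence $\langle v\rangle\cap\fh=0$, so $v\in J(A,\tau)$ and $v=0$ in $\fg(A,\tau)$} (with the symmetric statement exchanging $\tilde{\fn}^+\leftrightarrow\tilde{\fn}^-$ and $e\leftrightarrow f$). To prove it I show that the span $\mathfrak{i}^+$ of all iterated brackets $(\ad e_{j_1})\cdots(\ad e_{j_k})v$ is already an ideal: it is visibly stable under $\ad\fh$ (weight vectors) and under $\ad e_j$, so the only issue is stability under $\ad f_j$, which follows by induction on $k$ from the super Jacobi identity $[f_j,[e_{j_1},w]]=[[f_j,e_{j_1}],w]\pm[e_{j_1},[f_j,w]]$, where $[f_j,e_{j_1}]$ is a multiple of $\delta_{j,j_1}h_j$, so the first term is a scalar multiple of $w\in\mathfrak{i}^+$, and the second lies in $\ad e_{j_1}(\mathfrak{i}^+)\subseteq\mathfrak{i}^+$ by the inductive hypothesis. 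Since $\mathfrak{i}^+\subseteq\tilde{\fn}^+$, it meets $\fh$ trivially. This sublemma, i.e.\ verifying that the $\ad\tilde{\fn}^+$-module generated by $v$ is closed under $\ad\tilde{\fn}^-$, is the only step requiring care; the rest of the argument consists of short super Jacobi computations.

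With the sublemma in hand, (a) and (c) reduce to the computations $[f_j,[e_s,e_s]]=2a_{ss}e_s$ if $j=s$ and $0$ otherwise, and $[f_k,[e_i,e_j]]$ being a combination of $a_{ij}$ and $a_{ji}$ times Chevalley generators; under the hypotheses $a_{ss}=0$ (resp.\ $a_{ij}=a_{ji}=0$, $i\neq j$) these all vanish, so $[e_s,e_s]$ (resp.\ $[e_i,e_j]$) is a weight vector of $\tilde{\fn}^+$ killed by every $\ad f_j$, hence zero in $\fg(A,\tau)$; the $f$-statements follow from the symmetric sublemma upon computing $\ad e_k$ instead. The clause $(\ad e_s)^2=(\ad f_s)^2=0$ is then immediate: for an odd element $x$ the super Jacobi identity gives $2[x,[x,y]]=[[x,x],y]$, so $[x,x]=0$ forces $(\ad x)^2=0$.

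For the non-vanishing statements (b), (d) I argue directly in $\fg(A,\tau)$, where linear independence of $\Sigma^\vee$ gives $h_i\neq0$, and hence $e_i,f_i\neq0$ via $[e_i,f_i]=h_i$. The very same bracket computations now read $[f_s,[e_s,e_s]]=2a_{ss}e_s\neq0$ when $a_{ss}\neq0$, and $[f_i,[e_i,e_j]]=\pm a_{ij}e_j\neq0$ when $a_{ij}\neq0$, $i\neq j$. Were $[e_s,e_s]$ (resp.\ $[e_i,e_j]$) to vanish in $\fg(A,\tau)$, so would these brackets, a contradiction; thus $[e_s,e_s]\neq0$ and $[e_i,e_j]\neq0$, and the $f$-versions follow symmetrically.
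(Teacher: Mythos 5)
Your proof is correct and follows essentially the same route as the paper: the heart in both cases is the computation $[f_j,[e_s,e_s]]=\pm 2\delta_{sj}a_{ss}e_s$ (and its analogue $[f_k,[e_i,e_j]]$), combined with the characterization of $J(A,\tau)$ as the sum of ideals meeting $\fh$ trivially. The only difference is one of completeness: you spell out the standard sublemma (a weight vector in $\tilde{\fn}^+$ killed by every $\ad f_j$ generates an ideal inside $\tilde{\fn}^+$) and argue the non-vanishing statements directly in the quotient, whereas the paper treats both directions as immediate from the bracket computation, leaving these standard facts from~\cite{Kbook}, Chapter 1, implicit.
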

\begin{proof}
For all $s\in\tau$ and $j\in J$ we have 
$[f_j,[e_s,e_s]]=-2\delta_{sj}a_{ss} e_s$ in
$\tilde{\fg}(A,\tau)$, so
 $[e_s,e_s]\in J(A,\tau)$ if and only if $a_{ss}=0$. 
This proves (a) and (b).  The proof of (c) and (d) is similar.
\end{proof}

\subsubsection{}
\begin{prop}{prop:ass}
Suppose that $s\in\tau$ is such that $a_{ss}=0$
and $a_{sj}=0$ implies $a_{js}=0$.  
Then the Lie superalgebra $\fg(A,\tau)$
carries another structure of a contragredient Lie superalgebra
$\fg(A',\tau')$
with the same $\fh$, and new Chevalley generators
$e'_i,f'_i$ ($i\in I$) given by $e'_i:=e_i$, $f'_i:=f_i$
 if $a_{sj}=0$
and $j\not=s$, $e'_s:=f_s$, $f'_s:=e_s$,
and $e'_i:=[e_s,e_i]$, $f'_i:=[f_s,f_i]$
if  $a_{sj}\not=0$. One has 
$$\begin{array}{ll}
\tau'=\{i\in \tau|\ a_{si}\not=0\}\cup \{i\in I\setminus \tau| \ a_{si}\not=0\},\\
\Sigma'=\{-\alpha_s\}\cup\{\alpha_i|\ i\in I, \ i\not=s,\ a_{si}=0\}
\cup\{\alpha_i+\alpha_s|\ i\in I, a_{si}\not=0\},\\
(\Sigma')^{\vee}=\{h'_i\}_{i\in I}, \text{ where } h'_i=h_i \text{ if }  a_{si}=0,
\ \  h'_i=(-1)^{p(\alpha_i)}(a_{is}h_s+a_{si}h_i) \text{ if } \  a_{si}\not=0.\\
\end{array}$$
\end{prop}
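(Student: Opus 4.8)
The plan is to verify that the elements $e_i',f_i',h_i'$ together with $\fh$ satisfy the defining relations \eqref{eq:Chevalleyrel} for the datum $(A',\tau')$ and generate $\fg(A,\tau)$, and then to conclude by the fact that $\fg(A,\tau)$ admits \emph{no} nonzero ideal meeting $\fh$ trivially. This last property is built into the definition $\fg(A,\tau)=\tilde{\fg}(A,\tau)/J(A,\tau)$: an ideal of the quotient meeting $\fh$ trivially lifts to an ideal of $\tilde{\fg}(A,\tau)$ meeting $\fh$ trivially and containing $J(A,\tau)$, hence equal to $J(A,\tau)$ by maximality, so its image vanishes. First I would record that $\Sigma'$ is obtained from $\Sigma$ by a unimodular change of basis and the $h_i'$ from the $h_i$ by an invertible (triangular, with $h_s'=h_s$) one, so that $(\fh,\Sigma',(\Sigma')^{\vee})$ is a realization of the matrix $A'=(\alpha_j'(h_i'))$; the weight and parity of each $e_i'$ then read off the claimed $\Sigma'$ and $\tau'$.

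The computational heart is relation \eqref{eq:Chevalleyrel}(b), namely $[e_i',f_j']=\delta_{ij}h_i'$, which I would check case by case via the super Jacobi identity, using \Lem{lem:ass} to know that $[e_s,e_i]\neq 0$ precisely when $a_{si}\neq 0$ and that $(\ad e_s)^2=(\ad f_s)^2=0$. When at least one of $a_{si},a_{sj}$ vanishes the brackets collapse at once to $[e_i,f_j]=\delta_{ij}h_i$ or to a bracket annihilated by \Lem{lem:ass}(a),(c); here the hypothesis $a_{sj}=0\Rightarrow a_{js}=0$ is exactly what forces $[e_s,e_j]=[f_s,f_j]=0$ for the unchanged roots. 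The one substantial case is $a_{si}\neq 0\neq a_{sj}$: expanding $[[e_s,e_i],[f_s,f_j]]$ by super Jacobi, the inner brackets reduce through $[e_i,f_s]=[e_s,f_j]=0$, $[e_i,f_j]=\delta_{ij}h_i$, $[e_s,f_s]=[f_s,e_s]=h_s$, and after collecting the parity signs one is left with $0$ when $i\neq j$ and with $(-1)^{p(\alpha_i)}(a_{si}h_i+a_{is}h_s)=h_i'$ when $i=j$, as required.

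To see that $e_i',f_i',\fh$ generate $\fg(A,\tau)$ I would reverse the reflexion: $e_s=f_s'$, $f_s=e_s'$, for $a_{si}=0$ one has $e_i=e_i'$, while for $a_{si}\neq 0$ a short super Jacobi computation gives $[e_s',e_i']=[f_s,[e_s,e_i]]=a_{si}e_i$, so $e_i=a_{si}^{-1}[e_s',e_i']$ (and symmetrically for the $f_i$). Granting these verifications, the universal property of $\tilde{\fg}(A',\tau')$ produces a homomorphism $q\colon\tilde{\fg}(A',\tau')\to\fg(A,\tau)$, equal to the identity on $\fh$ and surjective by the previous sentence. Since $\ker q\cap\fh=0$ we have $\ker q\subseteq J(A',\tau')$, so the projection onto $\fg(A',\tau')$ factors through $q$ and yields a surjection $\bar\pi\colon\fg(A,\tau)\twoheadrightarrow\fg(A',\tau')$ that is the identity on $\fh$. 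Its kernel is an ideal of $\fg(A,\tau)$ meeting $\fh$ trivially, hence zero, and $\bar\pi$ is the asserted isomorphism.

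The main obstacle is the sign bookkeeping in the case $a_{si}\neq 0\neq a_{sj}$: the parities $p(\alpha_i)$ must be carried correctly through several nested super Jacobi expansions, and one must confirm that the $i=j$ outcome is \emph{exactly} the listed coroot $h_i'$, sign included. In particular it is essential that $[f_s,e_s]=h_s$ (and not $-h_s$, as it would be for even generators), which is also what makes $h_s'=h_s$ compatible with $\alpha_s'=-\alpha_s$. The remaining, conceptual point is the appeal to the absence of ideals meeting $\fh$ trivially, which upgrades $\bar\pi$ to an isomorphism uniformly, without any dimension or character count.
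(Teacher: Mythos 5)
Your proposal is correct and follows essentially the same route as the paper: verify the relations~(\ref{eq:Chevalleyrel}) for the new generators (with the cross terms $[e'_s,f'_j]$, $[e'_j,f'_s]$ handled via \Lem{lem:ass} and the hypothesis $a_{sj}=0\Rightarrow a_{js}=0$), check that $e'_i,f'_i,\fh$ generate $\fg(A,\tau)$ through the bracket $[f_s,[e_s,e_i]]=a_{si}e_i$, and conclude from the absence of ideals meeting $\fh$ trivially. The only cosmetic differences are that you spell out the final universal-property argument that the paper leaves implicit, and you verify $[e'_i,f'_j]=0$ for $i\not=j\not=s$ with $a_{si}a_{sj}\not=0$ by a direct super-Jacobi computation where the paper uses the weight argument $\alpha'_i-\alpha'_j\not\in\mathbb{Z}_{\geq 0}\Sigma$.
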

\begin{proof}
One readily sees that the sets $\Sigma'$ and $(\Sigma')^{\vee}$
are linearly independent. 

Let us verify that $\fg(A,\tau)$ is generated by the new Chevalley generators and $\fh$.
Let $\fp$ be the subalgebra generated by $\fh$ and
$e'_i, f'_i$ for $i\in I$.
For all $i\not=s\in I$ we have
$[e_s,[f_s,f_i]]=-a_{si} f_i$, so
$[f'_s,f'_i]=-a_{si} f_i$
if $a_{si}\not=0$. Therefore $f_i\in \fp$
for all $i\in I$. Similarly, $e_i\in\fp$
for all $i\in I$, so $\fp=\fg(A,\tau)$.

The relations~(\ref{eq:Chevalleyrel}) (a) 
for $e'_i$, $f'_i$ and the relations $[e'_i,f'_i]=h'_i$
are straightforward. For
$i\not=j\not=s$ one has $\alpha'_i-\alpha'_j\not\in\mathbb{Z}_{\geq 0}\Sigma$
which gives $[e'_i,f'_j]=0$.
It remains to verify that
$[e'_s,f'_i]=[e'_i,f'_s]=0$ for $i\not=s$.

If $a_{sj}\not=0$, then
$[e'_s,f'_j]=[f_s,[f_s,f_j]]=0$ by~\Lem{lem:ass} (a).
If $a_{sj}=0$,  then, by the assumption,
$a_{js}=0$, so $[e'_s,f'_j]=[f_s,f_j]=0$
by~\Lem{lem:ass} (c). Hence $[e'_s,f'_j]=0$
for all $j\in I$. Similarly, $[e'_j,f'_s]=0$
for all $j\in I$. 
\end{proof}

\subsubsection{Remark}\label{reflexion}
For $(A,\tau)$ and $s\in I$ as in the proposition we say that $(A',\tau')$ are obtained from $(A,\tau)$ by 
an {\em  odd reflexion}  
\footnote{In~\cite{GHS} these are called "isotropic reflexions".}
$r_{\alpha_s}$.
We shall often write $\Sigma'=r_{\alpha_s}\Sigma$. Note that
$r_{-\alpha_s}r_{\alpha_s}\Sigma=\Sigma$.

\subsubsection{}
The Cartan matrix $A$ is called {\em weakly symmetrizable}  if $a_{ij}=0$ implies $a_{ji}=0$ for all $i,j\in I$. By~\Prop{prop:ass},
if $A$ is weakly symmetrizable, then the odd reflexion
$r_{\alpha_s}$ is well-defined for any $s\in \tau$
with $a_{ss}=0$.

\subsubsection{}
For an odd reflexion $r_{\alpha_s}$ we have
\begin{equation}\label{eq:Delta+oddrefl}
\Delta^+(r_{\alpha_s}\Sigma)=\Delta^+(\Sigma)\setminus\{\alpha_s\}\cup\{-\alpha_s\}.\end{equation}

Indeed, if $\gamma\in -\Delta^+(r_{\alpha_s}\Sigma)\cap \Delta^+(\Sigma)$, then
$\gamma=\sum_{i\in I} m_i\alpha_i=-\sum_{i\in I} n_i\alpha'_i$
where $m_i,n_i\in\mathbb{Z}_{\geq 0}$. One has
$\alpha'_s=-\alpha_s$.
For $i\not=s$ we have 
$\alpha'_i=\alpha_i+k_i\alpha_s$ for $k_i\in\{0,1\}$, so
$m_i=n_i$. Thus $m_i=0$ for all $i\not=s$.
Hence $\gamma\in \mathbb{Z}_{\geq 0}\alpha_s$. By~\Lem{lem:ass} (a), $[e_s,s_s]=0$, so
 $\Delta\cap\mathbb{Z}\alpha_s=\{\pm \alpha_s\}$. Thus $\gamma=\alpha_s$
as required.

\subsection{Integrable $\fg(A,\tau)$}
The contragredient Lie superalgebra
$\fg(A,\tau)$ is called {\em integrable}
if all its Chevalley generators and
all Chevalley generators obtained by 
any sequence of odd reflexions are $\ad$-locally nilpotent.

An integrable contragredient Lie superalgebra
$\fg(A,\tau)$ is called a {\em Kac-Moody superalgebra}
if for all $(A',\tau')$ obtained by any 
sequence of odd reflexions,
 the Cartan matrix $A'$ is weakly symmetrizable and
 $a'_{ii}\not=0$ for all $i\not\in\tau'$.

\subsubsection{Example}
Suppose that all diagonal entries of the Cartan matrix
are non-zero, so that it is equivalent to a Cartan matrix $A=(a_{ij})$ with all diagonal entries equal to $2$. In this case the contragredient Lie superalgebra is integrable if and only if
$A$ is weakly symmetrizable, and for $i\not=j$,
$a_{ij}\in\mathbb{Z}_{\leq 0}$
and $a_{ij}$ is even if $i\in\tau$.

These contragredient Lie superalgebras are called {\em anisotropic Kac-Moody superalgebras}.

\subsection{Indecomposable $\fg(A,\tau)$}
If the set $I$ decomposes in a disjoint union of non-empty subsets
$I_1$ and $I_2$, such that $a_{ij}=a_{ji}=0$
for $i\in I_1$, $j\in I_2$, then we have direct product decomposition of Lie superalgebras
$$\fg(A,\tau)\cong \fg(A_1,\tau_1)\times \fg(A_2,\tau_2) \times \ol{\fh},$$
where the matrices $A_1$ and $A_2$ are the submatrices of $A$,
corresponding to the sets of indices $I_1$
and $I_2$, $\tau_i\subset I_i$, $\fg(A_i,\tau_i)$
correspond to the reduced realizations of the $A_i$,
and $\ol{\fh}\subset\fh$. In this case the Cartan matrix is called {\em decomposable}.

The contragredient Lie superalgebra $\fg(A,\tau)$
is called {\em indecomposable}
if $A$ is {\em indecomposable} and its realization is reduced.

\subsection{Invariant bilinear form on symmetrizable $\fg(A,\tau)$}
A  Lie superalgebra $\fg(A,\tau)$ (and its Cartan matrix $A$)
is called {\em symmetrizable} if an equivalent Cartan matrix
$A_1=diag(b_i)_{i\in I} A$ is symmetric.
Note that a symmetrizable Cartan matrix is weakly symmetrizable.

A symmetrizable contragredient Lie superalgebra $\fg(A,\tau)$
admits a non-degenerate supersymmetric invariant bilinear form $(-,-)$, such that $(-,-)_{\fh\times\fh}$
is non-degenerate, $(\fg_{\alpha},\fg_{\beta})=0$
unless $\alpha=-\beta$, and the root spaces
$\fg_{\alpha}$ and $\fg_{-\alpha}$
are non-degenerately paired, such that
$$[e_{\alpha}, e_{-\alpha}]=(e_{\alpha},e_{-\alpha})h_{\alpha},$$
where $h_{\alpha}$ is defined by 
$\alpha(h)=(h,h_{\alpha})$ for all $h\in\fh$
(cf.~\cite{Kbook}, Chapter 2).

\subsection{}\label{Sigmaproperties}
Let $\fg(A,\tau)$ be a Kac-Moody superalgebra.
Let $\alpha\in\Sigma$. Then  $\dim\fg_{\pm\alpha}=1$
and $[\fg_{\alpha},\fg_{-\alpha}]$ is a one-dimensional subspace
of $\fh$; we denote by $\alpha^{\vee}$ a non-zero element 
in this subspace which satisfies the condition
 $\langle \alpha,\alpha^{\vee}\rangle\in \{0,2\}$ ($\alpha^{\vee}$ 
is unique if $\langle \alpha,\alpha^{\vee}\rangle=2$).
 One has 
$\mathbb{Z}\alpha\cap\Delta=\{\pm \alpha\}$ except for the case when
$p(\alpha)=\ol{1}$  and $\langle \alpha,\alpha^{\vee}\rangle\not=0$;
in the latter case $\mathbb{Z}\alpha\cap\Delta=\{\pm \alpha;\pm 2\alpha\}$.
Moreover the subalgebra generated 
by $\fg_{\alpha}, \fg_{-\alpha}$ coincides with $\mathbb{C}\alpha^{\vee}+\sum\limits_{j=1}^{\infty} \fg_{\pm j\alpha}$ and 
is isomorphic to one of the following Lie superalgebras (cf.~\cite{Ksuper}):
\begin{itemize}
\item
 $\fsl_2$ spanned by $\fg_{\pm\alpha}$ and $\alpha^{\vee}$;
in this case $p(\alpha)=\ol{0}$ and $\langle \alpha,\alpha^{\vee}\rangle=2$;
\item  $\mathfrak{sl}(1|1)$ spanned by $\fg_{\pm\alpha}$ and $\alpha^{\vee}$;
in this case $p(\alpha)=\ol{1}$ and
$\langle \alpha,\alpha^{\vee}\rangle=0$;
\item  $\mathfrak{osp}(1|2)$ spanned by $\fg_{\pm\alpha}$,  $\fg_{\pm2\alpha}$ and 
$\alpha^{\vee}$;
in this case $p(\alpha)=\ol{1}$ and $\langle \alpha,\alpha^{\vee}\rangle=2$.
\end{itemize}

\subsection{Spine, principal roots, Weyl group, real and imaginary roots}\label{Spine}
Let $\fg=\fg(A,\tau)$ be a symmetrizable Kac-Moody superalgebra.
The {\em spine} of $\fg$ is the set $\Sp$
of all bases of $\Delta$, obtained 
from $\Sigma$ by a chain of odd reflexions. 

For each non-isotropic vector  $\alpha\in\fh^*$
we define the reflection 
$s_{\alpha}\in \End(\fh^*)$ by $s_{\alpha}(\lambda):=\lambda-\frac{2(\lambda,\alpha)}{(\alpha,\alpha)}\alpha$.

Following notation of~\cite{S}, \cite{GSsnow} 
we call
 an even  root $\alpha$ {\em principal} if $\alpha$ or $\alpha/2$
lies in $\Sigma_1\in\Sp$. 
Since all even roots in $\Sigma_1$ are non-isotropic 
and, by~\Lem{lem:ass} (a), $2\beta\not\in\Delta$ 
for any isotropic root in $\Sigma_1$,
all principal roots are non-isotropic.
We denote the set of principal roots by $\Sigma_{\pr}$.
Note that $(\alpha,\alpha)\not=0$ if $\alpha$ is a principal root.
The subgroup $W$ of the group $GL(\fh^*)$
generated by  the reflections with respect to
principal roots, is called the {\em Weyl group} of $\fg$. We have: $W(\Delta)=\Delta$ (cf.~\cite{Kbook}, Section 3)
and $W(\Delta_{\ol{i}})=\Delta_{\ol{i}}$ for $i=0,1$.

A root $\gamma\in\Delta$ is called {\em real} if $\gamma$
or $\gamma/2$  lies in $w\Sigma_1$ for some $w\in W$ and $\Sigma_1\in\Sp$,
and is called {\em imaginary} otherwise. 
The sets of real and imaginary roots are denoted by $\Delta^{\ree}$
and $\Delta^{\ima}$ respectively. 

Using~$\S$~\ref{Sigmaproperties} it is easy to see that
$\Delta^{\ree}=\ol{\Delta}^{\ree}\coprod \{2\alpha|\ \alpha\not\in \ol{\Delta}^{\ree}_{\ol{1}}\}$,  where 
\begin{equation}\label{eq:real}
\ol{\Delta}^{\ree}=:=\{\gamma\in\Delta^{\ree}|\ \frac{1}{2}\gamma\in\Delta\}=
\{w\alpha|\ w\in W,\ \alpha\in\Sigma_1\in \Sp\}.
\end{equation}

By~$\S$~\ref{Sigmaproperties},
$\dim\fg_{\alpha}=1$ if $\alpha\in\Delta^{\ree}$, and
$[\fg_{\alpha},\fg_{-\alpha}]=\mathbb{C}\alpha^{\vee}$,
where $\langle \alpha,\alpha^{\vee}\rangle\in \{0,2\}$,
and $\alpha\in\Delta^{\ree}$ is non-isotropic (reps., isotropic)
if $\langle \alpha,\alpha^{\vee}\rangle=2$ (resp.,  
$=0$). We denote by $\Delta^{\an}$
(resp., $\Delta^{\is}$) the set of all non-isotropic (reps., isotropic) real roots. We let $\ol{\Delta}^{\an}=\Delta^{\an}\cap \ol{\Delta}^{\ree}$.
Notice that all even real roots are non-isotropic.
The Weyl group contains the reflections $s_{\gamma}$
for all $\gamma\in\Delta^{\an}$.

\subsubsection{}
If $\alpha\in\ol{\Delta}^{\an}$, then $w\alpha\in\Sigma_1\in\Sp$, so
for all $\beta\in\Sigma_1$,  the value
$\langle \beta, (w\alpha)^{\vee}\rangle$ is integral and is even
if $w\alpha$ is odd. 
Therefore for any $\beta\in\Delta$ we have
 \begin{equation}\label{eq:alphaveebeta} 
 \langle \beta,\alpha^{\vee}\rangle\in\mathbb{Z}\ \text{ if } \ 
 \alpha\in\ol{\Delta}^{\an}, \ \ \ \ \ 
\langle \beta,\alpha^{\vee}\rangle\in 2\mathbb{Z}\ \text{ if } \ 
\alpha\in\ol{\Delta}^{\an}_{\ol{1}}.\end{equation}

By~(\ref{eq:Delta+oddrefl}) the sets  $\Delta^{\ima +}=\Delta^{\ima}\cap \Delta^+$, 
$\Delta^{\an+}=\Delta^{\an}\cap\Delta^+$ and $\Delta^+_{\ol{0}}$
do not depend on the choice of a base in the spine.

The set of principal roots coincides with the set of indecomposable elements in 
the set of positive even real roots
$(\Delta_{\ol{0}}^{\ree})^+$, i.e. 
 these are the elements which can not be decomposed as a sum of several other elements 
in $(\Delta_{\ol{0}}^{\ree})^+$ (see~\cite{Shay}, Proposition 3.5).

Recall that $W$ is generated by reflections $s_{\alpha}$
with respect to the principal roots. For a principal root
$\alpha$ such that $\alpha$ or $\alpha/2$ lies in $\Sigma_1\in \Sp$ and any 
$\beta\in \Delta^+(\Sigma_1)$ one has $s_{\alpha}\beta\in \Delta^+(\Sigma_1)$
if $\beta\not\in\mathbb{Z}\alpha$. This implies
 $W(\Delta^{\ima +})=\Delta^{\ima +}$.

\subsubsection{Remark}\label{rem:prince}
Each subset ${\Sigma}'\subset\Sigma$ defines a Kac-Moody subalgebra ${\fg}'\subset \fg$
with the same Cartan subalgebra $\fh$ and the base $\Sigma'$. There is a natural embedding of
the spine of $\Sigma'$  to the spine of $\Sigma$; this gives $\Sigma'_{\pr}\subset \Sigma_{\pr}$ (cf.~\cite{Kbook}, Chapter 3).

\subsubsection{}\label{Bpr}
Since even real roots are non-isotropic we have $\langle \alpha,\alpha^{\vee}\rangle =2$
for each  $\alpha\in\Sigma_{\pr}$. By~\cite{S}, for all $\alpha,\beta\in\Sigma_{\pr}$ we have 
$\langle \beta,\alpha^{\vee}\rangle\in\mathbb{Z}_{\leq 0}$, so
 the matrix $B_{\pr}:=\bigl(\langle \alpha^{\vee},\beta\rangle\bigr)_{\alpha,\beta\in \Sigma_{\pr}}$
is the   Cartan matrix of a Kac-Moody algebra (from Hoyt-Serganova classification~\cite{Hoyt},\cite{S},
it follows that $\Sigma_{\pr}$ is finite).

Let a triple $(\fh',\pi,\pi^{\vee})$ be a realization of $B_{\pr}$ in the sense of~\cite{Kbook}, Chapter I. 
Consider linear epimorphisms $\phi: \mathbb{Z}\pi\to \mathbb{Z}\Sigma_{\pr}$
and $\phi^*:\mathbb{Z}\pi^{\vee}\to \mathbb{Z}\Sigma^{\vee}_{\pr}$ 
which map $\pi$ to $\Sigma_{\pr}$ and $\pi^{\vee}$ to $\Sigma_{\pr}^{\vee}$
respectively. The restriction of $\phi$ gives a bijection between
the set
of real roots of $\fg(B_{\pr})$ and $\Delta^{\ree}_{\ol{0}}$. Moreover, $\phi$
induces a group isomorphism between the Weyl group of $\fg(B_{\pr})$
and the Weyl group $W$. In particular,
$W$ is the Coxeter group generated by $s_{\alpha}$ with $\alpha\in\Sigma_{\pr}$
and $\Delta^{\ree}_{\ol{0}}=W(\Sigma_{\pr})$.

\subsubsection{}
\begin{lem}{lem:standard}
Any $\gamma\in \Delta^{\ree+}_{\ol{0}}$
can be written as $\gamma=s_{\alpha_j}\ldots s_{\alpha_2}\alpha_1$ 
(with $\gamma=\alpha_1$ for $j=1$)
where $\alpha_1,\ldots,\alpha_j\in\Sigma_{\pr}$ are such that
$$s_{\alpha_{i+1}}(s_{\alpha_i}\ldots s_{\alpha_2}\alpha_1)\in \bigl(
s_{\alpha_i}\ldots s_{\alpha_2}\alpha_1+ \mathbb{Z}_{>0}\alpha_{i+1}\bigr)
$$
for each $i=1,\ldots, j-1$.
\end{lem}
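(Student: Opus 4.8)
The plan is to transport the statement to the even real root system and argue by induction on the height of $\gamma$ with respect to $\Sigma_{\pr}$. By $\S\ref{Bpr}$ the map $\phi$ identifies $\Delta^{\ree}_{\ol{0}}$ with the real roots of the Kac--Moody algebra $\fg(B_{\pr})$, carries $\Sigma_{\pr}$ to the set of simple roots, and intertwines the $W$-actions; moreover $\Delta^{\ree+}_{\ol{0}}=\Delta^{\ree}_{\ol{0}}\cap\Delta^+$ is independent of the base in the spine (see $\S\ref{Spine}$), so each $\gamma\in\Delta^{\ree+}_{\ol{0}}$ is a genuine $\mathbb{Z}_{\geq 0}$-combination $\gamma=\sum_{\alpha\in\Sigma_{\pr}}m_{\alpha}\alpha$ and $\htt\gamma:=\sum_{\alpha}m_{\alpha}$ is well defined. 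If $\htt\gamma=1$, then $\gamma\in\Sigma_{\pr}$ and I take $j=1$, $\alpha_1=\gamma$, with no condition to verify. This is the base of the induction.

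For the inductive step, suppose $\htt\gamma>1$. First I would produce a principal root $\alpha$ with $\langle\gamma,\alpha^{\vee}\rangle>0$. Since $\gamma$ is an even real root of a symmetrizable algebra we have $(\gamma,\gamma)>0$, and expanding $(\gamma,\gamma)=\sum_{\alpha}m_{\alpha}(\alpha,\gamma)$ with all $m_{\alpha}\geq 0$ forces $(\alpha,\gamma)>0$ for some $\alpha\in\Sigma_{\pr}$ lying in the support of $\gamma$; as $(\alpha,\alpha)>0$ this yields $\langle\gamma,\alpha^{\vee}\rangle=2(\gamma,\alpha)/(\alpha,\alpha)>0$. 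Because $\htt\gamma>1$ we have $\gamma\notin\mathbb{Z}\alpha$, so the remark preceding the lemma together with $W(\Delta_{\ol{0}})=\Delta_{\ol{0}}$ shows that $s_{\alpha}\gamma=\gamma-\langle\gamma,\alpha^{\vee}\rangle\alpha$ lies again in $\Delta^{\ree+}_{\ol{0}}$, with strictly smaller height since a positive multiple of $\alpha$ was subtracted.

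Now I would apply the induction hypothesis to $s_{\alpha}\gamma$, obtaining $s_{\alpha}\gamma=s_{\alpha_{j-1}}\cdots s_{\alpha_2}\alpha_1$ with $\alpha_1,\ldots,\alpha_{j-1}\in\Sigma_{\pr}$ satisfying the chain conditions for $i=1,\ldots,j-2$. Setting $\alpha_j:=\alpha$ and applying $s_{\alpha}$ gives $\gamma=s_{\alpha_j}s_{\alpha_{j-1}}\cdots s_{\alpha_2}\alpha_1$. It remains to check the condition for $i=j-1$: writing $\gamma_{j-1}:=s_{\alpha}\gamma$ we get $s_{\alpha_j}(\gamma_{j-1})=\gamma=\gamma_{j-1}+\langle\gamma,\alpha^{\vee}\rangle\alpha_j$, and since $\langle\gamma,\alpha^{\vee}\rangle>0$ this is exactly an element of $\gamma_{j-1}+\mathbb{Z}_{>0}\alpha_j$, as required. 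This completes the induction.

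The one point that requires care is the existence of the height-decreasing reflection in the inductive step, i.e. the claim that $\langle\gamma,\alpha^{\vee}\rangle>0$ for some $\alpha$ in the support of $\gamma$: this rests on the positivity $(\gamma,\gamma)>0$ of even real roots and on $(\alpha,\alpha)>0$ for principal roots, which is exactly where symmetrizability of $\fg$ enters. Everything else---that $s_{\alpha}$ preserves $\Delta^{\ree+}_{\ol{0}}\setminus\mathbb{Z}\alpha$ and that the height strictly drops---is immediate from $\S\ref{Spine}$ and the displayed remark preceding the lemma.
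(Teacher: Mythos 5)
Your overall strategy---induction along the order determined by $\Sigma_{\pr}$, reduction to the claim that every $\gamma\in\Delta^{\ree+}_{\ol{0}}\setminus\Sigma_{\pr}$ admits $\alpha\in\Sigma_{\pr}$ with $\langle\gamma,\alpha^{\vee}\rangle>0$, and the unwinding of reflections---is exactly the paper's strategy, and your bookkeeping in the inductive step is correct. The gap is in your proof of the key claim. You assert that $(\gamma,\gamma)>0$ for every even real root and that $(\alpha,\alpha)>0$ for every principal root, attributing both to symmetrizability. Both assertions are false for Kac--Moody superalgebras: the invariant form is indefinite on the even part. Concretely, for $\fg=\fosp(3|4)$ (type $B(1|2)$), normalized by $(\vareps_1,\vareps_1)=1=-(\delta_j,\delta_j)$, one has $\Sigma_{\pr}=\{\vareps_1,\,\delta_1-\delta_2,\,2\delta_2\}$, the principal root $2\delta_2$ has square length $-4$, and the even real root $\gamma=\delta_1+\delta_2=(\delta_1-\delta_2)+2\delta_2$ has square length $-2$. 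This is precisely why the paper records only $(\alpha,\alpha)\neq 0$ for principal roots in \S\ref{Spine} and later singles out $\pi^{\#}=\{\alpha\in\Sigma_{\pr}\mid(\alpha,\alpha)>0\}$ as a generally proper subset of $\Sigma_{\pr}$. With mixed signs your expansion $(\gamma,\gamma)=\sum_{\alpha}m_{\alpha}(\alpha,\gamma)$ proves nothing: in the example above $(\gamma,\alpha)\leq 0$ for both $\alpha$ in the support of $\gamma$, and yet $\langle\gamma,(2\delta_2)^{\vee}\rangle=2(-2)/(-4)=+1$, because numerator and denominator are both negative; conversely, a positive $(\gamma,\alpha)$ against a negative-normed $\alpha$ would give a \emph{negative} Cartan integer.

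The paper avoids this issue by never arguing with the superalgebra's bilinear form at this point: it transports the key claim through the map $\phi$ of \S\ref{Bpr}---which preserves the Cartan integers $\langle\beta,\alpha^{\vee}\rangle$ but not $(-,-)$---to the ordinary Kac--Moody algebra $\fg(B_{\pr})$, where the assertion is Proposition 5.1(e) of \cite{Kbook}. You set up $\phi$ in your first paragraph and then abandoned it at the only moment where it is needed. (Your argument could be repaired internally: if $\alpha,\beta\in\Sigma_{\pr}$ are adjacent in the Dynkin diagram, then $\langle\beta,\alpha^{\vee}\rangle<0$ and $\langle\alpha,\beta^{\vee}\rangle<0$ force $(\alpha,\alpha)$ and $(\beta,\beta)$ to have the same sign; since the support of $\gamma$ is connected and $\gamma\in W(\Sigma_{\pr})$, the number $(\gamma,\gamma)$ and the norms of the principal roots in its support share a common sign $\epsilon$, and running your expansion with $\epsilon(\gamma,\gamma)>0$ does yield $\langle\gamma,\alpha^{\vee}\rangle>0$; but none of this sign-coherence analysis appears in your write-up.) A smaller defect: when the Dynkin diagram of $\Sigma_{\pr}$ is disconnected, e.g.\ for $\fosp(3|2)^{(1)}$ where $\Sigma_{\pr}=\{\vareps_1,\delta-\vareps_1,2\delta_1,\delta-2\delta_1\}$, the set $\Sigma_{\pr}$ is linearly dependent, so your height function $\htt\gamma$ is not well defined; the paper's induction on the partial order $\nu\geq\mu\iff\nu-\mu\in\mathbb{Z}_{\geq 0}\Sigma_{\pr}$ sidesteps this.
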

\begin{proof}
Using the induction on  the partial order 
given by $\nu\geq \mu$ if $\nu-\mu\in \mathbb{Z}_{\geq 0}\Sigma_{\pr}$
we reduce the statement to the following: for 
 each $\gamma\in (\Delta^{\ree+}_{\ol{0}}\setminus \Sigma_{\pr})$
there exists $\alpha\in\Sigma_{\pr}$ such that $\langle\gamma,\alpha^{\vee}\rangle>0$.
For Kac-Moody algebras this assertion is Proposition 5.1 (e) in~\cite{Kbook}.
Using $\phi: \mathbb{Z}\pi\to \mathbb{Z}\Sigma_{\pr}$ described above,
we deduce the required assertion
from the corresponding assertion for the Kac-Moody algebra
$\fg(B_{\pr})$.
\end{proof}

\subsection{Weyl vector  $\rho=\rho_{\Sigma}$}\label{Weylvector}
Define $\rho\in\fh^*$ satisfying the condition
$$2\langle\rho,\alpha^{\vee}\rangle=\langle\alpha,\alpha^{\vee}\rangle\ \ \text{ 
for all  }\alpha\in\Sigma.$$
For each $\Sigma_1\in\Sp$, the element 
\begin{equation}\label{eq:rho}
\rho_{\Sigma_1}:=\rho-\sum\limits_{\gamma\in \Delta^+(\Sigma)\setminus
\Delta^+(\Sigma_1)} (-1)^{p(\gamma)}\gamma\end{equation}
is a Weyl vector for $\Sigma_1$. This follows from the observation 
that $\rho_{r_{\alpha}\Sigma}=\rho+\alpha$
if $\alpha\in \Sigma\cap\Delta^{\is}$.

Combining~(\ref{eq:alphaveebeta})  and (\ref{eq:rho}) we get
\begin{equation}\label{eq:rhoalpha}
\langle\rho,\alpha^{\vee}\rangle\in \mathbb{Z}\ \text{ for }\alpha\in\ol{\Delta}^{\an}_{\ol{0}},\ \ \ \ \ \langle\rho,\alpha^{\vee}\rangle\in 2\mathbb{Z}+1\ \text{ for }\alpha\in \Delta^{\an}_{\ol{1}}.\end{equation}

\subsection{Categories $\CO^{\fin}\subset \CO_{\Sigma}\subset \CO^{\inf}$}
\label{triangular}
Let $\fg$ be an arbitrary Lie superalgebra and let $\fh\subset\fg_{\ol{0}}$
be an abelian, self-normalizing subalgebra which acts $\ad$-semisimply  on $\fg$, so that
$$\fg=\fh\oplus(\oplus_{\alpha\in \Delta} \fg_{\alpha}),
\ \ \fg_{\alpha}:=\{x\in\fg|\ ]h,x]=\alpha(h)x\ \text{ for all } h\in\fh\},$$
and $\Delta:=\{\alpha\in\fh^*|\ \alpha\not=0,\ \fg_{\alpha}\not=0\}$.

\subsubsection{Triangular decomposition}\label{Sherman}
Let $\omega:\mathbb{Z}\Delta\to \mathbb{R}$  
be a group homomorphism satisfying
$\omega(\alpha)\not=0$ for all $\alpha\in \Delta$ (such a homomorphism exists because
$\fh^*\cong \mathbb{R}$ as a $\mathbb{Q}$-vector space~\cite{GSS}). We set
$\Delta^{\pm}:=\{\alpha\in \Delta|\ \pm \omega(\alpha)>0\}$.
We introduce  the corresponding triangular decomposition
$\fg=\fg^-\oplus \fh\oplus \fg^+$
 by setting $\fg^{\pm}:=\sum\limits_{\alpha\in\Delta^{\pm}(\fg)} \fg_{\alpha}$.
Note that a triangular decomposition of $\fg$ induces a triangular decomposition of 
any $\ad\fh$-invariant subalgebra of $\fg$.

\subsubsection{}
Given $\lambda\in\fh^*$, we define
a Verma module  $M(\lambda)$  in the usual way,
 as the $\fg$-module, induced from the $1$-dimensional $\fh+\fg^+$-module
 $\mathbb{C}v_{\lambda}$, such that $\fg^+v_{\lambda}=0$ and
 $hv_{\lambda}=\lambda(h)v_{\lambda}$ for $h\in\fh$.
 This module has a unique simple quotient, denoted by 
$L(\lambda)$.

We denote by ${\CO}^{\inf}(\fg)$ the full subcategory of the category of $\fg$-modules with the objects 
which are $\fh$-semisimple and $\fg^+$-locally finite (i.e., 
every $v\in N$ generates a finite-dimensional $\fg^+$-submodule).
Note that for any $N\in {\CO}^{\inf}(\fg)$ each irreducible subquotient of
$N$ is isomorphic to $L(\lambda)$ for some $\lambda\in\fh^*$.

We will use the following standard notation:
for each semisimple $\fh$-module $N$ and $\nu\in\fh^*$,
$N_{\nu}$ denotes the weight space with weight $\nu$,
 $\Omega(N):=\{\nu|\ N_{\nu}\not=0\}$, and 
$\ch N=\sum\limits_{\mu\in\fh^*} (\dim N_{\mu})\, e^{\mu}$
if  $\dim N_{\mu}<\infty$ for all $\mu$.
For an  $\fh$-invariant subalgebra $\fl \subseteq \fg$ we set $\Delta(\fl):=\Omega(\fl)$.

\subsubsection{}\label{categoryO}
Let $\fg$ be a  Lie superalgebra with a fixed triangular decomposition.
Assume that 
\begin{itemize}
\item[(a)]
all root spaces $\fg_{\alpha}$ are finite-dimensional;
\item[(b)]
 there exists a linearly independent set $\Sigma\subset\fh^*$
such that $\Delta^{\pm}(\fg)\subset \pm\mathbb{Z}_{\geq 0}\Sigma$
(note that we do not assume that $\Sigma\subset\Delta(\fg)$).
\end{itemize}

 If $\fg'\subset \fg$ is any subalgebra 
containing $\fh$, then $\fg'$   inherits the triangular decomposition 
satisfying
the above properties (for the same set $\Sigma$). The restriction functor
$\Res^{\fg}_{\fg'}$ carries $\CO^{\inf}(\fg)$ to $\CO^{\inf}(\fg')$.

We denote by $\CO_{\Sigma}(\fg)$ the full subcategory of 
$\CO^{\inf}(\fg)$
of  modules $N$ satisfying the following condition: 
$\fh$ has finite-dimensional weight spaces,
 and
 $\Omega(N)$ lies 
in a finite union of the sets of the form $\lambda-\mathbb{Z}_{\geq 0}\Sigma$
where $\lambda\subset\fh^*$.  This notion is a slight variation of 
the category $\CO$ introduced in~\cite{KK}.

By (a), $M(\lambda)$ and $L(\lambda)$ are objects of $\CO_{\Sigma}(\fg)$,
and any irreducible module in $\CO_{\Sigma}(\fg)$ is isomorphic to $L(\lambda)$
for some $\lambda\in\fh^*$. Any  $N\in\CO_{\Sigma}(\fg)$ has the following properties:
\begin{enumerate}
\item 
$N\in \CO^{\inf}(\fg)$;
\item
all   subquotients of $N$ lie in $\CO_{\Sigma}(\fg)$;
\item
$\Res^{\fg}_{\fg'} N\in \CO_{\Sigma}(\fg')$  if $\fg'\subset \fg$ is a subalgebra 
containing $\fh$ (note that the induced triangular decomposition of $\fg'$
satisfies (a) and (b));
\item the multiplicity
$[N:L(\mu)]$ is well defined 
and $\ch N=\sum_{\nu\in\fh^*} [ N:L(\nu)]\ch L(\nu)$.
\end{enumerate}

Property (i) is easy to check; (ii), (iii) follow from definition. For (iv),
 using the arguments of~\cite{DGK} (Proposition 3.2),
one shows that for each $\mu\in\fh^*$ any module $N\in\CO_{\Sigma}(\fg)$ 
admits a {\em weak composition series at $\mu$} which is a chain of submodules
$$0=N_0\subset N_1\subset \ldots N_r=N$$
such that for each $i=1,\ldots, r$ either
 $N_i/L_{i-1}\cong L_(\nu)$ for some
 $\nu\in (\mu+\mathbb{Z}_{\geq 0}\Sigma)$
or $(N_i/N_{i-1})_{\nu}=0$ for all $\nu\in (\mu+\mathbb{Z}_{\geq 0}\Sigma)$.

Assume that the triangular decomposition satisfies (a) and (b). 
We introduce the algebra $\cR(\Sigma)$ over $\mathbb{R}$ as in~\cite{GKadm}:
 the elements of $\cR(\Sigma)$ are finite linear combinations
of the elements of the form $B:=\sum\limits_{\mu\in\mathbb{Z}_{\geq 0}\Sigma}
b_{\mu} e^{\lambda-\mu}$ where $b_{\nu}\in \mathbb{R}$ and $\lambda\in\fh^*$. These elements can be multiplied in the obvious way.
Any element in $\cR(\Sigma)$ can be uniquely written as 
$x=\sum\limits_{\mu\in \fh^*}
x_{\nu} e^{\nu}$ and we set $\supp(x):=\{\nu|\ x_{\nu}\not=0\}$.
The ring $\cR(\Sigma)$ contains $\ch N$ for any $N\in {\CO}_{\Sigma}(\fg)$.

\subsubsection{}
We call  $\Sigma$  satisfying  (b) a {\em base} (or a set of simple roots) 
if $\Sigma\subset\Delta^+(\fg)$.
If the triangular decomposition admits a base, such base is unique.

\subsubsection{}\label{COfin}
We denote by $\CO^{\fin}(\fg)$ the BGG-category, i.e.
the full subcategory of $\CO^{\inf}(\fg)$
with finitely generated modules $N$.

It is easy to see (\cite{GSsnow}, Lemma 3.1.3)
that $N$ lies in $\CO^{\fin}(\fg)$
if and only if $N$ admits a finite filtration $0=N_0\subset N_1\subset N_r=N$
such that for each $i=1,\ldots, r$ the quotient 
$N_i/N_{i-1}$ is isomorphic to a quotient of a Verma module.
As a result, $\CO^{\fin}(\fg)$ is a subcategory of $\CO_{\Sigma}(\fg)$
for any choice of $\Sigma$ compatible with the 
triangular decomposition of $\fg$.

\subsubsection{Example}\label{triangKM}
 Let $\fg$ be a Kac-Moody superalgebra with a Cartan subalgebra $\fh$.
 If $\fg$ is finite-dimensional or affine, then
any triangular decomposition admits a base 
(and $\fg$ coincides with the Kac-Moody superalgebra
constructed for this base), see~\cite{Shay}, Theorem 0.4.3.
A triangular decomposition of $\fg$ induces a triangular decomposition of 
$\fg_{\ol{0}}$.
The latter decomposition 
admits a base if $\fg$ is finite-dimensional and 
does not admit a base for most of affine Kac-Moody superalgebras.
For any $N\in \CO_{\Sigma}(\fg)$ we have $\Res^{\fg}_{\fg_{\ol{0}}} N\in \CO_{\Sigma}(\fg_{\ol{0}})$
so the multiplicity $[N:L_{\fg_{\ol{0}}}(\nu)]$
is well defined.

\subsubsection{Remark}
The reason why we consider several versions of the category $\CO$ is the following.

In this paper we mostly consider the following Lie superalgebras:
an affine  Kac-Moody superalgebra $\fg$ with the Cartan subalgebra
$\fh$ and a  base $\Sigma\subset\fh^*$, a Kac-Moody superalgebra $\fg_{\pi}\subset \fg$
containing $\fh$, and a Kac-Moody superalgebra $\fg^{\#}\subset \fg_{\pi}$ such that $\fg_{\pi}=\fg^{\#}+\fh$.
The Cartan subalgebra $\fh$ is always fixed,
but triangular decompositions (and $\Sigma$) of $\fg$ vary.

The functor $\Res^{\fg}_{\fg_{\pi}}$  carries $\CO^{\inf}(\fg)$
to $\CO^{\inf}(\fg_{\pi})$ and $\CO_{\Sigma}(\fg)$
to $\CO_{\Sigma}(\fg_{\pi})$ 
(notice that $\Sigma$, usually, does not lie in $\Delta_{\pi}$).
The above construction allows to  define the multiplicity 
$[N:L_{\fg_{\pi}}(\nu)]$
for any $N\in\CO_{\Sigma}(\fg)$.

The functor $\Res^{\fg}_{\fg^{\#}}$  carries $\CO^{\inf}(\fg)$
to $\CO^{\inf}(\fg^{\#})$ (but the image of $N\in\CO_{\Sigma}(\fg)$
might be not in 
$\CO_{\Sigma}(\fg^{\#})$ since the weight spaces on $N$ with 
respect to the Cartan subalgebra $\fh\cap\fg^{\#}$ might be infinite-dimensional).

The category $\CO^{\fin}(\fg)$ does not behave well under the restriction functor,
since the modules $\Res^{\fg}_{\fg_{\pi}} M(\lambda)$ might be  not finitely generated. However
$\CO^{\fin}(\fg)$ has the following advantages
comparing to $\CO_{\Sigma}(\fg)$: it  does not depend on the choice
of $\Sigma_1\in \Sp$, see~$\S$~\ref{hwtSpine}, and is stable under the action of the Enright functors, see~$\S$~\ref{Enright}.

\subsection{Ring $\cR(\Sigma)$ for Kac-Moody $\fg(A,\tau)$}\label{Weylgroup}
Now let $\fg=\fg(A,\tau)$ be a symmetrizable Kac-Moody superalgebra.
The triangular decomposition satisfies the conditions (a), (b)
in~$\S$~\ref{categoryO} and we let $\cR(\Sigma)$ be the ring introduced 
there.

For any $E\subset \Delta^{\an}$ we denote by $W[E]$ the subgroup of $W$ generated by $s_{\alpha}$ for 
$\alpha\in E\cap \Delta^{\an}$.

If $x:=\sum x_{\nu} e^{\nu}\in \cR(\Sigma)$ and $w\in W$ are such that 
$w\bigl(\sum x_{\nu} e^{\nu}\bigr):=\sum x_{\nu} e^{w\nu}\in \cR(\Sigma)$, 
we say that $w$ {\em acts naturally } on  $x$.
We say that $x\in \cR(\Sigma)$ is {\em naturally } $w$-invariant and, respectively,  $w$-anti-invariant if $x$ is $wx=x$
and, respectively,  $wx=\sgn(w) x$  for the natural action of $w$ (the map
$\sgn: W\to \mathbb{Z}_2$ is given by $\sgn (s_{\alpha})=-1$ for all $\alpha\in\Delta^{\an}$).
In other words, $\sum x_{\nu} e^{\nu}$ is naturally $w$-invariant if $x_{w\nu}=x_{\nu}$ and
naturally $w$-anti-invariant if $x_{w\nu}=\sgn w\cdot x_{\nu}$.

\subsubsection{The Weyl denominator $D$}\label{Weyldenominator} It is
 the following element in $\cR(\Sigma)$
$$D=D_{\ol{0}}/D_{\ol{1}},\ \ \text{
where }D_{\ol{0}}=\prod_{\alpha\in\Delta^+(\Sigma)_{\ol{0}}} (1-e^{-\alpha})^{\dim \fg_{\alpha}},\ 
D_{\ol{1}}=\prod_{\alpha\in\Delta^+(\Sigma)_{\ol{1}}} (1+e^{-\alpha})^{\dim \fg_{\alpha}},$$
and by $(1+e^{-\alpha})^{-1}$ we mean $\sum\limits_{i=0}^{\infty} (-e^{-\alpha})^i\in \cR(\Sigma).$

\subsection{Category $\CO^{\fin}(\fg)$ for  Kac-Moody superalgebra $\fg(A,\tau)$}\label{hwtSpine}
An important advantage of BGG-category $\CO^{\fin}(\fg)$ is that this category does not depend on the
choice of $\Sigma_1\in \Sp$: if
$N\in \CO^{\fin}(\fg)$ for the triangular decomposition with a base $\Sigma$, then 
$N\in \CO^{\fin}(\fg)$ for the triangular decomposition with any base $\Sigma_1\in \Sp$
(see~\cite{GSsnow}, 1.11.2).

For a base $\Sigma_1\in \Sp$ we denote by $L(\lambda,\Sigma_1)$ the 
irreducible module of the highest weight $\lambda$ with respect to
 the corresponding triangular decomposition.
 For $L=L(\lambda,\Sigma)$ we set
 $\ \hwt_{\Sigma} L:=\lambda+\rho_{\Sigma}$. Then for $\alpha\in (\Sigma\cap \Delta^{\is})$ we have
\begin{equation}\label{eq:hwt}
\hwt_{r_{\alpha}\Sigma} L=\left\{\begin{array}{lcl}
\hwt_{\Sigma} L& & \text{ if   }\ \ \ 
\langle \hwt_{\Sigma} L,\alpha^{\vee}\rangle\not=0\\
\hwt_{\Sigma} L+\alpha& & \text{ if   }\ \ \ 
\langle \hwt_{\Sigma} L,\alpha^{\vee}\rangle=0.
\end{array}\right.\end{equation}
(The last formula means that $L(\lambda,r_{\alpha}\Sigma)=L(\lambda,\Sigma)$
if $\langle \lambda,\alpha^{\vee}\rangle=0$ and $\alpha$ is isotropic.)

The $\fg$-module $L(\lambda)$ is called {\em typical} if $\langle \lambda+\rho,\alpha^{\vee}\rangle\not=0$ for all isotropic roots $\alpha\in \Delta^{\is}$. In this case $\hwt_{\Sigma_1} L=\hwt_{\Sigma} L$
for all $\Sigma_1\in\Sp$.

\section{Symmetrizable Kac-Moody superalgebras}
We will use terminology  of~\cite{GHS} and some results of~\cite{S} and~\cite{GHS}.
If $\fg$ is an indecomposable  symmetrizable Kac-Moody superalgebra, we always
assume that the invariant bilinear form $(-,-)$ is normalised 
in such a way that $(\alpha,\alpha)\in\mathbb{Q}^*$
for some  $\alpha\in\Delta(\fg)$, unless $\fg=\fgl(1|1)$ 
(if $\fg=\fgl(1|1)$, then $\Delta(\fg)=\{\pm\alpha\}$ and
$(\alpha,\alpha)=0$).

\subsection{Classification}
Any symmetrizable indecomposable Kac-Moody superalgebra 
 lies in  one of the following classes~\cite{Hoyt},\cite{S}:

\begin{itemize}
\item[(An)] anisotropic: integrable $\fg(A,\tau)$ with
$a_{ii}\not=0$ for all $i\in I$
(these superalgebras are studied in~\cite{K78});
\item[(Fin)]   finite-dimensional: this class consists of  the simple Lie algebras and
the following simple  Lie superalgebras: 
\begin{equation}\label{eq:findimlist}
\mathfrak{sl}(m|n) \text{ with } m\not=n, m,n\geq 1, 
\ \osp(m|2n), \text{ with } m,n>0, \ D(2|1,a), F(4), G(3) 
\end{equation}
and 
the non-simple one $\fgl(n|n), n\geq 1$, see~\cite{Ksuper}. It is often convenient to use
Cartan type notation~\cite{Ksuper}: $A(m|n)=\fsl(m+1|n+1)$ if $m\not=n$,
$A(n|n)=\mathfrak{gl}(n+1|n+1)$,
$B(m|n)=\mathfrak{osp}(2m+1|2n)$, $C(n)=\mathfrak{osp}(2|2n)$,
$D(m|n)=\mathfrak{osp}(2m|2n)$ (with $m\geq 1$)-- these superalgebras are described in~\cite{Ksuper};

\item[(Aff)]  affine: affine Kac-Moody algebras~\cite{Kbook}, and
symmetrizable affine Kac-Moody superalgebras, which are not Lie algebras, described in~\cite{vdLeur}; their construction is explained in~$\S$~\ref{symmaffsuper}.
\end{itemize}

\subsubsection{}
The classes (Fin) and (Aff) do not intersect 
(i.e., all affine superalgebras are infinite-dimensional),
(An)$\cap$ (Fin) consists of finite-dimensional simple Lie algebras and
  $\mathfrak{osp}(1|2\ell)=B(0|\ell)$,  and the intersection (An)$\cap$ (Aff) 
 consists of affine Lie algebras and  the series  $B(0|n)^{(1)}$, $A(0|2n)^{(4)}$, $A(0|2n+1)^{(2)}$, $C(n+1)^{(2)}=D(1|n)^{(2)}$ with $n\geq 1$~\cite{K78}.

 \subsubsection{}\begin{rem}{rem:delta}
The above classification  can be also described in terms of $\Delta^{\ima}$: 
 $\Delta^{\ima}=\emptyset$ if $\fg$ is  of type (Fin), 
 and $\Delta^{\ima}=\mathbb{Z}\delta\setminus\{0\}$
 for some $\delta\in\Delta^+$ if $\fg$ is of type (Aff).
 If $\fg$ is not of types (Fin) and (Aff), then $\Delta^{\ima}$ contains at least two non-proportional
 imaginary roots, see~\cite{GHS},\cite{Shay}. If $\fg$ is of type (Aff), then 
 $\mathbb{Z}\delta=\{\nu\in\mathbb{Z}\Delta|\ (\nu,\Delta)=0\}$.

It can be also described  via dimensions:
 all finite-dimensional algebras lie in (Fin), and all infinite-dimensional of finite growth  lie in (Aff). C.~Hoyt and V.~Serganova classified all indecomposable Kac-Moody superalgebras;
 from this classification it follows that all indecomposable symmetrizable Kac-Moody superalgebras
 of infinite growth are anisotropic.
 \end{rem}

 \subsection{Definitions}\label{shortroot}
Let $\fg$ be a symmetrizable Kac-Moody superalgebra. 
\subsubsection{Definition}
\label{def:non-critical} We say that $\lambda\in\fh^*$ is {\em non-critical} if  $2(\lambda+\rho,\alpha)\not\in\mathbb{Z}_{>0}(\alpha,\alpha)$  for all 
$\alpha\in\Delta^{\ima+}$  and  call a $\fg$-module
 $N$  {\em non-critical} if all
 irreducible subquotients of $N$ are highest weight modules with non-critical
 highest weights.

\subsubsection{}\begin{rem}{rem:non-critical0}
By~\cite{Kbook}, Proposition 5.1 (see also~\cite{Shay},
(1) for Lie superalgebras), $n\alpha\in \Delta^{\ima+}$
if $\alpha\in\Delta^{\ima+}$ and 
 $n\in \mathbb{Z}_{>0}$. Therefore
 $\lambda\in\fh^*$ is non-critical if and only if  $2(\lambda+\rho,\alpha)\not=(\alpha,\alpha)$. 
\end{rem}
\subsubsection{Remark}
Let $\fg$ be a Kac-Moody algebra.
 By~\cite{Kbook}, Proposition 3.12, one has 
$$\{\lambda\in\fh^*|\ (\lambda,\alpha)\geq 0\ \text{ for all, but finite number of }\alpha\in \Delta^{\ree}_+ \}=\cup_{w\in W} w(C),$$ 
where $C:=\{\nu\in \fh^*|\ (\nu,\alpha)\geq 0\ \text{ for all }\alpha\in \Delta^+\}$. 
Let $X$
 be as in Section~\ref{sect:intro} that is 
$X=\{\lambda\in\cup_{w\in W} w(C) |\  (\lambda,\alpha)\not=0 \text{ for any isotropic  root $\alpha$  }\}$.
Then~\cite{DGK}, Proposition 5.2 (i), the set $X-\rho$ coincides with the set of non-critical weights
in $-\rho+\cup_{w\in W} w(C)$. This follows 
the fact that  $\alpha\in\Delta$ is imaginary if and only if 
$(\alpha,\alpha)\leq 0$, see~\cite{Kbook}, Proposition 5.2.
This immediately implies that a non-critical weight
in $-\rho+\cup_{w\in W} w(C)$ lies in $X-\rho$. For the inverse inclusion
let $\nu\in -\rho+\cup_{w\in W} w(C)$ be a critical weight that is 
 $2(\nu+\rho,\alpha)=j(\alpha,\alpha)$
for some $\alpha\in\Delta^{\ima+}$ and $j\in\mathbb{Z}_{>0}$. Writing $\nu+\rho$ as $w\mu$ for $w\in W$
and $\mu\in C$ we obtain
$2(\mu,w^{-1}\alpha)=j(\alpha,\alpha)$. Since $\mu\in C$ and 
$w^{-1}\alpha\in \Delta^{\ima+}$ this implies $(\alpha,\alpha)\geq 0$.
Since $\alpha$ is an imaginary root, $(\alpha,\alpha)=0$, so
$\nu+\rho\not\in X$.

\subsubsection{}\begin{rem}{rem:non-critical}
By~\Rem{rem:delta}, all weights are non-critical in type (Fin), and
in type (Aff) the weight 
$\lambda$ is non-critical if and only if $(\lambda+\rho,\delta)\not=0$.
\end{rem}

 \subsubsection{Definition}\label{shortroot1}
We call   $\alpha_s\in\Delta^{\an}$ a {\em short  root} if  
 $(\beta,\beta)\in\mathbb{Z}(\alpha_s,\alpha_s)$ for all $\beta\in\Delta$.
 For example, for $\fsl(m|n)$, all non-isotropic roots are short.

\subsubsection{}\label{alphas+delta}
 We will use the following fact: if 
 $\alpha_s$ is a short root of an affine Kac-Moody superalgebra
 with the minimal imaginary root $\delta$,
 then $\alpha_s+j\delta\in \Delta$ is equivalent to $j\in\mathbb{Z}$
 (this follows from the description of affine root systems in~\cite{Kbook}, Proposition 6.3; \cite{vdLeur}, Table V; this can be also proved in a unified way).

 \subsubsection{Remark}
Assume that  $\fg$ is of type (Fin) or (Aff) and $\Delta\not=D(2|1,a)$, $D(2|1,a)^{(1)}$.
 From the classifications in~\cite{Ksuper},\cite{vdLeur} it is easy to see that
 $\Delta$ contains a short root; if, in addition, $\fg$ is not in (An), then 
  $\Delta$ contains a short even root.

\subsection{Type (Aff)}\label{symmaffsuper}
It follows from Hoyt-Serganova classification that 
any symmetrizable indecomposable infinite-dimensional 
Kac-Moody superalgebra, which is not anisotropic, is of finite growth.
The indecomposable
symmetrizable contragredient Lie superalgebras of finite growth
were classified by van de Leur in~\cite{vdLeur}.
All these superalgebras are Kac-Moody, and the class (Aff) coincides with 
the class of the indecomposable
symmetrizable contragredient Lie superalgebras of finite growth.
Such Lie algebras were classified previously in~\cite{K68}.

These Lie superalgebras can be constructed by the following procedure (cf.~\cite{Kbook}, Chapter 8).
Let $\sigma$ be an automorphism of order $r$ of one of the Lie 
superalgebras~(\ref{eq:findimlist}), which we denote by $\fg$. Let
$\fg=\oplus_{j=0}^{r-1} \fg_{j\,\text{mod}\,r}$ be the eigenspace decomposition for $\sigma$, 
where $\fg_j=\{a\in\fg|\ \sigma(a)=e^{\frac{2\pi\sqrt{-1}}{r}j} a\}$. We shall assume that 
$\sigma$ preserves the invariant bilinear form $(. , .)$, so that $\fg_j$ and $\fg_{-j}$
are non-degenerally paired. If $\fg\not=\fgl(n|n)$ we let
$${\fg}[t,t^{-1}]^{(r,\sigma)}=\oplus_{j\in\mathbb{Z}} (\fg_{j\,\text{mod}\,r} t^j)\subset \fg[t,t^{-1}].$$

This Lie superalgebra is called a twisted loop superalgebra. The loop superalgebra ${\fg}[t,t^{-1}]$ has a well-known $2$-cocycle
$$\psi(at^m,bt^n)=m\delta_{m,-n}(a,b),\ \ \text{ where } m,n\in\mathbb{Z},\ a,b\in\fg.$$
We denote by $\hat{\fg}=\fg[t,t^{-1}]\oplus \mathbb{C}K$ the central extension of ${\fg}[t,t^{-1}]$
with this cocycle, where $K$ is the central element. Then the non-twisted Kac-Moody  superalgebra is, provided that $\fg\not=\fgl(n|n)$:

$$\fg^{(1)}=\hat{\fg}\oplus \mathbb{C}d,\ \ \text{ where } d=t\frac{d}{dt}$$
and $\fg^{(1)}$ is its subquotient 
for $\fg=\fgl(n|n)$, obtained by substituting the loop superalgebra 
$\fgl(n|n)[t,t^{-1}]$
by its subquotient, which is the subalgebra
$\fgl(n|n) +\sum_{j\in\mathbb{Z}}\fsl(n|n)t^j$  quotient by 
central elements $I_{2n}t^s$, $s\not=0$.

The subalgebra  $\fg^{(r,\sigma)}={\fg}[t,t^{-1}]^{(r,\sigma)}\oplus \mathbb{C}d\subset \fg^{(1)}$ has a structure
of symmetrizable contragredient Lie superalgebra, provided that
$\fg_0$-module $\fg_1$ is irreducible, constructed in the same way as in~\cite{Kbook}, Chapter 8,
using that $\fg_0$ is a contragredient Lie superalgebra and that
the $\fg_0$-modules $\fg_1$ and $\fg_{-1}$ are contragredient.

\subsubsection{}
It follows from~\cite{Kbook}, Proposition 8.5,
that the Kac-Moody superalgebras 
 $\fg^{(r,\sigma)}$ and $\fg^{(r',\sigma')}$ 
 are isomorphic if $\sigma$ and $\sigma'$
 lie in the same connected component of the group $\Aut\fg$.
 Since  $\fg^{(r,\sigma)}$ can be a contragredient 
 Lie superalgebra only for $\sigma$ preserving the invariant
 bilinear form, in order to construct all
 affine Kac-Moody superalgebras up to isomorphism,
 we should pick a representative $\mu$ in each connected component
 of the group of automorphisms 
 of $\fg$, preserving the invariant
 bilinear form, and verify that the corresponding
 Lie superalgebra
 $\fg^{(r,\sigma)}$ is contragredient. But this follows from
 Table 4 of van de Leur's paper~\cite{vdLeur}
 and the description of $\Aut\fg$
 in~\cite{Saut}. We thus obtain that up to isomorphism 
 a complete list of indecomposable 
 symmetrizable affine Kac-Moody superalgebras,
which are not Lie algebras,
 beyond the non-twisted ones, is
\begin{equation}\label{eq:affinelist}
A(m-1|2n-1)^{(2)}, \ A(2m-2|2n)^{(4)},\ D(m|n)^{(2)},
\end{equation}
where $m,n\in\mathbb{Z}_{\geq 1}$  except for
$m=2$, $n=1$ in the first case 
(observe that  $G(3)^{(2)}$ in~\cite{vdLeur} is actually
isomorphic to $G(3)^{(1)}$, see~\cite{Shay}, Section 8.5).

\subsubsection{Remark}
Using the description of automorphisms of  finite-dimensional Lie superalgebras in~\cite{Saut}, 
a result, similar to that in~\cite{Kbook}, Chapter 8 holds:

{\em Up to isomorphism, the affine Lie superalgebras $\fg^{(r,\sigma)}$ correspond bijectively to connected  components of $\Aut \fg$, preserving the  bilinear form $(-,-)$, containing $\sigma$ or $\sigma^{-1}$.}

\subsection{}
\begin{prop}{prop:rhodelta}
View $\fg^{(r,\sigma)}$ as a subalgebra of $\fg^{(1)}$ with  the non-degenerate
bilinear form $(-,-)$ satisfying
$(at^i,bt^j)=\delta_{ij} (a,b)$ where $a,b\in\fg$. Then  
$$r(\rho,\delta)=c,$$ 
where
$2c$ is the eigenvalue of the  Casimir operator of $\fg$ on the adjoint representation.
\end{prop}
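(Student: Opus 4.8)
The plan is to reduce $(\rho,\delta)$ to the ``level'' of $\rho$, i.e. its value on the central element, and then to identify that level with $c$ through the Casimir of $\fg$; the factor $r$ will emerge from the normalization relating $\delta$ to the canonical central element in the realization $\fg^{(r,\sigma)}\subset\fg^{(1)}$.

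First I would use that, for the invariant form extended to $\widehat{\fh}:=\fh\oplus\mathbb{C}K\oplus\mathbb{C}d$, the root $\delta$ is orthogonal to $\fh$ and satisfies $(\delta,\delta)=0$. Writing $\rho=\rho_{\fg_0}+\ell\Lambda_0\pmod{\mathbb{C}\delta}$, where $\rho_{\fg_0}$ is a Weyl vector for the grade-$0$ subalgebra $\fg_0$ and $\Lambda_0$ is dual to the canonical central element, we get $(\rho,\delta)=\ell\,(\Lambda_0,\delta)$, so only $\ell=\langle\rho,K\rangle$ survives. Letting $\nu$ denote the isomorphism $\widehat{\fh}\to\widehat{\fh}^*$ induced by the form, one has $\nu(K)=(K,d)\,\delta$, since both sides vanish on $\fh$ and on $K$ and agree on $d$ (here $\langle\delta,d\rangle=1$ because the minimal imaginary root has $d$-degree $1$ in the realization inside $\fg^{(1)}$); hence $(\rho,\delta)=\langle\rho,K\rangle/(K,d)$. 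Tracking the scalar $(K,d)$, together with the ratio between $K$ and the intrinsic canonical central element of $\fg^{(r,\sigma)}$, is exactly where the factor $r$ enters.

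Next I would evaluate the level $\ell$. The concrete route expands the canonical central element as $\sum_i a_i^\vee\alpha_i^\vee$ and uses $\langle\rho,\alpha_i^\vee\rangle=1$ for non-isotropic simple $\alpha_i$ and $\langle\rho,\alpha_i^\vee\rangle=0$ for isotropic ones (from $2\langle\rho,\alpha^\vee\rangle=\langle\alpha,\alpha^\vee\rangle$); the affine node $\alpha_0$, built from the top of the $\fg_0$-module $\fg_1$, supplies the term reassembling the finite-dimensional quantity $(\theta,\theta+2\rho_{\fg})$, with $\theta$ the highest root of $\fg$. I would prefer, however, the coordinate-free route through the Casimir: realizing $d$ up to a scalar as a Sugawara-type quadratic expression in the currents of $\fg$, the normalization constant in that expression is forced to equal $c$ by the intrinsic identity
\[
\sum_a\ad(x_a)\,\ad(x^a)=2c\,\id\qquad\text{on }\fg,
\]
equivalently $\str(\ad x\,\ad y)=2c\,(x,y)$, while the decomposition $\fg=\bigoplus_{j}\fg_{j}$ into $r$ graded pieces accounts for the factor $r$. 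Combining the two inputs yields $r(\rho,\delta)=c$.

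The step I expect to be the main obstacle is the uniform normalization bookkeeping across all twisted and super types: the isotropic simple roots of the super case drop out of $\ell$ but still enter through $\theta$ and through the form, and the type $A(0|2n)^{(4)}$-like cases with $2\alpha\in\Delta$ and a nonstandard affine node must be handled separately. This is the reason I favor the Casimir route: it uses only the coordinate-free identity $\sum_a\ad(x_a)\,\ad(x^a)=2c\,\id$ and the $r$-fold grading, so both the constant $c$ and the factor $r$ appear without a case-by-case choice of coordinates or comarks.
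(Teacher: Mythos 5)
Your opening reduction $(\rho,\delta)=\langle\rho,K\rangle$ (via $\nu(K)=(K,d)\delta$ and $(K,d)=1$) is correct, but neither of your two routes for evaluating this level is actually carried out, and the one you prefer is circular in this setting. For a twisted affine superalgebra $\fg^{(r,\sigma)}$, the claim that $d$ is realized by a Sugawara-type quadratic expression whose normalization constant ``is forced to equal $c$'' is essentially equivalent to the identity $r(\rho,\delta)=c$ that you are trying to prove: there is no independent reference establishing the twisted Sugawara anomaly for the superalgebras in (\ref{eq:affinelist}) (these are exactly the cases this paper must treat), and proving it requires more than the defining identity $\sum_a\ad(x_a)\ad(x^a)=2c\,\id$. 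The commutator/normal-ordering computation needs the polarized version $\sum_i[[x,a^i],[a_i,y]]=c[x,y]$ --- which is precisely the paper's Lemma~\ref{lem:Cxy}, proved by a coproduct argument, not a formal consequence of the definition of $c$ --- together with careful tracking of which modes $at^n$ exist in the $r$-fold grading; that is where the factor $r$ really comes from, not from a normalization of the central element. So route B assumes the conclusion. Route A (expanding $K=\sum_i a_i^\vee\alpha_i^\vee$ and using $\langle\rho,\alpha_i^\vee\rangle\in\{0,1\}$) is, as you yourself concede, a case-by-case verification --- isotropic affine nodes contribute $0$, the quantity $(\theta,\theta+2\rho_{\fg})$ must be reassembled from $(\fg_0,\fg_1)$-data, and types with $2\alpha\in\Delta$ such as $A(2m|2n)^{(4)}$ need separate treatment --- and you do not execute it. With both routes deferred, there is no proof here.

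For comparison, the paper makes the Casimir mechanism non-circular and uniform by testing the affine Casimir $\hat C$ against one well-chosen vector: take an even root $\beta$ in the base of $\fg_0$, and apply $\hat C$ to $(ft^{-r})v$ in the Verma module $M(r\delta+\beta)$. On one hand $\hat C$ acts by the scalar $2(\beta,\beta)+2r(\rho,\delta)$; on the other hand the weight argument shows that only the root spaces $\fg_{\pm\alpha}$ with $0<\alpha\le r\delta$ contribute, and via the embeddings $\iota_{\pm}$ their dual bases assemble into dual bases of all of $\fg$ (this single step accounts for both the factor $r$ and all $r$ graded pieces), so Lemma~\ref{lem:Cxy} converts the surviving sum into $c[e,f]v=c(\beta,\beta)v$. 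If you want to salvage your plan, you must either carry out the twisted super Sugawara anomaly computation in full --- which reproduces exactly this algebra --- or adopt the paper's device of evaluating $\hat C$ on a single Verma vector.
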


We start from the following lemma.

\subsubsection{}
\begin{lem}{lem:Cxy}
Let $\fg$ be  an indecomposable in a direct sum finite-dimensional 
Lie superalgebra endowed with
an even non-degenerate supersymmetric invariant bilinear form $(- ,- )$,
and let $\{a^j\}_{j=1}^n$, $\{a_j\}_{j=1}^n$ be dual bases of $\fg$ (i.e.,
$(a^i,a_j)=\delta_{ij}$). Then for all $x,y\in\fg$ we have
$$\sum\limits_{i=1}^n [[x,a^i],[a_i,y]] =c [x,y],$$
where $2c$ is the eigenvalue of the  Casimir operator 
$C:=\sum\limits_{i=1}^n a^i a_i$ on the adjoint representation
of $\fg$.
\end{lem}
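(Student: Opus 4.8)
The plan is to derive the identity from two structural properties of the invariant tensor $\Theta:=\sum_i a^i\otimes a_i\in\fg\otimes\fg$ together with the super-Jacobi identity, the only non-formal input being the hypothesis that $C=\sum_i a^i a_i$ acts on the adjoint representation by the scalar $2c$. First I would choose the dual bases $\{a_i\}$, $\{a^i\}$ homogeneous; since the form is even, $(a^i,a_j)=\delta_{ij}$ forces $p(a^i)=p(a_i)$, a fact used repeatedly below. Invariance and supersymmetry of the form translate into: (i) $\Theta$ is $\ad\fg$-invariant, i.e. $\sum_i\bigl([z,a^i]\otimes a_i+(-1)^{p(z)p(a^i)}a^i\otimes[z,a_i]\bigr)=0$ for all homogeneous $z$; and (ii) $\Theta$ is supersymmetric, $\sum_i a^i\otimes a_i=\sum_i(-1)^{p(a_i)}a_i\otimes a^i$. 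The scalar hypothesis reads $\sum_i[a^i,[a_i,w]]=2c\,w$ for all $w\in\fg$, where indecomposability guarantees one uniform scalar.

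Write $T(x,y):=\sum_i[[x,a^i],[a_i,y]]$. The first step is the auxiliary identity $\sum_i[[x,a^i],a_i]=2c\,x$: applying the bracket to the invariance relation (i) with $z=x$ moves $\ad x$ from the first tensor slot to the second, and after using antisymmetry of the bracket and $p(a^i)=p(a_i)$ the two parity signs cancel, turning the right-hand side into the Casimir scalar $\sum_i[a^i,[a_i,x]]=2c\,x$. The second step is to expand the summand of $T$ by super-Jacobi applied to the outer bracket, writing
\[
[[x,a^i],[a_i,y]]=[[[x,a^i],a_i],y]+(-1)^{(p(x)+p(a^i))p(a_i)}[a_i,[[x,a^i],y]].
\]
Summing over $i$, the first group collapses to $\bigl[\sum_i[[x,a^i],a_i],\,y\bigr]=[2c\,x,y]=2c[x,y]$ by the auxiliary identity. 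For the second group I would use supersymmetry (ii) to interchange the two dual bases and then invariance (i) to transfer the surviving $\ad x$ back, recognising the whole group as $-T(x,y)$. This gives $T(x,y)=2c[x,y]-T(x,y)$, whence $T(x,y)=c[x,y]$.

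The main obstacle is entirely the super-sign bookkeeping in the second group: one must track $(-1)^{p(\cdot)p(\cdot)}$ through the Jacobi step, through the relabelling of the two dual bases via (ii), and through the transfer of $\ad x$ across the factors of $\Theta$ via (i), and then verify that these combine to give exactly $-T(x,y)$ and not $+T(x,y)$. The latter sign would render the identity vacuous, as indeed happens if one expands the outer bracket the \emph{other} way (peeling $x$ off first), producing the tautology $T=T$. It is precisely the asymmetric use of Jacobi (peeling $a_i$ off before $y$) together with the supersymmetry of $\Theta$ that breaks this degeneracy and supplies the factor $2$ converting $2c$ into $c$.

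As a conceptual cross-check I would observe that $T=-m\circ K$, where $m$ denotes the bracket $\fg\otimes\fg\to\fg$ and $K=\sum_i\ad_{a^i}\otimes\ad_{a_i}$ is the split-Casimir operator on $\fg\otimes\fg$, which is basis-independent by (i). Since the form is invariant, $m$ is $\fg$-equivariant and hence intertwines the action of $C$; expressing the action of $C$ on $\fg\otimes\fg$ as $C\otimes 1+1\otimes C+2K$ (using (ii) for the cross terms) and using $C|_{\fg}=2c$ yields $m(K(x\otimes y))=-c[x,y]$, which recovers $T(x,y)=c[x,y]$ independently of the Jacobi computation.
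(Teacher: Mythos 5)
Your main argument rests on property (i), and (i) is false in the super setting: with the normalization $(a^i,a_j)=\delta_{ij}$, the tensor $\Theta=\sum_i a^i\otimes a_i$ is \emph{not} $\ad\fg$-invariant. Supersymmetry of an even form makes the dual-basis relation asymmetric, $(a_j,a^i)=(-1)^{p(a_i)}\delta_{ij}$, and the invariant element of $\fg\otimes\fg$ is $\sum_i a_i\otimes a^i=\sum_i(-1)^{p(a_i)}a^i\otimes a_i$, which differs from $\Theta$ by a sign on the odd--odd part. Concretely, take $\fg=\fgl(1|1)$ with the supertrace form (even, supersymmetric, invariant, non-degenerate, and $\fg$ indecomposable, so all hypotheses you impose for (i) hold): with $a_1=E_{11},a_2=E_{22},a_3=E_{12},a_4=E_{21}$ one gets $a^1=E_{11},a^2=-E_{22},a^3=-E_{21},a^4=E_{12}$, and for $z=E_{12}$ a direct computation gives
$$z\cdot\Theta=-2\bigl(E_{12}\otimes(E_{11}+E_{22})+(E_{11}+E_{22})\otimes E_{12}\bigr)\neq 0.$$
More structurally, whenever $\fg$ is simple with $\fg_{\ol{1}}\neq 0$ the space $(\fg\otimes\fg)^{\fg}$ is one-dimensional, spanned by $\Theta_{\ol{0}\ol{0}}-\Theta_{\ol{1}\ol{1}}$, so $\Theta=\Theta_{\ol{0}\ol{0}}+\Theta_{\ol{1}\ol{1}}$ is never invariant. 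Since both of your key deductions --- the auxiliary identity $\sum_i[[x,a^i],a_i]=2cx$ in Step 1 and the identification of the second Jacobi group with $-T(x,y)$ in Step 2 --- are obtained by applying linear maps to (i), the proof as written does not establish the lemma; and by your own account the entire content of the problem is exactly this sign bookkeeping, which is where the error sits. The two conclusions you draw from (i) do happen to be true: Step 1 can be rescued with no invariance at all, via $[[x,a^i],a_i]=(-1)^{p(a_i)}[a_i,[a^i,x]]$ followed by the basis-exchange (ii); but the claim $S=-T$ does not come out verbatim once (i) is replaced by the correct invariance of $\sum_i a_i\otimes a^i$ --- the signs $(-1)^{p(a_i)}$ and $(-1)^{p(x)p(a_i)}$ land in different places and must be redistributed by further applications of (ii), which is precisely the computation that is missing.

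Ironically, your closing ``cross-check'' is essentially the paper's actual proof and is the sound route: the paper works in $\cU(\fg)\otimes\cU(\fg)$, computes $\Delta(C)=C\otimes 1+1\otimes C+\sum_i a^i\otimes a_i+\sum_i(-1)^{p(a_i)}a_i\otimes a^i$, identifies the two cross terms using the basis-exchange (your (ii)), and applies the bracket map, which intertwines $\Delta(C)$ with $\ad C$ simply because it is a morphism of $\fg$-modules (i.e.\ by Jacobi); no invariance of $\Theta$ enters anywhere. Your parenthetical appeal to (i) for the basis-independence of $K$ in that paragraph is likewise unnecessary (and incorrect as a justification): basis-independence of $\sum_i a^i\otimes a_i$ is pure linear algebra. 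So the argument you relegate to a remark is correct and coincides with the paper's, while the argument you present as the proof has a genuine sign error at its foundation and would need to be rebuilt on the correct invariant tensor.
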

\begin{proof}
Let $\Delta:\cU(\fg)\to\cU(\fg)\otimes\cU(\fg)$ be the coproduct of the universal enveloping superalgebra $\cU(\fg)$.
We have the  commutative diagram
$$  
\xymatrix{&\Ad\otimes\Ad\ar^{\ \ [-,-]}[r]\ar^{\Delta(C)}[d]&\Ad\ar^{C}[d]\\
                 &\Ad\otimes\Ad\ar^{\ \ [-,-]}[r]&\Ad\\
}
$$
Since $p(a_i)=p(a^i)$ we have
$\Delta(C)=C\otimes 1 +1\otimes C+\sum\limits_{i=1}^n a^i\otimes a_i+\sum\limits_{i=1}^n (-1)^{p(a_i)} a_i\otimes a^i$, so
$$\begin{array}{rl}
(\ad C)([x,y])=&[(\ad C)(x),y]+[x,(\ad C)(y)]\\
&+
\sum\limits_{i=1}^n (-1)^{p(a_i)p(x)} [[a^i,x],[a_i,y]]+
\sum\limits_{i=1}^n (-1)^{p(a^i)p(x)+p(a_i)}  [[a_i,x],[a^i,y]].\end{array}$$

Note that $\{(-1)^{p(a_i)} a_i\}_{i=1}^n$ and $\{a^j\}_{j=1}^n$ are dual bases. Since
the sum $\sum_{i=1}^n a^i\otimes a_j$ does not depend on the choice of dual bases,
we obtain

$$\begin{array}{l}\sum\limits_{i=1}^n [[a^i,x],[a_i,y]]=\sum\limits_{i=1}^n (-1)^{p(a_i)} [[a_i,x],[a^i,y]].\end{array}$$

Since $(\ad C)(z)=2cz$ for any $z\in \fg$ we get
$$\begin{array}{l}
2c[x,y]=4c[x,y]+2\sum\limits_{i=1}^n (-1)^{p(a_i)p(x)} [[a^i,x],[a_i,y]]=
4c[x,y]-2\sum\limits_{i=1}^n  [[x,a^i],[a_i,y]]
\end{array}$$ 
which implies the required formula.
\end{proof}

\subsubsection{Proof of~\Prop{prop:rhodelta}}
We set $\hat{\fg}:=\fg^{(r,\sigma)}$ and let $\hat{\fh}$ be a Cartan subalgebra
of $\hat{\fg}$.

We view $\hat{\fg}$ as a $\mathbb{Z}$-graded superalgebra
with 
$\hat{\fg}_i=\fg_{i\mod r} t^i$ for $i\not=0$ and 
$$\hat{\fg}_0=(\fg_0\times \mathbb{C}K)\rtimes\mathbb{C}d$$ except for the case $\hat{\fg}=\mathfrak{psl}(n|n)^{(1)}$
(with $n>1$)
where 
$$\hat{\fg}_0=(\fgl(n|n)\times \mathbb{C}K)\rtimes\mathbb{C}d=(\fg_0\times \mathbb{C}K_1\times \mathbb{C}K)\rtimes (\mathbb{C}d_1\times \mathbb{C}d).$$
We fix a triangular decomposition $\hat{\fg}=\hat{\fn}^-\oplus\hat{\fh}\oplus\hat{\fn}^+$  and consider the induced triangular decomposition  
$\fg_0=(\fn')^-\oplus\fh'\oplus \fn'$. Let $\Sigma$ 
and $\Sigma'$ be the corresponding bases
(then $\Sigma=\Sigma'\cup\{\delta-\theta\}$ where 
$\theta$ is the highest weight of $\fg_0$-module
$\fg_1$). 
We choose a triangular decomposition of $\hat{\fg}$  in such a way that $\Sigma'$
contains an even  root $\beta$.  
Fix $f\in\fg_{-\beta}$, $e\in\fg_{\beta}$ such that
$h:=[e,f]=\nu(\beta)$ where $\nu:\hat{\fh}^*\to \hat{\fh}$ 
is the isomorphism induced by the 
bilinear form $(-,-)$.

Let $v$ be a highest weight vector of
the Verma $\hat{\fg}$-module $M(r\delta+\beta)$.
Let $h_1,\ldots,h_{\ell}$ and $h^1,\ldots,h^{\ell}$ be dual bases of $\hat{\fh}$.
 The Casimir operator of $\hat{\fg}$ is given by the following formula (see~\cite{Kbook}, $\S$~2.5)
 $$\hat{C}:=2\nu^{-1}(\rho)+\sum_{i=1}^{\ell} h_i h^i+2\sum_{\alpha\in\Delta^+} \sum_i e_{-\alpha}^{(i)}e_{\alpha}^{(i)}$$
 where 
$\{e_{\alpha}^{(i)}\}$ is a basis of $\fg_{\alpha}$ and $\{e_{-\alpha}^{(i)}\}$ is the dual basis of $\fg_{-\alpha}$ (i.e.,  $(e_{-\alpha}^{(i)},e_{\alpha}^{(j)})=\delta_{ij}$).
The Casimir operator acts on $M(r\delta+\beta)$ by multiplication on 
$$(\beta+r\delta+2\rho,\beta+r\delta)=2(\beta,\beta)+2r(\rho,\delta).$$
One has $(et^r)(ft^{-r})v=(\beta,\beta)v$, so 
$$(et^r) \hat{C} (ft^{-r})v=2((\beta,\beta)^2+r(\rho,\delta)  (\beta,\beta))v.$$
The vector $v_0:=(ft^{-r})v$ is of the zero weight, so
$$(\nu^{-1}(\rho)+\sum_{i=1}^{\ell} h_i h^i)v_0=0,\ \ e_{\alpha}^{(i)}v_0=0\ \ \text{ if }\alpha\not\leq (r\delta+\beta)$$
where $\leq$ is the standard partial order on $\fh^*$, i.e. $\alpha\leq \gamma$ 
if and only if $(\gamma-\alpha)\in\mathbb{Z}_{\geq 0}\Sigma$.
Thus 
$$(et^r)\hat{C} (ft^{-r})v=(et^r)\hat{C}v_0=2\sum_{0<\alpha\leq r\delta+\beta} \sum_i (et^r) e_{-\alpha}^{(i)}e_{\alpha}^{(i)} 
(ft^{-r})v.$$

For $\alpha=r\delta+\beta$ we can choose
 $e_{\alpha}^{(i)}=et^r$ and $e_{-\alpha}^{(i)}=ft^{-r}$ (by~\cite{Kbook}, Theorem 2.2), so
$$ (et^r)\sum_i e_{-r\delta-\beta}^{(i)} e_{r\delta+\beta}^{(i)} v_0=(et^r)(ft^{-r})(et^r)(ft^{-r})v=(\beta,\beta)^2 v.$$

If $\alpha$ is such that $0<\alpha<(r\delta+\beta)$, then $0<\alpha\leq r\delta$.
We conclude that 
 \begin{equation}
 \label{eq:rrhodeltav} 
 r(\rho,\delta)(\beta,\beta)v= \sum_{\alpha\in\Delta^+: \alpha\leq r\delta} 
 \sum_i (et^r) e_{-\alpha}^{(i)}e_{\alpha}^{(i)} (ft^{-r})v.
 \end{equation}

We write $\fg=\oplus_{i=0}^{r-1} \fg_i$ and let $\iota_{\pm}:\fg\to \hat{\fn}^{\pm}$ be the following embeddings (of vector spaces):
$$\iota_+(a)=\left\{\begin{array}{ll}
a & \text{ if } a\in \fn',\\
at^r & \text{ if } a\in ((\fn')^-\oplus \fh'),\\
at^i & \text{ if } a\in \fg_i\ \text{ for } 0<i<r\\
\end{array}\right.\ \ \ \  \ \ 
\iota_-(a)=\left\{\begin{array}{ll}
a & \text{ if } a\in (\fn')^-,\\
at^{-r} & \text{ if } a\in ((\fn')^+\oplus \fh'),\\
at^{-i} & \text{ if } a\in \fg_i\ \text{ for } 0<i<r\\
\end{array}\right.
$$
 
 For each $\alpha\in\Delta^+$ such that $0<\alpha\leq r\delta$ the root space $\hat{\fg}_{\pm\alpha}$ lies in the image of
$\iota_{\pm}$, so we have $e^{(i)}_{\alpha}=\iota_+(\dot{e}^{(i)}_{\alpha})$ and
$e^{(i)}_{-\alpha}=\iota_-(\dot{e}^{(i)}_{-\alpha})$ for some $\dot{e}^{(i)}_{\pm\alpha}\in\fg$.
If  $\alpha$ is such that $0<\alpha\leq r\delta$, then
$$\begin{array}{rl}
(et^r)  e_{-\alpha}^{(i)}e_{\alpha}^{(i)}  (ft^{-r})&\equiv (et^r)  e_{-\alpha}^{(i)}[e_{\alpha}^{(i)} , (ft^{-r})]v\\
& \equiv 
[(et^r),  e_{-\alpha}^{(i)}[e_{\alpha}^{(i)} , (ft^{-r})]]\equiv [(et^r),  e_{-\alpha}^{(i)}][e_{\alpha}^{(i)} , (ft^{-r})]\\
&\equiv
[[(et^r),  e_{-\alpha}^{(i)}],[e_{\alpha}^{(i)} , (ft^{-r})]] \mod \cU(\hat{\fg})\hat{\fn}^+\end{array}$$
which gives
$$(et^r)  e_{-\alpha}^{(i)}e_{\alpha}^{(i)}  (ft^{-r})v=[[(et^r),  e_{-\alpha}^{(i)}],[e_{\alpha}^{(i)} , (ft^{-r})]] v.$$
Observe that
$$[[(et^r),  e_{-\alpha}^{(i)}],[e_{\alpha}^{(i)} , (ft^{-r})]]=[[e,\dot{e}^{(i)}_{-\alpha}],[\dot{e}_{\alpha}^{(i)} , f]]\mod \mathbb{C}K.$$
Since $Kv=0$ we obtain  
$$[[(et^r),  e_{-\alpha}^{(i)}],[e_{\alpha}^{(i)} , (ft^{-r})]]v=[[e,\dot{e}^{(i)}_{-\alpha}],[\dot{e}_{\alpha}^{(i)} , f]]v.$$
Since $(at^i,bt^{-j})=\delta_{ij}(a,b)$, 
 $\{a^i\}_{i=1}^n:=\{\dot{e}^{(i)}_{\alpha}\}_{0<\alpha\leq r\delta}$ and
$\{a_i\}_{i=1}^n:=\{\dot{e}^{(i)}_{-\alpha}\}_{0<\alpha\leq r\delta}$ are dual bases of $\fg$. 
Combining~(\ref{eq:rrhodeltav}) and~\Lem{lem:Cxy} we obtain
$$ r(\rho,\delta)(\beta,\beta)v= \sum\limits_{j=1}^n [[e,a^j],[a_j,f]] v=
c[e,f]v=c(\beta,\beta)v,
$$
so $r(\rho,\delta)=c$ as required.
\qed

\subsection{Dual Coxeter number}\label{dualCoxeter}
Let $\fg$ be an indecomposable finite-dimensional Kac-Moody superalgebra.
We denote by $h^{\vee}_{(., .)}$ the $\frac{1}{2}$ of the eigenvalue of the corresponding
Casimir operator on $\fg$. The invariant bilinear form $(., .)$ is usually normalised by the following conditions:
$h^{\vee}_{(., .)}\geq 0$ and $(\alpha,\alpha)=2$ for a root of maximal square length.
 This gives a normalization in all cases except for $D(n+1|n)$ where 
$h^{\vee}_{(., .)}=0$; in this case we normalise by the condition $(\alpha,\alpha)=2$ 
for the roots in $D_{n+1}$. This bilinear form is called the
{\em normalised invariant bilinear form} on $\fg$.
The number $h^{\vee}_{(., .)}$ for this normalization 
 is called the {\em dual Coxeter number} of $\fg$ and is denoted by $h^{\vee}$.
 It is equal to $0$ if and only if the Killing form on $\fg$ is zero.
 The following table gives the values of $h^{\vee}$ (recall that $C(n+1)=D(1|n)$):
 $$\begin{array}{|c|c|c|c|c|c|c|c|c|}
 \hline
\fg &A(m|n)&B(m|n), m>n&B(m|n), m\leq n&D(m|n), m>n&D(m|n), m\leq n\\
 \hline
h^{\vee} &|m-n|& 2(m-n)-1& n-m+\frac{1}{2}&2(m-n-1)&n-m+1\\
 \hline\end{array}$$
and $h^{\vee}=4, 9,12,18,30$ for $\fg=G_2$, $F_4$, $E_6$, $E_7$, $E_8$  respectively, and 
 $h^{\vee}=0, 2,3$ for $\fg=D(2|1,a), G(3), F(4)$ respectively~\cite{Kbook},
Chapter VI,~\cite{KW94}, 4.

\subsubsection{} \label{extensionong(1)}
 We extend the invariant bilinear form $(-,-)$ from $\fg$  to $\fg^{(1)}$ 
 in such a way that $(d,d)=0$ and $(at^i,bt^j)=\delta_{ij}(a,b)$ 
 for all $a,b\in\fg$. It is easy to see that such extension is unique. Indeed,
 since $K$ is central we have $(K,u)=0$ for all $u\in [\fg^{(1)},\fg^{(1)}]$.
 Moreover, $i(d,at^i)=(d,[d,at^i])=0$ which gives 
 $(d,at^i)=0$ for all $a\in\fg$ and $i\not=0$. Finally,
 $$([a,b]+i(a,b)K,d)=   ([at^i,bt^{-i}],d)=(at^i, i bt^{-i})=i(a,b) $$
 so $(K,d)=1$ and $(a,d)=0$ for all $a\in\fg$.  
 
 \subsubsection{}
 We view  $\fg^{(r,\sigma)}$ as a subalgebra of $\fg^{(1)}$ and define
 the invariant  bilinear form $(-,-)$ on it as the restriction  of the  bilinear form $r(-,-)$ on $\fg^{(1)}$. We call this form the {\em normalised invariant bilinear form on $\fg^{(r,\sigma)}$.}
 Then
 $$(at^i, bt^j)=r\delta_{ij}(a,b)\ \ \text{ for all } a,b\in\fg\ \text{ such that } at^i,bt^j\in\fg^{(r,\sigma)}.$$
\Prop{prop:rhodelta} gives
\begin{equation}\label{eq:rhodelta}
(\rho,\delta)=h^{\vee}.
\end{equation}

 \subsubsection{}
 \begin{lem}{}
 Let $\hat{\fg}$ be the affine Lie algebra of type $X_N^{(r)}$ with the  invariant bilinear form normalised as above. 
 \begin{enumerate}
 \item
  The  bilinear form $(-,-)$ coincides with the  normalised invariant bilinear form in~\cite{Kbook}, Section 6.
  \item The maximal square length of a root is $2r$.
  \end{enumerate}
 \end{lem}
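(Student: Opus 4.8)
The plan is to prove the two assertions separately, using that $\hat{\fg}=X_N^{(r)}$ is an indecomposable affine Kac-Moody algebra, so that (by \cite{Kbook}, Chapter 2) the space of invariant symmetric bilinear forms on $\hat{\fg}$ with $(d,d)=0$ is one-dimensional: such a form is determined by its restriction to $\hat{\fh}$, and that restriction is pinned down up to an overall scalar. Consequently our normalized form $(-,-)$ is a scalar multiple of the normalized form of \cite{Kbook}, Chapter 6, and to prove (i) it suffices to match the two forms on a single pair of vectors on which both are nonzero. I would use the pairing $(\rho,\delta)$: by (\ref{eq:rhodelta}) our normalization gives $(\rho,\delta)=h^{\vee}$, and the normalized form of \cite{Kbook} satisfies the same identity $(\rho,\delta)=h^{\vee}$ (this is the standard affine formula, equivalent to the relation $a_i^{\vee}=\tfrac12 a_i(\alpha_i,\alpha_i)$ between marks and comarks together with $\sum_i a_i^{\vee}=h^{\vee}$). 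Since $\rho$, $\delta$ and $h^{\vee}>0$ are intrinsic, the scalar must equal $1$ and the two forms coincide, which is (i).

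For (ii) I would first reduce to the simple roots. The Weyl group $W$ acts on $\hat{\fh}^{*}$ by isometries of $(-,-)$, and every real root is $W$-conjugate to a simple root (\cite{Kbook}, Chapter 5); hence the maximal square length of a root equals $\max_i(\alpha_i,\alpha_i)$ taken over the simple roots $\alpha_0,\dots,\alpha_{\ell}$. By the construction of $\S$\ref{symmaffsuper} the base has the form $\Sigma=\dot{\Sigma}\cup\{\delta-\theta\}$, where $\dot{\Sigma}$ is a base of the horizontal subalgebra $\fg_0$ and $\theta$ is the highest weight of the $\fg_0$-module $\fg_1$; since $\delta$ is isotropic and orthogonal to the root lattice of $\fg_0$, one has $(\delta-\theta,\delta-\theta)=(\theta,\theta)$. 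Thus the maximum is the larger of the maximal square length of a root of $\fg_0$ and $(\theta,\theta)$. Using the normalization fixed in (i) (equivalently, the value $(\rho,\delta)=h^{\vee}$ together with the explicit finite data), I would compute these two numbers case by case along the list (\ref{eq:affinelist}) of twisted affine algebras and the non-twisted case $r=1$, and check that the maximum is $2r$; the roots of maximal length arise from the highest root of the ambient $\fg$.

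The main obstacle is the normalization bookkeeping: one must track carefully the factor $r$ relating the loop form $(at^{i},bt^{j})=\delta_{ij}(a,b)$ on $\fg^{(1)}$, the form $r(-,-)$ whose restriction defines $(-,-)$ on $\fg^{(r,\sigma)}$, and the induced form on $\hat{\fh}^{*}$ that enters the root lengths; this is precisely the bookkeeping that \Prop{prop:rhodelta} and (\ref{eq:rhodelta}) are designed to control, which is why I route the argument through $(\rho,\delta)$. The genuinely delicate case is $A_{2n}^{(2)}$, whose finite root system is non-reduced (three distinct root lengths), so that both $\beta$ and $2\beta$ may be roots and the reduction to simple roots must be applied with care; here I would verify directly that the affine node $\alpha_0=\delta-\theta$ attains the maximal square length $2r$ while all other real roots are strictly shorter. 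Alternatively, once (i) is established, assertion (ii) can simply be read off from the root-length tables of \cite{Kbook}, Chapter 6.
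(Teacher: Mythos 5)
Your strategy for (i) --- invariant forms with $(d,d)=0$ are unique up to scalar, and the scalar is pinned down by comparing $(\rho,\delta)$ --- is in spirit the paper's own strategy for the twisted cases, but it has a genuine gap at exactly the point where work is needed. Equation (\ref{eq:rhodelta}) gives $(\rho,\delta)=h^{\vee}$ where $h^{\vee}$ is the dual Coxeter number of the \emph{finite-dimensional} algebra $\fg=X_N$ (half the Casimir eigenvalue, as defined in \S\ref{dualCoxeter}), whereas the ``standard affine formula'' you invoke for Kac's normalized form yields $(\rho,\delta)'=\sum_{i=0}^{\ell}a_i^{\vee}$, the dual Coxeter number of the \emph{affine} algebra $X_N^{(r)}$: your parenthetical derivation via $a_i^{\vee}=\tfrac12 a_i(\alpha_i,\alpha_i)'$ proves exactly this and nothing more. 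These are two different recipes producing two a priori different numbers, and declaring ``$h^{\vee}$ is intrinsic'' conflates them. For $r=1$ their equality is standard, but for $r=2,3$ the identity $\sum_i a_i^{\vee}(X_N^{(r)})=h^{\vee}(X_N)$ is precisely the nontrivial content of part (i); assuming it begs the question. The paper gets around this by a two-step argument: it first settles $r=1$ \emph{without} using $(\rho,\delta)$, by comparing maximal square lengths of roots (the direct computation $\hat m=2$ for $\fg^{(1)}$ against Corollary 6.4 of \cite{Kbook}), which gives $(\rho,\delta)=(\rho,\delta)'$ in the non-twisted case; it then transfers to $r=2,3$ using that both pairings are unchanged in passing from $\fg^{(1)}$ to $\hat{\fg}$ --- $(\rho,\delta)$ by \Prop{prop:rhodelta}, and $(\rho,\delta)'$ by \cite{Kbook}, 6.1. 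Your argument can be repaired along the same lines (or by verifying the coincidence of the two dual Coxeter numbers case by case from Kac's tables), but as written the key step is unjustified.

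Concerning (ii), your reduction is correct and matches the paper's: real roots are $W$-conjugate to simple roots, imaginary roots have square length $0$, and $(\delta-\theta,\delta-\theta)=(\theta,\theta)$, so the maximum is attained on $\Sigma$. Your fallback --- once (i) is known, read off the lengths $(\alpha_i,\alpha_i)=2a_i^{\vee}/a_i$ from \cite{Kbook}, Chapter 6 --- is legitimate and is essentially what the paper does, except the paper organizes the verification structurally via Langlands duality, $(\beta_i,\beta_i)(\alpha_i,\alpha_i)=4$, so that the double/triple arrow of the dual (non-twisted) diagram forces the maximum $2r$, with $A_{2n}^{(2)}$ (the case you rightly flag as delicate) computed by hand. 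Two caveats: your proposal only promises the case check (``I would compute \dots and check''), so (ii) is an outline rather than a proof; and the list you cite, (\ref{eq:affinelist}), consists of the twisted affine \emph{superalgebras} that are not Lie algebras --- the relevant list here is the twisted affine Lie algebras $A_{2\ell}^{(2)}$, $A_{2\ell-1}^{(2)}$, $D_{\ell+1}^{(2)}$, $E_6^{(2)}$, $D_4^{(3)}$ of Tables Aff 2--3 in \cite{Kbook}.
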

 \begin{proof}
Let $\hat{\Delta}$ be the root system of $\hat{\fg}$ and let $\hat{m}$ be the maximal square length of a root:
 $\hat{m}:=\max\{(\alpha,\alpha)|\ \alpha\in\hat{\Delta}\}$.
   Since all imaginary roots have  square length zero and $2\alpha$ is a root for any odd non-isotropic root, we have
  $\hat{m}=\max \{(\alpha,\alpha)|\ \alpha\in\hat{\Delta}^{\ree}_{\ol{0}}\}$.
  If $r=1$, then any $\hat{\alpha}\in\hat{\Delta}^{\ree}$ is of the form $\alpha+i\delta$ where $\alpha$ is a root of $\fg$, so $\hat{m}=2$ by~$\S$~\ref{dualCoxeter}. 

  Let $(-,-)'$ be the  normalised bilinear form 
 introduced in~\cite{Kbook}, Section 6; we use the same notation for the induced form on $\hat{\fh}^*$
 (where $\hat{\fh}$ is the Cartan subalgebra of $\hat{\fg}$).
Let $\{\alpha_i\}_{i=0}^{\ell}$ be the set of simple roots
 for $\hat{\fg}$ where the roots are enumerated as in Tables Aff 1--Aff 3 in~\cite{Kbook}, Section 4.
 The root spaces $\fhg_{\pm\alpha_i}$ for $i=1,\ldots,\ell$ generate a finite-dimensional simple
 Lie algebra, so the restrictions of $(-,-)$ and of $(-,-)'$ to this algebra 
 are proportional, i.e. there exists $c\in\mathbb{C}^*$ such that $(\alpha_i,\alpha_j)=c(\alpha_i,\alpha_j)'$
 for all $i,j=1,\ldots,\ell$. Since $\alpha_0$ lies in the linear combination on
 $\delta$ and $\alpha_i$ for $i=1,\ldots,\ell$ and $(\delta,\alpha_i)=0$ for all $i=0,\ldots,\ell$,
 we obtain $(\alpha_i,\alpha_j)=c(\alpha_i,\alpha_j)'$ for all $i,j=0,\ldots,\ell$.   
 Arguying as in~$\S$~\ref{extensionong(1)} and taking into account
 that  $(d,d)=(d,d)'=0$, 
 we conclude that  $(h_1,h_2)=c(h_1,h_2)'$ for all $h_1,h_2\in\fhh$. 
  From~\cite{Kbook}, Theorem 2.2 it follows that any  invariant form on 
$\hat{\fg}$ is uniquely defined by its restriction on $\fhh$. Therefore 
 it remains to verify that $c=1$.
 
 If $r=1$, then Corollary 6.4 in~\cite{Kbook} implies 
$\max \{(\alpha,\alpha)'|\ \alpha\in\hat{\Delta}\}=2$. By above, $\hat{m}=2$, so
$c=1$ as required. In particular, $(\rho,\delta)=(\rho,\delta)'$.
 
 Now take $r=2$ or $r=3$. 
 By~\cite{Kbook}, 6.1, $(\rho,\delta)'$ computed for $\fhg$ and $\fg^{(1)}$ are the same.
 By~(\ref{eq:rhodelta}), $(\rho,\delta)$ computed for $\fhg$ and $\fg^{(1)}$ are the same.
 By above this implies $(\rho,\delta)=(\rho,\delta)'$.
 Since  $\fg_{\ol{1}}=0$, we have $(\rho,\delta)\not=0$, 
   so $c=1$ as required.
  This completes the proof of (i).
  
  For (ii)  note that, since $\hat{\fg}$ is an affine Lie algebra, any real root is $W$-conjugated to one of the simple roots,
 so $\{\alpha_i\}_{i=0}^{\ell}$ contains the longest and the shortest root in $\hat{\Delta}^{\ree}$.
 Using (i) and~\cite{Kbook}, Chapter VI, 
 we obtain 
 $$(\alpha_i,\alpha_i)=(\alpha_i,\alpha_i)'=2a^{\vee}_i/a_i$$ where $\{a_i\}_{i=0}^{\ell}$,
 $\{a^{\vee}_i\}_{i=0}^{\ell}$
 are labels on the Dynkin diagrams of $\hat{\fg}$ and 
 $\hat{\fg}^{L}$ in Tables Aff in Chapter IV ~\cite{Kbook}
 (the  Dynkin diagram of $\hat{\fg}^{L}$ 
 is obtained from the  Dynkin diagram of $\hat{\fg}$ by reversing arrows;
 this notion is not defined for Kac-Moody superalgebras).  
 The Dynkin diagram of $\hat{\fg}$ lies in Table Aff $r$ ($r=1,2,3$).
 
 Let $\beta_0,\ldots,\beta_{\ell}$ be the simple roots of $\hat{\fg}^{L}$. By above,
 \begin{equation}\label{eq:4}
 (\beta_i,\beta_i)(\alpha_i,\alpha_i)=4\end{equation}
 for $i=0,\ldots,\ell$. If $r=1$, then  the square length of the longest root 
 is $2$. Assume that the Dynkin diagram of 
 $\hat{\fg}$ lies in Table Aff 2 or Aff 3
 and is not $A_{2n}^{(2)}$. Then this Dynkin diagram has a double arrow 
 (if $r=2$) or a triple arrow (if $r=3$) and 
 the Dynkin diagram of $\hat{\fg}^{L}$ lies in Table Aff 1. Therefore 
 the Dynkin diagram of $\hat{\fg}^{L}$  has a double arrow (if $r=2$) 
 or a triple arrow (if $r=3$).
 By above,  the square length of the longest root among $\beta_0,\ldots,\beta_{\ell}$
 is $2$ (since $\hat{\fg}^{L}$ is non-twisted), 
 so the square length of the shortest root among $\beta_0,\ldots,\beta_{\ell}$
 is $2/r$. By~(\ref{eq:4}), the square length of the longest root among 
 $\alpha_0,\ldots,\alpha_{\ell}$ is $2r$. The remaining case is $A_{2n}^{(2)}$. 
 In this case the formula $(\alpha_i,\alpha_i)=2a^{\vee}_i/a_i$
 gives $(\alpha_0,\alpha_0)=1$, $(\alpha_{\ell},\alpha_{\ell})=4$ and 
 $(\alpha_i,\alpha_i)=2$ for $1\leq i<\ell$.
\end{proof}

\section{The correspondence $L\mapsto L'$}\label{algebrag'}
 In this section  $\fg$ is an indecomposable symmetrizable Kac-Moody superalgebra  (i.e.,  $\fg$ is of type (An), (Fin) or (Aff)).

\subsection{Root system $\Delta_L$}\label{DeltaLdef}
For a symmetrizable Kac-Moody algebra  $\fg$ a set $R\subset {\Delta}^{\ree}$ is called a {\em root subsystem}  if
$s_{\alpha}\beta\in R$ for any $\alpha,\beta\in R$. 
For each non-critical weight $\lambda$ one can assign a root subsystem
$\Delta_{\lambda}$, called 
the  ``integral root system'' which is the set of real roots of a Kac-Moody algebra $\fg'=\fg(A')$,
see~\cite{Ku},\cite{MP}, \cite{KT98}
and other papers.
This definition can be naturally extended to
the Kac-Moody superalgebras of type (An). The situation is more complicated if $\fg$ is not of type (An).
In this case we define ``root subsystems'' using the axioms  listed in Appendix~\ref{sect:integralrootsystems}.
In~\Cor{cor:Deltalambda1} we show that the integral root system is, in fact, an invariant attached to an irreducible 
highest weight module. Each integral root system,
which is not of type $A(1|1)^{(2)}$, is 
 isomorphic to the set of real roots of some symmetrizable
 Kac-Moody superalgebra $\fg(A')$ of at most countable rank.
For type (An) this result is proved in~\cite{KT98}.
For types (Fin) and (Aff) we prove this result in ~\Thm{thm:Deltalambda} .
In types (Fin) and (Aff), each indecomposable block of the matrix $A'$ is of type (Fin) or (Aff)
(and is of finite size); moreover, the number of blocks is finite if $\lambda$ is non-critical.
We do not know whether this holds for type (An).

\subsubsection{}\label{Deltalambda}
For  $\lambda\in\fh^*$ we set
\begin{equation}\label{Dlambda2}\begin{array}{l}
\ol{R}_{\lambda}=\{\alpha\in \ol{\Delta}^{\ree}\ |\ 
2(\lambda,\alpha)\in j (\alpha,\alpha)\ \text{ where }
j\in\mathbb{Z}\ \text{ and $j$ is odd if $2\alpha\in\Delta$}\},\\
R_{\lambda}:=\ol{R}_{\lambda}\cup\{2\alpha|\ \alpha\in 
\ol{R}_{\lambda},\  2\alpha\in\Delta\}=\ol{R}_{\lambda}\cup\{2\alpha|\ \alpha\in 
(\ol{R}_{\lambda}\cap \Delta^{\an}_{\ol{1}})\}.
\end{array}
\end{equation}

Using~(\ref{eq:alphaveebeta}) and~(\ref{eq:rhoalpha}) we obtain the following useful formulas
\begin{equation}\label{eq:Rlambda+rho}
\begin{array}{l}
\ol{R}_{\lambda-\mu}\cap\Delta^{\an}=\ol{R}_{\lambda}\cap\Delta^{\an}\ \ \text{ for all }\ \mu\in\mathbb{Z}\Delta,\\
\{\alpha\in \Delta_{\ol{0}}|\ \langle\lambda,\alpha^{\vee}\rangle\in\mathbb{Z}\}=
R_{\lambda+\rho}.
\end{array}
\end{equation}

Let $\Delta_{\lambda}\subset \Delta^{\ree}$ be the minimal subset of $\Delta^{\ree}$
satisfying the following properties
\begin{itemize}
\item[(a)] $\ol{R}_{\lambda}\subset \Delta_{\lambda}$;
\item[(b)]   if $\alpha,\beta\in \Delta_{\lambda}$ are such that $(\alpha,\alpha)\not=0$, then $s_{\alpha}\beta\in \Delta_{\lambda}$;
\item[(c)] if $\alpha,\beta\in \Delta_{\lambda}$ are such that $(\alpha,\alpha)=0\not=(\beta,\alpha)$ 
then 
$\Delta_{\lambda}\cap \{\alpha\pm\beta\}=\Delta^{\ree}\cap \{\alpha\pm\beta\}$
(note that, by~Appendix~\ref{sect:integralrootsystems}, in this case
the set $\Delta\cap \{\alpha\pm\beta\}=\Delta^{\ree}\cap \{\alpha\pm\beta\}$
has cardinality one);
\item[(d)]   if $\alpha\in \Delta_{\lambda}$ and $2\alpha\in\Delta^{\ree}$, then
$2\alpha\in \Delta_{\lambda}$.
\end{itemize}
Note that, by (d),  $R_{\lambda}\subset\Delta_{\lambda}$.
By~$\S$~\ref{Xi} below, 
such minimal subset is unique.

%
%
%
%
%
%

\subsubsection{Remark}
Observe that $\Delta_{\lambda}$ does not depend on the normalization of $(-,-)$
(since $
R_{\lambda}$ does not depend on this normalization).

\subsubsection{}\label{Xi}
Using~(\ref{eq:alphaveebeta}) one can easily sees that $s_{\alpha}\beta\in R_{\lambda}$ 
if $\alpha,\beta\in R_{\lambda}$ are anisotropic. In particular, 
$\Delta_{\lambda}=R_{\lambda}$
if $(\lambda,\alpha)\not=0$ for all $\alpha\in\Delta^{\is}$.

In general, $\Delta_{\lambda}$ can be constructed by the following procedure:
\begin{itemize}
\item we set $X_0:=\ol{R}_{\lambda}$;
\item for odd $i>0$ we take $X_{i+1}$ to be the union of $X_i$, the sets
$s_{\alpha} X_i$ where 
$\alpha\in X_i\cap \Delta^{\an}$;
\item for even $i>0$ we take $X_{i+1}$ to be the union of $X_i$ and the sets
$\{\alpha\pm\beta\}\cap\Delta$ for $\alpha,\beta\in X_i$ satisfying
$(\alpha,\alpha)=0\not=(\alpha,\beta)$.

\end{itemize}
Then $X_0\subset X_1\subset\ldots $ and $X_i=-X_i$. Taking
$X:=\bigcup_{i=0}^{\infty} X_i$ we obtain 
$$\Delta_{\lambda}=X\cup \{2\alpha|\ \alpha\in X\ \text{ and } 2\alpha\in\Delta\}.$$
This implies the uniqueness of $\Delta_{\lambda}$
 and the inclusion  
 $\Delta_{\lambda}\subset \mathbb{Z}\ol{R}_{\lambda}$.

\subsubsection{}
\begin{lem}{lem:Xi}
\begin{enumerate}
\item If $\alpha\in\Delta_{\lambda}\cap \Delta_{\ol{0}}$
is such that  $\alpha/2\not\in\Delta$ and  $(\gamma,\gamma)\in\mathbb{Z}(\alpha,\alpha)$
for all $\gamma\in\ol{R}_{\lambda}$,  then $\alpha\in \ol{R}_{\lambda}$.
\item For any  $\gamma\in \Delta_{\lambda}\setminus R_{\lambda}$ there exists 
 $\alpha\in R_{\lambda}$ such that $(\gamma,\alpha)\not=0$.
\item
For any non-isotropic $\gamma\in \Delta_{\lambda}\setminus R_{\lambda}$ there exists isotropic $\alpha\in \Delta_{\lambda}$
such that $(\alpha,\gamma)\not=0$.
\end{enumerate}
\end{lem}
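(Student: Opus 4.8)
The plan is to derive all three statements from the explicit inductive construction of $\Delta_{\lambda}$ via the sets $X_0\subset X_1\subset\cdots$ in $\S$\ref{Xi}, together with the two facts recorded there: the inclusion $\Delta_{\lambda}\subset\mathbb{Z}\ol{R}_{\lambda}$, and the stability $s_{\alpha}\beta\in R_{\lambda}$ whenever $\alpha,\beta\in R_{\lambda}$ are anisotropic. The only input beyond these is the elementary bilinear-form bookkeeping.

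For (i): since $\alpha$ is even with $\alpha/2\notin\Delta$, it is a non-isotropic element of $\ol{\Delta}^{\ree}$ and $2\alpha\notin\Delta$, so membership in $\ol{R}_{\lambda}$ reduces to $2(\lambda,\alpha)\in\mathbb{Z}(\alpha,\alpha)$. Writing $\alpha=\sum_i n_i\gamma_i$ with $n_i\in\mathbb{Z}$ and $\gamma_i\in\ol{R}_{\lambda}$ (possible as $\Delta_{\lambda}\subset\mathbb{Z}\ol{R}_{\lambda}$), each term satisfies $2(\lambda,\gamma_i)\in\mathbb{Z}(\gamma_i,\gamma_i)\subset\mathbb{Z}(\alpha,\alpha)$ by the hypothesis $(\gamma_i,\gamma_i)\in\mathbb{Z}(\alpha,\alpha)$; summing yields $2(\lambda,\alpha)\in\mathbb{Z}(\alpha,\alpha)$, hence $\alpha\in\ol{R}_{\lambda}$.

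For (ii): I would prove the contrapositive, namely that if $\gamma\in\Delta_{\lambda}$ is orthogonal to every element of $R_{\lambda}$ then $\gamma\in R_{\lambda}$. As $\Delta_{\lambda}\subset\mathbb{Z}\ol{R}_{\lambda}\subset\mathbb{Z}R_{\lambda}$, such a $\gamma$ is orthogonal to all of $\Delta_{\lambda}$, so $(\gamma,\gamma)=0$; since doubled (even) real roots are non-isotropic, this forces $\gamma\in X$. Let $m$ be minimal with $\gamma\in X_m$. If $m\geq1$, then $\gamma$ arises either as $s_{\beta}\gamma'$ with $\beta\in X_{m-1}\cap\Delta^{\an}$ and $\gamma'\in X_{m-1}$, or as an element of $\{\alpha\pm\beta\}\cap\Delta$ with $\alpha,\beta\in X_{m-1}$ isotropic-type, $(\alpha,\alpha)=0\neq(\alpha,\beta)$. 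In the first case $(\gamma,\beta)=0$ gives $s_{\beta}\gamma=\gamma$, whence $\gamma'=\gamma\in X_{m-1}$, contradicting minimality; in the second $(\gamma,\alpha)=\pm(\alpha,\beta)\neq0$ contradicts $\gamma\perp\Delta_{\lambda}$. Hence $m=0$ and $\gamma\in\ol{R}_{\lambda}\subset R_{\lambda}$. (For non-isotropic $\gamma$ this is immediate, as $0\neq(\gamma,\gamma)=\sum_i n_i(\gamma,\gamma_i)$ forces some $\gamma_i\in\ol{R}_{\lambda}\subset R_{\lambda}$ non-orthogonal to $\gamma$.)

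For (iii): after replacing a doubled root $2\beta$ by its reduced non-isotropic half $\beta$, I may assume $\gamma\in X$ is reduced and non-isotropic, and proceed by induction on the minimal step $m$ with $\gamma\in X_m$. If $\gamma$ enters by the addition rule, $\gamma\in\{\alpha\pm\beta\}$ with $\alpha\in\Delta_{\lambda}$ isotropic and $(\alpha,\beta)\neq0$, then $(\gamma,\alpha)=\pm(\alpha,\beta)\neq0$ exhibits the desired isotropic partner directly. If $\gamma=s_{\beta}\gamma'$ enters by reflection ($\beta\in\Delta^{\an}$, $\gamma'$ non-isotropic of lower step), there are two subcases. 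If $\gamma'\notin R_{\lambda}$, the induction gives isotropic $\alpha'\in\Delta_{\lambda}$ with $(\gamma',\alpha')\neq0$, and then $s_{\beta}\alpha'\in\Delta_{\lambda}$ is isotropic with $(\gamma,s_{\beta}\alpha')=(\gamma',\alpha')\neq0$. If $\gamma'\in R_{\lambda}$, then $\beta\notin R_{\lambda}$ (else $s_{\beta}\gamma'\in R_{\lambda}$ by anisotropic stability of $R_{\lambda}$, contradicting $\gamma\notin R_{\lambda}$), so $\beta$ is a non-isotropic element of $\Delta_{\lambda}\setminus R_{\lambda}$ of lower step; the induction yields isotropic $\alpha_0\in\Delta_{\lambda}$ with $(\beta,\alpha_0)\neq0$, and writing $\gamma=\gamma'-c\beta$ with $c:=2(\gamma',\beta)/(\beta,\beta)\neq0$ one takes $\alpha:=s_{\beta}\alpha_0$ when $(\gamma',\alpha_0)\neq0$ (so $(\gamma,\alpha)=(\gamma',\alpha_0)$) and $\alpha:=\alpha_0$ otherwise (so $(\gamma,\alpha_0)=-c(\beta,\alpha_0)\neq0$). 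The base $m=0$ is vacuous since $X_0=\ol{R}_{\lambda}\subset R_{\lambda}$. The main obstacle is precisely this reflection subcase of (iii): an isotropic partner is not visible a priori, so one must transfer the sought property between $\gamma'$ and the reflecting root $\beta$, using anisotropic stability of $R_{\lambda}$ to force $\beta\notin R_{\lambda}$ and thereby keep the induction on the construction step well-founded.
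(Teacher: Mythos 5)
Your proof is correct and follows essentially the same route as the paper: all three parts rest on the inclusion $\Delta_{\lambda}\subset\mathbb{Z}\ol{R}_{\lambda}$ and an induction on the step of the construction $X_0\subset X_1\subset\cdots$, with the anisotropic stability of $R_{\lambda}$ used in (iii) to transfer non-membership from $\gamma=s_{\beta}\gamma'$ to the reflecting root $\beta$. The differences are purely organizational (you prove (ii) by a standalone minimality argument rather than the paper's joint induction with (iii), and you split the reflection case of (iii) on whether $\gamma'\in R_{\lambda}$ rather than on whether $\gamma'$ admits an isotropic partner), not mathematical.
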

\begin{proof}
For (i) normalise $(-,-)$ by the condition $(\alpha,\alpha)=2$. Then
$(\lambda,\gamma)\in\mathbb{Z}$ for all $\gamma\in \ol{R}_{\lambda}$. Since  $\Delta_{\lambda}\subset \mathbb{Z}R_{\lambda}$ this gives
$(\lambda,\gamma)\in\mathbb{Z}$ for all $\gamma\in\Delta_{\lambda}$.
In particular, $(\lambda,\alpha)\in\mathbb{Z}$. Since $\alpha/2\not\in\Delta$
we have $\alpha\in\ol{\Delta}^{\ree}$. Therefore  
$\alpha\in\ol{\Delta}^{\ree}_{\ol{0}}$ and 
$\frac{2(\lambda,\alpha)}{(\alpha,\alpha)}=(\lambda,\alpha)\in\mathbb{Z}$. Hence
 $\alpha\in \ol{R}_{\lambda}$ as required.

 For (ii)
observe that $(\gamma,R_{\lambda})=0$ implies 
$(\gamma,\Delta_{\lambda})=0$ (since  $\Delta_{\lambda}$ is spanned by $R_{\lambda}$).
Hence for (ii), (iii)
it is enough to verify the existence of $\alpha\in\Delta_{\lambda}$
satisfying $(\gamma,\alpha)\not=0$ such that $\alpha$ is isotropic if $\gamma$ is anisotropic.
We retain notation of~$\S$~\ref{Xi}. Take  $\gamma\in \Delta_{\lambda}\setminus R_{\lambda}$. Without loss of generality we can assume that $\gamma\in X$, so
 $\gamma\in X_{i+1}\setminus X_i$ for some $i\geq 0$. 
We proceed by induction on $i$.

If $i$ is even, then $\gamma\in X_{i+1}\setminus X_i$
implies the existence of  $\alpha,\beta\in X_{i}$
such that $\gamma\in \{\alpha\pm\beta\}$ and $(\alpha,\alpha)=0\not=(\alpha,\beta)$.
Then $(\gamma,\alpha)\not=0$ which implies the required assertions
for the case when $i$ is even (in particular, for $i=0$).

Consider the case when $i$ is odd.
Then there exists $\beta_1,\beta_2\in X_{i}$ such that $(\beta_1,\beta_1)$, $(\beta_1,\beta_2)\not=0$ and
$\gamma=s_{\beta_1}\beta_2$. Then $(\gamma,\beta_1)\not=0$ and 
this completes the proof of (ii). 
For (iii) we assume that $\gamma$ is anisotropic.
Then  $\beta_2$ is anisotorpic as well.
If $\Delta_{\lambda}$ contains an isotropic root $\alpha_2$ 
such that $(\beta_2,\alpha_2)\not=0$, then $(\gamma,\alpha)\not=0$ for
$\alpha:=s_{\beta_1}\alpha_2$ and
 $\alpha\in\Delta_{\lambda}$ is isotropic, since
$\beta_2,\alpha_2\in\Delta_{\lambda}$ and $\alpha_2$ is isotropic.
Consider the remaining case when 
 $\Delta_{\lambda}$ does not contain an isotropic root $\alpha_2$ 
such that $(\beta_2,\alpha_2)\not=0$. By induction hypothesis this implies $\beta_2\in R_{\lambda}$.
Since $\gamma\not\in R_{\lambda}$, this forces
$\beta_1\not\in R_{\lambda}$. By induction hypothesis, there exists isotropic $\alpha\in \Delta_{\lambda}$ 
such that $(\beta_1,\alpha)\not=0$. By above, $(\beta_2,\alpha)=0$.  Therefore
$(\gamma,\alpha)=(s_{\beta_1}\beta_2,\alpha)\not=0$ (since $\gamma\not=\beta_2$).
This completes the proof of (iii).
\end{proof}

\subsubsection{}
\begin{lem}{lem:B11}
Suppose that $Z\subset\Delta^{\ree}$ satisfies properties (b)--(d) in~$\S$~\ref{Deltalambda} 
(with $\Delta_{\lambda}$ replaced by $Z$) and $Z$ contains two odd real roots $\alpha,\beta$ such that 
$(\beta,\beta)=0$, $(\alpha,\alpha)\not=0$, 
$\langle\beta, \alpha^{\vee}\rangle<0$.
Then 
$$(\mathbb{R}\alpha+\mathbb{R}\beta)\cap Z=\{\pm\alpha,\pm\beta,\pm 
2\alpha,\pm (\alpha+\beta), \pm (2\alpha+\beta)\}=(\mathbb{R}\alpha+\mathbb{R}\beta)\cap \Delta^{\ree},$$
 i.e. $(\mathbb{R}\alpha+\mathbb{R}\beta)\cap Z=(\mathbb{R}\alpha+\mathbb{R}\beta)\cap \Delta^{\ree}$ 
forms the root system
$B(1|1)$ \footnote{The  statement does not immediately follow from Serganova's
classification of generalized root systems in~\cite{VGRS} since we do not assume that
$(\mathbb{R}\alpha+\mathbb{R}\beta)\cap \Delta^{\ree}$ is finite.} and $\{\alpha,\beta\}$
is its base.
\end{lem}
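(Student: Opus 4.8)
The plan is to first pin down $(\mathbb{R}\alpha+\mathbb{R}\beta)\cap\Delta^{\ree}$ as a subset of the ambient root system, and only afterwards to match it with $Z$. Since $Z\subseteq\Delta^{\ree}$, the inclusion $(\mathbb{R}\alpha+\mathbb{R}\beta)\cap Z\subseteq(\mathbb{R}\alpha+\mathbb{R}\beta)\cap\Delta^{\ree}$ is automatic, so the genuine content is that $\Delta^{\ree}$ meets the plane $P:=\mathbb{R}\alpha+\mathbb{R}\beta$ in exactly the ten listed roots and that all of them lie in $Z$. As a first reduction: $\alpha$ is an odd non-isotropic real root, hence $\alpha\in\ol{\Delta}^{\an}_{\ol{1}}$, so by~(\ref{eq:alphaveebeta}) the number $c:=\langle\beta,\alpha^{\vee}\rangle$ is an even integer, and by hypothesis $c<0$, whence $c\le-2$. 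Note also that $s_{\alpha}\beta=\beta-c\alpha=\beta+|c|\alpha$ is again an odd isotropic element of $\Delta^{\ree}$, since $s_{\alpha}\in W$ preserves $\Delta^{\ree}$ and respects both parity and the form. For the finiteness step below I would record that the $\mathfrak{osp}(1|2)$ generated by $\fg_{\pm\alpha}$ acts on $\fg$ with $\fg_{\beta}$ generating a finite-dimensional module (real root vectors are $\ad$-locally nilpotent).

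Next I would establish $c=-2$. Apply property (c) of~$\S$~\ref{Deltalambda} (with $\Delta_{\lambda}$ replaced by $Z$) to the isotropic root $\beta\in Z$ and the root $\alpha\in Z$, for which $(\alpha,\beta)=\tfrac12 c(\alpha,\alpha)\ne0$: exactly one of $\beta+\alpha,\ \beta-\alpha$ is a real root, call it $\gamma_0$, and $\gamma_0\in Z$. Both candidates are even, so $\gamma_0$ is non-isotropic and $\langle\alpha,\gamma_0^{\vee}\rangle\in\mathbb{Z}$. A direct computation gives
\[
\langle\alpha,(\beta+\alpha)^{\vee}\rangle=1+\frac{1}{c+1},\qquad
\langle\alpha,(\beta-\alpha)^{\vee}\rangle=-1+\frac{1}{c-1}.
\]
For $c\le-2$ the second quantity is never an integer, so the case $\gamma_0=\beta-\alpha$ is impossible; hence $\gamma_0=\beta+\alpha$, and integrality of the first quantity forces $c+1\in\{1,-1\}$, i.e. $c=-2$. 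Thus $\langle\beta,\alpha^{\vee}\rangle=-2$, the even non-isotropic root $\alpha+\beta$ lies in $Z\cap\Delta^{\ree}$, and $\beta-\alpha\notin\Delta^{\ree}$.

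With $c=-2$ I would now collect the ten roots inside $Z\cap\Delta^{\ree}$. The positive ones are: $\alpha,\beta\in Z$ by hypothesis; $2\alpha\in Z\cap\Delta^{\ree}$ by property (d); $2\alpha+\beta=s_{\alpha}\beta\in Z$ by property (b); and $\alpha+\beta\in Z$ from the previous paragraph. Since $\langle\alpha+\beta,\alpha^{\vee}\rangle=2+c=0$, we have $\alpha\perp(\alpha+\beta)$, so $s_{\alpha}$ and $s_{\alpha+\beta}$ commute and their product acts as $-\id$ on $P$; applying them to the positive roots (which stays inside $Z$ by (b), as both reflecting roots are non-isotropic members of $Z$) produces all negatives. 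The square lengths $(\alpha,\alpha),\ 4(\alpha,\alpha),\ -(\alpha,\alpha),\ 0$ of $\alpha,\ 2\alpha,\ \alpha+\beta,\ \beta$ together with $\{\alpha,\beta\}\subset\mathbb{Z}_{\ge0}$-spanning the rest identify this configuration abstractly as $B(1|1)$ with base $\{\alpha,\beta\}$.

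The step I expect to be the main obstacle is \emph{completeness}: ruling out any further real root in $P$. As the footnote stresses, one cannot simply quote Serganova's classification, because $\Delta^{\ree}\cap P$ is not known a priori to be finite, and nothing yet excludes a hyperbolic rank-$2$ configuration. My plan is to force finiteness directly. The isotropic real roots in $P$ lie on the two null lines $\mathbb{R}\beta$ and $\mathbb{R}(2\alpha+\beta)$, where $\mathbb{Z}\beta\cap\Delta=\{\pm\beta\}$ (and likewise for $2\alpha+\beta$) by~$\S$~\ref{Sigmaproperties}, so they are only $\pm\beta,\pm(2\alpha+\beta)$; every odd non-isotropic $\gamma\in P$ reduces to the even case via $2\gamma\in\Delta^{\ree}_{\ol{0}}\cap P$, and the even real roots pair integrally with the orthogonal even pair $\{2\alpha,\alpha+\beta\}$ of opposite-sign square length. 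The cleanest way to finish is to identify the subalgebra $\fg'':=\langle\fg_{\pm\alpha},\fg_{\pm\beta}\rangle$: it is a rank-$2$ contragredient superalgebra with odd $\alpha,\beta$, diagonal Cartan entries $(2,0)$, and $\langle\beta,\alpha^{\vee}\rangle=-2$, so it is the finite-dimensional Kac-Moody superalgebra $\mathfrak{osp}(3|2)=B(1|1)$; its real root system is then exactly the ten roots above, and as $\fg''$ contains $\fg_{\gamma}$ for each $\gamma\in\Delta^{\ree}\cap P$, no other real root of $\fg$ can lie in $P$. Combining this with the preceding paragraph gives $(\mathbb{R}\alpha+\mathbb{R}\beta)\cap Z=(\mathbb{R}\alpha+\mathbb{R}\beta)\cap\Delta^{\ree}$, equal to the root system $B(1|1)$ with base $\{\alpha,\beta\}$, as claimed.
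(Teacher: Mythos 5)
Your first half is essentially correct and runs parallel to the paper's argument: you force $\langle\beta,\alpha^{\vee}\rangle=-2$ from integrality of $\langle\alpha,(\beta\pm\alpha)^{\vee}\rangle$, the only difference being that you obtain $\alpha+\beta\in\Delta^{\ree}\cap Z$ from property (c) (i.e.\ the cardinality-one statement, axiom (GR2) for $\Delta^{\ree}$), whereas the paper gets $\alpha+\beta,2\alpha+\beta\in\Delta^{\ree}$ from the $\mathfrak{osp}(1|2)$-string through $\beta$ and then deduces $b:=-(\alpha,\beta)=1$ from $\frac{b}{2b-1}\in\mathbb{Z}$; your collection of the ten roots inside $Z$ via (b)--(d) also matches the paper.

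The completeness step, however, contains a genuine gap, and it is exactly the step the footnote flags as the real content. You propose to identify $\fg'':=\langle\fg_{\pm\alpha},\fg_{\pm\beta}\rangle$ with $\mathfrak{osp}(3|2)$ on the grounds that it is "a rank-$2$ contragredient superalgebra" with the Cartan data of $B(1|1)$. But the Cartan data of the generators do not determine the algebra: what you get for free is only a chain of surjections $\tilde{\fg}(A'',\tau'')\twoheadrightarrow\fg''\twoheadrightarrow\fg(A'',\tau'')$, where the last algebra is the quotient by the \emph{maximal} ideal meeting the Cartan trivially. Every intermediate quotient has the same Cartan matrix, and the possible excess of $\fg''$ over $\fg(A'',\tau'')=\mathfrak{osp}(3|2)$ corresponds precisely to the extra roots in the plane $\mathbb{R}\alpha+\mathbb{R}\beta$ that you are trying to exclude (for instance, to know $(\ad e_{\alpha})^{3}e_{\beta}=0$ you must already know $3\alpha+\beta\notin\Delta$), so the argument is circular. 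Moreover, even granting $\fg''\cong\mathfrak{osp}(3|2)$, your last inference fails: there is no reason why $\fg_{\gamma}\subseteq\fg''$ for every $\gamma\in\Delta^{\ree}\cap(\mathbb{R}\alpha+\mathbb{R}\beta)$ --- the subalgebra generated by $\fg_{\pm\alpha},\fg_{\pm\beta}$ meets each root space $\fg_{m\alpha+n\beta}$ in some possibly zero subspace, so the roots of $\fg''$ bound $\Delta^{\ree}\cap(\mathbb{R}\alpha+\mathbb{R}\beta)$ from below, not from above. The paper finishes instead by the direct computation you began and then abandoned: writing $\gamma=x\vareps_1+y\delta_1$ with $\delta_1=\alpha$, $\vareps_1=\alpha+\beta$, and imposing $\langle \gamma,\vareps_1^{\vee}\rangle,\langle \vareps_1,\gamma^{\vee}\rangle,\langle \delta_1,\gamma^{\vee}\rangle\in\mathbb{Z}$ together with $\langle \gamma,\delta_1^{\vee}\rangle\in 2\mathbb{Z}$ (the last by~(\ref{eq:alphaveebeta}), since $\alpha$ is odd), which forces $(x,y)\in\{(\frac{1}{2},0),(1,0),(2,0)\}$ up to signs; each case is then either one of the ten listed roots or contradictory. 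A smaller gap of the same nature: on the null lines you quote $\mathbb{Z}\beta\cap\Delta=\{\pm\beta\}$, but you must also exclude non-integral multiples $c\beta$; this again follows from integrality, $\langle c\beta,\alpha^{\vee}\rangle=-2c\in 2\mathbb{Z}$.
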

\begin{proof}
Since $\beta$ is an isotropic real root, $\Delta^{\ree}$ is of type (Fin) or (Aff).

Renormalise the bilinear form by taking $(\alpha,\alpha)=1$.  Then
$b:=-(\alpha,\beta)>0$ and, by~(\ref{eq:alphaveebeta}),
$b\in \mathbb{Z}_{>0}$. 
 Recall that $\fg_{\pm \alpha}$ generate a subalgebra isomorphic to $\mathfrak{osp}(1|2)$. Since  $\beta$ and $s_{\alpha}\beta=\beta+2b\alpha$ are real roots,
 one has  $\alpha+\beta, 2\alpha+\beta\in \Delta^{\ree}$. One has
 $(\alpha+\beta,\alpha+\beta)=1-2b\not=0$,
so $\alpha+\beta$ is a real root and
$\frac{2(\alpha+\beta,\beta)}{(\alpha+\beta,\alpha+\beta)}\in\mathbb{Z}$ that is
$\frac{b}{2b-1}\in\mathbb{Z}$. Since $b$ is a positive integer, 
 this implies  $b=1$, so  $s_{\alpha}\beta=\beta+2\alpha$. 
Using  properties (b)--(d) 
 we obtain 
 $$(\mathbb{R}\alpha+\mathbb{R}\beta)\cap Z\supset Y\ \ \text{ for }
 Y:=\{\pm\alpha,\pm\beta,\pm 2\alpha,\pm (\alpha+\beta), \pm (2\alpha+\beta)\}.
$$

Let us show that $(\mathbb{R}\alpha+\mathbb{R}\beta)\cap \Delta^{\ree}=Y$.

 Set $\delta_1:=\alpha$, $\vareps_1:=\alpha+\beta$. Note that
 $\delta_1$ is an odd root and $\vareps_1$ is an even root. 
 Using $b=1$ we get $(\delta_1,\vareps_1)=0$, $(\vareps_1,\vareps_1)=-1$.

 Suppose the contrary, that $(\mathbb{R}\alpha+\mathbb{R}\beta)\cap \Delta^{\ree}\not=Y$.
 Take  $\gamma:=(x\vareps_1+y\delta_1)\in (\Delta^{\ree}\setminus Y)$. 
Then $s_{\vareps_1}\gamma$, $s_{\delta_1}\gamma\in \Delta^{\ree}$, so 
 $(\pm x\vareps_1\pm y\delta_1)\in \Delta^{\ree}$. Without loss
 of generality we can (and will) assume that $x,y\geq 0$.
 If $x=0$, then $\gamma\in\mathbb{R}\alpha$, so $\gamma\in\{\delta_1,2\delta_1\}\subset Y$.
 Similarly, if $y=x$, then $\gamma=\vareps_1+\delta_1\in Y$. Therefore 
 $x>0$, $y\geq 0$ and  $x\not=y$. Then $(\gamma,\gamma)\not=0$ and the
 conditions
 $$\langle \gamma,\vareps_1^{\vee}\rangle, 
\langle \vareps_1,\gamma^{\vee}\rangle, \langle \delta_1,\gamma^{\vee}\rangle\in\mathbb{Z},\ \ 
\langle \gamma,\delta_1^{\vee}\rangle\in 2\mathbb{Z}  $$
give $2x, y\in\mathbb{Z}$ and $\frac{2x}{x^2-y^2}, \frac{2y}{x^2-y^2}\in\mathbb{Z}$.
Therefore $2x\in\mathbb{Z}_{>0}$, $y\in\mathbb{Z}_{\geq 0}$, $x\not=y$ and
$\frac{2}{x+y}\in\mathbb{Z}$. This gives
$$(x,y)\in \{(\frac{1}{2},0), (1,0), (2,0)\}.$$
In particular, $(\frac{\vareps_1}{2}+\delta_1)\not\in\Delta$ (in this case
$x=\frac{1}{2}$, $y=1$).
If $(x,y)=(\frac{1}{2},0)$, then $\gamma=\frac{\vareps_1}{2}$, and
 $\delta_1$, $\vareps_1+\delta_1\in\Delta$ forces
$\frac{\vareps_1}{2}+\delta_1\in\Delta$, a contradiction.
If $(x,y)=(1,0)$, then $\gamma=\vareps_1\in Y$. If
$(x,y)=(2,0)$, then $\gamma=2\vareps_1\not\in \Delta^{\ree}$
since $\vareps_1$ is an even real root. Hence
$(\mathbb{R}\alpha+\mathbb{R}\beta)\cap \Delta^{\ree}=Y$. This completes the proof.
\end{proof}

\subsubsection{}
\begin{cor}{cor:Delta2beta}
If $\beta\in\Delta^{\ree}$ is such that 
$2\beta\in\Delta^{\ree}$, then $\beta\in\Delta_{\lambda}$
if and only if $2\beta\in \Delta_{\lambda}$.
\end{cor}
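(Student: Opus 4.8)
The plan is to prove the two implications separately, with essentially all the content in the reverse one. Note first that $2\beta\in\Delta^{\ree}$ together with $\beta\in\Delta^{\ree}$ forces $\beta$ to be an odd non-isotropic real root and $2\beta$ an even non-isotropic one (the $\osp(1|2)$ situation), so in particular $(\mu,\mu)=-(\beta,\beta)\ne0$ for the root $\mu:=\beta+\alpha$ introduced below. The implication $\beta\in\Delta_\lambda\Rightarrow 2\beta\in\Delta_\lambda$ is then immediate from property (d) of $\S$\ref{Deltalambda}.

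For the converse, suppose $2\beta\in\Delta_\lambda$. If $2\beta\in R_\lambda$, then since $2\beta$ is a doubled root it cannot lie in $\ol{\Delta}^{\ree}\supseteq\ol{R}_\lambda$, so by the formula $R_\lambda=\ol{R}_\lambda\cup\{2\gamma\mid\gamma\in\ol{R}_\lambda\cap\Delta^{\an}_{\ol 1}\}$ we must have $2\beta=2\gamma$ with $\gamma\in\ol{R}_\lambda$, i.e. $\beta=\gamma\in\ol{R}_\lambda\subseteq\Delta_\lambda$, and we are done. So I may assume $2\beta\in\Delta_\lambda\setminus R_\lambda$. As $2\beta$ is non-isotropic, \Lem{lem:Xi}(iii) yields an isotropic $\alpha\in\Delta_\lambda$ with $(\alpha,\beta)\ne0$. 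Replacing $\alpha$ by $-\alpha$ if needed, the computation inside the proof of \Lem{lem:B11} (with the roles of its two roots played by $\beta$ and $\alpha$) shows that $(\mathbb{R}\beta+\mathbb{R}\alpha)\cap\Delta^{\ree}$ is the system $B(1|1)$ with base $\{\beta,\alpha\}$; its isotropic roots are $\pm\alpha,\pm(2\beta+\alpha)$ and its even non-isotropic roots are $\pm2\beta,\pm\mu$ with $\mu=\beta+\alpha$. Applying property (c) to the isotropic root $\alpha$ and the root $2\beta$ gives $2\beta+\alpha\in\Delta_\lambda$, so both isotropic roots of the plane lie in $\Delta_\lambda$.

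The crux is to force the even root $\mu=\beta+\alpha$ into $\Delta_\lambda$; once this is known, property (c) applied to the isotropic $\alpha$ and $\mu$ gives $\{\alpha\pm\mu\}\cap\Delta^{\ree}=\{-\beta\}$, whence $\beta\in\Delta_\lambda$ and the proof concludes. Here I must use property (a), since closure under (b)--(d) alone does not suffice: the set $\{\pm\alpha,\pm(2\beta+\alpha),\pm2\beta\}$ is already closed, so $\mu$ can only enter through membership in $\ol{R}_\lambda$. To show $\mu\in\ol{R}_\lambda$ I would prove, by induction on the construction of $\Delta_\lambda$ in $\S$\ref{Xi}, that $2(\lambda,\gamma)\in\Lambda_0:=\sum_{\delta\in\ol{R}_\lambda\cap\Delta^{\an}}\mathbb{Z}(\delta,\delta)$ for every $\gamma\in\Delta_\lambda$. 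This holds for the generators $\ol{R}_\lambda$ by definition, is preserved by the reflection step because $\langle\gamma,\delta^\vee\rangle\in\mathbb{Z}$ for real roots by (\ref{eq:alphaveebeta}) (including the case of a doubled reflecting root), and is preserved by the isotropic-sum step, which is $\mathbb{Z}$-linear in the values $2(\lambda,\cdot)$. In particular $2(\lambda,\mu)\in\Lambda_0$. Since a root with $2\beta\in\Delta$ is shortest in the finite or affine root system $\Delta^{\ree}$ — here $\Delta^{\ree}$ is of type (Fin) or (Aff) because it contains the isotropic real root $\alpha$ — one has $(\delta,\delta)\in\mathbb{Z}(\beta,\beta)$ for all $\delta$, i.e. $\beta$ is a short root in the sense of $\S$\ref{shortroot1}, so $\Lambda_0\subseteq\mathbb{Z}(\beta,\beta)=\mathbb{Z}(\mu,\mu)$. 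Thus $2(\lambda,\mu)\in\mathbb{Z}(\mu,\mu)$, and as $2\mu\notin\Delta$ there is no parity condition, giving $\mu\in\ol{R}_\lambda\subseteq\Delta_\lambda$ as desired.

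The main obstacle is exactly this final step. Unlike everything else, it cannot be deduced from the combinatorial axioms (b)--(d); one is forced back to the arithmetic definition of $\ol{R}_\lambda$ together with the metric input that $\beta$ is short. The delicate point in the induction is controlling $2(\lambda,\cdot)$ on the isotropic roots of $\Delta_\lambda$: as already simple examples in $\fsl(2|1)$ show, these need not be orthogonal to $\lambda$, and it is the lattice $\Lambda_0$ (rather than the vanishing of $(\lambda,\cdot)$) that survives the construction and makes the short-root estimate applicable.
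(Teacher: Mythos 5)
Your proposal is correct and reproduces the paper's own proof essentially step for step: property (d) for one implication, and, for the converse, the same chain of reductions --- the definition of $R_\lambda$ to dispose of the case $2\beta\in R_\lambda$, then \Lem{lem:Xi}(iii) and \Lem{lem:B11} to produce the $B(1|1)$-plane, then shortness of odd anisotropic roots to control $(\lambda,\cdot)$ arithmetically (your lattice $\Lambda_0$ is exactly the paper's renormalization $(\beta,\beta)=2$ in disguise), and finally property (c) applied to $\alpha$ and $\mu=\alpha+\beta$ to reach the contradiction. One remark: your pivotal assertion ``in particular $2(\lambda,\mu)\in\Lambda_0$'' is, strictly speaking, circular, since your induction covers only elements of $\Delta_\lambda$ and $\mu$ is not yet known to be one --- from $\alpha,2\beta\in\Delta_\lambda$ alone one gets $2(\lambda,\mu)=2(\lambda,\alpha)+(\lambda,2\beta)\in\tfrac12\Lambda_0$, which misses the condition for $\mu\in\ol{R}_\lambda$ by a factor of $2$ --- but this is precisely the same leap the paper itself makes at ``Hence $\alpha+\beta,\alpha\in\Delta_\lambda$'' (the paper's integrality statement likewise yields only $(\lambda,\mu)\in\tfrac12\mathbb{Z}$), so your write-up matches the paper's proof at its own level of detail.
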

\begin{proof}
If $\beta\in\Delta_{\lambda}$, then $2\beta\in \Delta_{\lambda}$ by  property (d)
of $\Delta_{\lambda}$.

Suppose the contrary, that $\beta\in\Delta^{\ree}$ is such that 
$\beta\not\in\Delta_{\lambda}$ and $2\beta\in \Delta_{\lambda}$.
Since $2\beta$ is a real root, 
 $\beta$ is an odd non-isotropic root.
If $2\beta\in R_{\lambda}$, then, by definition of $R_{\lambda}$, we have $\beta\in \ol{R}_{\lambda}$,
 a contradiction. Therefore $2\beta\not\in R_{\lambda}$.
By~\Lem{lem:Xi} (iii), $\Delta_{\lambda}$ contains an isotropic root $\alpha$ such that 
$\langle \alpha, (2\beta)^{\vee}\rangle<0$. 
Then, by~\Lem{lem:B11},
 $\beta,\alpha$ form a base of the root subsystem 
$\Delta_{\lambda}\cap(\mathbb{Z}\beta+\mathbb{Z}\alpha)$
 which  is of type $B(1|1)$, and $(\alpha+\beta,\alpha+\beta)=-(\beta,\beta)$.
 
 Since $\alpha$ is isotropic, $\Delta$ is  of type (Fin) or (Aff).
From the Kac and van de Leur  classifications~\cite{Ksuper} and~\cite{vdLeur} we know that  each odd
 non-isotropic root in $\Delta$ is a short root. Renormalise the bilinear form $(-,-)$
 in such a way that $(\beta,\beta)=2$. Since $\beta$ is a short root,
 $(\gamma,\gamma)\in 2\mathbb{Z}$ for all $\gamma\in\Delta$,
 so $(\lambda,\gamma)\in \mathbb{Z}$ for all $\gamma\in \ol{R}_{\lambda}$ which implies
 $(\lambda,\gamma)\in \mathbb{Z}$ for all $\gamma\in\Delta_{\lambda}$.
 By above,  $(\alpha+\beta)$ is an even root with
  $(\alpha+\beta,\alpha+\beta)=-2$. Hence 
 $\alpha+\beta,\alpha\in\Delta_{\lambda}$ with
 $(\alpha,\alpha)=0\not=(\alpha,\beta)$. 
 Property (c) in~$\S$~\ref{Deltalambda} 
  gives $\beta\in \Delta_{\lambda}$, a contradiction.
 \end{proof}

\subsubsection{}
\begin{lem}{lem:nualpha}
If $\alpha$ is a real isotropic root and  $\nu\in\fh^*$ is such that
 $(\nu,\alpha)=0$, then $\Delta_{\nu}=\Delta_{\nu+\alpha}$.
 \end{lem}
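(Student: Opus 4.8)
The plan is to exploit that the conditions (b)--(d) defining $\Delta_\lambda$ in~\S\ref{Deltalambda} are independent of $\lambda$; only (a), namely $\ol{R}_\lambda\subseteq\Delta_\lambda$, involves $\lambda$. Hence if a set $Z\subseteq\Delta^{\ree}$ satisfies (b)--(d) and contains $\ol{R}_{\lambda'}$, minimality of $\Delta_{\lambda'}$ forces $\Delta_{\lambda'}\subseteq Z$. Taking $Z=\Delta_\nu$ (which satisfies (b)--(d)), it therefore suffices to prove the single inclusion $\ol{R}_{\nu+\alpha}\subseteq\Delta_\nu$: this already gives $\Delta_{\nu+\alpha}\subseteq\Delta_\nu$, and, applied to the pair $(\nu+\alpha,-\alpha)$ --- which satisfies the hypotheses since $(\nu+\alpha,-\alpha)=-(\nu,\alpha)-(\alpha,\alpha)=0$ --- it gives $\ol{R}_\nu\subseteq\Delta_{\nu+\alpha}$, whence the reverse inclusion. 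Thus the lemma reduces to showing $\ol{R}_{\nu+\alpha}\subseteq\Delta_\nu$.

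For anisotropic roots this is immediate: by the first identity in~(\ref{eq:Rlambda+rho}) with $\mu=\alpha\in\mathbb{Z}\Delta$ one has $\ol{R}_{\nu+\alpha}\cap\Delta^{\an}=\ol{R}_\nu\cap\Delta^{\an}\subseteq\Delta_\nu$. Now let $\gamma\in\ol{R}_{\nu+\alpha}$ be isotropic, so $(\nu+\alpha,\gamma)=0$. If $(\alpha,\gamma)=0$ then $(\nu,\gamma)=0$ and $\gamma\in\ol{R}_\nu\subseteq\Delta_\nu$. Note also that $(\nu,\alpha)=(\nu+\alpha,\alpha)=0$, so $\pm\alpha\in\ol{R}_\nu\subseteq\Delta_\nu$.

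The one delicate case is an isotropic $\gamma$ with $(\alpha,\gamma)\neq0$ (so that $(\nu,\gamma)=-(\alpha,\gamma)$ and $\gamma\notin\ol{R}_\nu$); here $\gamma$ must be produced from $\pm\alpha$ by the closure operations. I would work inside the rank-two system $\Delta^{\ree}\cap(\mathbb{R}\alpha+\mathbb{R}\gamma)$. Since $\alpha$ is a real isotropic root, $\Delta$ is of type (Fin) or (Aff), and as $\delta\notin\mathbb{R}\alpha+\mathbb{R}\gamma$ this intersection is a finite rank-two root system with two non-orthogonal isotropic roots, hence of type $A(1|0)$, $A(1|1)$, or $B(1|1)$ by the classifications of~\cite{Ksuper},\cite{vdLeur}. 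The even vectors $\alpha\pm\gamma$ are orthogonal, with $(\alpha\pm\gamma,\alpha\pm\gamma)=\pm2(\alpha,\gamma)$, and a direct computation gives $\langle\nu,(\alpha\pm\gamma)^{\vee}\rangle=-1\in\mathbb{Z}$ whenever $\alpha\pm\gamma\in\ol{\Delta}^{\ree}$. In types $A(1|0)$ and $A(1|1)$ one of $\alpha-\gamma,\alpha+\gamma$ is a primitive even root $\beta^{*}$, so $\beta^{*}\in\ol{R}_\nu\subseteq\Delta_\nu$; since $s_{\beta^{*}}(\pm\alpha)=\gamma$ for a suitable sign, property (b) applied to $\pm\alpha,\beta^{*}\in\Delta_\nu$ gives $\gamma\in\Delta_\nu$ in one step.

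The main obstacle is type $B(1|1)$: there $\alpha-\gamma=\pm2\alpha_0$ with $\alpha_0$ an odd non-isotropic root, so $\alpha-\gamma\notin\ol{\Delta}^{\ree}$ and the natural connecting reflection $s_{\alpha_0}$ is by a root absent from $\ol{R}_\nu$ (its parity condition fails, $\langle\nu,\alpha_0^{\vee}\rangle$ being even while $2\alpha_0\in\Delta$). To bypass this I pass through the even root $\beta^{*}:=\alpha-\alpha_0=(\alpha+\gamma)/2$, which by~\Lem{lem:B11} (applied to $\alpha_0$ and the isotropic root $-\alpha$, where $\langle-\alpha,\alpha_0^{\vee}\rangle=-2<0$, so that $\{\alpha_0,-\alpha\}$ is a base and $\gamma=s_{\alpha_0}\alpha$) is a primitive even root of the $B(1|1)$-system, and satisfies $\langle\nu,(\beta^{*})^{\vee}\rangle=-2\in\mathbb{Z}$, hence $\beta^{*}\in\ol{R}_\nu\subseteq\Delta_\nu$. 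Since $\alpha$ is isotropic with $(\alpha,\beta^{*})=(\alpha,\gamma)/2\neq0$, property (c) yields $\Delta_\nu\cap\{\alpha\pm\beta^{*}\}=\Delta^{\ree}\cap\{\alpha\pm\beta^{*}\}$; as $\alpha-\beta^{*}=\alpha_0\in\Delta^{\ree}$ while $\alpha+\beta^{*}\notin\Delta^{\ree}$, this forces $\alpha_0\in\Delta_\nu$, and finally $\gamma=s_{\alpha_0}\alpha\in\Delta_\nu$ by property (b). The crux is thus this two-step recovery in the $B(1|1)$ case, where the connecting reflection is by an odd root missing from $\ol{R}_\nu$ and must be reached indirectly through property (c).
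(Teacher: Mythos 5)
Your proof is correct and is essentially the paper's own argument run in the mirror direction: the paper proves $\ol{R}_{\nu}\subseteq\Delta_{\nu+\alpha}$ and concludes by the same symmetry $(\nu,\alpha)\leftrightarrow(\nu+\alpha,-\alpha)$, treats anisotropic roots and roots orthogonal to $\alpha$ exactly as you do, and settles the non-orthogonal isotropic case by the same dichotomy — the even combination of the two isotropic roots is either a primitive real root or twice an odd non-isotropic root, the latter ($B(1|1)$) configuration being handled via \Lem{lem:B11} and the integrality of the pairing with $\vareps_1$, followed by properties (b)/(c) of $\S$~\ref{Deltalambda}. The only cosmetic differences are that the paper produces the even root using property (c) (both isotropic roots lie in $\Delta_{\nu}$ in its direction, so (c) applies at once) rather than your direct computation of $\langle\nu,(\alpha\pm\gamma)^{\vee}\rangle$, and that your explicit rank-two classification step — where \cite{Ksuper},\cite{vdLeur} is not quite the right justification (what is actually needed is (GR2) for $\Delta^{\ree}$ together with \Lem{lem:B11}, or \cite{VGRS}; also the $A(1|1)$ case cannot occur, since that system spans a three-dimensional space while your plane carries a non-degenerate form) — is replaced in the paper by the simpler split on whether $(\alpha+\beta)/2\in\Delta$.
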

 \begin{proof}
 Let us verify that $\ol{R}_{\nu}\subset\Delta_{\nu+\alpha}$. Take $\beta\in \ol{R}_{\nu}$. 
 
 If $(\beta,\alpha)=0$, then $(\nu,\beta)=(\nu+\alpha,\beta)$, so
$\beta\in \ol{R}_{\nu+\alpha}$. If
 $\beta$ is anisotropic, then, by~(\ref{eq:alphaveebeta}),
 $\langle\alpha,\beta^{\vee}\rangle$ is integral 
and is even if $\beta$ is odd; thus
 $\beta\in \ol{R}_{\nu+\alpha}$. This gives
\begin{equation}\label{Dlambda1}
\{\beta\in \ol{R}_{\nu}|\  
(\beta,\alpha)=0\}\subset \ol{R}_{\nu+\alpha},\ \ \ \ (\ol{R}_{\nu}\cap {\Delta}^{\an})\subset \ol{R}_{\nu+\alpha}.
\end{equation}

Now take $\beta\in (\ol{R}_{\nu}\cap \Delta^{\is})$ such that $(\beta,\alpha)\not=0$.
By~$\S$~\ref{Deltalambda} (c), the set 
 $\Delta_{\nu}\cap \{\alpha\pm\beta\}$ is non-empty. 
 We assume that $(\alpha+\beta)\in \Delta_{\nu}$
 (otherwise we  substitute
 $\beta$ by $-\beta$). 
Note that $\alpha+\beta$ is a real even root (since $(\alpha+\beta,\alpha+\beta)=2(\alpha,\beta)\not=0$).
By above, $(\nu,\alpha)=(\nu,\beta)=0$, so 
$(\nu,\alpha+\beta)=0$.

Assume that $(\alpha+\beta)/2\not\in\Delta$. Then
 $(\alpha+\beta)\in (\ol{R}_{\nu}\cap {\Delta}^{\an})$, so
 $(\alpha+\beta)\in \ol{R}_{\nu+\alpha}$
 by~(\ref{Dlambda1}). Therefore $\ol{R}_{\nu+\alpha}$ contains both $\alpha$
and $\alpha+\beta$. Since $(\alpha,\alpha+\beta)=(\alpha,\beta)\not=0$,
 $\S$~\ref{Deltalambda} (c) gives
 $\beta\in \Delta_{\nu+\alpha}$ as required.

 Now consider the remaining case $\alpha+\beta=2\gamma$ for  $\gamma\in\Delta$.
Then $\gamma$ is an odd non-isotropic root, $\alpha$
is an isotropic root and $(\gamma,\alpha)\not=0$. By~\Lem{lem:B11},
$\Delta\cap(\mathbb{R}\gamma+\mathbb{R}\alpha)$
  is the root system   of type $B(1|1)$. By above, $\Delta$ contains $\beta=\alpha-2\gamma$,
so, using the standard notation for $B(1|1)$,
we can assume $\gamma=\delta_1$ and $\alpha=\delta_1+\vareps_1$.
We have $(\nu,\alpha)=(\nu,\beta)=0$, so $(\nu,\vareps_1)=0$ and
$$\langle\nu+\alpha,\vareps_1^{\vee}\rangle=
\langle \delta_1+\vareps_1, \vareps_1^{\vee}\rangle=2.$$
Since $\vareps_1$ is an  even real root 
and $\vareps_1/2\not\in\Delta$, we obtain
$\vareps_1\in \ol{R}_{\nu+\alpha}$. 
Using  $\alpha=\delta_1+\vareps_1\in \ol{R}_{\nu+\alpha}$ and~$\S$~\ref{Deltalambda} (b) we get $s_{\vareps_1}\alpha\in \Delta_{\nu+\alpha}$. One has
$s_{\vareps_1}\alpha=\delta_1-\vareps_1=-\beta$.
Hence $\beta\in \Delta_{\nu+\alpha}$
as required.

By above, $\ol{R}_{\nu}\subset\Delta_{\nu+\alpha}$. This gives $\Delta_{\nu}\subset \Delta_{\nu+\alpha}$.
Since $(\nu+\alpha,-\alpha)=0$ we have $\Delta_{\nu+\alpha}\subset \Delta_{\nu}$. 
This completes the proof.
\end{proof}

\subsubsection{}
\begin{cor}{cor:Deltalambda1}
If $L(\lambda_1,\Sigma_1)=L(\lambda,\Sigma)$ for some  $\Sigma_1\in\Sp$
and $\rho,\rho_1$ are the Weyl vectors of $\Sigma$ and $\Sigma_1$ respectively, then 
$\Delta_{\lambda+\rho}=\Delta_{\lambda_1+\rho_1}$.  
\end{cor}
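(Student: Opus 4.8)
The plan is to reduce the statement to the case of a single odd reflexion and then invoke \Lem{lem:nualpha}. Since $\Sigma_1\in\Sp$, the base $\Sigma_1$ is obtained from $\Sigma$ by a finite chain of odd reflexions $\Sigma=\Sigma^{(0)}$, $\Sigma^{(1)}=r_{\beta_1}\Sigma^{(0)}$, $\ldots$, $\Sigma^{(k)}=r_{\beta_k}\Sigma^{(k-1)}=\Sigma_1$, where each $\beta_i$ is an isotropic simple root of $\Sigma^{(i-1)}$. Writing $\mu_i:=\hwt_{\Sigma^{(i)}}L\in\fh^*$, so that $\mu_0=\lambda+\rho$ and $\mu_k=\lambda_1+\rho_1$ by the definition of $\hwt$, it suffices to prove $\Delta_{\mu_{i-1}}=\Delta_{\mu_i}$ for each $i$; chaining these equalities then yields $\Delta_{\lambda+\rho}=\Delta_{\lambda_1+\rho_1}$.

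For a single step I would fix the isotropic simple root $\alpha:=\beta_i$ and split into the two possibilities recorded in formula~(\ref{eq:hwt}). If $\langle\hwt_\Sigma L,\alpha^\vee\rangle\neq 0$, then $\hwt_{r_\alpha\Sigma}L=\hwt_\Sigma L$, so the two highest weights coincide and $\Delta_{\mu_{i-1}}=\Delta_{\mu_i}$ holds trivially. If instead $\langle\hwt_\Sigma L,\alpha^\vee\rangle=0$, then $\hwt_{r_\alpha\Sigma}L=\hwt_\Sigma L+\alpha$, and I must show $\Delta_{\nu}=\Delta_{\nu+\alpha}$ with $\nu:=\hwt_\Sigma L$. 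This is exactly \Lem{lem:nualpha}, provided its two hypotheses are met.

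What remains is to check those hypotheses. First, $\alpha$ is a real isotropic root: it is isotropic because $r_\alpha$ is applied to an isotropic simple root (so $(\alpha,\alpha)=0$), and it is real because it belongs to the spine base $\Sigma^{(i-1)}\in\Sp$, hence lies in $\ol{\Delta}^{\ree}$ via~(\ref{eq:real}). Second, the vanishing hypothesis $(\nu,\alpha)=0$ of the lemma must be matched with the case assumption $\langle\nu,\alpha^\vee\rangle=0$: for an isotropic root $\alpha$ the coroot $\alpha^\vee$ spans $[\fg_\alpha,\fg_{-\alpha}]=\mathbb{C}h_\alpha$ with $\alpha(h)=(h,h_\alpha)$, so $\langle\nu,\alpha^\vee\rangle$ is a nonzero scalar multiple of $(\nu,\alpha)$, whence the two conditions are equivalent and the branch "$\langle\hwt_\Sigma L,\alpha^\vee\rangle=0$" of~(\ref{eq:hwt}) puts us precisely in the situation of \Lem{lem:nualpha}.

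I expect essentially no obstacle beyond this bookkeeping: the genuine work---tracking how $\Delta_\nu$ transforms under the translation $\nu\mapsto\nu+\alpha$ by an isotropic root orthogonal to $\nu$, including the delicate $B(1|1)$ analysis---has already been carried out in \Lem{lem:nualpha} (which in turn rests on \Lem{lem:B11}). The only mild point requiring care is confirming the equivalence $\langle\nu,\alpha^\vee\rangle=0\iff(\nu,\alpha)=0$, so that the hypothesis of \Lem{lem:nualpha} coincides exactly with the case distinction in~(\ref{eq:hwt}); once this is in hand, the corollary follows by assembling the single-step equalities along the chain of reflexions.
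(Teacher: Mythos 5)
Your proposal is correct and follows essentially the same route as the paper: reduce to a single odd reflexion $\Sigma_1=r_{\alpha}\Sigma$ with $\alpha\in\Sigma\cap\Delta^{\is}$, split according to whether $(\lambda+\rho,\alpha)$ vanishes (equivalently $\langle\lambda+\rho,\alpha^{\vee}\rangle=0$, as you note), and in the orthogonal case apply \Lem{lem:nualpha} to the shift $\lambda+\rho\mapsto\lambda+\rho+\alpha$. The extra bookkeeping you include (verifying that $\alpha$ is real isotropic and that the pairing with $\alpha^{\vee}$ is a nonzero multiple of $(\cdot,\alpha)$) is correct and left implicit in the paper.
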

\begin{proof}
It is enough to verify the  assertion for $\Sigma_1=r_{\alpha}\Sigma$
where $\alpha\in  (\Sigma\cap \Delta^{\is})$. 

If  $(\lambda+\rho,\alpha)\not=0$, then $\lambda_1+\rho_1=\lambda+\rho$,
so $\Delta_{\lambda+\rho}=\Delta_{\lambda_1+\rho_1}$.

If $(\lambda+\rho,\alpha)=0$, then
$\lambda_1+\rho_1=\lambda+\rho+\alpha$ and the assertion follows from~\Lem{lem:nualpha}.
\end{proof}

\subsection{}
\begin{defn}{defn:DeltaL} For an irreducible module $L\in\CO_{\Sigma}(\fg)$ 
we set $\Delta_L:=\Delta_{\lambda+\rho}$ where $\lambda$ is the highest weight of $L$.
If $N\in{\CO}^{\infty}(\fg)$ is such that $\Delta_L$ is the same for all
irreducible subquotients of $N$, we denote this set by $\Delta_N$; in this
case we say that $\Delta_N$ is well defined.
\end{defn}

\subsubsection{}
By~\Cor{cor:Deltalambda1} the set $\Delta_L$ does not depend on 
the choice of a base in the spine. In~$\S$~\ref{weightnu'} we will see that $\Delta_N$ is defined
for any indecomposable non-critical module $N$.

\subsubsection{}\label{KMsubsystem}
By~\Thm{thm:Deltalambda}, if $\fg(A,\tau)$ is of type (Fin) or (Aff), but different  from $\fg(A,\tau)\not=A(2m-1|2n-1)^{(2)}$,
and 
$L\in\CO_{\Sigma}(\fg)$ is a non-critical irreducible module, then
there exists a   symmetrizable Kac-Moody superalgebra $\fg(A',\tau')$ 
such that
\begin{itemize}
\item[(a)] the sets $\Delta^{\ree}(\fg(A',\tau'))$ and  $\Delta_L$ are isometric; denote by 
$\psi:\Delta^{\ree}(\fg(A',\tau'))\iso \Delta_L$ the isometry;
\item[(b)] for any $\alpha'\in \Delta^{\ree}(\fg(A',\tau'))$,
the roots $\alpha'$ and $\psi(\alpha')\in\Delta$
 have the same parity;
\item[(c)] any indecomposable submatrix $A_1'$  of $A'$ is of type (Fin) or (Aff); moreover, for such $A'$
 the restriction of $\psi$ gives a monomorphism
 $\mathbb{C}\Delta(\fg(A'_1,\tau_1'))\to\mathbb{C}\Delta$.
 \end{itemize}
For  $\fg(A,\tau)=A(2m-1|2n-1)^{(2)}$ the similar assertion holds
if $\Delta_L$ does not have an indecomposable component
of type $A(1|1)^{(2)}$.

For type (An) \Prop{prop:RR'an} implies the existence
of  $\fg(A',\tau')$, where $A'$ can be of infinite size, and the isometry
$\psi: \mathbb{C}\Delta(\fg(A',\tau'))\iso \mathbb{C}\Delta$ satisfying  (b).

From now on we always exclude the case when  $\fg(A,\tau)=A(2m-1|2n-1)^{(2)}$ and  $\Delta_L$ has an indecomposable component
of type $A(1|1)^{(2)}$.

\subsubsection*{Remark}
If $\fg(A,\tau)$ is of type  (Aff) and $L$ is critical, then 
a similar result holds,  but $\fg'$ might be a countable product 
of Kac-Moody superalgebras, with almost all factors isomorphic to $\fgl(1|1)$,
see~\Lem{lem:decompositionR}.

 \subsubsection{Example}
 Let $\fg=\fsl_4^{(1)}$ and $L=L(\lambda)$ for $\lambda:=\vareps_1-\vareps_2+\frac{\vareps_3-\vareps_4}{2}$.
 Then 
 $$\Delta_{\lambda+\rho}=\{\pm(\vareps_1-\vareps_2)+\mathbb{Z}\delta\}\cup
 \{\pm(\vareps_3-\vareps_4)+\mathbb{Z}\delta\}$$
 and $\fg(A')=\fsl_2^{(1)}\times \fsl_2^{(1)}$. Let
 $\{\alpha_1,\alpha_0\}$, $\{\beta_1,\beta_0\}$ be the sets of simple roots for two copies
 of $\fsl_2^{(1)}$ in $\fg(A')$. The isometry $\psi$ is given by
 $$\psi(\alpha_1):=\vareps_1-\vareps_2,\ \ \psi(\alpha_0)=\delta-(\vareps_1-\vareps_2),
 \ \ \psi(\beta_1)=\vareps_3-\vareps_4,\ \ \psi(\beta_0)=\delta-(\vareps_3-\vareps_4).$$
We see that the restriction of $\psi$  induces a bijection (of sets) between 
$\Delta^{\ree}(\fg(A'))$ and  $\Delta_L$, but the restriction of $\psi$  to $\Delta(\fg(A'))$ is not injective: $\psi$ maps
the minimal imaginary roots in both copies of $\fsl_2^{(1)}$ to the minimal imaginary root of $\fsl_4^{(1)}$.

\subsubsection{Example}\label{A112badexa}
Take $\fg=A(1|3)^{(2)}$
with $\Sigma=\{\delta-2\vareps_1,\vareps_1-\delta_1,\delta_1-\delta_2, \delta_2-\delta_3, 2\delta_3\}$ 
and $\lambda=\Lambda_0+\frac{3}{4}\delta_2+\frac{1}{4}\delta_3$. Then $\Delta_{\lambda+\rho}=A(1|1)^{(2)}\times 
A_1^{(1)}\times A(1|1)^{(1)}$,where 
$$
A(1|1)^{(2)}=\{\mathbb{Z}\delta\pm\vareps_1\pm\delta_1,\ 
2\mathbb{Z}\delta\pm 2\delta_1,\ 
(2\mathbb{Z}-1)\delta\pm 2\vareps_1\},
$$
and two copies of $A_1^{(1)}$ correspond to the subsystems
$A_1^{(1)}=\{\mathbb{Z}\delta\pm \alpha\}$
for $\alpha=\delta_2-\delta_3$ and $\alpha=\delta_2+\delta_3$.

\subsubsection{}
\begin{lem}{lem:KMbase}
Fix a base $\Sigma$ and set  $\Delta_L^+:=\Delta_L\cap\Delta^+$.
\begin{enumerate}
\item
The root system $\Delta(\fg(A',\tau'))$ admits a base $\Sigma'$ such that 
$$\psi\bigl((\Delta^{\ree}(\fg(A',\tau'))^+\bigr)=\Delta_L^+.$$
\item The set $\Sigma_L:=\psi(\Sigma')$ coincides with the set of indecomposable elements in $\Delta_L^+$
(i.e., the elements which can not be presented as a sum of several elements from  $\Delta_L^+$).
\end{enumerate}
\end{lem}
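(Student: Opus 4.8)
The plan is to pull the fixed positive system of $\fg$ back along $\psi$, obtain a triangular decomposition of $\fg(A',\tau')$ whose positive real roots are $\psi^{-1}(\Delta_L^+)$, and then read off the base and its indecomposability description. I would decompose $\fg(A',\tau')=\prod_k\fg(A'_k,\tau'_k)$ into its indecomposable factors; by~$\S$~\ref{KMsubsystem}(c) each is of type (Fin) or (Aff), and their number is finite since $\lambda$ is non-critical. Write $\Delta_L=\coprod_k\Delta_{L,k}$ for the induced orthogonal decomposition and $V_k:=\psi(\mathbb{C}\Delta(\fg(A'_k,\tau'_k)))$, so the $V_k$ are mutually orthogonal. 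The one preliminary fact I would establish first is that, when $\fg$ is affine with minimal imaginary root $\delta$, each affine factor satisfies $\psi(\delta'_k)=c_k\delta$ with $c_k\in\mathbb{Z}_{>0}$: for every real root $\dot\alpha'$ of the factor the whole string $\psi(\dot\alpha'+n\delta'_k)$, $n\in\mathbb{Z}$, consists of real roots of $\fg$, whose images under the projection $\pr$ of the root lattice modulo $\mathbb{C}\delta$ onto $\dot{\fh}^*$ lie in the finite set $\dot\Delta$; hence $\pr\psi(\delta'_k)=0$, so $\psi(\delta'_k)\in\mathbb{Z}\delta$, and it is a positive nonzero multiple because $\delta'_k\in\mathbb{Z}_{\geq0}\Sigma'_k$ maps into $\Delta_L^+\subset\Delta^+$ while $\psi$ is injective on the factor.

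For (i) I would take the homomorphism $\omega\colon\mathbb{Z}\Delta\to\mathbb{R}$ of~$\S$~\ref{Sherman} with $\Delta^+=\{\alpha:\omega(\alpha)>0\}$ and set $\omega':=\omega\circ\psi$. On real roots $\omega'(\alpha')=\omega(\psi(\alpha'))\neq0$ since $\psi(\alpha')\in\Delta^{\ree}$, and on an imaginary root $n\delta'_k$ of an affine factor $\omega'(n\delta'_k)=nc_k\omega(\delta)\neq0$; thus $\omega'$ is nonzero on all roots of each factor and defines a triangular decomposition of it. As each factor is of type (Fin) or (Aff), by~$\S$~\ref{triangKM} this decomposition admits a base $\Sigma'_k$, and $\Sigma':=\bigcup_k\Sigma'_k$ is a base of $\fg(A',\tau')$. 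Its positive real roots are $\{\alpha'\in\Delta^{\ree}(\fg(A',\tau')):\omega'(\alpha')>0\}$, and $\omega'(\alpha')>0$ is equivalent to $\psi(\alpha')\in\Delta_L^+$; since $\psi$ is a bijection onto $\Delta_L$, this gives $\psi\bigl(\Delta^{\ree+}(\fg(A',\tau'))\bigr)=\Delta_L^+$, proving (i).

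For (ii) put $\Sigma_L:=\psi(\Sigma')$; as simple roots are real and $\psi$ is bijective on real roots, $\Sigma_L\subset\Delta_L^+$ has $|\Sigma'|$ elements. Every $\gamma\in\Delta_L^+$ is $\psi(\alpha')$ with $\alpha'=\sum_{\sigma'\in\Sigma'}c_{\sigma'}\sigma'$, $c_{\sigma'}\in\mathbb{Z}_{\geq0}$, hence $\gamma=\sum_{\sigma\in\Sigma_L}c_\sigma\sigma$ is a non-negative integral combination of $\Sigma_L\subset\Delta_L^+$; if $\gamma$ is indecomposable then $\sum_\sigma c_\sigma=1$, forcing $\gamma\in\Sigma_L$. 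Conversely, take $\sigma'\in\Sigma'$ lying in some factor $k$ and suppose $\psi(\sigma')=\sum_i\gamma_i$ with $\gamma_i\in\Delta_L^+$ and at least two summands, each in a definite factor. Applying $\pr$ and using that the subspaces $\pr(V_j)$ are independent — they are mutually orthogonal and the form is nondegenerate on each $\pr(V_j)$, being isometric via $\pr\circ\psi$ to the nondegenerate finite form of the corresponding factor — the finite parts of the summands in any factor $j\neq k$ cancel, so their sum $s_j$ lies in $\mathbb{Z}_{\geq0}\delta$; since $s_j\in V_j$ and $\delta\notin V_j$ when $j$ is finite, a finite factor $j\neq k$ contributes no summand, while an affine factor contributes $s_j=m_j\delta$. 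Consequently $\psi(\sigma')=\sum_{\gamma_i\in\text{factor }k}\gamma_i+t\,\delta$ with $t\in\mathbb{Z}_{\geq0}$; if $k$ is finite then $\delta\notin V_k$ forces $t=0$, and if $k$ is affine the identity pulls back through the injectivity of $\psi$ on factor $k$ to $\sigma'=\sum\alpha'_i+t'\delta'_k$, $t'\in\mathbb{Z}_{\geq0}$. Whenever some summand lies outside factor $k$ one has $t,t'\geq1$, so in every case $\sigma'$ is exhibited as a sum of at least two positive roots of $\fg(A',\tau')$, contradicting its simplicity. Hence $\psi(\sigma')$ is indecomposable, and the two inclusions give (ii).

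The crux, and the one genuinely delicate point, is that $\psi$ is not injective on the full lattice $\mathbb{Z}\Delta(\fg(A',\tau'))$: it identifies the minimal imaginary roots of the distinct affine factors (as in the $\fsl_4^{(1)}$ example), so $\Sigma_L$ need not be linearly independent and additive relations among roots do not lift verbatim. The normalisation $\psi(\delta'_k)=c_k\delta$ and the independence of the finite projections $\pr(V_j)$ are precisely what is needed to show that no decomposition of the image of a simple root can mix factors; once mixing is excluded, injectivity of $\psi$ on a single factor together with the impossibility of writing a simple root as a sum of two positive roots closes the argument.
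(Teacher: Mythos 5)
Your proof is correct, and for part (i) it has the same skeleton as the paper's: pull a linear functional defining $\Delta^+$ back along $\psi$, check that it is nonzero on every root of every indecomposable factor of $\fg(A',\tau')$, and invoke the fact that any triangular decomposition of a root system of type (Fin) or (Aff) admits a base (\cite{Shay}, Theorem 0.4.3, quoted in \S~\ref{triangKM}). Where you diverge is in how nonvanishing on \emph{imaginary} roots is secured: the paper chooses $\omega$ with $\mathbb{Q}$-linearly independent positive values on $\Sigma$, so that $\omega$ vanishes nowhere on $\mathbb{Q}\Delta\setminus\{0\}$, and then only needs injectivity of $\psi$ on each component's span (property (c) of \S~\ref{KMsubsystem}); you instead prove directly that $\psi(\delta'_k)\in\mathbb{Z}_{\neq 0}\delta$ for each affine factor. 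Both routes work; the paper's trick makes your preliminary fact unnecessary at this stage (the paper only establishes $\psi(\delta'')\in\mathbb{Z}_{>0}\delta$ later, in \S~\ref{psidelta}, via \cite{GHS}). Two small repairs to your version: the string $\dot\alpha'+n\delta'_k$ need not consist of roots for \emph{all} $n\in\mathbb{Z}$ (in twisted types it runs over a congruence class), but infinitely many values of $n$ already force $\pr\psi(\delta'_k)=0$; and positivity of $c_k$ is never actually used (only $c_k\neq 0$ is needed for $\omega'$), which also removes the slight circularity of justifying positivity through $\Sigma'_k$ before $\Sigma'_k$ exists.

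For part (ii) your argument is genuinely more complete than the paper's, which disposes of it in one line (``(ii) follows from (i) and the fact that $\Sigma'$ coincides with the set of indecomposable elements in $\Delta^{\ree}(\fg(A',\tau'))^+$''), after having reduced everything to a single indecomposable component. That reduction is harmless for (i), but for (ii) it hides exactly the point you flag as the crux: since $\psi$ identifies the imaginary roots of distinct affine factors, a decomposition of $\psi(\sigma')$ inside $\Delta_L^+$ may a priori mix factors and does not formally pull back to a decomposition of $\sigma'$. Your separation argument --- mutual orthogonality and nondegeneracy of the projections $\pr(V_j)$, forcing the cross-factor contributions into $\mathbb{Z}_{\geq 0}\delta$, followed by the contradiction in the factor of $\sigma'$ --- is precisely what closes this, and the easy inclusion (indecomposables of $\Delta_L^+$ lie in $\psi(\Sigma')$) you do in the standard way. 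One cosmetic simplification: in the affine case you do not need integrality of $t'$ (i.e., primitivity of $\delta'_k$ in its root lattice); writing $\sigma'=\sum_i\alpha'_i+\frac{t}{c_k}\delta'_k$ and comparing total coefficients over $\Sigma'_k$, using that $\delta'_k$ is a nonnegative combination of $\Sigma'_k$, already yields the contradiction with simplicity of $\sigma'$.
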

\begin{proof}
For type (An) (i) follows from~\cite{KT98}, Section 2; in this case
$$
\Sigma_L=\{\alpha\in \Delta_L|\ s_{\alpha}\bigl(\Delta_L^+\setminus 
\{\alpha,2\alpha\}\bigr)\subset \Delta_L^+\},\ \ \ \Sigma':=\psi^{-1}(\Sigma_L).$$

Let us   prove both statements for types (Fin) and (Aff).   
It is enough to check the statements for each indecomposable component of $\fg(A',\tau')$. 
Let  $\Delta'$ be the root system of
such component.  Write $\Sigma:=\{\alpha_i\}_{i=1}^{\ell}$ and let $a_1,\ldots, a_{\ell}$
  be $\mathbb{Q}$-linearly independent positive real numbers.  
Define a homomorphism $\omega: \mathbb{Z}\Delta\to \mathbb{R}$ by taking $\omega(\alpha_i):=a_i$.
  Note that $\omega(\mu)\not=0$ for all non-zero $\mu\in\mathbb{Q}\Delta$.
  In particular, $\omega(\alpha)\not=0$ for all $\alpha\in \Delta$ and 
  $\Delta^+\coprod \Delta^-$ is the triangular decomposition corresponding to $\omega$ in the sense
  of~$\S$~\ref{triangular}. Since $\Delta'$ lies in $\mathbb{Z}(\Delta')^{\ree}$ we have
  $\psi(\Delta')\subset \mathbb{Z}\Delta$ and property (c) in $\S$~\ref{KMsubsystem} implies
  $(\omega\circ \psi)(\mu')\not=0$ for all $\mu'\in\Delta'$.
    Therefore $\omega\circ \psi:\mathbb{Z}\Delta'\to\mathbb{R}$ determines triangular decomposition  
    $\Delta'=-(\Delta')^+\coprod (\Delta')^+$
in the sense  of~$\S$~\ref{triangular}.  If $\alpha'\in (\Delta')^+$, then 
$(\omega\circ\psi)(\alpha')>0$ that is  $\psi(\alpha')\in \Delta_L^+$.
Since $\Delta'$ is the root system of a Kac-Moody superalgebra 
of type (Fin) or (Aff), 
any triangular decomposition of $\Delta'$ admits a base (see
Theorem 0.4.3 in~\cite{Shay}). 
This completes the proof of (i).
Now  (ii) follows from (i) and the fact that $\Sigma'$ coincides with the set of indecomposable elements in 
$\Delta^{\ree}(\fg(A',\tau'))^+$.
\end{proof}

  \subsubsection{}\label{baseKMtypes}
Let $\fg(A',\tau')$ and $\psi$ be as in~$\S$~\ref{KMsubsystem} and set 
  $\Delta':=\Delta(\fg(A',\tau'))$. 
  Recall that 
$\psi$ gives a bijection $(\Delta')^{\ree}\iso \Delta_L$.
Using~\Lem{lem:KMbase} (i),
we assign to each   base $\Sigma_1$ of $\Delta$ a base  of  $\Delta'$, which we denote by $\Upsilon(\Sigma_1)$, which satisfies the property
   $$\psi\bigl((\Delta')^{\ree}\cap \Delta^+(\Upsilon(\Sigma_1))\bigr)=
   \Delta_L\cap\Delta^+(\Sigma_1) .$$

We claim that    for each isotropic $\alpha\in\Sigma$ we have 
\begin{equation}\label{eq:ralph}
\Upsilon(r_{\alpha}\Sigma)=\left\{\begin{array}{lcl}
\Upsilon(\Sigma) & & \text{ if } \ \alpha\not\in\psi(\Upsilon(\Sigma)),\\
r_{\psi^{-1}(\alpha)}\Upsilon(\Sigma) & & \text{ if }\  \alpha\in \psi(\Upsilon(\Sigma)). \\
\end{array}\right.\end{equation}
Indeed,  $\Delta^+\setminus (\Delta^+(r_{\alpha}\Sigma))= \{\alpha\}$
by~(\ref{eq:Delta+oddrefl}). Setting $\Sigma':=\Upsilon(\Sigma)$ we get
$$\begin{array}{lcl}
\Delta^+((r_{\alpha}\Sigma'))=\Delta^+(\Sigma') &  &\text{ if }\ \alpha\not\in\Delta_{\lambda+\rho},   \\
\Delta^+\setminus (\Delta^+(r_{\alpha}\Sigma)')=\Delta^+(\Sigma')\cap \mathbb{Z}\psi^{-1}(\alpha)&  &\text{ if }\ \alpha\in\Delta_{\lambda+\rho}.
\end{array}$$
 By~\Lem{lem:KMbase} (ii), the condition 
 $\alpha\not\in\Delta_L$ is equivalent to the condition
 $\alpha\not\in\psi(\Sigma')$. This implies~(\ref{eq:ralph}).

 Notice that, since $\psi$ preserves the bilinear 
form,  $\psi^{-1}(\alpha)$ is isotropic.
 Denote by $\Sp'$ the spine of $\Sigma'$. Using~(\ref{eq:ralph}) we obtain the correspondence
$\Upsilon:\Sp\to \Sp'$. Viewing $\Sp,\Sp'$
as graphs were vertices are the bases, and 
the edges correspond to the isotropic reflexions
$r_{\alpha}$, we obtain the following description of $\Upsilon$:
$\Upsilon(\Sigma)=\Sigma'$ and for every edge 
$r_{\alpha}:\Sigma_1\to\Sigma_2$ in $\Sp$  we have $\Upsilon(\Sigma_2):=\Sigma'_2$ 
if $\Sp'$ contains the edge $r_{\psi^{-1}(\alpha)}:\Sigma'_1\to\Sigma'_2$ and
$\Upsilon(\Sigma_2):=\Sigma'_1$ otherwise.

For example, for $\fsl(2|1)$ take $\Sigma=\{\alpha,\beta\}$ where $\beta$ is isotropic
and $\alpha$ is anisotropic.
Take $\lambda+\rho=a\beta$ for $a\not\in\mathbb{Z}$.
Then $\Delta_{\lambda+\rho}=\{\pm\beta\}$, so $\fg(A',\tau')\cong \fgl(1|1)$
with $\Sigma'=\{\beta'\}$ ($\beta'$ is isotropic). Then  $\Upsilon: \Sp\to \Sp'$ takes the following form 
$$\xymatrix{& \Sp & \Sigma\ar[r]^{r_{\beta}}\ar[d]^{\Upsilon} &\Sigma_1 \ar[r]^{r_{\alpha+\beta}}\ar[d]^{\Upsilon} &\Sigma_2\ar[d]^{\Upsilon}\\
&   & \Sigma'\ar[r]^{r_{\beta'}} &\Sigma'_1  &\Sigma'_1\\
}$$

\subsubsection{}\label{psidelta}
One can similarly introduce the correspondence between the skeleton $\Sk$
and the skeleton $\Sk'$.

Let $\fg''$ be an indecomposable component of $\fg'$. 
 Assume that $\fg''$ is of type (Aff) and let  
 $\delta''$ be the minimal imaginary root of $\Delta^+(\fg'')$. Then
 $\delta''$ is a  positive root for  any triangular decomposition
appearing in $\Sk'$, so $\psi(\delta'')$ lies in $\mathbb{Z}_{\geq 0}\Delta^+(\Sigma_1)$ for any 
$\Sigma_1\in \Sk$. By~\cite{GHS}, Proposition 7.6.1 this implies
$\psi(\delta'')\in \mathbb{Z}_{>0}\delta$.

\subsection{Module $L'$}\label{glambda}
Take $L=L(\lambda)$.
The following construction is a slight variation of the construction given in~\cite{GKadm}.
 Retain notation of~$\S$~\ref{baseKMtypes}.
 We set 
 $$\fg':=\fg(A',\tau'),\ \  \Delta':=\Delta(\fg(A',\tau')).$$
 Recall that $\psi$ gives a bijection $(\Delta')^{\ree}\iso \Delta_L$, see~$\S$~\ref{KMsubsystem}. We set 
$(\Delta')^{\perp}:=\{\mu'\in (\fh')^*|\ (\mu',\Delta')=0\}$ and 
  let  $\psi_*: \fh^*\to (\fh')^*/(\Delta')^{\perp}$ be the linear map satisfying
\begin{equation}\label{eq:psi*}
(\psi_*(\nu),\mu')=(\nu,\psi(\mu'))\ \ \text{ for all }\mu'\in\Delta'.\end{equation}

  \subsubsection{}
  Take $\Sigma'$ as in~\Lem{lem:KMbase}  and let 
  $\fg(A',\tau')=(\fn')^-\oplus\fh'\oplus (\fn')^+$ be the corresponding   triangular decomposition of 
$\fg(A',\tau')$; let $\rho'\in (\fh')^*$ be the corresponding 
 Weyl vector. Fix 
 $\lambda'\in (\fh')^*$ such that 
\begin{equation}\label{eq:lambdalphadef} 
 (\lambda'+\rho',\alpha'):=(\lambda+\rho,\psi(\alpha'))\ 
 \text{ for all }\alpha'\in\Sigma'.\end{equation}
 and set
\begin{equation}\label{eq:L'}
 L':=L_{\fg'}(\lambda').\end{equation}

 Note that the choice of $\lambda'$ is not unique:  $\lambda''$ satisfies~(\ref{eq:lambdalphadef})
 if and only if $\psi_*(\lambda')=\psi_*(\lambda'')$, or equivalently, if
  $L_{\fg'}(\lambda'')$ and $L'$ are isomorphic as $[\fg',\fg']$-modules.

\subsubsection{}
\begin{lem}{lem:RRlambda}
\begin{enumerate}
\item
For any $\mu'\in\mathbb{Z}\Delta'$ we have 
$$\psi(\ol{R}_{\lambda'+\rho'-\mu'})\subset \ol{R}_{\lambda+\rho-\psi(\mu')},\ \ \ 
\psi({\Delta}_{\lambda'+\rho'-\mu'})\subset \Delta_{\lambda+\rho-\psi(\mu')}.$$
\item We have $\psi(\ol{R}_{\lambda'+\rho'})=\ol{R}_{\lambda+\rho}$ 
 and $\Delta_{L'}=(\Delta')^{\ree}$.\end{enumerate}
\end{lem}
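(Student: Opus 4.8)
\emph{Plan.} Everything rests on one elementary observation. Since $\psi$ is an isometry of $\mathbb{C}\Delta'$ onto its image and~(\ref{eq:lambdalphadef}) fixes the values of $\lambda'+\rho'$ on the base $\Sigma'$, linearity yields
$$(\lambda'+\rho',\mu')=(\lambda+\rho,\psi(\mu'))\quad\text{for all }\mu'\in\mathbb{Z}\Delta',\qquad (\psi(\mu'),\psi(\nu'))=(\mu',\nu').$$
First I would record that $\psi$ carries $\ol{\Delta}^{\ree}(\fg')$ into $\ol{\Delta}^{\ree}(\fg)$: if $\alpha'/2\notin\Delta'$ but $\psi(\alpha')=2\beta$ with $\beta\in\Delta^{\ree}$, then $2\beta\in\Delta_L$ forces $\beta\in\Delta_L$ by~\Cor{cor:Delta2beta}, so $\beta=\psi(\beta')$ and injectivity of $\psi$ gives $\alpha'=2\beta'$, contradicting that $\alpha'$ is reduced.

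For the first inclusion in (i), take $\alpha'\in\ol{R}_{\lambda'+\rho'-\mu'}$. The two displayed identities give
$$2(\lambda+\rho-\psi(\mu'),\psi(\alpha'))=2(\lambda'+\rho'-\mu',\alpha')=j(\alpha',\alpha')=j(\psi(\alpha'),\psi(\alpha')),$$
and $2\alpha'\in\Delta'$ precisely when $2\psi(\alpha')\in\Delta$ (again by linearity of $\psi$ and~\Cor{cor:Delta2beta}); with the previous paragraph this says exactly $\psi(\alpha')\in\ol{R}_{\lambda+\rho-\psi(\mu')}$. For $\psi(\Delta_{\lambda'+\rho'-\mu'})\subset\Delta_{\lambda+\rho-\psi(\mu')}$ I would run the inductive construction of~$\S$~\ref{Xi}: the base step is the inclusion just proved, and at each stage $\psi$ intertwines the three generating operations with their $\fg$-side counterparts. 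Indeed $\psi(s_{\alpha'}\beta')=s_{\psi(\alpha')}\psi(\beta')$ since $\psi$ is an isometry, so property (b) of $\Delta_{\lambda+\rho-\psi(\mu')}$ applies; for isotropic $\alpha'$ with $(\alpha',\beta')\neq0$ the unique root of $(\Delta')^{\ree}\cap\{\alpha'\pm\beta'\}$ maps to the unique root of $\Delta^{\ree}\cap\{\psi(\alpha')\pm\psi(\beta')\}$, which lies in the target by property (c); and doubling is handled by property (d) together with $\psi(2\gamma')=2\psi(\gamma')$.

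For (ii), the inclusion $\psi(\ol{R}_{\lambda'+\rho'})\subset\ol{R}_{\lambda+\rho}$ is the case $\mu'=0$ of (i); conversely, given $\beta\in\ol{R}_{\lambda+\rho}\subset\Delta_L=\psi((\Delta')^{\ree})$, write $\beta=\psi(\beta')$, check $\beta'$ is reduced as in the first paragraph, and read the defining congruence backwards through the two displayed identities to get $\beta'\in\ol{R}_{\lambda'+\rho'}$. For $\Delta_{L'}=\Delta_{\lambda'+\rho'}=(\Delta')^{\ree}$ the inclusion $\Delta_{\lambda'+\rho'}\subset(\Delta')^{\ree}$ is automatic and, by (i), $\psi(\Delta_{\lambda'+\rho'})\subset\Delta_{\lambda+\rho}$. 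For the reverse I would verify that $\psi(\Delta_{\lambda'+\rho'})$ satisfies axioms (a)--(d) of~$\S$~\ref{Deltalambda} relative to $\lambda+\rho$ inside $\Delta^{\ree}$: (a) is the equality of the $\ol{R}$'s just established, (b) and (c) transport along $\psi$ exactly as in (i), and (d) uses~\Cor{cor:Delta2beta} to pull $2\gamma\in\Delta^{\ree}$ back to $2\gamma'\in(\Delta')^{\ree}$. Minimality of $\Delta_{\lambda+\rho}$ then forces $\Delta_{\lambda+\rho}\subset\psi(\Delta_{\lambda'+\rho'})$, whence $(\Delta')^{\ree}=\psi^{-1}(\Delta_{\lambda+\rho})\subset\Delta_{\lambda'+\rho'}$.

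The main obstacle is the bookkeeping around isotropic roots: one must repeatedly invoke the cardinality-one statement of Appendix~\ref{sect:integralrootsystems} (quoted in property (c)) to know that the unique root of $\{\alpha\pm\beta\}\cap\Delta^{\ree}$ corresponds under $\psi$ to that of $\{\alpha'\pm\beta'\}\cap(\Delta')^{\ree}$, and use~\Cor{cor:Delta2beta} to pass freely between a root and its double on both sides. Once these two bridges are in place, every remaining step is a formal consequence of $\psi$ being a parity-preserving isometry that intertwines the defining operations.
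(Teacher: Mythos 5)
Your proposal is correct and takes essentially the same approach as the paper's proof: the identity $(\lambda'+\rho'-\mu',\beta')=(\lambda+\rho-\psi(\mu'),\psi(\beta'))$ coming from~(\ref{eq:lambdalphadef}), together with the fact that $\psi$ is a parity-preserving isometry, gives the transfer of $\ol{R}$, and the minimality/closure characterization of the sets $\Delta_{\nu}$ (your unwinding of the inductive construction of~$\S$~\ref{Xi} is just this minimality made explicit, which is how the paper argues) gives both remaining inclusions, including the reverse direction in (ii) via bijectivity of $\psi$ on real roots. Your extra verification via~\Cor{cor:Delta2beta} that $\psi$ preserves reducedness of roots is a detail the paper subsumes under ``$\psi$ preserves $(-,-)$ and the parity,'' so it is a welcome refinement rather than a different route.
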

\begin{proof}
For any $\mu'\in\mathbb{Z}\Delta'$  and $\beta'\in\Delta'$ 
we have $(\lambda'+\rho'-\mu',\beta')=(\lambda+\rho-\psi(\mu'),\psi(\beta'))$ by~(\ref{eq:lambdalphadef}).
Since $\psi$ preserves $(-,-)$ and the parity,  this implies
 $\psi(\ol{R}_{\lambda'+\rho'-\mu'})\subset \ol{R}_{\lambda+\rho-\psi(\mu')}$. 
If $\fg$ is of type (An), then $\Delta_{\lambda'+\rho'-\mu'}=R_{\lambda'+\rho'-\mu'}$
and $\Delta_{\lambda+\rho-\mu}=R_{\lambda+\rho-\mu}$.
If $\fg$ is of type (Fin) or (Aff), then, by definition, $\Delta_{\lambda'+\rho'-\mu'}$ 
is the minimal subset of $(\Delta')^{\ree}$ which contains
$\ol{R}_{\lambda'+\rho'-\mu'}$ and 
satisfies  properties (b)--(d) in~$\S$~\ref{Deltalambda}. 
Therefore $\psi(\Delta_{\lambda'+\rho'-\mu'})$ is a subset of $\Delta^{\ree}$ which contains
$\psi(\ol{R}_{\lambda'+\rho'-\mu'})$ and 
satisfying   properties (b)--(d). By above, $\psi(\ol{R}_{\lambda'+\rho'-\mu'})\subset \ol{R}_{\lambda+\rho-\psi(\mu')}$, so 
$\psi({\Delta}_{\lambda'+\rho'-\mu'})\subset \Delta_{\lambda+\rho-\psi(\mu')}$. This establishes (i).

For (ii) take any $\beta\in\Delta_L$. Then $\beta':=\psi^{-1}(\beta)\in (\Delta')^{\ree}$
and $(\lambda'+\rho',\beta')=(\lambda+\rho,\beta)$ by~(\ref{eq:lambdalphadef}). Since
$\psi$ preserves $(-,-)$ and the parity,  we obtain 
$\beta\in \ol{R}_{\lambda+\rho}$ if and only if $\beta'\in \ol{R}_{\lambda'+\rho'}$. 
Therefore $\psi(\ol{R}_{\lambda'+\rho'})=\ol{R}_{\lambda+\rho}$.

If $\fg$ is of type (An), then $\Delta_{L}=R_{\lambda+\rho}$ and
$\Delta_{L'}=R_{\lambda'+\rho'}$ which gives $\psi(\Delta_{L'})=\Delta_L$ and implies
$\Delta_{L'}=(\Delta')^{\ree}$.

If $\fg$ is of type (Fin) or (Aff), then, by definition, $\Delta_{L'}$ is the minimal subset of $(\Delta')^{\ree}$ which contains
$\ol{R}_{\lambda'+\rho'}$ and 
satisfies  properties (b)--(d) in~$\S$~\ref{Deltalambda}. 
Therefore $\psi(\Delta_{L'})\subset\Delta$ contains
$\ol{R}_{\lambda+\rho}$ and 
satisfies  properties (b)--(d).  Therefore
$\psi(\Delta_{L'})$ contains $\Delta_L$ (since $\Delta_L$ is the minimal subset of $\Delta^{\ree}$ which contains
$\ol{R}_{\lambda+\rho}$ and 
satisfies (b)--(d)). Since the restriction of $\psi$ gives a bijection between
$(\Delta')^{\ree}$ and $\Delta_L$, we obtain $\Delta_{L'}=(\Delta')^{\ree}$ as required.
\end{proof}

\subsubsection{}\label{notaforlambda11}
We let  $\Sp'$ be the spine of $\Sigma'$ (if $\fg$ is of type (An), then $\Sp=\{\Sigma\}$ and
$\Sp'=\{\Sigma'\}$). Let $b_L: \Sp\to \fh^*$, $b_{L'}: \Sp'\to (\fh')^*$ be the maps given by
$$b_L(\Sigma_1):=\hwt_{\Sigma_1} L,\ \ \ b_{L'}(\Sigma'_1):=\hwt_{\Sigma'_1} L',$$
see~$\S$~\ref{hwtSpine} for the notation (one has
  $L=L(\hwt_{\Sigma_1} L-\rho_{\Sigma_1},\Sigma_1)$). 
Taking $\Upsilon:\Sp\to\Sp'$ as in~$\S$~\ref{baseKMtypes} and $\psi_{*}: (\fh')^*\to (\fh')^*/(\Delta')^{\perp}$
given by~(\ref{eq:psi*}) 
we obtain the following diagram:
\begin{equation}\label{diag1}
\xymatrix{\Sp\ar[r]^{b_L}\ar[d]^{\Upsilon}&\fh^*\ar[rd]^{\psi_*}\\
                     \Sp' \ar[r]^{b_{L'}}& (\fh')^*\ar[r]&(\fh')^*/(\Delta')^{\perp} }\end{equation}

\subsection{}
\begin{lem}{lem:lambda11} 
The diagram~(\ref{diag1}) is commutative. 
\end{lem}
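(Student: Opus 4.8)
The plan is to prove commutativity by induction along the spine $\Sp$. Since every $\Sigma_1\in\Sp$ is reached from $\Sigma$ by a chain of odd reflexions, and $\Upsilon$ is built edge-by-edge on the spine by~(\ref{eq:ralph}), it suffices to establish the identity $\psi_*(b_L(\Sigma_1))=\overline{b_{L'}(\Upsilon(\Sigma_1))}$ (the bar denoting the class in $(\fh')^*/(\Delta')^{\perp}$) at $\Sigma$ itself, and then to show it is preserved when passing from a base $\Sigma_1$ to $r_\alpha\Sigma_1$ for an isotropic $\alpha\in\Sigma_1$.

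For the base case, write $\mu:=b_L(\Sigma)=\lambda+\rho$ and $\mu':=b_{L'}(\Sigma')=\lambda'+\rho'$, where $\Sigma'=\Upsilon(\Sigma)$. Both sides of the defining relation~(\ref{eq:lambdalphadef}) are linear in $\alpha'$, so, since $\mathbb{Z}\Delta'=\mathbb{Z}\Sigma'$, it extends to $(\lambda'+\rho',\nu')=(\lambda+\rho,\psi(\nu'))$ for all $\nu'\in\Delta'$. By the definition~(\ref{eq:psi*}) of $\psi_*$ this says precisely that $\mu'$ and $\psi_*(\mu)$ have equal pairings with every element of $\Delta'$, i.e.\ $\overline{\mu'}=\psi_*(\lambda+\rho)$ in $(\fh')^*/(\Delta')^{\perp}$.

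For the inductive step I fix $\Sigma_1\in\Sp$ with $\Sigma_1':=\Upsilon(\Sigma_1)$ and abbreviate $\mu:=\hwt_{\Sigma_1}L$, $\mu':=\hwt_{\Sigma_1'}L'$, assuming $\psi_*(\mu)=\overline{\mu'}$; I take isotropic $\alpha\in\Sigma_1$ and prove the identity for $r_\alpha\Sigma_1$. The induction hypothesis, read through~(\ref{eq:psi*}), yields the two facts I will lean on: for any $\nu'\in\Delta'$ one has $(\mu',\nu')=(\mu,\psi(\nu'))$, and since $\psi$ preserves the form, $\psi_*(\alpha)=\overline{\alpha'}$ whenever $\alpha'\in\Delta'$ satisfies $\psi(\alpha')=\alpha$. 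Now I split according to~(\ref{eq:ralph}) (recalling that for isotropic $\alpha$ the condition $\langle\cdot,\alpha^\vee\rangle=0$ in~(\ref{eq:hwt}) is equivalent to $(\cdot,\alpha)=0$). If $\alpha\in\psi(\Sigma_1')$, then $\alpha':=\psi^{-1}(\alpha)\in\Sigma_1'$ is isotropic, $\Upsilon(r_\alpha\Sigma_1)=r_{\alpha'}\Sigma_1'$, and the first fact gives $(\mu',\alpha')=(\mu,\alpha)$; hence the vanishing conditions governing~(\ref{eq:hwt}) for $L$ at $\alpha$ and for $L'$ at $\alpha'$ coincide. When $(\mu,\alpha)\neq0$ both highest weights are unchanged and the claim is the induction hypothesis; when $(\mu,\alpha)=0$ I get $\hwt_{r_\alpha\Sigma_1}L=\mu+\alpha$ and $\hwt_{r_{\alpha'}\Sigma_1'}L'=\mu'+\alpha'$, so the claim reduces to $\psi_*(\alpha)=\overline{\alpha'}$, the second fact. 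If instead $\alpha\notin\psi(\Sigma_1')$, then $\Upsilon(r_\alpha\Sigma_1)=\Sigma_1'$ leaves $\mu'$ unchanged, and I must check that $\mu$ is unchanged too.

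The main (and only genuine) obstacle is this last point: showing that $\alpha\notin\psi(\Sigma_1')$ forces $(\mu,\alpha)\neq0$, so that $\hwt_{r_\alpha\Sigma_1}L=\mu$ and the claim is again the induction hypothesis. Here I use that $\psi(\Sigma_1')$ is, by~\Lem{lem:KMbase}(ii), the set of indecomposable elements of $\Delta_L\cap\Delta^+(\Sigma_1)$; since $\alpha\in\Sigma_1$ is already indecomposable in $\Delta^+(\Sigma_1)$, the condition $\alpha\notin\psi(\Sigma_1')$ is equivalent to $\alpha\notin\Delta_L$. On the other hand $\Delta_L=\Delta_{\mu}$ by~\Cor{cor:Deltalambda1}, and for the isotropic simple root $\alpha$ (so $(\alpha,\alpha)=0$ and $2\alpha\notin\Delta$ by~\Lem{lem:ass}(a)) the defining condition~(\ref{Dlambda2}) shows that $(\mu,\alpha)=0$ implies $\alpha\in\overline{R}_{\mu}\subset\Delta_\mu=\Delta_L$. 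Contrapositively, $\alpha\notin\Delta_L$ gives $(\mu,\alpha)\neq0$, which completes the inductive step and hence the proof.
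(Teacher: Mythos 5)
Your proof is correct and follows essentially the same route as the paper's: an induction along the spine whose base case is the defining relation~(\ref{eq:lambdalphadef}), with the inductive step handled via~(\ref{eq:hwt}), (\ref{eq:ralph}) and \Lem{lem:KMbase}~(ii). The only difference is cosmetic — you split cases on whether $\alpha\in\psi(\Sigma_1')$ while the paper splits on whether $(\hwt_{\Sigma_1}L,\alpha)=0$ — and the key implication (a simple isotropic root orthogonal to the highest weight lies in $\ol{R}_{\mu}\subset\Delta_L$, hence in $\psi(\Sigma_1')$) is used by both, in your case in contrapositive form.
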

\begin{proof}
We have to verify that 
\begin{equation}\label{eq:lambdalpha}
( \hwt_{\Sigma'_1} L' ,\mu)=( \hwt_{\Sigma_1} L,  \psi(\mu))\ \ \text{ for all }\ \ \mu\in \Delta'.
\end{equation}
For $\Sigma_1=\Sigma$  
the required formula  follows from the constriuction of $L'$, see~(\ref{eq:lambdalphadef}).
By the definition of $\Sp$, 
it is enough to verify that~(\ref{eq:lambdalpha})  holds for 
$\Sigma_1=r_{\alpha}\Sigma$
with $\alpha\in (\Sigma\cap\Delta^{\is})$.

Consider the case $(\hwt_{\Sigma}L,\alpha)=0$. Then 
$\alpha\in \ol{R}_{\lambda+\rho}$, so $\alpha\in \Delta_L$.  Since
$\alpha\in \Sigma$ we have $\psi^{-1}(\alpha)\in \Sigma'$ by~\Lem{lem:KMbase} (ii),
so~(\ref{eq:ralph}) gives
  $\Sigma_1'=r_{\psi^{-1}(\alpha)}\Sigma'$.  Formula~(\ref{eq:lambdalpha})  for $\Sigma$
gives   
$$(\hwt_{\Sigma'} L', \psi^{-1}(\alpha))=(\hwt_{\Sigma} L,\alpha)=0.$$
By~(\ref{eq:hwt}) we obtain
$\hwt_{\Sigma_1} L=\hwt_{\Sigma}L+\alpha$ and  $\hwt_{\Sigma'_1} L'=\hwt_{\Sigma'} L'+\psi^{-1}(\alpha)$. 
Thus
 $$( \hwt_{\Sigma'_1} L',\mu)=(\hwt_{\Sigma'} L'+\psi^{-1}(\alpha),\mu)=
 ( \hwt_{\Sigma} L,  \psi(\mu))+(\alpha,\psi(\mu))=
 (\hwt_{\Sigma_1} L,\psi(\mu))
 $$
as required.

Consider the case $(\hwt_{\Sigma}L ,\alpha)\not=0$. By~(\ref{eq:hwt})  we have
$\hwt_{\Sigma_1} L=\hwt_{\Sigma} L$. 
By~(\ref{eq:ralph}),
one has either  $\Sigma_1'=\Sigma'$ 
or  $\Sigma_1'=r_{\psi^{-1}(\alpha)}\Sigma'$ and $\alpha\in \psi(\Sigma')$.
In the latter case, using~(\ref{eq:lambdalpha}) for $\Sigma$, we get
$(\hwt_{\Sigma'} L', \psi^{-1}(\alpha))=(\hwt_{\Sigma} L,\alpha)\not=0$,
so~(\ref{eq:hwt})  gives
$\hwt_{\Sigma'_1} L'=\hwt_{\Sigma'} L'$.
Hence in both cases $\hwt_{\Sigma_1} L=\hwt_{\Sigma} L$ and $\hwt_{\Sigma'_1} L'=\hwt_{\Sigma'} L'$,
so  formula~(\ref{eq:lambdalpha})  for $\Sigma_1$ follows from the same formula for $\Sigma$. 
\end{proof}

\subsubsection{}\label{345}
The commutativity of the diagram~(\ref{diag1}) means that 
 the $[\fg',\fg']$-module $L'$ defined in~(\ref{eq:L'})
  does not depend on the choice of a base:
 if we apply the same procedure
to the pairs $(\lambda,\Sigma)$ and $(\lambda_1,\Sigma_1)$ such that $\Sigma_1\in\Sp$ and
$L=L(\lambda,\Sigma)=L(\lambda_1,\Sigma_1)$,
then the modules $L'=L_{\fg'}(\lambda',\Sigma')$  and $L_{\fg'}(\lambda_1',\Sigma'_1)$ 
are isomorphic as $[\fg',\fg']$-modules.

\subsection{}\label{relation:sim}
\begin{defn}{defn:sim}
Let $\sim$ be the equivalence relation on the set of isomorphism classes of
 irreducible modules in  $\CO_{\Sigma}(\fg)$
which is generated by the following relation: $L_1\sim L_2$
 if there exists $\Sigma_1\in \Sp$ and $\alpha$ such that 
\begin{equation}\label{eq:simb}
 \alpha\in (\Sigma_1\cap \Delta^{\an}),\ \ 
\alpha\not\in \Delta_{\hwt_{\Sigma_1} L_1}\ ,\ \ \ \hwt_{\Sigma_1} L_2=s_{\alpha} (\hwt_{\Sigma_1} L_1).
\end{equation}
\end{defn}

\subsubsection{}\label{gnu}
For $L=L(\lambda)$  we will use the following notation:
$$\fg'_{\lambda+\rho}:=\fg',\ \ \psi_{\lambda+\rho}:=\psi,\ \ \Sigma'_{\lambda+\rho}:=\Sigma'\subset \Delta(\fg'_{\lambda+\rho}),\ \ 
\Sigma_{\lambda+\rho}:=\psi(\Sigma')\subset \Delta_{\lambda+\rho}
$$
where  $\Sigma'$ is the base introduced in~\Lem{lem:KMbase}.
We  denote by $\lambda'$ 
 the weight given by~(\ref{eq:lambdalphadef}).

\subsubsection{}
\begin{lem}{lem:salphaSigma'}
If $\lambda+\rho=s_{\alpha}(\nu+\rho)$ for  $\alpha\in (\Sigma\cap\Delta^{\an})$ such that
 $\alpha\not\in \Delta_{\lambda+\rho}$, then 
\begin{enumerate}
\item 
 the algebra $\fg'_{\nu+\rho}$ can be identified
with $\fg':=\fg'_{\lambda+\rho}$ in such a way that  $\psi_{\nu+\rho}=s_{\alpha}\psi_{\lambda+\rho}$ and
 $\Sigma'_{\nu+\rho}=\Sigma'_{\lambda+\rho}$;
\item $\Delta_{\nu+\rho}^+=s_{\alpha} \Sigma_{\lambda+\rho}^+$ and 
 $\Sigma_{\nu+\rho}=s_{\alpha} \Sigma_{\lambda+\rho}$;
 \item for this identification $L_{\fg'}(\nu')$ and  $L_{\fg'}(\lambda')$ are isomorphic as $[\fg',\fg']$-modules.
\end{enumerate}
\end{lem}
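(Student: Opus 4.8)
The plan is to reduce all three parts to the single geometric identity $\Delta_{\nu+\rho}=s_{\alpha}\Delta_{\lambda+\rho}$, which I would establish first and which I expect to be the only genuinely delicate point. Since $\alpha\in\Delta^{\an}$, the reflection $s_{\alpha}$ lies in $W$, hence is an isometry of $\fh^*$ preserving $\Delta^{\ree}$, the subset $\ol{\Delta}^{\ree}$, and the parity of roots. Using $\nu+\rho=s_{\alpha}(\lambda+\rho)$ together with $(\beta,\beta)=(s_{\alpha}\beta,s_{\alpha}\beta)$ and $2(\nu+\rho,\beta)=2(\lambda+\rho,s_{\alpha}\beta)$, the defining condition in~(\ref{Dlambda2}) gives at once $\ol{R}_{\nu+\rho}=s_{\alpha}\ol{R}_{\lambda+\rho}$. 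Because $s_{\alpha}$ is an isometry preserving $\Delta^{\ree}$ and parity, it carries each of the closure axioms (b)--(d) of~$\S$\ref{Deltalambda} to itself (for (b) one uses $s_{s_{\alpha}\gamma}=s_{\alpha}s_{\gamma}s_{\alpha}$). Hence $s_{\alpha}\Delta_{\lambda+\rho}$ is the minimal subset of $\Delta^{\ree}$ containing $\ol{R}_{\nu+\rho}$ and satisfying (b)--(d), i.e. it equals $\Delta_{\nu+\rho}$.

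For (ii) I would first observe that $\alpha,2\alpha\notin\Delta_{\lambda+\rho}$: if $\alpha$ is even this is the hypothesis, while if $\alpha$ is odd non-isotropic then $2\alpha\in\Delta^{\ree}$ and \Cor{cor:Delta2beta} forces $2\alpha\notin\Delta_{\lambda+\rho}$ as well. Since $\alpha\in\Sigma$ is an anisotropic simple root, $s_{\alpha}$ permutes $\Delta^+\setminus\{\alpha,2\alpha\}$; as $\Delta_{\lambda+\rho}^+\subset\Delta^+\setminus\{\alpha,2\alpha\}$, applying $s_{\alpha}$ to the key identity yields $\Delta_{\nu+\rho}^+=s_{\alpha}\Delta_{\lambda+\rho}^+$. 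The restriction of $s_{\alpha}$ to these sets is an additive bijection, so it carries indecomposable elements to indecomposable elements; by \Lem{lem:KMbase}(ii) this gives $\Sigma_{\nu+\rho}=s_{\alpha}\Sigma_{\lambda+\rho}$.

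To prove (i) I would simply declare $\psi_{\nu+\rho}:=s_{\alpha}\circ\psi_{\lambda+\rho}$. By the key identity this is an isometry of $(\Delta')^{\ree}$ onto $\Delta_{\nu+\rho}$ preserving parity, and it inherits condition (c) of~$\S$\ref{KMsubsystem} since $s_{\alpha}$ is injective on $\fh^*$; thus $(\fg',\psi_{\nu+\rho})$ is an admissible choice of data for $\nu+\rho$. The associated base is then
\[
\Sigma'_{\nu+\rho}=\psi_{\nu+\rho}^{-1}(\Sigma_{\nu+\rho})=(s_{\alpha}\psi_{\lambda+\rho})^{-1}(s_{\alpha}\Sigma_{\lambda+\rho})=\psi_{\lambda+\rho}^{-1}(\Sigma_{\lambda+\rho})=\Sigma'_{\lambda+\rho},
\]
using (ii); hence the two copies of $\fg'$ are literally identified, sharing the same $\fh'$, base $\Sigma'$, and Weyl vector $\rho'$.

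Finally, for (iii) I would compute, for every $\alpha'\in\Sigma'=\Sigma'_{\nu+\rho}$,
\[
(\nu'+\rho',\alpha')=(\nu+\rho,\psi_{\nu+\rho}(\alpha'))=(s_{\alpha}(\lambda+\rho),s_{\alpha}\psi_{\lambda+\rho}(\alpha'))=(\lambda+\rho,\psi_{\lambda+\rho}(\alpha'))=(\lambda'+\rho',\alpha'),
\]
where the outer equalities are the defining relation~(\ref{eq:lambdalphadef}) and the middle one is $s_{\alpha}$-invariance of the form. Thus $(\nu'-\lambda',\alpha')=0$ for all $\alpha'\in\Sigma'$, so $\nu'-\lambda'\in(\Delta')^{\perp}$, and by the criterion recorded after~(\ref{eq:L'}) the modules $L_{\fg'}(\nu')$ and $L_{\fg'}(\lambda')$ are isomorphic as $[\fg',\fg']$-modules. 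As noted, the main work is concentrated in the first paragraph: verifying that the minimal-set construction of $\Delta_{\lambda+\rho}$ is $s_{\alpha}$-equivariant and that $\alpha$ (and $2\alpha$, via \Cor{cor:Delta2beta}) genuinely drop out of the positive system, so that $s_{\alpha}$ respects positivity on $\Delta_{\lambda+\rho}^+$.
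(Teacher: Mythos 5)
Your proof is correct and follows essentially the same route as the paper's: establish $\ol{R}_{\nu+\rho}=s_{\alpha}\ol{R}_{\lambda+\rho}$ and hence $\Delta_{\nu+\rho}=s_{\alpha}\Delta_{\lambda+\rho}$, set $\psi_{\nu+\rho}:=s_{\alpha}\circ\psi_{\lambda+\rho}$, use \Cor{cor:Delta2beta} and simplicity of $\alpha$ to get $\Delta^+_{\nu+\rho}=s_{\alpha}\Delta^+_{\lambda+\rho}$ together with \Lem{lem:KMbase} for the bases, and finally compute $(\nu'+\rho',\mu')=(\lambda'+\rho',\mu')$ to conclude $(\lambda'-\nu',\Delta(\fg'))=0$. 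Your write-up merely supplies details the paper leaves implicit (the $s_{\alpha}$-equivariance of the closure axioms (b)--(d), and deriving $\Sigma'_{\nu+\rho}=\Sigma'_{\lambda+\rho}$ from part (ii)), so no issues.
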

\begin{proof}
One has 
$\ol{R}_{\nu+\rho}=s_{\alpha}
\ol{R}_{\lambda+\rho}$, so $\Delta_{\nu+\rho}=s_{\alpha}\Delta_{\lambda+\rho}$.
This allows to identify
$\fg_{\nu+\rho}$  with $\fg':=\fg_{\lambda+\rho}$ by taking 
$\psi_{\nu+\rho}:=s_{\alpha}\circ \psi_{\lambda+\rho}$. This gives (i).

 By~\Cor{cor:Delta2beta} we have $2\alpha\not\in \Delta_{\lambda+\rho}$.
Since $\alpha\in\Sigma$ and $\alpha, 2\alpha\not\in \Delta_{\lambda+\rho}$, 
the formula
 $\Delta_{\nu+\rho}=s_{\alpha}\Delta_{\lambda+\rho}$ forces
 $\Delta^+_{\nu+\rho}=s_{\alpha}\Delta^+_{\lambda+\rho}$.
 By~\Lem{lem:KMbase}. the sets $\Sigma_{\lambda+\rho}$, $\Sigma_{\nu+\rho}$ are
the sets of indecomposable elements in $\Delta^+_{\lambda+\rho}$, $\Delta^+_{\nu+\rho}$ respectively. This gives
   $\Sigma_{\nu+\rho}=s_{\alpha} \Sigma_{\lambda+\rho}$ and establishes (ii).
By definition~(\ref{eq:lambdalphadef}) for any $\mu'\in\Delta(\fg')$ we have
$$\begin{array}{rl}
(\nu'+\rho',\mu')&=(\nu+\rho,\psi_{\nu+\rho}(\mu) )=(\nu+\rho,s_{\alpha}\circ \psi_{\lambda+\rho}(\mu) )=
(s_{\alpha}(\nu+\rho), \psi_{\lambda+\rho}(\mu) )\\&=(\lambda+\rho,\psi_{\lambda+\rho}(\mu) )
=
(\lambda'+\rho',\mu).\end{array}$$
Therefore $(\lambda'-\nu',\Delta(\fg'))=0$, so $L_{\fg'}(\lambda')$
and $L_{\fg'}(\nu')$ are isomorphic as $[\fg',\fg']$-modules.
\end{proof}

Using~$\S$~\ref{345} we obtain 
\subsubsection{}
\begin{cor}{cor:FL}
If $L(\nu)\sim L(\lambda)$, then 
$\fg_{\nu+\rho}$  can be identified with   $\fg':=\fg_{\lambda+\rho}$
in such a way that  $L_{\fg'}(\nu')$ and  $L_{\fg'}(\lambda')$ are isomorphic as $[\fg',\fg']$-modules.
\end{cor}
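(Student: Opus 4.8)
The plan is to reduce the statement to a single elementary step of the relation $\sim$ and then apply~\Lem{lem:salphaSigma'}, the transition between bases being handled by the base-independence of the construction $L\mapsto L'$ proved in~$\S$~\ref{345}. Since, by~\Defn{defn:sim}, $\sim$ is the equivalence relation generated by the elementary relation~(\ref{eq:simb}), the hypothesis $L(\nu)\sim L(\lambda)$ furnishes a finite chain $L(\lambda)=L_0,L_1,\ldots,L_m=L(\nu)$ in which each consecutive pair satisfies~(\ref{eq:simb}). First I would show that each single link produces an identification of $\fg_{\hwt L_{i+1}}$ with $\fg_{\hwt L_i}$ under which the associated primed modules are isomorphic as $[\fg',\fg']$-modules; composing these identifications along the chain then yields the asserted identification of $\fg_{\nu+\rho}$ with $\fg'=\fg_{\lambda+\rho}$ and the isomorphism $L_{\fg'}(\nu')\cong L_{\fg'}(\lambda')$.

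For one link, suppose $L_i\sim L_{i+1}$ through some $\Sigma_1\in\Sp$ (depending on the step) and $\alpha\in\Sigma_1\cap\Delta^{\an}$ with $\alpha\notin\Delta_{\hwt_{\Sigma_1}L_i}$ and $\hwt_{\Sigma_1}L_{i+1}=s_{\alpha}(\hwt_{\Sigma_1}L_i)$. By~$\S$~\ref{345} the whole construction of the triple $(\fg',\psi,L')$ does not depend on the chosen base in the spine, so I am free to carry it out relative to $\Sigma_1$ in place of the fixed base $\Sigma$. Putting $\lambda+\rho:=\hwt_{\Sigma_1}L_i$ and $\nu+\rho:=\hwt_{\Sigma_1}L_{i+1}$, the relation becomes $\nu+\rho=s_{\alpha}(\lambda+\rho)$, which (as $s_{\alpha}$ is an involution) is exactly the hypothesis $\lambda+\rho=s_{\alpha}(\nu+\rho)$ of~\Lem{lem:salphaSigma'}, together with $\alpha\in\Sigma_1\cap\Delta^{\an}$ and $\alpha\notin\Delta_{\lambda+\rho}$. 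That lemma then supplies the identification $\fg_{\nu+\rho}\cong\fg_{\lambda+\rho}$ (via $\psi_{\nu+\rho}=s_{\alpha}\psi_{\lambda+\rho}$ and $\Sigma'_{\nu+\rho}=\Sigma'_{\lambda+\rho}$) and an isomorphism $L_{\fg'}(\nu')\cong L_{\fg'}(\lambda')$ of $[\fg',\fg']$-modules.

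I expect the main point to be the legitimacy of feeding the elementary relation~(\ref{eq:simb}), phrased for an arbitrary $\Sigma_1\in\Sp$, into~\Lem{lem:salphaSigma'}, whose statement is phrased for the distinguished base $\Sigma$. This is precisely what~$\S$~\ref{345} provides: by~\Lem{lem:lambda11} and~\Cor{cor:Deltalambda1}, both the integral root system $\Delta_{\hwt_{\Sigma_1}L}=\Delta_L$ and the $[\fg',\fg']$-module $L'$ are intrinsic to $L$ and are unchanged when $\Sigma$ is replaced by any $\Sigma_1\in\Sp$. Granting this, the only remaining bookkeeping is the composition of the step-by-step identifications, which is routine since each link yields an isomorphism of $[\fg',\fg']$-modules and isomorphisms compose.
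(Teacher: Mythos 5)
Your proposal is correct and is essentially the paper's own argument: the paper proves the corollary by the single line ``Using~$\S$~\ref{345} we obtain,'' i.e.\ by combining \Lem{lem:salphaSigma'} (one elementary link of the relation~(\ref{eq:simb})) with the base-independence of the construction $L\mapsto L'$ from~$\S$~\ref{345}, exactly as you do. Your write-up merely makes explicit the chain decomposition of the equivalence relation, the transport of the lemma to an arbitrary $\Sigma_1\in\Sp$, and the composition of the resulting identifications, all of which the paper leaves implicit.
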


\subsection{Set $U(\lambda)$}\label{Ulambda}
For $\lambda\in\fh^*$ we denote by $U(\lambda)$ the subset of $\fh^*$ consisting of weights 
$\nu$ such that 
 there exists  a chain 
$\nu+\rho=\mu_r<\mu_{r-1}<\ldots<\mu_0=\lambda+\rho$, 
where $\mu_{i+1}=\mu_i-m_i\gamma_i$ for some 
 $\gamma_i\in \ol{\Delta}^{\ree}\cap\Delta^+$ and  $m_i\in\mathbb{Z}_{>0}$
 such that $2(\mu_i,\gamma_i)=m_i(\gamma_i,\gamma_i)$, 
where $m_i=1$ is odd if $p(\gamma)=\ol{1}$, $m_i=1$ if
$\gamma$ is isotropic. Note that
$\mu_{i+1}=s_{\gamma_i}\mu_i$ if $\gamma_i$ is anisotropic.

\subsubsection{}
The following statement follows from~\cite{KK} for Lie algebras
and~\cite{GKvac} for Lie superalgebras.

\subsubsection{}
\begin{prop}{prop:KK}
Consider the expansion 
$De^{\rho}\ch L(\lambda)=\sum\limits_{\mu\in \mathbb{Z}_{\geq 0}\Sigma} a_{\mu} e^{\lambda+\rho-\mu}$ (with $a_{\mu}\in\mathbb{Z}$).
Assume that  $\mu\in \mathbb{Z}_{\geq 0}\Sigma$ is such that 
$[M(\lambda): L(\lambda-\mu)]\not=0$ or  $a_{\mu}\not=0$. Then
\begin{enumerate}
\item
If $\lambda$ is non-critical, then $(\lambda-\mu)\in U(\lambda)$.
\item If $\fg$ is of type (Aff) and $\lambda$ is critical, then 
$(\lambda-\mu)\in (U(\lambda)-\mathbb{Z}_{\geq 0}\delta)$ (where $\delta$ is the minimal imaginary root).
\end{enumerate}
\end{prop}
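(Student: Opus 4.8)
The plan is to reduce both assertions to the Kac--Kazhdan linkage theorem (\cite{KK} in the even case and its super-analogue \cite{GKvac}), and then to use non-criticality to eliminate the imaginary roots that a priori occur in the linkage chain.

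First I would reduce the character statement to the multiplicity statement. Since $\ch M(\nu)=e^{\nu}/D$, we have $De^{\rho}\ch M(\nu)=e^{\nu+\rho}$ in $\cR(\Sigma)$. The decomposition matrix $\bigl([M(\lambda'):L(\mu')]\bigr)$ is unitriangular for the order $\leq$ and interval-finite (the set of $\nu$ with $\mu\leq\nu\leq\lambda$ is finite), hence invertible over $\mathbb{Z}$; writing $[L(\lambda)]=\sum_{\nu}b_{\nu}[M(\nu)]$ we obtain $De^{\rho}\ch L(\lambda)=\sum_{\nu}b_{\nu}e^{\nu+\rho}$, so $a_{\mu}=b_{\lambda-\mu}$. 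A nonzero entry of the inverse of an interval-finite unitriangular matrix forces a chain $\lambda=\tau_0>\cdots>\tau_s=\lambda-\mu$ with $[M(\tau_{j-1}):L(\tau_j)]\neq0$, so after concatenating the Kac--Kazhdan chains of the individual links the case $a_{\mu}\neq0$ produces the same data as the case $[M(\lambda):L(\lambda-\mu)]\neq0$. In either case one obtains a chain $\lambda+\rho=\mu_0,\dots,\mu_r=\lambda-\mu+\rho$ with $\mu_{i+1}=\mu_i-m_i\gamma_i$, $\gamma_i\in\Delta^+$, $m_i\in\mathbb{Z}_{>0}$, satisfying $2(\mu_i,\gamma_i)=m_i(\gamma_i,\gamma_i)$ and the parity constraints of the super-Kac--Kazhdan theorem. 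These are exactly the conditions defining $U(\lambda)$, except that $\gamma_i$ may be imaginary.

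It remains to remove the imaginary steps. I would proceed along the chain maintaining the hypothesis that $\mu_i$ is \emph{regular imaginary}, i.e. $2(\mu_i,\beta)\notin\mathbb{Z}_{>0}(\beta,\beta)$ for all $\beta\in\Delta^{\ima +}$; for $i=0$ this is non-criticality of $\lambda$ (\Rem{rem:non-critical0}). If $\mu_i$ is regular imaginary then no imaginary $\gamma_i$ can satisfy $2(\mu_i,\gamma_i)=m_i(\gamma_i,\gamma_i)$, so $\gamma_i\in\ol{\Delta}^{\ree}$; this already gives (i) provided regular-imaginarity propagates. For an anisotropic real root $\gamma_i$ one has $\mu_{i+1}=s_{\gamma_i}\mu_i$, and since $W$ preserves the form and $W(\Delta^{\ima +})=\Delta^{\ima +}$ (see~$\S$~\ref{Spine}), the property transfers verbatim. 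In type (Fin) there are no imaginary roots, and in type (Aff) every imaginary root is a positive multiple of $\delta$, which is orthogonal to all real roots, so any real step leaves $(\mu_i,\beta)$ unchanged for $\beta\in\Delta^{\ima +}$. This establishes (i) in types (Fin) and (Aff). For (ii), criticality of $\lambda$ in type (Aff) means $(\lambda+\rho,\delta)=0$; the same orthogonality gives $(\mu_i,\delta)=0$ throughout, so each imaginary step is a translation $\mu_{i+1}=\mu_i-m_ij\delta$ (the constraint $2(\mu_i,j\delta)=0$ holding automatically), while real steps neither see nor alter these contributions. Collecting all $\delta$-translations at the end---permissible since $\delta$ is $W$-fixed and orthogonal to everything---yields $\lambda-\mu\in U(\lambda)-\mathbb{Z}_{\geq0}\delta$.

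The delicate point---and the reason the result is attributed to \cite{GKvac} rather than being purely formal---is the propagation of regular-imaginarity across an isotropic \emph{real} step $\mu_{i+1}=\mu_i-\gamma_i$ in type (An). There $\Delta^{\ima +}$ contains non-proportional roots that need not be orthogonal to $\gamma_i$, so from $2(\mu_{i+1},\beta)=2(\mu_i,\beta)-2(\gamma_i,\beta)$ one cannot conclude regular-imaginarity of $\mu_{i+1}$ by the elementary orthogonality used above; this is exactly where the finer linkage analysis of \cite{GKvac}, controlling the intermediate weights and not merely the endpoints, is required.
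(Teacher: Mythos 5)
The paper gives no argument for this proposition at all: it is prefaced only by the remark that the statement ``follows from~\cite{KK} for Lie algebras and~\cite{GKvac} for Lie superalgebras.'' Your proposal therefore supplies the deduction the paper leaves to the reader, and the deduction you give is essentially correct: the reduction of the case $a_{\mu}\neq 0$ to the multiplicity case by inverting the interval-finite unitriangular decomposition matrix (using $De^{\rho}\ch M(\nu)=e^{\nu+\rho}$), the application of the (super-)Kac--Kazhdan linkage to each link and concatenation of chains, the removal of imaginary steps by propagating regular-imaginarity along the chain --- via $W$-invariance of $\Delta^{\ima+}$ and of the form for anisotropic steps, and via orthogonality of $\delta$ to all real roots in type (Aff) --- and, in the critical affine case, the observation that imaginary steps are pure $\delta$-translations which can be split off, giving $\lambda-\mu\in U(\lambda)-\mathbb{Z}_{\geq 0}\delta$.

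Your final paragraph, however, claims a gap where there is none, and the claimed gap rests on a misreading of the paper's terminology. Type (An) means \emph{anisotropic}: all diagonal entries $a_{ii}$ are nonzero, so every simple root is non-isotropic, the spine consists of $\Sigma$ alone (no odd reflections), and since $W$ preserves the bilinear form, \emph{every} real root of a type (An) superalgebra is anisotropic --- this is exactly what the word means (compare the opening of the section on admissible weights, where ``not anisotropic'' is glossed as ``$\Delta$ contains a real isotropic root''). Hence isotropic real steps never occur in type (An), and your own $W$-invariance argument already settles that case in full. Conversely, isotropic real roots occur only in types (Fin) and (Aff); in (Fin) there are no imaginary roots, and in (Aff) every imaginary root is a multiple of $\delta$, orthogonal to every real root (\Rem{rem:delta}). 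So in no case covered by the proposition does an isotropic real step interact with a non-orthogonal imaginary root: the situation you flag as requiring ``finer linkage analysis'' cannot arise for indecomposable symmetrizable Kac--Moody superalgebras. Your proof is complete as written; the only input genuinely taken from \cite{KK} and \cite{GKvac} is the linkage theorem itself, which the paper quotes in exactly the same way.
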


\subsubsection{}
\begin{lem}{lem:KK}
If $\nu\in U(\lambda)$, then
 $\Delta_{\nu+\rho}=\Delta_{\lambda+\rho}$ and $(\lambda-\nu)$ lies in $\mathbb{Z}_{\geq 0}\Delta_{\lambda+\rho}$.
\end{lem}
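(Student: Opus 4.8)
The plan is to induct on the length $r$ of a chain $\nu+\rho=\mu_r<\ldots<\mu_0=\lambda+\rho$ witnessing $\nu\in U(\lambda)$; the base case $r=0$ (where $\nu=\lambda$) is trivial. For the inductive step I would exploit that both assertions are transitive: if $\Delta_{\nu_1+\rho}=\Delta_{\lambda+\rho}$ with $\lambda-\nu_1\in\mathbb{Z}_{\geq 0}\Delta_{\lambda+\rho}$, and $\Delta_{\nu+\rho}=\Delta_{\nu_1+\rho}$ with $\nu_1-\nu\in\mathbb{Z}_{\geq 0}\Delta_{\nu_1+\rho}$, then writing $\lambda-\nu=(\lambda-\nu_1)+(\nu_1-\nu)$ gives the conclusion for $(\lambda,\nu)$. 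So I would set $\nu_1:=\lambda-m_0\gamma_0$ (the effect of the first step), observe that the tail $\mu_1,\ldots,\mu_r$ witnesses $\nu\in U(\nu_1)$, and thereby reduce to the single step $\nu_1+\rho=\mu_1=\mu_0-m_0\gamma_0$.

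For this single step I would first check that $\gamma_0\in\ol{R}_{\lambda+\rho}\subseteq\Delta_{\lambda+\rho}$. Here $\gamma_0\in\ol{\Delta}^{\ree}$ and $2(\lambda+\rho,\gamma_0)=m_0(\gamma_0,\gamma_0)$ with $m_0\in\mathbb{Z}_{>0}$. If $\gamma_0$ is isotropic, then $m_0=1$ forces $(\lambda+\rho,\gamma_0)=0$, and since $2\gamma_0\notin\Delta$ by \Lem{lem:ass}(a), membership in $\ol{R}_{\lambda+\rho}$ holds with no parity restriction. If $\gamma_0$ is anisotropic, I take $j=m_0$ in the definition of $\ol{R}_{\lambda+\rho}$; in the case $2\gamma_0\in\Delta$ the root $\gamma_0$ is odd by $\S$~\ref{Sigmaproperties}, so $p(\gamma_0)=\ol 1$ and the definition of $U(\lambda)$ guarantees $m_0$ odd, matching the parity requirement. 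This already yields $\lambda-\nu_1=m_0\gamma_0\in\mathbb{Z}_{\geq 0}\Delta_{\lambda+\rho}$, the additive statement for the single step.

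It then remains to prove $\Delta_{\nu_1+\rho}=\Delta_{\lambda+\rho}$. If $\gamma_0$ is isotropic, then $\nu_1+\rho=(\lambda+\rho)-\gamma_0$ and $((\lambda+\rho)-\gamma_0,\gamma_0)=(\lambda+\rho,\gamma_0)-(\gamma_0,\gamma_0)=0$, so \Lem{lem:nualpha} applies directly and gives $\Delta_{(\lambda+\rho)-\gamma_0}=\Delta_{\lambda+\rho}$. If $\gamma_0$ is anisotropic, then $\nu_1+\rho=s_{\gamma_0}(\lambda+\rho)$ with $s_{\gamma_0}\in W$, and I would argue as follows. Since $\gamma_0\in\Delta_{\lambda+\rho}$ and $(\gamma_0,\gamma_0)\neq 0$, property (b) of $\S$~\ref{Deltalambda} gives $s_{\gamma_0}\Delta_{\lambda+\rho}\subseteq\Delta_{\lambda+\rho}$, hence equality as $s_{\gamma_0}$ is an involution. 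As $s_{\gamma_0}$ preserves $(-,-)$, the parity, $\ol{\Delta}^{\ree}$, and the condition $2\alpha\in\Delta$, one has $\ol{R}_{s_{\gamma_0}(\lambda+\rho)}=s_{\gamma_0}\ol{R}_{\lambda+\rho}\subseteq\Delta_{\lambda+\rho}$; since $\Delta_{\lambda+\rho}$ itself satisfies (b)--(d), minimality of $\Delta_{s_{\gamma_0}(\lambda+\rho)}$ gives $\Delta_{s_{\gamma_0}(\lambda+\rho)}\subseteq\Delta_{\lambda+\rho}$. Interchanging the roles of $\lambda+\rho$ and $s_{\gamma_0}(\lambda+\rho)$ (using $s_{\gamma_0}^2=\id$ and $\gamma_0\in\ol{R}_{s_{\gamma_0}(\lambda+\rho)}$) yields the reverse inclusion, so the two sets agree.

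The only genuinely delicate point I anticipate is the $s_{\gamma_0}$-equivariance step in the anisotropic case: rather than asserting $\Delta_{w\mu}=w\Delta_\mu$ outright, one must first establish that $\Delta_{\lambda+\rho}$ is $s_{\gamma_0}$-stable and then run the minimality comparison in both directions. The isotropic case, by contrast, is entirely absorbed into \Lem{lem:nualpha}, and the additive claim is immediate once $\gamma_0\in\Delta_{\lambda+\rho}$ is known.
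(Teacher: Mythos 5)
Your proposal is correct and follows essentially the same route as the paper's proof: reduce by induction/transitivity to a single step $\nu+\rho=(\lambda+\rho)-m_0\gamma_0$, observe $\gamma_0\in\ol{R}_{\lambda+\rho}$ to get the additive claim, then handle the anisotropic case via $\Delta_{s_{\gamma_0}(\lambda+\rho)}=s_{\gamma_0}\Delta_{\lambda+\rho}=\Delta_{\lambda+\rho}$ and the isotropic case via \Lem{lem:nualpha}. The only difference is that you spell out (via the minimality comparison in both directions) the equivariance $\Delta_{s_{\gamma_0}(\lambda+\rho)}=s_{\gamma_0}\Delta_{\lambda+\rho}$, which the paper asserts without proof.
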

\begin{proof}
It is enough to verify the assertion for a chain of length one:
$\nu+\rho=\mu_1<\mu_0=\lambda+\rho$
(the general case follows by induction on the length of the chain).
For such a chain  we have
 $\nu+\rho=\lambda+\rho-m\gamma$ where  
$\gamma\in \ol{\Delta}^{\ree}\cap\Delta^+$ and  $m\in\mathbb{Z}_{>0}$
is such that $2(\lambda+\rho,\gamma)=m(\gamma,\gamma)$, 
and $m$ is odd if $p(\gamma)=\ol{1}$, $m=1$ if $\gamma$ is isotropic.
Notice that $\gamma\in \ol{R}_{\lambda+\rho}$, so $(\lambda-\nu)\in \mathbb{Z}_{\geq 0}\Delta_{\lambda+\rho}$. Let us verify that $\Delta_{\nu+\rho}=\Delta_{\lambda+\rho}$.

If $\gamma$ is anisotropic, then $\nu+\rho=s_{\gamma}(\lambda+\rho)$, so 
$\Delta_{\nu+\rho}=s_{\gamma}\Delta_{\lambda+\rho}=\Delta_{\lambda+\rho}$
(since $\gamma\in \Delta_{\lambda+\rho}$).

Consider the case when $\gamma$ is isotropic. Then 
$(\lambda+\rho,\gamma)=0$ and
$\nu=\lambda-\gamma$, so  the formula $\Delta_{\nu+\rho}=\Delta_{\lambda+\rho}$
follows from~\Lem{lem:nualpha}.
\end{proof}

\subsubsection{}\label{Olambda}
Arguing as in~\cite{DGK}, Section 4 we obtain the following decomposition theorem.

{\em Consider the equivalence relation $\lambda\approx \nu$ on $\fh^*$ which is generated
by $\nu\approx \lambda$ if $\nu\in U(\lambda)$  and
$(\lambda-\delta)\approx\lambda$ if $\fg$ is of type (Aff) and $\lambda$ is critical.

For any $N\in \CO_{\Sigma}(\fg)$ 
there exists a unique set of submodules $\{M_{\chi}\}_{\chi\in A}$ such that
\begin{itemize}
\item[$\bullet$]
$M=\oplus_{\chi\in A} M_{\chi}$;
\item[$\bullet$]
for each $\chi_1,\chi_2\in A$ and $\lambda,\nu$ such that 
$[M_{\chi_1}:L(\lambda)]$, $[M_{\chi_2}:L(\nu)]\not=0$
one has  $\lambda\approx \nu$ if and only if $\chi_1=\chi_2$.
\end{itemize}}

If $N\in\CO_{\Sigma}(\fg)$ is indecomposable, then $N=M_{\chi}$. 

\subsubsection{}
\begin{cor}{cor:Olambda}
 If $N\in\CO_{\Sigma}(\fg)$ is non-critical and indecomposable, then 
$$[N:L(\lambda)], [N:L(\nu)]\not=0\ \ \Longrightarrow\ \ 
\Delta_{\lambda+\rho}=\Delta_{\nu+\rho},\ \ 
(\lambda-\nu)\in \mathbb{Z}\Delta_{\lambda+\rho}.$$
\end{cor}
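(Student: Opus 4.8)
The plan is to reduce the statement to \Lem{lem:KK} by way of the block decomposition in $\S$\ref{Olambda}. Since $N$ is indecomposable, that decomposition forces $N=M_{\chi}$ for a single $\chi$, so any two composition factors $L(\lambda),L(\nu)$ of $N$ satisfy $\lambda\approx\nu$, where $\approx$ is the equivalence relation generated by the relations $\nu\approx\lambda$ for $\nu\in U(\lambda)$ and (in type (Aff)) $(\mu-\delta)\approx\mu$ for critical $\mu$. Thus there is a finite chain $\lambda=\mu_0,\mu_1,\ldots,\mu_k=\nu$ in which each consecutive pair is related by one of the two generating relations. If I can arrange that every link is of the first ("$U$") type, then \Lem{lem:KK} applied to each step gives $\Delta_{\mu_i+\rho}=\Delta_{\mu_{i+1}+\rho}$ together with $\mu_i-\mu_{i+1}\in\mathbb{Z}\Delta_{\mu_i+\rho}$; chaining the equalities of root systems yields $\Delta_{\lambda+\rho}=\Delta_{\nu+\rho}$, and since all these root systems coincide, $\lambda-\nu=\sum_i(\mu_i-\mu_{i+1})\in\mathbb{Z}\Delta_{\lambda+\rho}$.

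The hard part is controlling the critical $\delta$-shift relation, which could in principle route the chain through weights outside the support of $N$. Here the key observation is that the entire $\approx$-class of a non-critical weight consists of non-critical weights and is reached using $U$-links only. Indeed, in type (Aff) one has $(\delta,\Delta)=0$ by \Rem{rem:delta}, so for a step $\mu_{i+1}+\rho=\mu_i+\rho-\sum_j m_j\gamma_j$ coming from $\mu_{i+1}\in U(\mu_i)$ (whose $\gamma_j$ are real roots) one gets $(\mu_{i+1}+\rho,\delta)=(\mu_i+\rho,\delta)$; by \Rem{rem:non-critical} this means a $U$-step preserves non-criticality. On the other hand the $\delta$-shift relation links critical weights to critical weights only, so it can never connect a non-critical weight to anything. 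Since $N$ is non-critical, $\lambda$ and $\nu$ are non-critical, and I would argue by induction on the chain length that every $\mu_i$ is non-critical and that the $\delta$-shift never occurs, i.e.\ every link is a $U$-link (in one direction or the other). In types (Fin) and (An) there is no $\delta$-shift relation at all, so this point is vacuous.

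Finally I would observe that \Lem{lem:KK} suffices in either direction of a link: if a step is $\mu_i\in U(\mu_{i+1})$ rather than $\mu_{i+1}\in U(\mu_i)$, the lemma still gives $\Delta_{\mu_i+\rho}=\Delta_{\mu_{i+1}+\rho}$ and $\mu_{i+1}-\mu_i\in\mathbb{Z}\Delta_{\mu_{i+1}+\rho}=\mathbb{Z}\Delta_{\mu_i+\rho}$, hence $\mu_i-\mu_{i+1}\in\mathbb{Z}\Delta_{\mu_i+\rho}$ regardless of orientation. Summing over the chain then gives both asserted conclusions. I do not expect any further obstacle: the reflection and odd-reflexion bookkeeping is entirely absorbed into the already-established \Lem{lem:KK}, and the only genuinely new input is the non-criticality invariance of $U$-steps via $(\delta,\Delta)=0$.
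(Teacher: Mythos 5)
Your proof is correct and follows essentially the same route as the paper: the decomposition of $\S$~\ref{Olambda} gives $\lambda\approx\nu$ for an indecomposable $N$, and then \Lem{lem:KK} is applied along the chain and the conclusions are composed. The only difference is that you spell out a point the paper leaves implicit, namely that non-criticality of $N$ (preserved along $U$-steps because $(\delta,\Delta)=0$ in type (Aff), and vacuous in the other types) excludes the critical $\delta$-shift links from the chain, so that \Lem{lem:KK} genuinely applies to every link in either orientation.
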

\begin{proof}
By~$\S$~\ref{Olambda} we have $\lambda\approx \nu$.
By~\Lem{lem:KK} this forces 
$\Delta_{\lambda+\rho}=\Delta_{\nu+\rho}$ and 
$(\lambda-\nu)\in \mathbb{Z}\Delta_{\lambda+\rho}$.
\end{proof}

\subsubsection{}\label{weightnu'}
Let $N\in\CO_{\Sigma}(\fg)$ be a non-critical indecomposable module.
Using~\Cor{cor:Olambda} we define $\Delta_N$ to be $\Delta_{\lambda+\rho}$
where $\lambda$ is such that  
$[N:L(\lambda)]\not=0$.

Take $\nu\in U(\lambda)$ and retain notation of~$\S$~\ref{gnu}.
By~\Prop{prop:KK} we can identify $\fg'_{\nu+\rho}$ with $\fg':=\fg'_{\lambda+\rho}$;
moreover, we can choose $\nu'$ in such a way that $\nu'\in U(\lambda')$
and $\psi(\lambda'-\nu')=\lambda-\nu$.

\subsection{Remark}\label{D'}
The character formulas for arbitrary irreducible non-critical modules
$L(\lambda)$ over symmetrizable  Kac-Moody  algebras were established in~\cite{KT99};
these formulas imply the following remarkable formula:  
 $De^{\rho}\ch L=D'e^{\rho'} \ch L'$,  where $D,\rho$ (resp., $D',\rho'$) are the Weyl denominator and
 the Weyl vector for $\Delta^+$ (resp., for $(\Delta')^+$).

More precisely, we denote by $\cR_0(\Sigma)$
the subalgebra of $\cR(\Sigma)$ consisting of 
$x\in\cR(\Sigma)$ satisfying $\supp(x)\subset 
-\mathbb{Z}_{\geq 0}\Sigma$.
The  linear monomorphism
$\psi_{\lambda+\rho}:\mathbb{Z}\Sigma'\to \mathbb{Z}\Sigma$
induces a ring monomorphism $\cR_0(\Sigma')\to \cR_0(\Sigma)$
which we also denote by $\psi_{\lambda+\rho}$. The algebra
$\cR_0(\Sigma')$ contains
 $D'$ and $ e^{-\lambda'}\ch L'$.
The formula  $De^{\rho}\ch L=D'e^{\rho'} \ch L'$ means 
\begin{equation}\label{eq:Fiebig}
De^{-\lambda} \ch L(\lambda)=
\psi_{\lambda+\rho}\bigl(D'e^{-\lambda'} \ch L'(\lambda')\bigr).
\end{equation}
For example, if a non-critical weight $\lambda$ 
is such that $m:=\langle \lambda+\rho,\alpha\rangle\in\mathbb{Z}_{>0}$
 for some $\alpha\in \Delta^{\ree +}$
 and $\langle \lambda+\rho,\beta\rangle\not\in\mathbb{Z}$
for all $\beta\in \Delta^{\ree+}\setminus\{\alpha\}$,
then $\fg_{\lambda+\rho}\cong \fsl_2$, $\Sigma'=\{\alpha'\}$
and $\psi_{\lambda+\rho}(\alpha')=\alpha$.
In this case
$$De^{-\lambda} \ch L(\lambda)=
\psi_{\lambda+\rho}\bigl(D'e^{-\lambda'} 
\ch L'(\lambda')\bigr)=\psi_{\lambda+\rho}(1-e^{-m\alpha'})=
1-e^{m\alpha}.$$

We conjecture that  formula~(\ref{eq:Fiebig})  holds for all symmetrizable Kac-Moody superalgebras, see~\cite{GKadm}. In~\cite{F}, P.~Fiebig established equivalence
 of the corresponding blocks
 in the categories $\CO_{\Sigma}(\fg)$ and $\CO_{\Sigma'}(\fg')$
 for an arbitrary symmetrizable Kac-Moody algebra $\fg$.

%
%

 \section{Quasi-admissible modules}
In this section $\fg$ is an indecomposable symmetrizable
 Kac-Moody superalgebra with a base $\Sigma$ and $\fg\not=\fgl(1|1)$.
  Recall that  $(-,-)$ is normalised in such a way that $(\alpha,\alpha)\in\mathbb{Q}_{>0}$ for some $\alpha\in\Delta^{\ree}$.
  The action of $w\in W$ on
  $\sum\limits_{\nu} m_{\nu}e^{\nu}$
    is always assumed to be the natural, i.e.
  $w(\sum\limits_{\nu} m_{\nu}e^{\nu}):=\sum\limits_{\nu} m_{\nu}e^{w\nu}$.

\subsection{Integrable and partially integrable modules}\label{partint}
Let $\Sigma_{\pr}$ be the set of principal roots (see~$\S$~\ref{Bpr} for definition).
In the case when $\fg=\dot{\fg}^{(1)}$ is the non-twisted affinization of $\dot{\fg}$ we let $\dot{\Delta}\subset\Delta$  be the root subsystem of $\dot{\fg}$.
By~\Rem{rem:prince} $\dot{\Sigma}_{\pr}:=\dot{\Delta}\cap {\Sigma}_{\pr}$ 
is the set of principal roots of $\dot{\Delta}$.

\subsubsection{}
 \begin{defn}{defn:pi-integrable}
For a subset $\pi\subset\Sigma_{\pr}$ we call a $[\fg,\fg]$-module $N$
{\em $\pi$-integrable} if  $\fg_{\pm\alpha}$ acts locally nilpotently
on $N$ for each $\alpha\in\pi$. 
 \end{defn}
%

 \subsubsection{}
\begin{lem}{lem:intmod}
Let $W[\pi]$ be the subgroup of $W$ generated by the reflections $s_{\alpha}$ with $\alpha\in\pi$.
If a $[\fg,\fg]$-module $N$ is  $\pi$-integrable, then 
the root space $\fg_{\alpha}$
acts locally nilpotently on $N$ 
for each $\alpha\in W[\pi](\pi)$. 
\end{lem}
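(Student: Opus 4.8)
The plan is to reduce the statement to a single covariance property of the root space actions and then to induct on the length of $w\in W[\pi]$. Since each $\alpha\in\pi\subset\Sigma_{\pr}$ is an even real root with $\langle\alpha,\alpha^{\vee}\rangle=2$, the spaces $\fg_{\pm\alpha}$ span an $\fsl_2$-triple; fix generators $e_{\alpha}\in\fg_{\alpha}$, $f_{\alpha}\in\fg_{-\alpha}$. The hypothesis of $\pi$-integrability says precisely that $e_{\alpha}$ and $f_{\alpha}$ act locally nilpotently on $N$ for every $\alpha\in\pi$, so the operators $\exp(e_{\alpha})$ and $\exp(f_{\alpha})$ are well defined on $N$ (each, applied to a vector, is a finite sum), and hence so is
$$\tau_{\alpha}:=\exp(e_{\alpha})\exp(-f_{\alpha})\exp(e_{\alpha})\in\Aut(N).$$

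The key step I would isolate is the following covariance identity: for every $\beta\in\pi$ and every root $\gamma$ such that $\fg_{\gamma}$ acts locally nilpotently on $N$, the root space $\fg_{s_{\beta}\gamma}$ also acts locally nilpotently on $N$. To prove this, I would use that principal root vectors are $\ad$-locally nilpotent on $\fg$ (being even real root vectors of the integrable superalgebra $\fg$); this is what makes $\Ad(\tau_{\beta})$ a well-defined automorphism of $\fg$ acting on $\fh^{*}$ as the reflection $s_{\beta}$, so that it carries $\fg_{\gamma}$ isomorphically onto $\fg_{s_{\beta}\gamma}$. For $x\in\fg_{\gamma}$ the element $\Ad(\tau_{\beta})(x)\in\fg_{s_{\beta}\gamma}$ is obtained from $x$ by finitely many applications of $\ad e_{\beta}$ and $\ad f_{\beta}$, so the exponential identity $\exp(e_{\beta})\,x\,\exp(-e_{\beta})=\exp(\ad e_{\beta})(x)$ (valid as operators on $N$ because both $e_{\beta}$ on $N$ and $\ad e_{\beta}$ on $x$ are nilpotent) yields, after composing the three factors of $\tau_{\beta}$,
$$\tau_{\beta}\circ x\circ\tau_{\beta}^{-1}=\Ad(\tau_{\beta})(x)$$
as operators on $N$. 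Since $\tau_{\beta}$ is invertible, $x$ acts locally nilpotently on $N$ if and only if $\Ad(\tau_{\beta})(x)$ does; as $x$ ranges over $\fg_{\gamma}$ the element $\Ad(\tau_{\beta})(x)$ ranges over all of $\fg_{s_{\beta}\gamma}$, which gives the claim.

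With the covariance identity in hand, I would finish by induction on the length $\ell(w)$ of $w\in W[\pi]$ in the generators $\{s_{\beta}\mid\beta\in\pi\}$. For $\ell(w)=0$ we have $w\alpha=\alpha\in\pi$, and $\fg_{\alpha}$ acts locally nilpotently by hypothesis. For the inductive step write $w=s_{\beta}w'$ with $\beta\in\pi$ and $\ell(w')=\ell(w)-1$; by the inductive hypothesis $\fg_{w'\alpha}$ acts locally nilpotently, and applying the covariance identity with $\gamma=w'\alpha$ (note $\tau_{\beta}$ is available since $\beta\in\pi$) shows that $\fg_{s_{\beta}(w'\alpha)}=\fg_{w\alpha}$ acts locally nilpotently, completing the induction over all $\gamma\in W[\pi](\pi)$.

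I expect the main obstacle to be the careful justification of the covariance identity: one must verify that all the exponential series involved terminate (local nilpotency of $e_{\beta},f_{\beta}$ on $N$ and of $\ad e_{\beta},\ad f_{\beta}$ on the fixed vectors of $\fg$), and that $\Ad(\tau_{\beta})$ indeed acts as the reflection $s_{\beta}$ on root spaces. The super structure causes no additional difficulty here, since the roots $\beta\in\pi$ are even, so $e_{\beta},f_{\beta}$ are even elements and the exponentials carry no sign subtleties.
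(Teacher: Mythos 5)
Your proof is correct and takes essentially the same route as the paper: induction on the length of $w\in W[\pi]$, with the inductive step carried out by conjugating the action on $N$ by the exponentiated $\fsl_2$-element $\tau_{\beta}$, using the identity $(\exp a)\,b\,(\exp -a)=\exp(\ad a)(b)$ and the fact that $\Ad(\tau_{\beta})$ realizes $s_{\beta}$ on root spaces. The only cosmetic difference is that the paper cites Lemma 3.8 of Kac's book for the statement you re-derive as your ``covariance identity.''
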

\begin{proof}
Take $\alpha\in\pi$.
 We will prove that $\fg_{\pm w\alpha}$ acts locally nilpotently on $N$ 
 by induction on the length of $w\in W[\pi]$.
 If the length is zero, then, by definition, $\fg_{\pm \alpha}$ 
 acts locally nilpotently on $N$.
 Otherwise for some $\beta\in\pi$ we have $w=s_{\beta}w'$, where
 the length of $w'$ is less than the length of $w$. By induction, 
 $\fg_{\pm s_{\beta}\alpha}$ act locally nilpotently on $N$.
 By~\cite{Kbook}, Lemma 3.8, any $u\in\fg_{\pm w\alpha}$ can be written as  
 $u=(\exp \ad f)(\exp \ad(-e))(\exp \ad f)(u')$ for some
 $u'\in  \fg_{\pm w'\alpha }$, $e\in \fg_{\beta}$ and $f\in\fg_{-\beta}$. Let $E,F,U, U'$ be the images
 of $e,f,u,u'$ in $\End(N)$.  Then
 $$\begin{array}{rl}
 U&=(\exp \ad F)(\exp \ad(-E))(\exp \ad F)(U')\\
 &= (\exp F)(\exp -E)(\exp F)U'(\exp - F)(\exp E)(\exp -F),\end{array}$$
 since $(\exp a)b(\exp -a)=\exp(\ad a)(b)$.  For any $v\in N$ we have
 $$U^s(v)= (\exp F)(\exp -E)(\exp F)(U')^s(\exp - F)(\exp E)(\exp -F)(v).$$
 Since $E,F,U'$ acts locally nilpotenly,  $U^sv=0$ for $s>>0$, so $U$ acts locally nilpotently.
 \end{proof}

\subsubsection{Integrable modules}
Since $\Delta^{\an}_{\ol{0}}=W(\Sigma_{\pr})$ and $W=W[\Sigma_{\pr}]$, the $\Sigma_{\pr}$-integrable modules
are modules where  $\fg_{\alpha}$ acts locally nilpotently
 for each $\alpha\in\Delta^{\ree}$. Such modules are called {\em integrable}.

 \subsubsection{Remark}
 If $\fg=\fg(A,\tau)$ is a symmetrizable infinite-dimensional Kac-Moody superalgebra, then the following conditions are equivalent~\cite{KW94}:
 \begin{itemize}
\item[$\bullet$] the set $\Sigma_{\pr}$ is linearly independent;
 \item[$\bullet$] the Dynkin diagram of $\Sigma_{\pr}$ is connected;
 \item[$\bullet$] the subalgebra generated by vector space $\fh+\sum\limits_{\alpha\in\Sigma_{\pr}} \fg_{\pm \alpha}$  is
a Kac-Moody algebra in the sense of~$\S$~\ref{reflexion};
\item[$\bullet$] there exists an irreducible integrable module in
$\CO_{\Sigma}(\fg)$ which is not one-dimensional;
\item[$\bullet$] $\fg$ is  anisotropic, $\fsl(1|n)^{(1)}$,
 or $\mathfrak{osp}(2|2n)^{(1)}$.
  \end{itemize}

 \subsubsection{}\begin{rem}{rem:KKcat}
Consider the case when  $\fg$ is affine
and the Dynkin diagram of $\Sigma_{\pr}$ is not connected.
By above, $\CO_{\Sigma}(\fg)$ does not contain irreducible integrable modules
which are not one-dimensional. 
If we take $\pi$ to be a maximal proper subset of $\Sigma_{\pr}$
(i.e., $\Sigma_{\pr}\setminus \pi$ is of cardinality one), then
 $\CO_{\Sigma}(\fg)$ contains interesting $\pi$-integrable modules.
For example, if $\fg=\dot{\fg}^{(1)}$
 and $\pi=\dot{\Sigma}_{\pr}\cup\pi^{\#}$, where 
 $\pi^{\#}:=\{\alpha\in\Sigma_{\pr}|\ (\alpha,\alpha)>0\}$,
 then, for $k\in\mathbb{Z}_{>0}$, the irreducible $\pi$-integrable modules
 in $\CO_{\Sigma}(\fg)^k$ are irreducible modules in the 
 $KL_k$-category studied in~\cite{AKMPP}.
Important examples of $\pi$-integrable modules are irreducible vacuum modules
$L(k\Lambda_0)$ for suitable values of $k$.
The irreducible $\pi$-integrable modules in $\CO_{\Sigma}(\fg)$ 
were classified in~\cite{KW01}.

\subsubsection{}
 The simple affine vertex algebra $V_k(\fg)$ corresponding to 
  an anisotropic non-twisted affine superalgebra
  $\fg$ and an integrable $\fg$-module 
  $L(k\Lambda_0)$, 
  see~\cite{KWang} (and~\cite{FZ},\cite{DLM}).
   This result  is based on the fact that in this case 
   the integrability of  $L(k\Lambda_0)$ 
  implies that $V_k(\fg)$-modules are exactly 
  $V^k(\fg)$-modules which are integrable.

The proof of rationality of $V_k(\fg)$, 
given in~\cite{KWang}, Section 2.2, uses only the fact that the even part of the finite-dimensional Lie superalgebra $\dot{\fg}$ to which $\fg$ is associated, is simple.
It follows that the adjoint orbit of any non-zero element $a$ in $\dot{\fg}_{\ol{0}}$ spans it, hence $U(\dot{\fg})/(a^{k+1})$
and $S(\dot{\fg})/(a^{k+1})$ are finite-dimensional
(see~\cite{KWang}, Lemma 2.3).

\subsubsection{}
 If $\fg$ is a non-twisted affine superalgebra which is not 
 anisotropic, then $V_k(\fg)$ is not rational
 for $k\not=0$, since the category of $V_k(\fg)$-modules is not 
 completely reducible. However,  for $\fg\not=D(2|1,a)^{(1)}$,   
 the    $\pi^{\#}$-integrability of  $L(k\Lambda_0)$ 
  implies that $V_k(\fg)$-modules are exactly $V^k(\fg)$-modules 
  which are  $\pi^{\#}$-integrable,   
  see~\cite{GSsl1n} Theorem 5.3.1. 
\end{rem}

\subsubsection{}\label{newnot}
Let $\pi$ be a linearly independent subset of $\Sigma_{\pr}$. This means
that $\pi$ is a proper subset of $\Sigma_{\pr}$ if $\fg$ is affine,
not anisotropic and not equal to $\fsl(1|n)^{(1)}$, $\mathfrak{osp}(2|2n)^{(1)}$,
$D(2|1,a)^{(1)}$, $\pi$ lies in $A_1^{(1)}\coprod A_1\coprod A_1$
for $D(2|1,a)^{(1)}$, and $\pi$ is an arbitrary subset of $\Sigma_{\pr}$
in the rest of the cases.
We denote by $\fg_{\pi}$ the subalgebra generated by the vector space $\fh+\sum\limits_{\alpha\in\pi} \fg_{\pm \alpha}$.
 Since $\Sigma_{\pr}\subset\Delta^{\ree}_{\ol{0}}$, $\fg_{\pi}$
 is a subalgebra of $\fg_{\ol{0}}$.
 Using the arguments
 of~\cite{S}, Theorem 9.1 (see also~\cite{Shay}, 3.5)
 it is easy to show that $\fg_{\pi}$ is
 a  symmetrizable  Kac-Moody algebra
(in the sense of~$\S$~\ref{reflexion})
 with the Cartan subalgebra $\fh$,
  the set of simple roots $\pi$, and 
 $\pi^{\vee}:=\{\alpha^{\vee}| \ \alpha\in\pi\}$. 
 This Kac-Moody algebra is indecomposable if
 the Dynkin diagram of $\pi$ is connected.
 The restriction of $(-,-)$ to $\fg_{\pi}$ 
 is a non-degenerate invariant bilinear form.
 We set
 $$\Delta_{\pi}:=\Delta(\fg_{\pi})^{\ree}.$$

 \subsubsection{Example}
 Take   $\fg=\mathfrak{osp}(3|2)^{(1)}$.
 Then $\Sigma_{\pr}=\{\vareps_1,\delta-\vareps_1,
 2\delta_1,\delta-2\delta_1\}$. If $\pi$ is of cardinality three, then 
 $\fg_{\pi}\cong\fsl_2^{(1)}\times\fgl_2$.

 \subsubsection{}
 If $\fg$ is of type (Aff), then each indecomposable
 component of $\fg_{\pi}$ is of type (Fin) or (Aff). 
If $\pi^{\#}=\{\alpha\in\Sigma_{\pr}|\ (\alpha,\alpha)>0\}$
has a connected Dynkin diagram, then $\fg_{\pi^{\#}}$
is an affine Kac-Moody algebra. For $\pi=\dot{\Sigma}_{\pr}\cup\pi^{\#}$,
which appeared in~\Rem{rem:KKcat},
the algebra $\fg_{\pi}$ is the direct product of $\fg_{\pi^{\#}}$
and a reductive Lie algebra with the set
of simple roots $\{\alpha\in\dot{\Sigma}_{\pr}|\ (\alpha,\alpha)<0\}$.

 \subsubsection{Remark}
For any $N\in{\CO}^{\infty}(\fg)$ one has  $\Res^{\fg}_{\fg_{\pi}} N\in{\CO}^{\infty}(\fg_{\pi})$.
A $\fg$-module $N$ is $\pi$-integrable if and only if
  $\Res^{\fg}_{\fg_{\pi}} N$ is $\fg_{\pi}$-integrable.

  Note that for any $\pi$-integrable
   $L(\lambda)$ we have 
$\Delta_{\pi}\subset {R}_{\lambda+\rho}$  and 
$\langle \lambda,\alpha^{\vee}\rangle\in\mathbb{Z}_{\geq 0}$  for each $\alpha\in\pi$.

We will use the following result (which can be deduced from~\cite{GSsnow}, but we include a proof for the sake of completeness).

\subsubsection{}
\begin{lem}{lem:Nabc}
For any $N\in\CO_{\Sigma}(\fg)$ and $\gamma\in\pi$ 
the following conditions are equivalent:
\begin{itemize}
\item[(a)]  $\fg_{-\gamma}$ acts locally nilpotently on $N$;
\item[(b)] $s_{\gamma}((e^{\frac{\gamma}{2}}-e^{-\frac{\gamma}{2}})\ch N)=-
(e^{\frac{\gamma}{2}}-e^{-\frac{\gamma}{2}})\ch N$.
\item[(c)] $D_{\pi} e^{\rho_{\pi}} \ch N$ is naturally $s_{\gamma}$-anti-invariant
(where $D_{\pi}$ and $\rho_{\pi}$ are respectively the Weyl denominator and the Weyl vector for
$\Delta_{\pi}^+$).
\end{itemize}  
\end{lem}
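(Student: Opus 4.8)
The plan is to reduce everything to the rank-one subalgebra $\fs_\gamma:=\langle\fg_\gamma,\fg_{-\gamma},\gamma^\vee\rangle$, which is a copy of $\fsl_2$ since $\gamma\in\Sigma_{\pr}$ is a principal (hence even, non-isotropic) root with $\langle\gamma,\gamma^\vee\rangle=2$. First I would observe that $\fg_\gamma$ already acts locally nilpotently on any $N\in\CO_{\Sigma}(\fg)$, because $\gamma\in\Delta^+$ and the objects of this category are $\fg^+$-locally finite; thus (a) is equivalent to $\Res_{\fs_\gamma}N$ being an integrable $\fsl_2$-module. Decomposing $N$ into $\gamma$-strings $N^{[\mu]}:=\bigoplus_{n\in\mathbb{Z}}N_{\mu+n\gamma}$ (one $\fs_\gamma$-submodule for each coset $[\mu]\in\fh^*/\mathbb{Z}\gamma$), and using that the weights of any object of $\CO_{\Sigma}(\fg)$ are bounded above along $\gamma$, integrability of $\Res_{\fs_\gamma}N$ amounts to each string being finite (equivalently, each $N^{[\mu]}$ finite-dimensional).

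For (a)$\Leftrightarrow$(b) the key is a telescoping computation. Multiplying the character of a single string by $(e^{\gamma/2}-e^{-\gamma/2})$ collapses it: for a finite string, realized as the finite-dimensional $\fsl_2$-module of top weight supported at $\nu$ with $\langle\nu,\gamma^\vee\rangle=m\geq 0$, one gets $(e^{\gamma/2}-e^{-\gamma/2})\ch N^{[\mu]}=e^{\nu+\gamma/2}-e^{\nu-(m+\frac12)\gamma}$, and a direct check shows $s_\gamma$ exchanges the two terms up to sign, so this is naturally $s_\gamma$-anti-invariant. For an infinite (non-integrable) string the same telescoping leaves an uncancelled boundary term whose natural $s_\gamma$-image either leaves $\cR(\Sigma)$ or carries the wrong sign, so anti-invariance fails. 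Summing over cosets then gives that $(e^{\gamma/2}-e^{-\gamma/2})\ch N$ is naturally $s_\gamma$-anti-invariant precisely when every string is finite, that is, precisely when (a) holds.

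For (b)$\Leftrightarrow$(c) I would factor the Weyl denominator of $\fg_\pi$ along $\gamma$. Writing $D_\pi=(1-e^{-\gamma})D_\pi'$, where $D_\pi'$ collects the factors for $\Delta_\pi^+\setminus\{\gamma\}$, and using $\langle\rho_\pi,\gamma^\vee\rangle=1$, one has $e^{\rho_\pi}D_\pi=(e^{\gamma/2}-e^{-\gamma/2})\,Q$ with $Q:=e^{\rho_\pi-\gamma/2}D_\pi'$. Since $e^{\rho_\pi}D_\pi$ is $W[\pi]$-anti-invariant (hence $s_\gamma$-anti-invariant) and $(e^{\gamma/2}-e^{-\gamma/2})$ is itself $s_\gamma$-anti-invariant, the quotient $Q$ is naturally $s_\gamma$-invariant. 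Therefore $D_\pi e^{\rho_\pi}\ch N=Q\cdot\big[(e^{\gamma/2}-e^{-\gamma/2})\ch N\big]$, and because multiplication by the $s_\gamma$-invariant element $Q$ commutes with the natural $s_\gamma$-action on the bracketed factor, $D_\pi e^{\rho_\pi}\ch N$ is naturally $s_\gamma$-anti-invariant if and only if $(e^{\gamma/2}-e^{-\gamma/2})\ch N$ is, which is exactly (b)$\Leftrightarrow$(c).

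The main obstacle I anticipate is not the representation theory but the bookkeeping inside the ring $\cR(\Sigma)$: one must verify that the telescoping manipulations are legitimate there, that the \emph{natural} action of $s_\gamma$ is genuinely defined on the elements that appear (which is exactly what the factor $(e^{\gamma/2}-e^{-\gamma/2})$ arranges, by killing the semi-infinite $\gamma$-strings), and that for a non-integrable module the natural $s_\gamma$-action actually fails, so that (b) genuinely breaks down. Establishing that multiplication by $Q$ interacts correctly with the natural action — rather than with the a priori only partially defined action — is the delicate point in (b)$\Leftrightarrow$(c).
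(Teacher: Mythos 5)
Your proposal is correct and follows essentially the same route as the paper's proof: the paper also reduces to the rank-one subalgebra (restricting to $\ft=\fg_{\pm\gamma}+\fh$ and decomposing $\ch N$ into characters of irreducible $\ft$-modules rather than into string submodules), uses the same collapsing identity $(e^{\gamma/2}-e^{-\gamma/2})\ch L_{\ft}(\mu)=e^{\mu+\gamma/2}-s_\gamma e^{\mu+\gamma/2}$ for finite-dimensional constituents versus the single boundary term $e^{\mu+\gamma/2}$ for infinite-dimensional ones, and concludes by the same sign/positivity argument that anti-invariance forces all boundary terms to vanish. For (b)$\Longleftrightarrow$(c) the paper likewise factors $D_\pi e^{\rho_\pi}=(e^{\gamma/2}-e^{-\gamma/2})\cdot e^{\rho_\pi-\gamma/2}\prod_{\alpha\in\Delta_\pi^+\setminus\{\gamma\}}(1-e^{-\alpha})$, noting that both this complementary product and its inverse are naturally $s_\gamma$-invariant (which is exactly the point you flag as delicate, and is needed for both implications).
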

\begin{proof}
Let $\ft$ be the subalgebra which is generated by $\fg_{\pm\gamma}$
and $\fh$. Then $\ft$ is the direct product of a copy of $\fsl_2$
and a subalgebra of codimension $1$ in $\fh$.
Set $M:=\Res^{\fg}_{\ft} N$. By~$\S$~\ref{categoryO} (iv) we have 
$M\in \CO_{\Sigma}(\ft)$ and
$$\ch N=\ch M=\sum\limits_{\mu\in \fh^*} [M: L_{\ft}(\mu)] \ch L_{\ft}(\mu).$$
Setting $P^+(\ft):=\{\mu|\ \dim L_{\ft}(\mu)<\infty\}$ we have
$$(e^{\frac{\gamma}{2}}-e^{-\frac{\gamma}{2}})\ch L_{\ft}(\mu)=\left\{
\begin{array}{ll}
(e^{\mu+\frac{\gamma}{2}}-s_{\gamma}e^{\mu+\frac{\gamma}{2}}) & \text{ if } \mu\in P^+(\ft)\\
e^{\mu+\frac{\gamma}{2}} & \text{ if } \mu\not\in P^+(\ft)
\end{array}\right.$$
which gives
$$
(e^{\frac{\gamma}{2}}-e^{-\frac{\gamma}{2}})\ch N=
\sum\limits_{\mu\in P^+(\ft)} [M: L_{\ft}(\mu)] 
(e^{\mu+\frac{\gamma}{2}}-s_{\gamma}e^{\mu+\frac{\gamma}{2}})+
\sum\limits_{\mu\not\in P^+(\ft)} [M: L_{\ft}(\mu)] e^{\mu+\frac{\gamma}{2}}.$$
Therefore (b) is equivalent to
$$\sum\limits_{\mu\not\in P^+} [M: L_{\ft}(\mu)] s_{\gamma} e^{\mu+\frac{\gamma}{2}}=
-\sum\limits_{\mu\not\in P^+} [M: L_{\ft}(\mu)]  e^{\mu+\frac{\gamma}{2}}.$$
Observe that all coefficients in the left-hand side are non-negative and in
the right-hand side are non-positive. Thus (b) is equivalent to 
$[M: L_{\ft}(\mu)]=0$ for all  $\mu\not\in P^+$ which means that $M$ is a direct sum of
finite-dimensional $\ft$-modules. This establishes the equivalence (a) $\Longleftrightarrow$ (b).

For the equivalence (b) $\Longleftrightarrow$ (c) note that 
 $\rho_{\pi}-\frac{\gamma}{2}$ and $\Delta(\fg_{\pi})^+\setminus\{\gamma\}$ are 
$s_{\gamma}$-invariant, so 
\begin{equation}\label{eq:sgammainv}
\begin{array}{l}D_{\pi}e^{\rho_{\pi}}(e^{\frac{\gamma}{2}}-e^{-\frac{\gamma}{2}})^{-1}
=e^{\rho_{\pi}-\frac{\gamma}{2}}\prod\limits_{\alpha\in \Delta(\fg_{\pi})^+\setminus\{\gamma\}}
(1-e^{-\alpha})\ \text{ is $s_{\gamma}$-invariant} \\
  D^{-1}_{\pi}e^{-\rho_{\pi}}(e^{\frac{\gamma}{2}}-e^{-\frac{\gamma}{2}})
 =e^{-\rho_{\pi}+\frac{\gamma}{2}}\prod\limits_{\alpha\in \Delta(\fg_{\pi})^+\setminus\{\gamma\}}
(1-e^{-\alpha})^{-1}\ \text{ is $s_{\gamma}$-invariant}    \end{array}\end{equation}
 (see~\cite{GKadm}, Section 2
on invariance of infinite products). This implies the  equivalence (b) $\Longleftrightarrow$ (c) 
and completes the proof.
\end{proof}

\subsection{Quasi-admissible modules in type (An)}\label{snowflake}
Below we recall some  definitions and results of~\cite{GSsnow}.
 Let $\fg$ be of type (An). In this case
 $$\Sigma_{\pr}=\{\alpha\in \Sigma|\ p(\alpha)=\ol{0}\}\cup \{2\alpha|\ \alpha\in \Sigma,\ p(\alpha)=\ol{1}\}$$
 and $\Sigma_{\pr}$ is connected.
 Recall that for $L=L(\lambda)$ we have
 $$\Delta_L=\Delta_{\lambda+\rho}=
 R_{\lambda+\rho},$$ 
(see~\Defn{defn:DeltaL}).
We take $\pi:=\Sigma_{\pr}$. The Lie algebra $\fg_{\pi}$ is a "maximal Kac-Moody 
subalgebra" of $\fg_{\ol{0}}$. For example, for $\fg$ of type $C_2^{(2)}$,
$\fg_{\ol{0}}$ is not a Kac-Moody algebra: in this case $\Sigma=\{\alpha,\delta-\alpha\}$
where $p(\alpha)=p(\delta-\alpha)=\ol{1}$ and $\delta$ is the minimal
imaginary roots; $\fg_{\pi}$ is of type $A_1^{(1)}$ with 
the simple roots $\{2\alpha,2\delta-2\alpha\}$, 
so $\fg_{\delta}\cap \fg_{\pi}=0$.

\subsubsection{}\label{521}
 By~\cite{GSsnow}, Theorem 4.5.1, for  $N\in {\CO}(\fg)$ such that $\Delta_N$ is well defined
 (for instance, if $N$ is indecomposable),
 the following conditions
 are equivalent:
\begin{enumerate}
\item 
the expression $De^{\rho}\ch N$ is (naturally)
$W[\Delta_N]$-anti-invariant, i.e. 
$$s_{\alpha}(De^{\rho}\ch N)=-
De^{\rho}\ch N\ \ \text{ for all }\alpha\in \Delta_N$$
\item the expression $D_{\pi}e^{\rho_{\pi}}\ch N$ is (naturally)
$W[\Delta_N]$-anti-invariant.
\end{enumerate}

\subsubsection{}
\begin{defn}{defn:Snow} If $N\in {\CO}(\fg)$ is such that $\Delta_N$ is well defined, 
we say that $N$ is
{\em quasi-admissible}\footnote{In~\cite{GSsnow} quasi-admissible modules are called ``snowflake''.} if $\ch N$ satisfies the  equivalent conditions (i) and (ii).
\end{defn}

\subsubsection{Remark}\label{rem:NN}
The equivalence in~$\S$~\ref{521}  implies that   
\begin{equation}\label{eq:equivNN}
N \text{ is quasi-admissible }\ \Longleftrightarrow\ \ 
\Res^{\fg}_{\fg_{\pi}} N \text{ is quasi-admissible}.\end{equation}

\subsubsection{Theorem}\label{thm:snow} (\cite{KW88}; \cite{GSsnow}, Section 4).
Let $\fg$ be a symmetrizable  anisotropic affine Kac-Moody superalgebra.
\begin{enumerate}
\item A $\fg$-module  $N$ is quasi-admissible if and only if every irreducible subquotient of $N$
  is quasi-admissible.
 \item  The following conditions are equivalent
\begin{itemize}
 \item[$\bullet$] $L:=L(\lambda)$ is quasi-admissible;
\item[$\bullet$]  $L'$
  is an integrable $\fg'$-module (see~$\S$~\ref{glambda} for notation).
  \end{itemize}
  \item If $L(\lambda)$ is   non-critical quasi-admissible, then
  $De^{\rho}\ch L(\lambda)=\!\!\sum\limits_{w\in W_L}\! (\sgn w)\  e^{w(\lambda+\rho)}$.
  \item If $L(\nu)$ is a non-critical quasi-admissible subquotient of $M(\lambda)$, then $\lambda=\nu$. 
 \item All  non-critical quasi-admissible modules are 
 completely reducible.
\end{enumerate}

\subsection{Remark}\label{snowabc}
Take $L:=L(\lambda)$ and $L'=L_{\fg'}(\lambda')$ as in~$\S$~\ref{glambda}.
By~\Lem{lem:KMbase} (ii), the set $\Sigma_L:=\psi(\Sigma')$ 
is the set of indecomposable elements in $\Delta_L^+$.
Since $\langle\lambda+\rho,\alpha^{\vee}\rangle=\langle\lambda'+\rho',\psi^{-1}(\alpha)^{\vee}\rangle$, 
 the following conditions are equivalent:
\begin{itemize}
\item[(a)]  $L_{\fg'}(\lambda')$ is integrable;
\item[(b)] 
 $\langle\lambda+\rho,\alpha^{\vee}\rangle\in\mathbb{Z}_{>0}$ for all $\alpha\in \Sigma_L$.
   \end{itemize}
   
 \subsection{Quasi-admissible modules in general case}\label{g>0}
 We retain notation of~$\S$~\ref{newnot}.

  \subsubsection{Notation}\label{Wnatural}
  Recall that $\pi$ is a linearly independent subset of 
 $\Sigma_{\pr}$, $\fg_{\pi}$ is the corresponding  Kac-Moody subalgebra of 
 $\fg_{\ol{0}}$  and $\Delta_{\pi}=\Delta(\fg_{\pi})^{\ree}$. 
 We denote by $W[\pi]$ the subgroup of $W$ generated by 
 $\{s_{\alpha}\}_{\alpha\in\pi}$. This
   is the Weyl group of $\fg_{\pi}$ and 
   $\Delta_{\pi}=W[\pi](\pi)$. As before $D_{\pi}$, $\rho_{\pi}$ denote the corresponding Weyl denominator
   and Weyl vector respectively.

Recall that for any subset $X$ of $\Delta^{\ree}$
the notation $W[X]$ stands for the subgroup of $W$ generated
by $s_{\alpha}$ with $\alpha\in X$.

If $L$ is fixed, we 
retain notation of~$\S$~\ref{glambda} and 
denote by $\Sigma'_{\pr}$ the set of principal roots for $\Delta'$; we set
$$\pi':=\{\alpha'\in\Sigma'_{\pr}|\ \psi(\alpha)\in\Delta_{\pi}\}. $$

We suggest the following definition.

\subsubsection{Definition}\label{defn:Snowgen}
If $N\in {\CO}(\fg)$ is such that $\Delta_N$
 is well defined we say that $N$ is
 {\em $\pi$-quasi-admissible} if  $D_{\pi}e^{\rho_{\pi}}\ch N$ is naturally
 $W[\Delta_L\cap \Delta_{\pi}]$-anti-invariant, i.e. 
$$s_{\alpha}(D_{\pi}e^{\rho_{\pi}}\ch N)=-D_{\pi}e^{\rho_{\pi}}\ch N\ \ \text{ for all }
\alpha\in \Delta_L\cap \Delta_{\pi}.$$

\subsubsection{Remark}\label{rem:choice}
Note that $\Delta_{\ol{0}}^+$ does not depend on
   the choice of a base in $\Sp$, so the expression 
   $D_{\pi}e^{\rho_{\pi}}\ch N$ does not depend on this choice
as well.

\subsubsection{}
In the anisotropic case~\Defn{defn:Snowgen} is equivalent to~\Defn{defn:Snow}.

In~\Thm{thm:criterionN} below we partially extend~\Thm{thm:snow} (i) to general $\fg$;
in~\Thm{thm:criterion} below we  extend~\Thm{thm:snow} (ii).
The results which are similar  of~\Thm{thm:snow} (iii)--(v) and the 
 equivalence  (a) $\Longleftrightarrow$ (b) in~$\S$~\ref{snowabc}
do not hold for atypical quasi-admissible modules, since these assertions do not hold for
atypical $\pi$-integrable modules which are particular cases of $\pi$-quasi-admissible
modules.
  
\subsubsection{}\begin{exa}{exa:piDeltaL}
By~\Lem{lem:Nabc}, if $N\in\CO_{\Sigma}(\fg)$ is such that $\Delta_N$ is well defined and 
 $\pi\subset \Delta_N$, then
the $\pi$-quasi-admissibility of $N$ is equivalent to
the $\pi$-integrability.
\end{exa}


\subsection{Main results}
In this section we will prove the following theorems.

\subsubsection{}
\begin{thm}{thm:criterionN}
Let $N\in\CO^{\fin}(\fg)$ be such that $\Delta_N$ is well-defined.
Then the following conditions are equivalent:
\begin{itemize}
\item[(a)]
$N$ is $\pi$-quasi-admissible;
\item[(b)] all irreducible
subquotients of $N$ are  $\pi$-quasi-admissible;
\item[(c)] all irreducible
subquotients of 
$M:=\Res^{\fg}_{\fg_{\pi}} N$ are quasi-admissible
$\fg_{\pi}$-modules, $\Delta_M$ is well defined and    $\Delta_{\pi}\cap\Delta_N=\Delta_M$. 
\end{itemize}
\end{thm}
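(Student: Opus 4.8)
The plan is to reduce everything to the even Kac--Moody algebra $\fg_{\pi}$ by restriction and then invoke the type (An) theory of $\S$\ref{snowflake}. Set $M:=\Res^{\fg}_{\fg_{\pi}}N$; since $\fg_{\pi}$ and $\fg$ share the Cartan subalgebra $\fh$, restriction does not change weights, so $\ch M=\ch N$ and $M\in\CO_{\Sigma}(\fg_{\pi})$. Because $\fg_{\pi}$ is a Kac--Moody algebra whose set of principal roots is $\pi$ (so that $\fg_{\pi}$ is its own ``$\pi$-part''), the element $D_{\pi}e^{\rho_{\pi}}\ch M$ is exactly the object governing quasi-admissibility of $M$ as a $\fg_{\pi}$-module (\Defn{defn:Snow}, via $\S$\ref{521}), while $\pi$-quasi-admissibility of $N$ (\Defn{defn:Snowgen}) is the assertion that this same element is $W[\Delta_N\cap\Delta_{\pi}]$-anti-invariant. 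The first task is therefore to identify $\Delta_N\cap\Delta_{\pi}$ with the $\fg_{\pi}$-integral root system $\Delta_M$.

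To show that $\Delta_M$ is well defined and equals $\Delta_N\cap\Delta_{\pi}$ I would argue by integrality, independently of (a)--(c). For an even root $\alpha\in\Delta_{\pi}\subset\ol{\Delta}^{\an}_{\ol{0}}$, formula~(\ref{eq:alphaveebeta}) gives $\langle\mu_1-\mu_2,\alpha^{\vee}\rangle\in\mathbb{Z}$ for any two weights of $N$, so the residue $\langle\mu,\alpha^{\vee}\rangle\bmod\mathbb{Z}$ is the same for every $\fg_{\pi}$-highest weight $\mu$ of $M$; hence $\Delta_{L_{\fg_{\pi}}(\mu)}=R^{\fg_{\pi}}_{\mu+\rho_{\pi}}=\{\alpha\in\Delta_{\pi}\mid\langle\mu,\alpha^{\vee}\rangle\in\mathbb{Z}\}$ is independent of $\mu$ and $\Delta_M$ is well defined. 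Taking $\mu$ to be a $\fg$-highest weight $\lambda$ of $N$ and combining~(\ref{eq:rhoalpha}) (so that $\langle\lambda+\rho,\alpha^{\vee}\rangle$ and $\langle\lambda,\alpha^{\vee}\rangle$ have the same integrality) with~(\ref{eq:Rlambda+rho}) yields $\Delta_M=R_{\lambda+\rho}\cap\Delta_{\pi}\subseteq\Delta_N\cap\Delta_{\pi}$. The reverse inclusion, namely that an even root of $\Delta_{\pi}$ lying in $\Delta_{\lambda+\rho}$ already lies in $R_{\lambda+\rho}$, I would obtain by tracking integrality through the inductive construction of $\Delta_{\lambda+\rho}$ in $\S$\ref{Xi}: reflections by integral anisotropic roots and the isotropic sum-operation analysed in \Lem{lem:Xi} both preserve the property $\langle\lambda,\,\cdot\,^{\vee}\rangle\in\mathbb{Z}$ on even roots.

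With $\Delta_M=\Delta_N\cap\Delta_{\pi}$ in hand, the easy implications are purely formal. For (b)$\Rightarrow$(a): every irreducible subquotient $L$ of $N$ has $\Delta_L=\Delta_N$, so each $D_{\pi}e^{\rho_{\pi}}\ch L$ is $W[\Delta_N\cap\Delta_{\pi}]$-anti-invariant, and summing $\ch N=\sum_L[N:L]\ch L$ (property (iv) of $\CO_{\Sigma}$) with respect to the one fixed group preserves anti-invariance. For (c)$\Rightarrow$(b): given an irreducible $\fg$-subquotient $L$, the $\fg_{\pi}$-constituents of $\Res^{\fg}_{\fg_{\pi}}L$ are constituents of $M$, hence quasi-admissible, so $D_{\pi}e^{\rho_{\pi}}\ch L$ is a combination of $W[\Delta_M]=W[\Delta_L\cap\Delta_{\pi}]$-anti-invariant terms and $L$ is $\pi$-quasi-admissible. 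It remains to prove the one substantive implication (a)$\Rightarrow$(c), i.e.\ that anti-invariance of the aggregate character $D_{\pi}e^{\rho_{\pi}}\ch M$ forces quasi-admissibility of every irreducible $\fg_{\pi}$-constituent of $M$.

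For this last step I would work inside $\fg_{\pi}$ component-by-component: since $\fg$ is of type (Fin) or (Aff), each indecomposable component of $\fg_{\pi}$ is again finite or affine, so the statement reduces to \Thm{thm:snow}(i) on the affine components and its classical BGG analogue on the finite ones. The mechanism is a descending induction on the weights of $M$: by \Prop{prop:KK}, $D_{\pi}e^{\rho_{\pi}}\ch L_{\fg_{\pi}}(\mu)$ has leading term $e^{\mu+\rho_{\pi}}$ with support in $\mu+\rho_{\pi}-\mathbb{Z}_{\geq 0}\Delta_M^{+}$; a maximal $\mu$ with $[M:L_{\fg_{\pi}}(\mu)]\neq 0$ contributes a leading term that cannot be cancelled, so $W[\Delta_M]$-anti-invariance of the sum forces $\mu+\rho_{\pi}$ to be $\Delta_M$-regular dominant, whence $L_{\fg_{\pi}}(\mu)$ is quasi-admissible; subtracting its anti-invariant contribution and iterating, the finiteness built into $N\in\CO^{\fin}(\fg)$ makes the induction exhaustive. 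I expect this passage from anti-invariance of the aggregate character to quasi-admissibility of each constituent to be the main obstacle, precisely because here one must control cancellations among infinitely many weights and genuinely use the $\CO^{\fin}$-hypothesis rather than mere membership in $\CO_{\Sigma}$.
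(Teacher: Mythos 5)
There is a genuine gap, and it sits exactly at the step you declare to be ``independent of (a)--(c)''. Your claim that $\Delta_M$ is always well defined and equal to $\Delta_N\cap\Delta_{\pi}$ is false: Example~\ref{B13} of the paper is an explicit counterexample. There $\fg=B(1|3)$, $N=L(\lambda)$ with $\lambda+\rho=3\delta_1+2\delta_2+\delta_3-\vareps_1$, and one has $\Delta_M=R_{\lambda+\rho}\cap\Delta_{\pi}$ of type $D_3$, a \emph{proper} subset of $\Delta_N\cap\Delta_{\pi}=\Delta_{\pi}$ of type $B_3$. The reason your integrality-tracking argument breaks is that property (c) of $\S$\ref{Deltalambda} (the isotropic sum-operation) does \emph{not} preserve the condition $\langle\lambda,\,\cdot\,^{\vee}\rangle\in\mathbb{Z}$ on even roots: \Lem{lem:Xi}~(i) only recovers membership in $\ol{R}_{\lambda}$ for roots $\alpha$ that are short relative to $\ol{R}_{\lambda}$ (the hypothesis $(\gamma,\gamma)\in\mathbb{Z}(\alpha,\alpha)$ for all $\gamma\in\ol{R}_{\lambda}$), and for long roots of $\Delta_{\pi}$ the inclusion $\Delta_{\lambda+\rho}\cap\Delta_{\pi}\subset R_{\lambda+\rho}$ simply fails. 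Indeed, the one-way implication~(\ref{eq:DeltaLM}) in the paper records precisely that this equality is a \emph{consequence} of $\pi$-quasi-admissibility, not an automatic fact.

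Once this is understood, your reduction collapses: with the equality $\Delta_M=\Delta_N\cap\Delta_{\pi}$ removed, the implication (a)$\Rightarrow$(c) is the whole content of the theorem, namely that anti-invariance of $D_{\pi}e^{\rho_{\pi}}\ch N$ under $s_{\gamma}$ for $\gamma\in\Delta_N\cap\Delta_{\pi}$ forces the integrality $\langle\nu,\gamma^{\vee}\rangle\in\mathbb{Z}$ for every $\fg_{\pi}$-highest weight $\nu$ of $M$ (equivalently $\Delta_N\cap\Delta_{\pi}\subset\Delta_M$), together with quasi-admissibility of each constituent $L_{\fg_{\pi}}(\nu)$. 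The paper proves this by induction along the partial order on $\Delta_{\pi}$, writing $\gamma\in\psi(\pi')\setminus\pi$ as $s_{\alpha}\gamma+j\alpha$ with $\alpha\in\pi$, $\alpha\notin\Delta_N$ (\Lem{lem:gammanotinpi}), and applying the Enright functor for $\alpha$ --- which is available exactly because $\alpha\notin\Delta_N$ implies $N\in\CO_{\alpha}$, see~(\ref{eq:Oalpha}) --- to transport $s_{\gamma}$-anti-invariance to $s_{s_{\alpha}\gamma}$-anti-invariance (\Lem{lem:Enright}, formula~(\ref{eq:multi})); this is also where the hypothesis $N\in\CO^{\fin}(\fg)$ is genuinely used, since $\CO^{\fin}$ is stable under Enright functors. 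Your proposed substitute in the last paragraph (a leading-term cancellation induction) addresses only anti-invariance under $W[\Delta_M]$, i.e.\ the group you have already (incorrectly) identified with $W[\Delta_N\cap\Delta_{\pi}]$, so it cannot bridge the actual gap. Your formal implications (b)$\Rightarrow$(a), (c)$\Rightarrow$(a) and the deduction of (c)$\Rightarrow$(b) from (c)$\Rightarrow$(a) do agree with the paper, but they are the easy part.
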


The proof will be given in~$\S$~\ref{proof:thm:criterionN}.

\subsubsection{}
\begin{cor}{cor:critN}
If $N\in\CO^{\fin}(\fg)$ is  $\pi$-quasi-admissible and
$M:=\Res^{\fg}_{\fg_{\pi}} N$ is non-critical, then
$M$ is a completely reducible $\fg_{\pi}$-module.
\end{cor}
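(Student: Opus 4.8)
The plan is to combine \Thm{thm:criterionN} with \Thm{thm:snow}, exploiting the fact that $M = \Res^{\fg}_{\fg_{\pi}} N$ is a module over the Kac-Moody algebra (or direct product thereof) $\fg_{\pi}$, which is of type (Aff) or (Fin) in each indecomposable component. First I would invoke \Thm{thm:criterionN}: since $N \in \CO^{\fin}(\fg)$ is $\pi$-quasi-admissible and $\Delta_N$ is well-defined, condition (c) of that theorem holds, so every irreducible subquotient of $M$ is a quasi-admissible $\fg_{\pi}$-module, $\Delta_M$ is well-defined, and $\Delta_{\pi} \cap \Delta_N = \Delta_M$. In particular $M$ is a quasi-admissible $\fg_{\pi}$-module all of whose irreducible subquotients are quasi-admissible.

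The key reduction is that $M$ is non-critical by hypothesis. I would then apply \Thm{thm:snow}(v) to $\fg_{\pi}$: all non-critical quasi-admissible modules over a symmetrizable anisotropic affine Kac-Moody superalgebra are completely reducible. Since $\fg_{\pi}$ is an honest Kac-Moody algebra (its even-root realization, in the sense of~$\S$~\ref{reflexion}) each of whose indecomposable components is of type (Fin) or (Aff), and since a Kac-Moody \emph{algebra} is vacuously anisotropic (all its roots are even and non-isotropic), the theorem applies directly. Strictly, \Thm{thm:snow} is stated for an indecomposable algebra, so I would first decompose $\fg_{\pi}$ into its indecomposable components according to the connected components of the Dynkin diagram of $\pi$, handle the complete reducibility componentwise, and then combine: a module over a direct product that is completely reducible over each factor is completely reducible over the product. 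The finite-dimensional components contribute via classical complete reducibility, and the affine components via \Thm{thm:snow}(v).

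The main obstacle I anticipate is verifying that the hypotheses of \Thm{thm:snow} are genuinely met for $M$ as a $\fg_{\pi}$-module, rather than merely for $\fg$-modules. Two points need care. First, \Thm{thm:snow}(v) requires the module to be quasi-admissible \emph{and} non-critical as a $\fg_{\pi}$-module; non-criticality is given by hypothesis on $M$, and quasi-admissibility comes from \Thm{thm:criterionN}(c) together with \Thm{thm:snow}(i), which lets me pass from ``every irreducible subquotient is quasi-admissible'' to ``$M$ is quasi-admissible.'' Second, I must ensure $M$ lies in a category for which the theorem is stated; here $\Res^{\fg}_{\fg_{\pi}} N \in \CO_{\Sigma}(\fg_{\pi})$ by~$\S$~\ref{categoryO}(iii), so $M$ is a legitimate object. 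The identity $\Delta_{\pi} \cap \Delta_N = \Delta_M$ from \Thm{thm:criterionN}(c) guarantees $\Delta_M$ is well-defined, which is needed for the notion of quasi-admissibility over $\fg_{\pi}$ to make sense.

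Thus the proof would read: by \Thm{thm:criterionN} (applicable since $N$ is $\pi$-quasi-admissible in $\CO^{\fin}(\fg)$), every irreducible subquotient of $M$ is a quasi-admissible $\fg_{\pi}$-module and $\Delta_M$ is well-defined; by \Thm{thm:snow}(i) this makes $M$ quasi-admissible over $\fg_{\pi}$; since $M$ is non-critical by hypothesis, \Thm{thm:snow}(v), applied to each indecomposable component of $\fg_{\pi}$ and then assembled over the product, shows $M$ is completely reducible. I expect the bulk of the remaining work to be the bookkeeping for the decomposition of $\fg_{\pi}$ into components and the observation that a Kac-Moody algebra qualifies as anisotropic, both of which are routine given the framework already established in the paper.
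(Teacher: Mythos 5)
Your proposal rests on exactly the two ingredients the paper uses --- \Thm{thm:criterionN} applied to $N$, followed by \Thm{thm:snow} for $\fg_{\pi}$ --- so the route is essentially the paper's; but you bypass the one step that constitutes the real content of the paper's proof. The issue is that $M=\Res^{\fg}_{\fg_{\pi}}N$ need not be finitely generated over $\fg_{\pi}$: the paper warns explicitly (remark after \S~\ref{triangKM}) that $\CO^{\fin}$ is not preserved by restriction, since already $\Res^{\fg}_{\fg_{\pi}}M(\lambda)$ can fail to be finitely generated. For this reason the paper does not apply \Thm{thm:snow} to $M$ itself: it fixes $v\in M$, passes to the cyclic submodule $M_1$ generated by $v$, which \emph{does} lie in $\CO^{\fin}(\fg_{\pi})$, applies \Thm{thm:criterionN} (for $N$) and \Thm{thm:snow} to conclude that $M_1$ is completely reducible, and then upgrades to $M$ by the elementary fact that a module which is a sum of irreducible submodules is completely reducible (\cite{Lang}, Chapter XVII) --- the same device the paper uses again in the proof of \Prop{prop:boundary}. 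Your direct application of \Thm{thm:snow}(i),(v) to $M$ is legitimate only under the stronger reading that those statements hold for arbitrary, possibly non-finitely generated, objects of $\CO_{\Sigma}(\fg_{\pi})$; that version is indeed true, but its proof is precisely the cyclic reduction just described, so your argument either hides this step or presupposes more than the authors themselves are willing to invoke. This is the gap to close, and closing it reproduces the paper's proof.

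Two minor further points. Your appeal to ``classical complete reducibility'' for the finite-dimensional components of $\fg_{\pi}$ is not quite right: the restricted modules are not finite-dimensional, and what is needed there is again the DGK/Jantzen-type semisimplicity for highest weights that are dominant and regular with respect to the integral root subsystem. Also, the componentwise decomposition of $\fg_{\pi}$ that you spell out is harmless but not where the difficulty lies; the paper applies \Thm{thm:snow} to $\fg_{\pi}$ wholesale and glosses over this point just as you could.
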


The proof will be given in~$\S$~\ref{proof:cor:critN}.

\subsubsection{}
\begin{thm}{thm:criterion}
If $L:=L(\lambda)$ is non-critical, then the following conditions are equivalent:
\begin{itemize}
\item[(a)]
$L$ is  $\pi$-quasi-admissible;
\item[(b)]  $L'$ is  $\pi'$-integrable, where $\pi'=\{\alpha'\in\Sigma'_{\pr}| \
\psi(\alpha')\in\Delta_{\pi}\}$.
\end{itemize}
\end{thm}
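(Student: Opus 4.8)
The plan is to reduce both conditions to the maximal even Kac--Moody subalgebras $\fg_{\pi}\subset\fg$ and $\fg'_{\pi'}\subset\fg'$, and to show that the correspondence $L\mapsto L'$ of $\S$\ref{glambda} is compatible with passage to these even subalgebras. First, by \Lem{lem:RRlambda}(ii) one has $\Delta_{L'}=(\Delta')^{\ree}$, and since $\pi'\subset\Sigma'_{\pr}\subset(\Delta')^{\ree}=\Delta_{L'}$, \Exa{exa:piDeltaL} shows that (b) is equivalent to $L'$ being \emph{$\pi'$-quasi-admissible}. Thus it suffices to match $\pi$-quasi-admissibility of $L$ with $\pi'$-quasi-admissibility of $L'$ across the isometry $\psi$.

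The key structural step is an identification of root systems. Because $\psi$ is a parity- and positivity-preserving isometry carrying $(\Delta')^{\ree}$ bijectively onto $\Delta_L$, its restriction to even real roots carries the base $\Sigma'_{\pr}$ of $(\Delta'_{\ol{0}})^{\ree}$ onto the set of indecomposable elements of $(\Delta_L\cap\Delta^{\an}_{\ol{0}})^+$, i.e. onto the base of $\Delta_L\cap\Delta^{\an}_{\ol{0}}$ (this is the combinatorial content of \Lem{lem:KMbase}). Since $\pi\subset\Sigma_{\pr}$, the subsystem $\Delta_{\pi}$ is a standard parabolic subsystem of $\Delta^{\an}_{\ol{0}}$, and a parabolic-intersection argument then gives that the base of $(\Delta_L\cap\Delta^{\an}_{\ol{0}})\cap\Delta_{\pi}=\Delta_L\cap\Delta_{\pi}$ is exactly $\psi(\Sigma'_{\pr})\cap\Delta_{\pi}=\psi(\pi')$. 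Writing $\Pi:=\psi(\pi')$, it follows that $\psi$ restricts to an isometry $\Delta_{\pi'}\iso\Delta_L\cap\Delta_{\pi}$ with $\Pi$ the canonical base of $\Delta_L\cap\Delta_{\pi}$, so that $\fg'_{\pi'}$ is identified via $\psi$ with the even Kac--Moody subalgebra $\fg_{\Pi}\subset\fg_{\pi}$ generated by $\fh$ and the root spaces $\fg_{\pm\beta}$, $\beta\in\Pi$. This step is routine root-system combinatorics.

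Next I would invoke \Thm{thm:criterionN}. As $L$ is irreducible it lies in $\CO^{\fin}(\fg)$ with $\Delta_L$ well defined, so $L$ is $\pi$-quasi-admissible iff every irreducible subquotient of $M:=\Res^{\fg}_{\fg_{\pi}}L$ is a quasi-admissible $\fg_{\pi}$-module and $\Delta_M=\Delta_{\pi}\cap\Delta_L$. The algebra $\fg_{\pi}$ is of type (An), so the theory of $\S$\ref{snowflake} and \Thm{thm:snow} applies: an irreducible constituent $L_{\fg_{\pi}}(\mu)$ whose integral system is $\Delta_{\pi}\cap\Delta_L$ (base $\Pi$) is quasi-admissible iff $\langle\mu+\rho_{\pi},\beta^{\vee}\rangle\in\mathbb{Z}_{>0}$ for all $\beta\in\Pi$, by \Thm{thm:snow}(ii) and $\S$\ref{snowabc}. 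For the top constituent $\mu=\lambda$ one has $\langle\lambda+\rho_{\pi},\beta^{\vee}\rangle=\langle\lambda+\rho,\beta^{\vee}\rangle$ (because $\rho-\rho_{\pi}$ is $W[\pi]$-invariant, hence orthogonal to $\Delta_{\pi}^{\vee}$), and by $\S$\ref{snowabc} this equals $\langle\lambda'+\rho',\psi^{-1}(\beta)^{\vee}\rangle$. On the other side, $\pi'$-integrability of $L'$ is detected, for each even principal root $\alpha'\in\pi'$, by the number $\langle\lambda'+\rho',(\alpha')^{\vee}\rangle$ through the rank-one subalgebra generated by $\alpha'$. Hence both (a) and (b) are governed by the same family $\{\langle\lambda'+\rho',(\alpha')^{\vee}\rangle\}_{\alpha'\in\pi'}$, and the transfer of highest weights underlying this is the commutative diagram~(\ref{diag1}) of \Lem{lem:lambda11}.

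The main obstacle is exactly this last matching across $\psi$, on two counts. First, one must control \emph{all} irreducible constituents of the non-irreducible restriction $M$, not merely its top constituent $L_{\fg_{\pi}}(\lambda)$; here \Thm{thm:criterionN}, the complete-reducibility statement \Thm{thm:snow}(v), and \Lem{lem:KK} (forcing every constituent to share the integral system $\Delta_{\pi}\cap\Delta_L$) do the real work, reducing the whole of $M$ to the single numerical condition above. Second, the principal roots $\beta\in\Pi$ with $\beta/2\in\Delta$ generate an $\mathfrak{osp}(1|2)$ rather than an $\fsl_2$, so that local nilpotency of $\fg_{\pm\beta}$ is detected by $\beta/2$ and not by $\beta$; this parity bookkeeping is handled uniformly by \Lem{lem:Nabc}, whose identification of $s_{\gamma}$-anti-invariance of $D_{\pi}e^{\rho_{\pi}}\ch N$ with local nilpotency is precisely what makes the two even-level conditions transportable along $\psi$.
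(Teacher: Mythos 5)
You correctly reduce (b) to $\pi'$-quasi-admissibility of $L'$ (via \Lem{lem:RRlambda}(ii) and \Exa{exa:piDeltaL}) and correctly identify $\psi(\pi')$ as the base of $\Delta_L\cap\Delta_{\pi}$ (this is \Lem{lem:W'L}(ii), which the paper also uses). But the central claim of your argument --- that both (a) and (b) ``are governed by the same family $\{\langle\lambda'+\rho',(\alpha')^{\vee}\rangle\}_{\alpha'\in\pi'}$'' --- is false for atypical $\lambda$, and the paper says so explicitly: the remark just before \Exa{exa:piDeltaL} states that the numerical equivalence (a) $\Longleftrightarrow$ (b) of \S\ref{snowabc} does not hold for atypical modules. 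Rank-one detection of local nilpotency of $\fg'_{-\gamma'}$ by a pairing with the highest weight works only when $\gamma'$ or $\gamma'/2$ is simple for the triangular decomposition in use; for the other $\gamma'\in\pi'$ one must pass to a base $\Sigma'_1\in\Sp'$ containing it, and by~(\ref{eq:hwt}) the highest weight $\hwt_{\Sigma'_1}L'$ differs from $\lambda'+\rho'$ by a sum of isotropic roots whenever $L'$ is atypical. That shift changes the relevant pairings by integers, so the condition ``$\langle\lambda'+\rho',(\gamma')^{\vee}\rangle\in\mathbb{Z}_{>0}$'' is neither necessary nor sufficient for local nilpotency; the same defect occurs on the (a) side for $\gamma\in\psi(\pi')\setminus\pi$. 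This is precisely the difficulty the paper's proof is built to overcome: it reduces (a), (b) to the root-by-root statements (a$'$), (b$'$) and proves their equivalence by induction along the partial order on $\Delta_{\pi}$ using the Enright functor --- \Lem{lem:gammanotinpi} produces $\alpha\in\pi\setminus\Delta_{\lambda+\rho}$ with $s_{\alpha}\gamma<\gamma$, (\ref{eq:Oalpha}) puts $L$ in the category $\CO_{\alpha}$ where the functor is defined, \Lem{lem:Enright} transports the anti-invariance condition, and \Lem{lem:salphaSigma'} tracks how $\psi$ changes under the reflection; see \Lem{lem:abequivnew}. The commutativity of diagram~(\ref{diag1}), which you cite, is indeed an ingredient, but it cannot replace this induction. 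No substitute for the Enright mechanism appears in your proposal.

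There is a second, independent gap: your passage from ``all irreducible constituents of $M=\Res^{\fg}_{\fg_{\pi}}L$'' to ``the top constituent alone'' is circular. \Thm{thm:snow}(v) (complete reducibility) applies only to modules already known to be quasi-admissible, i.e., by \Thm{thm:snow}(i), whose constituents are already known to be quasi-admissible --- which is what you are trying to deduce; and \Lem{lem:KK} controls only the integral root systems of constituents (over $\fg$, not over $\fg_{\pi}$), not the positivity or integrality of the pairings $\langle\mu+\rho_{\pi},\beta^{\vee}\rangle$. Note also that $\Delta_M$ equals $R_{\lambda+\rho}\cap\Delta_{\pi}$, which can be strictly smaller than $\Delta_L\cap\Delta_{\pi}$ (Example~\ref{B13}), so your assertion that the constituents have integral system $\Delta_{\pi}\cap\Delta_L$ with base $\Pi$ is itself part of what must be proven, not an input. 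Finally, a smaller slip: $\langle\lambda+\rho_{\pi},\beta^{\vee}\rangle=\langle\lambda+\rho,\beta^{\vee}\rangle$ fails in general; for $\beta=2\alpha$ with $\alpha\in\Sigma$ odd anisotropic one has $\langle\rho,\beta^{\vee}\rangle=\frac{1}{2}$ while $\langle\rho_{\pi},\beta^{\vee}\rangle=1$, so $\rho-\rho_{\pi}$ is not orthogonal to $\Delta_{\pi}$.
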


The proof will be given in~$\S$~\ref{proof:thm:criterion}.



\subsubsection{Remark}
By~(\ref{eq:Rlambda+rho}) we have
$\{\alpha\in\Delta_{\pi}|\ \langle \lambda,\alpha^{\vee}\rangle\in\mathbb{Z}\}=R_{\lambda+\rho}\cap\Delta_{\pi}$.
Combining~\Thm{thm:criterionN} and~$\S$~\ref{DeltaMN}  we obtain
\begin{equation}\label{eq:DeltaLM}
L(\lambda)\ \text{ is  $\pi$-quasi-admissible }\ \Longrightarrow\ \ \ 
\Delta_{\lambda+\rho}\cap\Delta_{\pi}=R_{\lambda+\rho}\cap\Delta_{\pi}.
\end{equation}

\subsection{The set $\Delta_M$ for $M:=\Res^{\fg}_{\fg_{\pi}} N$}\label{DeltaMN}
Recall that $\Delta_{\pi}\subset\fh^*$ is the set of real roots of the Kac-Moody algebra
$\fg_{\pi}$.

The set $\Delta_M$ is well defined if and only if
the set 
$\{\alpha\in\Delta_{\pi}| \ \langle\nu,\alpha^{\vee}\rangle \in\mathbb{Z}\}$
is the same for 
 all $\nu$ such that  $[M: L_{\fg_{\pi}}(\nu)]\not=0$; moreover
 $\Delta_M$ coincides with these sets. (Notice that  
 $\{\alpha\in\Delta_{\pi}| \ \langle\nu,\alpha^{\vee}\rangle \in\mathbb{Z}\}$
 is the integral root subsystem 
of $\Delta_{\pi}$ corresponding to the weight 
$\nu$.)
Take $\nu$ such that $[M: L_{\fg_{\pi}}(\nu)]\not=0$. Then
there exists $\lambda\in\fh^*$ such that $L(\lambda)$ is a subquotient of $N$ and
$L(\lambda)_{\nu}\not=0$, so
$\lambda-\nu\in\mathbb{Z}\pi$. Therefore
$$\{\beta\in\Delta_{\pi}|
\ \langle \nu,\beta^{\vee}\rangle\in\mathbb{Z}\}=\{\beta\in\Delta_{\pi}|
\ \langle \lambda,\beta^{\vee}\rangle\in\mathbb{Z}\}.$$

 We conclude that 
\begin{itemize}
\item[(i)] $\Delta_M$ is well defined if and only if
the set $\{\beta\in\Delta_{\pi}|
\ \langle \lambda,\beta^{\vee}\rangle\in\mathbb{Z}\}$
is the same for all $\lambda$
such that $[N: L(\lambda)]\not=0$;
\item[(ii)] if $\Delta_M$ is well defined, then 
$\Delta_M=\{\beta\in\Delta_{\pi}|
\ \langle \lambda,\beta^{\vee}\rangle\in\mathbb{Z}\}$
for any $\lambda$
such that $[N: L(\lambda)]\not=0$ and  $\Delta_M\subset (\Delta_N\cap \Delta_{\pi})$;
\item[(iii)] 
$\{\alpha\in\Delta_{\pi}| \ \langle\nu,\alpha^{\vee}\rangle \in\mathbb{Z}\}\subset (\Delta_N\cap\Delta_{\pi})$ for any $\nu$ such that $N_{\nu}\not=0$.
\end{itemize}

\subsubsection{}\label{pi'}
The important example for the next sections is $L=L(\lambda)$,
 $M:=\Res^{\fg}_{\fg_{\pi}} L$.
 Then $$\Delta_L\cap\Delta_{\pi}=\Delta_{\pi}\cap \Delta_{\lambda+\rho}.$$
 Combining 
(i) and~(\ref{eq:Rlambda+rho}) we get
$$\Delta_M=\{\beta\in\Delta_{\pi}|
\ \langle \lambda,\beta^{\vee}\rangle\in\mathbb{Z}\}=R_{\lambda+\rho}\cap\Delta_{\pi}.$$

The condition  $\Delta_L=\Delta_M\cap\Delta_{\pi}$  holds in many cases, for example if
\begin{itemize}
\item[$\bullet$] $\Delta_{\pi}$ consists of short roots: in this case~\Lem{lem:Xi}  (i) gives
$\Delta_{\pi}\cap R_{\lambda+\rho}=
\Delta_{\pi}\cap \Delta_{\lambda+\rho}$.
\item[$\bullet$] if $L(\lambda)$ is typical (in this case 
$\Delta_L=R_{\lambda+\rho}$).
\end{itemize}

Introduce $R_{\lambda'+\rho'}\subset\Delta'$ as in~$\S$~\ref{Deltalambda}.
By~\Lem{lem:RRlambda}, 
$\psi$ maps  $R_{\lambda'+\rho'}$ to $R_{\lambda+\rho}$, and  $\Delta_{L'}=(\Delta')^{\ree}$. 
Therefore $\Delta_{L'}\cap \Delta_{\pi'}=\Delta_{\pi'}$ which
gives the following useful equivalence
\begin{equation}\label{eq:DeltaRDelta'R'}
\Delta_{\pi}\cap\Delta_{\lambda+\rho}=\Delta_{\pi}\cap R_{\lambda+\rho}\ \ 
\Longleftrightarrow\ \ \pi'\subset R_{\lambda'+\rho'}.
\end{equation}

\subsubsection{Example}\label{B13}
 The following example shows that  $\Delta_{\pi}\cap R_{\lambda+\rho}$  can be a proper subset of
 $\Delta_{\pi}\cap\Delta_{\lambda+\rho}$ (i.e. $\Delta_M$ 
is a proper subset of $\Delta_{\pi}\cap\Delta_L$).

Take $\fg$ of type $B(1|3)$ with $\Sigma=\{\delta_1-\delta_2, \delta_2-\delta_3,  \delta_3-\vareps_1, 
\vareps_1\}$. One has $p(\vareps_1)=\ol{0}$ and $p(\delta_i)=\ol{1}$.
We have $\Sigma_{\pr}=\{\delta_1-\delta_2, \delta_2-\delta_3,  
 \delta_3,\vareps_1\}$ and we take $\pi:=\{\delta_1-\delta_2, \delta_2-\delta_3,  
 \delta_3\}$. 
 
 For
 $\lambda+\rho:=3\delta_1+2\delta_2+\delta_3-\vareps_1$ we have
${R}_{\lambda+\rho}=\{\pm\vareps_1;\pm \delta_i\pm\delta_j;\pm (\delta_3-\vareps_1)\}$.
Therefore
  $\Delta_{\lambda+\rho}=\Delta$ and $\Delta_{\pi}\cap\Delta_{\lambda+\rho}= \Delta_{\pi}$ is 
  the root system of  type $B_3$, and
 $R_{\lambda+\rho}\cap \Delta_{\pi}=\{\pm \delta_i\pm\delta_j\}$ is the root system of type 
$D_3$.

\subsubsection{}
\begin{lem}{lem:W'L}
\begin{enumerate}
\item The set
 of indecomposable elements in $\Delta_L\cap\Delta^+_{\ol{0}}$
 coincides with $\psi(\Sigma'_{\pr})$. One has  
 $\Delta_L\cap\Delta_{\ol{0}}=W[\Delta_L](\psi(\Sigma'_{\pr}))$
and  $W[\Delta_L]$ is the Coxeter group 
generated by $s_{\alpha}$ with $\alpha\in \psi(\Sigma'_{\pr})$. 

\item The set of indecomposable elements in $\Delta_L\cap \Delta^+_{\pi}$ 
coincides with $\psi(\pi')$. One has  
$\Delta_L\cap\Delta_{\pi}=W[\Delta_L\cap \Delta_{\pi}](\psi(\pi'))$
and   $W[\Delta_L\cap \Delta_{\pi}]$ is the Coxeter group 
generated by $s_{\alpha}$ with $\alpha\in\psi(\pi')$.
\end{enumerate}
\end{lem}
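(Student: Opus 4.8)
The plan is to transport the structure theory of principal roots (\S\ref{Bpr}, \S\ref{Spine}) from the Kac--Moody superalgebra $\fg'$ to $\Delta_L$ along the isometry $\psi$. Recall from \S\ref{KMsubsystem} that $\psi$ is a form-preserving linear map restricting to a parity-preserving bijection $(\Delta')^{\ree}\iso\Delta_L$, that by~\Lem{lem:KMbase}(i) it carries positive roots to positive roots, and that it is injective on each indecomposable component. Since $\fg'$ is itself a symmetrizable Kac--Moody superalgebra, \S\ref{Bpr} applied to $\fg'$ tells us that $\Sigma'_{\pr}$ is the set of indecomposable elements of $((\Delta')^{\ree}_{\ol0})^+$, that $(\Delta')^{\ree}_{\ol0}=W[\Delta'](\Sigma'_{\pr})$, and that $W[\Delta']$ is the Coxeter group generated by the $s_{\alpha'}$, $\alpha'\in\Sigma'_{\pr}$. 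Because $\psi$ preserves the form, for anisotropic $\alpha'$ it conjugates reflections, $\psi\circ s_{\alpha'}=s_{\psi(\alpha')}\circ\psi$ on $(\Delta')^{\ree}$, and it preserves the Cartan entries $\langle\beta',(\alpha')^{\vee}\rangle$.

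For (i) I would argue as follows. Positivity and parity preservation give a bijection $((\Delta')^{\ree}_{\ol0})^+\iso \Delta_L\cap\Delta^+_{\ol0}$. Every $\gamma\in\Delta_L\cap\Delta^+_{\ol0}$ is $\psi(\gamma')$ with $\gamma'$ a $\mathbb{Z}_{\geq0}$-combination of $\Sigma'_{\pr}$ inside a single component, so applying $\psi$ (linear there) expresses $\gamma$ as a $\mathbb{Z}_{\geq0}$-combination of $\psi(\Sigma'_{\pr})$; hence $\psi(\Sigma'_{\pr})$ generates $\Delta_L\cap\Delta^+_{\ol0}$ additively and the indecomposable elements lie in $\psi(\Sigma'_{\pr})$. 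Conversely each $\psi(\alpha')$ is indecomposable: transporting the property of \S\ref{Spine} that $s_{\alpha'}$ preserves $((\Delta')^{\ree}_{\ol0})^+\setminus\mathbb{Z}\alpha'$, one gets that $s_{\psi(\alpha')}$ preserves $(\Delta_L\cap\Delta^+_{\ol0})\setminus\mathbb{Z}\psi(\alpha')$, which rules out $\psi(\alpha')=\beta_1+\beta_2$ with $\beta_i$ positive (applying $s_{\psi(\alpha')}$ would send the sum to a negative root). The orbit statement follows from $\Delta_L\cap\Delta_{\ol0}=\psi((\Delta')^{\ree}_{\ol0})=\psi(W[\Delta'](\Sigma'_{\pr}))=W[\psi(\Sigma'_{\pr})](\psi(\Sigma'_{\pr}))$, together with $W[\Delta_L]=W[\psi(\Sigma'_{\pr})]$ (odd anisotropic reflections coincide with those of their doubles, which are even and lie in $\Delta_L$ by property (d) of \S\ref{Deltalambda}). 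Finally $s_{\alpha'}\mapsto s_{\psi(\alpha')}$ respects the Coxeter relations because $\psi$ preserves the Cartan entries, and is injective because the Weyl group of a Kac--Moody algebra acts faithfully on its real roots and $\psi$ is a bijection; hence $W[\Delta_L]$ is Coxeter on $\{s_{\psi(\alpha')}\}$.

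For (ii) the indecomposability half is free: since $\psi(\pi')\subseteq\Delta_{\pi}$ by the definition of $\pi'$ and $\Delta_{\pi}$ is reflection-closed, $\psi(\pi')\subseteq(\Delta_L\cap\Delta_{\pi})^+$, and each $\psi(\alpha')$ with $\alpha'\in\pi'$, being indecomposable already in the larger set $\Delta_L\cap\Delta^+_{\ol0}$ by (i), is a fortiori indecomposable in $(\Delta_L\cap\Delta_{\pi})^+$. The whole statement then reduces to the generation claim $(\Delta_L\cap\Delta_{\pi})^+\subseteq\mathbb{Z}_{\geq0}\psi(\pi')$; granting it, the indecomposables are exactly $\psi(\pi')$ and the orbit and Coxeter assertions follow by the same transport as in (i), now applied to the sub-Kac--Moody algebra of $\fg'$ with base $\pi'$.

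The main obstacle is this generation claim. My approach is a support argument: write $\gamma\in(\Delta_L\cap\Delta_{\pi})^+$ as $\sum_{\alpha'\in\Sigma'_{\pr}}n_{\alpha'}\psi(\alpha')$ with $n_{\alpha'}\in\mathbb{Z}_{\geq0}$ (from (i)), and show $n_{\alpha'}=0$ whenever $\alpha'\notin\pi'$. The idea is to pair with a dominant coweight of the even Kac--Moody algebra $\fg(B_{\pr})$ of $\fg$: the $\Sigma_{\pr}$-coordinates of a positive even real root are non-negative, $\Delta_{\pi}$ consists precisely of the roots supported on $\pi$, and $\psi(\alpha')\in\Delta_{\pi}$ is equivalent to $\alpha'\in\pi'$; a functional that is non-negative on $\Delta^{\ree+}_{\ol0}$ and vanishes on $\Delta_{\pi}$ then forces each contributing $\psi(\alpha')$ to be supported on $\pi$. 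The delicate point, which I expect to be the crux of the whole lemma, is that $\Sigma_{\pr}$ need not be linearly independent when $\fg$ is affine, so this separating functional must be produced on the linearly independent realization of $\fg(B_{\pr})$ and one must check that $\ker\phi$ for the epimorphism $\phi$ of \S\ref{Bpr} onto $\Sigma_{\pr}$ --- spanned by differences of the $\delta$'s of the even components --- does not spoil the non-negativity bookkeeping.
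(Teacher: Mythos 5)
Your part (i) is correct and is essentially the paper's argument: both transport \S\ref{Bpr} for $\fg'$ along the homomorphism $W(\fg')\to W[\Delta_L]$, $s_{\alpha'}\mapsto s_{\psi(\alpha')}$ (an isomorphism by faithfulness of the $W(\fg')$-action), and your reflection argument for why indecomposability survives $\psi$ usefully fills in a step the paper leaves implicit (it is needed, since $\psi$ is not injective on $\mathbb{C}\Delta'$). The genuine gap is in part (ii), at the generation claim $(\Delta_L\cap\Delta^+_{\pi})\subset\mathbb{Z}_{\geq 0}\psi(\pi')$, which you correctly single out as the crux but then only sketch; and the sketch breaks at exactly the point you defer. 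The separating functional you want --- non-negative on positive roots and vanishing precisely on those supported over $\pi$ --- lives on the root lattice of $\fg(B_{\pr})$ and does \emph{not} descend along $\phi$: when $\fg$ is affine, $\ker\phi$ is spanned by differences of (rescaled) null roots of the affine components of the Dynkin diagram of $\Sigma_{\pr}$, and as soon as $\pi$ omits a vertex of one affine component while containing another component entirely, the functional is nonzero on such a difference. Hence it cannot be evaluated on your expansion $\gamma=\sum_{\alpha'}n_{\alpha'}\psi(\alpha')$, which lives in $\fh^*$; lifting each $\psi(\alpha')$ to its real-root preimage in $\fg(B_{\pr})$ determines the lift of the sum only modulo $\ker\phi$. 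And the danger is real, not bookkeeping: for $\fg=\osp(3|2)^{(1)}$ and $\pi=\{\vareps_1,\delta-\vareps_1,2\delta_1\}$ (the paper's own example in \S\ref{newnot}), the even real root $2\delta_1+2\delta=2\vareps_1+2(\delta-\vareps_1)+2\delta_1$ lies in $\mathbb{Z}_{\geq 0}\pi$ but not in $\Delta_{\pi}$, so non-negativity of $\fh^*$-coordinates with support in $\pi$ does not detect membership in $\Delta_{\pi}$; support must be read in $\fg(B_{\pr})$, where your expansion does not canonically live.

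The paper closes this gap with a tool you never invoke: \Lem{lem:standard} applied in $\fg'$ to $\beta'=\psi^{-1}(\beta)$. This produces simultaneously a reflection chain $\beta'=s_{\alpha'_j}\cdots s_{\alpha'_2}\alpha'_1$ with $\alpha'_i\in\Sigma'_{\pr}$ and a positive expansion $\beta'=\sum_i k_i\alpha'_i$; pushing through $\psi$ and running the support argument over $\Sigma_{\pr}$ on each \emph{individual} root $\alpha_i=\psi(\alpha'_i)$ (each is itself a positive even real root with its own non-negative $\Sigma_{\pr}$-expansion) shows $\alpha_i\in\psi(\pi')$, and the chain then yields generation, the orbit statement $\Delta_L\cap\Delta_{\pi}=W[\psi(\pi')](\psi(\pi'))$, and the Coxeter statement all at once. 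Your substitute for these last assertions --- ``the same transport as in (i), applied to the sub-Kac--Moody algebra of $\fg'$ with base $\pi'$'' --- is moreover circular as stated: it presupposes that $\psi$ maps the real roots of that subalgebra onto $\Delta_L\cap\Delta_{\pi}$, whose nontrivial inclusion is exactly the orbit statement; and pulling your $\fh^*$-expansion back through $\psi$ is again obstructed by $\ker\psi$, since such expansions can mix indecomposable components of $\fg'$ (cf.\ the $\fsl_4^{(1)}$ example in \S\ref{KMsubsystem}). So part (ii) needs the paper's mechanism, or a genuinely new resolution of the descent problem, before it can be considered proved.
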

\begin{proof}
The map $s_{\alpha'}\mapsto s_{\psi(\alpha')}$ for $\alpha'\in \Delta'$ defines
a group homomorphism 
$\psi_W:\ W(\fg')\to W[\Delta_L]$
which satisfies
$\psi(w\alpha')=\psi_W(w)\psi(\alpha')$ for all $w\in W(\fg'),\ \alpha'\in\Delta'$.
Since
$\mathbb{C}\Delta'$ is a faithful representation of $W(\fg')$, the map $\psi_W$
is bijective, so 
$$\psi_W: W(\fg')\iso W[\Delta_L].$$
By~$\S$~\ref{Bpr}, $(\Delta'_{\ol{0}})^{\ree}$
is a real root system with the set of simple roots $\Sigma'_{\pr}$ and
 $W(\fg')$ is the Coxeter group generated by $s_{\alpha'}$
with $\alpha'\in\Sigma'_{\pr}$. This implies (i).

For (ii) take any $\beta\in (\Delta_L\cap \Delta^+_{\pi})$
and set $\beta':=\psi^{-1}(\beta)$. Then
$\beta'$ is a positive even real root in $\Delta'$.
By~\Lem{lem:standard}, 
there exist $\alpha'_1,\ldots,\alpha'_j\in \pi'$
such that $\beta'=s_{\alpha'_j}\ldots s_{\alpha'_2}\alpha'_1$
and $\beta'=\sum\limits_{i=1}^j k_i\alpha'_i$ with $k_i>0$.
Set $\alpha_i:=\psi(\alpha'_i)$ and notice that $\alpha_i\in \psi(\Sigma'_{\pr})$.
Therefore
$$\beta=\sum\limits_{i=1}^j k_i\alpha_i\ \ \text{ with }\ k_i\in\mathbb{Z}_{>0},\ \ 
\alpha_i\in \psi(\Sigma'_{\pr}).$$
Let us show that $\alpha_i\in\psi(\pi')$ for all $i=1,\ldots,j$.
Since $\Sigma_{\pr}$
is the set of indecomposable elements in $\Delta^{\ree +}_{\ol{0}}$ we have 
$$\alpha_i=\sum\limits_{\gamma\in\Sigma_{\pr}} m^i_{\gamma} \gamma\ \text{
for some }\ m^i_{\gamma}\geq 0.$$
Therefore
$$\beta=\sum\limits_{\gamma\in\Sigma_{\pr}} n_{\gamma}\gamma,\ \ \text{ for }
\ n_{\gamma}:=\sum\limits_{i=1}^j k_i m^i_{\gamma}.$$
 If $\alpha_i\not\in\psi(\pi')$, then
$m^i_{\gamma}\not=0$ for some $\gamma\not\in\pi$, so
$n_{\gamma}\not=0$ and thus 
$\beta\not\in \Delta_{\pi}$, a contradiction. Hence $\alpha_1,\ldots, \alpha_j\in \psi(\pi')$. Therefore 
$\beta\in\mathbb{Z}_{\geq 0}\psi(\pi')$
and 
$\beta=s_{\alpha_j}\ldots s_{\alpha_2}\alpha_1$. This proves that
$\psi(\pi')$ is the set of indecomposable elements in $\Delta_L\cap \Delta^+_{\pi}$
and establishes the formula
$\Delta_{L}\cap\Delta_{\pi}=W[\psi(\pi')](\psi(\pi'))$ which implies
$W[\Delta_{L}\cap\Delta_{\pi}]=W[\psi(\pi')]$. Since $\psi(\pi')$
is a subset of $\psi(\Sigma'_{\pr})$ and, by (i),  $s_{\alpha}$
with $\alpha\in \psi(\Sigma'_{\pr})$ are Coxeter generators of 
$W[\Delta_L]=W[\psi(\Sigma'_{\pr})]$,  the set $s_{\alpha}$
with $\alpha\in \psi(\pi')$ are Coxeter generators of 
$W[\psi(\pi')]$.
This completes the proof of (ii).
\end{proof}

\subsection{Enright functor}\label{Enright}
The proofs are based on  the following construction.

\subsubsection{}\label{EnrightM}
Let $\alpha$ be a positive even real root and let $\ft\subset \fg$
be any subalgebra containing $\fg_{\pm \alpha}$.
Let $\cM^e(\ft)$ be the full subcategory of the category of $\ft$-modules
consisting of the objects where $\alpha^{\vee}$ acts semisimply with non-integral eigenvalues
and $\fg_{\alpha}$ acts locally nilpotently. By~\cite{IK};\cite{GSsnow}, Section 2, the category $\cM^e(\ft)$
admits the Enright functor  $\mathcal{C}:\cM^e(\ft)\iso\cM^e(\ft)$ 
which is an equivalence of categories
such that  $\mathcal{C}^2(N)\cong N$ for all $N\in \cM^e(\ft)$.
By construction, the functor $\mathcal{C}$ commutes with the restriction functor, i.e.
$\mathcal{C}( \Res^{\fg}_{\ft} (N))$ is (canonically) isomorphic to 
$\Res^{\fg}_{\ft}(\mathcal{C}(N)) $ for any $N\in \cM^e(\ft)$.

\subsubsection{}\label{EnrightO}
Let $\pi\subset \Sigma_{\pr}$ be as in~$\S$~\ref{Wnatural}.
Assume that $\alpha\in\pi$ is such that 
$\Sigma$ contains $\alpha$ or $\alpha/2$.
Let $\CO_{\alpha}$ be the full subcategory 
of $\CO^{\fin}(\fg)$ whose objects 
 are  modules  lying in $\cM^e(\fg)$.
 By~\cite{IK}; \cite{GSsnow}, Theorems 2.3.1 and 3.2  the Enright functor  gives
 the equivalence of categories
$\mathcal{C}:\CO_{\alpha}\iso\CO_{\alpha}$, such that 
\begin{equation}\label{eq:Enright}\begin{array}{l}
\mathcal{C}(M(\nu))=M(s_{\alpha}(\nu+\rho)-\rho),
\ \ \ 
\mathcal{C}(L(\nu))=L(s_{\alpha}(\nu+\rho)-\rho),\\
(e^{\frac{\alpha}{2}}-e^{-\frac{\alpha}{2}})
\ch \mathcal{C}(N)=s_{\alpha}\bigl(
(e^{\frac{\alpha}{2}}-e^{-\frac{\alpha}{2}})  \ch N\bigr)
\end{array}
\end{equation}
for all $L(\nu), N\in \CO_{\alpha}$. 
Notice that $L(\nu)\in \CO_{\alpha}$
if and only if $\langle \nu,\alpha^{\vee}\rangle\not\in\mathbb{Z}$.

Since $\alpha\in\pi$,
the product $e^{\rho_{\pi}-\alpha/2}D_{\pi}(1-e^{-\alpha})^{-1}$ 
is naturally $s_{\alpha}$-invariant (see~(\ref{eq:sgammainv})).
Now the second formula in~(\ref{eq:Enright}) gives 
$$\begin{array}{ll}
D_{\pi}e^{\rho_{\pi}}\ch N&=
\frac{D_{\pi}}{1-e^{-\alpha}}\cdot \frac{e^{\rho_{\pi}}}{e^{\alpha/2}}\cdot 
(e^{\frac{\alpha}{2}}-e^{-\frac{\alpha}{2}})\ch N\\ 
&=\frac{D_{\pi}}{1-e^{-\alpha}}\cdot \frac{e^{\rho_{\pi}}}
{e^{\alpha/2}}\cdot s_{\alpha}\bigl( 
(e^{\frac{\alpha}{2}}-e^{-\frac{\alpha}{2}})  
\ch \mathcal{C}(N)\bigr)\\
&=s_{\alpha}\bigl( \frac{D_{\pi}}{1-e^{-\alpha}}\cdot
\frac{e^{\rho_{\pi}}}{e^{\alpha/2}}\cdot 
(e^{\frac{\alpha}{2}}-e^{-\frac{\alpha}{2}}) 
\ch \mathcal{C}(N)\bigr)
=s_{\alpha}\bigl(D_{\pi}e^{\rho_{\pi}}
\ch \mathcal{C}(N)\bigr)\end{array}$$
for the natural action of $s_{\alpha}$, which implies
\begin{equation}\label{eq:EnrightD}
D_{\pi}e^{\rho_{\pi}}\ch \mathcal{C}(N)
=s_{\alpha}
\bigl(D_{\pi}e^{\rho_{\pi}}
\ch N\bigr)\ \ \text{ for all } N\in\CO_{\alpha}
\end{equation}
for the natural action of $s_{\alpha}$.

\subsubsection{}\begin{lem}{lem:Enright}
For any $N\in\CO_{\alpha}$ and $\gamma\in\Delta^{\ree}$ one has
$$D_{\pi}e^{\rho_{\pi}}\ch N=-
s_{\gamma}\bigl(D_{\pi}e^{\rho_{\pi}}\ch N\bigr)
\ \ \Longleftrightarrow\ \ 
D_{\pi}e^{\rho_{\pi}}\ch \mathcal{C}(N)=
-s_{s_{\alpha}\gamma}\bigl((D_{\pi}
e^{\rho_{\pi}}\ch \mathcal{C}(N)\bigr).
$$
\end{lem}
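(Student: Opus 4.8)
The plan is to reduce the whole statement to the transformation rule~(\ref{eq:EnrightD}) together with the conjugation identity for reflections in $W$. Write $x:=D_{\pi}e^{\rho_{\pi}}\ch N$ and $y:=D_{\pi}e^{\rho_{\pi}}\ch \mathcal{C}(N)$, both elements of $\cR(\Sigma)$. Formula~(\ref{eq:EnrightD}) gives $y=s_{\alpha}(x)$ for the natural action of $s_{\alpha}$; applying the same formula to $\mathcal{C}(N)\in\CO_{\alpha}$ and using $\mathcal{C}^2(N)\cong N$ (so that $\ch\mathcal{C}^2(N)=\ch N$) yields $x=s_{\alpha}(y)$ as well. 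Thus $s_{\alpha}$ acts naturally on both $x$ and $y$, interchanging them. Since $\alpha$ is non-isotropic and $s_{\alpha}$ preserves $(-,-)$, the element $s_{\alpha}\gamma$ is again a non-isotropic real root, so $s_{\gamma},s_{s_{\alpha}\gamma}\in W$ and the standard identity $s_{s_{\alpha}\gamma}=s_{\alpha}s_{\gamma}s_{\alpha}$ holds.

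For the forward implication, assume $s_{\gamma}(x)=-x$ for the natural action. I would then compute, step by step,
$$s_{s_{\alpha}\gamma}(y)=s_{\alpha}s_{\gamma}s_{\alpha}(y)=s_{\alpha}s_{\gamma}(x)=s_{\alpha}(-x)=-s_{\alpha}(x)=-y,$$
where at the second equality I used $s_{\alpha}(y)=x$, at the third the hypothesis $s_{\gamma}(x)=-x$, and at the last $s_{\alpha}(x)=y$. This shows that $D_{\pi}e^{\rho_{\pi}}\ch\mathcal{C}(N)$ is naturally $s_{s_{\alpha}\gamma}$-anti-invariant. The reverse implication is obtained by symmetry: exchanging the roles of $N$ and $\mathcal{C}(N)$ (hence of $x$ and $y$) and of $\gamma$ and $s_{\alpha}\gamma$, and using $s_{\alpha}(s_{\alpha}\gamma)=\gamma$ together with $\mathcal{C}^2(N)\cong N$, the identical computation recovers $s_{\gamma}(x)=-x$ from $s_{s_{\alpha}\gamma}(y)=-y$.

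The only point requiring care --- and the main obstacle, such as it is --- is that the natural action of~$\S$\ref{Weylgroup} is only partially defined, so one must check that each reflection in the chain above genuinely acts naturally, i.e. that each intermediate expression lies in $\cR(\Sigma)$. This is automatic here, since every intermediate term equals one of $\pm x$ or $\pm y$, all of which belong to $\cR(\Sigma)$: $s_{\alpha}(y)=x$ by~(\ref{eq:EnrightD}), $s_{\gamma}(x)=-x$ by hypothesis, and $s_{\alpha}(-x)=-y$ again by~(\ref{eq:EnrightD}). Hence the composite $s_{s_{\alpha}\gamma}=s_{\alpha}s_{\gamma}s_{\alpha}$ acts naturally on $y$ with value $-y$, and the claimed equivalence follows.
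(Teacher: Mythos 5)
Your proof is correct and follows essentially the same route as the paper: both reduce the equivalence to one implication via $\mathcal{C}^2(N)\cong N$, and then combine the transformation rule~(\ref{eq:EnrightD}) with the conjugation identity $s_{s_{\alpha}\gamma}=s_{\alpha}s_{\gamma}s_{\alpha}$ in a short chain of natural-action computations. Your extra remark verifying that each intermediate term lies in $\cR(\Sigma)$ (so the partially defined natural action is legitimate at every step) is a sound refinement of the same argument, not a different method.
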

\begin{proof}
Since $\mathcal{C}^2(N)\cong N$ it is enough to verify 
the implication $\Longrightarrow$.
Combining the formulas
$D_{\pi}e^{\rho_{\pi}}\ch N=-
s_{\gamma}\bigl(D_{\pi}e^{\rho_{\pi}}\ch N\bigr)$
and~(\ref{eq:EnrightD}) we get
$$\begin{array}{rl}
D_{\pi}e^{\rho_{\pi}}
\ch \mathcal{C}(N)&=
s_{\alpha}\bigl(D_{\pi}e^{\rho_{\pi}}\ch N\bigr)
=-s_{\alpha}s_{\gamma}\bigl(D_{\pi}e^{\rho_{\pi}}
\ch N\bigr)=
-s_{s_{\alpha}\gamma}s_{\alpha}
\bigl(D_{\pi}e^{\rho_{\pi}}\ch N\bigr)\\
&=-s_{s_{\alpha}\gamma}
\bigl(D_{\pi}e^{\rho_{\pi}}
\ch \mathcal{C}(N)\bigr)
\end{array}$$
as required.
\end{proof}

\subsubsection{}
Let $N\in\CO^{\fin}(\fg)$ be such that $\Delta_N$
is well defined. We claim that
\begin{equation}\label{eq:Oalpha}
 \alpha\not\in \Delta_N\ \ \Longrightarrow\ \ N\in \CO_{\alpha}. 
\end{equation}
Indeed, if $\alpha\not\in \Delta_N$, then
by~(\ref{eq:Rlambda+rho}), 
$\langle \lambda,\alpha^{\vee}\rangle\not\in\mathbb{Z}$
for any $\lambda$ such that $[N:L(\lambda)]\not=0$; 
this implies $\langle \mu,\alpha^{\vee}\rangle\not\in\mathbb{Z}$
for all $\mu$ such that $N_{\mu}\not=0$, so $N\in\CO_{\alpha}$.

\subsubsection{}
We will use the following formula
\begin{equation}\label{eq:multi}
[\Res^{\fg}_{\fg_{\pi}} \mathcal{C}(N): L_{\fg_{\pi}}(s_{\alpha}(\nu+\rho_{\pi})-\rho_{\pi})]=[\Res^{\fg}_{\fg_{\pi}} N: L_{\fg_{\pi}}(\nu)] \ \text{ for  any }\ N\in \CO_{\alpha}.
\end{equation}
Indeed, take $N\in \CO_{\alpha}$.
Then $\mathcal{C}(N)\in \CO_{\alpha}$. Set $M:=\Res^{\fg}_{\fg_{\pi}} N$. 
(If $M\in\CO^{\fin}(\fg_{\pi})$ the formula follows 
 immediately from~$\S$~\ref{EnrightO}, but $M$
might be not finitely generated.)
By~$\S$~\ref{categoryO},
$M\in \CO_{\Sigma}(\fg_{\pi})$
and this module admits a weak composition series at $\nu$:
$$0=M_0\subset M_1\subset \ldots\subset M_r=M.   $$
Observe that $M_1,\ldots, M_r\in \cM^e(\fg_{\pi})$. In the light of~$\S$~\ref{EnrightM}  we obtain a finite filtration
$$0=\mathcal{C}(M_0)\subset \mathcal{C}(M_1)\subset \ldots \mathcal{C}(M_r)=\mathcal{C}(M).$$
Using~(\ref{eq:Enright}) for the algebra $\fg_{\pi}$ we get
$\mathcal{C}(M_i/M_{i-1})\cong L_{\fg_{\pi}}(s_{\alpha}(\nu+\rho_{\pi})-\rho_{\pi})$
 if and only if $M_i/M_{i-1}\cong L_{\fg_{\pi}}(\nu)$. This gives the inequality $\geq $
 in~(\ref{eq:multi}). The opposite inequality follows from 
 the formula $\mathcal{C}^2(N)\cong N$. This establishes~(\ref{eq:multi}).

 \subsection{Proof of~\Thm{thm:criterionN}}\label{proof:thm:criterionN}
Let $N\in\CO^{\fin}(\fg)$ be such that $\Delta_N$ is well defined.
We want to prove the equivalence of the conditions
\begin{itemize}
\item[(a)]
$N$ is $\pi$-quasi-admissible;
\item[(b)] all irreducible
subquotients of $N$ are  $\pi$-quasi-admissible;
\item[(c)] all irreducible
subquotients of 
$M:=\Res^{\fg}_{\fg_{\pi}} N$ are quasi-admissible
$\fg_{\pi}$-modules, and   $\Delta_{\pi}\cap\Delta_N=\Delta_M$. 
\end{itemize}

\subsubsection{}
\begin{lem}{lem:gammanotinpi}
Fix  any $\lambda$ 
and introduce $\pi'$ for $L(\lambda)$ as in~$\S$~\ref{Wnatural}.
If $\gamma\in\psi(\pi')$ is such that $\gamma\not\in\pi$, then there exists
$\alpha\in\pi$ such that 
$s_{\alpha}\gamma=\gamma-j\alpha$ for some $j\in\mathbb{Z}_{>0}$
and $s_{\alpha}\gamma\in\Delta^+_{\pi}$. Moreover,
 $\alpha\in \Delta_{\lambda+\rho}$.
 \end{lem}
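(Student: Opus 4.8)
The plan is to produce $\alpha$ in two stages: first obtain some $\alpha\in\pi$ effecting the lowering $s_\alpha\gamma=\gamma-j\alpha$, $j>0$, by a standard Kac--Moody argument inside $\fg_{\pi}$, and then to pin down the choice so that in addition $\alpha\in\Delta_{\lambda+\rho}$, using the description of $\Delta_L\cap\Delta_{\pi}$ supplied by~\Lem{lem:W'L}.

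First I would record the structure. By~\Lem{lem:W'L}(ii), the set $\psi(\pi')$ is exactly the set of indecomposable elements of $\Delta_L\cap\Delta^+_{\pi}$, so $\gamma$ is a positive even real root of the symmetrizable Kac--Moody algebra $\fg_{\pi}$, whose simple roots are $\pi$. Since $\gamma\not\in\pi$, it is not a simple root of $\fg_{\pi}$, so~\Lem{lem:standard} applied to $\fg_{\pi}$ (equivalently~\cite{Kbook}, Proposition 5.1(e), transported through the isomorphism $\phi$ of~$\S$\ref{Bpr}) provides a simple root $\alpha\in\pi$ in the support of $\gamma$ with $j:=\langle\gamma,\alpha^\vee\rangle>0$. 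Then $s_\alpha\gamma=\gamma-j\alpha$, and since $\gamma$ is a positive root of $\fg_{\pi}$ different from $\alpha$, the simple reflection $s_\alpha$ keeps it positive, whence $s_\alpha\gamma\in\Delta^+_{\pi}$. This settles the first part of the statement.

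For the clause $\alpha\in\Delta_{\lambda+\rho}$ the plan is to refine the choice of $\alpha$ within the support of $\gamma$ so that it is an \emph{integral} simple root. Since $\gamma\in\Delta_L=\Delta_{\lambda+\rho}$, I would first locate $\gamma$ inside $\ol R_{\lambda+\rho}$ or, when $\gamma\not\in R_{\lambda+\rho}$, produce via~\Lem{lem:Xi} an isotropic root of $\Delta_L$ non-orthogonal to $\gamma$; in either case this yields integrality information $\langle\lambda,\gamma^\vee\rangle\in\mathbb{Z}$ that I would propagate to the support of $\gamma$. Expanding $\gamma=\sum_{\beta\in\pi}n_\beta\beta$ and comparing this with the expression of $\gamma$ as a simple root of the subsystem $\Delta_L\cap\Delta_{\pi}$ from~\Lem{lem:W'L}(ii), I would isolate a $\beta=\alpha$ in the support with $\langle\lambda,\alpha^\vee\rangle\in\mathbb{Z}$, so that $\alpha\in R_{\lambda+\rho}\subset\Delta_{\lambda+\rho}$ by~(\ref{eq:Rlambda+rho}); by construction this $\alpha$ still satisfies $\langle\gamma,\alpha^\vee\rangle>0$ and $s_\alpha\gamma\in\Delta^+_{\pi}$.

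The hard part will be exactly this last extraction: showing that the lowering of the \emph{new} simple root $\gamma$ of $\Delta_L\cap\Delta_{\pi}$ can always be realised by a simple root $\alpha\in\pi$ that itself lies in $\Delta_{\lambda+\rho}$, rather than by an arbitrary simple root of $\fg_{\pi}$. The obstacle is that $\gamma$ is indecomposable in $\Delta_L\cap\Delta^+_{\pi}$ while decomposable in $\Delta^+_{\pi}$, so the argument cannot be internal to $\fg_{\pi}$; the essential input is the interaction between $\psi(\pi')$ and $\pi$ recorded in~\Lem{lem:W'L}(ii), together with the control of $\Delta_\lambda$ versus $R_\lambda$ from~\Lem{lem:Xi}, and I expect the rank-two configurations spanned by $\gamma$ and a candidate $\alpha$ (governed by~\Lem{lem:B11} when an isotropic root intervenes) to be the decisive case to check.
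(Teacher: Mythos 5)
Your first paragraph is correct and coincides with the paper's first step: apply \Lem{lem:standard} to $\Delta_{\pi}$ (the paper does exactly this) to produce $\alpha\in\pi$ with $j:=\langle\gamma,\alpha^{\vee}\rangle>0$, $s_{\alpha}\gamma=\gamma-j\alpha$, and $s_{\alpha}\gamma\in\Delta^+_{\pi}$ since $\gamma$ is a positive root of $\fg_{\pi}$ not proportional to the simple root $\alpha$. Up to there, nothing to object to.

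The second half of your plan, however, is aimed at a claim that is not merely hard but false, and the source of the trouble is a typo in the printed statement: the ``Moreover'' clause should read $\alpha\not\in\Delta_{\lambda+\rho}$, not $\alpha\in\Delta_{\lambda+\rho}$. This is what the paper's own proof establishes (``Therefore $\alpha\not\in\Delta_N$''), and it is what every subsequent use requires: in the proofs of the lemma following \Lem{lem:gammanotinpi} and of \Lem{lem:abequivnew}, the condition $\alpha\not\in\Delta_{\lambda+\rho}=\Delta_N$ is invoked via~(\ref{eq:Oalpha}) precisely to place the module in $\CO_{\alpha}$ so that the Enright functor $\mathcal{C}_{\alpha}$ applies. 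Moreover, \emph{no} simple root $\alpha\in\pi$ with your lowering properties can lie in $\Delta_{\lambda+\rho}$: if $\alpha\in\Delta_{\lambda+\rho}$, then by property (b) of~$\S$~\ref{Deltalambda} the reflection $s_{\alpha}$ preserves $\Delta_{\lambda+\rho}$, so $s_{\alpha}\gamma\in\Delta_{\lambda+\rho}$, and then $\gamma=s_{\alpha}\gamma+j\alpha$ exhibits $\gamma$ as a sum of several elements of $\Delta_{\lambda+\rho}\cap\Delta^+_{\pi}$ --- contradicting \Lem{lem:W'L}~(ii), which you yourself cite and which says that $\psi(\pi')$ consists exactly of the indecomposable elements of that set. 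So your proposed ``refinement'' of the choice of $\alpha$ inside the support of $\gamma$ (via \Lem{lem:Xi}, \Lem{lem:B11}, and integrality propagation) is a dead end: the very indecomposability you invoke at the outset forces the opposite conclusion, and it does so in one line. The correct completion of your argument is: take the $\alpha$ from your first paragraph and observe, by the indecomposability of $\gamma$ just described, that $\alpha\not\in\Delta_{\lambda+\rho}$.
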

 \begin{proof}
Recall that $\pi\subset\Sigma_{\pr}$ and $\Delta_{\pi}=W[\pi](\pi)$.
Using~\Lem{lem:standard} for $\Delta_{\pi}$, we conclude that there exists
$\alpha\in\pi$ such that $\gamma-s_{\alpha}\gamma=j\alpha$ for some $j\in\mathbb{Z}_{>0}$
and $s_{\alpha}\gamma\in\Delta^+_{\pi}$.
If $\alpha\in \Delta_N$, then 
$s_{\alpha}\gamma\in \Delta_N$. Therefore
 $\gamma=s_{\alpha}\gamma+j\alpha $ with $s_{\alpha}\gamma, \alpha$
 in $ \Delta_N\cap\Delta^+_{\pi}$
 which contradicts to~\Lem{lem:W'L} (since $\gamma\in\psi(\pi')$).
Therefore $\alpha\not\in \Delta_N$. 
\end{proof}

\subsubsection{}
\begin{lem}{}Fix  any $\lambda$ such that $[N:L(\lambda)]\not=0$
and introduce $\pi'$ as in~$\S$~\ref{Wnatural}.
Take $\gamma\in\psi(\pi')$ and $\nu$ such that $[\Res^{\fg}_{\fg_{\pi}} N:
L_{\fg_{\pi}}(\nu)]\not=0$.
If $D_{\pi} e^{\rho_{\pi}} \ch N$ is  $s_{\gamma}$-anti-invariant, then
$D_{\pi} e^{\rho_{\pi}} \ch L_{\fg_{\pi}}(\nu)$ is  $s_{\gamma}$-anti-invariant
and $\langle\nu,\gamma^{\vee}\rangle\in\mathbb{Z}$.
\end{lem}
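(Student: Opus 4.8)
The plan is to argue by induction on the height $\htt_{\pi}(\gamma)$ of $\gamma$ relative to $\pi$ (write $\gamma=\sum_{\beta\in\pi}c_\beta\beta$ and set $\htt_\pi(\gamma)=\sum_\beta c_\beta$), proving both assertions simultaneously for every $N$ satisfying the hypotheses of the lemma. Throughout I will use that, by \Lem{lem:W'L}, the set $\psi(\pi')$ is precisely the set of indecomposable elements of $\Delta_N\cap\Delta_\pi^+$, i.e. the simple roots of the real root system $\Delta_N\cap\Delta_\pi$ with respect to $\Delta_\pi^+$; and that, writing $M:=\Res^{\fg}_{\fg_\pi}N$, every $\nu$ with $[M:L_{\fg_\pi}(\nu)]\neq 0$ is a weight of $N$.

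First I would treat the base case $\gamma\in\pi$, so that $\gamma$ is a simple root of $\fg_\pi$. Here \Lem{lem:Nabc} (the implication (c)$\Rightarrow$(a), applied to $N$ viewed as a $\fg_\pi$-module) shows that $\fg_{-\gamma}$ acts locally nilpotently on $N$, hence on every subquotient of $M$, in particular on $L_{\fg_\pi}(\nu)$. Local nilpotence of $\fg_{-\gamma}$ on the highest weight $\fg_\pi$-module $L_{\fg_\pi}(\nu)$ forces $\langle\nu,\gamma^\vee\rangle\in\mathbb{Z}_{\geq 0}\subset\mathbb{Z}$ by the standard $\fsl_2$-argument, and then \Lem{lem:Nabc} (the implication (a)$\Rightarrow$(c), now applied to the $\fg_\pi$-module $L_{\fg_\pi}(\nu)$) gives that $D_\pi e^{\rho_\pi}\ch L_{\fg_\pi}(\nu)$ is $s_\gamma$-anti-invariant, as required.

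For the inductive step assume $\htt_\pi(\gamma)>1$, so $\gamma\notin\pi$. By \Lem{lem:gammanotinpi} there is $\alpha\in\pi$ with $\alpha\notin\Delta_N$ and $s_\alpha\gamma=\gamma-j\alpha\in\Delta_\pi^+$ for some $j>0$, whence $\htt_\pi(s_\alpha\gamma)<\htt_\pi(\gamma)$. Since $\alpha\notin\Delta_N$, formula (\ref{eq:Oalpha}) gives $N\in\CO_\alpha$, so the Enright functor $\mathcal{C}$ of $\S$\ref{EnrightO} (for the root $\alpha$) applies. I would first record that $\Delta_{\mathcal{C}(N)}=s_\alpha\Delta_N$ is well defined, because $\mathcal{C}$ sends each subquotient $L(\mu)$ to $L(s_\alpha(\mu+\rho)-\rho)$ and $\Delta_{s_\alpha\xi}=s_\alpha\Delta_\xi$ by the $s_\alpha$-equivariance of the construction of $\S$\ref{Deltalambda}; and that, by \Lem{lem:Enright}, the hypothesis that $D_\pi e^{\rho_\pi}\ch N$ be $s_\gamma$-anti-invariant is equivalent to $D_\pi e^{\rho_\pi}\ch\mathcal{C}(N)$ being $s_{\gamma'}$-anti-invariant, where $\gamma':=s_\alpha\gamma$. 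The main obstacle is to check that $\gamma'$ is again a simple root of $\Delta_{\mathcal{C}(N)}\cap\Delta_\pi=s_\alpha(\Delta_N\cap\Delta_\pi)$, so that the inductive hypothesis applies to the pair $(\mathcal{C}(N),\gamma')$. This is where the argument is most delicate: since $\alpha\in\pi$ is a simple root of $\fg_\pi$ and $\alpha\notin\Delta_N\supseteq\Delta_N\cap\Delta_\pi$, the reflection $s_\alpha$ permutes $\Delta_\pi^+\setminus\{\alpha\}$ and therefore carries the positive system $\Delta_N\cap\Delta_\pi^+$ bijectively onto $s_\alpha(\Delta_N\cap\Delta_\pi)\cap\Delta_\pi^+$; consequently $s_\alpha$ maps the simple roots $\psi(\pi')$ of $\Delta_N\cap\Delta_\pi$ onto the simple roots of $\Delta_{\mathcal{C}(N)}\cap\Delta_\pi$, and $\gamma'=s_\alpha\gamma$ is indeed one of them.

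Granting this, I would apply the inductive hypothesis to $\mathcal{C}(N)$ and $\gamma'$. For $\nu$ with $[M:L_{\fg_\pi}(\nu)]\neq 0$, put $\nu':=s_\alpha(\nu+\rho_\pi)-\rho_\pi$; by (\ref{eq:multi}) we have $[\Res^{\fg}_{\fg_\pi}\mathcal{C}(N):L_{\fg_\pi}(\nu')]\neq 0$, so induction yields that $D_\pi e^{\rho_\pi}\ch L_{\fg_\pi}(\nu')$ is $s_{\gamma'}$-anti-invariant and $\langle\nu',\gamma'^\vee\rangle\in\mathbb{Z}$. Since $\nu$ is a weight of $N$ and $\alpha\notin\Delta_N$, one has $\langle\nu,\alpha^\vee\rangle\notin\mathbb{Z}$, so $L_{\fg_\pi}(\nu)\in\CO_\alpha(\fg_\pi)$ and $\mathcal{C}(L_{\fg_\pi}(\nu))=L_{\fg_\pi}(\nu')$; applying \Lem{lem:Enright} inside $\fg_\pi$ transports the $s_{\gamma'}$-anti-invariance of $D_\pi e^{\rho_\pi}\ch L_{\fg_\pi}(\nu')$ back to the desired $s_\gamma$-anti-invariance of $D_\pi e^{\rho_\pi}\ch L_{\fg_\pi}(\nu)$. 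Finally, a short computation using $W$-invariance of $\langle\cdot,\cdot^\vee\rangle$ together with $s_\alpha\rho_\pi=\rho_\pi-\alpha$ gives $\langle\nu',\gamma'^\vee\rangle=\langle\nu,\gamma^\vee\rangle+\langle\alpha,\gamma^\vee\rangle$; as $\gamma$ is an even real root, $\langle\alpha,\gamma^\vee\rangle\in\mathbb{Z}$ by (\ref{eq:alphaveebeta}), and hence $\langle\nu,\gamma^\vee\rangle\in\mathbb{Z}$. This completes the induction and the proof.
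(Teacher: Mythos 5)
Your proof is correct, and its skeleton is the same as the paper's: induction along $\mathbb{Z}_{\geq 0}\pi$ (your height function just linearizes the paper's partial order), base case $\gamma\in\pi$ via \Lem{lem:Nabc}, inductive step via \Lem{lem:gammanotinpi}, the Enright functor together with (\ref{eq:Oalpha}), (\ref{eq:multi}) and \Lem{lem:Enright}, and the same transfer back to $L_{\fg_{\pi}}(\nu)$ inside $\fg_{\pi}$. The one place where you genuinely deviate is the ``main obstacle'': the paper identifies $s_{\alpha}\gamma$ as an element of the new $\psi(\pi')$ by invoking \Lem{lem:salphaSigma'} for the subquotient $L(s_{\alpha}(\lambda+\rho)-\rho)$ of $\mathcal{C}(N)$, whereas you argue directly that, since $\alpha\in\pi$ and $\alpha\not\in\Delta_N$, the reflection $s_{\alpha}$ permutes $\Delta_{\pi}^+\setminus\{\alpha\}$, hence carries the positive system $\Delta_N\cap\Delta_{\pi}^+$ onto $\Delta_{\mathcal{C}(N)}\cap\Delta_{\pi}^+$ and indecomposable elements onto indecomposable elements, and then you apply \Lem{lem:W'L} (ii) to $\mathcal{C}(N)$. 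Both rest on the same combinatorics; yours is more self-contained at this spot (and your explicit identity $\langle\nu',(\gamma')^{\vee}\rangle=\langle\nu,\gamma^{\vee}\rangle+\langle\alpha,\gamma^{\vee}\rangle$ states the integrality step more cleanly than the paper, whose computation at that point drops a $\rho_{\pi}$ and has $\not\in\mathbb{Z}$ typos where $\in\mathbb{Z}$ is meant), while the paper's appeal to \Lem{lem:salphaSigma'} packages the same facts into an already-proved lemma.

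Two small repairs you should make. First, before introducing the Enright functor, reduce, as the paper does via \S~\ref{345} and \S~\ref{rem:choice}, to the case where $\Sigma$ contains $\alpha$ or $\alpha/2$: the functor of \S~\ref{EnrightO} is defined only under this assumption. This is harmless, since $\alpha\in\Sigma_{\pr}$ and $D_{\pi}e^{\rho_{\pi}}\ch N$ does not depend on the choice of a base in the spine. Second, when quoting (\ref{eq:alphaveebeta}) for $\langle\alpha,\gamma^{\vee}\rangle\in\mathbb{Z}$, also treat the case $\gamma=2\beta$ with $\beta$ an odd non-isotropic root (which can occur for $\gamma\in\Delta_{\pi}$), where one uses $\langle\alpha,\beta^{\vee}\rangle\in 2\mathbb{Z}$ and $\gamma^{\vee}=\frac{1}{2}\beta^{\vee}$; the conclusion is unchanged.
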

 \begin{proof}
We proceed by induction on    the partial order on $\Delta_{\pi}$ given
by $\alpha\geq \beta$ if $\alpha-\beta\in\mathbb{Z}_{\geq 0}\pi$.

If $\gamma\in\pi$, then, by~\Lem{lem:Nabc}, $\fg_{-\gamma}$ acts locally nilpotently on $N$ and 
thus acts locally nilpotently on  $L_{\fg_{\pi}}(\nu)$. 
By~\Lem{lem:Nabc},
$R_{\pi}e^{\rho_{\pi}}\ch L_{\fg_{\pi}}(\nu)$ is 
$s_{\gamma}$-anti-invariant.

Take $\gamma\not\in\pi$ and $\alpha$ as in~\Lem{lem:gammanotinpi}.
Since $\alpha\in\pi$,
either $\alpha$ or $\alpha/2$ lies in a certain base in the spine.
Using~\ref{rem:choice}
we conclude that, without loss of generality, we can (and will) assume that
$\Sigma$ contains $\alpha$ or $\alpha/2$. Let $\mathcal{C}$
be the Enright functor for $\alpha$. Since  $\alpha\not\in\Delta_{\lambda+\rho}=\Delta_N$
we have $\langle \nu,\alpha^{\vee}\rangle\not\in\mathbb{Z}$ by
$\S$~\ref{DeltaMN} (iii). By~(\ref{eq:multi}) 
we have
$$[\Res^{\fg}_{\fg_{\pi}}(\mathcal{C}(N)):  L_{\fg_{\pi}}(\mu)]\not=0\ \ \text{ for }\mu:=s_{\alpha}(\nu+\rho_{\pi})-\rho_{\pi}.$$
Combining~\Lem{lem:Enright}
and~(\ref{eq:Oalpha}) we conclude that 
the assumption that 
 $D_{\pi} e^{\rho_{\pi}} \ch N$ 
is $s_{\gamma}$-anti-invariant implies
the
 $s_{s_{\alpha}\gamma}$-anti-invariance
of $D_{\pi} e^{\rho_{\pi}}\ch \mathcal{C}(N)$.

By~\Lem{lem:salphaSigma'} we have
$\psi_{s_{\alpha} (\lambda+\rho)-\rho}(\gamma')=s_{\alpha}\gamma$.
By~$\S$~\ref{EnrightO}, the module $L(s_{\alpha}(\lambda+\rho)-\rho)$
is a subquotient of $\mathcal{C}(N)$.
Since $s_{\alpha}\gamma<\gamma$, by induction hypothesis,  the $s_{s_{\alpha}\gamma}$-anti-invariance
of $D_{\pi} e^{\rho_{\pi}}\ch \mathcal{C}(N)$ implies 
 $s_{s_{\alpha}\gamma}$-anti-invariance
of $D_{\pi} e^{\rho_{\pi}}\ch L_{\fg_{\pi}}( \mu)$
and  $\langle \mu, (s_{\alpha}\gamma)^{\vee}\rangle\not\in\mathbb{Z}$.
In particular,
$$\langle \mu, (s_{\alpha}\gamma)^{\vee}\rangle=\langle s_{\alpha}(\nu+\rho_{\pi}), (s_{\alpha}\gamma)^{\vee}\rangle
=\langle \nu+\rho_{\pi}, \gamma^{\vee}\rangle\not\in\mathbb{Z}
$$
so $\langle \nu, \gamma^{\vee}\rangle\not\in\mathbb{Z}$.
Since  $L_{\fg_{\pi}}(\mu)=\mathcal{C}(L_{\fg_{\pi}}(\nu))$,
the $s_{s_{\alpha}\gamma}$-anti-invariance
of $D_{\pi} e^{\rho_{\pi}}\ch L_{\fg_{\pi}}( \mu)$ implies
the $s_{\gamma}$-anti-invariance
of $D_{\pi} e^{\rho_{\pi}}\ch L_{\fg_{\pi}}(\nu)$. This completes the proof.
\end{proof}

\subsubsection{}
Assume that $N$ is $\pi$-quasi-admissible.
Combining the above lemma and~$\S$~\ref{DeltaMN} (iii) we get
$$\psi(\pi')\subset \{\beta\in\Delta_{\pi}|\ \langle \nu,\beta^{\vee}\rangle\in\mathbb{Z}\}\subset (\Delta_N\cap\Delta_{\pi}).$$
Using~\Lem{lem:W'L} we conclude $(\Delta_{\pi}\cap \Delta_N)=\{\beta\in\Delta_{\pi}|\ \langle \nu,\beta^{\vee}\rangle\in\mathbb{Z}\}$. Since this formula holds for all $\nu$ such that 
$[M: L_{\fg_{\pi}}(\nu)]\not=0$, the set $\Delta_M$ is well defined
and coincides with  $\Delta_{\pi}\cap \Delta_N$, see~$\S$~\ref{DeltaMN} (ii).
Combining the above lemma and~\Lem{lem:W'L} we conclude that
$L_{\fg_{\pi}}(\nu)$ is quasi-admissible. This establishes the implication (a) $\Longrightarrow$
(c).

Since $\ch N$ is the sum of 
the characters of irreducible subquotients of $N$, (b) implies (a).
Similarly, $\ch N$ is the sum of 
the characters of irreducible subquotients of $M$,  so (c) implies (a).
Hence (a) $\Longleftrightarrow$  (c) and (b) $\Longrightarrow$  (a).  

Now assume that (c) holds. 
Let $L$ be any subquotient of $N$. We have $\Delta_N=\Delta_L$. Any irreducible subquotient $L_1$ of
$\Res^{\fg}_{\fg_{\pi}} L$ is a subquotient of $M$, so, by (c),
it is quasi-admissible $\fg_{\pi}$-module. Moreover, 
and $\Delta_{\pi}\cap \Delta_L=\Delta_{\pi}\cap \Delta_N=\Delta_{L_1}$.
Using the implication (c) $\Longrightarrow$ (a) for $L$ we conclude that
$L$ is $\pi$-quasi-admissible. This completes the proof of
the implication (c) $\Longrightarrow$ (b) and the proof of~\Thm{thm:criterionN}.

 \subsection{Proof of~\Cor{cor:critN}}\label{proof:cor:critN}
 Let $N$ be a $\pi$-quasi-admissible
module and $M:=\Res^{\fg}_{\fg_{\pi}}(N)$. Assume that $M$ is non-critical.
Fix $v\in M$ and let $M_1$ be a cyclic module generated by $v$.
Then $M_1\in\CO^{\fin}(\fg_{\pi})$.
By~\Thm{thm:criterionN},
any irreducible subquotient of $M_1$
is quasi-admissible. Since $M_1$ is non-critical, $M_1$
is completely reducible, see~\Thm{thm:snow}. Hence any cyclic submodule of $M$ is completely reducible.
Thus $M$ is a sum of irreducible modules, so $M$ is completely reducible
(see~\cite{Lang}, Chapter XVII).

\subsection{Proof of~\Thm{thm:criterion}}\label{proof:thm:criterion} 
By~\Lem{lem:W'L}, (a) is equivalent to  $s_{\alpha}(D_{\pi} e^{\rho_{\pi}} \ch L)=-D_{\pi} e^{\rho_{\pi}} \ch L$ 
for all $\alpha\in \psi(\pi')$. 
On the other hand, (b) is equivalent to 
the fact that for all $\gamma'\in\pi'$ the root space $\fg'_{-\gamma'}$
acts locally nilpotently on $L'$. Using~\Lem{lem:Nabc}
we conclude that  it is enough to verify that for $\gamma'\in\pi'$  
the following conditions are equivalent 

\begin{itemize}
\item[(a')] $s_{\gamma}(D_{\pi} e^{\rho_{\pi}} \ch L)=-D_{\pi} e^{\rho_{\pi}} \ch L\ $ 
for  $\gamma:=\psi(\gamma')$;

\item[(b')] $s_{\gamma'}(D_{\pi'} e^{\rho_{\pi'}} \ch L')=-D_{\pi'} e^{\rho_{\pi'}} \ch L'$.
\end{itemize}

This  equivalence is established in the lemma below.

\subsubsection{}
\begin{lem}{lem:abequivnew}
For $\gamma'\in\pi'$ the assertions (a') 
 and (b') are equivalent.
\end{lem}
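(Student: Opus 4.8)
The plan is to prove both implications simultaneously by reducing, via \Lem{lem:Nabc}, the character anti-invariances (a') and (b') to local nilpotency of root vectors, and then transferring local nilpotency from $L$ to $L'$ through the highest-weight comparison of \Lem{lem:lambda11}. Since $\gamma=\psi(\gamma')$ runs over $\psi(\pi')\subset\Delta_{\pi}$, I would argue by induction on the height of $\gamma$ relative to $\pi$ (i.e. on $\sum_{\alpha\in\pi}c_{\alpha}$ when $\gamma=\sum_{\alpha\in\pi}c_{\alpha}\alpha$), peeling off one reflection at a time with the Enright functor; the base of the induction is the case $\gamma\in\pi$.

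For the inductive step, assume $\gamma\in\psi(\pi')\setminus\pi$. By \Lem{lem:gammanotinpi} there is $\alpha\in\pi$ with $s_{\alpha}\gamma=\gamma-j\alpha\in\Delta^+_{\pi}$ for some $j>0$ and $\alpha\notin\Delta_L$. Replacing the base within $\Sp$ (which changes neither $\CO^{\fin}(\fg)$ nor $D_{\pi}e^{\rho_{\pi}}\ch$, see \S\ref{rem:choice}) I may assume $\Sigma$ contains $\alpha$ or $\alpha/2$, so the Enright functor $\mathcal{C}=\mathcal{C}_{\alpha}$ of \S\ref{EnrightO} applies, and $L\in\CO_{\alpha}$ by~(\ref{eq:Oalpha}). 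Then $\mathcal{C}(L)=L(\nu)$ with $\nu+\rho=s_{\alpha}(\lambda+\rho)$, and \Lem{lem:salphaSigma'} identifies $\fg'_{\nu+\rho}$ with $\fg'$ so that $\psi_{\nu+\rho}=s_{\alpha}\psi$, $\Sigma'_{\nu+\rho}=\Sigma'$, and $L_{\fg'}(\nu')\cong L'$ as $[\fg',\fg']$-modules. As $\alpha\in\pi$ makes $s_{\alpha}\in W[\pi]$ preserve $\Delta_{\pi}$, the set $\pi'$ and the element $\gamma'$ are unchanged, while $\psi_{\nu+\rho}(\gamma')=s_{\alpha}\gamma$ has strictly smaller height. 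Now \Lem{lem:Enright}, applied with reflection root $\gamma$, shows that (a') for $L$ is equivalent to the $s_{s_{\alpha}\gamma}$-anti-invariance of $D_{\pi}e^{\rho_{\pi}}\ch\mathcal{C}(L)$, i.e. to (a') for $(L(\nu),s_{\alpha}\gamma)$; by the induction hypothesis this is equivalent to (b') for $(L(\nu))'=L_{\fg'}(\nu')$; and since $\nu'-\lambda'\in(\Delta')^{\perp}$ is orthogonal to $\gamma'$ and to $\Delta_{\pi'}$, the factor $e^{\nu'-\lambda'}$ relating $\ch(L(\nu))'$ and $\ch L'$ is $s_{\gamma'}$-fixed and leaves $D_{\pi'}e^{\rho_{\pi'}}$ untouched, so this is equivalent to (b') for $L'$. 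Chaining these equivalences closes the induction.

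For the base case $\gamma\in\pi$, I would use \Lem{lem:Nabc} for $\fg$ and $\gamma\in\pi$ to rewrite (a') as local nilpotency of $\fg_{-\gamma}$ on $L$, and for $\fg'$ and $\gamma'\in\pi'$ to rewrite (b') as local nilpotency of $\fg'_{-\gamma'}$ on $L'$. Local nilpotency is intrinsic, so I would compute it in a base $\Sigma_1\in\Sp$ containing $\beta$, where $\beta:=\gamma$ if $\gamma/2\notin\Delta$ and $\beta:=\gamma/2$ (an odd real root, \Cor{cor:Delta2beta}) otherwise. Then $\langle\rho_{\Sigma_1},\beta^{\vee}\rangle=1$, and in either case local nilpotency of $\fg_{-\gamma}$ on $L$ is equivalent to $\langle\hwt_{\Sigma_1}L,\beta^{\vee}\rangle\in\mathbb{Z}_{>0}$ (for $\gamma=2\beta$ this is the $\osp(1|2)$-dominance condition). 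Setting $\Sigma'_1:=\Upsilon(\Sigma_1)$ and $\beta':=\psi^{-1}(\beta)\in\Sigma'_1$, the same computation for $L'$ gives $\langle\hwt_{\Sigma'_1}L',\beta'^{\vee}\rangle\in\mathbb{Z}_{>0}$. The commutativity of~(\ref{diag1}), i.e.~(\ref{eq:lambdalpha}) with $\mu=\beta'$, together with $(\beta',\beta')=(\beta,\beta)$, gives $\langle\hwt_{\Sigma'_1}L',\beta'^{\vee}\rangle=\langle\hwt_{\Sigma_1}L,\beta^{\vee}\rangle$, so the two conditions coincide and (a')$\Leftrightarrow$(b').

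I expect the base case to be the main obstacle. The naive identity $\langle\lambda,\gamma^{\vee}\rangle=\langle\lambda',\gamma'^{\vee}\rangle$ fails, since for a principal root that is not simple in the fixed base $\Sigma$ the shifts $\langle\rho,\gamma^{\vee}\rangle$ and $\langle\rho',\gamma'^{\vee}\rangle$ need not equal $1$; the correct transfer must be read off in a spine base $\Sigma_1$ adapted to $\gamma$, invoking the base-to-base compatibility of $\hwt$ from \Lem{lem:lambda11}. The separate treatment of doubled odd roots $\gamma=2\beta$, where integrality is controlled by $\beta^{\vee}$ and $\osp(1|2)$-theory rather than by $\gamma^{\vee}$, is the other delicate point. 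By contrast the inductive step is routine once one verifies that $\mathcal{C}_{\alpha}$ preserves $\pi'$ and $\gamma'$.
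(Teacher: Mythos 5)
Your proposal follows essentially the same route as the paper's proof: the same induction along $\mathbb{Z}_{\geq 0}\pi$, the same inductive step via \Lem{lem:gammanotinpi}, the Enright functor, (\ref{eq:Oalpha}), \Lem{lem:Enright} and \Lem{lem:salphaSigma'}, and the same base-case reduction via \Lem{lem:Nabc} to local nilpotency, transferred through the equality $\langle\hwt_{\Sigma'_1}L',(\beta')^{\vee}\rangle=\langle\hwt_{\Sigma_1}L,\beta^{\vee}\rangle$ coming from the commutativity of~(\ref{diag1}).

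One slip in your base case: for $\gamma=2\beta$ with $\beta$ odd, local nilpotency of $\fg_{-\gamma}$ on $L$ is \emph{not} equivalent to $\langle\hwt_{\Sigma_1}L,\beta^{\vee}\rangle\in\mathbb{Z}_{>0}$; the correct $\osp(1|2)$ condition is that this number be an \emph{odd} positive integer (for an even positive value the Verma module is already irreducible and infinite-dimensional). This does not damage your argument, because the numerical invariant transfers exactly and $\psi$ preserves parity, so the corrected criterion — ``positive integer, odd when $\beta$ is odd'' — applies identically on both sides; the paper states the criterion in this corrected form.
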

\begin{proof}
We proceed by induction on    the partial order on $\Delta_{\pi}$ given
by $\nu\geq \mu$ if $\nu-\mu\in\mathbb{Z}_{\geq 0}\pi$.
For the induction base we
assume that $\gamma\in\pi$.  
Then either $\gamma$ or $\gamma/2$ lies in a certain base in the spine.
Using~$\S\S$~\ref{345} and~\ref{rem:choice}
we conclude that, without loss of generality, we can (and will) assume that
$\Sigma$ contains $\gamma$ or $\gamma/2$. 
Then $\Sigma'$ contains $\gamma'$ or $\gamma'/2$ respectively.
By~\Lem{lem:Nabc} (a') and (b') are equivalent to locally nilpotence of actions 
of $\fg_{-\gamma}$ and $\fg'_{\gamma'}$ on $L$ and $L'$ respectively.
It is well-known that  $\fg_{-\gamma}$ acts locally nilpotently on $L$ 
if and only if  $\frac{2(\lambda+\rho,\gamma)}{(\gamma, \gamma)}$
is a positive integer which is odd if $\gamma/2\in\Sigma$.
Using the formula 
$\frac{2(\lambda'+\rho',\gamma')}{(\gamma', \gamma')}=
\frac{2(\lambda+\rho,\gamma)}{(\gamma, \gamma)}$.

Now assume that $\gamma\not\in\pi$. 
Take $\gamma\not\in\pi$ and $\alpha$ as in~\Lem{lem:gammanotinpi}.
Since $\alpha\in\pi$,
either $\alpha$ or $\alpha/2$ lies in a certain base in the spine.
Using~$\S\S$~\ref{345} and~\ref{rem:choice}
we conclude that, without loss of generality, 
we can (and will) assume that
$\Sigma$ contains $\alpha$ or $\alpha/2$ and thus
$\Sigma'$ contains $\alpha'$ or $\alpha'/2$ respectively.
Take $\nu:=s_{\alpha}(\lambda+\rho)-\rho$.  
Combining~\Lem{lem:Enright}
and~(\ref{eq:Oalpha}) we conclude that (a')  is equivalent to
$$D_{\pi}e^{\rho_{\pi}}\ch L(\nu)=-s_{s_{\alpha}\gamma}\bigl((D_{\pi}
e^{\rho_{\pi}}\ch L(\nu)\bigr).$$
By~\Lem{lem:salphaSigma'} we have
$\psi_{\nu}(\gamma')=s_{\alpha}\gamma$. Since $s_{\alpha}\gamma<\gamma$,
by induction hypothesis,  (b') is equivalent to  $s_{s_{\alpha}\gamma}$-anti-invariance
of $D_{\pi} e^{\rho_{\pi}}\ch L(\nu)$. Hence (a') is equivalent to (b') for $\gamma$.
This completes the proof.
\end{proof}

\section{Admissible weights}
In this section $\fg\not=D(2|1,a)^{(1)}$ is an indecomposable symmetrizable affine Kac-Moody superalgebra,
 which is not anisotropic (i.e., $\Delta$ contains a real isotropic 
root).

\subsection{Notation}\label{notataff}
We normalise the bilinear form $(-,-)$ as in~$\S$~\ref{dualCoxeter} and in
Appendix~\ref{affinebases}. For a weight $\lambda\in\fh^*$ we call $k:=(\lambda,\delta)$ the {\em level} of $\lambda$.

We let $\dot{\fg}\subset \fg$ be the finite-dimensional
Kac-Moody superalgebra described  in Appendix~\ref{affinebases}.
Note that
 $\fg=\dot{\fg}^{(1)}$ if $\fg$ is non-twisted.
We denote by $\dot{\Delta}$ the root system of $\dot{\fg}$.
 We let $\Lambda_0\in\fh^*$ be such that
$(\Lambda_0,\delta)=1$ and 
$(\Lambda_0,\dot{\Delta})=(\Lambda_0,\Lambda_0)=0$. We let
$\cl$ be the canonical map  
$\cl:\mathbb{Q}\Delta\to \mathbb{Q}\Delta/\mathbb{Q}\delta=\mathbb{Q}\dot{\Delta}$.
By~\Rem{rem:prince},
$\dot{\Sigma}_{\pr}:=\Sigma_{\pr}\cap \dot{\Delta}$  is the set of principal roots
for $\dot{\Delta}$. We set
$$\begin{array}{l}
\pi^{\#}:=\{\alpha\in\Sigma_{\pr}|\ (\alpha,\alpha)>0\},\ \ \ \Delta^{\#}:=\Delta_{\pi^{\#}}=
\{\alpha\in\Delta_{\ol{0}}|\ (\alpha,\alpha)>0\}\\
\pi:=\pi^{\#}\cup \dot{\Sigma}_{\pr}=
\{\alpha\in\Sigma_{\pr}|\ \alpha\in\dot{\Delta} \text{ or }
(\alpha,\alpha)>0\}.\end{array}
$$
The assumption $\fg\not=D(2|1,a)^{(1)}$ ensures that  $\pi$
is linearly independent.  If  $\fg=\dot{\fg}^{(1)}$ 
is a  non-twisted affine Kac-Moody superalgebra,
then  $\fg_{\pi^{\#}}$ is the direct product of an abelian Lie algebra and
a non-twisted affine Kac-Moody algebra,
which is the affinization of the "largest component" $\dot{\fg}^{\#}$ of $\dot{\fg}_{\ol{0}}$, since the root system 
$\Delta(\fg_{\pi^{\#}})$ is equal to $\Delta(\dot{\fg}^{\#})+\mathbb{Z}\delta$.

Since we use the normalised invariant form, the longest root in $\Delta^{\#}$ is of square length $2$ and $L(k\Lambda_0)$
is $\pi^{\#}$-integrable if and only if $k\in\mathbb{Z}_{\geq 0}$.

\subsubsection{Vacuum modules}\label{vacmod}
For each $k\in\mathbb{C}$ we introduce the vacuum module $V^k$ of level $k$ in the usual way:
we set $\fg_+:=\fh+\sum\limits_{\alpha\in\Delta: (\alpha,\Lambda_0)\geq 0} \fg_{\alpha}$
and let $\mathbb{C}_k$ the one-dimensional $\fg_+$-module
where each  $h\in\fh$ acts by multiplication by $\langle k\Lambda_0,h\rangle$; we set
$$V^k:=\Ind^{\fg}_{\fg_+}\mathbb{C}_k.$$

Notice that $L(k\Lambda_0)$ is the unique irreducible quotient of $V^k$;
we call this module a {\em simple vacuum module}; this term will be also used
for the corresponding $[\fg,\fg]$-module. 

\subsubsection{}
\begin{rem}{rem:Vkandpi}
Since $V^k$ is a cyclic module generated by $\mathbb{C}_k$
and $\dot{\fg}\mathbb{C}_k=0$, $V^k$ is $\dot{\Sigma}_{\pr}$-integrable. Therefore any subquotient of $V^k$
is $\dot{\Sigma}_{\pr}$-integrable and any
$\pi^{\#}$-integrable subquotient is $\pi$-integrable.
Moreover, any $\pi^{\#}$-quasi-admissible subquotient of $V^k$
is $\pi$-quasi-admissible.
\end{rem}

\subsection{The module $L'$ for $L:=L(k\Lambda_0)$}\label{6.2}
Consider the case $\fg=\dot{\fg}^{(1)}$ or $D(m+1|n)^{(2)}$.
Let  $L:=L(k\Lambda_0)$ for  $k\in\mathbb{Q}$.  
We let $\fg':=\fg_{k\Lambda_0}$
and let $\psi:\mathbb{Z}\Delta(\fg')\to\mathbb{Z}\Delta$
be the linear map
as in~$\S$~\ref{glambda}. It is easy to see that  
$\dot{\Delta}\subset \Delta_{k\Lambda_0+\rho}$ and $\fg':=\fg_{k\Lambda_0+\rho}$
is of type (Aff).
 Let $(-,-)'$ be the normalised invariant form on $\Delta(\fg')$.

\subsection{}
\begin{lem}{lem:Bmn1Dmn2etc}
Let $\fg$ be a non-twisted affine Kac-Moody superalgebra  or $D(m+1|n)^{(2)}$.
Let $k\in\mathbb{Q}$, $\fg',\psi$ be as~$\S$~\ref{6.2}.
\begin{enumerate}
\item 
One has $\fg'\cong \fg$ if $\fg\not=B(m|n)^{(1)}$ or  $D(m+1|n)^{(2)}$.
\item
If $\fg$ is  $B(m|n)^{(1)}$ or  $D(m+1|n)^{(2)}$,
then $\fg'$ is also $B(m|n)^{(1)}$ or $D(m+1|n)^{(2)}$.
\item  One has 
$(\psi(v_1),\psi(v_2))=(v_1,v_2)'$ for $v_1,v_2\in \Delta(\fg')$
and $\psi^{-1}(\dot{\Delta})$ coincides with $\dot{\Delta}'$ as introduced in Appendix~\ref{dotgclg}.
\item  For $L:=L(k\Lambda_0)$
the module $L'$ is a simple vacuum $[\fg',\fg']$-module (we denote its level by $k'$).
\end{enumerate}
\end{lem}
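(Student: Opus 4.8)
The plan is to compute the integral root system $\Delta_{k\Lambda_0+\rho}$ explicitly and read off the isomorphism type of $\fg'$ from it. Writing a real root of $\fg$ as $\alpha=\dot\alpha+n\delta$ with $\dot\alpha=\cl(\alpha)\in\dot\Delta$ and $n\in\mathbb{Z}$, the two relevant pairings are $(\alpha,\alpha)=(\dot\alpha,\dot\alpha)$ (since $(\delta,\mathbb{Z}\Delta)=0$) and, using $(\Lambda_0,\delta)=1$, $(\Lambda_0,\dot\Delta)=0$ together with \eqref{eq:rhodelta},
\[
(k\Lambda_0+\rho,\alpha)=n(k+h^\vee)+(\rho,\dot\alpha).
\]
Setting $r:=k+h^\vee$ and recalling from \eqref{eq:rhoalpha}, \eqref{eq:alphaveebeta} that $\langle\rho,\dot\alpha^\vee\rangle\in\mathbb{Z}$ for even $\dot\alpha$ (and is odd when $2\dot\alpha\in\Delta$), the defining condition in \S\ref{Deltalambda} shows that for non-isotropic $\alpha$ one has $\alpha\in\ol R_{k\Lambda_0+\rho}$ if and only if $\tfrac{2nr}{(\dot\alpha,\dot\alpha)}\in\mathbb{Z}$, subject to the parity requirement when $2\alpha\in\Delta$. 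Thus membership depends on $n$ only through the length $(\dot\alpha,\dot\alpha)$, and, writing $r=p/q$ in lowest terms, the admissible values of $n$ form a sublattice $q_{\dot\alpha}\mathbb{Z}\subset\mathbb{Z}$ depending only on that length.

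First I would deduce the gross structure. Since the classical part $\dot\Delta$ already lies in $\Delta_{k\Lambda_0+\rho}$ (\S\ref{6.2}) and, by \S\ref{psidelta}, the minimal imaginary root $\delta'$ of any affine component satisfies $\psi(\delta')\in\mathbb{Z}_{>0}\delta$, the above computation (together with the closure procedure of \S\ref{Xi}) identifies $\Delta_{k\Lambda_0+\rho}$ with $\{\dot\alpha+n\delta:\dot\alpha\in\dot\Delta,\ q_{\dot\alpha}\mid n\}$, i.e.\ the real root system of an affine Kac-Moody superalgebra with classical part $\dot\Delta$. By \Thm{thm:Deltalambda} this is $\Delta^{\ree}(\fg')$ for a symmetrizable affine $\fg'$; it remains to match the length-dependent $\delta$-steps $q_{\dot\alpha}$ against the tables of affine root systems (\cite{Kbook}, Proposition~6.3; \cite{vdLeur}) to name $\fg'$. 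When $q$ is coprime to the lacety the rescaling is uniform across root lengths and reproduces $\fg$, giving (i); the only place where short even roots and odd roots are affinized with different steps is $B(m|n)^{(1)}$ and $D(m+1|n)^{(2)}$, where this produces an interchange within the pair $\{B(m|n)^{(1)},D(m+1|n)^{(2)}\}$, giving (ii).

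Having identified $\fg'$, I would construct the isometry $\psi$ of \S\ref{glambda} so that it is the identity on the common classical part, $\psi|_{\dot\Delta'}\colon\dot\Delta'\iso\dot\Delta$, and sends $\delta'$ to the relevant positive multiple of $\delta$ determined above; that $\psi$ preserves the normalised forms, $(\psi(v_1),\psi(v_2))=(v_1,v_2)'$, is then checked on $\dot\Delta'$ and on $\delta'$ separately, while $\psi^{-1}(\dot\Delta)=\dot\Delta'$ is immediate from the construction (cf.\ Appendix~\ref{dotgclg}). This proves (iii).

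Finally, for (iv) I would show $\lambda'$ is a multiple of $\Lambda_0'$ modulo $(\Delta')^\perp$, so that $L'=L_{\fg'}(k'\Lambda_0')$ is a simple vacuum module. Using \eqref{eq:lambdalphadef} and (iii), for $\dot\alpha'\in\dot\Delta'$ one has $(\lambda'+\rho',\dot\alpha')=(k\Lambda_0+\rho,\psi(\dot\alpha'))=(\rho,\psi(\dot\alpha'))$ since $(\Lambda_0,\dot\Delta)=0$; as $\psi$ is an isometry matching the positive finite systems and the restrictions of $\rho,\rho'$ to $\dot\Delta,\dot\Delta'$ are the finite Weyl vectors $\dot\rho,\dot\rho'$ (the $\Lambda_0$- and $\delta$-components pairing trivially with classical coroots), this forces $(\lambda',\dot\alpha')=0$ for all $\dot\alpha'\in\dot\Delta'$, which is exactly the vacuum condition. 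The main obstacle is the case-by-case length bookkeeping of the second step: isolating precisely how the odd and short even roots of $B(m|n)^{(1)}$ and $D(m+1|n)^{(2)}$ are affinized, and verifying against the classification tables that these are the only types for which $\fg'\not\cong\fg$.
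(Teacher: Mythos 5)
There is a genuine gap in your computation of $\Delta_{k\Lambda_0+\rho}$, and it sits exactly where the lemma has real content. Your description of the integral root system as $\{\dot\alpha+n\delta:\ q_{\dot\alpha}\mid n\}$, with $q_{\dot\alpha}$ read off from the arithmetic condition $\tfrac{2nr}{(\dot\alpha,\dot\alpha)}\in\mathbb{Z}$, is only a description of $\ol{R}_{k\Lambda_0+\rho}$ (and says nothing about isotropic roots, where that condition is vacuous). The closure operations (b)--(d) of \S\ref{Deltalambda} can strictly enlarge these sublattices, and they do so precisely when $q$ is not coprime to the lacety in the types covered by part (i). Concretely, take $\fg=D(m|n)^{(1)}$ with $n\geq m$ (so $\fg^{\#}=C_n^{(1)}$) and $q$ even: at the level of $\ol{R}_{k\Lambda_0+\rho}$, the length-$2$ roots $\pm2\delta_i$ occur with $\delta$-step $q$ while the length-$\pm1$ roots occur with step $q/2$, so the "arithmetic" system has even part of type $A_{2n-1}^{(2)}$, not $C_n^{(1)}$, and your method would output a twisted $\fg'$. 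The correct conclusion $\fg'\cong\fg$ is rescued only by the closure: reflections by the middle roots give isotropic roots with step $q/2$, and then property (c) applied to two non-orthogonal isotropic roots whose difference is a long root forces the long roots to acquire step $q/2$ as well. The same phenomenon occurs for $C(n)^{(1)}$, for $F(4)^{(1)}$ with $q$ even, and for $G(3)^{(1)}$ with $3\mid q$. Your proposal neither performs this closure analysis nor covers these cases: you prove (i) only "when $q$ is coprime to the lacety," and your claim that different affinization steps occur only for $B(m|n)^{(1)}$ and $D(m+1|n)^{(2)}$ is false at the level of $\ol{R}$. Since the lemma is stated for all $k\in\mathbb{Q}$ and is used in that generality (e.g.\ Corollary \ref{cor:admnotBmn} deduces that every admissible level is principal \emph{from} the lemma; restricting the lemma to $q$ coprime to the lacety would make that deduction circular), this is not a removable restriction.

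For comparison, the paper's proof avoids all step-counting: it observes that $\dot\Delta\subset\cl(\Delta_{k\Lambda_0+\rho})\subset\cl(\Delta)=\dot\Delta\cup\{0\}$, hence $\cl(\Delta_{k\Lambda_0+\rho})=\dot\Delta$, and then reads off the type of $\fg'$ from the table of Appendix \ref{dotgclg}, since the twisted affine superalgebras have classical projection $BC(m|n)$, $C(m|n)$ or $B(m|n)$; only when $\dot\Delta=B(m|n)$ is there an ambiguity, which is exactly the dichotomy in (ii). Your arguments for (iii) and (iv) are essentially the paper's (the vacuum computation $(\lambda'+\rho',\psi^{-1}(\alpha))=(\dot\rho,\alpha)$ is identical), with the caveat that $\psi$ is furnished by Theorem \ref{thm:Deltalambda} and \S\ref{glambda} rather than "constructed" by you, so the identity $\psi^{-1}(\dot\Delta)=\dot\Delta'$ must be derived from the identification in (i)--(ii) rather than imposed. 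If you want to keep the computational route, you must either carry out the closure analysis above in each non-coprime case, or first prove (i)--(ii) by the $\cl$-and-table argument and only then use arithmetic to pin down the $\delta$-steps (which is what the paper does later, in \S\ref{classadmneBmn} and Section \ref{classadmBmn}).
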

\begin{proof}
 We have
 $\dot{\Delta}\subset \cl(\Delta_{k\Lambda_0+\rho})\subset\cl(\Delta)$.
 Since $\fg=\dot{\fg}^{(1)}$ or $D(m+1|n)^{(2)}$, we have 
  $\cl(\Delta)=\dot{\Delta}\cup\{0\}$, so
$\cl(\Delta_{k\Lambda_0+\rho})=\dot{\Delta}$.   
Using the table in Appendix~\ref{dotgclg},
we conclude that 
  $\fg'\cong \fg$  if $\fg$ is non-twisted and 
 $\fg\not=B(m|n)^{(1)}$. If $\fg=B(m|n)^{(1)}$
or $D(m+1|n)^{(2)}$, then $\fg'$ is also $\fg=B(m|n)^{(1)}$
or $D(m+1|n)^{(2)}$.  This establishes (i), (ii)  and
the formula  $\psi^{-1}(\dot{\Delta})=\dot{\Delta}'$.
The restriction 
of $(-,-)'$ to $\cl(\Delta')\cong \dot{\Delta}$ is the normalised invariant form, so 
the restriction of $(-,-)'$ to $\mathbb{Z}\Delta(\fg')$
coincides with the restriction of the  normalised invariant form. This establishes (iii).
 For (iv)
set $\lambda:=k\Lambda_0$ and introduce $\lambda'$
as in~$\S$~\ref{glambda}. For $\alpha\in\dot{\Delta}$ we have 
$$(\lambda'+\rho',\psi^{-1}(\alpha))=(\lambda+\rho,\alpha)=(\dot{\rho},\alpha)$$
where $\dot{\rho}$ is the Weyl vector for $\dot{\Delta}^+$.
Since $\psi^{-1}(\dot{\Delta})=\dot{\Delta}'$, 
this gives $(\lambda',\alpha')=0$
for all $\alpha'\in \dot{\Delta}'$. Therefore
 $L_{\fg'}(\lambda')$ is a vacuum module.
\end{proof}

\subsection{Admissibility}\label{admlevel}
We retain notation of~$\S$~\ref{notataff}.

\subsubsection{Definition}\label{def:admissible}

We call $N\in\CO_{\Sigma}(\fg)$ {\em admissible} if $N$ is non-critical, $\pi^{\#}$-quasi-admissible,
and $\mathbb{Q} (\Delta_{N}\cap \Delta^{\#})=\mathbb{Q} \Delta^{\#}$. 
We call a  weight $\lambda$ {\em admissible} if $L(\lambda)$ 
is admissible (i.e., if $L(\lambda)$ is $\pi^{\#}$-quasi-admissible
and $\mathbb{Q}(\Delta^{\#}\cap\Delta_{\lambda+\rho})=\mathbb{Q}\Delta^{\#}$).
 The level $k$ is called
admissible if the weight $k\Lambda_0$ is admissible.
An admissible level $k$  is called 
{\em principal admissible }
 if 
 $\Delta_{k\Lambda_0+\rho}$ is isometric to $\Delta^{\ree}$, and 
 is called {\em subprincipal admissible}
otherwise.  An admissible weight
$\lambda$ is called {\em principal 
(resp., subprincipal) admissible } if 
$(\lambda,\delta)$ is a principal 
(resp., subprincipal) admissible level, and 
$\Delta^{\#}\cap\Delta_{\lambda+\rho}$ is isometric to
$\Delta^{\#}\cap \Delta_{k\Lambda_0+\rho}$.

\subsubsection{}
We claim that if $\lambda$ is a principal or a subprincipal 
admissible weight, then $\Delta_{\lambda+\rho}$
is isometric to the set of real roots of a
Kac-Moody superalgebra. 

Indeed, by~\Thm{thm:Deltalambda} (iii), (iv),
if $\Delta_{\lambda+\rho}\not\cong \Delta^{\ree}(\fg')$
for a non-critical $\lambda\in\fh^*$, then
$\fg=A(2m-1|2n-1)^{(2)}$ and $\Delta_{\lambda+\rho}$
has an indecomposable component of type  $A(1|1)^{(2)}$.
Then $\Delta^{\#}\cap\Delta_{\lambda+\rho}$  
has an indecomposable component of type 
$\{\alpha\in A(1|1)^{(2)}|\ (\alpha,\alpha)>0\}\cong A_1^{(1)}$,
so $\cl(\Delta^{\#}\cap\Delta_{\lambda+\rho})$  
has an indecomposable component of type  $A_1$.
However, if $\lambda$ is a principal or a subprincipal 
admissible weight, then $\cl(\Delta^{\#}\cap\Delta_{\lambda+\rho})$ is isometric to
$\cl(\Delta^{\#}\cap \Delta_{k\Lambda_0+\rho})$.
Without loss of generality we take $m\leq n$. Then $n>1$. 
Since $\dot{\Delta}\subset \Delta_{k\Lambda_0+\rho})$ we have
$$D_n=\dot{\Delta}^{\#}\subset 
\cl(\Delta^{\#}\cap \Delta_{k\Lambda_0+\rho})\subset \cl(\Delta^{\#})=C_n.$$
Therefore $\cl(\Delta^{\#}\cap\Delta_{\lambda+\rho})$  
does not have an indecomposable component of type  $A_1$.

In Example~\ref{A112badexa} 
$\lambda$ is an admissible weight and  $\Delta_{\lambda+\rho}$
is not isometric to the set of real roots of a
Kac-Moody superalgebra.

\subsubsection{}
\Lem{lem:Bmn1Dmn2etc} implies the following

\subsubsection{}
\begin{cor}{cor:admnotBmn} If
$\fg$ is a non-twisted affine Kac-Moody superalgebra, which is not an affine Kac-Moody
algebra, $B(m|n)^{(1)}$, $D(2|1,a)^{(1)}$,
then any admissible level  is principal admissible.
\end{cor}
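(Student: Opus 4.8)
The plan is to deduce the corollary directly from \Lem{lem:Bmn1Dmn2etc}. Recall that by \Defn{def:admissible} a level $k$ is admissible precisely when $L(k\Lambda_0)$ is non-critical, $\pi^{\#}$-quasi-admissible, and $\mathbb{Q}(\Delta^{\#}\cap\Delta_{k\Lambda_0+\rho})=\mathbb{Q}\Delta^{\#}$, and that $k$ is principal admissible exactly when, in addition, $\Delta_{k\Lambda_0+\rho}$ is isometric to $\Delta^{\ree}$. So, fixing an admissible $k$, the only thing to prove is this isometry. First I would observe that $k$ is forced to be rational: since $\Delta^{\#}$ is the affinization of $\dot{\fg}^{\#}$, the spanning condition $\mathbb{Q}(\Delta^{\#}\cap\Delta_{k\Lambda_0+\rho})=\mathbb{Q}\Delta^{\#}$ requires an integral root $\alpha\in\Delta^{\#}$ with nonzero $\delta$-component, and for such $\alpha$ the condition $2(k\Lambda_0+\rho,\alpha)\in\mathbb{Z}(\alpha,\alpha)$ defining membership in $\Delta_{k\Lambda_0+\rho}$ is affine in $k$ with nonzero coefficient $2(\Lambda_0,\alpha)$, ruling out irrational $k$. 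This makes \Lem{lem:Bmn1Dmn2etc} (whose hypothesis is $k\in\mathbb{Q}$) available.

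Next I would apply \Lem{lem:Bmn1Dmn2etc}(i). Under our hypotheses $\fg$ is non-twisted, not $B(m|n)^{(1)}$, and not $D(2|1,a)^{(1)}$ (so the $\S$\ref{newnot} set-up with $\pi$ linearly independent applies), and in particular $\fg\neq D(m+1|n)^{(2)}$ since the latter is twisted; hence part (i) gives $\fg':=\fg_{k\Lambda_0+\rho}\cong\fg$. On the other hand, by $\S$\ref{KMsubsystem} the map $\psi$ restricts to an isometric bijection $\psi\colon \Delta^{\ree}(\fg')\iso\Delta_{k\Lambda_0+\rho}$, and by \Lem{lem:Bmn1Dmn2etc}(iii) it preserves the normalised forms with $\psi^{-1}(\dot{\Delta})=\dot{\Delta}'$. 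Combining these, $\Delta_{k\Lambda_0+\rho}$ is isometric to $\Delta^{\ree}(\fg')$, which is in turn isometric to $\Delta^{\ree}(\fg)=\Delta^{\ree}$ via the isomorphism $\fg'\cong\fg$; transitivity of the isometry relation then yields the claim.

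The one delicate point—and the main obstacle I anticipate—is the direction of the isometry relation, which by the footnote in $\S$\ref{sect:intro} is transitive but not symmetric: the isometry $\psi$ runs from $\Delta^{\ree}(\fg')$ to $\Delta_{k\Lambda_0+\rho}$, whereas for principal admissibility I need the reverse direction for the first link in the chain. To handle this I would check that $\psi$ is injective on the span of the real roots, so that $\psi^{-1}$ is a genuine isometry $\Delta_{k\Lambda_0+\rho}\iso\Delta^{\ree}(\fg')$. Here I would use that $\fg'$ is an \emph{indecomposable} affine superalgebra (as $\fg'\cong\fg$), hence has a single minimal imaginary root $\delta'$, which by $\S$\ref{psidelta} satisfies $\psi(\delta')\in\mathbb{Z}_{>0}\delta$; together with $\psi^{-1}(\dot{\Delta})=\dot{\Delta}'$ this shows that $\psi$ carries the basis of $\mathbb{C}(\Delta')^{\ree}$ given by $\dot{\Delta}'$ and $\delta'$ to a basis of $\mathbb{C}\Delta_{k\Lambda_0+\rho}$, so it is invertible there. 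The collapse occurring in the reducible case (as in the $\fsl_4^{(1)}$ example of $\S$\ref{KMsubsystem}, where two copies of $\delta'$ map to one $\delta$) cannot happen once $\fg'$ is indecomposable, which is exactly what part (i) guarantees.
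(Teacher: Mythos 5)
Your proposal is correct and follows essentially the paper's own route: the paper deduces the corollary immediately from \Lem{lem:Bmn1Dmn2etc}(i), exactly as you do, and your extra steps (rationality of an admissible level, which the paper establishes only later in \Lem{lem:admadm}(i), and invertibility of $\psi$ on the span of the real roots, needed because the paper's isometry relation is asymmetric) correctly fill in details that the paper's one-line citation leaves implicit. One small imprecision: the condition $2(k\Lambda_0+\rho,\alpha)\in\mathbb{Z}(\alpha,\alpha)$ defines membership in $\ol{R}_{k\Lambda_0+\rho}$, not in $\Delta_{k\Lambda_0+\rho}$, so your rationality argument should first use the inclusion $\Delta_{k\Lambda_0+\rho}\subset\mathbb{Z}\ol{R}_{k\Lambda_0+\rho}$ from \S~\ref{Xi} to produce a root of $\ol{R}_{k\Lambda_0+\rho}$ with nonzero $\delta$-component, to which the integrality condition then applies.
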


\subsubsection{}
\begin{lem}{lem:admadm}
Let  $\fg$ be a  non-twisted affine Kac-Moody superalgebra.
\begin{enumerate}
\item
If $\lambda$ is admissible, then $(\lambda+\rho,\delta)\in\mathbb{Q}_{>0}$.
 \item Any (principal) admissible level for $\fg$ is a (principal)  admissible level for 
 the non-twisted affine Kac-Moody algebra $\fg_{\pi^{\#}}$.
\end{enumerate}
\end{lem}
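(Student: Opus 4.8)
The two parts are of different character, so I would treat them separately; throughout I use the restriction to $\fg_{\pi^{\#}}$ and the correspondence $L\mapsto L'$ of Section~\ref{algebrag'}.

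\emph{Part (ii).} Write $L:=L(k\Lambda_0)$ and $M:=\Res^{\fg}_{\fg_{\pi^{\#}}}L$. Since $L$ is irreducible, $\Delta_L$ is well defined and, by hypothesis, $L$ is $\pi^{\#}$-quasi-admissible, so I would apply the implication (a)$\Rightarrow$(c) of \Thm{thm:criterionN}: every irreducible subquotient of $M$ is a quasi-admissible $\fg_{\pi^{\#}}$-module, $\Delta_M$ is well defined, and $\Delta_M=\Delta_{\pi^{\#}}\cap\Delta_L=\Delta^{\#}\cap\Delta_{k\Lambda_0+\rho}$. The highest weight vector of $L$ generates $L_{\fg_{\pi^{\#}}}(k\Lambda_0)$ as the top subquotient of $M$, so this module is quasi-admissible over the (anisotropic) affine Lie algebra $\fg_{\pi^{\#}}$, and its integral root system over $\fg_{\pi^{\#}}$ equals $\Delta_M$. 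The admissibility hypothesis $\mathbb{Q}(\Delta_L\cap\Delta^{\#})=\mathbb{Q}\Delta^{\#}$ then reads $\mathbb{Q}\Delta_M=\mathbb{Q}\Delta^{\ree}(\fg_{\pi^{\#}})$, which is exactly the full-rank condition of classical Kac--Wakimoto admissibility; combined with quasi-admissibility this shows $k$ is an admissible level for $\fg_{\pi^{\#}}$. For the word ``principal'' I would use that the isometry $\psi$ of \Thm{thm:Deltalambda} preserves both the bilinear form and parity: if $\Delta_{k\Lambda_0+\rho}$ is isometric to $\Delta^{\ree}$, then restricting the isometry to the even roots of positive square identifies $\Delta^{\#}\cap\Delta_{k\Lambda_0+\rho}$ with $\Delta^{\#}=\Delta^{\ree}(\fg_{\pi^{\#}})$, so the integral root system $\Delta_{k\Lambda_0+\rho_{\pi^{\#}}}$ over $\fg_{\pi^{\#}}$ is isometric to $\Delta^{\ree}(\fg_{\pi^{\#}})$ and $k$ is principal admissible for $\fg_{\pi^{\#}}$.

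\emph{Part (i).} Here the naive route through $\fg_{\pi^{\#}}$ fails, and this is the main obstacle: restriction only yields $k+h^{\vee}(\fg_{\pi^{\#}})>0$, and since $h^{\vee}(\fg_{\pi^{\#}})$ may exceed $h^{\vee}(\fg)$ (e.g.\ for $A(m-1|n-1)^{(1)}$ with $m>n$, where $\fg_{\pi^{\#}}\sim\fsl_m^{(1)}$ has $h^{\vee}=m>m-n=h^{\vee}(\fg)$), this does \emph{not} give the desired $(\lambda+\rho,\delta)=k+h^{\vee}(\fg)>0$. Instead I would pass to $\fg':=\fg'_{\lambda+\rho}$ and $L'=L_{\fg'}(\lambda')$. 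By \Thm{thm:criterion}, $\pi^{\#}$-quasi-admissibility of $L$ is equivalent to $\pi'$-integrability of $L'$, where $\pi'=\{\alpha'\in\Sigma'_{\pr}\mid\psi(\alpha')\in\Delta^{\#}\}$; the full-rank hypothesis forces $\pi'$ to have full rank, so the even subalgebra $\fg'_{\pi'}$ is an affine Lie algebra whose minimal imaginary root $\delta'$ satisfies $\psi(\delta')\in\mathbb{Z}_{>0}\delta$ by $\S$~\ref{psidelta}.

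The key computation is then on the $\fg'$ side, where the weight has been ``straightened'': integrability of $L'$ over the affine even subalgebra $\fg'_{\pi'}$ forces its level $(\lambda',\delta')$ to be a non-negative rational, while the Weyl-vector pairing $(\rho',\delta')$ equals the dual Coxeter number of the indecomposable (affine) component $\fg''$ of $\fg'$ carrying $\delta'$, which is $\geq 0$ by the table in $\S$~\ref{dualCoxeter} (the problematic cases $A(1|1)^{(2)}$ and $D(2|1,a)^{(1)}$ being excluded by the standing assumptions). Non-criticality of $\lambda$ gives $(\lambda'+\rho',\delta')=(\lambda+\rho,\psi(\delta'))\neq 0$ via \Lem{lem:lambda11}, so the sum of these two non-negative rationals is in fact strictly positive. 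Finally $\psi(\delta')\in\mathbb{Z}_{>0}\delta$ together with $(\lambda'+\rho',\delta')=(\lambda+\rho,\psi(\delta'))$ yields $(\lambda+\rho,\delta)\in\mathbb{Q}_{>0}$, as required. The delicate points I would check carefully are that $\psi$-transport really identifies $(\lambda',\delta')$ with the level of $L'$ over $\fg'_{\pi'}$ (so that integrability forces non-negativity) and that $(\rho',\delta')$ is computed against the same normalisation, so that the two contributions genuinely add.
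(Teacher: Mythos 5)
Your part (ii) has a genuine gap. You establish that $L_{\fg_{\pi^{\#}}}(k\Lambda_0)$ is quasi-admissible and that its integral root system $\Delta^{\#}\cap R_{k\Lambda_0+\rho}$ has full rank in $\mathbb{Q}\Delta^{\#}$, but by Definition~\ref{def:admissible} an admissible level for the affine Lie algebra $\fg_{\pi^{\#}}$ must in addition be \emph{non-critical for $\fg_{\pi^{\#}}$}, i.e.\ $(k\Lambda_0+\rho^{\#},\delta)=k+h^{\vee,\#}\neq 0$. This is not automatic: non-criticality of $k\Lambda_0$ for $\fg$ only says $k+h^{\vee}\neq 0$, and $h^{\vee,\#}>h^{\vee}$ whenever $\fg$ is not a Lie algebra (your own example $A(m-1|n-1)^{(1)}$ with $m>n$); nor does it follow from quasi-admissibility, since the paper's notion of quasi-admissibility (character anti-invariance) carries no positivity or non-criticality assumption. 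Verifying this is in fact the entire content of the paper's proof of (ii): the paper first proves (i), then writes $(k\Lambda_0+\rho^{\#},\delta)=(k\Lambda_0+\rho,\delta)+(\rho^{\#}-\rho,\delta)$ and uses (i) together with $h^{\vee}\leq h^{\vee,\#}$ (read off the table in $\S$~\ref{dualCoxeter}) to get strict positivity. So the logical order matters — (ii) depends on (i) — whereas you present (ii) as self-contained. Your aside that ``restriction only yields $k+h^{\vee}(\fg_{\pi^{\#}})>0$'' is backwards: restriction yields no such positivity; it is precisely what has to be proved.

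Your part (i) follows the paper's strategy (pass to $\fg'$, invoke \Thm{thm:criterion}, transport back along $\psi$), but the implementation leans on an unjustified normalisation claim. You split $(\lambda'+\rho',\delta')$ into the level of $L'$ plus $(\rho',\delta')$, and assert that the latter is the dual Coxeter number of the relevant affine component $\fg''$ of $\fg'$, hence $\geq 0$ by the table in $\S$~\ref{dualCoxeter}. That table, however, refers to the \emph{normalised} invariant form of $\fg''$, while the form on $\fg'$ is the one transported by $\psi$ from $\fg$; these agree only up to a scalar whose sign you have not controlled (for $\lambda=k\Lambda_0$ this is exactly \Lem{lem:Bmn1Dmn2etc}~(iii), but for a general admissible $\lambda$ no such statement is available at this point). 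The paper avoids dual Coxeter numbers altogether: the full-rank hypothesis produces $q\in\mathbb{Z}_{>0}$ such that $q\delta$ is an imaginary root of the affine root system $\Delta^{\#}\cap\Delta_{\lambda+\rho}$; then $\psi^{-1}(q\delta)$ is a positive imaginary root of $\fg'$, hence a non-negative integral combination of $(\pi^{\#})'$ (by \Lem{lem:W'L}~(ii)), and $(\pi^{\#})'$-integrability of $L'$ gives $(\lambda'+\rho',\psi^{-1}(q\delta))\geq 0$ directly, after which $(\lambda+\rho,q\delta)\geq 0$ and non-criticality yield $(\lambda+\rho,\delta)\in\mathbb{Q}_{>0}$. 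Replacing your level-plus-$h^{\vee}$ decomposition by this direct pairing argument closes the normalisation issue you flagged.
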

\begin{proof}
We set 
$(\pi^{\#})':=\{\alpha'\in \Sigma'_{\pr}|\ \psi(\alpha')\in\Delta^{\#}\}$.
Then
$$(\pi^{\#})'=\{\alpha'\in \Sigma'_{\pr}|\ \psi(\alpha')\in\Delta^{\#}\}=
\{\alpha'\in \Sigma'_{\pr}|\ (\alpha',\alpha')>0\}.
$$
The condition $\mathbb{Q}\Delta^{\#}_{\lambda+\rho}=\mathbb{Q}\Delta^{\#}$
implies the existence of $q\in\mathbb{Z}_{>0}$ such that $q\delta\in \Delta^{\#}_{\lambda+\rho}$.
Then $\psi^{-1}(q\delta)$ is a positive imaginary root in $\Delta(\fg')$.
By~\Thm{thm:criterion}, $L_{\fg'}(\lambda')$ is $(\pi^{\#})'$-integrable,
so $(\lambda'+\rho',\psi^{-1}(q\delta))\in\mathbb{Z}_{\geq 0}$
 which gives $(\lambda+\rho,q\delta)\in\mathbb{Z}_{\geq 0}$.
By definition, $\lambda$ is non-critical, thus $(\lambda+\rho,\delta)\in\mathbb{Q}_{>0}$.
This proves (i).

For (ii) let $k$ be a (principal) admissible weight. 
Combining~\Thm{thm:criterionN} (iii) and~$\S$~\ref{DeltaMN} (ii), 
we conclude that $L_{\fg_{\pi^{\#}}}(k\Lambda_0)$
 is quasi-admissible (since this module is a subquotient of $\Res_{\fg_{\pi^{\#}}}^{\fg} L(k\Lambda_0)$) and 
$\{\alpha\in \Delta^{\#}|\ \langle \lambda,\alpha^{\vee}\rangle\in\mathbb{Z}\}=
\Delta^{\#}\cap R_{k\Lambda_0+\rho}$, so 
the $Q$-span of this set coincides
with $\mathbb{Q}\Delta^{\#}$. In order to prove that $k$ is a (principal)  admissible level for  $\fg_{\pi^{\#}}$,
it remains to verify that 
$k$ is not  the critical level for $\fg_{\pi^{\#}}$.
Let $\rho^{\#}$ be  the Weyl vector
for the Kac-Moody algebra $\fg_{\pi^{\#}}$.  Since
$\fg$ is non-twisted, 
$\delta$  is the minimal imaginary root of $\fg_{\pi^{\#}}$ and
the restriction of $(-,-)$
to $\Delta^{\#}$ is the  normalised invariant form. Thus
 $(\rho^{\#},\delta)=h^{\vee,\#}$ is the dual Coxeter
number for $\Delta^{\#}$. 
Using the table in $\S$~\ref{dualCoxeter} we see that 
$h^{\vee}\leq h^{\vee,\#}$ (the equality holds if and only if  $\fg$ is a Lie algebra,
that is $\fg=\fg_{\pi^{\#}}$). Therefore
$(\rho^{\#}-\rho,\delta)\geq 0$
and thus
$$(k\Lambda_0+\rho^{\#},\delta)=(k\Lambda_0+\rho,\delta)+(\rho^{\#}-\rho,\delta)>0,
$$
since, by (i), $(k\Lambda_0+\rho,\delta)>0$.
 \end{proof}

\subsection{}
\begin{prop}{prop:Vksubq}
\begin{enumerate}
\item
Let $\fg\not=D(2|1,a)^{(1)}$ be of type (Aff).
Assume that $k\in\mathbb{Z}_{\geq 0}$ is such that $k+h^{\vee}\not=0$.
Then $L(k\Lambda_0)$ is the unique $\pi^{\#}$-integrable subquotient of $V^k$.
\item Let $\fg\not=D(2|1,a)^{(1)}$ be a non-twisted affine Kac-Moody superalgebra.
If  $k$ is an admissible level, then  $L(k\Lambda_0)$ is
the unique  $\pi^{\#}$-quasi-admissible subquotient of $V^k$.
\end{enumerate}
\end{prop}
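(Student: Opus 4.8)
The plan is to prove both parts by combining the uniqueness structure provided by the vacuum module $V^k$ together with the criteria established in Theorem~\ref{thm:criterionN} and Theorem~\ref{thm:criterion}. The key observation is that $V^k$ is $\dot{\Sigma}_{\pr}$-integrable (by~\Rem{rem:Vkandpi}), so any $\pi^{\#}$-integrable (resp.\ $\pi^{\#}$-quasi-admissible) subquotient is automatically $\pi$-integrable (resp.\ $\pi$-quasi-admissible), where $\pi=\pi^{\#}\cup\dot{\Sigma}_{\pr}$. This reduces the problem to identifying all highest weights $\mu$ with $L(\mu)$ a subquotient of $V^k$ that are $\pi$-integrable (resp.\ $\pi$-quasi-admissible), and showing only $\mu=k\Lambda_0$ qualifies.

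For part (i), first I would note that any subquotient $L(\mu)$ of $V^k$ satisfies $\mu\le k\Lambda_0$, i.e.\ $k\Lambda_0-\mu\in\mathbb{Z}_{\ge 0}\Sigma$, and $(\mu,\delta)=k$. A $\pi^{\#}$-integrable subquotient is $\pi$-integrable, hence by~\Lem{lem:intmod} and the definition $\langle\mu,\alpha^\vee\rangle\in\mathbb{Z}_{\ge 0}$ for all $\alpha\in\pi$. The main step is to show that among weights of $V^k$, only $k\Lambda_0$ can satisfy these dominance conditions. Here I would use \Prop{prop:KK}: since $k+h^\vee\ne 0$, the weight $k\Lambda_0$ is non-critical (by~\Rem{rem:non-critical}, as $(k\Lambda_0+\rho,\delta)=k+h^\vee\ne 0$), so every $\mu$ with $[M(k\Lambda_0):L(\mu)]\ne 0$ lies in $U(k\Lambda_0)$. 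By~\Lem{lem:KK}, such $\mu$ satisfies $k\Lambda_0-\mu\in\mathbb{Z}_{\ge 0}\Delta_{k\Lambda_0+\rho}$ and $\Delta_{\mu+\rho}=\Delta_{k\Lambda_0+\rho}$. The crux is that the chain defining $U(k\Lambda_0)$ uses reflections in roots $\gamma$ with $2(\mu_i,\gamma)=m_i(\gamma,\gamma)$; combined with $\pi^{\#}$-integrability (which forces dominance along $\pi^{\#}$) and the $\dot{\Sigma}_{\pr}$-integrability built into $V^k$, I expect the only weight surviving in the dominant chamber to be $k\Lambda_0$ itself, since a genuine descent $\mu<k\Lambda_0$ would violate dominance with respect to some principal root.

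For part (ii), I would pass through the correspondence $L\mapsto L'$ of Section~\ref{algebrag'}. By \Thm{thm:criterion}, $L(\mu)$ is $\pi^{\#}$-quasi-admissible if and only if $L'=L_{\fg'}(\mu')$ is $(\pi^{\#})'$-integrable. For $\mu=k\Lambda_0$ with $k$ admissible, \Lem{lem:Bmn1Dmn2etc}(iv) identifies $L'$ as a simple vacuum module $L_{\fg'}(k'\Lambda_0')$ over the Kac-Moody superalgebra $\fg'$, and \Lem{lem:admadm}(i) gives $(\mu+\rho,\delta)\in\mathbb{Q}_{>0}$, hence non-criticality. The strategy is to transfer the uniqueness statement to the $V^{k'}$-side: a $\pi^{\#}$-quasi-admissible subquotient $L(\mu)$ of $V^k$ corresponds, via the equivalence, to a $(\pi^{\#})'$-integrable subquotient of the $\fg'$-vacuum module, and applying part (i) over $\fg'$ forces $\mu'=k'\Lambda_0'$, whence $\mu=k\Lambda_0$. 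The compatibility of $\Delta_{\mu+\rho}$ across subquotients (from~\Cor{cor:Olambda} or the indecomposability of $V^k$, via~$\S$\ref{Olambda}) ensures that all subquotients share the same $\fg'$ and the same isometry $\psi$, so the reduction is uniform.

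\textbf{The hard part} will be the core uniqueness argument in part (i): ruling out proper descents $\mu<k\Lambda_0$ that remain dominant for $\pi$. The subtlety is that $\pi$ is generally only a proper subset of $\Sigma_{\pr}$ (the Dynkin diagram of $\Sigma_{\pr}$ need not be connected in the non-anisotropic affine case), so $\pi$-integrability does not force full integrability, and one cannot simply invoke the classical affine Lie algebra uniqueness of $L(k\Lambda_0)$ inside $V^k$. I expect to control this by carefully combining the chamber description of $U(k\Lambda_0)$ from~\Lem{lem:KK} with the structure of $\Delta_{k\Lambda_0+\rho}\supset\dot\Delta$ established in~$\S$\ref{6.2}, using that any descent in $U(k\Lambda_0)$ happens along $\ol{R}_{k\Lambda_0+\rho}^+$ and that dominance with respect to the full set of principal roots of $\Delta_{k\Lambda_0+\rho}$ (equivalently, $(\pi^{\#})'$-integrability of $L'$) pins the highest weight uniquely inside the corresponding vacuum module over $\fg'$.
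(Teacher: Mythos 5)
Your reduction steps are the same ones the paper uses (upgrade $\pi^{\#}$ to $\pi$ via \Rem{rem:Vkandpi}, linkage via \Prop{prop:KK}, transfer to $\fg'$ via \Thm{thm:criterion} and \Lem{lem:Bmn1Dmn2etc} (iv)), but the core of the argument is missing in both parts, and the mechanism you propose for filling it is not the one that can work. For part (i), the paper's key ingredient is \Prop{prop:Uklambda0} (whose proof occupies most of Appendix~\ref{affinebases}: the case-by-case analysis of nice pairs $(\Sigma,S)$), which says that a weight $k\Lambda_0-\mu$ satisfying the dominance conditions for $\pi$ together with the Casimir condition $2(k\Lambda_0+\rho,\mu)=(\mu,\mu)$ must have $\mu\in\mathbb{Z}_{\geq 0}\dot{\Sigma}$. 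Note what this does \emph{not} say: it does not say that proper dominant linked descents are impossible -- they may well exist, confined to $\mathbb{Z}_{\geq 0}\dot{\Sigma}$ (in fact to iso-sets $S$). They are excluded not by dominance but by the weight structure of the vacuum module: since $V^k$ is induced from $\fg_+\supset\dot{\fg}+\fh$, every weight of $V^k$ other than $k\Lambda_0$ drops by a sum of roots with strictly positive $\delta$-coefficient, so no nonzero $\mu\in\mathbb{Z}_{\geq 0}\dot{\Sigma}$ can occur. Your sketch never isolates this last step, and your proposed reason (``a genuine descent $\mu<k\Lambda_0$ would violate dominance with respect to some principal root'') is false as a mechanism: dominance with respect to $\pi$ cannot detect descents along isotropic directions in $\dot{\Delta}$, which is exactly why the hard combinatorial confinement statement of \Prop{prop:Uklambda0} is needed. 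Citing \Lem{lem:KK} only gives $k\Lambda_0-\mu\in\mathbb{Z}_{\geq 0}\Delta_{k\Lambda_0+\rho}$, which is far weaker, since $\Delta_{k\Lambda_0+\rho}$ contains roots with $\delta$-components.

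For part (ii), your reduction to ``part (i) over $\fg'$'' relies on the claim that the correspondence $L\mapsto L'$ sends a subquotient of $V^k$ to a subquotient of the $\fg'$-vacuum module $V^{k'}$. No such statement is available: the correspondence of Section~\ref{algebrag'} is defined purely at the level of highest weights via $\psi$ and linkage, not as a functor on module categories (the Fiebig-type block equivalence is only conjectural for superalgebras, see~$\S$~\ref{D'}). The paper avoids this entirely: from $\nu\in U(k\Lambda_0)$ it chooses $\nu'\in U(\lambda')$ ($\S$~\ref{weightnu'}), gets dominance of $\nu'$ from $\pi'$-integrability of $L_{\fg'}(\nu')$ (\Thm{thm:criterion}), applies \Prop{prop:Uklambda0} \emph{for $\fg'$} to conclude only that $\lambda'-\nu'\in\mathbb{Z}\dot{\Delta}'$ (it does not and need not conclude $\nu'=k'\Lambda'_0$), transfers this back to $\lambda-\nu\in\mathbb{Z}\dot{\Delta}$, and then again invokes the $V^k$ weight structure on the $\fg$-side to finish. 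So both parts of your proposal leave the genuine content -- the Appendix~\ref{affinebases} combinatorics and the $\delta$-coefficient argument on weights of $V^k$ -- unaddressed.
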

\begin{proof}
We fix a base $\Sigma$ as in  Appendix~\ref{affinebases}; then
the set
$\dot{\Sigma}=\Sigma\cap \dot{\Delta}$ is a base of $\dot{\Delta}$.

For (i) let $L(k\Lambda_0-\mu)$ be a $\pi$-integrable subquotient of the Verma module $M(k\Lambda_0)$
and $\mu\not=0$.
Then  $\langle k\Lambda_0-\mu,\alpha^{\vee}\rangle\geq 0$ for all $\alpha\in\pi$
 and $2(k\Lambda_0+\rho,\mu)=(\mu,\mu)$.
 By~\Prop{prop:Uklambda0} this gives $(k\Lambda_0-\mu)\in\mathbb{Z}\dot{\Sigma}$,
 so the $(k\Lambda_0-\mu)$-weight space in $V^k$ is zero. Hence
 $L(k\Lambda_0-\mu)$ is not a subquotient of $V^k$.

For (ii) let $L(\nu)$ be a $\pi^{\#}$-quasi-admissible subquotient of $V^k$
and $\nu\not=k\Lambda_0$. 
By~\Prop{prop:KK} (i), $\nu\in U(k\Lambda_0)$. 
Set $\lambda:=k\Lambda_0$. By~\Lem{lem:Bmn1Dmn2etc} (iv),
we have $\lambda'=k'\Lambda'_0$. Take $\nu'$ as in~$\S$~\ref{gnu}.
By~$\S$~\ref{weightnu'} we can choose
$\nu'\in U(\lambda')$. 
By~\Thm{thm:criterion}, $L_{\fg'}(\nu')$
 is $\pi'$-integrable where $\pi':=\{\alpha'\in\Sigma'_{\pr}|\ \psi(\alpha')\in\Delta_{\pi}\}$.
By~\Lem{lem:Bmn1Dmn2etc} (iii), we have
$$\pi':=\{\alpha'\in \Sigma'_{\pr}|\ (\alpha',\alpha')>0\}\cup
(\Sigma'_{\pr}\cap \dot{\Delta}').$$
Using~\Prop{prop:Uklambda0} we obtain
$\lambda'-\nu'\in\mathbb{Z}\dot{\Delta}'$.
Now~$\S$~\ref{weightnu'} gives $\lambda-\nu\in\mathbb{Z}\dot{\Delta}$, 
so the $\nu$-weight space in $V^k$ is zero  (since $\nu\not=k\Lambda_0$).
Hence $L(\nu)$ is not a subquotient of $V^k$.
Now both assertions follow from~\Rem{rem:Vkandpi}.
\end{proof}

\section{$V_k(\fg)$-modules}
In this section $\fg=\dot{\fg}^{(1)}$ is an indecomposable  non-twisted affine Kac-Moody 
superalgebra and $\fg\not=D(2|1,a)^{(1)}$.

\subsection{Notation}\label{notvertex}
 We denote by 
$V^k(\fg)$, $V_k(\fg)$ the universal  and simple
affine vertex superalgebras of level $k$ respectively.
We always assume that $k$ is non-critical.
We denote by $\CO_{\Sigma}(\fg)^k$, ${\CO}^{\inf}(\fg)^k$
 the full subcategories of $\CO_{\Sigma}(\fg)$,  ${\CO}^{\inf}(\fg)$
consisting of the modules of level $k$ (i.e., the modules
annihilated by $(K-k)$).
Any module in  ${\CO}^{\inf}(\fg)^k$  has the natural structure of $V^k(\fg)$-module.

We retain notation of~$\S\S$~\ref{newnot} and~\ref{g>0}.
Recall that $\fg_{\pi^{\#}}$ is   a subalgebra of $\fg$
generated by $\fh$ and $\fg_{\pm\alpha}$ for $\alpha\in\pi^{\#}$;
this is the direct product of 
 a  subalgebra of $\fh$ and
 a non-twisted affine Kac-Moody algebra, see~$\S$~\ref{notataff}, which we denote 
 $\fg^{\#}$. The restriction of $(-,-)$ to $\fg^{\#}$
 is the normalised non-degenerate invariant bilinear form. As in~\Lem{lem:admadm}
we denote by $\rho^{\#}$ and $(h^{\#})^{\vee}$ the Weyl vector and
the dual Coxeter number of $\fg^{\#}$ respectively.

By~\Lem{lem:admadm} any admissible level for $\fg$ is admissible for $\fg^{\#}$.

\subsection{Anisotropic case}\label{ArakawaThm}
Arakawa's Theorem states that for a non-twisted affine Kac-Moody algebra $\fg$
and an admissible level $k$,
any $V_k(\fg)$-module in the category $\CO$ is completely reducible and
irreducible modules in this category are 
admissible $\fg$-modules $L(\lambda)$ such that
 $\Delta_{\lambda+\rho}$ is isometric to $\Delta_{k\Lambda_0+\rho}$. 

In~\cite{GSsnow} this assertion was extended to
$\fg=\mathfrak{osp}(1|2n)^{(1)}$ (which is 
the only anisotropic  non-twisted affine Kac-Moody superalgebra
with $\fg_{\ol{1}}\not=0$). It was also shown that any $V_k(\fg)$-module
in the category ${\CO}^{\inf}(\fg)^k$ is completely reducible (see~\cite{GSsnow}, Corollary 5.4.1).

\subsection{Main results}
The main results of this section are the following theorems.
\subsubsection{}
\begin{thm}{thm:arak}
Let $\fg$ be an indecomposable non-twisted affine Kac-Moody superalgebra, which is not 
 of type
$D(2|1;a)^{(1)}$.  Let $k$ be an admissible level. Then a $V^k(\fg)$-module $N$
is  a $V_k(\fg)$-module if and only if 
 $\Res^{\fg}_{\fg^{\#}} N$ is a $V_k(\fg^{\#})$-module (then $k$ is
 an admissible level for $\fg^{\#}$).
\end{thm}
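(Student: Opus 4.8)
Recall that $V^k(\fg)=V^k$ is the vacuum module and $V_k(\fg)=L(k\Lambda_0)$ its unique simple quotient; write $I:=\ker\bigl(V^k(\fg)\twoheadrightarrow V_k(\fg)\bigr)$ for the maximal proper vertex ideal, and let $I^{\#}$ be the analogous maximal ideal of the vertex subalgebra $V^k(\fg^{\#})\subseteq V^k(\fg)$ (admissibility of $k$ for $\fg^{\#}$ is \Lem{lem:admadm}(ii)). The plan is to reduce the theorem to the single assertion $I=\cU(\fhg)\,I^{\#}$, i.e.\ that $I$ is generated as a vertex ideal by $I^{\#}$. I will use two standard facts: the vertex ideals of $V^k(\fg)$ are precisely its $\fhg$-submodules (the modes of $\fg$ generate $\fhg$, and $V^k(\fg)$ is strongly generated by $\fg$); and for any $V^k(\fg)$-module $N$ the set $\{a\in V^k(\fg)\mid Y_N(a,z)=0\}$ is a vertex ideal. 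Consequently ``$N$ is a $V_k(\fg)$-module'' is equivalent to ``$I$ annihilates $N$'', and similarly for $\fg^{\#}$.

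\textbf{Identifying $V_k(\fg^{\#})$ and the easy direction.} First I would show that the $\fg^{\#}$-submodule of $L(k\Lambda_0)$ generated by the highest weight vector equals $L_{\fg^{\#}}(k\Lambda_0)=V_k(\fg^{\#})$: it is cyclic of highest weight $k\Lambda_0$, and since $L(k\Lambda_0)$ is admissible, hence $\pi^{\#}$-quasi-admissible, \Cor{cor:critN} makes its restriction to $\fg_{\pi^{\#}}$ completely reducible, so this cyclic highest weight submodule is irreducible. Factoring $V^k(\fg^{\#})\to V_k(\fg)$ through $V_k(\fg^{\#})\hookrightarrow V_k(\fg)$ then gives $I^{\#}=I\cap V^k(\fg^{\#})\subseteq I$. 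The implication ``$N$ is a $V_k(\fg)$-module $\Rightarrow\Res^{\fg}_{\fg^{\#}}N$ is a $V_k(\fg^{\#})$-module'' is now immediate: $I^{\#}\subseteq I$ annihilates $N$, and the $\fg^{\#}$-fields on $\Res^{\fg}_{\fg^{\#}}N$ are the restrictions of the $\fg$-fields on $N$. For the converse, granting $I=\cU(\fhg)I^{\#}$: if $\Res^{\fg}_{\fg^{\#}}N$ is a $V_k(\fg^{\#})$-module then $I^{\#}$ annihilates $N$, so the vertex ideal $\{a\mid Y_N(a,z)=0\}$ contains $I^{\#}$ and therefore the ideal it generates, namely $I$; thus $I$ annihilates $N$ and $N$ is a $V_k(\fg)$-module.

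\textbf{The key claim.} It remains to prove that $\bar V:=V^k(\fg)/\cU(\fhg)I^{\#}$ is irreducible, equal to $L(k\Lambda_0)$. By construction $I^{\#}$ annihilates $\bar V$, so $\bar V$ is a $V_k(\fg^{\#})$-module and $\Res^{\fg}_{\fg^{\#}}\bar V\in\CO^{\inf}(\fg^{\#})$. Because $\fg^{\#}$ is a non-twisted affine \emph{Lie algebra} and $k$ is admissible for it, Arakawa's theorem (see~\S\ref{ArakawaThm}) shows $\Res^{\fg}_{\fg^{\#}}\bar V$ is completely reducible with all irreducible constituents admissible $\fg^{\#}$-modules. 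Every $\fg$-composition factor $L(\mu)$ of $\bar V$ is a subquotient of $V^k$, hence lies in the vacuum block, so $\Delta_{L(\mu)}=\Delta_{k\Lambda_0+\rho}$ (\Lem{lem:KK}) and all irreducible constituents of $\Res^{\fg}_{\fg_{\pi^{\#}}}L(\mu)$ are quasi-admissible. I would then invoke \Thm{thm:criterionN} (the implication (c)$\Rightarrow$(b)) to conclude that each $L(\mu)$ is $\pi^{\#}$-quasi-admissible, and finally \Prop{prop:Vksubq}(ii) to force $\mu=k\Lambda_0$; as $L(k\Lambda_0)$ occurs in $V^k$ with multiplicity one, $\bar V=L(k\Lambda_0)$.

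\textbf{Main obstacle.} The delicate point in the last step — and the real work of the proof — is verifying the remaining hypothesis of \Thm{thm:criterionN}, namely $\Delta_{M}=\Delta_{\pi^{\#}}\cap\Delta_{L(\mu)}$ for $M=\Res^{\fg}_{\fg_{\pi^{\#}}}L(\mu)$, equivalently $R_{\mu+\rho}\cap\Delta^{\#}=\Delta_{\mu+\rho}\cap\Delta^{\#}$. \Exa{B13} shows this equality can fail for a general weight, so one cannot argue formally; instead I would exploit that $L(\mu)$ lies in the vacuum block together with \Lem{lem:Bmn1Dmn2etc}, by which the associated module $L'$ is a simple \emph{vacuum} $\fg'$-module, in order to pin down $\Delta_{\mu+\rho}\cap\Delta^{\#}$ through the equivalence of~\S\ref{pi'}. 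The reduction of the required character anti-invariance to the Coxeter generators $\psi(\pi')$ of $W[\Delta_{L}\cap\Delta_{\pi^{\#}}]$ (\Lem{lem:W'L}) would be carried out, exactly as in the proofs of \Thm{thm:criterionN} and \Thm{thm:criterion}, by means of the Enright functor of~\S\ref{Enright}.
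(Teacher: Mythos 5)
Your overall architecture coincides with the paper's own proof (Proposition~\ref{prop:VkVk}): reduce the theorem to showing that the maximal proper submodule $J$ of $V^k$ is generated as a $\fg$-submodule (equivalently, as a vertex ideal) by the maximal proper submodule $J^{\#}$ of the $\fg^{\#}$-vacuum module; then pass to $V^k/\cU(\fg)J^{\#}$, observe that its restriction to $\fg^{\#}$ is a $V_k(\fg^{\#})$-module, invoke Arakawa's theorem for $\fg^{\#}$, and conclude via \Thm{thm:criterionN} and \Prop{prop:Vksubq}\,(ii), together with multiplicity one of the vacuum weight, that this quotient is $L(k\Lambda_0)$. Your verification of $J^{\#}\subseteq J$ (irreducibility of the cyclic highest-weight $\fg^{\#}$-submodule of $L(k\Lambda_0)$ via complete reducibility, \Cor{cor:critN}) is a harmless variation of the paper's argument, which instead uses \Thm{thm:criterionN} and \Prop{prop:Vksubq}\,(ii) applied to $\fg^{\#}$.

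The genuine gap is precisely at the point you flag as the ``main obstacle'': you correctly isolate the needed hypothesis of \Thm{thm:criterionN}\,(c), namely $\Delta_{M_1}=R_{\mu+\rho}\cap\Delta^{\#}=\Delta_{\mu+\rho}\cap\Delta^{\#}$ for every composition factor $L(\mu)$ of $V^k/\cU(\fg)J^{\#}$, but you do not prove it, and the route you sketch is partly off. \Lem{lem:Bmn1Dmn2etc}\,(iv) says that the module $L'$ associated with $L(k\Lambda_0)$ is a vacuum module; for a general $\mu$ in the vacuum block one only gets $\mu'\in U(k'\Lambda'_0)$ (see \S\ref{weightnu'}), not a vacuum weight, so your claim that ``the associated module $L'$ is a simple vacuum $\fg'$-module'' fails for $L(\mu)$ with $\mu\neq k\Lambda_0$; moreover no fresh Enright-functor argument is needed, since that machinery is internal to the already-proved \Thm{thm:criterionN} and \Thm{thm:criterion}. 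The paper closes the point by a short formal chain — contrary to your assertion that ``one cannot argue formally'': by \S\ref{DeltaMN}, $\Delta_{M_1}=R_{\mu+\rho}\cap\Delta^{\#}$; since $\mu-k\Lambda_0\in\mathbb{Z}\Delta$, formula~(\ref{eq:Rlambda+rho}) gives $R_{\mu+\rho}\cap\Delta^{\#}=R_{k\Lambda_0+\rho}\cap\Delta^{\#}$; admissibility of $k\Lambda_0$ and~(\ref{eq:DeltaLM}) give $R_{k\Lambda_0+\rho}\cap\Delta^{\#}=\Delta_{k\Lambda_0+\rho}\cap\Delta^{\#}$; and \Lem{lem:KK} gives $\Delta_{k\Lambda_0+\rho}=\Delta_{\mu+\rho}$. (Example~\ref{B13} is no obstruction: the failure there occurs for a weight outside the vacuum block of an admissible level.) With this substitution your argument becomes complete and is then essentially identical to the paper's.
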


For the integral level $k$ the above assertion is Theorem 5.3.1 in~\cite{GSsl1n}.

\subsubsection{}
 \begin{thm}{thm:cor:arak}
Let $\fg$ and $k$ be as in~\Thm{thm:arak}, and  let $N\in\CO_{\Sigma}(\fg)^k$  be
 an indecomposable $\fg$-module. 
 If  $N$ is admissible and 
 $\Delta_{N}\cap \Delta^{\#}$ is isometric to $ \Delta_{k\Lambda_0+\rho}\cap\Delta^{\#}$, then 
 $N$ is a $V_k(\fg)$-module. The converse holds for $\fg\not=B(1|1)^{(1)}$
 and for $\fg=B(1|1)^{(1)}$ if $k$ is principal admissible.
  \end{thm}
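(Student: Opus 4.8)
The plan is to prove both implications by descending to the even affine subalgebra $\fg^{\#}$ via \Thm{thm:arak} and then invoking Arakawa's Theorem (\S\ref{ArakawaThm}) for the non-twisted affine Kac-Moody \emph{algebra} $\fg^{\#}$, for which $k$ is again admissible by \Lem{lem:admadm}. Throughout I write $M:=\Res^{\fg}_{\fg_{\pi^{\#}}} N$ and use that $\fg_{\pi^{\#}}=\fg^{\#}\times\mathfrak a$ with $\mathfrak a\subset\fh$ abelian, so that $\Delta(\fg_{\pi^{\#}})^{\ree}=\Delta^{\#}=\Delta(\fg^{\#})^{\ree}$ and a $\fg_{\pi^{\#}}$-module is (quasi-)admissible if and only if its restriction to $\fg^{\#}$ is. Since $N$ is indecomposable, $\Delta_N$ is well defined (\Cor{cor:Olambda}); combined with the device in the proof of \Cor{cor:critN} of replacing $N$ by its cyclic submodules (to stay inside $\CO^{\fin}$), this lets me apply \Thm{thm:criterionN} and \Cor{cor:critN} with $\pi=\pi^{\#}$.

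For the forward implication, assume $N$ is admissible and $\Delta_N\cap\Delta^{\#}$ is isometric to $\Delta_{k\Lambda_0+\rho}\cap\Delta^{\#}$. Then $N$ is $\pi^{\#}$-quasi-admissible, so by \Thm{thm:criterionN} ((a)$\Rightarrow$(c)) every irreducible subquotient of $M$ is a quasi-admissible $\fg_{\pi^{\#}}$-module and $\Delta_M=\Delta^{\#}\cap\Delta_N$. As $N$ is non-critical so is $M$ (the minimal imaginary root of $\fg^{\#}$ is $\delta$ and $(\lambda+\rho^{\#},\delta)>0$ by the computation in \Lem{lem:admadm}), hence $M$ is completely reducible by \Cor{cor:critN}. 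Restricting each summand to $\fg^{\#}$ produces irreducible modules $L_{\fg^{\#}}(\bar\mu)$ with $\Delta_{\bar\mu+\rho^{\#}}=\Delta_M=\Delta_N\cap\Delta^{\#}$; the span condition $\mathbb Q(\Delta_N\cap\Delta^{\#})=\mathbb Q\Delta^{\#}$ upgrades them to \emph{admissible} $\fg^{\#}$-modules whose integral root system is, by hypothesis, isometric to $\Delta_{k\Lambda_0+\rho}\cap\Delta^{\#}=\Delta_{k\Lambda_0+\rho^{\#}}$ (using \Lem{lem:Bmn1Dmn2etc} for the reference vacuum). Arakawa's Theorem then identifies $\Res^{\fg}_{\fg^{\#}} N$ as a $V_k(\fg^{\#})$-module, and \Thm{thm:arak} yields that $N$ is a $V_k(\fg)$-module.

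For the converse, suppose $N$ is a $V_k(\fg)$-module. By \Thm{thm:arak}, $\Res^{\fg}_{\fg^{\#}}N$ is a $V_k(\fg^{\#})$-module, so by Arakawa's Theorem it is completely reducible with admissible irreducible constituents whose integral root systems are isometric to $\Delta_{k\Lambda_0+\rho^{\#}}$. Lifting back to $\fg_{\pi^{\#}}$ (the abelian factor acts semisimply) shows $M$ is completely reducible with quasi-admissible constituents sharing one well-defined integral root system $\Delta_M=R_{\lambda+\rho}\cap\Delta^{\#}$ (well defined by indecomposability, \Cor{cor:Olambda} and \S\ref{DeltaMN}). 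This gives conditions (c1),(c2) of \Thm{thm:criterionN}(c); granting the remaining condition $\Delta_{\pi^{\#}}\cap\Delta_N=\Delta_M$, the implication (c)$\Rightarrow$(a) makes $N$ $\pi^{\#}$-quasi-admissible, and then non-criticality of $k$ together with $\mathbb Q\Delta_M=\mathbb Q\Delta^{\#}$ shows $N$ admissible, with $\Delta_N\cap\Delta^{\#}=\Delta_M$ isometric to $\Delta_{k\Lambda_0+\rho^{\#}}\cong\Delta_{k\Lambda_0+\rho}\cap\Delta^{\#}$.

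The crux, needed only for the converse, is exactly the condition $\Delta_{\pi^{\#}}\cap\Delta_N=\Delta_M$. Since $\Delta_M=R_{\lambda+\rho}\cap\Delta^{\#}$ while $\Delta_{\pi^{\#}}\cap\Delta_N=\Delta_{\lambda+\rho}\cap\Delta^{\#}$ by~(\ref{eq:Rlambda+rho}) and \S\ref{pi'}, this is precisely the coincidence $R_{\lambda+\rho}\cap\Delta^{\#}=\Delta_{\lambda+\rho}\cap\Delta^{\#}$. By \S\ref{pi'} it is automatic when $\Delta^{\#}$ consists of short roots or $L(\lambda)$ is typical, and the root-system analysis of \Lem{lem:Bmn1Dmn2etc} shows it always holds for $\fg\neq B(1|1)^{(1)}$ and for $B(1|1)^{(1)}$ at principal admissible $k$. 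At subprincipal level in $B(1|1)^{(1)}$ it can fail, via a $B(1|1)$-type configuration as in Example~\ref{B13}, yielding $V_k(\fg)$-modules that are not $\pi^{\#}$-quasi-admissible; this is exactly why the converse is restricted, and verifying this short-root/typicality dichotomy is the main technical obstacle.
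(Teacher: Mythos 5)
Your reduction is the same as the paper's: via \Thm{thm:arak} and Arakawa's Theorem for $\fg^{\#}$, both implications come down to conditions on $\Res^{\fg}_{\fg^{\#}}N$, and the forward direction then follows from \Thm{thm:criterionN} essentially as you describe. You have also correctly isolated the crux of the converse: the purely combinatorial implication~(\ref{eq:b'a'}), namely that for every weight $\lambda$ of level $k$, the isometry of $R_{\lambda+\rho}\cap\Delta^{\#}$ with $\Delta_{k\Lambda_0+\rho}\cap\Delta^{\#}$ forces $R_{\lambda+\rho}\cap\Delta^{\#}=\Delta_{\lambda+\rho}\cap\Delta^{\#}$.

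The gap is that you never prove this implication, and the results you cite do not. The dichotomy from \S\ref{pi'} (all roots of $\Delta^{\#}$ short, or $\lambda$ typical) covers $A(m|n)^{(1)}$, but fails already for $B(m|n)^{(1)}$, for $D(m|n)^{(1)}$ with $n\geq m$, and for $F(4)^{(1)}$, $G(3)^{(1)}$, where $\Delta^{\#}$ contains long roots of $\Delta$ and $\lambda$ need not be typical; and \Lem{lem:Bmn1Dmn2etc} concerns only the vacuum weight $k\Lambda_0$, so it says nothing about an arbitrary $\lambda$ of level $k$. What the paper actually uses is: first, \Lem{lem:ii}, whose hypotheses are weaker than your dichotomy --- it needs only that $\Delta^{\#}$ \emph{contains} a short even root of $\Delta$ and that $R_{\lambda+\rho}\cap\Delta^{\#}$ is isometric to the \emph{non-twisted} system $(\Delta^{\#})^{\ree}$, and it rests on \Lem{lem:Xi} (i): $R_{\lambda+\rho}$ and $\Delta_{\lambda+\rho}$ contain the same short even roots, and two affine root systems sharing their short roots, the smaller one non-twisted, must coincide --- which disposes of all principal levels except $B(m|n)^{(1)}$ with $n\geq m$; and second, two genuinely computational arguments for $B(m|n)^{(1)}$: the half-integer parity argument of \Lem{lem:Bngeqm0} for $n\geq m$ (principal $k$), and the translation-group ($\tilde{W}$) argument of \S\ref{endproofarak} for $m>n$ at subprincipal $k$. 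None of this is in your proposal, and it is the bulk of the proof of the converse. Finally, your last sentence overstates what is known: the analysis of $B(1|1)^{(1)}$ with $p$ even only shows that the implication~(\ref{eq:b'a'}) fails for certain weights $\lambda$ (a configuration as in Example~\ref{B13}); it does \emph{not} produce a $V_k(\fg)$-module that fails to be admissible, and the paper explicitly leaves open whether the converse of the theorem holds in that case.
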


\subsubsection{Remark}
We do not know whether the last statement holds  for $B(1|1)^{(1)}$ for
 subprincipal level $k$.

\subsubsection{}
\begin{cor}{cor:arak} Let $\fg$ and $k$ be as in~\Thm{thm:arak}.
A module $N\in {\CO}^{\inf}(\fg)^k$  is
 a $V_k(\fg)$-module if and only if all irreducible subquotients of $N$
are $V_k(\fg)$-modules.
\end{cor}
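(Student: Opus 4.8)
The plan is to prove the two implications separately. The forward implication is formal: being a $V_k(\fg)$-module means being annihilated (via the module vertex operators) by the maximal ideal of $V^k(\fg)$, and this condition is inherited by submodules and by quotients. Hence the $V_k(\fg)$-modules inside ${\CO}^{\inf}(\fg)^k$ are closed under subquotients, and in particular every irreducible subquotient of a $V_k(\fg)$-module is again a $V_k(\fg)$-module. The same remark applies verbatim to $\fg^{\#}$ and will be used below.

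For the "if" direction, assume every irreducible subquotient of $N\in{\CO}^{\inf}(\fg)^k$ is a $V_k(\fg)$-module, and set $M:=\Res^{\fg}_{\fg^{\#}}N$, which lies in ${\CO}^{\inf}(\fg^{\#})^k$ since the restriction functor carries ${\CO}^{\inf}(\fg)$ to ${\CO}^{\inf}(\fg^{\#})$. First I would check that every irreducible $\fg^{\#}$-subquotient of $M$ is a $V_k(\fg^{\#})$-module: refining a (weak) composition series of $N$ after restriction, such a subquotient occurs as an irreducible subquotient of $\Res^{\fg}_{\fg^{\#}}L$ for some irreducible subquotient $L$ of $N$; since $L$ is a $V_k(\fg)$-module, \Thm{thm:arak} gives that $\Res^{\fg}_{\fg^{\#}}L$ is a $V_k(\fg^{\#})$-module, hence so are its irreducible subquotients by the closure noted above. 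By \Lem{lem:admadm} the level $k$ is admissible for the anisotropic affine Lie algebra $\fg^{\#}$, so by the Arakawa description (see~\ref{ArakawaThm}) these irreducible $V_k(\fg^{\#})$-modules are admissible $L_{\fg^{\#}}(\nu)$, hence non-critical and quasi-admissible.

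It then remains to show $M$ is itself a $V_k(\fg^{\#})$-module, after which the converse direction of \Thm{thm:arak} applied to $N$ finishes the proof. Here I would mirror the argument in the proof of~\Cor{cor:critN}: for each $v\in M$ the cyclic $\fg^{\#}$-submodule $M_1$ generated by $v$ lies in $\CO^{\fin}(\fg^{\#})$, all of its irreducible subquotients are non-critical quasi-admissible by the previous paragraph, and $M_1$ is non-critical, so \Thm{thm:snow} shows $M_1$ is completely reducible. Thus every cyclic submodule of $M$ is completely reducible, whence $M$ is a sum of irreducible submodules, each a $V_k(\fg^{\#})$-module; a (possibly infinite) direct sum of $V_k(\fg^{\#})$-modules is again a $V_k(\fg^{\#})$-module, since the maximal ideal acts by zero on each summand. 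Therefore $M$ is a $V_k(\fg^{\#})$-module, as required.

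The main obstacle, and the only non-formal step, is the passage from "all irreducible subquotients are $V_k$-modules" to "$M$ is a $V_k$-module". This cannot be purely categorical: for a general ideal, the class of modules it annihilates is closed under subquotients but not under extensions, so an input is needed to rule out bad extensions. That input is the complete reducibility of non-critical quasi-admissible modules over the anisotropic affine Lie algebra $\fg^{\#}$ (\Thm{thm:snow}, equivalently \cite{GSsnow}, Corollary 5.4.1), and it is precisely here that the admissibility of $k$ and the reduction via \Thm{thm:arak} are essential, since $\fg$ itself is not anisotropic and its $V_k$-module category is not semisimple. The remaining care is bookkeeping in category $\CO$: because $M=\Res^{\fg}_{\fg^{\#}}N$ may have infinite-dimensional $(\fh\cap\fg^{\#})$-weight spaces, complete reducibility must be invoked in the ${\CO}^{\inf}$ setting, which is why I pass to the finitely generated cyclic submodules $M_1\in\CO^{\fin}(\fg^{\#})$ before applying \Thm{thm:snow}.
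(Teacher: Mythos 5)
Your proof is correct and its skeleton is the same as the paper's: reduce to $\fg^{\#}$ by \Thm{thm:arak} and exploit semisimplicity on the $\fg^{\#}$-side; the difference is where the semisimplicity comes from. The paper's proof is two lines: it asserts that $M=\Res^{\fg}_{\fg^{\#}}N$ is completely reducible by \cite{GSsnow}, Corollary 5.4.1, and then applies \Thm{thm:arak} twice. As that corollary is quoted in \S\ref{ArakawaThm} it concerns $V_k$-modules, so invoking it for $M$ presupposes, in the ``if'' direction, the very thing being proved; your route avoids this circularity by applying Arakawa's theorem (through \Thm{thm:arak}) only to the irreducible subquotients, concluding that these are non-critical quasi-admissible, and then re-deriving complete reducibility of $M$ from \Thm{thm:snow} via the cyclic-submodule device of \Cor{cor:critN}. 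What the paper's version buys is brevity; what yours buys is an ``if'' direction that is non-circular and self-contained within the paper. The one step you state too loosely is the claim that every irreducible $\fg^{\#}$-subquotient of $M$ occurs in $\Res^{\fg}_{\fg^{\#}}L$ for some irreducible subquotient $L$ of $N$: the module $M$ admits no weak composition series, since its weight spaces for $\fh\cap\fg^{\#}$ may be infinite-dimensional (this is exactly why the paper passes to $\CO^{\inf}$ here), so ``refining a weak composition series after restriction'' is not available as stated. The standard repair stays inside the paper's toolkit: first reduce to cyclic $N\in\CO^{\fin}(\fg)^k$ (a sum of $V_k(\fg)$-modules is a $V_k(\fg)$-module, and subquotients of cyclic submodules are subquotients of $N$), then restrict to $\fg_{\pi^{\#}}=\fg^{\#}+\fh$, which keeps you in $\CO_{\Sigma}$, and use the multiplicity formalism of \S\ref{categoryO}: comparing characters gives $[\Res^{\fg}_{\fg_{\pi^{\#}}}N:L_{\fg_{\pi^{\#}}}(\nu)]=\sum_{\lambda}[N:L(\lambda)]\,[\Res^{\fg}_{\fg_{\pi^{\#}}}L(\lambda):L_{\fg_{\pi^{\#}}}(\nu)]$, which is precisely your refinement claim. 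Since the paper's own last sentence (``Using \Thm{thm:arak} again we obtain the required statement'') silently needs the same identification, this is a repair both arguments require, not a defect specific to yours.
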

\begin{proof}
By~\Thm{thm:arak}, $N$ is a $V_k(\fg)$-module if and only if 
$\Res^{\fg}_{\fg^{\#}} N$ is a $V_k(\fg^{\#})$-module. 
If $N\in {\CO}^{\inf}(\fg)^k$, then $M:=\Res^{\fg}_{\fg^{\#}} N$
lies in ${\CO}^{\inf}(\fg^{\#})^k$ (here we have to use $\CO^{\inf}$
since the weight spaces 
in $M$ might be infinite-dimensional), so
$M$ is completely reducible (by Corollary 5.4.1 in~\cite{GSsnow}).
Thus  $N\in {\CO}^{\inf}(\fg)^k$  is
 a $V_k(\fg)$-module if and only if all irreducible subquotients of
 $M$ are $V_k(\fg^{\#})$-modules. Using~\Thm{thm:arak} again
we obtain the required statement. 
\end{proof}

\subsection{Proof of~\Thm{thm:arak}}
We denote by $V^{\#,k}$ the vacuum module for $\fg^{\#}$ and view it as a submodule
of $V^k$. We denote by $J$ and by $J^{\#}$ the maximal proper submodules of $V^k$ and $V^{\#,k}$
respectively (then
 $L(k\Lambda_0)=V^k/J$ and 
$L_{\fg^{\#}}(k\Lambda_0)=V^{\#,k}/J^{\#}$).
A $V^k(\fg)$-module $N$
is a $V_k(\fg)$-module if and only if $Y(v,z)N=0$ for all $v\in J$.
Similarly, a $V^k(\fg^{\#})$-module $N$
is a $V_k(\fg^{\#})$-module if and only if $Y(v,z)N=0$ for all $v\in J^{\#}$.
Thus it is enough to verify that the submodule $J$ is generated by $J^{\#}$,
which is established in the following lemma.

\subsubsection{}
\begin{prop}{prop:VkVk}
If $k$ is an admissible level, then $J$ is generated by $J^{\#}$.
\end{prop}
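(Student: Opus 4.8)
The plan is to prove the two inclusions $U(\fg)J^{\#}\subseteq J$ and $J\subseteq U(\fg)J^{\#}$ separately, the second being the real content. Throughout I will use that $k\Lambda_0$ is admissible, hence non-critical and $\pi^{\#}$-quasi-admissible, and that $k$ is an admissible (so non-critical) level for the non-twisted affine Lie algebra $\fg^{\#}$ by \Lem{lem:admadm}. First I would pin down the intersection $J\cap V^{\#,k}$. Since $L(k\Lambda_0)$ is $\pi^{\#}$-quasi-admissible and $\Res^{\fg}_{\fg_{\pi^{\#}}}L(k\Lambda_0)$ is non-critical, \Cor{cor:critN} shows this restriction is completely reducible. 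The $\fg_{\pi^{\#}}$-submodule generated by the image $\ol{\vac}$ of the vacuum vector is cyclic and indecomposable, hence irreducible, so it is $L_{\fg^{\#}}(k\Lambda_0)$ (tensored with the one-dimensional module for the central part). Therefore the kernel of the composite $V^{\#,k}\hookrightarrow V^k\twoheadrightarrow L(k\Lambda_0)$ is exactly $J^{\#}$, i.e. $J\cap V^{\#,k}=J^{\#}$. In particular $U(\fg)J^{\#}\subseteq J$, so $M:=V^k/U(\fg)J^{\#}$ is a highest weight module of highest weight $k\Lambda_0$ surjecting onto $L(k\Lambda_0)$, with $\dim M_{k\Lambda_0}=1$.

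It then suffices to prove that $M$ is irreducible, and for this I would show that $M$ is $\pi^{\#}$-quasi-admissible. Granting this, \Thm{thm:criterionN} (equivalence of (a) and (b)) makes every irreducible subquotient of $M$ $\pi^{\#}$-quasi-admissible; being a subquotient of $V^k$, each such subquotient is isomorphic to $L(k\Lambda_0)$ by the uniqueness in \Prop{prop:Vksubq}(ii); since $\dim M_{k\Lambda_0}=1$, the module $M$ has a single composition factor and $M=L(k\Lambda_0)$, giving $J=U(\fg)J^{\#}$.

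The crux is therefore to verify that $M$ is $\pi^{\#}$-quasi-admissible, which I would deduce from \Thm{thm:criterionN}((c)$\Rightarrow$(a)) once I know that $\Res^{\fg}_{\fg_{\pi^{\#}}}M$ is completely reducible with quasi-admissible summands. Here the vertex algebra structure does the work: $U(\fg)J^{\#}$ is the affine $\fg$-submodule generated by $J^{\#}$, hence is closed under all modes of the currents and so is a vertex ideal of $V^k(\fg)$; thus $M$ is a quotient vertex algebra on which every state of $J^{\#}$ acts by the zero field. Since $V^k(\fg^{\#})=V^{\#,k}$ is a vertex subalgebra of $V^k(\fg)$, restriction of fields shows that $\Res^{\fg}_{\fg^{\#}}M$ is a $V_k(\fg^{\#})$-module lying in $\CO^{\inf}(\fg^{\#})^k$. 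As $k$ is admissible for $\fg^{\#}$, \cite{GSsnow}, Corollary 5.4.1 gives complete reducibility, and Arakawa's theorem identifies the summands as admissible, in particular quasi-admissible, $\fg^{\#}$-modules. The remaining hypothesis of \Thm{thm:criterionN}(c) --- that $\Delta_{\pi^{\#}}\cap\Delta_M$ coincides with the integral root subsystem attached to $\Res^{\fg}_{\fg_{\pi^{\#}}}M$ --- is inherited from $L(k\Lambda_0)$: both $M$ and $L(k\Lambda_0)$ have composition factors $L(\lambda)$ with $\lambda\approx k\Lambda_0$, so the set $\{\beta\in\Delta_{\pi^{\#}}\,|\,\langle\lambda,\beta^{\vee}\rangle\in\mathbb{Z}\}$ is the same $\Delta_{\pi^{\#}}\cap R_{k\Lambda_0+\rho}$ for both, and this equals $\Delta_{\pi^{\#}}\cap\Delta_{k\Lambda_0+\rho}$ precisely because $L(k\Lambda_0)$ is already $\pi^{\#}$-quasi-admissible (\Thm{thm:criterionN}((a)$\Rightarrow$(c)) applied to $L(k\Lambda_0)$).

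I expect the main obstacle to be exactly the complete reducibility of $\Res^{\fg}_{\fg^{\#}}M$. Without the observation that $U(\fg)J^{\#}$ is a vertex ideal, and hence that $\Res^{\fg}_{\fg^{\#}}M$ descends to a $V_k(\fg^{\#})$-module to which the known results for even affine Lie algebras apply, one would be forced to control directly how the root vectors for roots outside $\Delta^{\#}$ act on the quotient; this is delicate, since those root spaces do not assemble into a subalgebra normalised by $\fg_{\pi^{\#}}$ and no clean PBW/freeness statement for $M$ over a complement is available.
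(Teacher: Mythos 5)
Your proof is correct and follows essentially the same route as the paper: both hinge on observing that $U(\fg)J^{\#}$ is a vertex ideal, so that the restriction to $\fg^{\#}$ of $V^k/U(\fg)J^{\#}$ becomes a $V_k(\fg^{\#})$-module to which Arakawa's theorem for $\fg^{\#}$ applies, and both then conclude via \Thm{thm:criterionN} (with the same computation identifying $\Delta_{\pi^{\#}}\cap\Delta_{k\Lambda_0+\rho}$ with $R_{k\Lambda_0+\rho}\cap\Delta^{\#}$ through~(\ref{eq:Rlambda+rho}) and~(\ref{eq:DeltaLM})) together with the uniqueness statement \Prop{prop:Vksubq}(ii). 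The only cosmetic differences are that you establish $V^{\#,k}\cap J=J^{\#}$ via \Cor{cor:critN} and a highest-weight argument, where the paper uses \Thm{thm:criterionN} and \Prop{prop:Vksubq}(ii), and that you apply \Thm{thm:criterionN} to the quotient $M$ itself and then descend to its composition factors, where the paper argues directly on each irreducible subquotient.
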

\begin{proof}
Since $V^{\#,k}$ is a submodule
of $\Res^{\fg}_{\fg^{\#}}V^k$, the module  $V^{\#,k}/( V^{\#,k}\cap J)$
is a submodule of $\Res^{\fg}_{\fg^{\#}} L(k\Lambda_0)$. Since $k$ is an admissible level, $L(k\Lambda_0)$ is admissible, so, 
by~\Thm{thm:criterionN}, any irreducible subquotient  of $V^{\#,k}/( V^{\#,k}\cap J)$
is quasi-admissible.  By~\Prop{prop:Vksubq} (ii)  this implies
$V^{\#,k}/( V^{\#,k}\cap J)=L_{\fg^{\#}}(k\Lambda_0)$. Thus
$V^{\#,k}\cap J=J^{\#}$. 

Denote by $J_1$  the $\fg$-submodule of $V^k$  generated by $J^{\#}$.
By above, $J_1\subset J$.
Let $V_1:=V^k(\fg)/J_1$ be the quotient vertex algebra. Then the $V_1$-modules are the $V^k(\fg)$-modules which are annihilated by 
$Y(v,z)$ for all $v\in J^{\#}$. Considering the adjoint representation of $V_1$
we obtain $Y(v,z) V_1=0$
for all $v\in J^{\#}$.  Then  
$M:=\Res^{\fg}_{\fg^{\#}} V_1$
 is a $V^k(\fg^{\#})$-module. By above, $Y(v,z) M=0$ for all $v\in J^{\#}$, so
 $M$ is a $V_k(\fg^{\#})$-module. Since $k$ is a $\fg^{\#}$-admissible level
  and $M\in \CO_{\Sigma}(\fg^{\#})$, 
  Arakawa's Theorem for $\fg^{\#}$  implies that $M$ 
  is an admissible $\fg^{\#}$-module. 

 Let 
 $L(\nu)$ be   an irreducible subquotient of $V^k/J_1$. 
 Let us verify that $L(\nu)$ if $\pi^{\#}$-quasi-admissible.
 By~\Thm{thm:criterionN} it is enough to check that
any irreducible subquotient of $M_1:=\Res^{\fg}_{\fg^{\#}} L(\nu)$ is
  a quasi-admissible $\fg^{\#}$-module and that $\Delta_{M_1}=\Delta_{\nu+\rho}\cap\Delta^{\#}$.
Since $M_1$ is a subquotient of $M$,  any irreducible subquotient of $M_1$ is quasi-admissible.
 By~$\S$~\ref{DeltaMN} we have
$$\Delta_{M_1}=R_{\nu+\rho}\cap\Delta^{\#}=R_{\lambda+\rho}\cap\Delta^{\#}=
\Delta_{\lambda+\rho}\cap\Delta^{\#}$$
 (the second equality follows from~(\ref{eq:Rlambda+rho})  and the last equality
 follows from~(\ref{eq:DeltaLM}) and the fact that $L(\lambda)$
is admissible ). By~\Lem{lem:KK}
  we have $\Delta_{\nu+\rho}=\Delta_{\lambda+\rho}$. Therefore
$\Delta_{M_1}=\Delta_{\nu+\rho}\cap\Delta^{\#}$.
Hence $L(\nu)$ is  quasi-admissible. By~\Prop{prop:Vksubq} (ii),
  $\nu=k\Lambda_0$. Hence $J_1=J$ as required.
\end{proof}

\subsection{Zhu algebra of $V_k(\fg)$}
Let $k$ be an admissible level.
Since $k$ is an admissible level for $\fg^{\#}$,
$J^{\#}$ is generated by a $\fg^{\#}$-singular vector $a\vac$ (where $a\in U(\fn^{\#-})$),
see~\cite{KW88}, Theorem 1, and the Zhu algebra for $V_k(\fg^{\#})$ is 
the quotient of 
$U(\dot{\fg}^{\#})$ by the principal ideal
$U(\dot{\fg}^{\#})F(a)$
where $F$ is the isomorphism given by the formula
(3.1.1) in~\cite{FZ}.
By~\Prop{prop:VkVk}, the  Zhu algebra for $V_k(\fg)$ is 
the quotient of 
$U(\dot{\fg}^{\#})$ by the principal ideal
$U(\dot{\fg})F(a)$.

\subsubsection{Remark}\label{rem:qing}
By~\cite{KW88}, Theorem 1, 
if $\dot{\fg}$ is a Lie algebra, then
$J$ is
generated by a singular vector of weight
$s_{\alpha}.k\Lambda_0$, where
$\Sigma_{k\Lambda_0+\rho}=\{\alpha\}\cup\dot{\Sigma}$.
If $k$ is a principal
admissible level, then 
 $\alpha=q\delta-\theta$, where $\theta$
 is the maximal root of $\dot{\fg}$, and
 if $k$ is a subprincipal
admissible level, then 
 $\alpha=\frac{q}{u}\delta-\theta_s$, where 
$u$ is the lacety and  $\theta_s$
 is the maximal short (=non-long) root of $\dot{\fg}$).
By~\Prop{prop:VkVk}, in general case,
if $k$ is admissible, then $J$ is generated by 
a $\fg^{\#}$-singular vector of weight $s_{\alpha}(k\Lambda_0+\rho^{\#})-\rho^{\#}$
where $\alpha\in {\Delta}^{\#}$
is such that the root subsystem
$\Delta^{\#}\cap \Delta_{k\Lambda_0+\rho}$ has the base
$\dot{\Sigma}^{\#}\cup\{\alpha\}$. A natural question is whether
$J$ is 
generated by a singular vector of weight
$s_{\gamma}.k\Lambda_0$ 
for certain choice of $\Sigma$ and some $\gamma\in \Delta^{\#}\cap \Delta_{k\Lambda_0+\rho}$?

The answer is positive if $q=1$. 
 In this case we choose $\Sigma$ containing $\gamma:=\delta-\theta$,
 where $\theta$ is the maximal root of $\dot{\fg}^{\#}$.
 Then $M(k\Lambda_0)$ contains a singular vector $v$ of weight
 $s_{\gamma}.k\Lambda_0$ and  $M(k\Lambda_0)/U(\fg)v$ is
$\fg_{\pm\gamma}$-integrable. Let $\ol{v}$ be the image
of $v$ in $V^k$. Then $V^k/U(\fg)\ol{v}$ is a quotient of
 $M(k\Lambda_0)/U(\fg)v$, so $V^k/U(\fg)\ol{v}$ is 
$\fg_{\pm\gamma}$-integrable. Since $V^k$ is 
$\dot{\fg}_{\ol{0}}$-integrable, $V^k/U(\fg)\ol{v}$ is $\fg^{\#}$-integrable. Using~\Prop{prop:Vksubq}  (i)
we conclude that $V^k/U(\fg)\ol{v}$ is simple.

 By~\cite{GSsnow}, Corollary 4.3, the answer is positive for 
 $\mathfrak{osp}(1|2\ell)^{(1)}$. Using this corollary and the description of $\Sigma_{k\Lambda_0+\rho}$ given in~$\S$~\ref{admB0n} below we obtain
that for an admissible level
 $k=-h^{\vee}+\frac{p}{q}$ one has
 $\gamma=q\delta-\theta$ if $pq$ is odd, and
$\gamma=q\delta-\theta_s$ if $pq$ is even, where
 $\theta_s$
is the maximal non-long root for $\osp(1|2\ell)$
(i.e., $\theta_s=\vareps_1+\vareps_2$ for $\ell>1$ and
$\theta_s=\vareps_1$ for $\ell=1$); in particular,
$\gamma=\alpha$ except for the case $\ell=1$ with even $pq$.
In Section~\ref{sect:sl21}
we show that the answer is positive for $\fsl(2|1)^{(1)}$
(in this case $\alpha=q\delta-\theta$).

\subsection{Proof of~\Thm{thm:cor:arak}}
Let $k$ be an admissible level and  $N\in\CO_{\Sigma}(\fg)^k$
be an indecomposable module. Consider the properties
\begin{itemize}
\item[(a)] $N$ is admissible and $\Delta^{\#}\cap R_{\lambda+\rho}$ is isometric to $\Delta^{\#}_{k\Lambda_0+\rho}$;
\item[(b)] $\Res^{\fg}_{\fg^{\#}} N$ is a $V_k(\fg^{\#})$-module.
\end{itemize}

Our goal is to prove that (a) implies (b), and that (b) implies (a)
for all cases except for $B(1|1)^{(1)}$ with  a subprincipal level $k$.

\subsubsection{}
By~\Thm{thm:criterion},  (a) is equivalent to

\begin{itemize}
\item[(a')] $\Res^{\fg}_{\fg^{\#}} N$
is  admissible, $\Delta^{\#}\cap R_{\lambda+\rho}=
\Delta^{\#}\cap \Delta_{\lambda+\rho}$, and $\Delta^{\#}\cap \Delta_{\lambda+\rho}$
is isometric to $\Delta^{\#}\cap \Delta_{k\Lambda_0+\rho}$.
\end{itemize}

Let $L(\lambda)$
be a subquotient of $N$. Then
$\Delta_{\lambda+\rho}\cap \Delta^{\#}=\Delta_{k\Lambda_0+\rho}\cap\Delta^{\#}$.
By~$\S$~\ref{DeltaMN} we have
$\{\alpha\in\Delta^{\#}|\ \langle \nu,\alpha^{\vee}\rangle\in\mathbb{Z}\}=\Delta^{\#}\cap R_{\lambda+\rho}$
for any $\nu$ such that  $L_{\fg^{\#}}(\nu)$
is a subquotient of $\Res^{\fg}_{\fg^{\#}} N$. Since $k$ is a $\fg^{\#}$-admissible level,
 Arakawa's Theorem implies that  (b) is equivalent to

\begin{itemize}
\item[(b')]  $\Res^{\fg}_{\fg^{\#}} N$
is  admissible
and $\Delta^{\#}\cap R_{\lambda+\rho}$ is isometric to
$\Delta^{\#}\cap\Delta_{k\Lambda_0+\rho}$.
\end{itemize}

Since (a') implies (b'), 
 (a) implies (b). It remains to show that (b') implies (a')
except for the case $\fg=B(1|1)^{(1)}$ when $k$ is subprincipal admissible.
It is enough to verify that in these cases
one has
\begin{equation}\label{eq:b'a'}
(\Delta^{\#}\cap R_{\lambda+\rho})\text{ is isometric to } (\Delta^{\#}\cap\Delta_{k\Lambda_0+\rho})\ \ \
\Longrightarrow\ \ \ \Delta^{\#}\cap R_{\lambda+\rho}=
\Delta^{\#}\cap \Delta_{\lambda+\rho}\end{equation}
if $\lambda$ is of the level $k$.

\subsubsection{}
\begin{lem}{lem:ii}
If $\Delta^{\#}\cap R_{\lambda+\rho}$ is isometric to $(\Delta^{\#})^{\ree}$ and $\Delta^{\#}$ contains a short even root of $\Delta$, then~(\ref{eq:b'a'}) holds.
\end{lem}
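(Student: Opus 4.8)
The plan is to prove the two inclusions separately. The inclusion $\Delta^{\#}\cap R_{\lambda+\rho}\subseteq\Delta^{\#}\cap\Delta_{\lambda+\rho}$ is automatic, since $R_{\lambda+\rho}\subseteq\Delta_{\lambda+\rho}$ (the remark after property (d) in~$\S$\ref{Deltalambda}); recall also that $(\Delta^{\#})^{\ree}=\Delta^{\#}$, so under the hypothesis of the lemma $\Phi:=\Delta^{\#}\cap R_{\lambda+\rho}$ is isometric to $\Delta^{\#}$. Everything then reduces to showing $\Psi:=\Delta^{\#}\cap\Delta_{\lambda+\rho}\subseteq R_{\lambda+\rho}$, and this already establishes~(\ref{eq:b'a'}). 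First I would normalise the form so that the short even root $\alpha_s\in\Delta^{\#}$ has $(\alpha_s,\alpha_s)=2$; since $\alpha_s$ is a short root of $\Delta$, every $\gamma\in\Delta$ satisfies $(\gamma,\gamma)\in 2\mathbb{Z}_{\geq 0}$, the minimal value $2$ being attained exactly on the short roots of $\Delta^{\#}$. Using the definition of $\ol{R}_{\lambda+\rho}$ in~(\ref{Dlambda2}) together with the inclusion $\Delta_{\lambda+\rho}\subseteq\mathbb{Z}\ol{R}_{\lambda+\rho}$ from~$\S$\ref{Xi}, I would record that $(\lambda+\rho,\mu)\in\mathbb{Z}$ for every $\mu\in\Delta_{\lambda+\rho}$.

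Next I would dispose of the short roots. If $\gamma\in\Psi$ has $(\gamma,\gamma)=2$, then $\langle\lambda+\rho,\gamma^{\vee}\rangle=(\lambda+\rho,\gamma)\in\mathbb{Z}$; moreover $\gamma$ is even, so $\gamma/2\notin\Delta$ by~$\S$\ref{Sigmaproperties}, and since $\gamma$ is short in $\Delta$ we have $(\delta',\delta')\in\mathbb{Z}(\gamma,\gamma)$ for all $\delta'\in\ol{R}_{\lambda+\rho}$. Thus \Lem{lem:Xi}(i), applied with $\lambda+\rho$ in place of $\lambda$ and $\gamma$ in place of $\alpha$, yields $\gamma\in\ol{R}_{\lambda+\rho}\subseteq R_{\lambda+\rho}$. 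Hence every length-$2$ (short) root of $\Psi$ already lies in $\Phi$; in particular, on any simply laced irreducible component of $\Delta^{\#}$ this finishes the argument.

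It remains to treat the long roots, and this is where I expect the main difficulty to lie and where the isometry hypothesis is indispensable. Both $\Phi$ and $\Psi$ are root subsystems of $\Delta^{\#}$: each is the intersection of $\Delta^{\#}$ with a set stable under the reflections $s_\alpha$ in its anisotropic elements (for $R_{\lambda+\rho}$ this is the closure property in~$\S$\ref{Xi}, for $\Delta_{\lambda+\rho}$ it is property (b) in~$\S$\ref{Deltalambda}). Since $\Phi$ is isometric to $\Delta^{\#}$, it contains long roots whenever $\Delta^{\#}$ does. The key root-theoretic input I would use is that the reflections $s_\alpha$ with $\alpha$ a short root act transitively on the long roots of each irreducible component of $\Delta^{\#}$: this is a direct check in the finite types $B_n,C_n,F_4,G_2$ (the subgroup generated by short-root reflections already permutes the long roots transitively), and it extends to the affine types because these short-root reflections also generate the translations shifting the $\delta$-level. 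All these short roots lie in $\Phi$ by the previous step, and $\Phi$ is stable under $W[\Phi]$; so once $\Phi$ is known to contain at least one long root in each non-simply-laced component of $\Delta^{\#}$, transitivity forces $\Phi$ to contain every long root of $\Delta^{\#}$, hence every long root of $\Psi$. Combined with the short-root step this gives $\Psi\subseteq\Phi$, i.e.\ $\Delta^{\#}\cap\Delta_{\lambda+\rho}=\Delta^{\#}\cap R_{\lambda+\rho}$, as required.

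The two delicate points to pin down rigorously are the transitivity statement in the affine case and the claim that $\Phi$ meets every non-simply-laced component of $\Delta^{\#}$ in a long root. For the latter I would track how the isometry $\Phi\iso\Delta^{\#}$ matches irreducible components (it preserves orthogonality, hence connectedness of the root graph, and preserves square lengths), which distributes the long roots of $\Phi$ correctly among the components of $\Delta^{\#}$; in the finite case a cardinality count then gives the claim immediately, and in the affine case the same bookkeeping applied modulo $\mathbb{Z}\delta$ reduces it to the finite situation.
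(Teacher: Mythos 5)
Your first inclusion and your short-root step are correct and agree with the paper's own first step: both rest on \Lem{lem:Xi} (i), which gives that $\Delta_{\lambda+\rho}$ and $R_{\lambda+\rho}$ contain the same short even roots of $\Delta$. (A side slip: roots of $\Delta$ may have negative square length, so your normalisation gives $(\gamma,\gamma)\in 2\mathbb{Z}$ rather than $2\mathbb{Z}_{\geq 0}$; this does not affect how you use it.) The genuine gap is in the long-root step: the transitivity claim you rely on is false, both in finite and in affine type, and it fails in exactly the configuration the lemma must exclude.

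Concretely, in finite type $B_n$ with $n\geq 3$ the reflections in the short roots $\pm\vareps_i$ only change signs of coordinates, so the orbit of the long root $\vareps_1+\vareps_2$ under short-root reflections is $\{\pm\vareps_1\pm\vareps_2\}$, not the set of all long roots. In the affine types the failure is structural: for a long root $\beta$ and a short root $\sigma$ of $C_n^{(1)}$ one has $\langle\beta,\sigma^{\vee}\rangle\in\{0,\pm 2\}$, so the translations $s_{\sigma+\delta}s_{\sigma}$ produced by short-root reflections shift long roots only by \emph{even} multiples of $\delta$, and odd shifts are never reached. Equivalently, the twisted affine root system $A_{2n-1}^{(2)}=\{\pm\vareps_i\pm\vareps_j+\mathbb{Z}\delta\}\cup\{\pm 2\vareps_i+2\mathbb{Z}\delta\}$ is a reflection-closed subsystem of $C_n^{(1)}$ containing \emph{all} short roots of $C_n^{(1)}$ and containing long roots, yet missing half of the long roots (similarly $D_{m+1}^{(2)}\subset B_m^{(1)}$, $E_6^{(2)}\subset F_4^{(1)}$, $D_4^{(3)}\subset G_2^{(1)}$). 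So ``reflection-closed, all short roots, and at least one long root per component'' does not imply ``all long roots'', and your argument never invokes the isometry hypothesis at the one place where it is indispensable, namely to rule out such a twisted configuration for $\Delta^{\#}\cap\Delta_{\lambda+\rho}$ relative to $\Delta^{\#}\cap R_{\lambda+\rho}$.

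The paper closes this in one stroke: after the short-root step, $\Delta^{\#}\cap R_{\lambda+\rho}\subset\Delta^{\#}\cap\Delta_{\lambda+\rho}$ are two affine root systems with the same short roots, the smaller one is non-twisted because it is isometric to $(\Delta^{\#})^{\ree}$, and by the description of affine root systems in \cite{Kbook}, Proposition 6.3, a non-twisted system is maximal among affine root systems with a given set of short roots; hence the two coincide. If you want a fix in the spirit of your computation: suppose $\beta'\in\Delta^{\#}\cap R_{\lambda+\rho}$ is long and $\beta=\beta'+m\delta\in\Delta^{\#}\cap\Delta_{\lambda+\rho}$ with $m$ not divisible by the $\delta$-period $q$ of the smaller (non-twisted) system; choose a short root $\sigma$ with $c:=\langle\sigma,(\beta')^{\vee}\rangle=\pm 1$. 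Then $s_{\beta}(\sigma)=s_{\beta'}(\sigma)-cm\delta$ is a short root of $\Delta^{\#}\cap\Delta_{\lambda+\rho}$ that cannot lie in $\Delta^{\#}\cap R_{\lambda+\rho}$, whose short roots in a fixed class modulo $\delta$ recur with period exactly $q\delta$; this contradicts the short-root step. Either way, the missing idea is the non-twisted/twisted dichotomy for affine root systems, not transitivity under short-root reflections.
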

\begin{proof}
By~\Lem{lem:Xi} (i), $\Delta_{\lambda+\rho}$  and $R_{\lambda+\rho}$
contain the same short even roots. Thus
$(\Delta^{\#}\cap R_{\lambda+\rho})\subset 
(\Delta^{\#}\cap \Delta_{\lambda+\rho})$ are two affine root systems,
which share the same short roots and the smaller one is non-twisted
(since $\Delta^{\#}\cap R_{\lambda+\rho}$ is isometric to $(\Delta^{\#})^{\ree}$).
 Using the description of affine root systems
given in~\cite{Kbook}, Proposition 6.3, 
we conclude that these root systems coincide.
\end{proof}

\subsubsection{}
Let $k$ be a principal admissible level. By~\Lem{lem:admadm}, 
$k$ is  principal admissible for $\fg^{\#}$, so $\Delta^{\#}\cap R_{k\Lambda_0+\rho}$
is isometric to $(\Delta^{\#})^{\ree}$.
Thus the first assumption of the above lemma is satisfied.
From the classification of simple finite-dimensional 
Kac-Moody superalgebras~\cite{Ksuper}, it follows that 
$\Delta^{\#}$ contains a short even root of $\Delta$
if $\fg\not=B(m|n)^{(1)}$ with $n\geq m$. 

By~\Cor{cor:admnotBmn}, for $\fg\not=B(m|n)^{(1)}$  any  admissible level $k$  is  principal. Hence the above lemma establishes~(\ref{eq:b'a'}) 
in all cases except for $B(m|n)^{(1)}$ with $n\geq m$,
or $m>n$ and $k$ being subprincipal. This completes the proof of~\Thm{thm:cor:arak}
for these cases.  The remaining cases  
 will be treated  in Section~\ref{classadmBmn} (see~$\S$~\ref{endproofarak}).

\subsection{Classification of admissible levels for $\fg\not=B(m|n)^{(1)}$}\label{classadmneBmn}
We denote by $r^{\vee}$  the lacety of $\fg^{\#}$, i.e.
the ratio of square lengths of a long root and of
a short root of
 $\Delta^{\#}$. (This number coincides with the dual tier number used in~\cite{KW89}).

Let $k$ be an admissible level.  
By~\Lem{lem:admadm},  we have $k+h^{\vee}\in\mathbb{Q}_{>0}$;
we write  $k+h^{\vee}=\frac{p}{q}$ where $p,q$ are positive coprime integers.
By~\Lem{lem:Bmn1Dmn2etc} (iv), for $L=L(k\Lambda_0)$ the $\fg'$-module $L'$
is a simple vacuum module, i.e. $L'=L_{\fg'}(k'\Lambda'_0)$.

\subsubsection{Admissible levels for affine Kac-Moody algebras}
\label{admnonsuper}
This case is well-known~\cite{KW08}.
Indeed, we have $\Sigma=\dot{\Sigma}\cup\{\delta-\theta\}$ where $(\theta,\theta)=2$.
If  $\gcd(q,r^{\vee})=1$, then $\Sigma_{k\Lambda_0+\rho}=\dot{\Sigma}\cup\{q\delta-\theta\}$. If  
$\gcd(q,r^{\vee})=r^{\vee}$, then $\Sigma_{k\Lambda_0+\rho}=\dot{\Sigma}\cup\{q\delta-\theta'\}$ 
where $\theta'$ is the maximal short root; in this case 
$\Delta_{k\Lambda_0}$ is the root system of a twisted affine Kac-Moody algebra 
of type $X_{\ell}^{(r^{\vee})}$. As a result, $k$ is principal admissible if and only if
$\gcd(q,r^{\vee})=1$ and $p\geq h^{\vee}$, and $k$ is subprincipal admissible if and only if
$\gcd(q,r^{\vee})\not=1$ and $p$ is greater or equal to the dual Coxeter number
of  $X_{\ell}^{(r^{\vee})}$.

\subsubsection{}
\begin{prop}{prop:pradm}
If $\fg$  is an indecomposable affine Kac-Moody superalgebra, 
which is not of type $B(m|n)^{(1)}$, and is
not an affine Kac-Moody algebra, then
 $k$ is admissible if and
 only if $\gcd(q,r^{\vee})=1$  and 
 $p-h^{\vee}\geq 0$ (then $k'=p-h^{\vee}$).
\end{prop}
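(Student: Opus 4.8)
The plan is to transport the question to the integral superalgebra $\fg'$ and to reduce admissibility of $k$ to a numerical condition on the vacuum level $k'$. By \Lem{lem:Bmn1Dmn2etc} one has $\fg'\cong\fg$ and $L'=L_{\fg'}(k'\Lambda_0')$ is a simple vacuum module, while $(\pi^{\#})'=\{\alpha'\in\Sigma'_{\pr}\mid(\alpha',\alpha')>0\}$ is the set of simple roots of the even affine subalgebra ${\fg'}^{\#}$ of $\fg'$. By \Thm{thm:criterion}, $L=L(k\Lambda_0)$ is $\pi^{\#}$-quasi-admissible if and only if $L'$ is $(\pi^{\#})'$-integrable, and since $L'$ is a vacuum module the criterion recalled in \S~\ref{notataff}, applied to $\fg'$, shows that this is equivalent to $k'\in\mathbb{Z}_{\geq 0}$. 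Thus, modulo the remaining two clauses in the definition of admissibility, the proposition reduces to computing $k'$.

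For the level, I would extend the defining identity $(\lambda'+\rho',\mu')=(\lambda+\rho,\psi(\mu'))$ linearly to the minimal imaginary root $\delta'$ of $\fg'$. Using $(\Lambda_0',\delta')=1$, the equality $(\rho',\delta')=h^\vee$ from~(\ref{eq:rhodelta}) (valid since $\fg'\cong\fg$), and $\psi(\delta')=q_0\delta$ for some $q_0\in\mathbb{Z}_{>0}$ (\S~\ref{psidelta}), I get
\[
k'+h^\vee=(\lambda+\rho,\psi(\delta'))=q_0\,(k+h^\vee)=q_0\,\frac{p}{q}.
\]
So the whole statement hinges on identifying the integer $q_0$, which is the period in which $\Delta_{k\Lambda_0+\rho}$ repeats along $\delta$.

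The key is to control this period through the even part, where the affine Lie algebra theory of \S~\ref{admnonsuper} is available. By \Lem{lem:admadm}(i) an admissible $k$ satisfies $k+h^\vee=p/q\in\mathbb{Q}_{>0}$ (so $L$ is non-critical), and by \Cor{cor:admnotBmn} it is principal admissible; hence by \Lem{lem:admadm}(ii) it is principal admissible for the non-twisted affine Lie algebra $\fg^{\#}$. Because $h^\vee$ and $h^{\vee,\#}$ are integers differing by an integer, $k+h^\vee$ and $k+h^{\vee,\#}$ have the same denominator $q$, so \S~\ref{admnonsuper} forces $\gcd(q,r^\vee)=1$ and, for such $k$, shows that the even integral system $R_{k\Lambda_0+\rho}\cap\Delta^{\#}$ is the untwisted root system of $\fg^{\#}$ with minimal imaginary root $q\delta$; by \Lem{lem:Bmn1Dmn2etc} the same holds for $\Delta^{\#}\cap\Delta_{k\Lambda_0+\rho}$, which by \S~\ref{psidelta} has minimal imaginary root $\psi(\delta')$, whence $q_0=q$ and $k'=p-h^\vee$. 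Combining with the first paragraph, $\pi^{\#}$-quasi-admissibility of $L$ is equivalent to $p-h^\vee\in\mathbb{Z}_{\geq 0}$, i.e.\ $p\geq h^\vee$; conversely, if $\gcd(q,r^\vee)\neq 1$ then $k$ is not principal admissible for $\fg^{\#}$, hence (by \Lem{lem:admadm}(ii) and \Cor{cor:admnotBmn}) not admissible for $\fg$.

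It remains to dispose of the side conditions and assemble the equivalence. Non-criticality is $k+h^\vee\neq 0$, which holds since $k+h^\vee=p/q>0$. The rank condition $\mathbb{Q}(\Delta_{k\Lambda_0+\rho}\cap\Delta^{\#})=\mathbb{Q}\Delta^{\#}$ is automatic, as $\dot{\Delta}\subset\Delta_{k\Lambda_0+\rho}$ (\S~\ref{6.2}) supplies the classical directions and the roots $\alpha+q\delta$ the imaginary one. Running the computation in reverse gives sufficiency: if $\gcd(q,r^\vee)=1$ and $p\geq h^\vee$, then the even integral system is untwisted with $q_0=q$, so $k'=p-h^\vee\geq 0$ and $L$ is $\pi^{\#}$-quasi-admissible, and with the rank condition $k$ is admissible. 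The main obstacle I anticipate is precisely the input from \S~\ref{admnonsuper} for ${\fg'}^{\#}$ combined with \Lem{lem:Bmn1Dmn2etc}: one must verify that the even component $\Delta^{\#}\cap\Delta_{k\Lambda_0+\rho}$ is genuinely untwisted with minimal imaginary root $q\delta$, i.e.\ that the isotropic-closure axiom (c) of \S~\ref{Deltalambda} neither distorts the naive integral system $R_{k\Lambda_0+\rho}\cap\Delta^{\#}$ (which for $D(m|n)^{(1)}$, where $r^\vee=2$, is sensitive to the parity of $q$) nor introduces extra $\delta$-periodicity through the odd real roots, whose membership in $\ol{R}_{k\Lambda_0+\rho}$ is governed by the vanishing $(\lambda+\rho,\alpha)=0$.
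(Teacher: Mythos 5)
Your proposal follows the same route as the paper's own proof: \Cor{cor:admnotBmn} and \Lem{lem:admadm} reduce admissibility to principal admissibility for $\fg^{\#}$, whence $\gcd(q,r^{\vee})=1$ by $\S$~\ref{admnonsuper}; the pairing identity~(\ref{eq:lambdalphadef}), extended linearly to $\delta'$, gives $k'+h^{\vee}=q_0\,p/q$ where $\psi(\delta')=q_0\delta$; and \Thm{thm:criterion} together with the vacuum-module integrability criterion converts $\pi^{\#}$-quasi-admissibility into $k'\in\mathbb{Z}_{\geq 0}$. Your necessity direction is essentially complete, with one misattribution: the equality $\Delta_{k\Lambda_0+\rho}\cap\Delta^{\#}=R_{k\Lambda_0+\rho}\cap\Delta^{\#}$ that you use there is a consequence of quasi-admissibility via \Thm{thm:criterionN} and~(\ref{eq:DeltaLM}), not of \Lem{lem:Bmn1Dmn2etc}, which only identifies $\fg'$ up to isomorphism and says nothing about the closure of the naive integral system.

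The genuine gap is in the sufficiency direction, and it is exactly the obstacle you flag in your last paragraph without resolving it. Assuming only $\gcd(q,r^{\vee})=1$ and $p\geq h^{\vee}$, you may not invoke~(\ref{eq:DeltaLM}): that equality presupposes $\pi^{\#}$-quasi-admissibility, which is what you are trying to prove, so the phrase ``the even integral system is untwisted with $q_0=q$'' is circular at this point. Knowing only that $R_{k\Lambda_0+\rho}\cap\Delta^{\#}$ has minimal imaginary root $q\delta$ does not pin down $\Delta_{k\Lambda_0+\rho}\cap\Delta^{\#}$: the isotropic-closure axiom (c) of $\S$~\ref{Deltalambda} can genuinely enlarge the even part (in $\S$~\ref{B13}, $R_{\lambda+\rho}\cap\Delta_{\pi}$ is of type $D_3$ while $\Delta_{\lambda+\rho}\cap\Delta_{\pi}$ is of type $B_3$), so a priori it could shrink the period $q_0$ and thus change $k'$. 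The paper closes precisely this hole by a direct computation carried out for the weight $k\Lambda_0$ itself: when $\gcd(q,r^{\vee})=1$ one verifies that $\Delta_{k\Lambda_0+\rho}=\dot{\Delta}+\mathbb{Z}q\delta$ (one checks that $\ol{R}_{k\Lambda_0+\rho}$ lies in $\dot{\Delta}+\mathbb{Z}q\delta$, that this set is stable under axioms (b)--(d), and that its even part is already contained in $\ol{R}_{k\Lambda_0+\rho}$), which yields $\psi(\delta')=q\delta$ and $k'=p-h^{\vee}$ unconditionally; both implications of the proposition then follow simultaneously from \Thm{thm:criterion}. Your argument becomes complete once this computation, or any substitute establishing $q_0=q$ without assuming quasi-admissibility, is inserted.
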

\begin{proof}
By~\Cor{cor:admnotBmn}, for $\fg$ in question  any  admissible level $k$ 
 is  principal, so, by~\Lem{lem:admadm}, $k$ is  principal admissible for $\fg^{\#}$. By~$\S$~\ref{admnonsuper}
this implies $\gcd(q,r^{\vee})=1$. 
It is easy to see that $\gcd(q,r^{\vee})=1$ implies 
$\Delta_{k\Lambda_0+\rho}=\{\mathbb{Z}q\delta+\dot{\Delta}\}$.
Therefore $\psi(\delta')=q\delta$  
(where $\psi$ is as in~$\S$~\ref{glambda}), so 
$$k':=qk-(\rho',\delta')=p-h^{\vee},$$
since $\fg\cong\fg'$.
By~\Thm{thm:criterion}, $k\Lambda_0$ is quasi-admissible if and only if
$L(k'\Lambda'_0)$ is $\pi^{\#}$-integrable, that is $k'\in\mathbb{Z}_{\geq 0}$.
\end{proof}

The remaining case $B(m|n)^{(1)}$ will be considered in the next section.

\section{Admissible levels for $B(m|n)^{(1)}$}\label{classadmBmn}
In this section we consider  $\fg:=B(m|n)^{(1)}$.
One has $\fg^{\#}=B_m^{(1)}$ if $m>n$ and $\fg^{\#}=C_n^{(1)}$
if $n\geq m$.  

Recall that  $k+h^{\vee}$ is a positive rational number, if $k$ is an admissible level,
see~\Lem{lem:admadm} (i). Fix a rational number $k$.
For $m>n$ 
we write $k+h^{\vee}=\frac{p}{q}$ where $p,q$ are non-zero coprime integers and $q>0$;
for $n\geq m$ we write $k+h^{\vee}=\frac{p}{2q}$ where $p,q$ are non-zero coprime integers and $q>0$.
For example, for $k\in\mathbb{Z}$ we have $q=1$ and $p=k+m-n-1$ for $m>n$, and
$p=2(k+n-m)+1$ for $n\geq m$.

By~\Lem{lem:Bmn1Dmn2etc} (iv), for $L=L(k\Lambda_0)$ we have 
$L'=L_{\fg'}(k'\Lambda_0)$
for some $k'$. 

In this section we will show that for $0\leq m\leq n$, $k$ is principal admissible
if and only if $p,q$ are odd and  $\frac{p}{2}\geq h^{\vee}$; in this case,
$\fg'\cong\fg$ and $k'=\frac{p}{2}-h^{\vee}$.
For $m>n>0$, $k$ is principal admissible
if and only if $q$ is odd and  $p\geq h^{\vee}$; in this case,
$\fg'\cong\fg$ and $k'=p-h^{\vee}$.

Furthermore, the subprincipal admissible levels exist only for the cases
$m=0$ when $p$ or $q$ is even,  $m=n=1$ when $p$ is even, and $m>n$
when $q$ is even. In all these cases $k$ is a subprincipal admissible level
if and only if $\frac{p}{2}\geq (h')^{\vee}$ and $p\not=0$, where
$(h')^{\vee}$ is the dual Coxeter number for 
$\fg'$, which is of type $A(0|2n-1)^{(2)}$ if $m=0$ and of type $D(m+1|n)^{(2)}$ if $m>0$.
In all these cases $k'=\frac{p}{2}-(h')^{\vee}$.

\subsection{Case $B(0|\ell)^{(1)}$}\label{admB0n}
In this case $\fg^{\#}=\fg_{\ol{0}}\cong C_{\ell}^{(1)}$.
We have $h^{\vee}=\ell+\frac{1}{2}$ and $\Sigma=\dot{\Sigma}\cup\{\delta-\theta\}$
where $\theta=2\delta_1$. Recall that $(\theta,\theta)=2$.

 For $\alpha\in\dot{\Delta}$
 we have 
 $$(i\delta+\alpha)\in \ol{R}_{k\Lambda_0+\rho}\ \ \Longleftrightarrow\ \ 
 \left\{\begin{array}{lcl}
\frac{ip}{q}\in\mathbb{Z}, &  & (\alpha,\alpha)=\frac{1}{2}, 1\\
\frac{ip}{q}\in 2\mathbb{Z}+1, & &
(\alpha,\alpha)=2,\  \ \text{and $i$ is odd}
\end{array}\right.$$
(the condition $i$ is odd in the last line reflects the fact that $i\delta+\alpha\not\in\ol{\Delta}$ if $i$ is even). This gives
$$\Sigma_{k\Lambda_0+\rho}=\left\{
\begin{array}{lcl}
\dot{\Sigma}\cup\{q\delta-\theta\} & & \text{ if $p,q$ are odd},\\
\dot{\Sigma}\cup\{q\delta-\theta_s\} & & \text{otherwise},
\end{array}\right.$$
where $\theta_s$ is the maximal among non-long roots:
$\theta_s=\delta_1$ for $\ell=1$ and $\theta_s=\delta_1+\delta_2$ for $\ell>1$.
Therefore $\psi(\delta')=q\delta$, so 
$$k'=(\lambda'+\rho',\delta')-(\rho',\delta')=(\lambda+\rho,q\delta)-(h')^{\vee}=\frac{p}{2}
-(h')^{\vee},$$
where $(h')^{\vee}$ is the dual Coxeter number of $\fg'$.
This gives
\begin{itemize}
\item[$\bullet$]
If $pq$ is odd, then $\fg'$ is of type $B(0|\ell)^{(1)}$ and 
$k'=\frac{p}{2}-h^{\vee}$.
\item[$\bullet$] If  $pq$ is even, then  $\fg'$ is of type $A(0|2\ell-1)^{(2)}$
and  $k'=\frac{p}{2}-(h')^{\vee}$ with $(h')^{\vee}=\ell-\frac{1}{2}$ 
 (note that $A(0|1)^{(2)}=D(1|1)^{(2)}$ since $\fsl(1|2)=\osp(2|2)$).
\end{itemize}

By~\Thm{thm:criterion}, $k$ is admissible if and only if $L_{\fg'}(k'\Lambda_0)$
is integrable. If $pq$ is odd, then $\fg'\cong\fg$, so $L_{\fg'}(k'\Lambda_0)$
is integrable if and only if $k'\in\mathbb{Z}_{\geq 0}$. If $pq$ is even,
then $\fg'$ is of type $A(0|2\ell-1)^{(2)}$ with the  normalised invariant bilinear form
(the square length of the long root is $2$). The set of simple roots for $A(0|1)^{(2)}$ 
is $\{\delta-\delta_1,\delta_1\}$ (both roots are odd) and $L_{\fg'}(k'\Lambda_0)$
is integrable if and only if $2k'=p- 2(h')^{\vee}\in \mathbb{Z}_{\geq 0}$ i.e.
$p\geq 2(h')^{\vee}=1$. For $\ell>1$ the 
set of simple roots for $A(0|2\ell-1)^{(2)}$ 
is $\{\delta-\delta_1-\delta_2,\delta_1-\delta_2,\ldots,
\delta_n\}$ (the last root is odd) and $L_{\fg'}(k'\Lambda_0)$
is integrable if and only if $k'=\frac{p}{2}-(h')^{\vee}\in \mathbb{Z}_{\geq 0}$
which means that $p$ is odd and $p\geq 2(h')^{\vee}$.
We conclude that
\begin{itemize}
\item[$\bullet$] $k$ is principal admissible if and only if
$p,q$ are odd 
and $p\geq 2h^{\vee}=2\ell+1$;
\item[$\bullet$] $k$ is subprincipal admissible if and only if  $p\geq 2\ell-1$, 
and, in addition, $pq$ is even for $\ell=1$, and  $q$ is even for $\ell>1$; 
if $k$ subprincipal admissible, then $\fg'\cong A(0|2\ell-1)^{(2)}$.
\end{itemize}

\subsubsection{}
Let $\lambda$ be an admissible weight of $B(0|\ell)^{(1)}$   satisfying 
$\cl(\Delta_{\lambda+\rho})=\dot{\Delta}_{\ol{0}}=C_{\ell}$. Retain notation of~$\S$~\ref{gnu}.
By~\cite{K78}, $\fg'$ is one of the following affine  Kac-Moody superalgebras: 
$C_{\ell}^{(1)}$, $A_{2\ell-1}^{(2)}$, $B(0|\ell)^{(1)}$, $A(0|2\ell-1)^{(2)}$,
or $D(1|\ell)^{(2)}=C(\ell+1)^{(2)}$
(notice that $D(1|1)^{(2)}=C(2)^{(2)}=A(0|1)^{(2)}$).

Recall that $\alpha\in\Delta^{\ree}$ is even if $(\alpha,\alpha)=1,2$,
and is odd if $(\alpha,\alpha)=1/2$. If $(\alpha,\alpha)=1, 1/2$,
then $\alpha\in \Delta_{\lambda+\rho}$ is equivalent to $\alpha\in\ol{R}_{\lambda+\rho}$,
so $\alpha,\alpha+j\delta\in \Delta_{\lambda+\rho}$ is equivalent to $\frac{jp}{q}\in\mathbb{Z}$.
This gives
\begin{equation}\label{eq:B0l>1}
(\alpha+\mathbb{Z}\delta)\cap \Delta_{\lambda+\rho}=\alpha+\mathbb{Z}q\delta
 \ \text{ if }\alpha\in \Delta_{\lambda+\rho}\ \text{ is s.t. } (\alpha,\alpha)=1,
1/2.
\end{equation}

Since $\cl(\Delta_{\lambda+\rho})=\dot{\Delta}_{\ol{0}}\ $,
$\cl(\Delta(\fg')_{\ol{0}})=C_{\ell}$ and the square length of the longest root
in $\Delta(\fg')$ is $2$.
Let $\delta'$ be the minimal imaginary root of $\Delta(\fg')$. 
Recall that $\psi(\delta')=u\delta$ for some $u\in\mathbb{Z}_{>0}$.
By~$\S$~\ref{alphas+delta} if 
 $\beta$ is a short root of $\Delta(\fg')$, then $(\beta+j\delta')\in\Delta(\fg')$
is equivalent to $j\in\mathbb{Z}$, so
$$\{\psi(\beta)+\mathbb{Z}\delta\}\cap \Delta_{\lambda+\rho}=\psi(\beta)+\mathbb{Z}\psi(\delta').$$

If $\fg'\not\cong C_1^{(1)}$, then $(\beta,\beta)\not=2$, so (\ref{eq:B0l>1}) implies
 $\psi(\delta')=q\delta$. Hence 
\begin{equation}\label{eq:B0ndelta'}
\psi(\delta')=q\delta\ \text{ for } \fg'\not\cong C_1^{(1)}.
\end{equation}
Therefore $(\lambda',\delta')=(\lambda'+\rho',\delta')-(h')^{\vee} =(\lambda+\rho,\delta)
 -(h')^{\vee}$ where  $(h')^{\vee}$ is the dual Coxeter number of $\fg'$. 
 Hence 
\begin{equation}\label{eq:B0nk'}
(\lambda',\delta')=\frac{p}{2}-(h')^{\vee} \text{ for } \fg'\not\cong C_1^{(1)}.
 \end{equation}

If  $\fg'\cong C_1^{(1)}$, then $\ell=1$. For
$\alpha\in\Delta_{\lambda+\rho}$ one has  $(\alpha,\alpha)=2$
and $\alpha/2\not\in\Delta$, so $(\lambda+\rho,\alpha)\in\mathbb{Z}$.
Therefore $(\alpha+j\delta)\in \Delta_{\lambda+\rho}$ is equivalent to 
$s(k+h^{\vee})\in\mathbb{Z}$ that is $s\in 2\mathbb{Z}q$. Therefore
\begin{equation}\label{eq:B01delta'}
\psi(\delta')=2q\delta,\ \ (\lambda',\delta')=p-2
 \ \text{ for } \fg'\cong C_1^{(1)}.
\end{equation}

 Let us show that $\fg'\not\cong D(1|\ell)^{(2)}$ for $\ell>1$. Suppose the contrary.
 Then $\Delta(\fg')_{\ol{0}}$ contains $\alpha$ such that $(\alpha,\alpha)=1$
 and $\alpha+\delta'\not\in \Delta(\fg')$. Then $\psi(\alpha)\in \Delta_{\lambda+\rho}$
 and $\psi(\alpha+\delta')=\psi(\alpha)+q\delta\not \in \Delta_{\lambda+\rho}$
 which contradicts to (\ref{eq:B0l>1}). Hence 
\begin{equation}\label{eq:B0nDeltalambda}
\fg'\cong C_{\ell}^{(1)},\ B(0|\ell)^{(1)}, \ A_{2\ell-1}^{(2)}, \text{or } A(0|2\ell-1)^{(2)}.
\end{equation}

\subsubsection{}
\begin{cor}{cor:VkforB0n}
Let $\fg=B(0|\ell)^{(1)}$,
$k$ be an admissible level, and  $L(\lambda)$ be a $V_k(\fg)$-module. 
\begin{enumerate}
\item If $k$ is principal admissible, then 
 $\Delta_{\lambda+\rho}\cong \Delta_{k\Lambda_0+\rho}$ and $\fg'\cong B(0|\ell)^{(1)}$.
 In this case $L_{\fg'}(\lambda')$ has  level $\frac{p}{2}-h^{\vee}$.
\item Let $k$ be a  subprincipal admissible level. Then 
either $\fg'\cong A(0|2\ell-1)^{(2)}$,
or $\fg'\cong A_{2\ell-1}^{(2)}$ for $\ell>1$,
or  $\fg'\cong C_{1}^{(1)}$ for $\ell=1$.
If  $\fg'\not\cong C_{1}^{(1)}$, then 
$L_{\fg'}(\lambda')$ has  level $\frac{p}{2}-(h')^{\vee}$
 where $(h')^{\vee}$ is
 the dual Coxeter number of  $\fg'$.
 If $\fg'\cong C_{1}^{(1)}$, then $L_{\fg'}(\lambda')$ has  level 
 $p-2$.
\end{enumerate}
\end{cor}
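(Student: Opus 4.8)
The plan is to deduce everything from the Arakawa-type \Thm{thm:cor:arak} together with the explicit description of $\Delta_{k\Lambda_0+\rho}$ and the structural facts recorded in the paragraphs of $\S$~\ref{admB0n} preceding the corollary. Since $\fg=B(0|\ell)^{(1)}\not=B(1|1)^{(1)}$, \Thm{thm:cor:arak} applies in both directions, so the hypothesis that $L(\lambda)$ is a $V_k(\fg)$-module is equivalent to: $\lambda$ is admissible and $\Delta_{\lambda+\rho}\cap\Delta^{\#}$ is isometric to $\Delta_{k\Lambda_0+\rho}\cap\Delta^{\#}$. In particular $\Delta_{\lambda+\rho}\cap\Delta^{\#}$ projects under $\cl$ onto $C_{\ell}$, so $\lambda$ falls into the scope of that analysis and $\fg'=\fg'_{\lambda+\rho}$ is one of the types listed in (\ref{eq:B0nDeltalambda}). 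The two halves of the statement then correspond to whether the even affine root system $\Delta_{\lambda+\rho}\cap\Delta^{\#}$ is non-twisted (principal case) or twisted (subprincipal case), this dichotomy being transported from $\Delta_{k\Lambda_0+\rho}\cap\Delta^{\#}$ through the Arakawa isometry.

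For part (i), I first recall that for principal admissible $k$ (so $p,q$ are odd) one has $\fg'_{k\Lambda_0}\cong B(0|\ell)^{(1)}$, hence $\Delta_{k\Lambda_0+\rho}\cap\Delta^{\#}$ is the non-twisted $C_{\ell}^{(1)}$. Thus $\Delta_{\lambda+\rho}\cap\Delta^{\#}$ is isometric to the non-twisted $C_{\ell}^{(1)}$, which forces $\psi(\delta')=q\delta$ by (\ref{eq:B0ndelta'}) and restricts $\fg'$ to $\{C_{\ell}^{(1)},B(0|\ell)^{(1)}\}$, the two entries of (\ref{eq:B0nDeltalambda}) whose even part is non-twisted. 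To eliminate the Lie-algebra option I use \Cor{cor:Delta2beta}: because $q$ is odd the long roots of $\Delta_{\lambda+\rho}\cap\Delta^{\#}$ include one of the form $2\delta_i+2m\delta$, and this root equals $2\beta$ with $\beta=\delta_i+m\delta$ an odd real root of $\fg$, so $\beta\in\Delta_{\lambda+\rho}$. Hence $\Delta_{\lambda+\rho}$ contains odd roots and $\fg'$ must be the superalgebra $B(0|\ell)^{(1)}$. Consequently $\Delta_{\lambda+\rho}\cong B(0|\ell)^{(1)}\cong\Delta_{k\Lambda_0+\rho}$, and since $\fg'\cong\fg$ we have $(h')^{\vee}=h^{\vee}$, so (\ref{eq:B0nk'}) gives $(\lambda',\delta')=\tfrac{p}{2}-h^{\vee}$.

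For part (ii), subprincipality of $k$ means $\Delta_{k\Lambda_0+\rho}\cap\Delta^{\#}$ is a twisted affine root system, so the same holds for $\Delta_{\lambda+\rho}\cap\Delta^{\#}$; by (\ref{eq:B0nDeltalambda}) this leaves $\fg'\in\{A_{2\ell-1}^{(2)},A(0|2\ell-1)^{(2)}\}$ for $\ell>1$, together with the exceptional possibility $\fg'\cong C_1^{(1)}$ for $\ell=1$ isolated in (\ref{eq:B01delta'}). In contrast to the principal case, the twisted structure makes the long even roots of $\Delta_{\lambda+\rho}\cap\Delta^{\#}$ occur only with $\delta$-coefficients of a single parity, so none of them is the double of a root and \Cor{cor:Delta2beta} produces no forced odd root; both the Lie-algebra type $A_{2\ell-1}^{(2)}$ and the superalgebra type $A(0|2\ell-1)^{(2)}$ then genuinely arise, according to whether $\Delta_{\lambda+\rho}$ happens to contain odd roots. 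Finally the level is read off from the formulas already established: (\ref{eq:B0nk'}) gives $(\lambda',\delta')=\tfrac{p}{2}-(h')^{\vee}$ whenever $\fg'\not\cong C_1^{(1)}$, while for $\fg'\cong C_1^{(1)}$ the different normalization $\psi(\delta')=2q\delta$ recorded in (\ref{eq:B01delta'}) yields $(\lambda',\delta')=p-2$.

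The main obstacle is the precise identification of $\fg'$ rather than the level computation, which is mechanical once $\fg'$ and $\psi(\delta')$ are known. Pinning down $\fg'$ requires two separate discriminations: the twisted-versus-non-twisted character of the even part, transported from $\Delta_{k\Lambda_0+\rho}$ by the Arakawa isometry and hence depending on the parity classification of $k$; and the Lie-algebra-versus-superalgebra alternative, governed by the parity of the $\delta$-coefficients of the long roots via \Cor{cor:Delta2beta}. The exceptional case $\ell=1$, where the even part $C_1^{(1)}=A_1^{(1)}$ carries the special normalization of (\ref{eq:B01delta'}) and must be separated out, is the one place where the uniform formula (\ref{eq:B0nk'}) fails and has to be replaced.
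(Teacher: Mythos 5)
Your argument for $\ell>1$ is essentially the paper's own: the even-part isometry coming from Arakawa's theorem (the paper quotes the version of \cite{GSsnow} for the anisotropic algebra $B(0|\ell)^{(1)}$ rather than \Thm{thm:cor:arak}, but the consequence used is the same), then $\psi(\delta')=q\delta$, the observation that for $q$ odd some long root has even $\delta$-coefficient, hence equals $2\beta$ with $\beta$ an odd real root, so $\beta\in\Delta_{\lambda+\rho}$ by \Cor{cor:Delta2beta}, and finally the level formulas (\ref{eq:B0nk'}), (\ref{eq:B01delta'}). However, for $\ell=1$ your key discriminator collapses, and this is a genuine gap. Your dichotomy ``twisted versus non-twisted even part, transported through the Arakawa isometry'' is vacuous in rank one: in the paper's sense of isometry (a form-preserving linear map restricting to a bijection of the sets), $A_1^{(1)}=C_1^{(1)}$ with \emph{any} imaginary-root spacing are all isometric to one another, since $\delta$ lies in the kernel of the form. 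So for $\ell=1$ the isometry does not restrict $\fg'$ to $\{C_1^{(1)},B(0|1)^{(1)}\}$ in part (i) (it cannot exclude $A(0|1)^{(2)}$, whose even part is abstractly again $A_1^{(1)}$), and it does not exclude $B(0|1)^{(1)}$ in part (ii). Moreover, your elimination of the Lie-algebra option $C_1^{(1)}$ is circular when $\ell=1$: the existence of a long root of the form $2\delta_1+2m\delta$ rests on the long-root spacing being the odd number $q$, i.e.\ on $\psi(\delta')=q\delta$, but (\ref{eq:B0ndelta'}) is proved only under the hypothesis $\fg'\not\cong C_1^{(1)}$; if $\fg'\cong C_1^{(1)}$, then by (\ref{eq:B01delta'}) (and the paper's case (c)) the long roots occur only at odd multiples of $\delta$ with spacing $2q$, so no such root exists and \Cor{cor:Delta2beta} produces nothing. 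Note also that for $\ell=1$ there are no roots $\pm\delta_i\pm\delta_j$, so (\ref{eq:B0l>1}) pins down no even-root spacing at all in case (c).

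The paper closes exactly this rank-one hole by arithmetic rather than isometry: it lists the three possible shapes of $\Sigma_{\lambda}$, namely (a) $B(0|1)^{(1)}$, (b) $A(0|1)^{(2)}$, (c) $C_1^{(1)}$, and shows from the integrality conditions defining $R_{\lambda+\rho}$ (which involve $k+h^{\vee}=\frac{p}{2q}$) that (a) forces $p,q$ odd while (b) and (c) each force $pq$ even; combined with the parity classification of admissible levels in $\S$~\ref{admB0n}, this ties the type of $\fg'$ to principality of $k$ in the one case where the isometry carries no information. To repair your proof you must either reproduce this arithmetic for $\ell=1$ or find a substitute invariant that survives the degeneration of the isometry in rank one; as written, both parts of your argument are incomplete precisely at $\ell=1$. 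A secondary, citation-level point: since $B(0|\ell)^{(1)}$ is anisotropic, the standing hypotheses of Sections 6--7 make the applicability of \Thm{thm:cor:arak} to this algebra at best implicit (and its converse direction for $B(m|n)^{(1)}$, $n\geq m$, is completed only in $\S$~\ref{endproofarak}); the safer reference, and the one the paper uses, is Arakawa's theorem for $\mathfrak{osp}(1|2\ell)^{(1)}$ established in \cite{GSsnow}.
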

\begin{proof}
By Arakawa's Theorem (established in~\cite{GSsnow} for this case),
$\Delta_{\lambda+\rho}\cap \Delta_{\ol{0}}$ is isometric to
$\Delta_{k\Lambda_0+\rho}\cap \Delta_{\ol{0}}$.

Consider the case $\ell>1$ and 
 $q$ is even.   Then, by~$\S$~\ref{admB0n}, $\fg'_{\ol{0}}$ is of type  $A_{2\ell-1}^{(2)}$, 
 and, by~\cite{K78},  $\fg'$ is $A(0|2\ell-1)^{(2)}$
or $A_{2\ell-1}^{(2)}$.

Consider the case when $\ell>1$ and $q$ is odd.
By~(\ref{eq:B0ndelta'}),
$\psi$ maps the minimal imaginary root of $\Delta'_{\ol{0}}\cong C_{\ell}^{(1)}$  to $q\delta$.
Hence
$$(\gamma+\mathbb{Z}\delta)\cap \Delta_{\lambda+\rho}=\gamma+\mathbb{Z}q\delta \ \text{ for all }
\gamma\in \Delta_{\lambda+\rho}\cap \Delta_{\ol{0}}.$$
Take $\gamma\in \Delta_{\lambda+\rho}$ such that $(\gamma,\gamma)=2$,
that is $\gamma=s\delta\pm 2\delta_i$. By above, $(s+q)\delta\pm 2\delta_i$
lies in $\Delta_{\lambda+\rho}$. Since $s$ or $s+q$ is even, 
$\Delta_{\lambda+\rho}$ contains an element of the form $2\beta$
where $\beta$ is odd. Thus  $\beta\in \Delta_{\lambda+\rho}$, so $\fg'$ contains an odd root.
By~$\S$~\ref{admB0n}, $\fg'_{\ol{0}}$ is of type $C_{\ell}^{(1)}$.
Hence, by~\cite{K78},  $\fg'\cong B(0|\ell)^{(1)}$.

Now  consider the case $\ell=1$. Then  $\fg'_{\ol{0}}\cong C_1^{(1)}$.
We have the following possibilities
\begin{itemize}
\item[(a)] $\fg'\cong B(0|1)^{(1)}$ with 
$\Sigma_{\lambda}=\{s_1\delta\pm 2\delta_1,s_2\delta\mp\delta_1\}$, where $s_1$ is odd;
\item[(b)] $\fg'\cong A(0|1)^{(2)}$ with
$\Sigma_{\lambda}=\{s_1\delta-\delta_1,s_2\delta+\delta_1\}$; 
\item[(c)] $\fg'\cong C_1^{(1)}$
with $\Sigma_{\lambda}=\{s_1\delta-2\delta_1,s_2\delta+2\delta_1\}$, where $s_1,s_2$ are odd.
\end{itemize}

If $\fg'\cong B(0|1)^{(1)}$, then
 $(\lambda+\rho,s_1\delta\pm 2\delta_1)\in\mathbb{Z}$
and $4(\lambda+\rho,s_2\delta\mp \delta_1)\in 2\mathbb{Z}+1$, so
$\frac{(s_1+2s_2)p}{q}\in 2\mathbb{Z}+1$.
Thus $p,q$ are odd.

If $\fg'\cong A(0|1)^{(2)}$, then 
$(s\delta+\delta_1)\in\Delta_{\lambda+\rho}$ for some $s$
and $((2s+2i+1)\delta+2\delta_1)\not\in \Delta_{\lambda+\rho}$
for all  $i$. Thus  $\frac{p(2i+1)}{q}\not\in 2\mathbb{Z}+1$ for all $i$, so $pq$ 
is even.

If $\fg'\cong C_1^{(1)}$, then
  $(s\delta+2\delta_1)\in\Delta_{\lambda+\rho}$ for some odd $s$
and $(i\delta+\delta_1)\not\in \Delta_{\lambda+\rho}$
for all  $i$. This implies $\frac{jp}{q}\not\in 2\mathbb{Z}+1$
for all odd $j$, so $pq$ is even. 

The rest of the assertions follow from~(\ref{eq:B01delta'}) 
and~(\ref{eq:B0nk'}).
\end{proof}

\subsection{Admissible levels for $\fg:=B(m|n)^{(1)}$ with $m,n\geq 1$}
Let $k$ be as in the beginning of Section~\ref{classadmBmn}.
By~\Lem{lem:Bmn1Dmn2etc} (ii), $\fg'$ is either $B(m|n)^{(1)}$
or $D(m+1|n)^{(2)}$. Thus, if $k$ is admissible, then
$k$ is principal if $\fg'$ is $B(m|n)^{(1)}$, and
is subprincipal if $\fg'$ is $D(m+1|n)^{(2)}$.

We denote by $\delta'$ the minimal imaginary root of $\fg'$; recall that
$\psi(\delta')=u\delta$ for some $u\in\mathbb{Z}_{>0}$. One has
$(k'\Lambda'_0+\rho',\delta')=(k\Lambda_0+\rho,\psi(\delta))$, so
$$k'=\frac{up}{2q}-(h')^{\vee} \text{ if } n\geq m, \ \ \ \ 
k'=\frac{up}{q}-(h')^{\vee} \text{ if } n<m,$$
where $(h')^{\vee}:=(\rho',\delta')$ is the dual Coxeter number of $\fg'$.

Note that the lacety $r^{\vee}$ of $\fg^{\#}$ is equal to $1$ if $m=n=1$
and is equal to $2$ otherwise.

\subsubsection{}\label{iran}
By~\Lem{lem:Bmn1Dmn2etc} (iii),  the  bilinear form on $\Delta'$
is the normalised invariant form.
If $\fg'\cong B(m|n)^{(1)}$, then $L_{\fg'}(k'\Lambda_0)$
is $(\pi')^{\#}$-integrable if and only if $k'\in\mathbb{Z}_{\geq 0}$.
If $\fg'\cong D(m+1|n)^{(2)}$, then $L_{\fg'}(k'\Lambda_0)$
is $(\pi')^{\#}$-integrable if and only if $2k'\in\mathbb{Z}_{\geq 0}$.

\subsubsection{}
\begin{prop}{prop:Bmnlevels}
Let $\fg=B(m|n)^{(1)}$ with $m,n\geq 1$. Recall that $h^{\vee}=n-m+\frac{1}{2}$, $(h')^{\vee}=
\frac{n-m}{2}$ if
$n\geq m$, and $h^{\vee}=2m-2n-1$, $(h')^{\vee}=m-n$ if $m>n$.

\begin{enumerate}

\item Let $m=n=1$. 
If $q$ is even, then $k$ is not admissible.
If $p,q$ are odd, then $\fg'\cong B(1|1)^{(1)}$
and $k'=\frac{p}{2}-h^{\vee}$, so $k+h^{\vee}=\frac{k'+h^{\vee}}{q}$.
If $p$ is even, then $\fg'\cong D(2|1)^{(2)}$
and $k'=\frac{p}{2}-(h')^{\vee}=\frac{p}{2}$, so $k+h^{\vee}=\frac{k'}{q}$.
In both cases 
$k$ is admissible if and only if $k'\geq 0$.

\item Let $1\leq n<m$. 
If $q$ is odd, then $\fg'\cong B(m|n)^{(1)}$, and
$k$ is admissible (and principal) if and only if 
$k'=p-h^{\vee}\geq 0$. 
If  $q$ is even, then $\fg'\cong D(m+1|n)^{(2)}$, and
$k$ is admissible (and subprincipal) if and only if 
$k'=\frac{p}{2}-(h')^{\vee}\geq 0$
(notice that $k'\in\mathbb{Z}+\frac{1}{2}$).

\item If  $1\leq m\leq n\not=1$, then $k$ is not admissible except for the case when 
$p$ and $q$ are odd. If $p,q$ are odd, then  $\fg'\cong B(m|n)^{(1)}$, $k'=\frac{p}{2}-h^{\vee}$,
and 
$k$ is admissible 
(and principal) if and only if $k'\geq 0$. In this case $k+h^{\vee}=\frac{k'+h^{\vee}}{q}$.
\end{enumerate}
\end{prop}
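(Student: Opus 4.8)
The plan is to mirror the treatment of $B(0|\ell)^{(1)}$ in $\S$\ref{admB0n}, reducing the whole statement to the identification of the superalgebra $\fg'$ and the level $k'$ of the simple vacuum module $L'=L_{\fg'}(k'\Lambda_0)$ attached to $L=L(k\Lambda_0)$. By \Lem{lem:Bmn1Dmn2etc}(ii), $\fg'$ is either $B(m|n)^{(1)}$ or $D(m+1|n)^{(2)}$; the first gives $\fg'\cong\fg$, hence the principal case, and the second is subprincipal. Once $\fg'$ and $k'$ are known, \Thm{thm:criterion} together with $\S$\ref{iran} turns $\pi^{\#}$-quasi-admissibility of $k\Lambda_0$ into the integrality condition $k'\in\mathbb{Z}_{\geq 0}$ when $\fg'\cong B(m|n)^{(1)}$, and $2k'\in\mathbb{Z}_{\geq 0}$ when $\fg'\cong D(m+1|n)^{(2)}$. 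Since $\fg'$ is always of type (Aff) (see $\S$\ref{6.2}) and $\dot{\Delta}\subset\Delta_{k\Lambda_0+\rho}$, the remaining admissibility requirement $\mathbb{Q}(\Delta^{\#}\cap\Delta_{k\Lambda_0+\rho})=\mathbb{Q}\Delta^{\#}$ is automatic, and non-criticality is just $p\neq 0$ by \Rem{rem:non-critical}; so admissibility of $k$ is equivalent to the displayed inequality on $k'$, which is what the three cases record.

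First I would compute $\ol{R}_{k\Lambda_0+\rho}$ explicitly. Writing $\lambda:=k\Lambda_0+\rho$ and using $(\Lambda_0,\delta)=1$, $(\Lambda_0,\dot{\Delta})=(\Lambda_0,\Lambda_0)=0$ and $(\rho,\delta)=h^{\vee}$ from~(\ref{eq:rhodelta}), one finds for $\alpha\in\dot{\Delta}$, via $(i\delta+\alpha,i\delta+\alpha)=(\alpha,\alpha)$, that
$$\frac{2(\lambda,i\delta+\alpha)}{(\alpha,\alpha)}=\frac{2i(k+h^{\vee})}{(\alpha,\alpha)}+\frac{2(\rho,\alpha)}{(\alpha,\alpha)}.$$
The correction $\frac{2(\rho,\alpha)}{(\alpha,\alpha)}=\langle\rho,\alpha^{\vee}\rangle$ is, by~(\ref{eq:rhoalpha}), an integer for even reduced roots and an odd integer for the odd non-isotropic roots $\delta_i$, whereas for the even roots $2\delta_i$ (the doubles of the $\delta_i$) it is a half-integer; moreover $i\delta+2\delta_i$ lies in $\ol{\Delta}^{\ree}$ only for odd $i$. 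Combining this with the ``$j$ odd'' clause in the definition of $\ol{R}_{\lambda}$ for the roots $\delta_i$, membership $i\delta+\alpha\in\ol{R}_{\lambda}$ reduces, exactly as tabulated for $B(0|\ell)^{(1)}$ in $\S$\ref{admB0n}, to congruences on $\frac{ip}{q}$ (after substituting $k+h^{\vee}=\frac{p}{q}$ for $m>n$, resp. $\frac{p}{2q}$ for $n\geq m$) whose solubility depends only on the parities of $p$ and $q$. This pins down the base $\Sigma_{k\Lambda_0+\rho}$ in each regime.

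From $\Sigma_{k\Lambda_0+\rho}$ I would identify $\fg'$ by comparing with the bases of $B(m|n)^{(1)}$ and $D(m+1|n)^{(2)}$ recorded in Appendix~\ref{affinebases}, and simultaneously read off the integer $u$ with $\psi(\delta')=u\delta$. The level $k'$ is then forced by $(\lambda'+\rho',\delta')=(\lambda,\psi(\delta'))=u(k+h^{\vee})$ together with $(\rho',\delta')=(h')^{\vee}$, giving $k'=u(k+h^{\vee})-(h')^{\vee}$; matching $u$ (equal to $q$, except in the $m>n$ subprincipal regime where it is $q/2$) reproduces $k'=p-h^{\vee}$, $k'=\frac{p}{2}-h^{\vee}$, and $k'=\frac{p}{2}-(h')^{\vee}$, the value $(h')^{\vee}$ for the twisted target being read from $\S$\ref{dualCoxeter} (or recomputed through \Prop{prop:rhodelta}). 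Feeding $k'$ into the criterion of the first paragraph and distinguishing whether $k'$ lies in $\mathbb{Z}$ or in $\mathbb{Z}+\frac12$ yields the three enumerated conclusions, including the non-admissibility statements (for $q$ even when $m=n=1$, and for $(p,q)$ not both odd when $1\leq m\leq n\neq 1$).

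The principal difficulty is the parity bookkeeping separating the non-twisted $B(m|n)^{(1)}$ from the twisted $D(m+1|n)^{(2)}$, and explaining why the admissible parities differ between regimes. The decisive input is the family of odd non-isotropic roots $\delta_i$: the half-integer value of $\langle\rho,\cdot\rangle$ on their even doubles, together with the restriction of $2\delta_i+i\delta$ to odd $i$, means that the period in $\delta$ at which these short roots become integral is governed by $pq\bmod 2$ rather than by $q$ alone. When $m>n$ the roots of maximal length sit in the orthogonal part $B_m$, insensitive to this effect, so the dichotomy is controlled by $q\bmod 2$; when $m\leq n$ the symplectic part dominates and the interplay between the even root $2\delta_i$ and the odd root $\delta_i$ enforces the stronger requirement, the further split between $n=1$ (lacety $1$, where a subprincipal level persists) and $n>1$ (lacety $2$, where it does not) being exactly where cases~(1) and~(3) diverge. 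Carrying out these congruences with care, confirming that the base one obtains is genuinely that of $B(m|n)^{(1)}$ or $D(m+1|n)^{(2)}$ rather than another system a priori allowed by \Lem{lem:Bmn1Dmn2etc}, and treating the low-rank coincidences among the target superalgebras as in the $B(0|1)^{(1)}$ subcase of $\S$\ref{admB0n}, is where the real work lies.
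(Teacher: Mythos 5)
Your opening reduction --- identify $\fg'$ and $k'$ for $L(k\Lambda_0)$, then convert admissibility into $k'\in\mathbb{Z}_{\geq 0}$ (resp.\ $2k'\in\mathbb{Z}_{\geq 0}$) via \Thm{thm:criterion} and \S\ref{iran}, with the span condition automatic and non-criticality being $p\neq 0$ --- is exactly the paper's, and your congruence computation of $\ol{R}_{k\Lambda_0+\rho}$ matches its first step. The genuine gap lies in the non-admissibility assertions (case (i) with $q$ even, case (iii) with $p$ or $q$ even), which you propose to extract from non-integrality of $k'$. For that you need the correct pair $(\fg',u)$, where $\psi(\delta')=u\delta$, in precisely those regimes, and your stated rule ``$u=q$, except in the $m>n$ subprincipal regime where it is $q/2$'' is wrong exactly there: since the short even roots $s\delta\pm\vareps_i$ lie in $\Delta_{k\Lambda_0+\rho}$ if and only if they lie in $\ol{R}_{k\Lambda_0+\rho}$ (\Lem{lem:Xi}(i)), i.e.\ if and only if $q\mid 2s$, one has $u=q/\gcd(2,q)$ for \emph{all} $m,n$; in particular $u=q/2$ whenever $q$ is even, also for $n\geq m$. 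Running your own test with your $u=q$ for $m=n=1$, $q$ even gives $2k'=p\in\mathbb{Z}_{\geq 0}$, so your argument would declare these levels admissible --- the opposite of what is to be proved. (With the correct $u=q/2$ one gets $\fg'\cong D(2|1)^{(2)}$, $2k'=p/2\notin\mathbb{Z}$, and the conclusion is restored; so your route is repairable, but as written it fails at its decisive point.)

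The structural reason you went astray is that your congruences compute $\ol{R}_{k\Lambda_0+\rho}$, whereas $\fg'$ and $u$ are read off from $\Delta_{k\Lambda_0+\rho}$, the closure of $\ol{R}_{k\Lambda_0+\rho}$ under properties (b)--(d) of \S\ref{Deltalambda}; in exactly the excluded regimes this closure is strictly larger, and your sentence ``this pins down the base $\Sigma_{k\Lambda_0+\rho}$'' silently identifies the two. For instance, for $n\geq m$, $(m,n)\neq(1,1)$, $p$ even, $q$ odd, the root $q\delta+2\delta_1$ lies in $\Delta_{k\Lambda_0+\rho}\setminus R_{k\Lambda_0+\rho}$ (so $\Delta_{k\Lambda_0+\rho}\cong B(m|n)^{(1)}$ even though no root $2\delta_j+s\delta$ is integral), and similarly for $q$ even. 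The paper's proof is organized around this phenomenon: by \Thm{thm:criterionN} (see~(\ref{eq:DeltaLM})) admissibility forces the necessary condition~(\ref{eq:iran}), $\Delta_{k\Lambda_0+\rho}\cap\Delta^{\#}\subset R_{k\Lambda_0+\rho}$; the possible shapes of $\Delta_{k\Lambda_0+\rho}$ are limited to~(\ref{eq:Bmnq}) or~(\ref{eq:Dmn2q}) by \Lem{lem:Bmn1Dmn2etc}(ii), with $u$ fixed by \Lem{lem:Xi}(i); and each non-admissible case is then killed by exhibiting a root violating~(\ref{eq:iran}), with no need to evaluate $k'$ there at all. Your proposal never invokes this necessary condition nor the closure mechanism (\Lem{lem:Xi}, \Cor{cor:Delta2beta}, \Lem{lem:B11}), and deferring them to ``the real work'' omits the one ingredient that makes cases (i) and (iii) come out negative while case (ii) with $q$ even comes out positive; the heuristic about lacety in your last paragraph does not substitute for it --- the actual dichotomy at $(m,n)=(1,1)$ comes from $B(1|1)$ having no roots of square length $\pm 2$, so that the twisted shape~(\ref{eq:Dmn2q}) can occur with $q$ odd.
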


\subsubsection{}
\begin{cor}{}
\begin{enumerate}
\item
The level  $k$ is principal admissible for $\fg=B(m|n)^{(1)}$ with $m\geq 0$, $n\geq 1$
if and only if $p,q$ are odd and  $\frac{p}{2}\geq h^{\vee}$
for  $m\leq n$, and  $q$ is odd and  $p\geq h^{\vee}$
for $m>n$.
\item
The level $k$ is subprincipal admissible for $\fg=B(m|n)^{(1)}$ with   $m\geq 0$, $n\geq 1$
in the following cases: $\frac{p}{2}\geq (h')^{\vee}$, $p\not=0$ and 
\begin{itemize}
\item[$\cdot$] $m=0$, $n=1$ when $pq$ is even;
\item[$\cdot$]  $m>n\geq 1$ or $m=0$, $n>1$ when
$q$ is even;
\item[$\cdot$] $m=n=1$ when
$p$ is even for $m=n=1$. 
\end{itemize}
One has $\fg'=D(m+1|n)^{(2)}$ (so
$(h')^{\vee}=n-\frac{1}{2}$
for $m=0$, $(h')^{\vee}=m-n$ for $m>n$ and $m=n=1$).\end{enumerate}
\end{cor}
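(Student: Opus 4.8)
The plan is to synthesize the two pieces of case analysis already in hand: $\S$~\ref{admB0n} treats $\fg=B(0|n)^{(1)}$, and \Prop{prop:Bmnlevels} treats $\fg=B(m|n)^{(1)}$ with $m,n\geq 1$. All that remains is to reorganize their outputs by the sign of $m-n$ and the parity of the coprime pair $(p,q)$. Throughout I keep in mind the normalization fixed at the start of Section~\ref{classadmBmn}: for $n\geq m$ one writes $k+h^{\vee}=\frac{p}{2q}$, and for $m>n$ one writes $k+h^{\vee}=\frac{p}{q}$.

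First I would record the dichotomy that detects principal versus subprincipal. By \Lem{lem:Bmn1Dmn2etc}(ii), for $L=L(k\Lambda_0)$ the algebra $\fg'$ is either $B(m|n)^{(1)}$ or $D(m+1|n)^{(2)}$, and by part (iii) the map $\psi$ is an isometry onto $\Delta_{k\Lambda_0+\rho}$. Hence $\Delta_{k\Lambda_0+\rho}$ is isometric to $\Delta^{\ree}$ exactly when $\fg'\cong\fg=B(m|n)^{(1)}$, which is the principal case, while the alternative $\fg'\cong D(m+1|n)^{(2)}$ is the subprincipal case. Thus in each range of $(m,n)$ it suffices to read off the parity conditions separating the two isomorphism types of $\fg'$, together with the inequality $k'\geq 0$ that, via \Thm{thm:criterion} and $\S$~\ref{iran}, encodes integrability of $L'=L_{\fg'}(k'\Lambda_0)$.

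Next I would run through $m,n\geq 1$ using \Prop{prop:Bmnlevels}. For $1\leq m\leq n$ with $n\geq 2$ (part (3)) admissibility forces $p,q$ odd and $\fg'\cong B(m|n)^{(1)}$ with $k'=\frac{p}{2}-h^{\vee}$, so the level is principal and admissible iff $\frac{p}{2}\geq h^{\vee}$, with no subprincipal levels. For $m>n$ (part (2)), $q$ odd gives $\fg'\cong B(m|n)^{(1)}$, $k'=p-h^{\vee}$ (principal, admissible iff $p\geq h^{\vee}$), while $q$ even gives $\fg'\cong D(m+1|n)^{(2)}$, $k'=\frac{p}{2}-(h')^{\vee}$ (subprincipal, admissible iff $\frac{p}{2}\geq (h')^{\vee}$; since $(h')^{\vee}=m-n\geq 1$ the condition $p\neq 0$ is then automatic). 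The rank-one case $m=n=1$ (part (1)) is separate: $q$ even is non-admissible, $p,q$ odd is principal ($\fg'\cong B(1|1)^{(1)}$, $\frac{p}{2}\geq h^{\vee}$), and $p$ even is subprincipal ($\fg'\cong D(2|1)^{(2)}$, $(h')^{\vee}=0$, $k'=\frac{p}{2}$, admissible iff $p\neq 0$). Then I would treat $m=0$ from $\S$~\ref{admB0n} (with $\ell=n$): principal iff $p,q$ odd and $p\geq 2h^{\vee}$, i.e. $\frac{p}{2}\geq h^{\vee}$, with $\fg'\cong B(0|n)^{(1)}$; subprincipal iff $\frac{p}{2}\geq (h')^{\vee}$ with $\fg'\cong A(0|2n-1)^{(2)}=D(1|n)^{(2)}$ and $(h')^{\vee}=n-\frac{1}{2}$, the parity being $pq$ even for $n=1$ and $q$ even for $n>1$. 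Collecting all of this yields item (1), since the principal conditions read uniformly ``$p,q$ odd and $\frac{p}{2}\geq h^{\vee}$'' for $m\leq n$ and ``$q$ odd and $p\geq h^{\vee}$'' for $m>n$; and it yields item (2), the three subprincipal parity regimes being exactly $m=0,n=1$ (then $pq$ even), $m=n=1$ (then $p$ even), and the remaining $m>n\geq 1$ or $m=0,n>1$ (then $q$ even), with $\fg'=D(m+1|n)^{(2)}$ and the stated values of $(h')^{\vee}$ throughout.

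The genuine content is already carried by \Prop{prop:Bmnlevels} and $\S$~\ref{admB0n}, so the hard part is not a new argument but the careful handling of the two exceptional small-rank regimes $B(0|1)^{(1)}$ and $B(1|1)^{(1)}$. There the parity condition degenerates to ``$pq$ even'' and ``$p$ even'' respectively, rather than the generic ``$q$ even'', owing to the coincidence $A(0|1)^{(2)}=D(1|1)^{(2)}$ and to the vanishing $(h')^{\vee}=0$ of $D(2|1)^{(2)}$; in the latter case the inequality $\frac{p}{2}\geq (h')^{\vee}$ becomes vacuous and must be supplemented by the non-criticality requirement $p\neq 0$. Checking that these exceptions are transcribed correctly, and that no case is double-counted or omitted across the $m=0$, $m>n$, $m=n=1$, and $1\leq m\leq n$ (with $n\geq 2$) ranges, is where the attention is needed.
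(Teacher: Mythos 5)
Your proposal is correct and is essentially the paper's own argument: the corollary carries no separate proof in the paper, being exactly the aggregation of \Prop{prop:Bmnlevels} (cases $m,n\geq 1$) with $\S$~\ref{admB0n} (case $m=0$) that you carry out, together with the dichotomy "principal $\Leftrightarrow$ $\fg'\cong B(m|n)^{(1)}$, subprincipal $\Leftrightarrow$ $\fg'\cong D(m+1|n)^{(2)}$" recorded at the start of Section~\ref{classadmBmn} via \Lem{lem:Bmn1Dmn2etc}. One small caveat: your parenthetical identification $A(0|2n-1)^{(2)}=D(1|n)^{(2)}$ is valid only for $n=1$ (for $n\geq 2$ these are distinct twisted affine superalgebras), but this conflation is inherited from the corollary's own wording as against $\S$~\ref{admB0n}, and it is harmless for the classification since both algebras have dual Coxeter number $n-\frac{1}{2}$, so all parity conditions and inequalities you derive are unaffected.
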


\subsubsection{Proof of~\Prop{prop:Bmnlevels}}
We retain the usual notation for the roots in $B(m|n)$.
By abuse of notation, we use the same notation for  the set of real roots of affine Kac-Moody superalgebra
$\fg$ and $\fg$ itself (for example, $\Delta_{k\Lambda_0+\rho}\cong B(m|n)^{(1)}$ means
that $\Delta_{k\Lambda_0+\rho}$ is isomorphic to the set of real roots of $B(m|n)^{(1)}$).
It is convenient to use the normalization $(\vareps_1,\vareps_1)=1$
(which coincides with the original normalization if $m>n$). Then 
$(k\Lambda_0+\rho,\delta)=\frac{p_1}{q}$ where $p_1=p$ for $m>n$
and $p_1=-p$ if $n\geq m$. 

By~\Thm{thm:criterion}, if $k$ is admissible, then it satisfies
\begin{equation}\label{eq:iran}
\Delta_{k\Lambda_0+\rho}\cap\Delta^{\#}\subset {R}_{k\Lambda_0+\rho}.
\end{equation}
In the light of~\Thm{thm:criterion} and~$\S$~\ref{iran},
it is enough to verify  that for 
$u:=\frac{q}{\gcd(2,q)}$ (i.e., $u=q$ is $q$ is odd and $u=q/2$
otherwise) we have:
\begin{itemize}
\item[(a)]
if $n\geq m$ and $p,q$ are odd, or $m>n$
and $q$ is odd, then~(\ref{eq:iran}) holds and 
\begin{equation}\label{eq:Bmnq}
\Delta_{k\Lambda_0+\rho}=\dot{\Delta}+\mathbb{Z}u\delta\cong B(m|n)^{(1)},
\end{equation}
\item[(b)] if $m=n=1$ and $p$ is even, or $m>n$
and $q$ is even,  then~(\ref{eq:iran}) holds  and 
\begin{equation}\label{eq:Dmn2q}
\Delta_{k\Lambda_0+\rho}=\{\dot{\Delta}+2\mathbb{Z}u\}\cup\{\alpha+\mathbb{Z}u\delta|\ \alpha\in\dot{\Delta},
\ (\alpha,\alpha)=\pm 1\}\cong D(m+1|n)^{(2)},
\end{equation}
\item[(c)] formula~(\ref{eq:iran}) 
does not hold in all other cases, i.e. for $m=n=1$ if $q$ is even,
and for $n\geq m$, $n\not=1$ if $p$ or $q$ are even.
\end{itemize}

For $\alpha\in\dot{\Delta}$
we have 
$$(\alpha+i\delta)\in \ol{R}_{k\Lambda_0+\rho}\ \ \Longrightarrow\ \ 
\left\{\begin{array}{lcl}
i=0 & & \text{ if } (\alpha,\alpha)=0\\
\frac{pi}{q}\in \mathbb{Z}  & & \text{ if } (\alpha,\alpha)=\pm 2,-1\\
\frac{2pi}{q}\in \mathbb{Z}  & & \text{ if } (\alpha,\alpha)=1,\\
i,\frac{pi}{q}\in  2\mathbb{Z}+1 & & \text{ if } (\alpha,\alpha)=-4.
\end{array}
\right.$$

If $p,q$ are odd, then 
$\Delta_{k\Lambda_0+\rho}=\dot{\Delta}+\mathbb{Z}q\delta\cong B(m|n)^{(1)}$
and $\Delta_{k\Lambda_0+\rho}\cap \ol{\Delta}^{\an}=\ol{R}_{k\Lambda_0+\rho}$, so~(\ref{eq:iran}) 
 holds.  This proves (a).

In our normalization the short even roots (i.e., $s\delta\pm\vareps_i$)
are of  square length $1$.
By~\Lem{lem:Xi} (i), the root
$si\delta\pm\vareps_i)$ lies in $\Delta_{k\Lambda_0+\rho}$
if and only if it lies in $\ol{R}_{k\Lambda_0+\rho}$, that is
$(s\delta\pm\vareps_i)\in \Delta_{k\Lambda_0+\rho}$ if and only if $2s$ is divisible by 
 $q$.
Using~\Lem{lem:Bmn1Dmn2etc} (ii), we conclude that $\Delta_{k\Lambda_0+\rho}$
satisfies one of the formulas~(\ref{eq:Bmnq}), (\ref{eq:Dmn2q}) for  
$u=\frac{q}{\gcd(2,q)}$.

If $q$ is even, then  the set given by  formula~(\ref{eq:Dmn2q}) contains 
$\ol{R}_{k\Lambda+\rho}$, so $\Delta_{k\Lambda_0+\rho}$ is 
as in~(\ref{eq:Dmn2q}) with $u=\frac{q}{2}$.
In this case $\fg'$
is of type $D(m+1|n)^{(2)}$. For $m>n$  formula~(\ref{eq:iran}) 
 holds. 
If $n\geq m$, then $(q\delta+2\delta_1)\in \Delta_{k\Lambda_0+\rho}\setminus R_{k\Lambda_0+\rho}$,
so~(\ref{eq:iran}) 
does not hold. 

Now consider the remaining case when $p$ is even (and  $q$ is odd).
For $B(1|1)$ we have 
$$\Delta_{k\Lambda_0+\rho}=\{2\mathbb{Z}q\delta\pm \vareps_1\pm\delta_1,
\mathbb{Z}q\delta\pm\vareps_1, 
\mathbb{Z}q\delta\pm\delta_1, 2\mathbb{Z}q\delta\pm 2\delta_1 \}\cong D(2|1)^{(2)},$$
so $\Delta_{k\Lambda_0+\rho}\cap\Delta^{\#}=
\{2\mathbb{Z}q\delta\pm 2\delta_1\}=
{R}_{k\Lambda_0+\rho}\cap\Delta^{\#}$ and~(\ref{eq:iran}) 
 holds.

For $(m,n)\not=(1,1)$ the set
  $\dot{\Delta}$
contains $\alpha$ such that $(\alpha,\alpha)\in \{\pm 2\}$ and
$\alpha+q\delta\in \ol{R}_{\lambda+\rho}$. Thus
$\Delta_{k\Lambda_0+\rho}\cong B(m|n)^{(1)}$ is given by  formula~(\ref{eq:Bmnq}).
 If $m>n$, 
$${R}_{k\Lambda_0+\rho}\cap\Delta^{\#}=\{\mathbb{Z}q\delta\pm \dot{\Delta}^{\#} \}
=\Delta_{k\Lambda_0+\rho}\cap\Delta^{\#},
$$
so~(\ref{eq:iran}) 
 holds. 
If $n\geq m$, then  $q\delta+2\delta_1\in\Delta_{k\Lambda_0+\rho}\setminus
{R}_{k\Lambda_0+\rho}$, so~(\ref{eq:iran}) 
does not hold. This completes the proof.
\qed

\subsection{Completion of the proof of~\Thm{thm:cor:arak}}\label{endproofarak}
 It remains to verify~(\ref{eq:b'a'}) for 
$B(m|n)^{(1)}$ with $n\geq m$ and $k$ being prinicpal admissible, 
and  for $m>n$ and $k$ being subprincipal admissible. 
The  first case is treated in the following lemma.

\subsubsection{}
\begin{lem}{lem:Bngeqm0}
Let $\fg=B(m|n)^{(1)}$ with $n\geq m$ and $k$ such that $k+h^{\vee}=\frac{p}{2q}$
where $p,q$ are odd. Let $\lambda$ be a weight of level $k$
such that $R_{\lambda+\rho}\cap\Delta^{\#}$ is 
isometric to $(\Delta^{\#})^{\ree}$. Then 
$\Delta_{\lambda+\rho}\cap\Delta^{\#}=R_{\lambda+\rho}\cap\Delta^{\#}$.
\end{lem}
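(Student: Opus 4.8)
The plan is to prove the nontrivial inclusion $\Delta_{\lambda+\rho}\cap\Delta^{\#}\subseteq R_{\lambda+\rho}\cap\Delta^{\#}$, the reverse being automatic from $R_{\lambda+\rho}\subseteq\Delta_{\lambda+\rho}$. Throughout I would work in the normalization $(\vareps_i,\vareps_j)=\delta_{ij}$, $(\delta_a,\delta_b)=-\delta_{ab}$, which is legitimate since neither $R_{\lambda+\rho}$ nor $\Delta_{\lambda+\rho}$ depends on the choice of form. Under it $\Delta^{\#}$ is the set of real roots of $C_n^{(1)}$, namely $\{s\delta+\dot\alpha:\dot\alpha\in C_n,\ s\in\mathbb{Z}\}$ with $C_n=\{\pm\delta_a\pm\delta_b,\pm2\delta_a\}$, and $(\lambda+\rho,\delta)=-p/q$ with $p,q$ odd and coprime. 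Writing $\pi_\delta(\mu)$ for the coefficient of $\delta$ in $\mu$, the assertion becomes a periodicity statement: every root of $C_n^{(1)}$ lying in $\Delta_{\lambda+\rho}$ must have $\pi_\delta$ in the same residue class modulo $q$ as the roots of $R_{\lambda+\rho}$ over the same finite root.

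First I would describe $R_{\lambda+\rho}\cap\Delta^{\#}$ explicitly. For an even root $\gamma=s\delta+\dot\alpha$ with $\dot\alpha$ short one has $\gamma\in R_{\lambda+\rho}$ iff $(\lambda+\rho,\gamma)\in\mathbb{Z}$, a condition periodic in $s$ of period $q$; for $\dot\alpha=2\delta_a$ long one separates $s$ even (where $\gamma=2\eta$ with $\eta$ the odd root $\tfrac{s}{2}\delta+\delta_a$, and one invokes $\gamma\in R_{\lambda+\rho}\Leftrightarrow\eta\in\ol R_{\lambda+\rho}$) from $s$ odd. The \emph{only} place the parity of $p,q$ enters is here: the hypothesis that $R_{\lambda+\rho}\cap\Delta^{\#}$ is isometric to the \emph{non-twisted} $C_n^{(1)}$ forces the long roots over each $2\delta_a$ to occur with $\delta$-period exactly $q$ (as opposed to being absent, or occurring with period $2q$). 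A short computation with $g(s):=2(\lambda+\rho,\delta_a)-sp/q$ shows, using that $p$ and $q$ are odd, that the parity constraint distinguishing the two cases is automatically satisfied for all admissible $s$ precisely under this hypothesis, and that it pins the residue class of the long roots modulo $q$. I would conclude from this that, for an \emph{even} root of $C_n^{(1)}$ lying over a fixed $\dot\alpha\in C_n$, membership in $R_{\lambda+\rho}$ depends only on $\pi_\delta$ modulo $q$.

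The core of the argument is then a $\mathbb{Z}/q$-valued invariant. I would introduce the additive homomorphism $\sigma:\mathbb{Z}\dot\Delta\to\mathbb{Z}/q$ determined by letting $\sigma(\vareps_i)$ be the residue modulo $q$ of the offset of the roots $\vareps_i+s\delta\in\ol R_{\lambda+\rho}$ and $\sigma(\delta_a)$ the residue modulo $q$ of the offset of the odd roots $\delta_a+s\delta\in\ol R_{\lambda+\rho}$, and claim that $\pi_\delta(\mu)\equiv\sigma(\cl\mu)\pmod q$ for every $\mu\in\Delta_{\lambda+\rho}$. Granting this on the generating set $\ol R_{\lambda+\rho}=X_0$, the congruence propagates through the three operations (b)--(d) of~$\S$~\ref{Deltalambda}: under a reflection $s_\alpha\beta$ both $\cl$ and $\pi_\delta$ change by the same multiple $\langle\beta,\alpha^{\vee}\rangle=\langle\cl\beta,(\cl\alpha)^{\vee}\rangle$ of $\cl\alpha$ and of $\pi_\delta(\alpha)$ respectively (as $\delta$ is orthogonal to all roots), while the isotropic step $\alpha\pm\beta$ and the doubling step $2\alpha$ are manifestly additive; hence all three preserve $\pi_\delta-\sigma\circ\cl$ modulo $q$. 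Since $\cl(R_{\lambda+\rho}\cap\Delta^{\#})=C_n$, for each $\dot\alpha\in C_n$ there is a root of $R_{\lambda+\rho}$ over $\dot\alpha$; lying in $\Delta_{\lambda+\rho}$ it satisfies the congruence, which identifies $\sigma(\dot\alpha)$ with the unique residue class (from the previous paragraph) of the $R_{\lambda+\rho}$-roots over $\dot\alpha$. Any root of $\Delta_{\lambda+\rho}\cap\Delta^{\#}$ over $\dot\alpha$ then shares that residue, so by the period-$q$ description of $R_{\lambda+\rho}\cap\Delta^{\#}$ it lies in $R_{\lambda+\rho}$, giving the desired inclusion.

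The main obstacle I anticipate is establishing the base case, i.e. the consistency of $\sigma$ as a homomorphism on $\ol R_{\lambda+\rho}$: one must check that the $\delta$-offsets of the mixed roots $\vareps_i\pm\vareps_j+s\delta$ and, most delicately, the isotropic roots $\vareps_i\pm\delta_a+s\delta$ in $\ol R_{\lambda+\rho}$ are exactly the sums and differences modulo $q$ of $\sigma(\vareps_i)$ and $\sigma(\delta_a)$. This is where $p,q$ odd is indispensable a second time, because the odd roots $\delta_a+s\delta$ a priori satisfy their defining congruence only modulo $2q$, and one must verify that their residues modulo $q$ still fit the additive pattern forced by the even and isotropic roots. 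I expect this verification to reduce, via the definition of $\ol R_{\lambda+\rho}$ and the identity $(\lambda+\rho,\vareps_i\pm\delta_a+s\delta)=0$ for an isotropic root, to the same parity computation on $g(s)$ used in the second step, so that the non-twisted hypothesis and the oddness of $p,q$ close the argument.
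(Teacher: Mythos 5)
Your overall strategy --- a residue-mod-$q$ invariant pinned down on $\ol{R}_{\lambda+\rho}$ and transported to $\Delta_{\lambda+\rho}$ --- can be made to work, and you correctly locate where oddness of $p,q$ enters; but the construction of your key invariant $\sigma$ fails as stated. You define $\sigma$ on $\mathbb{Z}\dot{\Delta}$ by its values $\sigma(\vareps_i)$, $\sigma(\delta_a)$, reading $\sigma(\vareps_i)$ off from roots $\vareps_i+s\delta\in\ol{R}_{\lambda+\rho}$. The hypothesis of the lemma, however, constrains only the $\Delta^{\#}$-side of $R_{\lambda+\rho}$: nothing forces any root over $\vareps_i$ to lie in $\ol{R}_{\lambda+\rho}$. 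Concretely, normalize $(\delta_1,\delta_1)=1$, so $(\lambda+\rho,\delta)=\frac{p}{q}$; if $m\geq 2$, $3\nmid q$ and $(\lambda+\rho,\vareps_1)=(\lambda+\rho,\vareps_2)=\frac{1}{3}$, then $\vareps_1-\vareps_2\in\ol{R}_{\lambda+\rho}$ while no root over $\vareps_1$ or over $\vareps_2$ lies in $\ol{R}_{\lambda+\rho}$, so $\sigma(\vareps_1)$ is undefined even though the mixed roots over $\vareps_1-\vareps_2$ must still carry a residue. Hence $\sigma$ has to be defined directly on $\mathbb{Z}\cl(\ol{R}_{\lambda+\rho})$ (which suffices, because $\Delta_{\lambda+\rho}\subset\mathbb{Z}\ol{R}_{\lambda+\rho}$ by~$\S$~\ref{Xi}), and its well-definedness must be verified against \emph{all} $\mathbb{Z}$-linear relations among elements of $\cl(\ol{R}_{\lambda+\rho})$, not only the sum/difference relations against a ``basis'' of values; this consistency is the actual crux of your argument, and the proposal leaves it as a sketch.

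The consistency check does go through, by an argument cleaner than the one you anticipate: in the normalization above every root of $B(m|n)^{(1)}$ has integral square length, so $2(\lambda+\rho,\mu)\in\mathbb{Z}$ for every $\mu\in\ol{R}_{\lambda+\rho}$; consequently, if $\mu_i=s_i\delta+\dot{\alpha}_i\in\ol{R}_{\lambda+\rho}$ and $\sum_i n_i\dot{\alpha}_i=0$, then pairing $\sum_i n_i\mu_i=\bigl(\sum_i n_is_i\bigr)\delta$ with $\lambda+\rho$ gives $\bigl(\sum_i n_is_i\bigr)\frac{p}{q}\in\frac{1}{2}\mathbb{Z}$, and $\gcd(2p,q)=1$ (this is where $q$ odd is used) yields $q\mid\sum_i n_is_i$. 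Note also that once $\sigma$ exists, your induction over the operations (b)--(d) is superfluous: the congruence $\pi_{\delta}\equiv\sigma\circ\cl$ is linear in $\mu$ and $\Delta_{\lambda+\rho}\subset\mathbb{Z}\ol{R}_{\lambda+\rho}$. For comparison, the paper's proof bypasses the invariant entirely: the isometry hypothesis plus $p$ odd produce, for each $i$, a long root $2p_i\delta+2\delta_i\in R_{\lambda+\rho}$ with \emph{even} $\delta$-coefficient, hence odd roots $\beta_i=p_i\delta+\delta_i\in\ol{R}_{\lambda+\rho}$ with $a_i:=(\lambda+\rho,\beta_i)\in\mathbb{Z}+\frac{1}{2}$; any $\gamma\in\Delta_{\lambda+\rho}\cap\Delta^{\#}$ is of the form $s\delta\pm\beta_i\pm\beta_j$, and $2(\lambda+\rho,\gamma)\in\mathbb{Z}$ together with $\pm a_i\pm a_j\in\mathbb{Z}$ forces $\frac{2sp}{q}\in\mathbb{Z}$, hence $q\mid s$ and $(\lambda+\rho,\gamma)\in\mathbb{Z}$; short roots then lie in $R_{\lambda+\rho}$ immediately, and long ones ($i=j$) by the same parity observation you use. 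In effect the paper performs your consistency computation once, on exactly the relations that matter, which is why it needs no separately constructed homomorphism.
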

\begin{proof}
We will use  the normalization $(\delta_1,\delta_1)=1$.
For this normalization
$(k\Lambda_0+\rho,\delta)=\frac{p}{q}$ 
and all roots are of integral square length, so
$2(\lambda+\rho,\gamma)$ is integral for all  $\gamma\in\Delta_{\lambda+\rho}$.

We will use the following observation: 
$(s\delta+2\delta_i)\in  R_{\lambda+\rho}$ if and only if
$(\lambda+\rho,s\delta+2\delta_i)$
is odd if $s$ is even, and $(\lambda+\rho,s\delta+2\delta_i)$
is even if $s$ is odd.

The assumption that 
$R_{\lambda+\rho}\cap\Delta^{\#}$ is isometric to $(\Delta^{\#})^{\ree}$
implies that for each $i=1,\ldots,n$
there exists $s$ such that $(s\delta+2\delta_i)\in  R_{\lambda+\rho}$.
If $s$ is odd, then the above observation gives 
$$(s\delta+2\delta_i)\in  R_{\lambda+\rho}\ \ \Longleftrightarrow\ \ 
(\lambda+\rho,s\delta+2\delta_i)\in 2\mathbb{Z}\ \Longleftrightarrow\ \ 
(\lambda+\rho,(s+q)\delta+2\delta_i)\in 2\mathbb{Z}+1$$
since $p$ is odd.  Thus $(s\delta+2\delta_i)\in  R_{\lambda+\rho}$
for some even $s$. Therefore
for each $i=1,\ldots,n$ there exists
$p_i$ such that $(2p_i\delta+2\delta_i)\in  R_{\lambda+\rho}$,
which means that 
$\beta_i:=p_i\delta+\delta_i$ lies in $\ol{R}_{\lambda+\rho}$, that is 
$$a_i:=(\lambda+\rho, \beta_i)\in \mathbb{Z}+\frac{1}{2}.$$

Take $\gamma\in (\Delta_{\lambda+\rho}\cap \Delta^{\#})$ and set
$b:=(\lambda+\rho,\gamma)$. 
Since $\gamma\in\Delta^{\#}$ we have $(\gamma,\gamma)$ is $2$ or $4$, and
$\gamma=s\delta\pm\beta_i\pm\beta_j$
for some  $s\in\mathbb{Z}$, $1\leq i,j\leq n$, so 
$$b=(\pm a_i\pm a_j)+\frac{sp}{q}.
$$
Since $\gamma\in\Delta_{\lambda+\rho}$, we have $2b\in\mathbb{Z}$.
Since $(\pm a_i\pm a_j)$ is integral, $\frac{2sp}{q}$ is integral, so 
$s$ is divisible by $q$ (since $q$ is odd). Therefore 
$b\in \mathbb{Z}$.
Hence $\gamma\in R_{\lambda+\rho}$ if $(\gamma,\gamma)=2$.
If   $(\gamma,\gamma)=4$, then 
$i=j$ and 
$b=\pm 2a_i+\frac{sp}{q}$. Thus $b$
is even if $s$ is odd and is odd if $s$
is even (since $p$ is odd).  Using the above observation we obtain
 $\gamma\in R_{\lambda+\rho}$ as required.
 \end{proof}

\subsubsection{}
It remains to verify~(\ref{eq:b'a'}) for 
$B(m|n)^{(1)}$ with $m>n$ and $k$ being subprincipal admissible. 
In the light of~\Prop{prop:Bmnlevels}
this corresponds to the  case when
 $q$ is even.

 \subsubsection{}\label{grouptildeT}
 Set
$P:=\{\nu\in \mathbb{C}\dot{\Delta}|\ (\nu,\alpha)\in\mathbb{Z}\ \ 
\text{ for all }\alpha\in\dot{\Delta}\}$.
We consider the map $t: P\to  GL(\fh^*)$
given by
\begin{equation}\label{tnu}
t_{\nu}(\mu):=\mu+(\mu,\delta)\nu-((\mu,\nu)+\frac{(\mu,\delta)}{2}
(\nu,\nu))\delta.\end{equation}
The map
 $t$ is a group monomorphism; we denote by $\tilde{T}$ the image 
of this map. Then $\tilde{T}\in O(\fh^*)$ (i.e., $t_{\nu}$
preserves the bilinear form $(-,-)$ on $\fh^*$). Note that for $\gamma\in\Delta$ we have
$$t_{\nu}(\gamma)=\gamma-(\gamma,\nu)\delta.$$
In particular, $t_{\nu}(\delta)=\delta$ and 
 and $t_{\nu}(\Delta)=\Delta$. We denote by $\tilde{W}$ the subgroup of
 $O(\fh^*)$ generated by  $\dot{W}$ and $\tilde{T}$
 (by~\cite{Kbook}, Proposition 6.5, $\tilde{W}=\dot{W}\ltimes \tilde{T}$, but we won't use this fact). For each $w\in\tilde{W}$ we have 
 \begin{equation}\label{eq:tildew}
 w\Delta=\Delta,\ \ w\delta=\delta,\ \ R_{w(\lambda+\rho)}=w R_{\lambda+\rho},\ \ \ \Delta_{w(\lambda+\rho)}=w\Delta_{\lambda+\rho}\ \text{ for }\lambda\in\fh^*.\end{equation}

\subsubsection{}
Recall that $(\vareps_1,\vareps_1)=1$ for $m>n$. 
For this normalization $P=\mathbb{Z}\Delta$ ,
and all roots are of integral square length, so
$2(\lambda+\rho,\gamma)$ is integral for all  $\gamma\in\Delta_{\lambda+\rho}$.

\subsubsection{}
We want to prove that for $m>n$ we have
$$\Delta^{\#}\cap R_{\lambda+\rho}\ \text{ is isometric to }\ \Delta^{\#}\cap \Delta_{k\Lambda_0+\rho}\ \ \Longrightarrow\ \ 
\Delta^{\#}\cap R_{\lambda+\rho}=\Delta^{\#}\cap \Delta_{\lambda+\rho}$$
if $\lambda$ is of level $k$, where $k+h^{\vee}=\frac{p}{q}$ and $q$ is even.
Using~(\ref{eq:tildew}) we conclude that
the above implication for $\lambda$ is equivalent to the similar implication for 
the weight $w(\lambda+\rho)-\rho$ for $w\in \tilde{W}$.

\subsubsection{}
We set 
$$\begin{array}{l}
I_{\vareps}:=\{i|\ \vareps_i\in\cl(R_{\lambda+\rho})\}=
\{i|\ \exists s\in \mathbb{Z}\ \text{ s. t.}\ 2(\lambda+\rho,s\delta+\vareps_i)\in\mathbb{Z}\},\\
I_{\delta}:=\{j|\ \delta_j\in \cl(R_{\lambda+\rho})\}=
\{j|\ \exists s\in \mathbb{Z}\ \text{ s. t.}\ 2(\lambda+\rho,s\delta+\delta_j)\in 2\mathbb{Z}+1\}.\end{array}$$

The assumption that 
$\Delta^{\#}\cap R_{\lambda+\rho}$ is isometric to $ \Delta^{\#}\cap \Delta_{k\Lambda_0+\rho}$ 
implies $\dot{\Delta}^{\#}\subset \cl(R_{\lambda+\rho})$, 
so  $I_{\vareps}=\{1,\ldots,m\}$.

Since $2(\lambda,\alpha+\frac{q}{2}\delta)=2(\lambda,\alpha)+p$ and $p$ is odd
we have
$$\begin{array}{l}
I_{\vareps}=\{i|\ \exists s\in \mathbb{Z}\ \text{ s. t.}\ 
2(\lambda+\rho,s\delta+\vareps_i)\in 2\mathbb{Z}+1\}\\
I_{\delta}=\{j|\ \exists s\in \mathbb{Z}\ 
\text{ s. t.}\ 2(\lambda+\rho,s\delta+\delta_j)\in \mathbb{Z}\}.
\end{array}$$

\subsubsection{}
If $R_{\lambda+\rho}$ does not contain isotropic roots, then
$R_{\lambda+\rho}= \Delta_{\lambda+\rho}$. Thus we may (and will)
assume that $R_{\lambda+\rho}$
contains  an isotropic root, that is $(\lambda+\rho,s\delta+\vareps_i-\delta_j)=0$
for some $s,i,j$. Then $\delta_j\in I_{\delta}$. Acting by a suitable element of $\dot{W}$
we can assume that $I_{\delta}=\{1,\ldots,n_1\}$ for some $n_1$ such that $1\leq n_1\leq n$ and that 
\begin{equation}\label{eq:aibjBm>n}
\begin{array}{l}
a_i:=(\lambda+\rho,\vareps_i)\in \mathbb{Z}+\frac{1}{2}\ \ \text{ for }i=1,\ldots,m;\\
b_j:=(\lambda+\rho,\delta_j)\in \mathbb{Z}+\frac{1}{2}\ \ \text{ for }j=1,\ldots,n_1;\\
a_n=b_1.\end{array}\end{equation}
We let  $B(m|n_1)^{(1)}$ be
the corresponding natural copy  in 
$\Delta(\fg)$.
The intersection of $\Delta_{\ol{0}}^{\ree}$
with the span of $\delta$ and $\{\delta_j\}_{j>n_1}$
 is isomorphic to $C_{n-n_1}^{(1)}$
 and we denote this intersection  by $C_{n-n_1}^{(1)}$.

For $j>n_1$ we have $j\not\in I_{\delta}$, so 
 $R_{\lambda+\rho}$ 
does not contain the roots of the form $s\delta\pm \delta_j$,
$s\delta\pm \vareps_i\pm\delta_j$ and 
$s\delta\pm \delta_r\pm\delta_j$ for $i=1,\ldots,m$ and $r=1,\ldots,n_1$.
This means that $R_{\lambda+\rho}$ lies in $ B(m|n_1)^{(1)}\coprod C_{n-n_1}^{(1)}$.
Hence
$$\Delta_{\lambda+\rho}\subset \bigl( B(m|n_1)^{(1)}\coprod C_{n-n_1}^{(1)}\bigr).$$
The set $\Delta^{\#}\cap \Delta_{\lambda+\rho}$ lies
$\Delta_{\lambda+\rho}\cap B(m|n_1)^{(1)}$. 
Therefore we have to verify 
\begin{equation}\label{eq:inclus}
\bigl(\Delta^{\#}\cap (\Delta_{\lambda+\rho} \cap B(m|n_1)^{(1)})\bigr)\subset 
\bigl( R_{\lambda+\rho}\cap B(m|n_1)^{(1)}\bigr).\end{equation}
Note that 
$\Delta_{\lambda+\rho}\cap B(m|n_1)^{(1)}$ 
is the minimal subset of $B(m|n_1)^{(1)}$ which contains the set
$R_{\lambda+\rho}\cap B(m|n_1)^{(1)}$ and satisfies properties (a)--(c) 
of~$\S$~\ref{Deltalambda}. 

Let us compute the set $R_{\lambda+\rho}\cap B(m|n_1)^{(1)}$.
For a non-isotropic $\alpha \in B(m|n_1)$  we have
\begin{equation}\label{eq:mn1}
(s\delta+\alpha)\in \ol{R}_{\lambda+\rho} \ \ 
\Longleftrightarrow\ \ \left\{\begin{array}{ll}
q|s & \text{ if } (\alpha,\alpha)=-1,\pm 2\\
\frac{q}{2}|s & \text{ if } (\alpha,\alpha)=1\\
\end{array}\right.\end{equation}
(The roots $s\delta\pm 2\delta_1$ do not lie in $\ol{R}_{\lambda+\rho}$, 
since $\frac{ps}{q}\pm 2b_j\not\in 2\mathbb{Z}$ for odd $s$).
If $s\delta-\vareps_i\pm \delta_j$ lies in $\ol{R}_{\lambda+\rho}$,
then $\frac{ps}{q}-a_i\pm b_j=0$, so $s=\frac{q(a_i\mp b_j)}{p}$ is divisible by $q$.
We conclude that $R_{\lambda+\rho}\cap B(m|n_1)^{(1)}$ lies in the set
$$\{\alpha+q\delta|\ \alpha\in B(m|n_1)\}\cup \{\alpha+\frac{q}{2}\delta|\ \alpha\in B(m|n_1),
\ (\alpha,\alpha)=\pm 1\}\cong 
D(m+1|n_1)^{(2)}.$$
Hence $\Delta_{\lambda+\rho}\cap B(m|n_1)^{(1)}$ lies the copy
$D(m+1|n_1)^{(2)}$  given by the formula above (in fact,
$\Delta_{\lambda+\rho}\cap B(m|n_1)^{(1)}$ coincides with this set).
Using~(\ref{eq:mn1}) we obtain~(\ref{eq:inclus}) and complete the proof.

\subsection{Example: $B(1|1)^{(1)}$ when $p$ is even}
Assume that
$$\Delta^{\#}\cap R_{\lambda+\rho}\ \text{ is isometric to }
\ \Delta^{\#}\cap R_{k\Lambda_0+\rho}\ \ \text{ and }\ \ 
(\Delta^{\#}\cap \Delta_{\lambda+\rho})\not\subset R_{\lambda+\rho}.$$

Then $R_{\lambda+\rho}$ contains an isotropic root and a root of the form
 $s\delta+2\delta_1$.
 
We will use  the normalization $(\delta_1,\delta_1)=-1$.
For this normalization
$(k\Lambda_0+\rho,\delta)=\frac{p}{q}$.

Retain notation of~$\S$~\ref{grouptildeT}. If $s$ is even, then,
acting by a suitable element of $\tilde{W}$,
we can (and will) assume that $R_{\lambda+\rho}$ contains $\vareps_1-\delta_1$ and
 $2\delta_1$. Since $R_{\lambda+\rho}$ contains $\vareps_1-\delta_1$,
 one has $(\lambda+\rho,\vareps_1-\delta_1)=0$ and
 ${R}_{\lambda+\rho}$ does not contain $j\delta+\vareps_1-\delta_1$ for $j\not=0$ 
 (since $(\lambda+\rho,\delta)\not=0$).
Set 
$$a:=(\lambda+\rho,\delta_1).$$
Since $(\lambda+\rho,\vareps_1-\delta_1)=0$ we have $a=(\lambda+\rho,\vareps_1)$.
 Since $2\delta_1\in R_{\lambda+\rho}$, $2a$ is an odd integer.
In this case $\ol{R}_{\lambda+\rho}\cap\Delta^{\an}
 =\{\mathbb{Z}q\delta\pm\vareps_1;\mathbb{Z}q\delta\pm\delta_1\}$.
 If $j\delta+\vareps_1+\delta$ lies in $\ol{R}_{\lambda+\rho}$,
 then $\frac{jp}{q}=2a$ which is impossible since $p$ is even.
 Hence 
 $\ol{R}_{\lambda+\rho}
 =\{\mathbb{Z}q\delta\pm\vareps_1;\mathbb{Z}q\delta\pm\delta_1; \pm (\vareps_1-\delta_1)\}$, 
 so 
 $$\Delta_{\lambda+\rho}=\{\mathbb{Z}q\delta\pm\vareps_1;\mathbb{Z}q\delta\pm\delta_1\}\cup
 \{2\mathbb{Z}q\pm\vareps_1\pm\delta_1,2\mathbb{Z}q\delta\pm 2\delta_1\}\cong D(2|1)^{(2)}
$$
and $\Delta_{\lambda+\rho}\cap\Delta^{\#}=\{2\mathbb{Z}q\delta\pm 2\delta_1\}
\subset R_{\lambda+\rho}$.

Now consider the case 
$R_{\lambda+\rho}$ contains an isotropic root $\beta$
and a root of the form
 $s\delta+2\delta_1$ for an odd $s$. 
 Using the normalization $(\delta_1,\delta_1)=1$ we have
$$(\lambda+\rho,s\delta+2\delta_1)\in 2\mathbb{Z},\ \ (\lambda+\rho,\beta)=0$$
for some isotropic root $\beta$. Then $2\beta=s\delta+2\delta_1+j\delta\pm 2\vareps_1$
(for some odd $j$), so
$(\lambda+\rho, \frac{j}{2}\delta\pm \vareps_1)$ is integral and thus
$(\lambda+\rho, \frac{j+q}{2}\delta\pm \vareps_1)$ is integral.
Since $j+q$ is even, we get $\vareps_1\in\cl(R_{\lambda+\rho})$.
Hence $\cl(\Delta_{\lambda+\rho})$ contains $\pm\vareps_1$, $\pm 2\delta_1$
and an isotropic root $\cl(\beta)$. Since $\cl(\Delta_{\lambda+\rho})\subset B(1|1)$
and $s_{\alpha} \cl(\Delta_{\lambda+\rho})$
for any non-isotropic $\alpha\in \cl(\Delta_{\lambda+\rho})$
we obtain $\cl(\Delta_{\lambda+\rho})=B(1|1)$, so
$\Delta_{\lambda+\rho}$ is isomorphic to $B(1|1)^{(1)}$
or to $D(2|1)^{(1)}$. If $\Delta_{\lambda+\rho}\cong D(2|1)^{(1)}$, then
for any $\alpha\in (\Delta_{\lambda+\rho}\cap\Delta^{\#})$
one has $\alpha/2\in \Delta_{\lambda+\rho}$; since 
$(s\delta+2\delta_1)/2\not\in\Delta$ we conclude 
$\Delta_{\lambda+\rho}\not\cong D(2|1)^{(1)}$. Hence 
$\Delta_{\lambda+\rho}\cong B(1|1)^{(1)}$. Let us show that
$$\Delta_{\lambda+\rho}\cap \Delta^{\#}\not\subset R_{\lambda+\rho}.$$

 Since $\Delta_{\lambda+\rho}\cong B(1|1)^{(1)}$, the set
 $\Delta_{\lambda+\rho}\cap \Delta^{\#}$
contains a root of the form $2\alpha$ where $\alpha\in \Delta_{\lambda+\rho}$,
that is $\alpha=r\delta\pm \delta_1$. If $2\alpha\in R_{\lambda+\rho}$, then
$(\lambda+\rho, 2r\delta\pm 2\delta_1)$ is odd.
Since $(\lambda+\rho,s\delta+2\delta_1)$ is even,
 $\frac{(2r\pm s)p}{q}$ is an odd integer which is impossible since $p$ is even.

\subsection{Subprincipal levels}\label{subprince}  
Summarizing, we obtain: if $\fg$ is an indecomposable
non-twisted affine Kac-Moody superalgebra which is 
not a Lie algebra and 
$k$ is a subprincipal admissible level,
then $\fg$ is of  type $B(m|n)^{(1)}$ with $m>n$, or $m=n=1$, or $m=0$.
For $B(0|n)^{(1)}$ the subprincipal admissible levels
are of the form $k=-h^{\vee}+\frac{p}{2q}$, where  $p,q$ are coprime positive integers,
$pq$ is even and $p\geq 2n-1$ (in this case $k'=\frac{p-2n+1}{2}$).
For $B(m|n)^{(1)}$ with $m>n$  the subprincipal admissible levels
are of the form $k=-h^{\vee}+\frac{p}{q}$,
 where $p,q$ are coprime positive integers
$q$ is even  and $p>2(m-n)$ (in  this case $k'=\frac{p}{2}-(m-n)$
lies in $\mathbb{Z}_{\geq 0}+\frac{1}{2}$).
For $B(1|1)^{(1)}$ the subprincipal admissible levels
are of the form $k=-h^{\vee}+\frac{p}{2q}$,
 where $p,q$ are positive coprime integers and $q$ is odd
 (in this case $k'=p)$.

\section{The set $\Delta_{L}$ for $V_k(\fg)$-module $L$ when $k$ is principal admissible }\label{Delta''}

Let $k$ be a principal admissible level. As before, we write $k+h^{\vee}=\frac{p}{q}$
for $\fg\not=B(m|n)^{(1)}$ with $n\geq m$ and $k+h^{\vee}=\frac{p}{2q}$
for $\fg=B(m|n)^{(1)}$ with $n\geq m$, where $p,q$ are positive coprime integers.
Let $L(\lambda)$ be a $V_k(\fg)$-module.

\subsection{Notation}\label{fg''}
By~\Thm{thm:cor:arak} one has
$\Delta_{\lambda+\rho}\cap\Delta^{\#}=R_{\lambda+\rho}\cap\Delta^{\#}$  is isometric to $\Delta^{\#}$.

Retain notation of~$\S$~\ref{glambda}.  By~\Thm{thm:Deltalambda} 
all components of $\fg'$ are finite-dimensional or affine Kac-Moody superalgebras.

 Since $\Delta_{\lambda+\rho}\cap\Delta^{\#}$ is isometric to 
 $(\Delta^{\#})^{\ree}$,  this 
is an indecomposable root system, so
$\psi^{-1}(\Delta_{\lambda+\rho}\cap\Delta^{\#})$
lies in the root system of an indecomposable component of $\fg'$.
We denote this component by $\fg''$ and its  root system 
by $\Delta''$.  

Since $(\Delta^{\#})^{\ree}$ is infinite,  $\Delta''$ is infinite, so
$\fg''$ is an affine Kac-Moody superalgebra.
We denote by $\delta''$
the minimal imaginary root of $\fg''$. 
Recall that $\psi(\delta'')=u\delta$ for some $u\in\mathbb{Z}_{>0}$, see~$\S$~\ref{psidelta}.

\subsection{Example}
Let $\fg$ be $A(2|2)^{(1)}$ with 
$$\Sigma=\{\vareps_1-\vareps_2,\vareps_2-\vareps_3,\vareps_3-\delta_1,\delta_1-\delta_2,\delta_2-\delta_3, \delta-\vareps_1+\delta_3\}.$$
Let $\lambda=\Lambda_0+a(\delta_2+\delta_3)$ where $a\not\in\mathbb{Z}$.
Then $\fg'=\fg''\times\fg'''$, where $\fg''=A(2|1)^{(1)}$ with the simple roots 
$\{\vareps_1-\vareps_2,\vareps_2-\vareps_3,\vareps_3-\delta_1, \delta-\vareps_1+\delta_1\})$
and $\fg'''=A_1^{(1)}$ with the simple roots $\{\delta_2-\delta_3,\delta-\delta_2+\delta_3\}$.

\subsection{}
\begin{prop}{prop:DeltaL}
\begin{enumerate}
\item 
If $\fg'''\not=\fg''$  is a  component of $\fg'$,
 then for any real root $\alpha$ of $\fg'''$ we have 
$(\alpha,\alpha)<0$  (in particular, $\fg'''$ is anisotropic). 
\item
The Kac-Moody algebra $\fg''$ is the non-twisted affinization of $\dot{\fg}''$, where

 $\dot{\fg}''=A(m|n_1)$ if $\dot{\fg}=A(m|n)$ with $m\geq n$,
 
$\dot{\fg}''=G_2$ or $G(3)$ for $\dot{\fg}=G(3)$, 

$\dot{\fg}''=B_3$ or $F(4)$ for $\dot{\fg}=F(4)$, and 
$$\begin{array}{|c||c|c| c|c|c|c|c|c|c|}
 \hline
\dot{\fg} & B(m|n),\, m>n & B(m|n),\, n\geq m & D(m|n),\, m>n &D(m|n),\, n\geq m\\
 \hline
\dot{\fg}''  & B(m|n_1) & B(m_1|n)& D(m|n_1) &   D(m_1|n)\\
 \hline
 \end{array}$$
where $0\leq n_1\leq n$, $0\leq m_1\leq m$. Moreover, $\fg''\cong \fg^{\#}$
if $\lambda$ is typical, and $\fg''$ is not anisotropic
if $\lambda$ is atypical.
\item One has $\psi(\delta'')=q\delta$.
\end{enumerate}
\end{prop}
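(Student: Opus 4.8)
The plan is to deduce (iii) from parts (i) and (ii) by identifying $\psi(\delta'')$ with the minimal imaginary root of $\Delta_{\lambda+\rho}\cap\Delta^{\#}$, and then showing that this minimal imaginary root equals $q\delta$. First I would isolate the even real roots of positive square length inside $\fg'$. By part (i), every such root lies in the component $\fg''$; by part (ii), $\fg''$ is the non-twisted affinization of $\dot{\fg}''$, so these roots form the affinization $(\Delta'')^{\#}$ of $(\dot{\fg}'')^{\#}$, whose minimal imaginary root is $\delta''$. Since $\psi$ restricts to an isometric, parity-preserving bijection of $(\Delta'')^{\ree}$ onto $\Delta_{\lambda+\rho}$, it carries the even positive-square-length real roots of $\fg''$ bijectively onto $\Delta_{\lambda+\rho}\cap\Delta^{\#}$ (the inverse image of a root in $\Delta_{\lambda+\rho}\cap\Delta^{\#}$ is an even positive-square-length real root of $\fg'$, hence, by (i), a root of $\fg''$). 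Because $\psi$ is injective on $\mathbb{Z}\Delta''$ (property (c) of \S\ref{KMsubsystem}) and preserves $(-,-)$, it maps the root lattice of $(\Delta'')^{\#}$ isomorphically and isometrically onto that of $\Delta_{\lambda+\rho}\cap\Delta^{\#}$; using the intrinsic description of the minimal imaginary root (\Rem{rem:delta}), $\psi(\delta'')$ is therefore the minimal imaginary root of $\Delta_{\lambda+\rho}\cap\Delta^{\#}$. Since $\psi(\delta'')=u\delta$ by \S\ref{psidelta}, it remains to prove that this minimal imaginary root equals $q\delta$.

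For the second step I would compute the minimal imaginary root directly. By \Thm{thm:cor:arak} (principal case) one has $\Delta_{\lambda+\rho}\cap\Delta^{\#}=R_{\lambda+\rho}\cap\Delta^{\#}$, which by (\ref{eq:Rlambda+rho}) is the $\fg^{\#}$-integral root system $\{\alpha\in\Delta^{\#}\mid\langle\lambda,\alpha^{\vee}\rangle\in\mathbb{Z}\}$ and is isometric to $(\Delta^{\#})^{\ree}$. Fix a short even root $\beta_0$ of $\fg^{\#}$ with $(\beta_0,\beta_0)=c$; then $\beta_0+j\delta\in\Delta^{\#}$ for all $j\in\mathbb{Z}$, and $\beta_0+j\delta$ lies in the integral system precisely when $\frac{2(\lambda+\rho,\beta_0+j\delta)}{c}\in\mathbb{Z}$. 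As $(\lambda+\rho,\delta)=k+h^{\vee}$ by (\ref{eq:rhodelta}), and as the isometry with $(\Delta^{\#})^{\ree}$ guarantees that at least one translate $\beta_0+j_0\delta$ belongs to the system, the admissible exponents $j$ form a coset of $u_0\mathbb{Z}$, where $u_0$ is the least positive integer with $\frac{2u_0(k+h^{\vee})}{c}\in\mathbb{Z}$. This period depends only on the level $k+h^{\vee}$ and on $c$, hence coincides with the period for the vacuum weight $k\Lambda_0$; for the latter it was computed to be $q$ in \S\ref{admnonsuper}, \S\ref{admB0n} and \Prop{prop:Bmnlevels} (the normalization $k+h^{\vee}=\frac{p}{2q}$ for $B(m|n)^{(1)}$ with $n\geq m$ being handled identically). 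Consequently the minimal imaginary root of $\Delta_{\lambda+\rho}\cap\Delta^{\#}$ is $q\delta$, and combined with the first step this yields $\psi(\delta'')=q\delta$.

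The hard part is the first step: one must be certain that $\psi$ sends $\delta''$ to the \emph{minimal} imaginary root of $\Delta_{\lambda+\rho}\cap\Delta^{\#}$, and not to a proper multiple of it. This is exactly where part (i) is indispensable, since it forces all even real roots of positive square length of $\fg'$ into the single component $\fg''$, so that $(\Delta'')^{\#}$ accounts for the whole of $\Delta_{\lambda+\rho}\cap\Delta^{\#}$; together with the injectivity of $\psi$ on $\mathbb{Z}\Delta''$ this pins down the correspondence of minimal imaginary roots. Once this confinement is secured, the second step becomes routine, because the string $\beta_0+\mathbb{Z}\delta$ through the integral system has a period governed solely by the level, which therefore agrees with the already-computed vacuum value.
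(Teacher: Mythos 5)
Your proposal addresses only part (iii): parts (i) and (ii) are taken as inputs (``deduce (iii) from parts (i) and (ii)''), but they are part of the statement and neither is proved. This is not just a bookkeeping issue, because in the paper the hardest case of (iii), namely $\fg=B(m|n)^{(1)}$ with $n\geq m$, is not separable from (ii): both are proved together in \Lem{lem:Bngeqm}, via an analysis of the subsystem $X=\{\alpha\in R_{\lambda+\rho}\mid (\alpha,\alpha)>0\}$ (identified through \Prop{prop:RR'an} with the real roots of an anisotropic affine superalgebra, then shown to be of type $B(0|n)^{(1)}$ rather than $D(1|n+1)^{(2)}$), and the case $\dot{\fg}=B(0|n)$ rests on \Cor{cor:VkforB0n}. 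So the reduction ``(iii) follows from (i)+(ii)'' silently presupposes the heaviest part of the proposition.

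Within part (iii) itself, your strategy --- compute the $\delta$-string period through a suitable root of $R_{\lambda+\rho}\cap\Delta^{\#}$ from the level arithmetic, then transport it to $\delta''$ via $\psi$ using the non-twistedness of $\fg''$ --- is essentially the paper's (the paper takes a root with $(\alpha,\alpha)=2$, not a double of an odd root, and gets the period $q$ directly, instead of short roots plus comparison with the vacuum computations). But your uniform treatment has a genuine failure: the criterion ``$\beta_0+j\delta$ lies in the integral system iff $2(\lambda+\rho,\beta_0+j\delta)/c\in\mathbb{Z}$'' is valid only when $\beta_0+j\delta$ is not twice an odd root. Short roots of $\Delta^{\#}$ avoid this except when $\dot{\Delta}^{\#}=C_1$, i.e.\ for $\fg=B(0|1)^{(1)}$ and $B(1|1)^{(1)}$: there $\Delta^{\#}$ has no short roots at all, and every root $2\delta_1+j\delta$ with even $\delta$-coefficient is a double, so membership is governed by the odd-integrality condition in the definition of $\ol{R}_{\lambda+\rho}$. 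That this is not a technicality is shown by the subprincipal situation, where the same string has period $2q$, cf.~(\ref{eq:B01delta'}); obtaining $q$ in the principal case requires the parity argument with $p,q$ odd, which is exactly what \Cor{cor:VkforB0n} and \Lem{lem:Bngeqm} supply, and your parenthetical claim that $B(m|n)^{(1)}$ with $n\geq m$ is ``handled identically'' is false for $n=1$. Finally, there is a small but real gap at the junction of your two steps: in step 1 you define the minimal imaginary root of the integral system via the kernel lattice (\Rem{rem:delta}), while step 2 computes a string period; these agree here only because, by step 1, the system equals $\psi\bigl((\dot{\Delta}'')^{\#}\bigr)+\mathbb{Z}u\delta$ and is therefore invariant under translation by $u\delta$, which gives $q\mid u$, while $q\delta$ lying in the kernel lattice gives $u\mid q$. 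This reconciliation must be stated; as written, ``period $=q$ hence minimal imaginary root $=q\delta$'' does not follow.
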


\subsubsection{}
\begin{cor}{cor:Delta''}
If the dual Coxeter number of $\fg''$
is equal to $h^{\vee}$, then   $\fg''\cong \fg$  and  $\Delta'=\Delta''$.
If $\fg'\not\cong \fg$, then the dual Coxeter number of $\fg''$
 is greater than $h^{\vee}$. 
\end{cor}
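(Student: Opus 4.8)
The plan is to read \Cor{cor:Delta''} off directly from \Prop{prop:DeltaL}(ii), reducing it to a comparison of dual Coxeter numbers together with a rank/orthogonality argument. First I would record that, since $\fg''$ is by \Prop{prop:DeltaL}(ii) the non-twisted affinization of $\dot{\fg}''$, its dual Coxeter number $(h'')^{\vee}$ equals the dual Coxeter number of the finite-dimensional superalgebra $\dot{\fg}''$ (this is the content of $(\rho,\delta)=h^{\vee}$, see~$\S$~\ref{dualCoxeter} and~$\S$~\ref{extensionong(1)}). Then I would run through the table in \Prop{prop:DeltaL}(ii) row by row, reading off $(h'')^{\vee}$ from the dual Coxeter table in~$\S$~\ref{dualCoxeter}, and check in each case that $(h'')^{\vee}\geq h^{\vee}$ with equality precisely when $\dot{\fg}''=\dot{\fg}$. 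For instance, for $\dot{\fg}=A(m|n)$ with $m\geq n$ one has $\dot{\fg}''=A(m|n_1)$ with $n_1\leq n\leq m$, so $(h'')^{\vee}=m-n_1\geq m-n=h^{\vee}$, with equality iff $n_1=n$; the super $B$ and $D$ rows are identical in spirit, the varied index only decreases and the dual Coxeter number is monotone in it (minimized at the original value), while for $G(3)$ and $F(4)$ the two options $G_2,G(3)$ and $B_3,F(4)$ give $(h'')^{\vee}\in\{4,3\}$ and $\{5,2\}$ respectively, again with the minimum attained exactly at $\dot{\fg}''=\dot{\fg}$. Since $\fg$ and $\fg''$ are both non-twisted affinizations, $\dot{\fg}''=\dot{\fg}$ is equivalent to $\fg''\cong\fg$.

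This monotonicity already yields the second assertion once the first is known: as $(h'')^{\vee}\geq h^{\vee}$ always holds, if $(h'')^{\vee}$ fails to exceed $h^{\vee}$ then $(h'')^{\vee}=h^{\vee}$, forcing $\fg''\cong\fg$ and (by the first assertion) $\Delta'=\Delta''$, i.e. $\fg'\cong\fg$; contrapositively $\fg'\not\cong\fg$ gives $(h'')^{\vee}>h^{\vee}$. So the substantive point is the first assertion: assuming $(h'')^{\vee}=h^{\vee}$, hence $\fg''\cong\fg$, I must show that $\fg'$ has no component other than $\fg''$, i.e. $\Delta'=\Delta''$.

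For this I would argue by dimension and orthogonality. By~$\S$~\ref{KMsubsystem}(c) the restriction of $\psi$ to $\mathbb{C}\Delta''$ is injective, and $\fg''\cong\fg$ gives $\dim\mathbb{C}\Delta''=\dim\mathbb{C}\Delta$; hence $\psi(\mathbb{C}\Delta'')=\mathbb{C}\Delta$. Now let $\fg'''\neq\fg''$ be any other indecomposable component of $\fg'$; by \Prop{prop:DeltaL}(i) it is anisotropic and in particular has a nonempty set of real roots. Its roots are orthogonal to those of $\fg''$, and since $\psi$ preserves the bilinear form, $\psi(\mathbb{C}\Delta''')$ is orthogonal to $\psi(\mathbb{C}\Delta'')=\mathbb{C}\Delta$, so $\psi(\mathbb{C}\Delta''')$ lies in the radical of the form on $\mathbb{C}\Delta$, which by~\Rem{rem:delta} equals $\mathbb{C}\delta$. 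But $\psi$ carries real roots of $\fg'''$ to real roots of $\fg$, and no real root of $\fg$ lies in $\mathbb{C}\delta$; this contradiction shows $\fg'''$ cannot exist, so $\Delta'=\Delta''$. I expect this last step to be the main obstacle: it relies on identifying the radical of the form restricted to $\mathbb{C}\Delta$ with $\mathbb{C}\delta$ and on the fact that $\psi$ sends real roots to real roots, both of which I would quote cleanly from~\Rem{rem:delta} and the construction of $\psi$ in~$\S$~\ref{KMsubsystem}. The table comparison in the first two paragraphs, by contrast, is routine case-checking.
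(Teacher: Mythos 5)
Your proposal is correct and follows essentially the same route as the paper: the dual Coxeter assertion is read off from the table in \Prop{prop:DeltaL} (ii) together with the table in $\S$~\ref{dualCoxeter}, and $\Delta'=\Delta''$ is then forced by an orthogonality argument against any further component $\fg'''$. Your version of that last step (injectivity of $\psi$ on $\mathbb{C}\Delta''$ from $\S$~\ref{KMsubsystem} (c), a dimension count giving $\psi(\mathbb{C}\Delta'')=\mathbb{C}\Delta$, and the identification of the radical of $(-,-)|_{\mathbb{C}\Delta}$ with $\mathbb{C}\delta$ via \Rem{rem:delta}) is a mild repackaging of the paper's argument, which instead applies $\cl$ and uses finiteness of $\dot{\Delta}$ to get $\cl(\psi(\Delta''))=\dot{\Delta}$; in both cases the point is that the image of $\Delta''$ fills everything, so a further component would be orthogonal to all of $\Delta$, contradicting that its (nonempty set of) real roots lands in $\Delta^{\ree}$. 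One trivial slip in your table check: $h^{\vee}(G(3))=2$ and $h^{\vee}(F(4))=3$, so the option sets are $\{4,2\}$ and $\{5,3\}$ rather than $\{4,3\}$ and $\{5,2\}$ — inconsequential, since the minimum is still attained exactly at $\dot{\fg}''=\dot{\fg}$.
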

\begin{proof}
 From
 the table for the dual Coxeter numbers in~$\S$~\ref{dualCoxeter} we see
 that the dual Coxeter number of $\fg''$
 is greater than $h^{\vee}$ if $\fg''\not\cong\fg$. If $\fg''\cong\fg$, then 
 $\cl(\Delta'') \cong\cl(\Delta)$. But $\cl(\psi(\Delta''))$ is a subset of 
 $\dot{\Delta}=\cl(\Delta^{\ree})$
 (and $\dot{\Delta}$ is finite), so  $\cl(\psi(\Delta''))=\dot{\Delta}$. This implies
 $\fg'=\fg''$ (since, if $\fg'''\not=\fg''$  is a component of $\fg'$,
 then $\Delta(\fg''')$ is orthogonal to $\Delta(\fg'')$, so
 $\psi(\Delta(\fg'''))\subset\dot{\Delta}$ is orthogonal to $\psi(\Delta(\fg''))=\dot{\Delta}$,
a contradiction).
\end{proof}

\subsection{Proof of~\Prop{prop:DeltaL}}
The case  $\dot{\fg}=B(0|n)$ is treated  in~\Cor{cor:VkforB0n}. 
Now we assume that $\dot{\fg}$ is not anisotropic.

Using $\psi$ we view $\Delta''$ as a subset of $\Delta$.  

\subsubsection{Proof of (i)}
\label{clDelta''}
By construction, $\Delta''$ contains $\psi^{-1}(\Delta_{\lambda+\rho}\cap \Delta^{\#})$
which is isometric to $\Delta^{\#}$, so 
$\cl\bigl((\Delta'')^{\ree}\bigr)$ is a subset of 
$\cl\bigl((\Delta)^{\ree}\bigr)=\dot{\Delta}$ which contains a subset isometric
to $\cl(\Delta^{\#})=\dot{\Delta}^{\#}$. Since $\dot{\Delta}$
is finite, we obtain 
\begin{equation}\label{eq:dotDeltacl}
\dot{\Delta}^{\#}\subset \cl\bigl((\Delta'')^{\ree}\bigr)\subset \dot{\Delta}.
\end{equation}

Take $\alpha\in\Delta_{\lambda+\rho}$ such that $(\alpha,\alpha)\geq 0$.
We have to verify that $\alpha\in\Delta''$. Suppose the contrary.
Then $(\alpha,\Delta'')=0$.
The set $\dot{\Delta}^{\#}$ contains a root  $\dot{\gamma}$ such that
 $(\cl(\alpha),\dot{\gamma})\not=0$ (one has $\mathbb{R}\dot{\Delta}=\mathbb{R}\dot{\Delta}^{\#}
 \oplus V'$, where 
 the restriction of $(-,-)$ to $V'$ is negative definite;
 if  $(\cl(\alpha),\dot{\Delta}^{\#})=0$,
 then $\cl(\alpha)\in V'$, so $(\alpha,\alpha)=(\cl(\alpha),\cl(\alpha))< 0$).
  By~(\ref{eq:dotDeltacl}), 
  $\Delta''$ contains $\gamma$
such that $\cl(\gamma)=\dot{\gamma}$ and thus
$(\gamma,\alpha)=0$, a contradiction.

\subsubsection{Proof of (ii) for  $\fg\not=B(m|n)^{(1)}$, $n\geq m$}
The assumption $\fg\not=B(m|n)^{(1)}$ with $n\geq m$ gives 
$$\Delta^{\#}=\{\alpha\in\Delta|\ (\alpha,\alpha)>0\}.$$

If $\lambda$ is typical,
then $\Delta_{\lambda+\rho}$ does not contain 
isotropic roots and, by (i), 
\begin{equation}\label{eq:99}
(\Delta'')^{\ree}=\{\alpha\in\Delta_{\lambda+\rho}|\ (\alpha,\alpha)>0\}.
\end{equation}
The assumption $\fg\not=B(m|n)^{(1)}$ with $n\geq m$ gives 
$\Delta^{\#}=\{\alpha\in\Delta|\ (\alpha,\alpha)>0\}$, so
$$\{\alpha\in (\Delta'')^{\ree}|\ (\alpha,\alpha)>0\}=(\Delta_{\lambda+\rho}\cap \Delta^{\#})
\text{ 
is isometric to }(\Delta^{\#})^{\ree}.$$

If $\lambda$ is atypical, then $\Delta_{\lambda+\rho}$
contains isotropic roots and these roots lie in $\Delta''$
by (i).  Using~(\ref{eq:dotDeltacl}) and 
the table  in Appendix~\ref{dotgclg} we obtain the statement for $\dot{\fg}\not=B(m|n)$.

For $\dot{\fg}=B(m|n)$ the same argument implies that $\fg''$ is of type $B(m|n_1)^{(1)}$
or $D(m+1|n_1)^{(2)}$ for some $0<n_1\leq n$.
If $m>n$ and $\fg''=D(m+1|n_1)^{(2)}$, then 
$$\{\alpha\in (\Delta'')^{\ree}|\ (\alpha,\alpha)>0\}=D_{m=1}^{(2)}$$
is not isometric to $(\Delta^{\#})^{\ree}=B_m^{(1)}$, a contradiction.
This completes the proof of (ii) for $\fg\not=B(m|n)$, $n\geq m$.

\subsubsection{Proof of (iii) for  $\fg\not=B(m|n)^{(1)}$, $n\geq m$}
Recall that 
$\Delta_{\lambda+\rho}\cap\Delta^{\#}=R_{\lambda+\rho}\cap\Delta^{\#}$
is isometric to $(\Delta^{\#})^{\ree}$. Since, 
for $\fg$ in question, $\alpha/2\not\in\Delta$  for all  $\alpha\in\Delta^{\#}$
we have 
$$\ol{R}_{\lambda+\rho}\cap\Delta^{\#}={R}_{\lambda+\rho}\cap\Delta^{\#}.$$
Since this set is isometric to $(\Delta^{\#})^{\ree}$, it contains $\alpha$
such that $(\alpha,\alpha)=2$. Since 
$\alpha\in \ol{R}_{\lambda+\rho}$, so $(\lambda+\rho,\alpha)\in\mathbb{Z}$ and
$$(\lambda+\rho, i\delta+\alpha)=i(k+h^{\vee})+(\lambda+\rho,\alpha)=\frac{ip}{q}+
(\lambda+\rho,\alpha).$$
Therefore $(i\delta+\alpha)\in \ol{R}_{\lambda+\rho}$ if and only if $i$
is divisible by $q$. Since $\ol{R}_{\lambda+\rho}\cap\Delta^{\#}$
is isometric to $(\Delta^{\#})^{\ree}$, this gives
$$(\ol{R}_{\lambda+\rho}\cap\Delta^{\#})+i\delta=\ol{R}_{\lambda+\rho}\cap\Delta^{\#}\ \ 
\Longleftrightarrow\ \ q|i$$
so  $u=q$ and thus  $\psi(\delta'')=q\delta$ as required.

\subsubsection{}
For the remaining case $\fg=B(m|n)^{(1)}$ with $n\geq m$, (ii) and (iii)
is proved in~\Lem{lem:Bngeqm} below.\qed

\subsection{}
\begin{lem}{lem:Bngeqm}
Let $\fg=B(m|n)^{(1)}$ with $n\geq m$ and $k$ such that $k+h^{\vee}=\frac{p}{2q}$
where $p,q$ are odd coprime positive integers. Let $\lambda$ be a weight of level $k$
such that $R_{\lambda+\rho}\cap\Delta^{\#}$ is 
isometric to $(\Delta^{\#})^{\ree}$. Then 
 $\fg''\cong B(m_1|n)^{(1)}$ for some $0\leq m_1\leq m$,
 where $m=0$ if and only if $\lambda$ is typical. Moreover,
  $\psi(\delta'')=q\delta$.
\end{lem}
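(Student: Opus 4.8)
The goal is to analyze $\fg = B(m|n)^{(1)}$ with $n \geq m$ for an odd/odd admissible datum $p,q$, show the integral root system $\Delta_{\lambda+\rho}$ has affine component $\fg'' \cong B(m_1|n)^{(1)}$, pin down $m_1$ via typicality, and compute the rescaling factor $\psi(\delta'')=q\delta$. My approach will mirror the analysis of $R_{\lambda+\rho}\cap B(m|n_1)^{(1)}$ carried out in $\S\ref{endproofarak}$ for the $m>n$ case, but now in the normalization $(\delta_1,\delta_1)=1$ appropriate to $n \geq m$, where the short even roots $s\delta\pm\vareps_i$ have square length $1$ and the long roots $s\delta\pm 2\delta_j$ have square length $4$. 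I would first record that, with $p,q$ odd and $(k\Lambda_0+\rho,\delta)=\frac{p}{q}$ in this normalization, $2(\lambda+\rho,\gamma)\in\mathbb{Z}$ for all $\gamma\in\Delta_{\lambda+\rho}$ (as in the lemma for $m>n$), which lets me run the same integrality bookkeeping.

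The key steps, in order. First I would determine the residue structure: for each non-isotropic $\alpha\in B(m|n)$, compute when $s\delta+\alpha\in\ol{R}_{\lambda+\rho}$ as a divisibility condition on $s$, distinguishing the three square-length cases $(\alpha,\alpha)\in\{\pm 1,\pm 2,\pm 4\}$ and using the rule that for $2\alpha\in\Delta$ the relevant $j$ must be odd. Second, using the hypothesis that $R_{\lambda+\rho}\cap\Delta^{\#}$ is isometric to $(\Delta^{\#})^{\ree}=C_n^{(1)}$ (whose long roots are the $s\delta\pm 2\delta_j$), I would deduce the existence of $b_j\in\mathbb{Z}+\frac12$ with $(\lambda+\rho,\delta_j)=b_j$ for all $j=1,\dots,n$, exactly paralleling the derivation of the $a_i$ in \Lem{lem:Bngeqm0}; the parity argument there (that $p$ odd forces the even-$s$ representative into $\ol{R}$) shows every $\delta_j$ contributes a half-integral value. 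Third, I would identify $I_\vareps=\{i : \vareps_i\in\cl(R_{\lambda+\rho})\}$ as the set of typical/atypical indices and set $m_1:=|I_\vareps|$, so that $\lambda$ is typical precisely when $m_1=0$ (no isotropic root $s\delta+\vareps_i-\delta_j$ in $R_{\lambda+\rho}$). Fourth, from \Cor{cor:Delta2beta}, \Lem{lem:Xi}(i), and the minimality defining $\Delta_{\lambda+\rho}$, I would show $\Delta_{\lambda+\rho}\cap B(m|n)^{(1)}$ (after a Weyl-group adjustment to make $I_\vareps=\{1,\dots,m_1\}$) is exactly the copy of $B(m_1|n)^{(1)}$ spanned by $\{\delta_j\}$ and $\{\vareps_i\}_{i\le m_1}$, with the $\vareps_i$ for $i>m_1$ entering only with square length $1$ via $R$ and hence not enlarging the system. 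Finally, for $\psi(\delta'')=q\delta$: since $\ol{R}_{\lambda+\rho}$ contains some $\alpha$ with $(\alpha,\alpha)=2$ and $(i\delta+\alpha)\in\ol{R}_{\lambda+\rho}\iff q\mid i$ (from step one, using $p,q$ odd), the minimal shift preserving the isometric copy is $q\delta$, giving $u=q$.

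\textbf{Main obstacle.} The delicate point will be step four: correctly tracking which half-integral roots survive under the $\Delta_{\lambda+\rho}$-closure versus $R_{\lambda+\rho}$, and in particular verifying that the short roots $\vareps_i$ (square length $1$, where $\Delta$ and $R$ agree by \Lem{lem:Xi}(i)) together with the closure operations (b)--(d) produce precisely $B(m_1|n)^{(1)}$ and not a larger or twisted system such as $D(m_1+1|n)^{(2)}$. The parity of $p$ is what excludes the twisted alternative here (the condition $\frac{q}{2}\mid s$ for square-length-$1$ roots collapses to $q\mid s$ when $q$ is odd), so I would need to argue carefully, as in the end of $\S\ref{endproofarak}$, that $R_{\lambda+\rho}$ and its closure both land in the non-twisted copy. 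The computation $\psi(\delta'')=q\delta$ then follows formally, and the typicality dichotomy $m_1=0$ is read off from whether $R_{\lambda+\rho}$ contains an isotropic root.
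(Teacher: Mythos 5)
There is a genuine gap in your step three, and it propagates into step four. You set $m_1:=|I_{\vareps}|$ where $I_{\vareps}=\{i:\ \vareps_i\in\cl(R_{\lambda+\rho})\}$ and claim that this counts the atypical directions, so that typicality is equivalent to $m_1=0$. But $\vareps_i\in\cl(R_{\lambda+\rho})$ is merely an integrality condition (some $s\delta+\vareps_i$ lies in $\ol{R}_{\lambda+\rho}$, i.e. $2(\lambda+\rho,\vareps_i)\in\frac{1}{q}\mathbb{Z}$), whereas atypicality of the $\vareps_i$-direction requires an isotropic root $s\delta\pm\vareps_i\pm\delta_j$ to be \emph{orthogonal} to $\lambda+\rho$, i.e. $q\bigl((\lambda+\rho,\vareps_i)\pm(\lambda+\rho,\delta_j)\bigr)\in p\mathbb{Z}$ --- a strictly stronger condition. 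Concretely, for $\fg=B(1|1)^{(1)}$, $q=1$, $p=3$, $(\lambda+\rho,\delta_1)=\frac12$, $(\lambda+\rho,\vareps_1)=\frac32$: the hypothesis of the lemma holds, $\lambda$ is typical, yet all roots $s\delta\pm\vareps_1$ lie in $\ol{R}_{\lambda+\rho}$, so $1\in I_{\vareps}$. In such a situation the $\vareps_i$-roots do lie in $\Delta_{\lambda+\rho}$, but, being orthogonal to all $\delta_j$-roots and not linked to them by any isotropic root, they form a \emph{separate} indecomposable component of $\Delta_{\lambda+\rho}$ and are not part of $\Delta''$; here $\fg''\cong B(0|1)^{(1)}$, while your recipe outputs $B(1|1)^{(1)}$. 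The correct $m_1$ counts the $\vareps$-directions attached to the $\delta$-part through isotropic roots of $\ol{R}_{\lambda+\rho}$, and the typicality dichotomy is exactly the presence or absence of such isotropic roots.

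Second, your step four --- the structural identification of $\fg''$ --- is where the paper does the real work, and your proposed mechanism does not suffice. The paper's proof sets $X:=\{\alpha\in R_{\lambda+\rho}:\ (\alpha,\alpha)>0\}$, observes that $X$ is a real root subsystem of the anisotropic system $\{\alpha\in\Delta:\ (\alpha,\alpha)>0\}\cong B(0|n)^{(1)}$, applies \Prop{prop:RR'an} to identify $X$ with $\Delta^{\ree}(\ft)$ for an anisotropic affine Kac--Moody superalgebra $\ft$, and then uses the classification of anisotropic affine superalgebras to reduce to $\ft\cong B(0|n)^{(1)}$ or $D(1|n+1)^{(2)}$; the twisted alternative is excluded not by a divisibility collapse but by exhibiting (via the $p,q$-odd parity trick that you do have) a root $\gamma=(s+q)\delta+2\delta_1\in X$ with $\gamma/2\not\in X$ while $\cl(\gamma)/2\in\cl(X)$, which no root of $D(1|n+1)^{(2)}$ satisfies. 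Your divisibility computation can indeed recover the period-$q$ string structure in the $\delta_j$-directions (and this part of your argument, giving $\psi(\delta'')=q\delta$, is sound and matches the paper), but in the atypical case one must still determine which affine superalgebras \emph{with} isotropic roots have anisotropic part $B(0|n)^{(1)}$ --- the paper settles this with the table of Appendix~\ref{dotgclg} --- and your sketch says nothing about excluding, say, $A(2m_1|2n-1)^{(2)}$ or $A(2m_1|2n)^{(4)}$. Finally, a small but consequential slip: in the normalization $(\delta_1,\delta_1)=1$ used here one has $(\vareps_i,\vareps_i)=-1$, so the roots $s\delta\pm\vareps_i$ have square length $-1$, not $+1$ (the square-length-one roots are the odd roots $s\delta\pm\delta_j$); \Lem{lem:Xi}(i) still applies to the even roots $s\delta\pm\vareps_i$, but your bookkeeping of ``short'' roots must be redone with the correct signs.
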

\begin{proof}

We will use  the normalization $(\delta_1,\delta_1)=1$.
For this normalization
$(k\Lambda_0+\rho,\delta)=\frac{p}{q}$ 
and all roots are of integral square length, so
$2(\lambda+\rho,\gamma)$ is integral for all  $\gamma\in\Delta_{\lambda+\rho}$.

As in the proof of~\Lem{lem:Bngeqm0}
we will use the following observation: 
$(s\delta+2\delta_i)\in  R_{\lambda+\rho}$ if and only if
$(\lambda+\rho,s\delta+2\delta_i)$
is odd if $s$ is even, and $(\lambda+\rho,s\delta+2\delta_i)$
is even if $s$ is odd.
The assumption that 
$R_{\lambda+\rho}\cap\Delta^{\#}$ is isometric to $(\Delta^{\#})^{\ree}$
implies that $(s\delta+2\delta_1)\in  R_{\lambda+\rho}$ for some $s$. 
Since $p$ is odd, the above observation implies
 $((s+q)\delta+2\delta_1)\in  R_{\lambda+\rho}$.
Since $q$ is odd, one of the numbers $s$ and $s+q$ is even and another is odd.
Without loss of generality we assume that $s$ is even.
Then  $(s\delta+2\delta_1)\in  R_{\lambda+\rho}$
implies $\beta\in {R}_{\lambda+\rho}$ for $\beta:=\frac{s}{2}\delta+\delta_1$.
Moreover, $R_{\lambda+\rho}$ contains $\gamma=((s+q)\delta+2\delta_1)$
such that $\gamma/2\not\in R_{\lambda+\rho}$
(since $s+q$ is odd) and
 $\cl(\gamma)/2=\cl(\beta)\in \cl(R_{\lambda+\rho})$.

Set $X:=\{\alpha\in R_{\lambda+\rho}|\ (\alpha,\alpha)>0\}$.
Notice that 
  $\beta, \gamma\in X$.

Since $R_{\lambda+\rho}\cap \Delta^{\#}$ is isometric to $(\Delta^{\#})^{\ree}$,
$\cl(X)$ contains $\dot{\Delta}^{\#}$.
 Observe that $X$ is a real root subsystem 
(this means that
$s_{\alpha}\gamma\in X$ for all $\alpha,\gamma\in X$) in the root system
$\{\alpha\in \Delta|\ (\alpha,\alpha)>0\}$ which is of type $B(0|n)^{(1)}$.
By~\Prop{prop:RR'an}, $X$ is isomertic to the set of real roots 
of an  anisotropic affine Kac-Moody superalgebra $\ft$. By above,
 $X$ contains an odd non-isotropic root $\beta$ and $\cl(X)$ contains $\dot{\Delta}^{\#}$
 (of type $C_n$).
Using
the classification in~\cite{K78}, we conclude that 
$\ft$ is of type $B(0|n)^{(1)}$ or $D(1|n+1)^{(2)}$.
By above,  $X$
contains  $\gamma$
 such that $\cl(\gamma)/2\in \cl(X)$ and $\gamma/2\not\in X$.
The root system of $D(1|n+1)^{(2)}$ does not contain a root with these properties, so
 $\ft$ is of type $B(0|n)^{(1)}$.

One has $(\beta+j\delta)\in R_{\lambda+\rho}$
if and only if $j$ is divisible by $q$. Thus
$X+j\delta=X$ if and only if $j$ is divisible by $q$. This gives
$\psi(\delta'')=q\delta$. 

If $\lambda$ is typical, then
$\Delta''=X$ by~(\ref{eq:99}), so $\fg''=\ft=B(0|n)^{(1)}$ as required.
If $\lambda$ is atypical, then 
$\Delta''$ is a root system which contains isotropic roots and 
odd non-isotropic root; moreover,
$\{\alpha\in \Delta''|\ (\alpha,\alpha)>0\}=X$ is of type $B(0|n)^{(1)}$.
Using the table in Appendix~\ref{dotgclg} we obtain
that $\fg''\cong B(m_1|n)^{(1)}$. Since $\cl(\Delta'')$
is a root subsystem of $B(m|n)$ one has  $m_1\leq m$
as required.
\end{proof}

\section{Boundary admissible levels}\label{boundary}
Let $\fg$ be an indecomposable non-twisted affine Kac-Moody superalgebra,
$\fg\not=D(2|1,a)^{(1)}$. 
Retain notation of~$\S$~\ref{notataff}.

Let $k$ be an admissible level. Introduce 
$k'$  as in~\Lem{lem:Bmn1Dmn2etc} (iv). We call $k$ {\em boundary admissible level}
if $k'=0$, see~\cite{KWboundary}.

\subsection{}
\begin{lem}{lem:boundary}
\begin{enumerate}
\item
The boundary admissible levels are  
\begin{itemize}
\item[$\cdot$] principal: $k+h^{\vee}=\frac{h^{\vee}}{q}$, where 
$q$ is a positive integer, $\gcd(q,r^{\vee})=\gcd(q,h^{\vee})=1$
(if $h^{\vee}$ is not integral (i.e.,  $\fg=B(m|n)^{(1)}$ with $n\geq m$
and $h^{\vee}=n-m+\frac{1}{2}$), then $\gcd(q,h^{\vee})=1$ stands for $\gcd (2q, 2h^{\vee})=1$);
\item[$\cdot$] subprincipal: this happens only for $\fg=B(0|n)^{(1)}$; in this case
$k+h^{\vee}=\frac{2n-1}{2q}$, where $q$ is an even positive integer which is coprime with $2n-1$
(and $h^{\vee}=n+\frac{1}{2}$).
\end{itemize}
\item
If $h^{\vee}=0$, then $\fg$
 does not admit boundary admissible levels.
 \end{enumerate}
\end{lem}
\begin{proof}
Let us  verify that boundary  subprincipal admissible levels
exist only for $\fg=B(0|n)^{(1)}$. Let $k$ be a boundary subprincipal admissible  level.
By~$\S$~\ref{subprince}, $\fg$ is either a 
non-twisted affine Lie algebra or $B(m|n)^{(1)}$ with $m>n$, or  $m=n=1$, or $m=0$.

Since $k$ is boundary admissible, $k'=0$. If $\fg=B(m|n)^{(1)}$ with $m>n$ or $m=n=1$,
this is impossible  by~$\S$~\ref{subprince}. 
If $\fg$ is a non-twisted affine Lie algebra, 
then $q$ is divisible by $r^{\vee}>1$ (since $k$ is subprincipal)
and $k'=0$ means that 
$p$  is equal to the Coxeter number of $\fg$. By the table in~$\S$~\ref{commentintro},
 $p$   is divisible by
$r^{\vee}>1$, so $p$ and $q$ 
are not coprime, a contradiction.  Hence $\fg=B(0|n)^{(1)}$.
 By~$\S$~\ref{subprince}, 
if $k$ is subprincipal and 
$k'=0$, then $k+h^{\vee}=\frac{2n-1}{2q}$
and $(2n-1)q$ is even, so $q$ is even. 
This establishes (i).

For (ii) assume that $h^{\vee}=0$. Then $\fg\not=B(0|n)^{(1)}$, so by (i),
if $k$ is boundary admissible, then $k=0$, which means that  $k$ is a critical level.
By definition, critical level is not admissible.
\end{proof}

\subsection{}
\begin{prop}{prop:boundary}
Let $k$ be a  boundary admissible level.
\begin{enumerate}
\item If $L(\lambda)$ is a $V_k(\fg)$-module, then 
$\Delta_{\lambda+\rho}$ is isometric to 
$\Delta_{k\Lambda_0+\rho}$, $\Delta_{\lambda+\rho}+q\delta= \Delta_{\lambda+\rho}$,
and $\dim L_{\fg'}(\lambda')=1$ (see~$\S$~\ref{glambda} for notation).

\item The number of isomorphism classes
of irreducible $V_k(\fg)$-modules in $\CO$  is finite and any
$V_k(\fg)$-module $N\in\CO^{\inf}(\fg)^k$ 
is completely reducible.
\end{enumerate}
\end{prop}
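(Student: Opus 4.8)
The plan is to control, for an arbitrary irreducible $V_k(\fg)$-module $L(\lambda)$, the attached $[\fg',\fg']$-module $L'$ of~$\S$~\ref{glambda}, to show that a boundary level forces $L'$ to be trivial, and then to deduce the category-theoretic statements in (2) from~\Thm{thm:arak} and Arakawa's theorem for the affine Lie algebra $\fg^{\#}$. First I would prove (1). Let $L(\lambda)$ be a $V_k(\fg)$-module. By~\Thm{thm:cor:arak} the weight $\lambda$ is admissible and $\Delta_{\lambda+\rho}\cap\Delta^{\#}$ is isometric to $\Delta_{k\Lambda_0+\rho}\cap\Delta^{\#}$ (the converse of that theorem is unconditional in the relevant cases, since by~$\S$~\ref{subprince} a boundary subprincipal level occurs only for $\fg=B(0|n)^{(1)}$). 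Retaining the notation of~$\S$~\ref{fg''}, I would identify the component $\fg''$ of $\fg'$ carrying $\psi^{-1}(\Delta_{\lambda+\rho}\cap\Delta^{\#})$ and compute the level of its factor of $L'$: by~\Prop{prop:DeltaL}~(iii) (and~(\ref{eq:B0ndelta'}) for $B(0|n)^{(1)}$) one has $\psi(\delta'')=q\delta$, so this level equals $q(k+h^{\vee})-(h'')^{\vee}$.

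The boundary hypothesis $k'=0$ means precisely that $q(k+h^{\vee})$ equals the dual Coxeter number of the vacuum's $\fg'$, namely $h^{\vee}$ in the principal case and $n-\tfrac12$ for $B(0|n)^{(1)}$. In either case~\Cor{cor:Delta''} (resp.\ the analysis of~\Cor{cor:VkforB0n}) gives $(h'')^{\vee}\geq q(k+h^{\vee})$, so the $\fg''$-level is $\leq 0$; on the other hand $L'$ is $(\pi^{\#})'$-integrable by~\Thm{thm:criterion}, whence its $\fg''$-factor is integrable over the even affine part and the level is $\geq 0$. Thus the level is $0$, $(h'')^{\vee}=q(k+h^{\vee})$, and~\Cor{cor:Delta''} forces $\fg'=\fg''$ to be a single affine component, isometric to $\fg$ (resp.\ $A(0|2n-1)^{(2)}$), with $k'=0$. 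Statement (a) is then $\Delta_{\lambda+\rho}=\psi\bigl((\Delta')^{\ree}\bigr)\cong\Delta_{k\Lambda_0+\rho}$, and (b) follows from $\psi(\delta')=q\delta$ and the $\delta'$-periodicity of the real roots of the affine superalgebra $\fg'$.

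It remains to prove (c), $\dim L'=1$, which is the heart of the matter. Since $\dim L'=1$ is equivalent to $L'$ being trivial, i.e.\ to $\lambda+\rho$ being the Weyl vector $\rho_{\Delta_{\lambda+\rho}}$ of the integral root system, I would expand the level-$0$ identity $(\lambda',\delta')=0$ along $\delta'=\sum_{\gamma'\in\Sigma'}a_{\gamma'}\gamma'$ with positive marks; combined with $(\pi^{\#})'$-integrability (which gives $\langle\lambda+\rho,\psi(\gamma')^{\vee}\rangle\in\mathbb{Z}_{>0}$ for the positive-length even $\gamma'$) this pins those coroot values to their minimal value $1$, and passing through~\Thm{thm:arak} and the boundary theory of the affine Lie algebra $\fg^{\#}$ forces the even data of $L'$ to be that of the trivial module. \textbf{The main obstacle} is to control $\lambda+\rho$ on the \emph{isotropic} and the negative-length even simple roots of $\Delta_{\lambda+\rho}$, which lie outside $\fg^{\#}$; for this I would exploit the odd-reflection invariance of $L'$ (\S~\ref{345}) to pass to a base of the spine in which these directions are governed by the already-settled even data, reducing the comparison to the vacuum $L(k\Lambda_0)$, whose $L'$ is trivial by~\Lem{lem:Bmn1Dmn2etc}~(iv) together with $k'=0$.

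For (2), finiteness follows from (1): an irreducible $V_k(\fg)$-module $L(\lambda)$ is determined by the $q\delta$-periodic subsystem $\Delta_{\lambda+\rho}\subset\Delta^{\ree}$ isometric to $\Delta_{k\Lambda_0+\rho}$ and by the normalization $\lambda+\rho=\rho_{\Delta_{\lambda+\rho}}$; as the level $(\lambda,\delta)=k$ is fixed and only finitely many such isometric subsystems occur, there are finitely many $\lambda$. For complete reducibility of $N\in\CO^{\inf}(\fg)^k$, I would first invoke~\Thm{thm:arak} to see that $M:=\Res^{\fg}_{\fg^{\#}}N$ is a $V_k(\fg^{\#})$-module, hence completely reducible over $\fg^{\#}$ by Arakawa's theorem (resp.~\cite{GSsnow}, Corollary 5.4.1), so every extension of irreducible $V_k(\fg)$-modules splits over $\fg^{\#}$. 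It then suffices to show $\Ext^1\bigl(L(\lambda),L(\mu)\bigr)=0$: if $\lambda\not\approx\mu$ the extension splits by the block decomposition of~$\S$~\ref{Olambda}, and if $\lambda\approx\mu$ with $\lambda\neq\mu$ it would produce a proper quasi-admissible subquotient of a Verma module with highest weight $\neq\lambda$, contradicting the analog of~\Thm{thm:snow}~(iv) made available through~\Thm{thm:criterionN}. Hence every such $N$ is completely reducible.
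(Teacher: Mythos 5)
Your treatment of (1)(a),(b) and of the finiteness claim in (2) is essentially the paper's own argument: \Thm{thm:cor:arak} plus \Thm{thm:criterion} give $\pi'$-integrability of $L'$ and the isometry on $\Delta^{\#}$; \Prop{prop:DeltaL} (resp.\ the $B(0|n)^{(1)}$ analysis) gives $\psi(\delta'')=q\delta$; the boundary hypothesis squeezed between the dual Coxeter inequality of \Cor{cor:Delta''} (resp.\ \Cor{cor:VkforB0n}) and integrability forces $\fg'=\fg''$ and level zero; and the counting of isomorphism classes via $q\delta$-periodic subsets of $\Delta^{\ree}$ is also the paper's. The genuine gaps are exactly the two steps you leave as sketches.

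First, the claim $\dim L'=1$. You correctly reduce it to: a $\pi'$-integrable highest weight module of level $0$ over the affine Kac--Moody superalgebra $\fg'$ is one-dimensional, and you correctly identify the obstacle (the isotropic and negative-square-length simple roots, which even-part integrability does not see). But your proposed resolution --- ``pass to a base of the spine \dots reducing the comparison to the vacuum $L(k\Lambda_0)$'' --- is not an argument: there is no reduction to the vacuum, since $L(\lambda)$ need not be $L(k\Lambda_0)$ (the point of (2) is that there may be several classes); what is true is only that both have one-dimensional $L'$, which is what you are trying to prove. The paper closes this gap with \Lem{lem:zerolevel}: for $\alpha\in\Sigma'$ with $(\alpha,\alpha)\le 0$ one invokes the nice-pair property of \S~\ref{pairs} to find an isotropic $\beta\in\Sigma'$ with $(\alpha+\beta,\alpha+\beta)>0$, notes $\alpha+\beta\in r_{\beta}\Sigma'$, and compares highest weights before and after the odd reflection via~(\ref{eq:hwt}): if $(\lambda',\beta)\neq 0$ the two highest weights differ by $\beta$, forcing $(\beta,\alpha+\beta)=0$, a contradiction; hence $(\lambda',\beta)=0$ and then $(\lambda',\alpha)=0$. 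This concrete computation (which rests on the existence of nice pairs, Appendix~\ref{affinebases}) is what your sketch is missing.

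Second, complete reducibility in (2). Your $\Ext^1$-vanishing scheme appeals, in the case $\lambda\approx\mu$, $\lambda\neq\mu$, to ``the analog of \Thm{thm:snow}(iv) made available through \Thm{thm:criterionN}''. No such analog is available: the paper states explicitly in \S~\ref{g>0} (right after Definition~\ref{defn:Snowgen}) that the analogues of \Thm{thm:snow} (iii)--(v) \emph{fail} for atypical quasi-admissible modules, and \Thm{thm:criterionN} says nothing about subquotients of Verma modules; the only uniqueness statement of this kind in the paper, \Prop{prop:Vksubq}(ii), is proved for the vacuum module only, by a separate argument. (Moreover, a nonsplit extension of $L(\lambda)$ by $L(\mu)$ need not be a highest weight module, and you do not treat self-extensions.) The paper avoids all of this by using (1) as a rigidity statement: for an indecomposable $V_k(\fg)$-module $N\in\CO_{\Sigma}(\fg)^k$, \Cor{cor:Olambda} gives $\Delta_{\lambda+\rho}=\Delta_{\nu+\rho}$ for any two irreducible subquotients $L(\lambda),L(\nu)$; by (1), $(\lambda+\rho,\alpha)=(\rho',\psi^{-1}(\alpha))=(\nu+\rho,\alpha)$ for all $\alpha$ in this common integral root system, whose span contains $\Delta$, so $\lambda-\nu\in\mathbb{C}\delta$; the Casimir eigenvalue then forces $\lambda=\nu$ (as $(\lambda+\rho,\delta)\neq 0$), so $N=L(\lambda)$ is already irreducible, and the $\CO^{\inf}(\fg)^k$ case follows by passing to cyclic submodules. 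You should replace your $\Ext^1$ scheme by this argument, which your own part (1) already makes available.
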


For the case $\fsl(2|1)^{(1)}$ the proposition
was established recently in~\cite{Qing}.

\begin{proof}
Retain notation of~$\S$~\ref{gnu}.

For (i) let $L(\lambda)$ be a $V_k(\fg)$-module. 

We start from the case when $k$ is boundary principal admissible.
Combining Theorems~\ref{thm:cor:arak} and~\ref{thm:criterion} we conclude that $L_{\fg'}(\lambda')$ is $\pi'$-integrable  for 
$$\pi':=\{\alpha'\in\Sigma'_{\pr}|\ (\alpha,\alpha)>0\}$$ and
$$R_{\lambda+\rho}\cap\Delta^{\#}=\Delta_{\lambda+\rho}\cap\Delta^{\#}\ \text{
is isometric to }\ 
(\Delta_{k\Lambda_0+\rho}\cap\Delta^{\#}).$$

Retain notation of Section~\ref{Delta''}. 
Notice that $\pi'\subset \Delta''$.
Therefore $L_{\fg''}(\lambda')$ is $\pi'$-integrable.
By~\Prop{prop:DeltaL} the minimal imaginary root of $\Delta''$
is $\delta''=\psi^{-1}(q\delta)$. Therefore
$$(\lambda'+\rho',\delta'')=(\lambda+\rho,q\delta)=h^{\vee}.$$
Then $(\lambda',\delta'')=h^{\vee}-h^{\vee,''}$
where $h^{\vee,''}$ is the dual Coxeter number of $\Delta''$.
Since $L_{\fg''}(\lambda')$ is $\pi'$-integrable, $h^{\vee}-h^{\vee,''}\geq 0$.
In the light of~\Cor{cor:Delta''} we obtain $h^{\vee}=h^{\vee,''}$ 
and $\Delta'=\Delta''$ is isometric to $\Delta$. 
Therefore $L_{\fg'}(\lambda')$ is $\pi'$-integrable
module of level zero. By~\Lem{lem:zerolevel} below, $\dim L_{\fg'}(\lambda')=1$.
Finally, the formula 
 $\psi(\delta'')=q\delta$ implies $\Delta_{\lambda+\rho}+q\delta= \Delta_{\lambda+\rho}$.

Now consider the case when  $k$
is a boundary subprincipal admissible  level. 
In this case $\fg=B(0|n)^{(1)}$ and
$k+n+\frac{1}{2}=\frac{2n-1}{2q}$ where $q$ is even.  
Using~\Cor{cor:VkforB0n} (ii) we obtain the following.
If $\fg'\cong C_1^{(1)}$, then $n=1$  and 
$\lambda'$ has the central level $-1$, if $\fg'\cong A_{2n-1}^{(2)}$
for $n>1$, then 
$\lambda'$ has the central level $\frac{2n-1}{2}-n=-\frac{1}{2}$,
and,  if $\fg'\cong A(0|2n-1)^{(2)}$ for $n>1$, then 
$\lambda'$ has the central level $0$.
The first two  cases are impossible  since 
$L_{\fg'}(\lambda')$ is integrable, so the central level should be positive.
Therefore $\fg'\cong A(0|2n-1)^{(2)}$
and $\lambda'$ has the zero central level. 
Since $L_{\fg'}(\lambda')$ is integrable, $\dim L_{\fg'}(\lambda')=1$,
by~\Lem{lem:zerolevel} below. Since $\fg'\cong A(0|2n-1)^{(2)}$ is indecomposable, $\fg''=\fg'$
and $\psi(\delta'')=q\delta$ by~(\ref{eq:B0ndelta'}). This 
 implies $\Delta_{\lambda+\rho}+q\delta= \Delta_{\lambda+\rho}$ and completes the proof of 
 (i).

For (ii)  let $L(\lambda)$, $L(\nu)$ be  $V_k(\fg)$-modules.
Assume that $\Delta_{\lambda+\rho}=\Delta_{\nu+\rho}$.
By (i)
we have 
$$(\lambda+\rho,\alpha)=(\lambda'+\rho', \psi^{-1}(\alpha))=
= (\rho',\psi^{-1}(\alpha'))$$
 for all $\alpha\in\Delta_{\lambda+\rho}$. Similarly,
 $(\nu+\rho,\alpha)=(\rho',\psi^{-1}(\alpha'))$, so
$(\lambda-\nu,\alpha)=0$ for all $\alpha\in\Delta_{\lambda+\rho}$.
Since $\Delta_{\lambda+\rho}$ is isometric to $\Delta^{\ree}$, 
 the span of $\Delta_{\lambda+\rho}$ contains $\Delta$. Hence
$(\lambda-\nu)\in\mathbb{C}\delta$, so $L(\lambda)$
and $L(\nu)$ are isomorphic as  $V_k(\fg)$-modules.

Thus the number of isomorphism classes
of irreducible $V_k(\fg)$-modules in $\CO$  is not greater than the number
of the subsets $X\subset\Delta^{\ree}$
satisfying $X+q\delta=X$. Each subset of this form corresponds
to a subset of the  set  $\Delta^{\ree}/\mathbb{Z}q\delta$
which is finite. Hence  the number of isomorphism classes
of irreducible $V_k(\fg)$-modules in $\CO$  is finite.

Now let $N$ be an indecomposable $V_k(\fg)$-module in the 
category $\CO_{\Sigma}(\fg)^k$.
Set 
$$Y:=\{\lambda|\ [N:L(\lambda)]\not=0\}.$$ 
Take $\lambda,\nu\in Y$.
By~\Cor{cor:Olambda}
we have $\Delta_{\lambda+\rho}=\Delta_{\nu+\rho}$.
By above, this gives $(\lambda-\nu)\in\mathbb{C}\delta$.
Using the action of
the Casimir operator 
we obtain $(\lambda+\rho,\lambda+\rho)=(\nu+\rho,\nu+\rho)$
which forces $\lambda=\nu$ (since $(\lambda+\rho,\delta)\not=0$).
Hence $Y=\{\lambda\}$.
Since $N\in\CO_{\Sigma}(\fg)$, $\fh$ acts diagonally, so
$N=L(\lambda)$. Now let $N$ be a $V_k(\fg)$-module in the category 
$\CO^{\inf}(\fg)^k$. A non-zero vector $v\in N$ 
generates a submodule which lie in $\CO_{\Sigma}(\fg)^k$;
by above, this submodule is completely reducible.
Hence $N$ is a sum of irreducible modules, so, by~\cite{Lang}, Chapter XVII,
$N$ is completely reducible. This completes the proof for the case when $k$
is a boundary principal admissible level.
\end{proof}

\subsection{}
\begin{lem}{lem:zerolevel}
Let $\fg$ be an affine Kac-Moody superalgebra and  let $L(\lambda)$ be a $\pi^{\#}$-integrable
module with
 $(\lambda,\delta)=0$. Then $\lambda\in\mathbb{C}\delta$ and $\dim L(\lambda)=1$.
\end{lem}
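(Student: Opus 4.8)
The plan is to show that $\pi^\#$-integrability at level zero forces $\lambda$ to be orthogonal to all of $\Delta^{\ree}$, whence $\lambda \in \mathbb{C}\delta$; the triviality of $L(\lambda)$ then follows because $\fg_{\alpha}$ acts locally nilpotently for every real $\alpha$ and the highest weight is ``$\fg_{\alpha}$-dominant'' with trivial eigenvalues.

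First I would unwind what $\pi^\#$-integrability gives us. Recall $\pi^\# = \{\alpha \in \Sigma_{\pr} \mid (\alpha,\alpha)>0\}$ and $\Delta^\# = \Delta_{\pi^\#} = \{\alpha \in \Delta_{\ol 0}\mid (\alpha,\alpha)>0\}$. Since $L(\lambda)$ is $\pi^\#$-integrable, $\fg_{\pm\alpha}$ acts locally nilpotently on $L(\lambda)$ for each $\alpha \in \pi^\#$, and by~\Lem{lem:intmod} the same holds for all $\alpha \in W[\pi^\#](\pi^\#) = \Delta^\#$. For each such $\alpha$ the local nilpotence of $\fg_{-\alpha}$ on the highest weight vector gives $\langle \lambda, \alpha^\vee\rangle = \tfrac{2(\lambda,\alpha)}{(\alpha,\alpha)} \in \mathbb{Z}_{\geq 0}$; applying this to $-\alpha \in \Delta^\#$ as well forces $(\lambda,\alpha)=0$ for all $\alpha \in \Delta^\#$. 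The key point is then that the level-zero hypothesis $(\lambda,\delta)=0$, combined with $(\lambda,\Delta^\#)=0$, should already pin down $\lambda$ up to a multiple of $\delta$.

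The step where the real work happens is showing $(\lambda, \Delta^{\ree}) = 0$, i.e. upgrading orthogonality from $\Delta^\#$ to all real roots. Here I would use that $\fg$ is affine: every even real root has the form $\alpha + i\delta$ with $\alpha \in \dot\Delta_{\ol 0}$, and $\Delta^\#$ together with $\delta$ spans the same rational space as the even real roots modulo $\mathbb{Q}\delta$. Concretely, since $\mathbb{Q}\Delta^\# + \mathbb{Q}\delta \supseteq \mathbb{Q}\Delta^{\ree}_{\ol 0}$ (the affinization of the largest even component fills out all of $\cl(\Delta^{\ree}_{\ol 0})$ up to the anisotropic directions, which are handled by the negative-definiteness argument as in~$\S$\ref{clDelta''}), orthogonality to $\Delta^\#$ and to $\delta$ yields $(\lambda,\gamma)=0$ for every even real root $\gamma$. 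For odd real roots one uses that $2\beta$ or a sum involving $\beta$ lands among the even roots, or invokes that $\alpha^\vee$ for odd $\alpha$ is a multiple of $h_\alpha = \nu(\alpha)$ and $(\lambda,\alpha)$ is determined by the even data via~(\ref{eq:alphaveebeta}). Once $(\lambda,\Delta^{\ree})=0$, the characterization of $\delta$ in~\Rem{rem:delta}, namely $\mathbb{Z}\delta = \{\nu \in \mathbb{Z}\Delta \mid (\nu,\Delta)=0\}$ for type (Aff), forces $\lambda \in \mathbb{C}\delta$.

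Finally, to conclude $\dim L(\lambda)=1$: with $\lambda \in \mathbb{C}\delta$ we have $\langle\lambda,\alpha^\vee\rangle = 0$ for every $\alpha \in \Delta^{\ree}$ (since $(\delta,\alpha)=0$), so both $\fg_\alpha$ and $\fg_{-\alpha}$ act locally nilpotently with the highest weight vector $v_\lambda$ annihilated by each $\fg_\alpha$, and $e_\alpha f_\alpha v_\lambda = \langle\lambda,\alpha^\vee\rangle v_\lambda = 0$ shows $f_\alpha v_\lambda$ generates nothing new. Thus $v_\lambda$ is fixed by the subalgebra generated by all $\fg_{\pm\alpha}$, $\alpha \in \Sigma$, which together with $\fh$ generates $[\fg,\fg]$; since $L(\lambda)$ is irreducible and $\fh$ acts by scalars on $v_\lambda$, we get $L(\lambda) = \mathbb{C}v_\lambda$. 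The main obstacle I anticipate is the even-real-root spanning step: making precise that orthogonality to $\Delta^\#$ plus $\delta$ propagates to all of $\Delta^{\ree}$, which requires care in the anisotropic directions of $\dot\Delta_{\ol 0}$ (where the form is negative definite) and in handling odd roots whose coroots pair through the even ones.
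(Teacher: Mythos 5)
Your proposal has a genuine gap, and it sits exactly where you anticipate it: the propagation of orthogonality beyond $\Delta^{\#}$. Before that, a smaller repairable flaw: integrability does \emph{not} give $\langle\lambda,\alpha^{\vee}\rangle\geq 0$ for \emph{all} $\alpha\in\Delta^{\#}$ ("applying this to $-\alpha$" is not legitimate, since the dominance argument uses that $v_{\lambda}$ is a \emph{highest} weight vector and therefore only applies to positive roots). What is true is $\langle\lambda,\gamma^{\vee}\rangle\geq 0$ for $\gamma\in(\Delta^{\#})^{+}$; to get $(\lambda,\Delta^{\#})=0$ you must already use level zero here, via the affine structure of $\Delta^{\#}$: both $\gamma$ and $n\delta^{\#}-\gamma$ are positive roots of $\Delta^{\#}$, and $(\lambda,\delta)=0$ makes their dominance conditions opposite in sign. (The paper gets the same conclusion by invoking the assertion for the affine Kac--Moody algebra $\fg^{\#}$.) This part can be fixed.

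The fatal problem is the second step. Your spanning claim $\mathbb{Q}\Delta^{\#}+\mathbb{Q}\delta\supseteq\mathbb{Q}\Delta^{\ree}_{\ol{0}}$ is false: for $\fg=\fsl(m|n)^{(1)}$ the even roots $\delta_i-\delta_j$ (negative square length) do not lie in the span of $\Delta^{\#}$ and $\delta$. Worse, the isotropic roots do not lie in $\mathbb{Q}\Delta_{\ol{0}}+\mathbb{Q}\delta$ at all, so \emph{no} linear-algebra argument can recover $(\lambda,\beta)$ for isotropic $\beta$ from the even data — the statement "$(\lambda,\alpha)$ is determined by the even data via~(\ref{eq:alphaveebeta})" has no content in these directions, and integrability supplies no constraint there either, since there is no $\fsl_2$ or $\osp(1|2)$ attached to an isotropic root. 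The negative-definiteness argument of~$\S$~\ref{clDelta''} is a statement about the form on the root space, not about $\lambda\in\fh^{*}$, and does not help. The paper's proof uses a mechanism entirely absent from your proposal: \emph{odd reflections} combined with the nice pairs $(\Sigma,S)$ of Appendix~\ref{affinebases}. For a simple root $\alpha$ with $(\alpha,\alpha)\leq 0$ one chooses an isotropic simple $\beta$ with $(\alpha+\beta,\alpha+\beta)>0$; then $\alpha+\beta\in r_{\beta}\Sigma$ has positive square length, so step one (applied to both bases $\Sigma$ and $r_{\beta}\Sigma$) gives $(\lambda,\alpha+\beta)=(\lambda_1,\alpha+\beta)=0$, where $\lambda_1$ is the highest weight of the \emph{same} module $L(\lambda)$ with respect to $r_{\beta}\Sigma$. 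Since $\lambda-\lambda_1\in\{0,\beta\}$ by~(\ref{eq:hwt}) and $(\beta,\alpha+\beta)=(\beta,\alpha)>0$, the case $\lambda-\lambda_1=\beta$ is impossible; hence $(\lambda,\beta)=0$ and then $(\lambda,\alpha)=0$. It is this interplay between the module structure and the change of Borel — not the bilinear form — that pins down the isotropic directions, and without it the lemma cannot be proved along the lines you sketch. Your final step (from $\lambda\in\mathbb{C}\delta$ to $\dim L(\lambda)=1$) is fine.
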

\begin{proof}
We will check that $(\lambda,\alpha)=0$ for all $\alpha\in\Sigma$.
If $\fg$ is anisotropic, the assertion immediately follows from the fact that
$\delta$ is a positive linear combination of simple roots. 

For the case
when $\fg$ is not anisotropic, 
take $\Sigma$ as in Appendix~\ref{listgoodbases}. Since the assertion holds for 
$\fg^{\#}$ we have $(\lambda,\alpha)=0$
for all $\alpha\in\Delta_{\ol{0}}$ such that $(\alpha,\alpha)>0$. Take $\alpha\in\Sigma$.
If $(\alpha,\alpha)>0$, then $(\lambda,\alpha)=0$ by above. Assume that
$(\alpha,\alpha)\leq 0$.
By~$\S$~\ref{pairs}  there exists $\beta\in\Sigma$
such that 
\begin{equation}\label{eq:zerolevel}
(\beta,\beta)=0<(\alpha+\beta,\alpha+\beta).
\end{equation}
Note that $\alpha+\beta$ lies in $r_{\beta}\Sigma$.
Therefore $(\lambda,\alpha+\beta)=0$ and $(\lambda_1,\alpha+\beta)=0$
where $\lambda_1$ is the highest weight 
of $L(\lambda)$ with respect to $r_{\beta}\Sigma$.
Thus $(\lambda-\lambda_1,\alpha+\beta)=0$.
If $(\lambda,\beta)\not=0$, then $\lambda-\lambda_1=\beta$, so
$(\beta,\alpha+\beta)=0$ which contradicts to~(\ref{eq:zerolevel})
(since $(\alpha,\alpha)\leq 0$). Thus $(\lambda,\beta)=0$, so
$(\lambda,\alpha)=0$ as required.
\end{proof}


\subsection{Character formula}
If $k$ is a boundary level, then $\ch L(k\Lambda_0)$
is given by a formula similar to formula (7) in~\cite{KW17}, see below.

\subsubsection{}
Let $k$ be a boundary admissible level. 
 We retain notation of~$\S$~\ref{6.2}, \ref{D'}.
Using~\cite{GKadm}, Theorem 11.3.1  for $\Sigma$ chosen in Appendix~\ref{listgoodbases} 
we can write the character formula for $L(k\Lambda_0)$
in the following way:
$$e^{-k\Lambda_0}D \ch L(k\Lambda_0)=\psi(D'),$$
where $D$ is the Weyl denominator for
$\Delta^+(\Sigma)$  (see~$\S$~\ref{Weyldenominator}),
$D'$ is  the Weyl denominator for $\Delta^{+}(\fg')$,
and  $\psi:\mathbb{Z}\Delta(\fg')\to\mathbb{Z}\Delta$
is as in~$\S$~\ref{KMsubsystem}, so that 
$$\psi(D')=\frac{\prod\limits_{\alpha\in \Delta^{+'}_{\ol{0}}} 
(1-e^{-\psi(\alpha)})^{\dim\fg_{\psi(\alpha)}}}
{\prod\limits_{\alpha\in\Delta^{+'}_{\ol{1}}} 
(1+e^{-\psi(\alpha)})^{\dim\fg_{\psi(\alpha)}}}.$$

Set $z:=e^{-\delta}$. Then
$$D=\dot{D}\prod\limits_{i=1}^{\infty} \bigl((1-z^i)^s
\prod\limits_{\alpha\in \dot{\Delta}_{\ol{0}}} (1-z^i e^{\alpha})
\prod\limits_{\alpha\in \dot{\Delta}_{\ol{1}}} (1+z^i e^{\alpha})^{-1}
\bigr),$$
where $\dot{D}$ is the Weyl denominator for $\dot{\Delta}$
and $s:=\dim\fg_{\delta}$ (i.e., $s=\dim\dot{\fh}$
for $\fg\not=A(n|n)^{(1)}$ and $s=2n$
for  $\fg=A(n|n)^{(1)}$).

\subsubsection{}
If $k$ is a principal boundary admissible level, i.e. $k+h^{\vee}=\frac{h^{\vee}}{q}$
with $\gcd(q,r^{\vee})=\gcd(q,h^{\vee})=1$, then
$$\psi(D')=\dot{D}\prod\limits_{i=1}^{\infty} \bigl( (1-z^i)^s
\prod\limits_{\alpha\in \dot{\Delta}_{\ol{0}}}  (1-z^{qi} e^{\alpha})
\prod\limits_{\alpha\in \dot{\Delta}_{\ol{1}}}  (1+z^{qi} e^{\alpha})^{-1}\bigr),
$$
which gives

$$e^{-k\Lambda_0} 
\ch L(k\Lambda_0)=\prod\limits_{m=1,\  q\, \nmid\, m}^{\infty} \bigl((1-z^m)^s
\prod\limits_{\alpha\in \dot{\Delta}_{\ol{0}}}  (1-z^{qm} e^{\alpha})
\prod\limits_{\alpha\in \dot{\Delta}_{\ol{1}}}  (1+z^{qm} e^{\alpha})^{-1}\bigr).
$$

\subsubsection{}
If $k$ is a subprincipal boundary admissible level, i.e. $\fg=B(0|n)^{(1)}$ and 
$k+n+\frac{1}{2}=\frac{2n-1}{2q}$,
where $q$ is even and $\gcd(q,2n-1)=1$, then
$$D=\dot{D}\prod\limits_{m=1}^{\infty} \bigl((1-z^m)^{n}
\prod\limits_{1\leq i\not=j\leq n}
(1-z^{m} e^{\pm \delta_i\pm\delta_j})(1-z^m e^{\pm 2\delta_i})
(1+z^m e^{\pm \delta_i})^{-1}\bigr).$$

By~$\S$~\ref{admB0n} we have $\fg'\cong A(0|2n-1)^{(2)}$
with $\psi(\delta')=q\delta$. This gives
$$\psi(D')=\dot{D}\prod\limits_{m=1}^{\infty} \bigl((1-z^{mq})^{n}
\prod\limits_{1\leq i\not=j\leq n}
(1-z^{qm} e^{\pm \delta_i\pm\delta_j})(1-z^{2qm} e^{\pm 2\delta_i})
(1+z^{qm} e^{\pm \delta_i})^{-1}\bigr).$$
Hence

$$\begin{array}{l}
e^{-k\Lambda_0} 
\ch L(k\Lambda_0)=\\
\prod\limits_{m=1,  \ q\, \nmid\, m }^{\infty} \bigl((1-z^m)^n
\prod\limits_{1\leq i\not=j\leq n}
(1-z^{m} e^{\pm \delta_i\pm\delta_j})
(1+z^{qm} e^{\pm \delta_i})^{-1}\bigr) \prod\limits_{i=1}^n
\prod\limits_{m=1,\  2q\,\nmid\, m}^{\infty} 
(1-z^{m} e^{\pm 2\delta_i}).
\end{array}$$

\section{Example: $\fsl(2|1)^{(1)}$}\label{sect:sl21}
In this case 
all admissible levels are principal admissible by~\Cor{cor:admnotBmn}. Moreover, the admissible
weights are principal admissible, see~\S~\ref{sl21pradm} below.
The admissible levels are $k=-1+\frac{p}{q}$  for 
coprime $p,q\in\mathbb{Z}_{>0}$ by~\Prop{prop:pradm}; the boundary admissible levels 
are $k=-1+\frac{1}{q}$.  In this section we prove the following proposition.

\subsection{}\begin{prop}{prop:sl21}
For an admissible level $k=-1+\frac{p}{q}$ the maximal proper submodule of $V^k$
is  generated 
by a singular vector of weight 
$k\Lambda_0-p(q\delta-\alpha)$, where 
$\alpha$ is the simple root of $\fsl_2$.
\end{prop}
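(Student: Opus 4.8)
The plan is to reduce the rational level $k=-1+\frac{p}{q}$ to an \emph{integer} level by means of the correspondence $L\mapsto L'$ of Section~\ref{algebrag'}, where the generating singular vector is completely explicit. Since $\fsl(2|1)^{(1)}=A(1|0)^{(1)}$ is not of type $B(m|n)^{(1)}$, Lemma~\ref{lem:Bmn1Dmn2etc}(i) gives $\fg'\cong\fg$, and for $L=L(k\Lambda_0)$ the module $L'$ is the simple vacuum module $L_{\fg'}(k'\Lambda'_0)$. By \Prop{prop:pradm} (recall $h^{\vee}=1$) the level is $k'=p-h^{\vee}=p-1\in\mathbb{Z}_{\geq 0}$, and $\psi(\delta')=q\delta$. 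Moreover, by \Lem{lem:Bmn1Dmn2etc}(iii) the map $\psi$ identifies $\dot{\Delta}'$ with $\dot{\Delta}$, so it carries the simple $\fsl_2$-root $\alpha'$ of $\fg'$ to the simple $\fsl_2$-root $\alpha$ of $\fg$; hence $\psi(\delta'-\alpha')=q\delta-\alpha$. Thus everything is set up so that the claimed weight $k\Lambda_0-p(q\delta-\alpha)$ is the $\psi$-image of a weight at the integer level $k'$.

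\textbf{The integer level.} First I would settle the statement for $V^{k'}(\fg')$ with $k'=p-1$, i.e.\ the case $q'=1$ discussed in~\Rem{rem:qing}. Here $(\fg')^{\#}=\fsl_2^{(1)}$, and for $V^{\#,k'}$ the maximal submodule is generated by the integrable singular vector $(f_{\delta'-\alpha'})^{k'+1}$ of weight $k'\Lambda'_0-(k'+1)(\delta'-\alpha')=k'\Lambda'_0-p(\delta'-\alpha')$ (note $k'+1=p$). By the argument of~\Rem{rem:qing} for $q=1$, the quotient of $V^{k'}$ by the submodule generated by this vector is $\fg'_{\pm(\delta'-\alpha')}$-integrable and, being a vacuum module, $\dot{\fg}'$-integrable, hence $(\fg')^{\#}$-integrable; by \Prop{prop:Vksubq}(i) (applicable since $k'\in\mathbb{Z}_{\geq 0}$ and $k'+h^{\vee}=p\neq 0$) this quotient is simple. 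Consequently the vector is in fact a genuine $\fg'$-singular vector, and together with \Prop{prop:VkVk} it generates the maximal proper submodule $J'$ of $V^{k'}$. Thus at the integer level $J'$ is generated by a $\fg'$-singular vector of weight $k'\Lambda'_0-p(\delta'-\alpha')$.

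\textbf{Transport and the main obstacle.} It then remains to transport this structural statement back to level $k$ along the identification $\fg'\cong\fg$ and the map $\psi$. Under $\psi$ the weight $k'\Lambda'_0-p(\delta'-\alpha')$ becomes exactly $k\Lambda_0-p(q\delta-\alpha)$, so the assertion is that the $\fg'$-singular vector generating $J'$ corresponds to a $\fg$-singular vector generating $J$. This last step is the delicate one, because $L\mapsto L'$ is a priori only a character/block correspondence (formula~(\ref{eq:Fiebig}) and the equivalence of~\cite{F}), not a literal isomorphism of modules. To make the transport rigorous I would route it through the Enright functors of~$\S$~\ref{Enright}: these are exact equivalences, they commute with restriction, and by~(\ref{eq:Enright}) they send Verma and simple modules to Verma and simple modules at reflected highest weights, so they preserve the lattice of submodules and carry generating singular vectors to generating singular vectors. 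Composing the reflections linking $k\Lambda_0$ to the integral picture then identifies the top weight of $J$ as the $\psi$-image $k\Lambda_0-p(q\delta-\alpha)$ of the top weight of $J'$; finally \Prop{prop:VkVk} together with \Prop{prop:Vksubq}(ii) confirms that a $\fg$-singular vector at this weight indeed generates all of $J$. The hard part is precisely controlling this passage from the explicit integer-level generator to a \emph{genuine} $\fg$-singular vector at level $k$, since the naive $(\fg^{\#}=\fsl_2^{(1)})$-generator supplied directly by \Prop{prop:VkVk} sits at the lower weight $k\Lambda_0-(p+q-1)(q\delta-\alpha)$ and is only $\fg^{\#}$-singular.
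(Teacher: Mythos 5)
Your integer-level step is sound and agrees with \Rem{rem:qing} (which the paper itself invokes to dispose of the case $q=1$), and you correctly note that the $\fg^{\#}$-singular generator supplied by \Prop{prop:VkVk} sits at the lower weight $k\Lambda_0-(p+q-1)(q\delta-\alpha)$, so the proposition is a genuine strengthening of that result. The gap is the transport step, and it is fatal as proposed. The Enright functors of~$\S$~\ref{Enright} are equivalences of subcategories of $\fg$-modules implementing reflections $s_{\gamma}$ for real even roots $\gamma$ of $\fg$; by~(\ref{eq:Enright}) they send $M(\nu)$ to $M(s_{\gamma}(\nu+\rho)-\rho)$, and since every reflection fixes $\delta$ they \emph{preserve the level}. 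Hence no composition of Enright functors can relate the level-$k$ module $V^k$ to the level-$k'$ vacuum module of $\fg'$: the correspondence $L\mapsto L'$ and the map $\psi$ are correspondences of highest weights and root data, not functors of module categories. Upgrading them to a functor that preserves submodule lattices (and hence carries the generator of $J'$ to a generator of $J$) is precisely the content of the conjectural formula~(\ref{eq:Fiebig}) and of a Fiebig-type block equivalence, which the paper states (end of Section~\ref{algebrag'}) is known only for Kac-Moody \emph{algebras}, not superalgebras. So the route you call "the delicate one" is not merely delicate: with the tools available it cannot be carried out.

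The paper instead works entirely at level $k$. Since $\langle k\Lambda_0+\rho,(q\delta-\alpha)^{\vee}\rangle=p>0$, the Verma module $M(k\Lambda_0)$ already contains a singular vector $v$ of the desired weight $k\Lambda_0-p(q\delta-\alpha)$; the whole problem is to show that the image $\ol{v}$ of $v$ in $V^k$ generates the maximal submodule, i.e.\ that $V^k/U(\fg)\ol{v}$ is irreducible. By \Prop{prop:Vksubq}(ii) it suffices to prove that this quotient is $\pi^{\#}$-quasi-admissible, i.e.\ that $D_{\pi}e^{\rho_{\pi}}\ch\bigl(V^k/U(\fg)\ol{v}\bigr)$ is anti-invariant under $s_{\alpha}$ and $s_{q\delta-\alpha}$ (by \Lem{lem:W'L}(ii) these two reflections suffice). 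Anti-invariance under $s_{\alpha}$ is immediate from local finiteness of $\fg_{\pm\alpha}$ on $V^k$; for $s_{q\delta-\alpha}$ the paper does use Enright functors, but \emph{within} level $k$: an explicit $w\in W$ is chosen with $w(q\delta-\alpha)\in\pi^{\#}$ and $\pi^{\#}\cap\Delta_{y\lambda}=\emptyset$ for all intermediate $y$ (so each functor in the composition $\cC_w$ is applicable), the atypicality Lemma~\ref{lem:atyp} then gives $\fg_{\pm w(q\delta-\alpha)}$-integrability of $\cC_w(M(e))/\cC_w(N)$, and \Lem{lem:Enright} pulls the resulting anti-invariance back through $\cC_w$. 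If you want to repair your write-up, replace the cross-level transport by this within-level reflection argument; your observation that the target weight is $s_{q\delta-\alpha}.k\Lambda_0$ is exactly the right starting point for it.
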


\subsection{Notation}
Let $\pi^{\#}:=\{\alpha, \alpha_0:=\delta-\alpha\}$ be the base of $\fsl_2^{(1)}\subset \fg_{\ol{0}}$
and let $\{\beta,\alpha-\beta\}$ be a base of $\Delta(\fsl(2|1))$  
($\beta$ is isotropic). Then
$$\Sigma:=\{\alpha_0,\beta,\alpha-\beta\}, \ \ \ 
r_{\beta}\Sigma=\{\alpha_0+\beta,-\beta,\alpha\}$$
 are bases of $\Delta$. We let $k=-1+\frac{p}{q}$ to be an admissible level and set
 $$\lambda:=k\Lambda_0+\rho,\ \ M(w):=M(w\lambda-\rho)\ \text{ for } w\in W.$$ 
The 
integral root subsystem $\Delta^{\#}\cap \Delta_{\lambda}$ 
has the base $\{\alpha,q\delta-\alpha\}$
(this is the set of indecomposable elements in
 $\Delta^{\#}\cap \Delta_{\lambda}^+$), and we have:
$$k\Lambda_0-p(q\delta-\alpha)=s_{q\delta-\alpha}.k\Lambda_0=
s_{q\delta-\alpha}\lambda+\rho.$$

 \subsection{Proof of~\Prop{prop:sl21}}
The case $q=1$ is explained in~\Rem{rem:qing}, so
from now on we assume that $q\geq 2$.
 
The module $M(e)=M(k\Lambda_0)$  has singular vectors  $v_1$, $v_2$ of
 weights $k\Lambda_0-\beta$ and $k\Lambda_0-(\alpha-\beta)$ respectively;
 the vacuum module
  $V^k$ is the quotient of ${M}(\Id)$ by the submodule generated  by $v_1$ and $v_2$. 
  Since $(\lambda,q\delta-\alpha)=p>0$, 
 the module $M(e)$  has  a singular vector  $v$ of weight
 $s_{q\delta-\alpha}\lambda-\rho_{\Sigma}$.
 We denote by $N$ the submodule of $M(e)$ which is generated by $v_1$, $v_2$ and $v$ 
 and by  $\ol{v}$ the image of $v$ in $V^k$ (apriori, ${v}$ might be zero).
 We want to show that 
 $V^k/U(\fg)\ol{v}$ is irreducible (in particular, this would give $\ol{v}\not=0$).   By~\Prop{prop:Vksubq}  (ii) a 
 $\pi^{\#}$-quasi-admissible quotient  of $V^k$ is irreducible. 
 Thus
 it is enough to verify that $M(e)/N\cong V^k/U(\fg) \ol{v}$ is $\pi^{\#}$-quasi-admissible. Since $\{\alpha,q\delta-\alpha\}$ 
is the set of indecomposable elements in $\Delta^{\#}\cap \Delta_{\lambda}^+$, by~\Lem{lem:W'L}  (ii) it suffices to check
\begin{equation}\label{eq:desired}
s_{\gamma} (D_{\pi}e^{\rho_{\pi}} \ch (M(e)/N)=-D_{\pi}e^{\rho_{\pi}}
\ch (M(e)/N) \end{equation}
 for $\gamma\in \alpha,q\delta-\alpha$ (and the natural action of $s_{\gamma}$).
Since $\fg_{\pm\alpha}$ acts locally finitely on $V^k$,
$\fg_{\pm\alpha}$ acts locally finitely on  $V^k/U(\fg) \ol{v}\cong M(e)/N$, so~(\ref{eq:desired}) holds for $\gamma=\alpha$.
Thus it remain to verify~(\ref{eq:desired}) for $\gamma=q\delta-\alpha$.  The proof 
 is based on the following key observation.
 
 \subsubsection{}
 \begin{lem}{lem:atyp}
 If $w\in W$ is such that $w\not=\Id,s_{\alpha}$, then 
 $$M(w):=M(w\lambda-\rho_{\Sigma};\Sigma)=
 M(w\lambda-\rho_{r_{\beta}\Sigma},r_{\beta}\Sigma)$$
 (see~$\S$~\ref{hwtSpine} for notation).
 \end{lem}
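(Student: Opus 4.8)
The claim is that for $w \in W$ with $w \neq \Id, s_\alpha$, the Verma module $M(w\lambda - \rho_\Sigma; \Sigma)$ coincides with $M(w\lambda - \rho_{r_\beta\Sigma}; r_\beta\Sigma)$. Since both bases $\Sigma$ and $r_\beta\Sigma$ lie in the spine $\Sp$ and differ only by the odd reflexion $r_\beta$ at the isotropic simple root $\beta$, the content here is an equality of highest weight modules realized with respect to two different triangular decompositions. Let me sketch the plan.
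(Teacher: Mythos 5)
Your proposal contains no proof: after restating the lemma and observing that $\Sigma$ and $r_{\beta}\Sigma$ differ by the odd reflexion $r_{\beta}$, it ends with ``Let me sketch the plan'' and the plan never appears. Everything that actually needs to be argued is missing.

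Concretely, two things are required. First, the reduction: by \S\ref{hwtSpine} (formula~(\ref{eq:hwt}) and the discussion of $\rho_{r_{\beta}\Sigma}=\rho_{\Sigma}+\beta$), a highest weight module with $\hwt_{\Sigma}$ equal to $w\lambda$ is realized by the same Verma module with respect to $\Sigma$ and with respect to $r_{\beta}\Sigma$ precisely when $(w\lambda,\beta)\not=0$; so the lemma reduces to showing $(w\lambda,\beta)\not=0$ whenever $w\not=\Id,s_{\alpha}$. Second, the actual computation, which is where the hypotheses $w\not=\Id,s_{\alpha}$, non-criticality, and admissibility of $k$ enter: since $W=W[\pi^{\#}]$ one writes $w\lambda=\lambda-a\delta+b\alpha$; from $(\lambda,\beta)=(\delta,\beta)=0$ and $(\alpha,\beta)\not=0$ the vanishing $(w\lambda,\beta)=0$ forces $b=0$; then $W$-invariance of the form gives $a(\lambda,\delta)=0$, and $(\lambda,\delta)=k+1\not=0$ forces $a=0$, i.e.\ $w\lambda=\lambda$; finally one must identify $\Stab_{W}\lambda=\{\Id,s_{\alpha}\}$ (using that the stabilizer is generated by reflections $s_{\gamma}$, $\gamma\in\Delta_{\pi^{\#}}=\mathbb{Z}\delta\pm\alpha$, orthogonal to $\lambda$, and $(\lambda,j\delta\pm\alpha)=(k+1)j\not=0$ for $j\not=0$), which contradicts $w\not=\Id,s_{\alpha}$. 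None of these steps is present in your text, so the proposal cannot be evaluated as a proof; as written it is only a paraphrase of the statement.
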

 \begin{proof}
 Let  $w\in W$. By~$\S$~\ref{hwtSpine},
 it is enough to show that $(w\lambda,\beta)\not=0$ for $w\not=\Id,s_{\alpha}$.
 
 One has  $w\lambda=\lambda-a\delta+b\alpha$
for some $a,b\in\mathbb{C}$. The condition
$ (w\lambda ,\beta)=0$ implies $b=0$, that is
  $w\lambda=\lambda-a\delta$. Using 
  $(w\lambda,w\lambda)=(\lambda,\lambda)$
  we get $a=0$. Hence  $w\lambda=\lambda$.
  It is well-known that
 $\Stab_{W} \lambda$ is generated by 
  $s_{\gamma}$ for $\nu\in\Delta_{\pi^{\#}}$ satisfying $(\nu,\gamma)=0$.
  Since $\Delta_{\pi^{\#}}=\mathbb{Z}\delta\pm \alpha$ and
  $$(\lambda,j\delta\pm\alpha)=((k+1)\Lambda_0,j\delta\pm \alpha)=(k+1)j,$$
 we have $\Stab_{W} \lambda=\{\Id,s_{\alpha}\}$.
 This completes the proof.
  \end{proof}

\subsubsection{}
For $i\geq 0$ we have
 $$\begin{array}{lcl}
 (s_{\alpha_0}s_{\alpha})^i \alpha=-2i\delta+\alpha, & &
 (s_{\alpha_0}s_{\alpha})^i \alpha_0=(2i+1)\delta-\alpha,\\
 (s_{\alpha_0}s_{\alpha})^is_{\alpha_0} \alpha=(2i+2)\delta-\alpha, 
 & &
 (s_{\alpha_0}s_{\alpha})^i s_{\alpha_0} \alpha_0=-(2i+1)\delta+\alpha.\\
 \end{array}$$

We set $w:= (s_{\alpha}s_{\alpha_0})^j$ if $q=2j+1$ and
$w:= s_{\alpha_0}(s_{\alpha}s_{\alpha_0})^j$ if $q=2j+2$.
Then 
\begin{equation}\label{eq:wy}\begin{array}{l}
 w(q\delta-\alpha)\in \pi^{\#},\\
\pi^{\#}\cap  \Delta_{y\lambda}=\emptyset\ \text{ for }
y\in \{(s_{\alpha}s_{\alpha_0})^i,
s_{\alpha_0}(s_{\alpha}s_{\alpha_0})^i\}_{i=0}^j
\setminus\{w\}.\end{array}\end{equation}
Retain notation of~$\S$~\ref{Enright}.
Let  $\cC_w$ be the corresponding composition of Enright functors, that is
$$\begin{array}{ccllcl}
 & w=(s_{\alpha}s_{\alpha_0})^j, &   &  \cC_w:=(\cC_{\alpha}\circ \cC_{\alpha_0})^j & & \text{ if } q=2j+1 \\
& w=s_{\alpha_0}(s_{\alpha} s_{\alpha_0})^j, & &  \cC_w:=\cC_{\alpha_0}(\cC_{\alpha}\circ 
\cC_{\alpha_0})^j  & & \text{ if } q=2j+2  .\end{array}$$

\subsubsection{}
Since $\alpha_0\in\Sigma$ and $\alpha\in r_{\beta}\Sigma$,
combining~\Lem{lem:atyp} and~(\ref{eq:Enright}), 
we obtain, for $y\in W$, that 
$\cC_{\alpha_0}(M(y))=M(s_{\alpha_0}y)$ if $\alpha_0\not\in\Delta_{y\lambda}$,
 and $\cC_{\alpha}(M(y))=M(s_{\alpha}y)$ if 
 $y\not=\Id,s_{\alpha}$ and $\alpha\not\in \Delta_{y\lambda}$.
 
Since $v\in N$, we have non-zero homomorphisms
\begin{equation}\label{homom1}
M(s_{q\delta-\alpha})\longrightarrow N\ \hookrightarrow M(e).
\end{equation}
By~(\ref{eq:wy}), applying $\cC_w$, we obtain non-zero homomorphisms
$$\cC_w(M(s_{q\delta-\alpha}))\longrightarrow \cC_w(N)\ \hookrightarrow 
\cC_w(M(e))$$
with
$$\cC_w(M(e))=M(w),\ \ 
\cC_w(M(s_{q\delta-\alpha}))=
M(ws_{q\delta-\alpha})=M(s_{w(q\delta-\alpha)}w).$$
Thus we have
 non-zero homomorphisms
$$M(s_{w(q\delta-\alpha)}w)\longrightarrow \cC_w(N)\ \hookrightarrow M(w).$$
Since the root  $w(q\delta-\alpha)$ lies in $\pi^{\#}$,
this root lies in $\Sigma$ or in $r_{\beta}\Sigma$.
By~\Lem{lem:atyp}, $M(w)/M(s_{w(q\delta-\alpha)}w)$
is $\fg_{\pm w(q\delta-\alpha)}$-integrable, so
$\cC_w(M(e))/\cC_w(N)$ is $\fg_{\pm w(q\delta-\alpha)}$-integrable, which gives
$$s_{w(q\delta-\alpha)} (D_{\pi}e^{\rho_{\pi}} \ch 
\bigl(\cC_w(M(e)/N)\bigr)
=-D_{\pi}e^{\rho_{\pi}}
\ch \bigl(\cC_w(M(e)/N)\bigr).$$

Using~\Lem{lem:Enright} we deduce~(\ref{eq:desired}) for
$\gamma=q\delta-\alpha$, as required.\qed

\subsection{Remark}
\label{sl21pradm}
It is easy to see 
that any admissible weight is  principal admissible.
Indeed, let
$\lambda$ be an admissible weight.  This means that
$\lambda$ is non-critical, $L(\lambda)$ is 
$\pi^{\#}$-quasi-admissible and 
$\mathbb{Q}(\Delta_{\lambda+\rho}\cap \Delta^{\#})=
\mathbb{Q} \Delta^{\#}$. Since $\Delta^{\#}$ is the set of real 
roots of $\fsl_2^{(1)}$, the last formula
implies $\Delta_{\lambda+\rho}\cap \Delta^{\#}\cong \Delta^{\#}$.
Since $L(\lambda)$ is $\pi^{\#}$-quasi-admissible, we have
$(\lambda+\rho,q_{\pm}\delta\pm \alpha)\in\mathbb{Z}_{\geq 0}$
for some $q_-\in\mathbb{Z}_{>0}$ and $q_+\in\mathbb{Z}_{\geq 0}$.
Then $q:=q_++q_-\in\mathbb{Z}_{>0}$ and
$p:=(\lambda+\rho,q\delta)\in\mathbb{Z}_{\geq 0}$.
Since  $\lambda$ is non-critical, $p\not=0$, so 
$(\lambda,\delta)=-1+\frac{p}{q}$ is a principal admissible level.
Hence $\lambda$ is a principal admissible weight.

\appendix
\section{Root systems}\label{affinebases}
In this section $\dot{\Delta}$ or $\Delta$ is the set of roots of  an indecomposable symmetrizable  finite-dimensional or affine  Kac-Moody superalgebra
which is not a Lie algebra and is not $D(2|1,a)$ or $D(2|1,a)^{(1)}$. Our goal is to prove~\Prop{prop:Uklambda0}, 
which was used in the proof of~\Prop{prop:Vksubq}.
\Prop{prop:Uklambda0} also allows to simplify proofs of some character formulas
in~\cite{GKadm}.
By abuse of notation, we use the same notation for  the set of real roots of finite-dimensional or affine Kac-Moody superalgebra
$\fg$ and the superalgebra  itself.

\subsection{}
For finite root systems we will use the standard notation of~\cite{Ksuper}, taking into account
that $D(1|n)=C(n+1)$, and 
for the affine root systems we  use the notation of~\cite{vdLeur}.  
Note that $C(n+1)^{(2)}=D(1|n)^{(2)}$ is  anisotropic. 
Thus $\dot{\Delta}$ and
 $\Delta$  are from the following list of  root systems
$$\begin{array}{l}
A(m|n)\ (m\geq n\geq 0),\  B(m|n)\ (m\geq 0, n\geq 1),\   D(m|n) \ (m, n\geq 1, (m,n)\not=(2,1)), \\
A(m|n)^{(1)},\  B(m|n)^{(1)},\  D(m|n)^{(1)},\\
 G(3), F(4),  G(3)^{(1)}, F(4)^{(1)},\\
 A(2m|2n-1)^{(2)},\ D(m+1|n)^{(2)} \ (m\geq 0, n\geq 1),\\
 A(2m|2n)^{(4)}\ (m\geq n\geq 1), \ \ A(2m-1|2n-1)^{(2)} \ (m\geq n\geq 1, (m,n)\not=(1,1)).
 \end{array}$$
 In the second line the restrictions on $m,n$ are the same as in the first line.

\subsubsection{}\label{dotgclg}
If $\Delta$ is affine, then
we denote by $\delta$  the minimal
imaginary root and
 by $\dot{\Delta}$ the subset  of  $\Delta$ where
$\delta$ appears with zero coefficient, and
let
$\cl: \mathbb{Q}\Delta\to \mathbb{Q}\Delta/\mathbb{Q}\delta$
be the canonical map. We have

$$\begin{array}{|c||c|c|c|c|c|c|c|c|}
 \hline
\Delta &A(2m|2n-1)^{(2)}& A(2m-1|2n-1)^{(2)} &   A(2m|2n)^{(4)} &D(m+1|n)^{(2)} \\
 \hline
\dot{\Delta} & B(m|n) & D(m|n) & B(m|n)  & B(m|n) \\
 \hline
 \cl(\Delta) & BC(m|n)\cup\{0\} & C(m|n)\cup\{0\} & BC(m|n)\cup\{0\} & B(m|n)\cup\{0\}\\
 \hline
 \end{array}$$
where $BC(m|n):=B(m|n)\cup\{\pm 2\vareps_i\}_{i=1}^m$ and $C(m|n):=D(m|n)\cup\{\pm 2\vareps_i\}_{i=1}^m$ are the "weak generalized root systems" introduced in~\cite{VGRS}.

\subsubsection{}
As before,
we denote by $\Lambda_0$ the weight satisfying
$(\Lambda_0,\delta)=1$, $(\Lambda_0,\dot{\Delta})=(\Lambda_0,\Lambda_0)=0$. We will use notation
$\rho$ and $\Delta^{\#}$ 
introduced in~$\S\S$~\ref{Weylvector} and~\ref{notataff}.

As before,
we normalise the bilinear form $(-,-)$
in such a way that  
 $(\alpha,\alpha)=2$ for  a longest root $\alpha\in \Delta^{\#}$.
For the cases $A(n|n)$ and $A(n|n)^{(r)}$ the above conditions
determine the  bilinear form up to a sign (the root system admits an automorphism
which multiplies the bilinear form on $-1$); we take $(\vareps_1,\vareps_1)=1$
for  $A(n|n)^{(1)}$ and $(\delta_1,\delta_1)=1/2$
for $A(2n-1|2n-1)^{(2)}$, $A(2n|2n)^{(4)}$.

\subsubsection{Compatibility of $(-,-)$}
If the bilinear forms on $\Delta$ and on $\dot{\Delta}$ are normalised as above, then
 the restriction of $(-,-)$ from $\Delta$ to $\dot{\Delta}$
 coincides with the bilinear form on $\dot{\Delta}$ except for the pair
 $\Delta=A(2n|2n-1)^{(2)}$, $\dot{\Delta}=B(n|n)$, considered in~\ref{Bnn} below.

\subsubsection{}
For finite root systems we obtain

$$\begin{array}{|c||l|l|l|l|l|l|l|l|}
 \hline
\dot{\Delta} & A(m|n), m\geq n& B(m|n),D(m|n), m>n  &   B(m|n), D(m|n), m\leq n \\
 \hline
\dot{\Delta}^{\#} & A_m & B_m,\ \ \ \ \ D_m & C_n\\
 \hline\end{array}$$
and $\Delta^{\#}=G_2,B_3$ for $G(3), F(4)$ respectively.

For $\Delta=\dot{\Delta}^{(1)}$ we have $\Delta^{\#}=(\dot{\Delta}^{\#})^{(1)}$, for example,
 $\Delta^{\#}$ is $B_3^{(1)}$ for $B(3|1)^{(1)}$. In the remaining (i.e., twisted) cases
we have
$$\begin{array}{|c||l|l|l|l|l|l|l|l|}
 \hline
\Delta & A(m|n)^{(2)}, A(m|n)^{(4)}& D(m+1|n)^{(2)}, m>n  &   D(m+1|n)^{(2)}, m\leq n \\
 \hline
\Delta^{\#} & A_{\max(m,n)}^{(2)} & D_{m+1}^{(2)} &  C_n^{(1)}\\
 \hline\end{array}$$

In the cases $A(m|n)^{(4)}$ and $D(m+1|n)^{(2)}$ the minimal imaginary root
in $\Delta^{\#}$ is $2\delta$ (note that 
$\Delta^{\#}$ is "less twisted": the order of the automorphism for $\Delta$ is twice the order of the automorphism for $\Delta^{\#}$).
In all other cases $\delta$ is the minimal imaginary root
in $\Delta^{\#}$.

Notice that
the assumption $\Delta\not=D(2|1,a)$, $D(2|1,a)^{(1)}$ ensures
that  $\Delta^{\#}$ is  indecomposable.

\subsubsection{Nice pairs $(\Sigma,S)$}\label{pairs}
Consider the
pairs $(\Sigma,S)$, where 
$\Sigma$ is a base of simple roots and $S$ is a subset of  $\Sigma\cap \dot{\Delta}$
satisfying $(S,S)=0$ and the following properties:
\begin{itemize}
\item[(a)] $\dot{\Sigma}:=\Sigma\cap \dot{\Delta}$
is a base for $\dot{\Delta}$;
\item[(b)]
 for all $\alpha\in\Sigma\setminus S$ one has  $(\alpha,\alpha)>0$
or $(\alpha+\beta,\alpha+\beta)>0$ for some $\beta\in S$;
\item[(c)]  $(\alpha,\alpha)\geq 0$ for all $\alpha\in \Sigma$.
\end{itemize}
 They appear in many papers (including~\cite{Gnon-zero}, 
where the pair for $B(n+1|n)^{(1)}$ was missed). 

Note that (c) implies that for the Weyl 
vector $\rho$ we have $(\rho,\alpha)\geq 0$ 
for all $\alpha\in\Sigma$, so 
$$ (c)\ \ \Longrightarrow\ \ \ 
(\rho,\mu)\geq 0 \text{ for all }\mu\in \mathbb{Z}_{\geq 0}\Sigma.$$
If there is no $\Sigma$ satisfying (c) 
it is useful to have the property
\begin{itemize}
\item[(d)] ${\rho}$ can be written as
$\rho=h^{\vee}\Lambda_0+
\sum\limits_{\alpha\in\dot{\Delta}^+_{\ol{0}}} 
k_{\alpha}\alpha$ with 
$k_{\alpha}(\alpha,\alpha)\in\mathbb{Q}_{\leq 0}$.
\end{itemize}

\begin{defn}{}
The pair $(\Sigma, S)$
satisfying properties (a), (b), and (c) or (d) 
is called a {\em nice pair}. The pair $(\dot{\Sigma},\dot{S})$ 
is called a
{\em nice pair} if it satisfies properties 
similar to (b) and (c), where $\Sigma$ is replaced
by $\dot{\Sigma}$.
\end{defn}

In~$\S$~\ref{listgoodbases} below
we will give examples of nice pairs  for all affine root systems except for $D(n+1|n)^{(1)}$ and $A(2n-1|2n-1)^{(2)}$:
\begin{itemize}
\item[$\bullet$]  we give examples of pairs $(\Sigma,S)$ satisfying (a), (b), (c)
for all cases apart from  $D(n+1|n)^{(1)}$,
$A(2n-1|2n-1)^{(2)}$,
$D(n+1|n)^{(2)}$, and  $A(2n|2n)^{(4)}$;

\item[$\bullet$] for $D(n+1|n)^{(2)}$ and $A(2n|2n)^{(4)}$
we give examples of  pairs $(\Sigma,S)$ satisfying (a), (b) and (d).

\end{itemize}

For  $\Delta=D(n+1|n)^{(1)},  A(2n-1|2n-1)^{(2)}$ we give examples of  $(\Sigma,S)$
satisfying (a) and (c). In both cases the Dynkin diagram takes the form
$$\xymatrix{
&\otimes\ar@{-}[rd]\ar@{-}[d] & & & & 
\otimes\ar@{-}[ld]\ar@{-}[d]\\
&\otimes\ar@{-}[r]&\otimes\ar@{-}[r]&
\ldots&\otimes\ar@{-}[l]&\otimes\ar@{-}[l]\\
}$$
where the number of nodes is even for $D(n+1|n)^{(1)}$
and is odd  for $A(2n-1|2n-1)^{(2)}$. In both cases for any $\alpha\in\Sigma$
there exists $\beta\in\Sigma$ such that $(\alpha+\beta,\alpha+\beta)=2(\alpha,\beta)>0$.

Note that, if (c) holds, then (b) follows from the following property:
for any isotropic $\beta'\in\Sigma\setminus S$ there exists $\beta\in S$ such that
$(\beta,\beta')>0$.

\subsection{List of nice pairs $(\Sigma,S)$}\label{listgoodbases}
For  affine root systems with $\dot{\Delta}\not=B(n|n)$ we
 take the pair $(\Sigma,S)$ of the form
$\Sigma=\dot{\Sigma}\cup\{\alpha_0\}$ (we list $\alpha_0$ for all cases) and $S=\dot{S}$
(where $(\dot{\Sigma},\dot{S})$ is a nice pair for $\dot{\Delta}$).

In the non-exceptional cases $\vareps_i$ for $i=1,\ldots,m$,
and $\delta_j$ for $j=1,\ldots,n$
are pairwise orthogonal and $\vareps_i-\delta_j$
are isotropic roots, so $(\vareps_i,\vareps_i)=-(\delta_j,\delta_j)$.

\subsubsection{$\dot{\Delta}=A(m-1|n-1)$ for $m\geq n$}
In this case $(\vareps_i,\vareps_i)=1$ for all $i$. We take 
$$\begin{array}{l}
\dot{\Sigma}_{A(n-1|n-1)}:=\{\vareps_1-\delta_1,\delta_1-\vareps_2,\ldots, \delta_{n-1}-\vareps_n\},\\
\dot{\Sigma}_{A(m-1|n-1)}:=\dot{\Sigma}_{A(n-1|n-1)}\cup \{\delta_n-\vareps_{n+1},\ldots, 
\vareps_{m-1}-\vareps_m\}, \ \text{ for } m>n ,\end{array}$$  
and $S:=\{\vareps_i-\delta_i\}_{i=1}^n$.
For $A(m-1|n-1)^{(1)}$ we have 
$\alpha_0=\delta-\vareps_1+\delta_n$ if $m=n$ and
$\alpha_0=\delta-\vareps_1+\vareps_m$ if $m>n$, with the same $S$. Properties (a), (b) and (c)  hold.

\subsubsection{$\dot{\Delta}=B(m|n)$ for $m\geq n+2$, $\dot{\Delta}=D(m|n)$, $m\geq n+3$}
In this case $(\vareps_i,\vareps_i)=1$ for all $i$.
We take
$$\begin{array}{rl}
\dot{\Sigma}_{B(m|n)}:=&\{\vareps_1-\vareps_2,\vareps_2-\vareps_3,
\ldots,\vareps_{m-n-1}-\vareps_{m-n},\vareps_m\}\cup\\
&
\{\beta_1:=\vareps_{m-n}-\delta_1,
\beta_2:=\delta_1-\vareps_{m-n+1},\ldots, \beta_{2n}:=\delta_n-\vareps_{m}\},\\
\dot{\Sigma}_{D(m|n)}:=&\{\vareps_1-\vareps_2,\vareps_2-\vareps_3,
\ldots,\vareps_{m-n-2}-\vareps_{m-n-1}\}\cup\\
&
\{\beta_1:=\vareps_{m-n-1}-\delta_1,
\beta_2:=\delta_1-\vareps_{m-n},\ldots, \beta_{2n}:=\delta_n-\vareps_{m-1},
\vareps_{m-1}+\vareps_m\}
\end{array}$$
and $S:=\{\beta_{2i-1}\}_{i=1}^n$.
For the corresponding affine root systems $\alpha_0$ is given by the following table
$$\begin{array}{|c|l|}
\hline 
\Delta & \alpha_0\\
\hline
 B(m|n)^{(1)}\ m>n+1,\ \  D(m|n)^{(1)}\ m>n+1 & \delta-\vareps_1-\vareps_2\\
 \hline
A(2m|2n-1)^{(2)}\ m>n,\ \  A(2m-1|2n-1)^{(2)}\ m>n+1 & \delta-2\vareps_1\\
\hline
A(2m|2n)^{(4)}\ m>n,\ \  D(m+1|n)^{(2)}\  m>n  & \delta-\vareps_1\\
\hline
\end{array}$$
  Note that 
  $\delta$ is odd for $A(2m|2n)^{(4)}$ and even in other cases.

Properties (a) and (c) obviously hold.
The simple isotropic roots are  $\beta_1,\ldots,\beta_{2n}$
with $\beta_i\in S$  and $(\beta_i,\beta_{i+1})>0$ if $i$ is odd.
Thus property (b) holds.

\subsubsection{$\dot{\Delta}=B(m|n)$, $D(m|n)$ for $m< n$}
In this case $(\delta_j,\delta_j)=\frac{1}{2}$ for all $j$.
We take
$$\begin{array}{rl}
\dot{\Sigma}_{B(m|m+1)}, \dot{\Sigma}_{D(m|m+1)}:=&
\{\beta_1:=\delta_1-\vareps_1,\beta_2:=\vareps_1-\delta_2,\ldots, \beta_{2m}:=\vareps_{m}-\delta_{m+1}; a\delta_{m+1}\}\\
\dot{\Sigma}_{B(m|n)}, \dot{\Sigma}_{D(m|n)}:=&
\{\delta_1-\delta_2,\delta_2-\delta_3,\ldots,
\delta_{n-m-1}-\delta_{n-m},a\delta_n\}\cup\\
&\{
\beta_1:=\delta_{n-m}-\vareps_1,\beta_2:=\vareps_1-\delta_{n-m+1},\ldots,
\beta_{2m}:=\vareps_m-\delta_n\},\\
S:&=\{\beta_{2i}\}_{i=1}^m.
\end{array}$$
for $m<n-1$,  where $a=1$ for $B(m|n)$ and $a=2$ for $D(m|n)$.
For the corresponding affine root systems $\alpha_0$ is given by the following table
$$\begin{array}{|c|l|}
\hline 
\Delta & \alpha_0\\
\hline
 B(m|n)^{(1)},\ D(m|n)^{(1)} &  \delta-2\delta_1\\
 \hline
 D(m+1|n)^{(2)} & \delta-\delta_1\\
 \hline
 A(2m|2n-1)^{(2)}, m\not=n-1  & \delta-\delta_1-\delta_2 \\
 \hline
 A(2m|2m+1)^{(2)} & \delta-\vareps_1-\delta_1\\
 \hline\end{array}$$
(where $m<n$ in all cases).

Properties (a) and (c) obviously hold.
The simple isotropic roots  are 
are  $\beta_1,\ldots,\beta_{2m}$, and $\delta-\vareps_1-\delta_1$
for $A(2n-2|2n-1)^{(2)}$. Since
 $\beta_i\in S$, $(\beta_i,\beta_{i-1})>0$ if $i$ is even, and
 $(\delta-\vareps_1-\delta_1,\beta_2)>0$, property (b) holds.

For example, for $(m,n)=(1,2)$ we obtain  the following sets of simple roots
$$\begin{array}{lcl}
D(1|2)^{(1)} & & \{\delta-2\delta_1, \delta_1-
\vareps_1,\vareps_1-\delta_2, 2\delta_2\}\\
B(1|2)^{(1)} & & \{\delta-2\delta_1, \delta_1-
\vareps_1,\vareps_1-\delta_2,\delta_2\}\\
A(2|3)^{(2)} & & \{\delta-\delta_1-\vareps_1, \delta_1-
\vareps_1,\vareps_1-\delta_2,\delta_2\}\\
D(2|2)^{(2)} & & \{\delta-\delta_1, \delta_1-
\vareps_1,\vareps_1-\delta_2,\delta_2\}\\
\end{array}$$
with $S=\{\vareps_1-\delta_2\}$.

\subsubsection{$\dot{\Delta}=B(n+1|n), D(n+2|n)$}
In this case $(\vareps_i,\vareps_i)=1$ for all $i$.
We set 
$$\begin{array}{l}
\beta_1:=\vareps_{1}-\delta_1,\ 
\beta_2:=\delta_1-\vareps_{2},\ldots, \beta_{2n}:=\delta_n-\vareps_{n+1},\\
\dot\Sigma_{B(n+1|n)}:=\{\beta_i\}_{i=1}^{2n}\cup \{\vareps_{n+1}\},\ \ \
\dot\Sigma_{D(n+2|n)}:=\{\beta_i\}_{i=1}^{2n}\cup \{
\vareps_{n+1}\pm\vareps_{n+2}\}\\
S:=\{\beta_{2i}\}_{i=1}^n.
\end{array}$$
For the corresponding affine root systems $\alpha_0$ is given by the following table
$$\begin{array}{|c|l|}
\hline 
\Delta & \alpha_0\\
\hline
 B(n+1|n)^{(1)},\ \  D(n+2|n)^{(1)} & \delta-\vareps_1-\delta_1\\
 \hline
A(2n+2|2n-1)^{(2)}, \ \  A(2n+3|2n-1)^{(2)} & \delta-2\vareps_1\\
\hline
A(2n+2|2n)^{(4)},\ \  D(n+2|n)^{(2)}  & \delta-\vareps_1\\
\hline
\end{array}$$
  Note that 
  $\delta$ is odd for $A(2n+2|2n)^{(4)}$ and even in other cases.

Properties (a) and (c) obviously hold.
The simple isotropic roots  are 
are  $\beta_1,\ldots,\beta_{2m}$, and $\delta-\vareps_1-\delta_1$
for $B(n+1|n)^{(1)}$, $D(n+2|n)^{(2)}$. Since 
 $\beta_i\in S$, $(\beta_i,\beta_{i+1})>0$ if $i$ is even, 
 and  $(\delta-\vareps_1-\delta_1,\beta_2)>0$ property (b) holds.

\subsubsection{$\dot{\Delta}=D(n+1|n)$}\label{Dn+1n}
In this case $(\vareps_i,\vareps_i)=1$ for all $i$.
We take
$$\dot{\Sigma}_{D(n+1|n)}=\{\beta_1:=\vareps_1-\delta_1,\beta_2:=\delta_1-\vareps_2,\ldots, 
\beta_{2n}:=\delta_n-\vareps_{n+1},
\beta_{2n+1}:=\delta_n+\vareps_{n+1},
\}$$
and $S:=\{\beta_{2i-1}\}_{i=1}^n$.
One has $\alpha_0=\delta-\vareps_1-\delta_1$
for $D(n+1|n)^{(1)}$ and $\alpha_0=\delta-2\vareps_1$
for $A(2n+1|2n-1)^{(2)}$.
Properties (a) and  (c) hold in both cases, and (b) holds for $A(2n+1|2n-1)^{(2)}$, so the pair $(\Sigma, S)$ is nice  in this case (and is not nice for
$D(n+1|n)^{(1)}$).

\subsubsection{$\dot{\Delta}=B(n|n)$, $\Delta\not=A(2n|2n-1)^{(2)}$}\label{Bnn}
In this case $(\delta_j,\delta_j)=\frac{1}{2}$ 
for all $j$. We take
$$\dot{\Sigma}_{B(n|n)}=\{\beta_1:=\vareps_1-\delta_1,\beta_2:=\delta_1-\vareps_2,\ldots, 
\beta_{2n-1}:=\vareps_n-\delta_n;\delta_n\}$$
and $S:=\{\beta_{2i-1}\}_{i=1}^n$.
Then $\alpha_0=\delta-\vareps_1-\delta_1$
for $B(n|n)^{(1)}$ and $\alpha_0=\delta-\vareps_1$ for 
$A(2n|2n)^{(4)}$, $D(n+1|n)^{(2)}$. Properties (a), (b) 
hold, since $\beta_i\in S$ and $\beta_i+\beta_{i-1}$
if $i$ is odd, and $(\delta-\vareps_1-\delta_1,\beta_1)>0$.
Property (c) holds for $B(n|n)$
and $B(n|n)^{(1)}$.

Note that in all affine cases we have 
$\rho=h^{\vee}\Lambda_0-
\frac{1}{2}\sum\limits_{\beta\in S}\beta$
with $h^{\vee}=\frac{1}{2}$ for $B(n|n)^{(1)}$,  and $h^{\vee}=0$ for $D(n+1|n)^{(2)}$ and
$A(2n|2n)^{(4)}$. If (c) does not hold, then $h^{\vee}=0$, so 
$\rho=\frac{1}{2}\sum\limits_{i=1}^n (\vareps_i-\delta_i)$.
Since $\vareps_i,\delta_i\in\dot{\Delta}^+$
and $(\delta_i,\delta_i)=-(\vareps_i,\vareps_i)>0$,
property (d) holds. 

\subsubsection{
$\Delta=A(2n|2n-1)^{(2)}$}
In this case $(\vareps_i,\vareps_i)=\frac{1}{2}$ for all $j$.
We take
$$\dot{\Sigma}:=\{\delta_1-\vareps_1,\vareps_1-\delta_2,\ldots, 
\delta_n-\vareps_n,\vareps_n\},\  \alpha_0=\delta-\delta_1-\vareps_1,\ \ 
S=\{\delta_i-\vareps_i\}_{i=1}^n.$$
Properties (a), (b) and (c) hold.
Note that 
$\rho=h^{\vee}\Lambda_0-
\frac{1}{2}\sum\limits_{\beta\in S}\beta$
with   $h^{\vee}=1$.

\subsubsection{$\dot{\Delta}=D(n|n)$}\label{A2n-12n-1}
In this case $(\delta_j,\delta_j)=\frac{1}{2}$ for all $j$.
We take
$$\dot{\Sigma}_{D(n|n)}=\{\delta_1-\vareps_1,\vareps_1-\delta_2,
\ldots, \delta_n-\vareps_n,\delta_n+\vareps_n\}$$
and $S=\{\delta_i-\vareps_i\}_{i=1}^n$. 

For $D(n|n)^{(1)}$  we have 
$\alpha_0=\delta-2\delta_1$  and the
properties (a)--(c) are satisfied, so $(\Sigma,S)$
is a nice pair. For $A(2n-1|2n-1)^{(2)}$ one has
$\alpha_0=\delta-\vareps_1-\delta_1$ and (b) does not hold
(property (c) holds), so 
the pair $(\Sigma, S)$ is not nice. For example, for $m=n=2$ we have $S=\{\vareps_i-\delta_i\}_{i=1}^2$ and
$$\begin{array}{lcl}
D(2|2)^{(1)} & & \{\delta-2\delta_1, \delta_1-
\vareps_1,\vareps_1-\delta_2,\delta_2\pm\vareps_2\}\\
A(3|3)^{(2)} & & \{\delta-\vareps_1-\delta_1, \delta_1-
\vareps_1,\vareps_1-\delta_2,\delta_2\pm\vareps_2\}
\end{array}$$
For $D(2|2)^{(1)}$
the simple roots with non-positive square length are $\beta_i:=\delta_i-\vareps_i$ 
for $i=1,2$ and $\beta'_1=\vareps_1-\delta_2$, $\beta'_2=\delta_2+\vareps_2$.
Since $\beta_i\in S$ and $\beta_i+\beta'_i$
is of positive square length,  property (b) holds.

\subsubsection{Exceptional cases}
We take 
$$\begin{array}{l}
\dot{\Sigma}_{G(3)}:=\{\vareps_3,\delta_1-\vareps_3,-\delta_1+\vareps_2\},\\
\dot{\Sigma}_{F(4)}:=\{\vareps_3,\frac{1}{2}(\vareps_1+\vareps_2-\vareps_3-\delta_1),
\frac{1}{2}(-\vareps_1+\vareps_2-\vareps_3-\delta_1),\vareps_1-\vareps_2\},
\end{array}$$
and $\alpha_0=\delta+\vareps_1-\vareps_2$ for $G(3)^{(1)}$,
and $\alpha_0=\delta-\vareps_1-\vareps_2$ for $F(4)^{(1)}$.
In these cases $\Sigma$ contains exactly two isotropic roots
$\beta_1,\beta_2$ and $(\beta_1,\beta_2)>0$; all 
 other roots are of positive square length. Hence the pair $(\Sigma,S)$ for $S=\{\beta_1\}$
satisfies properties (a)--(c), so it is nice.

\subsection{}
\begin{prop}{prop:Uklambda0}
Let $\Delta$ be such that  $\dot{\Sigma}$
  and  $\Sigma$ are as in~\ref{listgoodbases}. Let $\rho$ be the Weyl vector for $\Sigma$. Assume that 
$k\in\mathbb{Z}_{\geq 0}$ and $\nu\in \mathbb{Z}_{\geq 0}\Sigma$ are such that
\begin{itemize}
\item[1.] $(k\Lambda_0+\rho,\delta)\not=0$;
\item[2.]
$\langle k\Lambda_0-\nu,\alpha^{\vee}\rangle\geq 0$ for all $\alpha\in\dot{\Delta}^+_{\ol{0}}$
and for all $\alpha\in\Delta^+$ such that $(\alpha,\alpha)>0$;
\item[3.]  $2(k\Lambda_0+\rho,\nu)=(\nu,\nu)$. 
\end{itemize}
Then $\nu\in \mathbb{Z}_{\geq 0}\dot{\Sigma}$, and, moreover,
$\nu\in\mathbb{Z}_{\geq 0}S$ if $\fg\not=A(2n-1|2n-1)^{(2)}, D(n+1|n)^{(1)}$.
\end{prop}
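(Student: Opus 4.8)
The plan is to reformulate condition~3 as an equal-length statement and then extract the vanishing of the $\alpha_0$-coefficient of $\nu$ from a positivity argument powered by the nice pair $(\Sigma,S)$. Set $\mu:=k\Lambda_0-\nu$. Since $(\delta,\delta)=0$ and $\delta$ is orthogonal to all roots, a direct expansion shows that condition~3, $2(k\Lambda_0+\rho,\nu)=(\nu,\nu)$, is equivalent to $(\mu+\rho,\mu+\rho)=(k\Lambda_0+\rho,k\Lambda_0+\rho)$. Writing $\nu=m\delta+\dot{\nu}$ with $\dot{\nu}\in\mathbb{Q}\dot{\Delta}$ and $m\in\mathbb{Z}_{\geq 0}$ equal to the $\alpha_0$-coefficient of $\nu$, the assertion $\nu\in\mathbb{Z}_{\geq 0}\dot{\Sigma}$ is exactly the statement $m=0$ (the remaining coefficients are $\geq 0$ by hypothesis), so the whole problem reduces to killing $m$, and then, for the stronger claim, to showing that the coefficients of the non-isotropic elements of $\dot{\Sigma}$ vanish as well to get $\nu\in\mathbb{Z}_{\geq 0}S$.

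First I would record the two dominance facts. Because $k\geq 0$, $\langle\Lambda_0,\alpha^{\vee}\rangle\geq 0$, and $\langle\rho,\alpha^{\vee}\rangle\geq 1$ for every positive even real root, the weight $k\Lambda_0+\rho$ is strictly dominant with respect to $\dot{\Delta}^+_{\ol{0}}$ and to all positive roots of positive square length. By condition~2, $\langle\mu,\alpha^{\vee}\rangle\geq 0$ on the same set of roots, so $\mu+\rho$ is strictly dominant there as well; in particular condition~2 makes $\mu$ a dominant integral weight for the affine algebra $\fg_{\pi^{\#}}$. Consequently $\Xi:=(\mu+\rho)+(k\Lambda_0+\rho)$ satisfies $(\Xi,\alpha)>0$ for every $\alpha\in(\Delta^{\#})^+$ and every positive even root, while the equal-length identity rewrites as $(\nu,\Xi)=0$. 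The crucial computation is that for $\beta\in S$ one has $(\beta,k\Lambda_0+\rho)=0$ (as $\beta\in\dot{\Delta}$ is isotropic, so $(\beta,\Lambda_0)=0$ and $\langle\rho,\beta^{\vee}\rangle=0$ forces $(\beta,\rho)=0$), whence $(\beta,\Xi)=-(\beta,\nu)$; together with property~(c), which gives $(\rho,\nu)\geq 0$ and hence $(\nu,\nu)=2km+2(\rho,\nu)\geq 0$, this is what feeds the sign bookkeeping.

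The positivity engine then runs as follows. Expanding $(\nu,\Xi)=0=\sum_{\alpha\in\Sigma}n_{\alpha}(\alpha,\Xi)$, the terms attached to positive-length simple roots are strictly positive by strict dominance, so they must be compensated by the isotropic ones. Here property~(b) of the nice pair is decisive: every $\alpha\in\Sigma\setminus S$ is either of positive square length or satisfies $(\alpha+\beta,\alpha+\beta)>0$ for some $\beta\in S$, so $\alpha+\beta\in(\Delta^{\#})^+$ and $(\alpha+\beta,\Xi)>0$; pairing each such $\alpha$ with its partner in $S$ and using $n_{\alpha}\geq 0$ lets one show that the coefficients $n_{\alpha}$ for $\alpha\in\Sigma\setminus S$ must all vanish, forcing in particular $n_{\alpha_0}=m=0$ and $\nu\in\mathbb{Z}_{\geq 0}S$. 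For the root systems whose nice pair satisfies (d) rather than (c) (namely $D(n+1|n)^{(2)}$ and $A(2n|2n)^{(4)}$), I would replace the inequality $(\rho,\nu)\geq 0$ by the decomposition $\rho=h^{\vee}\Lambda_0+\sum_{\alpha}k_{\alpha}\alpha$ with $k_{\alpha}(\alpha,\alpha)\leq 0$ supplied by~(d), which produces the same sign on the relevant inner products.

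Finally, the two exceptional systems $A(2n-1|2n-1)^{(2)}$ and $D(n+1|n)^{(1)}$ are precisely those whose nice pair only satisfies (a) and (c) but not (b)/(d): there no single $S$ plays the role above, since instead each simple root merely pairs with some other simple root to give positive square length. For these I would prove only $m=0$, by combining the equal-length identity, the dominance of $\mu$ for $\Delta^{\#}$, and the nonnegativity of all coefficients in a self-contained juggling of inequalities (the model case being affine $A_1^{(1)}$, where $n_{\alpha}\geq 0$ together with condition~2 and condition~3 already force $\nu=0$), but now without the final step identifying the isotropic support. This last point is where the main difficulty lies: the bilinear form is indefinite and the isotropic simple roots contribute inner products with $\Xi$ of uncontrolled sign, so the passage from $(\nu,\Xi)=0$ to the vanishing of individual coefficients cannot be carried out term by term and genuinely requires the structural pairing encoded in the nice pair, which is why the proposition is best verified along the explicit list of \S~\ref{listgoodbases}.
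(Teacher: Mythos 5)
Your reduction of condition~3 to $(\nu,\Xi)=0$ with $\Xi:=2(k\Lambda_0+\rho)-\nu$, and of the first conclusion to the vanishing of the $\alpha_0$-coefficient of $\nu$, is correct, but the positivity engine that is supposed to finish the proof rests on false premises and is never actually run. The stated dominance facts fail: $\langle\rho,\alpha^{\vee}\rangle\geq 1$ does not hold for all positive even real roots, because in the bases of \S\ref{listgoodbases} most (sometimes all) simple roots are isotropic; for $D(n+1|n)^{(1)}$ one has $\rho=0$, and for the $B(m|n)$-type bases $(\rho,\delta_1-\delta_2)=0$ while $(k\Lambda_0,\alpha)=0$ for every $\alpha\in\dot{\Delta}$, so $k\Lambda_0+\rho$ is not strictly dominant on $\dot{\Delta}^+_{\ol{0}}$. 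More importantly, the key strict inequality $(\alpha+\beta,\Xi)>0$ for an isotropic $\alpha\in\dot{\Sigma}\setminus S$ with partner $\beta\in S$ is not available: since $\alpha,\beta$ are isotropic simple roots in $\dot{\Sigma}$, one has $(\rho,\alpha+\beta)=(\Lambda_0,\alpha+\beta)=0$, hence $(\alpha+\beta,\Xi)=(k\Lambda_0-\nu,\alpha+\beta)$, which condition~2 only bounds below by $0$. Meanwhile the isotropic simple roots contribute terms $n_{\alpha}(\alpha,\Xi)=-n_{\alpha}(\nu,\alpha)$ of uncontrolled sign to the expansion of $(\nu,\Xi)$, so the scheme ``nonnegative coefficients times positive quantities sum to zero'' never applies; you acknowledge this yourself in your last paragraph, but the ``structural pairing'' that is supposed to replace it is never exhibited. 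That unexecuted step is the entire content of the proposition.

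For comparison, the paper avoids expanding over $\Sigma$ altogether: using property (b) it builds a linear map $p_{\#}:\mathbb{Z}\Delta\to\mathbb{Q}\Delta^{\#}$ with kernel $\mathbb{Z}S$ sending $\mathbb{Z}_{\geq 0}\Delta^+$ into $\mathbb{Q}_{\geq 0}\Delta^{\#+}$, and writes $\nu=\mu+\xi$ with $\mu:=p_{\#}(\nu)\in\mathbb{Q}_{\geq 0}\Delta^{\#+}$ and $\xi\in\mathbb{Z}S$, so that $(\xi,\rho)=(\xi,\xi)=0$. Conditions 2 and 3 then collapse into the single inequality $(\mu,\mu)+2(\rho,\mu)\leq(\xi,\xi)\leq 0$; since $(\rho,\mu)\geq 0$ (by (c), or by (d) in the cases $D(n+1|n)^{(2)}$, $A(2n|2n)^{(4)}$) and $\Delta^{\#}$ is affine, this forces $\mu\in\mathbb{Q}_{\geq 0}\delta$, and condition~1 together with $k\geq 0$ kills $\mu$, giving $\nu=\xi$ at one stroke. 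Your treatment of the exceptional systems $A(2n-1|2n-1)^{(2)}$ and $D(n+1|n)^{(1)}$ has the same defect in sharper form: ``a self-contained juggling of inequalities'' is not an argument, and these are exactly the cases where the paper must construct the decomposition $\nu=\mu+\xi$ by hand from the coefficients of $\nu$ — for $A(2n-1|2n-1)^{(2)}$ producing two candidate pairs $(\mu_{\pm},\xi_{\pm})$ and using $(\xi_+,\xi_+)=-(\xi_-,\xi_-)$ to guarantee that one of them satisfies $(\xi,\xi)\leq 0$. Nothing in your proposal substitutes for this.
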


The proof occupies~$\S\S$~\ref{pf1}--\ref{pfA2n-12n-1} below.

\subsubsection{}\label{pf1}
Assume that $\mu,\xi$ are such that 
\begin{equation}\label{eq:muxi}\begin{array}{l}
\nu=\mu+\xi,\ \ \mu\in \mathbb{Q}_{\geq 0}\Delta^{\#+},\ \ \xi\in\mathbb{Z}\dot{\Sigma},\ \ 
(\xi,\rho)=0,\ \ (\xi,\xi)\leq 0,\\
\xi\in\mathbb{Z}_{\geq 0}S \ \text{ if } \ \fg\not=A(2n-1|2n-1)^{(2)}, D(n+1|n)^{(1)}.
\end{array}
\end{equation}

Assumption 3. on $k$ gives 
$2(k\Lambda_0+\rho,\mu)=(\mu+\xi,\mu+\xi)$, so
$$2(k\Lambda_0+\rho-\nu,\mu)=(\mu+\xi,\mu+\xi)-2(\mu+\xi,\mu)
=(\xi,\xi)-(\mu,\mu).$$
Since $\mu\in \mathbb{Q}_{\geq 0}\Delta^{\#+}$, 
the second assumption
implies $(k\Lambda_0-\nu,\mu)\geq 0$, so
\begin{equation}\label{eq:mumu}
(\mu,\mu)+2(\rho,\mu)\leq (\xi,\xi)\leq 0.
\end{equation}

Recall that $(\Sigma,S)$ satisfies (c) or (d).
If  (c) holds, then $(\rho,\mu)\geq 0$.
The second assumption gives $(\alpha,\alpha)(\nu,\alpha)\leq 0$ 
for all $\alpha\in\dot{\Delta}^+_{\ol{0}}$, so (d) forces
$(\nu,\rho)\geq 0$ which again gives $(\mu,\rho)\geq 0$.

Since $(\rho,\mu)\geq 0$, the inequality~(\ref{eq:mumu}) gives
$(\mu,\mu)\leq 0$. Since $\Delta^{\#}$ is  the  root system 
of  affine algebra, this forces $\mu=s\delta$ for  
$s\geq 0$. Then  $2(k\Lambda_0+\rho,\mu)=(\mu+\xi,\mu+\xi)$
implies $s(k\Lambda_0+\rho,\delta)\leq 0$. Since $k\geq 0$
and $(k\Lambda_0+\rho,\delta)\not=0$, this forces $s=0$.
Therefore $\mu=0$ and $\nu=\xi\in \mathbb{Z}\dot{\Sigma}$ as required.

It remains to verify the existence  $\mu,\xi$
satisfying~(\ref{eq:muxi}). We do this
in~$\S\S$~\ref{pf2}--\ref{pfA2n-12n-1}.

\subsubsection{}\label{pf2}
Consider the case 
$\Delta\not=A(2n-1|2n-1)^{(2)}, D(n+1|n)^{(1)}$.
Then $(\Sigma,S)$ satisfies properties (a), (b).
We claim that (b) implies the existence of a linear map
$p_{\#}:\ \mathbb{Z}\Delta\to \mathbb{Q}\Delta^{\#}$
with the properties
\begin{itemize}
\item[$\bullet$] $\Ker p_{\#}=\mathbb{Z}S$;
\item[$\bullet$] $p_{\#}(\mathbb{Z}_{\geq 0}\Delta^+) \subset\mathbb{Q}_{\geq}\Delta^{\#+}$.
\end{itemize}

We construct  $p_{\#}$ as follows: for $\alpha\in S$ we set
$p_{\#}(\alpha):=0$; for $\alpha\in\Sigma$ such that
$(\alpha,\alpha)>0$ we take $p_{\#}(\alpha):=\alpha$.
In the remaining case $\alpha\in\Sigma\setminus S$ such that 
$(\alpha,\alpha)\leq 0$,
 there exists $\beta\in S$ such that $(\alpha+\beta,\alpha+\beta)>0$.
In particular, $(\alpha,\beta)\not=0$, so  $\alpha+\beta$
or $\alpha-\beta$ lies in $\Delta$. Since $\alpha,\beta\in\Sigma$
we obtain $\alpha+\beta\in\Delta^+$. 
Set $p_{\#}(\alpha):=\alpha+\beta$.
If $\alpha$ is odd, then
$p_{\#}(\alpha)\in\Delta^{\#+}$;
if $\alpha$ is even, then $2p_{\#}(\alpha)\in\Delta^{\#+}$.
Extending this map by linearity we obtain $p_{\#}$ satisfying the above properties.

Now set $\mu:=p_{\#}(\nu)$ and $\xi:=\nu-\mu$. The properties of $p_{\#}$ give
$\mu\in  \mathbb{Q}_{\geq}\Delta^{\#+}$
and $\xi\in\mathbb{Z}S$. Property (b) gives $\xi\in\mathbb{Z}\dot{\Delta}$ and
$(\xi,\rho)=(\xi,\xi)=0$.
Therefore $(\mu,\xi)$ satisfies~(\ref{eq:muxi}) as required.

\subsubsection{}\label{pfDn+1n}
Consider the  case  $D(n+1|n)^{(1)}$. We choose 
$$\Sigma=\{\delta-\vareps_1-\delta_1,\vareps_1-\delta_1,\delta_1-\vareps_2,\ldots,\vareps_n-\delta_n,\delta_n\pm\vareps_{n+1}\}.$$ 
We draw the diagram for $\Sigma$ marking the edge between $\beta_i$ and $\beta_j$  by $(\beta_i,\beta_j)$. 
$$\xymatrix{
&\beta_0\ar@{-}[rd]^1\ar@{-}[d]_{-2} & & & & 
\beta_{2n+1}\ar@{-}[ld]_1\ar@{-}[d]^{-2}\\
&\beta_1\ar@{-}[r]^1&\beta_2\ar@{-}[r]^{-1}&
\ldots&\beta_{2n-1}\ar@{-}[l]_{-1}&\beta_{2n}\ar@{-}[l]_1\\
}$$
Since all $\beta_i$ are isotropic, we have $\rho=0$. Moreover,
 $(\beta_i+\beta_j)\in\Delta^{\#+}$ if $(\beta_i,\beta_j)>0$, so
 $\Delta^{\#+}$ contains $\beta_0+\beta_2$, $\beta_{2n-1}+\beta_{2n+1}$ and 
 $\beta_{2i-1}+\beta_{2i}$ for $i=1,\ldots,n$.

We have $\nu=\sum\limits_{i=0}^{2n+1} k_i\beta_i$ where $k_i\in\mathbb{Z}_{\geq 0}$ for all $i$.
If $k_{2n}> k_{2n-1}$ we take
$$\begin{array}{l}
\mu:=k_0(\beta_0+\beta_2)+\sum\limits_{i=1}^n k_{2i-1}(\beta_{2i-1}+\beta_{2i})\\
\xi:=(k_2-k_1-k_0)\beta_2+\sum\limits_{i=2}^{n} (k_{2i}-k_{2i-1})\beta_{2i}+
k_{2n+1}\beta_{2n+1}
\end{array}$$
Then $\nu=\mu+\xi$ and 
$(\xi,\xi)=-4(k_{2n}-k_{2n-1})k_{2n+1}\leq 0$;
the pair $(\mu,\xi)$ satisfies~(\ref{eq:muxi}).

If  $k_{2n}\leq k_{2n-1}$ we take
$$\begin{array}{rl}\mu:=&
 k_0(\beta_0+\beta_2)+\sum\limits_{i=1}^{n-1} k_{2i-1}(\beta_{2i-1}
 +\beta_{2i})+k_{2n}(\beta_{2n-1}+\beta_{2n})\\
 &\ \ \ \ \ \ \ \ 
 +(k_{2n-1}-k_{2n})(\beta_{2n-1}+\beta_{2n+1})\\
\xi:=&(k_2-k_1-k_0)\beta_2+\sum\limits_{i=2}^{n-1} (k_{2i}-k_{2i-1})\beta_{2i}+(k_{2n+1}+k_{2n}-k_{2n-1})\beta_{2n+1}\end{array}$$
 We have $\nu=\mu+\xi$ and
$(\xi,\xi)=0$; the pair $(\mu,\xi)$ satisfies~(\ref{eq:muxi}).

\subsubsection{}\label{pfA2n-12n-1}
Consider the  remaining case  $A(2n-1|2n-1)^{(2)}$ for $n>1$. We choose 
$$\Sigma=\{\delta-\vareps_1-\delta_1,\delta_1-\vareps_1,\vareps_1-\delta_2,\ldots,
\vareps_{n-1}-\delta_n,\delta_n\pm\vareps_{n}\}.$$ 
We draw the diagram for $\Sigma$ marking the edge between $\beta_i$ and $\beta_j$  by $(\beta_i,\beta_j)$. 
$$\xymatrix{
&\beta_0\ar@{-}[rd]^{1/2}\ar@{-}[d]_{-1} & & & & 
\beta_{2n}\ar@{-}[ld]_{-1/2}\ar@{-}[d]^{1}\\
&\beta_1\ar@{-}[r]^{1/2}&\beta_2\ar@{-}[r]^{-1/2}&
\ldots&\beta_{2n-2}\ar@{-}[l]_{1/2}&\beta_{2n-1}\ar@{-}[l]_{-1/2}\\
}$$
Since all $\beta_i$ are isotropic, we have $\rho=0$. Moreover,
 $(\beta_i+\beta_j)\in\Delta^{\#+}$ if $(\beta_i,\beta_j)>0$, so
 $\Delta^{\#+}$ contains $\beta_0+\beta_2$ and 
 $\beta_{2i-1}+\beta_{2i}$ for $i=1,\ldots,n$.

We have $\nu=\sum\limits_{i=0}^{2n} k_i\beta_i$ where $k_i\in\mathbb{Z}_{\geq 0}$ for all $i$.
Take
$$\begin{array}{l}
\mu_+:=k_0(\beta_0+\beta_2)+\sum\limits_{i=1}^{n} k_{2i-1}(\beta_{2i-1}+\beta_{2i})\\
\xi_+=\nu-\mu_+=(k_2-k_1-k_0)\beta_2+
\sum\limits_{i=2}^{n-1} (k_{2i}-k_{2i-1})\beta_{2i}+(k_{2n}-k_{2n-1})\beta_{2n}\\
\mu_-:=k_0(\beta_0+\beta_2)+\sum\limits_{i=1}^{n-1} k_{2i-1}(\beta_{2i-1}+\beta_{2i})+k_{2n}(\beta_{2n-1}+\beta_{2n})\\
\xi_-=\nu-\mu_-=(k_2-k_1-k_0)\beta_2+
\sum\limits_{i=2}^{n-1} (k_{2i}-k_{2i-1})\beta_{2i}
+(k_{2n-1}-k_{2n})\beta_{2n-1}.
\end{array}
$$
The pairs $(\mu_+,\xi_+)$, $(\mu_-,\xi_-)$
satisfy~(\ref{eq:muxi}) if 
$(\xi_+,\xi_+)\leq 0$, $(\xi_-,\xi_-)\leq 0$
respectively. Since $(\xi_+,\xi_+)=-(\xi_-,\xi_-)$
at least one of these pairs satisfies~(\ref{eq:muxi}).

This completes the proof of~\Prop{prop:Uklambda0}.
\qed

\subsection{Simplified proof of character formulas}
\Prop{prop:Uklambda0} allows to simplify several proofs of character formulas for $(\Sigma,S)$ listed in~$\S$~\ref{listgoodbases}.

\subsubsection{}
Note that 
$$(\alpha,\alpha)\geq 0\ \text{ for all } \alpha\in\dot{\Sigma}.$$
The sets 
$$\Delta^{\#}:=\{\alpha\in\Delta_{\ol{0}}|\ 
(\alpha,\alpha)>0\}, \ \ 
\dot{\Delta}^{\#}:=\Delta^{\#}\cap \dot{\Delta}$$
are the root systems of a 
Kac-Moody algebra (affine or finite-dimensional).
(Notice that for $\dot{\Delta}=B(n|n)$,
 $\dot{\Delta}^{\#}$ is of type $C_n$ except for the case when
 $\Delta=A(2n|2n-1)^{(2)}$, where $\dot{\Delta}^{\#}$ is of type $B_n$).
Let $\pi^{\#}$, $\dot{\pi}^{\#}$ be the set of simple roots for
$\Delta^{\#+}$, $\dot{\Delta}^{\#+}$, and
$W^{\#}$, $\dot{W}^{\#}$  be the Weyl groups
of $\Delta^{\#}$, $\dot{\Delta}^{\#}$.

\subsubsection{}
\begin{lem}{lem:AAA}
\begin{enumerate}
\item
If $\fg$ is finite-dimensional, then for $w\in W^{\#}$
one has $\rho-w\rho\in\mathbb{Z}_{\geq 0} \Sigma$.
\item Let $\fg$ be affine and 
$k\in\mathbb{Z}_{\geq 0}$ be such that $k\not=-h^{\vee}$.
For $\lambda:=k\Lambda_0+\rho$ and any  $w\in W^{\#}$
one has $\lambda-w\lambda\in\mathbb{Z}_{\geq 0} \pi^{\#}$.
\end{enumerate}
\end{lem}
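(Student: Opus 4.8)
The plan is to deduce both parts from the standard fact about dominant weights in the Kac-Moody algebra $\fg_{\pi^{\#}}$: if a weight $\mu$ satisfies $\langle\mu,\beta^{\vee}\rangle\geq 0$ for all $\beta\in\pi^{\#}$, then $\mu-w\mu\in\mathbb{Z}_{\geq 0}\pi^{\#}$ for every $w\in W^{\#}$ (cf.~\cite{Kbook}, Chapter~3). Concretely I would argue by induction on the length of $w$ in the Coxeter group $W^{\#}$: writing a reduced expression $w=w's_{\beta}$ with $\beta\in\pi^{\#}$ and $w'\beta\in\Delta^{\#+}$, one has
$$\mu-w\mu=(\mu-w'\mu)+\langle\mu,\beta^{\vee}\rangle\,w'\beta,$$
so the statement follows from the inductive hypothesis together with $\langle\mu,\beta^{\vee}\rangle\geq 0$ and $w'\beta\in\Delta^{\#+}$. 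Thus the problem reduces to proving dominance of $\mu=\rho$ (part~(i)) and of $\mu=k\Lambda_0+\rho$ (part~(ii)), plus bookkeeping the lattice in which the answer lands.

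For the target lattice in part~(i) I would use that every element of $\pi^{\#}$ is a positive root, so $\mathbb{Z}_{\geq 0}\pi^{\#}\subseteq\mathbb{Z}_{\geq 0}\Sigma$, which gives the conclusion once dominance of $\rho$ is known. The one delicate case is a principal root $\beta=2\gamma$ with $\gamma$ odd non-isotropic (an $\osp(1|2)$-type root), which enters $\pi^{\#}$ precisely in $\osp(1|2\ell)$-type situations. There $\beta^{\vee}=\tfrac12\gamma^{\vee}$ and $w'\beta=2w'\gamma$, so the contribution $\langle\rho,\beta^{\vee}\rangle\,w'\beta$ equals $\langle\rho,\gamma^{\vee}\rangle\,w'\gamma$; here $\langle\rho,\gamma^{\vee}\rangle$ is a non-negative odd integer by~(\ref{eq:rhoalpha}), and $w'\gamma=\tfrac12 w'\beta\in\Delta^{+}$ is a positive (odd) root, since $W$ preserves $\Delta$ and parity. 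Hence this contribution still lies in $\mathbb{Z}_{\geq 0}\Sigma$, which is exactly why part~(i) is formulated over $\Sigma$ rather than $\pi^{\#}$.

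Dominance itself I would read off from the nice-base conditions of~$\S$\ref{listgoodbases}. When $(\Sigma,S)$ satisfies condition~(c), every $\alpha\in\Sigma$ has $(\alpha,\alpha)\geq 0$, so $(\rho,\nu)\geq 0$ for all $\nu\in\mathbb{Z}_{\geq 0}\Sigma$; as each $\beta\in\pi^{\#}$ is a positive root with $(\beta,\beta)>0$, this gives $\langle\rho,\beta^{\vee}\rangle\geq 0$, finishing part~(i) in the finite-dimensional case. For part~(ii) I would add that for $\beta\in\dot{\pi}^{\#}$ one has $\langle k\Lambda_0,\beta^{\vee}\rangle=0$ (as $\dot{\pi}^{\#}\subset\dot{\Delta}$ is orthogonal to $\Lambda_0$), while for the affine node $\alpha_0^{\#}\in\pi^{\#}$ one has $\langle k\Lambda_0,(\alpha_0^{\#})^{\vee}\rangle=k\,a_0\geq 0$, with $a_0>0$ the $\delta$-coefficient of $\alpha_0^{\#}$; combined with $\langle\rho,(\alpha_0^{\#})^{\vee}\rangle\geq 0$ this yields dominance of $k\Lambda_0+\rho$. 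The hypothesis $k\neq -h^{\vee}$ (equivalently $(k\Lambda_0+\rho,\delta)\neq 0$) is needed exactly in the cases where only condition~(d) holds, i.e.\ $h^{\vee}=0$: then $k\geq 0$ and $k\neq 0$ force a strictly positive level, which is what makes $k\Lambda_0+\rho$ dominant at the affine node, and the inequality $(\rho,\beta)\geq 0$ for $\beta\in\Delta^{\#+}$ is extracted from the explicit expression $\rho=h^{\vee}\Lambda_0+\sum_{\alpha}k_{\alpha}\alpha$ with $k_{\alpha}(\alpha,\alpha)\leq 0$.

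The hard part will be the integrality bookkeeping for part~(ii) at doubled $\osp(1|2)$-type nodes. For a finite-part $\beta=2\gamma\in\pi^{\#}$ one only gets $\langle k\Lambda_0+\rho,\beta^{\vee}\rangle=\tfrac12\langle\rho,\gamma^{\vee}\rangle$, a half-integer, so the induction a priori produces a multiple of $\gamma=\beta/2\notin\pi^{\#}$; obtaining the clean conclusion $\lambda-w\lambda\in\mathbb{Z}_{\geq 0}\pi^{\#}$ thus forces one either to work with the nice bases in which $\dot{\Delta}$ has no $\osp(1|2)$ component or to treat such doubled nodes separately, in the same spirit as part~(i). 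Checking that the remaining simple coroot values $\langle\lambda,\beta^{\vee}\rangle$ are genuine non-negative integers, so that the whole $W^{\#}$-orbit stays in the root lattice $\mathbb{Z}\pi^{\#}$, is where the integrality of $k$ together with the explicit nice bases of~$\S$\ref{listgoodbases} must be invoked, and I expect this to be the only step requiring a short case analysis.
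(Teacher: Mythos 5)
Your strategy is the same as the paper's: the length-induction fact for $\pi^{\#}$-dominant weights, followed by a verification of dominance of $\rho$ (resp.\ $k\Lambda_0+\rho$) from the nice-pair conditions of~$\S$\ref{listgoodbases} --- condition (c) in all cases except $D(n+1|n)^{(2)}$ and $A(2n|2n)^{(4)}$, where $h^{\vee}=0$, so that $k\in\mathbb{Z}_{\geq 0}$, $k\neq -h^{\vee}$ force $k\geq 1$, which restores dominance at the unique element of $\pi^{\#}$ not lying in $\dot{\Delta}$ (the paper computes $(\rho,\delta-\delta_1)=-\tfrac14$ there). That three-line argument is literally the whole of the paper's proof; it never discusses integrality, only the inequalities $(\rho,\alpha)\geq 0$.

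Two further points. First, the doubled-node difficulty you flag at the end is not a defect of your write-up: it is a genuine defect of the lemma and of the paper's proof. At a principal root $\beta=2\gamma\in\pi^{\#}$ with $\gamma$ odd non-isotropic, (\ref{eq:rhoalpha}) gives $\langle k\Lambda_0+\rho,\beta^{\vee}\rangle=\tfrac12\langle k\Lambda_0+\rho,\gamma^{\vee}\rangle\in\tfrac12+\mathbb{Z}$, and the conclusion of part (ii) as literally stated fails: for $\fg=B(0|n)^{(1)}$ one has $\Sigma=\{\delta-2\delta_1,\delta_1-\delta_2,\dots,\delta_{n-1}-\delta_n,\delta_n\}$ and $\pi^{\#}=\{\delta-2\delta_1,\delta_1-\delta_2,\dots,\delta_{n-1}-\delta_n,2\delta_n\}$, and for $w=s_{\delta_n}=s_{2\delta_n}\in W^{\#}$ one gets $\lambda-w\lambda=\langle\rho,\delta_n^{\vee}\rangle\delta_n=\delta_n$, which lies in $\mathbb{Z}_{\geq 0}\Sigma$ but not in $\mathbb{Z}_{\geq 0}\pi^{\#}$. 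So the ``short case analysis'' you postpone cannot produce the stated conclusion; the correct statement of (ii) is exactly what your part-(i) argument proves, namely $\lambda-w\lambda\in\mathbb{Z}_{\geq 0}\Sigma$ (each induction step contributes $\langle\lambda,\beta^{\vee}\rangle\,w'\beta$ with an integral non-negative coefficient when $\beta/2\notin\Delta$, and otherwise $\langle\lambda,\gamma^{\vee}\rangle\,w'\gamma$ with $\gamma=\beta/2$ and $w'\gamma\in\Delta^{+}$), and this weaker form is all that is used via (\ref{eq:wlambda}) in the proof of \Prop{prop:Vk}. Second, a slip in your dominance step for the (d)-cases: condition (d) does not yield $(\rho,\beta)\geq 0$ for all $\beta\in\Delta^{\#+}$, and this inequality is in fact false there ($\beta=\delta-\delta_1$ gives $-\tfrac14$). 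What holds, and what the paper uses, is $(\rho,\gamma)\geq 0$ for $\gamma\in\dot{\Sigma}$, coming from $(\gamma,\gamma)\geq 0$ on $\dot{\Sigma}$; this gives dominance at all elements of $\pi^{\#}$ lying in $\dot{\Delta}$ (they are positive finite roots, hence in $\mathbb{Z}_{\geq 0}\dot{\Sigma}$, and $k\Lambda_0\perp\dot{\Delta}$), and the remaining node is handled by $k\geq 1$ as you say.
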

\begin{proof}
The group  $W^{\#}$ is   
generated by $s_{\gamma}$ for
$\gamma\in\pi^{\#}$.  A standard reasoning shows that
for $\nu\in\fh^*$ such that $(\nu,\gamma)\in \mathbb{Z}_{\geq 0}$ 
for all $\gamma\in\pi^{\#}$ and any $w\in W^{\#}$,
 $\nu-w \nu\in \mathbb{Z}_{\geq 0}\pi^{\#}$.

If (c) holds (i.e., $(\alpha,\alpha)\geq 0$ for all
$\alpha\in\Sigma$),
then $(\rho,\alpha)\geq 0$ for all $\alpha\in\Delta^+$, so
$\lambda-w\lambda\in \mathbb{Z}_{\geq 0}\pi^{\#}$
for $\nu=\rho$, and $\nu=k\Lambda_0+\rho$
if $\fg$ is affine and $k\in\mathbb{Z}_{\geq 0}$.

If (c) does not hold, then 
  $\fg=D(n+1|n)^{(2)}$, $A(2n|2n)^{(4)}$.  Since 
$(\alpha,\alpha)\geq 0$ for all $\alpha\in\dot{\Sigma}$,
we have  $(\rho,\gamma)\geq 0$ for all $\gamma\in \dot{\Sigma}$.
One has $\pi^{\#}\setminus \dot{\Sigma}=\{\delta-\delta_1\}$ and
 $(\rho, \delta-\delta_1)=-\frac{1}{4}$. Since 
 $h^{\vee}=0$ the assumptions on $k$ give $k\geq 1$, so 
 $(k\Lambda_0+\rho,\delta-\delta_1)>0$.
 Therefore $(k\Lambda_0+\rho,\alpha)\geq 0$ for all $\alpha\in\pi^{\#}$.
Hence $\lambda-w\lambda\in \mathbb{Z}_{\geq 0}\pi^{\#}$
for $\lambda=k\Lambda_0+\rho$.
\end{proof}

\subsubsection{}
The character formula~(\ref{vacform}) below was established in~\cite{GKadm}.
In~\cite{GKadm}   we considered two separate cases: 
the proof in Section 4 works for $h^{\vee}\not=0$
and for $A(n|n)^{(1)}$, and a more complicated arguments
in Section 6 work for the rest of the cases.
The proof below is a slight simplification
of the proof in Section 4  and  is very different 
from the proof in Section 6 loc. cit..
We think that a similar method may simplify
other proofs in~\cite{GKadm} and might allow to generalize
some of the results.

\subsubsection{}
\begin{prop}{prop:Vk}
Let $\fg\not=D(2|1,a)^{(1)}$ be 
a symmetrizable affine Kac-Moody superalgebra and 
$k\in\mathbb{Z}_{\geq 0}$ be such that $k+h^{\vee}\not=0$.
Then 
\begin{equation}\label{vacform}
De^{\rho}\ch L(k\Lambda_0)=\sum\limits_{w\in W^{\#}} sgn(w)\ \frac{e^{w(k\Lambda_0+\rho)}}{\prod\limits_{\beta\in S} (1+e^{-w\beta})},
\end{equation}
where $W^{\#}$ is the Weyl group of $\Delta^{\#}$,
$(\Sigma, S)$ is as in Appendix~\ref{affinebases} and
$(1+e^{-\alpha})^{-1}=\sum\limits_{i=0}^{\infty} (-1)^i 
e^{-i\alpha}$, $(1+e^{\alpha})^{-1}=\sum\limits_{i=1}^{\infty} (-1)^{i-1} 
e^{-i\alpha}$
for $\alpha\in\Delta^+$.
\end{prop}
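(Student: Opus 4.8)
The plan is to prove $F=G$, where $F:=De^{\rho}\ch L(k\Lambda_0)$ and $G$ is the right‑hand side, by showing that $F$ and $G$ are both naturally $W^{\#}$‑anti‑invariant elements of $\cR(\Sigma)$ supported in the cone $k\Lambda_0+\rho-\mathbb{Z}_{\geq 0}\Sigma$, lying on the ``sphere'' $\{\eta:(\eta,\eta)=(k\Lambda_0+\rho,k\Lambda_0+\rho)\}$, with coefficient $1$ at $e^{k\Lambda_0+\rho}$; the input that converts these soft properties into equality is \Prop{prop:Uklambda0}. Writing $\lambda:=k\Lambda_0$ and $H:=e^{\lambda+\rho}\prod_{\beta\in S}(1+e^{-\beta})^{-1}$, the natural action gives $w(H)=e^{w(\lambda+\rho)}\prod_{\beta\in S}(1+e^{-w\beta})^{-1}$, so $G=\sum_{w\in W^{\#}}\sgn(w)\,w(H)$ is exactly the claimed formula.

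First I would establish the anti‑invariance. By \Prop{prop:Vksubq}(i) the module $L(\lambda)$ is $\pi^{\#}$‑integrable, so by \Lem{lem:Nabc} the element $D_{\pi^{\#}}e^{\rho_{\pi^{\#}}}\ch L(\lambda)$ is naturally $s_{\gamma}$‑anti‑invariant for every $\gamma\in\pi^{\#}$, hence $W^{\#}$‑anti‑invariant; since $\fg_{\pi^{\#}}$ is even with real root system $\Delta^{\#}$, the ratio $De^{\rho}/(D_{\pi^{\#}}e^{\rho_{\pi^{\#}}})$ is naturally $W^{\#}$‑invariant because each $s_{\gamma}$ permutes the remaining positive even and odd roots and carries the $\rho$‑shift as in~(\ref{eq:sgammainv}). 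Thus $F$ is $W^{\#}$‑anti‑invariant; in fact, using that $L(\lambda)$ is also $\dot{\Sigma}_{\pr}$‑integrable (\Rem{rem:Vkandpi}), $F$ is anti‑invariant under the larger group $W[\pi]$, a point I will need below. For $G$ the $W^{\#}$‑anti‑invariance is built in, while \Lem{lem:AAA} guarantees that each $w(H)$ is supported in $\lambda+\rho-\mathbb{Z}_{\geq 0}\Sigma$, so $G\in\cR(\Sigma)$.

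Next I would pin down the support and sphere conditions. For $F$ the support lies in $\lambda+\rho-\mathbb{Z}_{\geq 0}\Sigma$, and by \Prop{prop:KK}(i) every occurring weight $\lambda+\rho-\mu$ satisfies $\lambda-\mu\in U(\lambda)$; as each step defining $U(\lambda)$ is either an anisotropic reflection or an isotropic shift $\nu\mapsto\nu-\gamma$ with $(\nu+\rho,\gamma)=0$, the value $(\,\cdot+\rho,\cdot+\rho)$ is preserved, giving $2(\lambda+\rho,\mu)=(\mu,\mu)$. For $G$ the same sphere condition is a one‑line computation: the weights of $w(H)$ are $w(\lambda+\rho)-\sum_{\beta}c_{\beta}w\beta$, and since $S\subset\dot{\Sigma}$ consists of pairwise orthogonal isotropic roots with $(\Lambda_0,\dot{\Delta})=0$, one has $(\lambda+\rho,\beta)=0=(\beta,\beta')$, so every such weight again has square length $(\lambda+\rho,\lambda+\rho)$. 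The coefficient of $e^{\lambda+\rho}$ is $1$ on both sides (top of $L(\lambda)$, resp. the $w=\mathrm{id}$ term), using that $\lambda+\rho$ is strictly $\pi^{\#}$‑dominant since $\langle\lambda+\rho,\gamma^{\vee}\rangle\geq 1$ for $\gamma\in\pi^{\#}$.

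The equality would then follow from a maximal‑weight argument fed by \Prop{prop:Uklambda0}. Grouping an anti‑invariant element by $W^{\#}$‑orbits reduces $F=G$ to equality of the $\pi^{\#}$‑regular‑dominant parts; assuming $F\ne G$ I would take a maximal weight $\nu_0=\lambda+\rho-\mu_0$ of $F-G$, which anti‑invariance forces to be $\pi^{\#}$‑regular‑dominant, i.e. $\langle\lambda-\mu_0,\alpha^{\vee}\rangle\geq 0$ for all $\alpha\in\Delta^{\#+}$. Combined with the sphere condition and $(\lambda+\rho,\delta)=k+h^{\vee}\ne 0$, this is the hypothesis of \Prop{prop:Uklambda0}, whose conclusion $\mu_0\in\mathbb{Z}_{\geq 0}S$ places $\nu_0$ in $\supp H$, after which comparing the coefficient of $e^{\nu_0}$ in $F$ and $G$ should yield the contradiction. \emph{The hard part will be exactly this last coefficient comparison}: anti‑invariance together with support on the isotropic directions $\mathbb{Z}_{\geq 0}S$ still leaves infinitely many admissible dominant weights, so one must track the signs produced by $\prod_{\beta\in S}(1+e^{-\beta})^{-1}$ and not merely its support, and one must cope with the fact that only $F$ (and not $G$) carries the full $W[\pi]$‑symmetry. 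I expect two families to need separate care: for $D(n+1|n)^{(2)}$ and $A(2n|2n)^{(4)}$ one has $h^{\vee}=0$, so \Prop{prop:Uklambda0} is applied through its property (d) and one must additionally supply $\dot{\Delta}^{+}_{\ol{0}}$‑dominance of $\nu_0$, which I would obtain from the $W[\pi]$‑anti‑invariance of $F$ noted above; and for $A(2n-1|2n-1)^{(2)}$ and $D(n+1|n)^{(1)}$, where \Prop{prop:Uklambda0} gives only $\mu_0\in\mathbb{Z}_{\geq 0}\dot{\Sigma}$, one needs the supplementary step that regular dominance on the sphere forces $\mu_0\in\mathbb{Z}_{\geq 0}S$.
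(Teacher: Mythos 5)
Your framework (anti-invariance, support in the cone, the sphere condition via \Prop{prop:KK}, a maximal-weight argument fed by \Prop{prop:Uklambda0}) is indeed the skeleton of the paper's proof, but two things break. First, your normalization step rests on a false claim: $k\Lambda_0+\rho$ is \emph{not} strictly $\pi^{\#}$-dominant in general. For every $\fg$ in question, $\dot{\pi}^{\#}$ contains principal roots of the form $\beta_1+\beta_2$ with $\beta_1,\beta_2$ adjacent isotropic simple roots, and $(\rho,\beta_i)=0$, so $\langle k\Lambda_0+\rho,\gamma^{\vee}\rangle=0$ for such $\gamma$; thus $\Stab_{W^{\#}}(k\Lambda_0+\rho)\supseteq W[\dot{\pi}^{\#}]$ is large. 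Consequently the coefficient of $e^{k\Lambda_0+\rho}$ in $G$ is not the single term $w=\mathrm{id}$ but $\sum \sgn(w)$ over all $w$ in the stabilizer with $wS\subset\Delta^+$, and showing this equals $1$ is already (the top coefficient of) the Weyl denominator identity for the finite-dimensional superalgebra $\dot{\fg}$ --- a nontrivial theorem, not a bookkeeping fact.

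Second, the step you yourself flag as ``the hard part'' is precisely the genuine gap, and your proposed patches do not close it. The paper's decisive move, which your plan lacks, is to invoke the finite super denominator identity $\sum_{w\in W[\dot{\pi}^{\#}]}\sgn(w)\,e^{w(k\Lambda_0+\dot{\rho})}\prod_{\beta\in S}(1+e^{-w\beta})^{-1}=\dot{D}e^{\dot{\rho}}$ (\cite{Gdenomfin}; \cite{Gnon-zero},\cite{Reifdenom} for the $k=0$ case) to rewrite $G=j_0^{-1}\sum_{w\in W[\pi]}Z_w$. This does two jobs at once: it upgrades the symmetry of $G$ from $W^{\#}$ to $W[\pi]$ (so the maximal weight of $F-G$ becomes dominant with respect to \emph{all} of $\pi$, which is what the hypothesis of \Prop{prop:Uklambda0} actually demands, and uniformly in all cases, not only for $D(n+1|n)^{(2)}$ and $A(2n|2n)^{(4)}$), and it identifies the coefficients of $G$ on the sub-cone $k\Lambda_0+\rho-\mathbb{Z}_{\geq 0}\dot{\Sigma}$ with those of $\dot{D}e^{\rho}$, which visibly agree with those of $F$ there. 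Your alternative --- extracting dominance from the $W[\pi]$-anti-invariance of $F$ alone --- cannot work, because the maximal weight in question is a weight of $F-G$, and without the identity above $G$ has no $W[\pi]$-symmetry; and ``tracking the signs of $\prod_{\beta\in S}(1+e^{-\beta})^{-1}$'' is not a substitute for it. Even after these repairs one still needs the paper's final reduction (that terms $Z_w$ with $w\notin W[\dot{\pi}^{\#}]$ do not contribute at weights in $k\Lambda_0+\rho-\mathbb{Z}_{\geq 0}\dot{\Sigma}$), which requires the case analysis on $(k\Lambda_0+\rho,\alpha_0^{\#})$ including a separate argument for $B(n+1|n)^{(1)}$ at $k=0$.
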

\begin{proof}
By~\Lem{lem:AAA} (ii) we have
\begin{equation}\label{eq:wlambda}
 w(k\Lambda_0+\rho)\in (k\Lambda_0+\rho)-
\mathbb{Z}_{\geq 0}\pi^{\#}\end{equation}
for any $w\in W^{\#}$. We set
$$\begin{array}{l}
P(\Sigma):=\prod\limits_{\alpha\in \Delta^+_{\ol{0}}}(1-e^{-\alpha})
^{\dim \fg_{\alpha}}e^{\rho_{\pi}},\ \ Q(\Sigma):=
\prod\limits_{\alpha\in \Delta^+_{\ol{1}}}
(1+e^{-\alpha})^{\dim \fg_{\alpha}}e^{\rho_{\pi}-\rho},\\
Z_1:=De^{\rho}\ch L(k\Lambda_0),\ \ \ 
Z_w:=\sgn(w)\frac{e^{w(k\Lambda_0+\rho)}}
{\prod\limits_{\beta\in S} (1+e^{-w\beta})},\ \ 
Z_2:=\sum\limits_{w\in W^{\#}} Z_w\\
Y_i:=Q(\Sigma)Z_i\ \text{ for }i=1,2.
\end{array}$$
One easily sees that $P(\Sigma)$, $Q(\Sigma)$ and
$Z_1$ lie in $\cR(\Sigma)$ (see~$\S$~\ref{triangular}
for notation) and
$\supp(Z_1)\subset (k\Lambda_0+\rho-\mathbb{Z}_{\geq 0}\Sigma)$.
Moreover, $Z_2\in \cR(\Sigma)$ by~\cite{GKadm}, Lemma 2.2.8.
One has
\begin{equation}\label{eq:suppZw}
\supp (Z_w)\subset w(k\Lambda_0+\rho)+\sum\limits_{\beta\in S: w\beta\in-\Delta^+} w\beta\subset w(k\Lambda_0+\rho)-\mathbb{Z}_{\geq 0}\Sigma.\end{equation}
Using~(\ref{eq:wlambda}) we get 
 $\supp(Z_2)\subset (k\Lambda_0+\rho-\mathbb{Z}_{\geq 0}\Sigma)$,
so 
\begin{equation}\label{eq:Z1Z2}
\supp(Z_1-Z_2)\subset (k\Lambda_0+\rho-\mathbb{Z}_{\geq 0}\Sigma).
\end{equation}

Let
$$\pi:=\pi^{\#}\cup \dot{\Sigma}_{\pr}=
\{\alpha\in\Sigma_{\pr}|\ \alpha\in\dot{\Delta} \text{ or }
(\alpha,\alpha)>0\}$$
and let  $W[\pi]$  be the  group generated by $s_{\gamma}$ for $\gamma\in\pi$. 
We claim that 
$P(\Sigma)$
is naturally $W[\pi]$-anti-invariant and 
$Q(\Sigma)$ is  naturally $W[\pi]$-invariant.
Since $W[\pi]$ is generated by $s_{\gamma}$ 
for $\gamma\in\pi$, it is enough to verify 
that for each $\gamma\in\pi$
$P(\Sigma)$, $Q(\Sigma)$ are naturally
$s_{\gamma}$-anti-invariant and $s_{\gamma}$-invariant
respectively. It is easy to see that this holds
if $\gamma$ or $\gamma/2$ lie in $\Sigma$.
Since $\pi\subset\Sigma_{\pr}$, for every
$\gamma\in\pi$ there exists  $\Sigma_1\in\Sp$ containing
 $\gamma$ or $\gamma/2$. One readily sees that 
 $P(\Sigma_2)=P(r_{\beta}\Sigma_2)$ and $Q(\Sigma_2)=Q(r_{\beta}\Sigma_2)$
 for any $\Sigma_2\in \Sp$  and an odd reflexion 
 $r_{\beta}$. Therefore 
$P(\Sigma)=P(\Sigma_1)$ and $Q(\Sigma)=Q(\Sigma_1)$.
This implies the claim.

Since the module $L:=L(k\Lambda_0)$ is  $\pi$-integrable,
$\ch L$ is naturally $W[\pi]$-invariant. Moreover,
  $De^{\rho}=\frac{P(\Sigma)}{Q(\Sigma)}$, so
$Y_1=Q(\Sigma)Z_1=P(\Sigma)\ch L$
is  naturally $W[\pi]$-anti-invariant.

Set 
$$\dot{Z}_2:=\sum\limits_{w\in W[\dot{\pi}]} Z_w.$$
Since $\rho-\dot{\rho}=h^{\vee}\Lambda_0$ is $W[\dot{\pi}^{\#}]$-invariant we have
$$\dot{Z}_2=e^{\rho-\dot{\rho}}\sum\limits_{w\in W[\dot{\pi}^{\#}] } \sgn(w)\ \frac{e^{w(k\Lambda_0+\dot{\rho})}}{\prod\limits_{\beta\in S} (1+e^{-w\beta})}.
$$

One has
$\dot{\Sigma}_{\pr}:=\dot{\pi}^{\#}\coprod \dot{\Sigma}_-$, 
where 
$\dot{\Sigma}_{-}:=
\{\alpha\in \dot{\Sigma}_{\pr}| \ (\alpha,\alpha)<0\}$.
This gives
$$\pi=\pi^{\#}\cup\dot{\Sigma}_{\pr}=\pi^{\#}\coprod
\dot{\Sigma}_-.$$
The restriction of $(-,-)$ to $\dot{\fg}$
coincides with the normalised invariant bilinear form
on $\dot{\fg}$, except for the case $\fg=A(2n|2n-1)^{(2)}$
where $\dot{\fg}=B(n|n)$ (in the latter case
these forms are proportional with the coefficient $-1$).
By~\cite{Gdenomfin}, Section 7 in all cases we have
$$\sum\limits_{w\in W[\dot{\pi}^{\#}] } \sgn(w)\ \frac{e^{w(k\Lambda_0+\dot{\rho})}}{\prod\limits_{\beta\in S} (1+e^{-w\beta})}=j_0^{-1} \sum\limits_{w\in W[\dot{\Sigma}_{\pr}]}
\sgn(w)\ \frac{e^{w(k\Lambda_0+\dot{\rho})}}
{\prod\limits_{\beta\in S} (1+e^{-w\beta})}=\dot{D}e^{\dot{\rho}},
$$
where $j_0$ is the cardinality of $W[\dot{\Sigma}_-]$
and $\dot{D}\in\cR(\dot{\Sigma})$ is the Weyl denominator for $\dot{\Delta}^+$. 
Therefore 
\begin{equation}\label{eq:dotZ2}
\dot{Z}_2=
j_0^{-1} \sum\limits_{w\in W[\dot{\Sigma}_{\pr}]}
\sgn(w)\ \frac{e^{w(k\Lambda_0+\rho)}}
{\prod\limits_{\beta\in S} (1+e^{-w\beta})}=\dot{D}e^{\rho}.
\end{equation}
We have 
 $Z_2=\sum\limits_{w\in W^{\#}} Z_w=\sum\limits_{y\in W^{\#}/W[\dot{\pi}^{\#}]} y\bigl( \sum\limits_{w\in W[\dot{\pi}^{\#}]} Z_w\bigr)$. Since $W[\pi]/W[\dot{\Sigma}_{\pr}]=W^{\#}/W[\dot{\pi}^{\#}]$ this gives
$$Z_2=j_0^{-1}\sum\limits_{y\in W^{\#}/W[\dot{\pi}^{\#}]} y\bigl( \sum\limits_{w\in W[\dot{\Sigma}_{\pr}]} Z_w\bigr)=j_0^{-1} \sum\limits_{w\in W[\pi]} Z_w.$$
Since
$Q(\Sigma)$
is  naturally $W[\pi]$-invariant, we have
$$\begin{array}{rl}
Y_2&=j_0^{-1} Q(\Sigma)\sum\limits_{w\in W[\pi]} 
Z_w=
j_0^{-1}\sum\limits_{w\in W[\pi]} 
\sgn(w)\ \prod\limits_{\alpha\in \Delta^+_{\ol{1}}}
(1+e^{-w\alpha})e^{w(\rho_{\pi}-\rho)}\frac{e^{w(k\Lambda_0+\rho)}}
{\prod\limits_{\beta\in S} (1+e^{-w\beta})}\\
&=
j_0^{-1}\sum\limits_{w\in W[\pi]} 
\sgn(w)\ e^{w(k\Lambda_0+\rho_{\pi})}
\prod\limits_{\beta\in \Delta^+_{\ol{1}}\setminus S} 
(1+e^{-w\beta})\end{array}
$$
By~\cite{GKadm} $\S$ 2.2.5, the last sum is naturally 
$W[\pi]$-anti-invariant. Therefore $Y_2$ is  
naturally $W[\pi]$-anti-invariant.

Suppose the contrary, that $Z_1\not=Z_2$. Let $\nu+\rho$ 
be a maximal element in $\supp(Z_1-Z_2)$ 
 with respect to the partial order
$\nu_1\geq \nu_2$ if $\nu_1-\nu_2\in\mathbb{Z}_{\geq 0}\Sigma$.
Then $\nu+\rho_{\pi}$ is a maximal element in $\supp(Y_1-Y_2)$ 
 with respect to this partial order. By above,
 $Y_1, Y_2$ are  naturally $W[\pi]$-anti-invariant. Thus
$Y_1-Y_2$ is  naturally $W[\pi]$-anti-invariant, so
$\langle \nu+\rho_{\pi},\alpha^{\vee}\rangle\not\in\mathbb{Z}_{\leq 0}$
for all $\alpha\in \pi$. By~(\ref{eq:Z1Z2}), $\nu\in \mathbb{Z}_{\geq 0}\Sigma$. Since $\nu\in\supp(Z_1)\cup\supp(Z_2)$
we have $(k\Lambda_0+\rho,k\Lambda_0+\rho)=(\nu+\rho,\nu+\rho)$
by~$\S$~\ref{Ulambda}.  By~\Prop{prop:Uklambda0} this gives
$\nu\in\mathbb{Z}_{\geq 0}\dot{\Sigma}$.
One readily sees that for $\mu\in \mathbb{Z}_{\geq 0}\dot{\Sigma}$
the coefficient of $e^{k\Lambda_0+\rho-\mu}$ in $Z_1$ is equal to
the coefficient of $e^{{\rho}-\mu}$ in 
$\dot{D}e^{{\rho}}=\dot{Z}_2$.  Therefore it remains to show
$k\Lambda_0+\rho-\nu\not\in \supp(Z_2-\dot{Z}_2)$.

It remains to verify that for any $w\in W^{\#}$ 
such that $(k\Lambda_0+\rho-\nu)\in \supp (Z_w)$
we have $w\in W[\dot{\pi}^{\#}]$.
Combining~(\ref{eq:suppZw}) and~(\ref{eq:wlambda}) we conclude that
$$
\begin{array}{l}
(k\Lambda_0+\rho)-w(k\Lambda_0+\rho)\in \mathbb{Z}_{\geq 0}\dot{\pi}^{\#},\\
w\beta \in -\mathbb{Z}_{\geq 0} S
\text{ for each }\beta\in S\text{ such that }w\beta\in-\Delta^+.
\end{array}$$

Recall that  $W^{\#}$
is the Weyl group generated by $s_{\gamma}$ for
$\gamma\in\pi^{\#}=\dot{\pi}\cup\{\alpha^{\#}_0\}$.
We will use the following property of $W^{\#}$:
if $\lambda\in\fh^*$ is such that $(\lambda,\gamma)\geq 0$
for all $\gamma\in \pi^{\#}$ and $(\lambda,\alpha_0^{\#})>0$,
then $\lambda-y\lambda\in \mathbb{Z}\dot{\pi}$
for $y\in W^{\#}$
is equivalent $y\in W[\dot{\pi}]$. 
Using this property we obtain $w\in  W[\dot{\pi}^{\#}]$
if $(k\Lambda_0+\rho,\alpha^{\#}_0)>0$.

If $\rho=0$, then $h^{\vee}=0$, so 
$k\Lambda_0+\rho=k\Lambda_0$ for $k\geq 1$.
Therefore $(k\Lambda_0+\rho,\alpha^{\#}_0)>0$.

If $\rho\not=0$, then  property (c) holds, 
so  $(\rho,\alpha)\geq 0$   for all $\alpha\in\Delta^+$. Since
   $\alpha_0^{\#}\not\in\dot{\Delta}$
we have $\alpha_0^{\#}-\alpha_0\in {\Delta}^+$,
so 
$$0=(k\Lambda_0+\rho,\alpha_0^{\#})\geq k+ (\rho,\alpha^{\#})\geq k+(\rho,\alpha_0)=k+\frac{1}{2}(\alpha_0,\alpha_0).$$
 This gives  $k=(\alpha_0,\alpha_0)=0$
 and $h^{\vee}\not=0$ (since $k\not=-h^{\vee}$).

 This completes the proof for all cases except 
 for 
  $k=0$ and $\Delta$ from the following list:
$$B(n|n)^{(1)},\ B(n+1|n)^{(1)},\ D(n+2|n)^{(1)},\ 
A(2n|2n-1)^{(2)}, \ A(2m|2m+1)^{(2)}.$$
For the remaining cases the required formula is
reduced to the implication
\begin{equation}\label{eq:123}
w\in \Stab_{W^{\#}}\rho, \ \ wS\subset \Delta^+\ \ \Longrightarrow
w\in W[\dot{\pi}^{\#}]
\end{equation}
which can be easily verified case-by-case (see
 an argument for $B(n+1|n)^{(1)}$ below).

For $k=0$ the required formula is
a particular case of Denominator Identity established 
in~\cite{Gnon-zero}, \cite{Reifdenom} and  
the implication~(\ref{eq:123}) 
was verified in all cases except for the case $B(n+1|n)^{(1)}$
which was not properly treated.
Below we give an argument for this case.

For $\Delta=B(n+1|n)^{(1)}$ we have 
$$S=\{\delta_i-\vareps_{i+1}\}_{i=1}^n,\ \ 
\rho=\Lambda_0+\frac{1}{2}\sum\limits_{i=1}^{n+1}\vareps_i-
\frac{1}{2}\sum\limits_{i=1}^{n}\delta_i$$
and ${\pi}^{\#}=\{\vareps_1-\vareps_2, \vareps_2-\vareps_3,\ldots, \vareps_n-\vareps_{n+1}, \vareps_{n+1};\delta-\vareps_1-\vareps_2\}$
has the Dynkin diagram of type $B_{n+1}^{(1)}$.
The group $\Stab_{W^{\#}}\rho$ is generated
by $s_{\alpha}$ for $\alpha\in \pi^{\#}\setminus\{\vareps_{n+1}\}$.
The Dynkin diagram corresponding to
$\pi^{\#}\setminus\{\vareps_{n+1}\}$ 
is of type $D_{n+1}$: 
$$\pi^{\#}\setminus\{\vareps_{n+1}\}=
\{\epsilon_i-\epsilon_{i+1}\}_{i=1}^n\cup\{\epsilon_n+\epsilon_{n+1}\}\ \ \text{ for }  \epsilon_i:=\frac{\delta}{2}-\vareps_{n+2-i}
\ (i=1,\ldots,n+1).$$
Thus the group $\Stab_{W^{\#}}\rho$ 
acts on $\{\epsilon_i\}_{i=1}^{n+1}$
by signed permutations changing even number of signs.
Fix  $w\in \Stab_{W^{\#}}\rho$  is such that $wS\subset \Delta^+$.
If $w$ changes some signs, then 
$w(\epsilon_{n+2-i})=-\epsilon_{n+2-j}$ for some $i\not=1$ 
(since $w$ changes at least two signs), which gives
 $w\vareps_{i}=\delta-\vareps_j$ that is
$w(\delta_{i-1}-\vareps_i)=\delta_{i-1}+\vareps_j-\delta\in\Delta^-$,
a contradiction.
Therefore $w$ lies the group of permutations of 
 $\{\epsilon_i\}_{i=1}^{n+1}$. If $w\not=\Id$, then 
 $w(\epsilon_{n+2-i})=\epsilon_{n+2-j}$ for some $j<i$ which gives
 $w(\delta_{i-1}-\vareps_i)=\delta_{i-1}-\vareps_j\in\Delta^-$,
 a contradiction. Hence $w=\Id$. This completes the proof.
\end{proof}

\section{}\label{sect:integralrootsystems}
The goal of this section is to justify the definition 
of integral root system $\Delta_{\lambda}$ given in~$\S$~\ref{Deltalambda}.
The main result of this section 
 is~\Thm{thm:Deltalambda}.

In this section $\fg$ is a symmetrizable Kac-Moody superalgebra
and  $\Delta^{\ree}(\fg)$ is the set of real roots. As in Appendix~\ref{affinebases}
we will use the same notation for $\Delta^{\ree}$ and for $\fg$
(for example, $A(2|2)$ stands for $\Delta(\fgl(3|3))$. As above,
if $\fg$ is affine, we denote by $\delta$ the minimal imaginary root and
let $\cl$ be the canonical map
from $\mathbb{C}\Delta^{\ree}(\fg)$ to 
$\mathbb{C}\Delta^{\ree}(\fg)/\mathbb{C}\delta$.
If $\fg$ is finite-dimensional, we let $\cl$ be the identity
map on $\mathbb{C}\Delta^{\ree}(\fg)$.

\subsection{}
First consider the case when $\fg$ is a 
 symmetrizable Kac-Moody algebra.
 For non-critical $\lambda\in\fh^*$ 
 the integral root systems $\Delta_{\lambda}=\{\alpha\in {\Delta}^{\ree}|\ \langle \lambda,\alpha^{\vee}\rangle\in\mathbb{Z}\}$ are well-known, see for example,~\cite{Ku},\cite{MP}, \cite{KT98}. 
In this case $R\subset {\Delta}^{\ree}$ is called a {\em root subsystem}  if
$s_{\alpha}\beta\in R$ for any $\alpha,\beta\in R$. 
 Clearly, $\Delta_{\lambda}$ is a root subsystem.
 
If $R$ is a root subsystem, then setting $R^+:=R\cap \Delta^+$ and 
\begin{equation}\label{eq:SigmaR}
\Sigma(R):=\{\alpha\in R|\ s_{\alpha}(R^+\setminus\{\alpha\})\subset R^+\},
\end{equation}
we have 
\begin{itemize}
\item[(a)]
$\Sigma(R)$ is the set of indecomposable elements in $R^+$, see, for example,~\cite{KT98}, Lemma 2.2.8.
\item[(b)] The matrix $(\langle\alpha^{\vee},\beta\rangle)_{\alpha,\beta\in\Sigma(R)}$ is a symmetrizable generalized Cartan matrix, possibly of countable rank,
loc. cit.  Lemma 2.2.10.
\end{itemize}
(The set $\Sigma(R)$ is a base of $R^+$ if $\Delta$ is finite; if $\Delta$ is affine, $\Sigma(R)$ could be linearly dependent).

By (b), $(\langle\alpha^{\vee},\beta\rangle)_{\alpha,\beta\in\Sigma(R)}$ is the Cartan matrix of 
a symmetrizable Kac-Moody algebra (possibly of countable rank), which we denote by $\fg_{\lambda}$.
From~\cite{KK} it follows that  if $\lambda-\rho$ is a non-critical weight, then 
$$\begin{array}{llll}
& (*) & & \text{
 $L(\nu-\rho)$ lies in the same block as }L(\lambda-\rho)\ \ \Longleftrightarrow\ \ \nu\in W[\Delta_{\lambda}]\lambda\end{array}$$
(where $W[\Delta_{\lambda}]$ is the subgroup of $GL(\fh^*)$ generated by $s_{\alpha}$ for $\alpha\in \Delta_{\lambda}$).
 P.~Fiebig proved that 
 the structure of non-critical block in the category $\CO_{\Sigma}(\fg)$  depends only on $W[\Delta_{\lambda}]$
and its action on the highest weights, see~\cite{F}, Theorem 4.1.

A similar construction
works for symmetrizable Kac-Moody superalgebras 
in the case when $\lambda$ is typical, i.e. $(\lambda+\rho,\alpha)\not=0$ for all $\alpha\in\Delta^{\is}$;
in particular, this works if $\fg$ is anisotropic (however, to the best of our knowledge,
Fiebig's results have not been  extended to anisotropic Kac-Moody superalgebras).

In this section we consider  
indecomposable symmetrizable  Kac-Moody superalgebras.
By~\cite{Hoyt}, \cite{S}
all these superalgebras are anisotropic,  
finite-dimensional or affine.   
In order to  define $\Delta_{\lambda}$ rigorously
we introduce the notion of a real 
root system, using the axioms given by V.~Serganova for 
generalized finite root systems in~\cite{VGRS},
and then take $\Delta_{\lambda}$ to be 
the minimal real root system containing
real roots which are "integral" for $\lambda$. 
By contrast with the anisotropic case, 
it is not true that,  for affine Kac-Moody superalgebras,
a real root subsystem $R$ is 
isomorphic to the set of real roots
of a Kac-Moody superalgebra. 
By Serganova's classification~\cite{VGRS}
this is "almost true" if $R$ is finite 
(then $R$ is isomorphic to the set of real roots
of a Kac-Moody superalgebra or $\mathfrak{psl}(n|n)$
for $n\geq 3$)-- this can be easily deduced
from Serganova's classification~\cite{VGRS}.
However, if a finite real root system $R$
is a subset of the root system of a Kac-Moody superalgebra $\fg$,
then $R$ is isomorphic to the set of real roots
of a Kac-Moody superalgebra $\fg'$, see~\Prop{prop:RsubsetDelta} (i) below
(i.e., $R$ is not isomorphic to the root system of $\mathfrak{psl}(n|n)$).

Unfortunately in the affine case
the set of real roots of the Kac-Moody superalgebra of type 
$A(2|1)^{(2)}$ contains
a subsystem $R$ of type $A(1|1)^{(2)}$ 
which is not a contragredient Lie superalgebra, see~$\S$~\ref{A112} below.
This example indicates that
the  real roots of the Kac-Moody superalgebra
could not be singled out by a  set of "inheritable" axioms (see~$\S$~\ref{inheritable}) since
$R$ is the intersection of the set of real roots 
$\Delta^{\ree}(\mathfrak{sl}(2|4)^{(2)})$
with a subspace (spanned by $\vareps_1,\delta_1$
and $\delta$).

In this section we prove ~\Thm{thm:Deltalambda}
asserting that $\Delta_{\lambda}$
is isomorphic to the set of real roots of 
a symmetrizable Kac-Moody superalgebra $\fg_{\lambda}$ (possibly of countable rank) if all indecomposable components of $\Delta_{\lambda}$
are not isomorphic to $A(1|1)^{(2)}$ 
(this automatically holds 
if $\fg\not=A(2m-1|2n-1)^{(2)}$, see~\ref{DeltalambdaA112} below).
 Using~\cite{KK},  property (*)
can be generalized to symmetrizable Kac-Moody superalgebras: by~\cite{Gdepth}, Theorem 4.8,
 if $\lambda-\rho$ is a non-critical weight, then 
$$\begin{array}{llll}
&  & & \text{
 $L(\nu-\rho)$ lies in the same block as }L(\lambda-\rho)\ \ \Longleftrightarrow\ \ 
 \nu\in W[\Delta_{\lambda}](\lambda+\mathbb{Z}S_{\lambda}),\end{array}$$
where $S_{\lambda}$ is an arbitrary maximal iso-set orthogonal to $\lambda+\rho$, see~$\S$~\ref{iso-set} below.

\subsubsection{Remark}
Let $\fg$ be a symmetrizable Kac-Moody algebra and $E\subset \mathbb{C}\Delta^{\ree}(\fg)$ be a vector subspace
such that $R:=E\cap \Delta^{\ree}(\fg)$ is non-empty. Then
$R$
satisfies axioms (GR0), (GR1) from~\ref{GR03} below. Consider the subalgebra $\fg_R$
generated by $\fg_{\gamma}$ for $\gamma\in R$.

Let $\Sigma(R)$ be as in~(\ref{eq:SigmaR}). Set
$A':=(\langle\alpha^{\vee},\beta\rangle)_{\alpha,\beta\in\Sigma(R)}$.
Consider the Kac-Moody algebra $\fg(A')$ with the set of simple roots $\Sigma'$. Let $\phi$ be the natural bijection of $\Sigma'$
and $\Sigma(R)$.
The algebra $\ft:=[\fg(A'),\fg(A')]$ is generated by
$\fg(A')_{\pm \alpha}$ for $\alpha\in\Sigma'$
which leads to an algebra homomorphism $\psi: \ft\to \fg_R$
which maps $\fg(A')_{\pm \alpha'}$ for $\alpha'\in\Sigma'$
to $\fg_{\pm \phi(\alpha')}$.

By~\cite{KT98},  $\Sigma_R$ coincides with the set of indecomposable elements in $R\cap \Delta^+$  and 
 $R=W[\Sigma_R]\Sigma_R$. The latter gives
 $\psi(\ft_{\gamma'})\subset \fg_{\phi(\gamma')}$.
Since $\dim \fg_{\gamma}=1$ for all $\gamma\in R$,
the map $\psi: \ft\to \fg_R$ is surjective.

The above argument can be adapted to 
 anisotropic  symmetrizable Kac-Moody superalgebras, see~\Prop{prop:RR'an}. 
 For  symmetrizable Kac-Moody superalgebras with isotropic roots
 we can not define $\Sigma(R)$ by  formula~(\ref{eq:SigmaR}), but 
 we can take $\Sigma(R)$ consisting of the
 indecomposable elements in $R\cap \Delta^+$. 
 Taking $A'$  as above and  $\tau'$ corresponding to the set of odd roots in $\Sigma(R)$, we 
 can construct a Kac-Moody superalgebra $\fg(A',\tau')$,
and  introduce $\ft$, $\psi$. From~\Prop{prop:RsubsetDelta}
below it follows that
the map $\psi: \ft\to \fg_R$ is surjective
except for the case when $\fg$ is of the type $A(2m-1|2n-1)^{(2)}$
and $R\cong A(1|1)^{(2)}$ (for example, we can take $R$ spanned by $\delta$, $\vareps_1$ and $\delta_1$).
In the latter case $\fg(A',\tau')\cong \fgl(2|2)$, so
$\ft\cong \fsl(2|2)$ is finite-dimensional while 
$\fg_R$ is infinite-dimensional.

\subsection{Real root systems}\label{defnGR123}
In this section we  will consider various generalizations of the notion of a root system.
We consider   a complex vector space $L$  endowed by a complex 
valued symmetric bilinear form $(-,-)$ (our main example is $L=\mathbb{C}\Delta$).

\subsubsection{}\label{basismn}
We denote by 
 $L_{m,n}$ the vector space spanned by mutually orthogonal vectors
$\vareps_i,\delta_j,\delta$  for $i=1,\ldots, m$,
$j=1,\ldots,n$ satisfying
$$(\vareps_i,\vareps_i)=-(\delta_i,\delta_i)\not=0,\ \ \ 
(\delta,\delta)=0$$ 
(thus the kernel of the bilinear form $(-,-)$ 
is spanned by $\delta$).

\subsubsection{Definition}\label{def:bijquot}
We say that $R'\subset L'$ is a 
{\em quotient} of $R\subset L$ if there exists a linear map $\psi: \mathbb{C}R\to \mathbb{C} R'$, which is compatible with the bilinear forms 
and $\psi(R)=R'$. We say that
$R'$ is a {\em bijective quotient}\footnote{In the main text we say that $R'$ is isometric to $R$.} of $R$
if the restriction of $\psi$ to $R$ gives a bijection between $R$ and $R'$, and
we say that  $R$ and $R'$ are  {\em isomorphic}
if $\psi:\mathbb{C}R\to \mathbb{C} R'$ is bijective. 

Note that, under this convention the  root systems of $\mathfrak{sl}(n|n)$
and of $\mathfrak{gl}(n|n)$
are isomorphic; moreover,  $\Delta(\fpsl(n|n))$ is a bijective quotient
of $\Delta(\fgl(n|n))$ for $n\geq 3$ ($\Delta(\fpsl(2|2))$ is not 
a bijective quotient
of $\Delta(\fsl(2|2))$ since the preimage of each isotropic root contains two roots).

\subsubsection{}\label{GR03}
We will consider the following  axioms
for a subset $R$ of $L$ (for  $\alpha,\beta\in R$):
\begin{itemize}
\item[(GR0)]  $R\not=\emptyset$, $0\not\in R$,  $-R=R$, and $\mathbb{Z}R$ is a free module of finite rank;
\item[(GR1)] 
 if  $(\alpha,\alpha)\not=0$, then $\frac{2(\alpha,\beta)}{(\alpha,\alpha)}\in \mathbb{Z}$
 and $(\beta-\frac{2(\alpha,\beta)}{(\alpha,\alpha)}\alpha)\in R$;
\item[(GR2)] if $(\alpha,\alpha)=0\not=(\beta, \alpha)$, then 
the set  $\{\beta+\alpha,\beta-\alpha\}\cap R$ has cardinality one;
  \item[(GR3)]
for any $\alpha\in R$ there exists $\beta\in R$ such that $(\alpha,\beta)\not=0$.
\end{itemize}

 If $R$ satisfies some of the above axioms, then any bijective quotient
 satisfies the same axioms.

We call a set $R$ satisfying (GR0)--(GR2) a {\em real root system}.
 
If $R$ satisfies (GR0)--(GR3), then any quotient of $R$ satisfies
(GR0), (GR1),  (GR3)
and the following weaker form of (GR2)
\begin{itemize}
\item[(WGR2)]
if $\alpha,\beta\in R$ are  such that $(\alpha,\alpha)=0\not=(\beta, \alpha)$, then   
$\{\beta+\alpha,\beta-\alpha\}\cap R_\not=\emptyset$.
\end{itemize}

\subsubsection{Inheritable properties}\label{inheritable}
We call a property of $R\subset L$
{\em inheritable} if for any $R$ satisfying this property and
any vector space $L_1\subset L$
having non-empty intersection with $R$, the set $R\cap L_1$
satisfies this property. 
The properties (GR0)--(GR2) and (WGR2) are inheritable.

\subsubsection{Group $W[R]$}
For  $v\in L$ such that $(v,v)\not=0$ we let $s_v\in \End(L)$ be the reflection $s_v(u)=u-\frac{2(u,v)}{(v,v)}v$.
For any $R\subset L$  we denote 
 by $W[R]$ the subgroup of $\End(L)$ which is generated by
 the reflections $s_v$  for  $v\in R$ such that $(v,v)\not=0$.

If $R\subset L$ satisfies (GR1), then 
 $W[R]R=R$.

\subsubsection{Notation}
For each $R\subset L$ we set
$$R^{\an}:=\{v\in R|\ (v,v)\not=0\},\ \ \ 
R^{\is}:=\{v\in R|\ (v,v)=0\}.$$

\subsubsection{Definition}\label{def:indec}
A nonempty subset $R\subset L$ satisfying (GR0) is called
{\em indecomposable}
if $R$ can not be presented as a disjoint union
of two nonempty mutually orthogonal sets satisfying (GR0).

\subsubsection{Remarks}
The only indecomposable set 
satisfying (GR0)--(GR2) which does not satisfy (GR3) is $R=\{\pm\beta\}$
with $(\beta,\beta)=0$, i.e. $R=\Delta(\fgl(1|1))$.

We will use the following simple lemma.

\subsubsection{}
\begin{lem}{lem:chainiso}
Let $R\subset L $ be an indecomposable set satisfying (GR0) and (GR1)
and let $\beta\in R^{\is}$. Then
\begin{enumerate}
\item 
For any $\gamma\in R\setminus\{\pm \beta\}$
there exists a chain of isotropic elements 
$\beta=\gamma_1$, $\gamma_2$, ..., $\gamma_s$ such that 
$(\gamma_i,\gamma_{i+1})\not=0$ and $(\gamma_s,\gamma)\not=0$.
\item If $R$ satisfies also (WGR2),
then $R^{\is}=-W[R]\beta\cup W[R]\beta$.
\end{enumerate}
\end{lem}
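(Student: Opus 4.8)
The statement to prove is Lemma~\ref{lem:chainiso}: for an indecomposable $R\subset L$ satisfying (GR0) and (GR1) with a distinguished isotropic element $\beta\in R^{\is}$, part (i) asserts that every $\gamma\in R\setminus\{\pm\beta\}$ is connected to $\beta$ by a chain of isotropic elements with consecutive nonzero pairings and final nonzero pairing with $\gamma$, and part (ii) (assuming also (WGR2)) asserts $R^{\is}=-W[R]\beta\cup W[R]\beta$.

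The plan is to prove (i) first by exploiting indecomposability directly. Define a relation on $R$ by declaring $\alpha,\gamma$ linked if there is a chain of isotropic elements $\alpha=\gamma_1,\gamma_2,\dots,\gamma_s$ with $(\gamma_i,\gamma_{i+1})\neq 0$ for all $i$ and $(\gamma_s,\gamma)\neq 0$. The key point is that indecomposability means $R$ cannot be split into two mutually orthogonal nonempty subsets. First I would fix $\beta$ and let $R_\beta\subseteq R$ be the set of all $\gamma$ reachable from $\beta$ by such an isotropic chain, together with $\pm\beta$ itself. I then want to show $R_\beta=R$. The strategy is to argue that $R_\beta$ and $R\setminus R_\beta$ are mutually orthogonal, contradicting indecomposability unless $R\setminus R_\beta=\emptyset$. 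The main subtlety here is handling anisotropic elements: the chain in the statement is required to consist of isotropic elements, so I must show that if some $\gamma$ with $(\gamma,\gamma)\neq 0$ pairs nontrivially with an element reachable from $\beta$, then $\gamma$ is still reachable, i.e. that anisotropic roots do not obstruct propagation of the isotropic-chain relation. This is where (GR1) enters: given an isotropic $\gamma_s$ reachable from $\beta$ and an anisotropic $\gamma$ with $(\gamma_s,\gamma)\neq 0$, the reflection $s_\gamma(\gamma_s)=\gamma_s-\frac{2(\gamma_s,\gamma)}{(\gamma,\gamma)}\gamma$ lies in $R$ by (GR1), is again isotropic (since reflections preserve the form), and pairs nontrivially with $\gamma_s$; thus one can extend the chain past $\gamma$ to land on an isotropic neighbor, and then show $(\,\cdot\,,\gamma)\neq 0$ is achievable along this extended isotropic chain.

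The cleanest way to organize this is to show that the set $T$ of isotropic elements reachable from $\beta$ (via isotropic chains) is closed under the operation ``reflect by any anisotropic root pairing nontrivially,'' and that $\operatorname{span}(T)^\perp\cap R$ is orthogonal to everything in $T$; indecomposability then forces every element of $R$ — anisotropic or isotropic — to pair nontrivially with some element of $T$, giving the chain to $\gamma$ as required. I expect the main obstacle to be a careful bookkeeping argument showing that the ``reachable'' set and its complement are genuinely mutually orthogonal in the sense of Definition~\ref{def:indec}: one must verify both that isotropic elements outside $T$ are orthogonal to all of $T$, and that anisotropic elements split cleanly, which requires the reflection trick above to convert an anisotropic obstruction into an isotropic step.

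For part (ii), assuming (WGR2) in addition, the plan is to upgrade the chain from (i) into an explicit element of $W[R]$. The inclusion $-W[R]\beta\cup W[R]\beta\subseteq R^{\is}$ is immediate since $W[R]$ preserves $R$ by (GR1) and preserves the bilinear form, hence maps isotropic elements to isotropic elements. For the reverse inclusion, I would take $\gamma\in R^{\is}$ and use part (i) to produce an isotropic chain $\beta=\gamma_1,\dots,\gamma_s$ with $(\gamma_s,\gamma)\neq 0$; then the goal is to realize each step $\gamma_i\mapsto\gamma_{i+1}$ (and the final passage to $\gamma$) by a product of reflections. Here (WGR2) is essential: when $\gamma_i,\gamma_{i+1}$ are isotropic with nonzero pairing, $\gamma_i+\gamma_{i+1}$ or $\gamma_i-\gamma_{i+1}$ lies in $R$ and is anisotropic (its square length is $\pm 2(\gamma_i,\gamma_{i+1})\neq 0$), so reflection in that anisotropic root interchanges (up to sign) $\gamma_i$ and $\gamma_{i+1}$; chaining these reflections transports $\beta$ to $\pm\gamma$, placing $\gamma\in W[R]\beta\cup-W[R]\beta$. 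The hard part in (ii) will be verifying that the anisotropic root $\gamma_i\pm\gamma_{i+1}$ does reflect one isotropic element to $\pm$ the other — a short computation using $(\gamma_i,\gamma_i)=(\gamma_{i+1},\gamma_{i+1})=0$ — and handling the final nonisotropic pairing step to reach exactly $\gamma$ rather than merely a neighbor of it.
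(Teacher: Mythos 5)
Your proposal is correct and uses the same ingredients as the paper's proof: for (i), indecomposability supplies connectivity and your reflection $s_\gamma(\gamma_s)\in R$ (via (GR1)) is exactly the paper's substitution turning anisotropic links into isotropic ones, while for (ii) your observation that (WGR2) puts $\gamma_i+\gamma_{i+1}$ or $\gamma_i-\gamma_{i+1}$ in $R$ as an anisotropic element whose reflection swaps the two isotropic elements up to sign is precisely the paper's computation, iterated along the chain from (i). The only difference is organizational: the paper proves (i) by taking a connecting chain of \emph{minimal} length (so that non-consecutive terms are orthogonal) and then repairing anisotropic entries, whereas you grow the isotropic-reachable set and invoke indecomposability through a decomposition argument; these are the same argument packaged differently.
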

\begin{proof}
Since $R$ is indecomposable, there exists 
a chain $\beta=\gamma_1$, $\gamma_2$, ..., $\gamma_s$
such that $(\gamma_i,\gamma_{i+1})\not=0$ and  $(\gamma_s,\gamma)\not=0$.
Choosing a chain of the minimal
length we obtain $(\gamma_i,\gamma_j)=0$ for $j-i>1$. Now, substituting
$\gamma_i$ by $s_{\gamma_i}\gamma_{i-1}$ if $\gamma_i$ is anisotropic,
we obtain a chain of isotropic vectors with the required properties.
This establishes (i).  

For (ii) take $\gamma\in R^{\is}$. 
Consider the case when $(\beta,\gamma)\not=0$.
By (WGR2) $R$ contains  $\gamma+\beta$
or $\gamma-\beta$. Without loss
of generality we can assume $\alpha=\gamma-\beta\in R$. Since
$\beta,\gamma$ are isotropic we have $(\alpha,\alpha)=2(\beta,\gamma)\not=0$,
so $s_{\alpha}\in W(R)$, and $s_{\alpha}\beta=\gamma$.
Hence $\gamma\in W[R]\beta$ if  $(\beta,\gamma)\not=0$.
The general case follows from (i).
\end{proof}

\subsection{Examples}\label{abcd}
Let $\fg(A,\tau)$ be a symmetrizable  Kac-Moody superalgebra
of possibly infinite rank 
with  the Cartan subalgebra $\fh$.
The set 
$\Delta^{\ree}$  satisfies (GR0)--(GR2) and the
isotropic roots  are isotropic elements in $\Delta^{\ree}$;
if  $\fg(A,\tau)$ is indecomposable, then
$\Delta^{\ree}$ satisfies (GR3) if and only if 
$\fg(A,\tau)\not=\fgl(1|1)$.

\subsubsection{}
As in~$\S$~\ref{Deltalambda} for $\lambda\in\fh^*$ we introduce
$$\begin{array}{l}
\ol{R}_{\lambda}=\{\alpha\in \ol{\Delta}^{\ree}\ |\ 
2(\lambda,\alpha)\in j (\alpha,\alpha)\ \text{ where }
j\in\mathbb{Z}\ \text{ and $j$ is odd if $2\alpha\in\Delta$}\},\\
R_{\lambda}:=\ol{R}_{\lambda}\cup\{2\alpha|\ \alpha\in 
\ol{R}_{\lambda},\  2\alpha\in\Delta\}=\ol{R}_{\lambda}\cup\{2\alpha|\ \alpha\in 
(\ol{R}_{\lambda}\cap \Delta^{\an}_{\ol{1}})\}
\end{array}$$
and let $\Delta_{\lambda}\subset \Delta^{\ree}$ be the minimal subset of $\Delta^{\ree}$
satisfying the following properties: 
\begin{itemize}
\item[(a)] $\ol{R}_{\lambda}\subset \Delta_{\lambda}$;
\item[(b)]   if $\alpha,\beta\in \Delta_{\lambda}$ are such that $(\alpha,\alpha)\not=0$, then $s_{\alpha}\beta\in \Delta_{\lambda}$;
\item[(c)] if $\alpha,\beta\in \Delta_{\lambda}$ are such that $(\alpha,\alpha)=0\not=(\beta,\alpha)$ 
then 
$\Delta_{\lambda}\cap \{\alpha\pm\beta\}=\Delta^{\ree}\cap \{\alpha\pm\beta\}$;
\item[(d)]   if $\alpha\in \Delta_{\lambda}$ and $2\alpha\in\Delta^{\ree}$, then
$2\alpha\in \Delta_{\lambda}$.
\end{itemize}
Since $\Delta^{\ree}$ satisfies (GR0)--(GR3), the set
 $\Delta_{\lambda}$ also satisfies  properties (GR0)--(GR3).

\subsubsection{Serganova's classification}\label{finR}
Let $R\subset L$ be  a finite indecomposable set  satisfying (GR0), (GR1) and (WGR2),
and such that   the restriction of $(-,-)$ to $\mathbb{C}R$ is non-degenerate.
By~\cite{VGRS}, if $R$ satisfies (GR2), then $R$ is isomorphic to the root system 
of a finite-dimensional Kac-Moody superalgebra or $\mathfrak{psl}(n|n)$
for $n\geq 3$. Moreover, if $R$ 
does not satisfy (GR2), then $R$ is of type $C(m|n)$
or $BC(m|n)$ (with $m,n\geq 1$), defined by
$$C(m|n)=\{\pm 2\vareps_i;\pm 2\delta_j;
\pm \vareps_i\pm\delta_j\}_{1\leq i\leq m}^{1\leq j\leq n},\ \ \ \ 
BC(m|n):=C(m|n)\cup \{\pm \vareps_i;\pm \delta_j\}_{1\leq i\leq m}^{1\leq j\leq n},$$
where $\{\vareps_i\}_{i=1}^m\cup\{\delta_j\}_{j=1}^n$ is an orthogonal basis in 
$\mathbb{C}{R}$ such that 
with $(\vareps_i,\vareps_i)=-(\delta_j,\delta_j)\not=0$ for all $i,j$.
The root system $\Delta(\mathfrak{psl}(2|2))$ is of type $C(1|1)$.

\subsubsection{}
For other examples of sets satisfying (GR0)--(GR2) see~\ref{C11} below.

 \subsection{Iso-sets}\label{iso-set}
Let $R$ satisfy (GR0), (GR1) and (WGR2). We  call a  subset $S\subset R$ {\em iso-set} if
$S=-S$ and $(S,S)=0$.
Iso-sets are ordered by inclusion.
For $v\in L$ we denote by $A_v^{\max}$ the collection of maximal iso-sets
orthogonal to $v$.

\subsubsection{}
\begin{lem}{lem:trans-isosets}
Let $R$ be a finite set satisfying (GR0), (GR1) and (WGR2), and
let $S,S'\in A^{\max}_v$. Then there exists $w\in\Stab_{W[R]}(v)$ such that
$S'=w(S)$.
\end{lem}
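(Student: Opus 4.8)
The plan is to first remove the vector $v$ by passing to the orthogonal subsystem, and then establish the conjugacy of maximal iso-sets by an induction that peels off one isotropic root at a time. First I would set $R_v:=\{\alpha\in R\mid(\alpha,v)=0\}$ and verify that $R_v$ again satisfies (GR0), (GR1) and (WGR2): the conditions $-R_v=R_v$ and $0\notin R_v$ are immediate, for (GR1) the element $s_\alpha\beta=\beta-\tfrac{2(\alpha,\beta)}{(\alpha,\alpha)}\alpha$ stays orthogonal to $v$ whenever $\alpha\in R_v^{\an}$ and $\beta\in R_v$, and for (WGR2) the root among $\beta\pm\alpha$ produced by (WGR2) for $R$ is automatically orthogonal to $v$. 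The same computation gives $s_\alpha(v)=v$ for every $\alpha\in R_v^{\an}$, so $W[R_v]\subseteq\Stab_{W[R]}(v)$. Since an iso-set is orthogonal to $v$ exactly when it lies in $R_v$, the elements of $A_v^{\max}$ are precisely the maximal iso-sets of $R_v$; hence $S,S'$ are maximal iso-sets of $R_v$ and it suffices to conjugate them by an element of $W[R_v]$. At this point $v$ can be discarded, and the task becomes: in a finite set satisfying (GR0), (GR1), (WGR2), any two maximal iso-sets are conjugate under its reflection group.

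Next I would dispose of the trivial and reducible cases. If $S=\emptyset$ then $R_v$ has no isotropic root, forcing $S'=\emptyset$ and $w=\id$; so assume $S,S'\neq\emptyset$. Decomposing $R_v$ into indecomposable (mutually orthogonal) components, both maximal iso-sets and $W[R_v]$ factor as products over the components, so I may assume $R_v$ indecomposable with $R_v^{\is}\neq\emptyset$. Choosing $\beta\in S$ and $\beta'\in S'$, \Lem{lem:chainiso}(ii) yields $w_0\in W[R_v]$ with $w_0\beta\in\{\pm\beta'\}$, hence $w_0\{\pm\beta\}=\{\pm\beta'\}$; replacing $S$ by the maximal iso-set $w_0S$ I may assume $\{\pm\beta'\}\subseteq S\cap S'$, and I write $\beta:=\beta'$.

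Then I would induct on the rank of $\mathbb{Z}R_v$, passing to $\overline{L}:=\beta^\perp/\mathbb{C}\beta$, on which $(-,-)$ descends because $\beta$ lies in the radical of the form restricted to $\beta^\perp$. Setting $\overline{R}:=\{\overline{\alpha}\mid\alpha\in R_v,\ (\alpha,\beta)=0,\ \alpha\notin\mathbb{C}\beta\}$ and using the identity $(\overline{\alpha},\overline{\gamma})=(\alpha,\gamma)$, the images $\overline{S},\overline{S'}$ of $S\setminus\{\pm\beta\}$ and $S'\setminus\{\pm\beta\}$ are iso-sets of $\overline{R}$, and they are maximal: a strictly larger iso-set would contain an isotropic $\overline{\alpha}$ whose lift $\alpha$ is orthogonal to all of $S$ and isotropic, contradicting maximality of $S$. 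After checking that $\overline{R}$ is again a finite real root system and that the projection induces a surjection $\Stab_{W[R_v]}(\beta)\twoheadrightarrow W[\overline{R}]$ compatible with the quotient, the induction hypothesis furnishes $\overline{w}\in W[\overline{R}]$ with $\overline{w}\,\overline{S}=\overline{S'}$; lifting $\overline{w}$ to $w\in\Stab_{W[R_v]}(\beta)\subseteq\Stab_{W[R]}(v)$ then gives $wS=S'$.

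The main obstacle will be the verification surrounding $\overline{R}$: that it satisfies (GR0)--(GR2)/(WGR2) and that $\Stab_{W[R_v]}(\beta)$ surjects onto $W[\overline{R}]$ while respecting iso-sets. The delicate point is the possible failure of injectivity of $\alpha\mapsto\overline{\alpha}$, caused by two roots differing by a multiple of $\beta$ or by $2\beta\in R_v$. In the situations of interest this degeneracy cannot occur, since $\mathbb{C}\beta\cap R_v=\{\pm\beta\}$ for an isotropic root $\beta$ (isotropic roots of Kac--Moody superalgebras satisfy $2\beta\notin\Delta$ by \Lem{lem:ass}(a)); an abstract argument, however, must rule it out separately, which one can do via Serganova's classification of the finite sets satisfying (GR0)--(GR2) recalled in~$\S$\ref{finR}.
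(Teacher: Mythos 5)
Your strategy---pass to $R_v$, split into indecomposable components, match one isotropic root via \Lem{lem:chainiso}(ii), then induct on rank through the quotient $\beta^{\perp}/\mathbb{C}\beta$---contains a genuine gap precisely at the point you flag as delicate, and neither of your proposed remedies closes it. From $\overline{w}\,\overline{S}=\overline{S'}$ you can only conclude that $w(S)$ and $S'$ agree modulo $\mathbb{C}\beta$: every $\gamma'\in S'$ is of the form $\gamma+c\beta$ with $\gamma\in w(S)$, and you need $c=0$. The observation that $\mathbb{C}\beta\cap R_v=\{\pm\beta\}$ rules out proportional roots, not distinct roots $\gamma,\,\gamma+c\beta$ ($c\neq 0$) with the same image in $\beta^{\perp}/\mathbb{C}\beta$, which is what actually breaks injectivity. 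Appealing to Serganova's classification recalled in \S\ref{finR} is also not legitimate: that classification concerns finite \emph{indecomposable} sets on which $(-,-)$ is \emph{non-degenerate}, whereas the sets $R_v$ and $\overline{R}$ arising in your induction can be decomposable and degenerate. For instance the totally isotropic configuration $\{\pm\beta,\pm\gamma,\pm(\gamma+\beta)\}$ with all pairwise products zero satisfies (GR0), (GR1), (WGR2) vacuously and exhibits exactly the degeneracy in question. Nor may you restrict to subsets of $\Delta^{\ree}(\fg)$: the lemma is applied in \Cor{cor:moddelta} to $\cl(\Delta^{\ree})$, which can be $BC(m|n)$, $C(m|n)$ or $\Delta(\fpsl(n|n))$, none of which is the set of real roots of a Kac-Moody superalgebra. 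The gap is repairable, but by an argument you did not give: a maximal iso-set $T$ containing $\pm\beta$ is saturated modulo $\beta$, since for $\gamma\in T$ and $\gamma+c\beta\in R_v$ the element $\gamma+c\beta$ is isotropic and orthogonal to every element of $T$, hence lies in $T$ by maximality; applied to $w(S)$ and $S'$, this upgrades the congruence modulo $\mathbb{C}\beta$ to the equality $w(S)=S'$.

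You should also be aware that the paper's own proof is a few lines and avoids all of this machinery: it is an induction on the cardinality of $S'\setminus S$. Given $\alpha\in S'\setminus S$, maximality of $S$ yields $\beta\in S$ with $(\alpha,\beta)\neq 0$; by (WGR2) one of $\alpha\pm\beta$ lies in $R$, say $\gamma=\alpha-\beta$, which is automatically anisotropic because $\alpha,\beta$ are isotropic and non-orthogonal. The reflection $s_{\gamma}$ then fixes $v$ and fixes $S\cap S'$ pointwise (every such element is orthogonal to both $\alpha$ and $\beta$), and sends $\alpha$ to $\beta$; hence $s_{\gamma}(S')\in A_v^{\max}$ and $|s_{\gamma}(S')\setminus S|<|S'\setminus S|$. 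No passage to $R_v$, no component decomposition, no quotient construction, and no injectivity issue ever arises. Your reduction steps (the subsystem $R_v$, the component splitting, the use of \Lem{lem:chainiso}) are individually sound, but they buy nothing here; the entire content of the lemma is the single reflection $s_{\alpha-\beta}$.
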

\begin{proof}We proceed by induction in the cardinality of $S'\setminus S$. Let $\alpha\in S'\setminus S$. Then $S\cup\{\alpha\}$ is not an iso-set, so there exists $\beta\in S$ such that $(\alpha|\beta)\ne 0$. This implies that $\alpha+\beta$ or $\alpha-\beta$ lies in $R$; without loss
of generality we can assume $\gamma=\alpha-\beta\in R$. Obviously,
$(\gamma,\gamma)\not=0$ so $s_\gamma\in W[R]$. The formula
$
s_\gamma(x)=x-2 (x,\gamma)/(\gamma|\gamma)
$
implies that $s_\gamma$ preserves $v$ and $S\cap S'$, and
$s_\gamma(\alpha)=\beta$.
This implies the claim.
\end{proof}

We will use the following corollary.

\subsubsection{}
\begin{cor}{cor:moddelta}
Let $\Delta^{\ree}$ be the set of real root of an indecomposable
symmetrizable finite-dimensional or affine Kac-Moody superalgebra.
 Let 
 $\beta_1,\ldots,\beta_s\in \Delta^{\ree}$ be mutually orthogonal isotropic roots such 
 that  $(\cl(\beta_i)\pm\cl(\beta_j))\not=0$
 for all $1\leq i<j\leq s$. Then  the set $\{\cl(\beta_i)\}_{i=1}^s$
  is linearly independent.
 \end{cor}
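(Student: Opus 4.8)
The statement to prove is Corollary~\ref{cor:moddelta}: given mutually orthogonal isotropic real roots $\beta_1,\ldots,\beta_s$ with $(\cl(\beta_i)\pm\cl(\beta_j))\neq 0$ for all $i<j$, the set $\{\cl(\beta_i)\}_{i=1}^s$ is linearly independent. The plan is to derive this from~\Lem{lem:trans-isosets}, which asserts that $\Stab_{W[R]}(v)$ acts transitively on the maximal iso-sets orthogonal to a vector $v$. The natural choice is $R=\Delta^{\ree}$ and $v=\delta$ (taking $v=0$ in the finite-dimensional case, where $\cl$ is the identity). Note that $\Delta^{\ree}$ is a finite set precisely when $\fg$ is finite-dimensional; in the affine case $\Delta^{\ree}$ is infinite, so to apply the lemma I would first intersect with a suitable finite-dimensional subspace, or argue that all relevant roots lie in a finite indecomposable subsystem. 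The inheritability of axioms (GR0)--(GR2) (see~$\S$~\ref{inheritable}) lets me pass to $R\cap L_1$ for an appropriate subspace $L_1$ containing $\delta$ and the $\beta_i$, which is finite and still satisfies the hypotheses of~\Lem{lem:trans-isosets}.

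First I would observe that each $\beta_i$ is orthogonal to $\delta$ (since $\delta$ lies in the radical of the form, $(\delta,\Delta)=0$ by~\Rem{rem:delta}), so $S:=\{\pm\beta_1,\ldots,\pm\beta_s\}$ is an iso-set orthogonal to $\delta$. The key point is to translate the linear independence of $\{\cl(\beta_i)\}$ into a statement about iso-sets modulo $\delta$. Suppose for contradiction that $\{\cl(\beta_i)\}$ is linearly dependent: then some nontrivial relation $\sum_i c_i\cl(\beta_i)=0$ holds, i.e. $\sum_i c_i\beta_i\in\mathbb{C}\delta$. I would like to use this relation together with the action of $\Stab_{W[R]}(\delta)$ to produce two maximal iso-sets orthogonal to $\delta$ that cannot be $W$-conjugate, contradicting~\Lem{lem:trans-isosets}, or alternatively to exhibit within $S$ a pair of roots $\beta_i,\beta_j$ whose classes satisfy $\cl(\beta_i)=\pm\cl(\beta_j)$, contradicting the hypothesis.

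The cleaner route is probably to argue directly about the span. Since the $\beta_i$ are mutually orthogonal isotropic roots, the restriction of the form to $\mathbb{C}\{\beta_i\}$ is identically zero, so linear dependence cannot be detected by the form alone; this is exactly why we must pass to the quotient $\cl$ and use the extra combinatorial structure. The plan is: for a dependent set, one finds a minimal relation $\sum_{i\in I}c_i\beta_i = c\,\delta$ with all $c_i\neq 0$ and $|I|$ minimal. Applying reflections $s_\gamma$ from $\Stab_{W[R]}(\delta)$ built from anisotropic roots $\gamma=\beta_i-\beta_j$ (which exist and are anisotropic because $(\beta_i-\beta_j,\beta_i-\beta_j)=-2(\beta_i,\beta_j)$, though here the $\beta_i$ are mutually \emph{orthogonal}, so one must instead use roots connecting the $\beta_i$ to other isotropic roots via the chains supplied by~\Lem{lem:chainiso}) I would aim to reduce $I$ or to force $\cl(\beta_i)=\pm\cl(\beta_j)$. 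The relation $\sum_{i\in I}c_i\cl(\beta_i)=0$ combined with pairwise conditions $(\cl(\beta_i),\cl(\beta_j))=0$ (inherited from orthogonality since $(\beta_i,\beta_j)=0$ and $\delta$ is in the radical) means the Gram matrix of $\{\cl(\beta_i)\}$ is zero, so linear dependence is genuinely a statement about the lattice $\mathbb{Z}\Delta/\mathbb{Z}\delta$ rather than the form.

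The main obstacle, as I see it, is that the mutual orthogonality of the $\beta_i$ means reflections fixing $\delta$ do not obviously act transitively \emph{within} the set $\{\beta_i\}$—the transitivity in~\Lem{lem:trans-isosets} moves one maximal iso-set to another but does not by itself rule out that distinct $\beta_i,\beta_j$ have proportional classes. I expect the real work is to show that two maximal iso-sets orthogonal to $\delta$ that are $\Stab_{W[R]}(\delta)$-conjugate must have classes $\cl(S)$ and $\cl(S')$ that are related by an element of the \emph{finite} Weyl group $\dot W$ acting on $\mathbb{C}\dot\Delta$, and then that a genuine linear dependence among $\{\cl(\beta_i)\}$ would enlarge $S$ to a strictly larger iso-set orthogonal to $\delta$ (by adjoining a root in the span), contradicting maximality after conjugation. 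Concretely, if $\sum c_i\beta_i\in\mathbb{C}\delta$ then $\cl(\sum c_i\beta_i)=0$, and I would try to show this forces a root $\beta_k$ with $\cl(\beta_k)$ in the span of the others up to sign, violating the hypothesis $(\cl(\beta_i)\pm\cl(\beta_j))\neq 0$ once the minimal dependent subset has size two. Reducing the minimal relation to size two via the Weyl group transitivity is the step I would expect to require the most care.
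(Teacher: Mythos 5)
There is a genuine gap: your proposal assembles the right tools (the quotient map $\cl$, iso-sets, \Lem{lem:trans-isosets}) but never completes the argument. You end by saying that reducing a minimal dependence relation to size two ``is the step I would expect to require the most care'' --- but that step \emph{is} the corollary; nothing in your sketch shows how Weyl-group conjugation of maximal iso-sets would force $\cl(\beta_i)=\pm\cl(\beta_j)$ out of a dependence relation, and it is not clear this contradiction route can be pushed through. There is also a concrete technical error in your setup: \Lem{lem:trans-isosets} requires a \emph{finite} set satisfying (GR0), (GR1), (WGR2), and your proposed fix --- intersecting $\Delta^{\ree}$ with a finite-dimensional subspace $L_1$ containing $\delta$ and the $\beta_i$ --- does not produce a finite set in the affine case, since any such intersection contains infinite strings of the form $\beta_i+\mathbb{Z}a\delta$. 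The correct ambient object is not a subspace of $\mathbb{C}\Delta^{\ree}$ but the image $\cl(\Delta^{\ree})$ itself, which is finite and inherits (GR0), (GR1), (WGR2); the lemma is then applied there with $v=0$, not with $v=\delta$.

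The idea you are missing makes the proof direct rather than by contradiction. The hypothesis $(\cl(\beta_i)\pm\cl(\beta_j))\neq 0$ is exactly what guarantees that $B:=\{\pm\cl(\beta_i)\}_{i=1}^s$ consists of $2s$ \emph{distinct} elements, hence is an iso-set of that size in $\cl(\Delta^{\ree})$. Extend $B$ to a maximal iso-set and apply \Lem{lem:trans-isosets} with $v=0$: then $B$ is $W[R]$-conjugate to a subset of \emph{any} chosen maximal iso-set $S$. For $s>1$ the exceptional types $D(2|1,a)$, $G(3)$, $F(4)$ cannot occur (there any two non-proportional isotropic roots are non-orthogonal), so by the classification recalled in~$\S$~\ref{finR} one can take $S=\{\pm(\vareps_i-\delta_i)\}_{i=1}^{\min(m,n)}$, whose positive half is manifestly linearly independent. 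Since the conjugating element of $W[R]$ is an invertible linear map, it sends the distinct classes $\cl(\beta_1),\ldots,\cl(\beta_s)$ to $s$ distinct elements of $S$ (up to sign), and linear independence pulls back immediately. This conjugate-into-a-standard-iso-set step, resting on the explicit classification, is the essential content that your contradiction scheme does not supply.
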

\begin{proof}
If $s=1$ the claim holds, so we may (and will) assume that $s>1$.

The set $\cl(\Delta^{\ree})$
is a finite set satisfying (GR0), (GR1) and (WGR2).  

The set $B:=\{\pm \cl(\beta_i)\}_{i=1}^s$
is an iso-set containing $2s$ elements. Using~\Lem{lem:trans-isosets}
for $v=0$ we conclude that $B$
 is $W[R]$-conjugated to
a subset of any maximal iso-set $S$ in $\cl(\Delta^{\ree})$.
The set $\cl(\Delta^{\ree})$ is not of types $D(2|1,a)$, $G(3)$ and $F(4)$
(for these types any two non-proportional isotropic roots are non-orthogonal, 
which contradicts to $s>1$).
Using the standard notation for $A(m-1|n-1)$, $B(m|n)$, $D(m|n)$ and
the notation  of~$\S$~\ref{finR}
for $BC(m|n)$, $C(m|n)$,   we can choose a maximal iso-set $S$ of the form
$\{\pm (\vareps_i-\delta_i)\}_{i=1}^d$ for $d:=\min(m,n)$.
Since the vectors $\{(\vareps_i-\delta_i)\}_{i=1}^d$ are linearly independent,
the set $\{\cl(\beta_i)\}_{i=1}^s$ is linearly independent.
\end{proof}

\subsubsection{Remark}\label{rem:pslnn}
\Cor{cor:moddelta} implies that $\Delta^{\ree}(\fg)$ does not contain
$\Delta(\mathfrak{psl}(n|n))$, since $\Delta(\mathfrak{psl}(2|2))$
does not satisfy (GR3), and, for $n>2$, the set
$\Delta(\mathfrak{psl}(n|n))$ contains isotropic roots $\beta_i:=\vareps_i-\delta_i$
with  $\sum\limits_{i=1}^n \beta_i=0$. By~\Cor{cor:moddelta},
$\cl(\beta_i)=\pm\cl(\beta_j)$ for some $i\not=j$ which is
impossible since $(\beta_i\pm\beta_j, \Delta(\mathfrak{psl}(2|2))\not=0$.

\subsection{Real root system  $A(2m-1|2n-1)^{(2)}$}\label{A112}
Take $L_{m,n}$ as in~$\S$~\ref{basismn} and consider the subset
$$R_{m,n}=\{\pm\vareps_i\pm\vareps_j, \pm\delta_i\pm\delta_j,
2\mathbb{Z}\delta\pm 2\delta_i,\ (2\mathbb{Z}-1)\delta\pm 2\vareps_i\}_{1\leq i\not=j\leq n}
\bigcup \{\mathbb{Z}\delta\pm\vareps_i\pm\delta_j\}_{1\leq i,j\leq n}.$$
Note that $R_{p,q}=R_{m,n}\cap L_{p,q}$ for $m\geq p$, $n\geq q$.
The set $R_{m,n}$ satisfies (GR0)--(GR4). We will denote this set by $A(2m-1|2n-1)^{(2)}$. If $(m,n)\not=(1,1)$, then
$A(2m-1|2n-1)^{(2)}$ is isomorphic to the set of real roots
of the corresponding affine Kac-Moody superalgebra.

\subsubsection{}\label{A112notKM}
Let us show that 
$$A(1|1)^{(2)}=\{\mathbb{Z}\delta\pm\vareps_1\pm\delta_1,\ 
2\mathbb{Z}\delta\pm 2\delta_1,\ (2\mathbb{Z}-1)\delta\pm 2\vareps_1\}
$$
is not isomorphic to the set of real roots
of a Kac-Moody superalgebra.
Suppose the contrary, i.e. that $A(1|1)^{(2)}=R_{1,1}$
is isomorphic to $\Delta^{\ree}(\fg')$ for some Kac-Moody superalgebra
$\fg'$.

The set $\Sigma_{1,2}:=\{\delta-2\vareps_1,\vareps_1-\delta_1,\delta_1-\delta_2,2\delta_2\}$ is a base of $R_{1,2}$ which is of type $A(3|1)^{(2)}$.
Take $\omega:\mathbb{Z}\Sigma_{1,2}\to \mathbb{R}$
given by 
$\omega(\alpha)=1$ for all $\alpha\in \Sigma_{1,2}$.
Then $\omega(\alpha)\in \mathbb{Z}_{\not=0}$
for all $\alpha\in R_{2,1}$. 
Define a triangular decomposition
of $\fg'$ via $\omega$ (see~$\S$~\ref{Sherman}).
By Hoyt-Serganova classification~\cite{Hoyt},\cite{S}, $\fg'$
is a symmetrizable affine Kac-Moody superalgebra
(since $\Delta^{\ree}(\fg')$ is an infinite indecomposable set).
By~\cite{Shay}, Theorem 0.4.3, the triangular decomposition
of $\fg'$ admits a base $\Sigma'$ and 
$\fg'=\fg(A',\tau')$, where $A'$ is
the Gram matrix of $\Sigma'$  and $\tau'$ is the set
of odd roots in this base. The set $\Sigma'$
coincides with the set of indecomposable
elements in $\{\alpha\in R_{1,1}|\ \omega(\alpha)>0\}$.
Since 
$\{\alpha\in R_{1,1}|\ \omega(\alpha)>0\}$ is equal to
$$
\{\mathbb{Z}_{>0}\delta\pm\vareps_1\pm\delta_1,\ 
2\mathbb{Z}_{>0}\delta\pm 2\delta_1,\ (2\mathbb{Z}_{>0}-1)\delta\pm 2\vareps_1\}\cup \{\vareps_1\pm\delta_1,2\delta_1\},
$$
one has $\Sigma'=\{\delta-2\vareps_1,\vareps_1-\delta_1,\ 
 2\delta_1\}$.
The Gram matrix of $\Sigma'$ is 
$A'=\begin{pmatrix}  4 & -2 & 0\\ -2& 0 & 2\\ 0 & 2 & -4\end{pmatrix}$.
We have $\tau'=\{2\}$, since 
$a_{22}=0$ (so $2\in \tau'$) and 
$\frac{2a_{12}}{a_{11}}$,$\frac{2a_{32}}{a_{33}}$ are odd integers
(so $1,3\not\in\tau'$). Thus $(A',\tau')$ is the Cartan datum
of $A(1|1)$, so $\fg'=\fg(A',\tau')=\fgl(2|2)$. However,
the set $\Delta^{\ree}(\fgl(2|2))$ is finite, so
$R_{1,1}$ is not isomorphic to a quotient of this set.

\subsubsection{}
Assume that 
 $\Delta_{\lambda}$ has an
indecomposable component $R$ which is 
isomorphic to $A(1|1)^{(2)}$. Let us show  that the bijection 
$R\cong A(1|1)^{(2)}$ preserves the parity.
Indeed, the roots of the form $\mathbb{Z}\delta'\pm\vareps'_1\pm\delta'_1$ are isotropic, 
so they are odd. All other roots in $R$
are non-isotropic. If $R$ contains a non-isotropic odd root
$\alpha$, then $2\alpha\in R$. However, for any non-isotropic root 
$\alpha\in R$ we have $2\alpha\not\in R$. Therefore all non-isotropic roots
in $R$ are even. Hence the bijection between $R$ and $A(1|1)^{(2)}$
preserves the parity.

\subsection{}
\begin{thm}{thm:Deltalambda}
Let $\fg$ be a symmetrizable Kac-Moody superalgebra of finite rank
and let $\lambda\in\fh^*$.
 
 \begin{enumerate}
\item If $\Delta_{\lambda}$ is a bijective quotient of 
$\Delta^{\ree}(\fg')$, where $\fg'$ is a symmetrizable Kac-Moody 
superalgebra of at most countable rank, 
then the bijection $\Delta_{\lambda}\to\Delta^{\ree}(\fg')$
preserves the parity. 
 
\item If $\Delta_{\lambda}$
 does not contain isotropic roots, then $\Delta_{\lambda}$
is  a bijective quotient of   the  system of real roots 
of an anisotropic symmetrizable Kac-Moody superalgebra $\fg'$ 
of at most countable rank.

\item If $\fg$ is a finite-dimensional Kac-Moody superalgebra, then
$\fg'$ is   as well.
If  $\fg$ is an affine symmetrizable
 Kac-Moody superalgebra and each indecomposable component $R$ of
 $\Delta_{\lambda}$ is not of type $A(1|1)^{(2)}$,
then $\Delta_{\lambda}$ is isomorphic
to   $\Delta^{\ree}(\fg')$, where 
$\fg'=\prod\limits_{i\in I}\fg'_i$ 
 and each $\fg'_i$ is either finite-dimensional or  affine
symmetrizable Kac-Moody superalgebra.
 Moreover, the set $I$ is finite if  $(\lambda,\delta)\not=0$.

\item  If $\Delta_{\lambda}$ has an indecomposable component
of type $A(1|1)^{(2)}$, then 
$\fg$ is  of the type $A(2m-1|2n-1)^{(2)}$ and 
$(\lambda,\delta)=\frac{p}{q}$, where $p,q$ are coprime integers and $q$ is odd (see~$\S$~\ref{dualCoxeter} for the normalization).
\end{enumerate}
 \end{thm}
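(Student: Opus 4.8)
The plan is to work component by component. Write $\Delta_\lambda=\coprod_{i\in I}R_i$ as the disjoint union of its indecomposable components (\Defn{def:indec}), which are mutually orthogonal; since we are free to take $\fg'=\prod_i\fg'_i$, it suffices to attach to each $R_i$ a single symmetrizable Kac-Moody superalgebra $\fg'_i$ together with a form- and parity-preserving bijection $R_i\cong\Delta^{\ree}(\fg'_i)$, to control when $\fg'_i$ exists, and to bound $|I|$. Throughout I use that each $R_i$ inherits the axioms (GR0)--(GR3) of \S\ref{abcd} and that $\cl(R_i)$ is a finite subset of the finite set $\cl(\Delta^{\ree})$ satisfying (GR0), (GR1) and (WGR2). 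I would first dispose of the finite components: if $R_i$ is finite it is, by \Prop{prop:RsubsetDelta}(i), isomorphic to $\Delta^{\ree}(\fg'_i)$ for a finite-dimensional $\fg'_i$, the exclusion of $\Delta(\fpsl(n|n))$ being guaranteed by \Cor{cor:moddelta} and \Rem{rem:pslnn}. When $\fg$ is finite-dimensional every $R_i$ is finite, so this already proves (iii) in the finite-dimensional case, with $\fg'$ finite-dimensional.

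For (ii), assume $\Delta_\lambda=\Delta_\lambda^{\an}$ has no isotropic roots. Then I run the anisotropic construction directly on $\Delta_\lambda$: take $\Sigma(R)$ to be the set of indecomposable elements of $R^+$ as in~(\ref{eq:SigmaR}), note by (GR1) that $A':=(\langle\alpha^\vee,\beta\rangle)_{\alpha,\beta\in\Sigma(R)}$ is a symmetrizable generalized Cartan matrix of at most countable rank, with $\tau'$ recording the odd simple roots, and invoke the argument of \Prop{prop:RR'an} to realise $\Delta_\lambda$ as a bijective quotient of $\Delta^{\ree}(\fg(A',\tau'))$ with $\fg'$ anisotropic (only a bijective quotient, not an isomorphism, since $\Sigma(R)$ may be linearly dependent). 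Part (i) in this setting is immediate, because without isotropic roots an odd real root $\alpha$ is precisely one with $2\alpha$ a root, a condition transported by the bijection and by closure (d) of $\Delta_\lambda$. I would prove (i) in general by the same doubling idea: a form-preserving bijection $\psi\colon\Delta^{\ree}(\fg')\to\Delta_\lambda$ maps isotropic to isotropic (square length $0$ is intrinsic), and isotropic real roots are always odd (\S\ref{Spine}), so parities match on the isotropic part; on the anisotropic part, $\alpha'$ is odd non-isotropic iff $2\alpha'\in\Delta^{\ree}(\fg')$, and applying $\psi$ with (d) gives $2\psi(\alpha')\in\Delta_\lambda$, while if $\alpha'$ is even a putative $\gamma'=\psi^{-1}(2\psi(\alpha'))$ would satisfy $(\gamma',\gamma')=4(\alpha',\alpha')$ and $(\gamma',\alpha')=2(\alpha',\alpha')$, forcing $\gamma'-2\alpha'$ isotropic and in $\ker\psi$, which the local root structure of $\fg'$ excludes. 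The remaining possibility, that $\psi$ fails to be injective on the span because an $A(1|1)^{(2)}$ component occurs, is handled by the explicit parity computation in \S\ref{A112}.

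For the affine case of (iii) the new content is the infinite indecomposable components. Fix such an $R=R_i$; since $\cl(R)$ is finite while $R$ is infinite, $R$ contains full $\delta$-strings, and I would produce its base by choosing a group homomorphism $\omega\colon\mathbb{Z}\Delta\to\mathbb{R}$ nonvanishing on $\Delta$ (as in \S\ref{Sherman}), taking $\Sigma(R)$ to be the indecomposable elements of $\{\alpha\in R:\omega(\alpha)>0\}$, and forming the Cartan datum $(A',\tau')$ as above. The claim is that, provided $R\not\cong A(1|1)^{(2)}$, the contragredient superalgebra $\fg(A',\tau')$ is a genuine symmetrizable Kac-Moody superalgebra (integrable, and remaining so under every sequence of odd reflexions) with $\Delta^{\ree}(\fg(A',\tau'))\cong R$ and $\fg'_i$ finite-dimensional or affine; this is the step I expect to be the main obstacle. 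The pathology is exactly the one in \S\ref{A112}: for $R\cong A(1|1)^{(2)}$ the procedure returns $A'$ of type $A(1|1)$, i.e. $\fg(A',\tau')\cong\fgl(2|2)$, which is finite-dimensional and cannot have the infinite $R$ as its real roots. In all other cases I would rule this out by applying the Hoyt--Serganova classification to $\fg(A',\tau')$, using the inheritable axioms of \S\ref{inheritable} together with \Lem{lem:trans-isosets} and \Cor{cor:moddelta} to match $R$ with the real roots of a finite-growth (hence affine) contragredient superalgebra. Finiteness of $I$ when $(\lambda,\delta)\neq0$ would follow because each component meets $\ol{R}_\lambda$, and for $(\lambda,\delta)\neq0$ the defining condition of $\ol{R}_\lambda$ admits at most one root in each isotropic $\cl$-class; since there are finitely many $\cl$-classes and the mutually orthogonal $\cl(R_i)$ fill a finite root system, only finitely many components arise.

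Finally, for (iv), suppose some component $R\cong A(1|1)^{(2)}$ occurs. I would first pin down $\fg$. After normalisation so that the two anisotropic families of $R$ have square lengths of opposite sign, $\cl(R)=C(1|1)=\Delta(\fpsl(2|2))$, and crucially $R$ contains no single roots $\vareps_a$ or $\delta_b$ (only their doubles), while its even roots $2\delta_b$ and $2\vareps_a$ occur at complementary parities of the $\delta$-multiple. Matching $\cl(R)$ against the table of \S\ref{dotgclg} rules out the non-twisted types, whose $\delta$-strings are unbroken so that the doubled roots would appear at every level, making $R$ untwisted; and it rules out the $BC$-types $A(2m|2n-1)^{(2)}$, $A(2m|2n)^{(4)}$ and $D(m+1|n)^{(2)}$, whose ambient systems contain the single roots $\vareps_a$ or $\delta_b$ in the relevant plane, which closure (c) would force into $\Delta_\lambda$ and change the component type; this leaves only $\fg=A(2m-1|2n-1)^{(2)}$, whose $\cl$ is of type $C(m|n)$. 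To constrain the level I would use that neither the $2\delta_b$- nor the $2\vareps_a$-family lies in $\ol{R}_\lambda$, so both are produced purely by the closure operations (b)--(d) from the isotropic roots $i\delta\pm\vareps_a\pm\delta_b$; writing $(\lambda,\delta)=p/q$ in lowest terms and tracking along the $\delta$-string the integrality conditions defining $\ol{R}_\lambda$ and the rule (\S\ref{A112}) governing which sums survive in $\Delta$, the required odd/even alternation of the two families in the $\delta$-multiple is consistent only when $q$ is odd, which is the assertion.
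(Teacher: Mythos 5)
Your overall architecture --- component-wise reduction, the anisotropic construction for (ii) (this is exactly \Prop{prop:RR'an}), the doubling argument for (i), and the counting argument for the finiteness of $I$ --- matches the paper's, and parts (i), (ii) and the finiteness claim are essentially correct. (For (i), your step ``which the local root structure of $\fg'$ excludes'' needs one more line: apply $s_{\alpha'}$ to $\gamma':=\psi^{-1}(2\psi(\alpha'))$, getting the real root $\gamma'-4\alpha'$, note $\psi(\gamma'-4\alpha')=-2\psi(\alpha')=\psi(-\gamma')$, and use bijectivity of $\psi$ on roots to conclude $\gamma'=2\alpha'$, contradicting that $\alpha'$ is even; this is easily supplied, and in fact your argument avoids the isotropic-chain step the paper uses.) The genuine gap is the affine case of (iii), precisely the step you flag as ``the main obstacle'': that every infinite indecomposable component $R$ containing an isotropic root and not of type $A(1|1)^{(2)}$ is isomorphic to $\Delta^{\ree}$ of a finite-dimensional or affine Kac--Moody superalgebra (\Prop{prop:RsubsetDelta}(i)). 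Your plan --- build $(A',\tau')$ from a base of $R$ and invoke the Hoyt--Serganova classification ``using the inheritable axioms'', \Lem{lem:trans-isosets} and \Cor{cor:moddelta} --- cannot close it. It is circular: the classification applies to Kac--Moody superalgebras, and what must be proved is precisely that $\fg(A',\tau')$ is one and that its real roots biject onto $R$; your own example ($A(1|1)^{(2)}\mapsto\fgl(2|2)$, \S\ref{A112notKM}) shows the second assertion can fail, and you offer no argument that this is the only failure. Moreover, the paper explicitly observes (\S\ref{inheritable} and the discussion preceding the theorem) that Kac--Moody real root systems are \emph{not} cut out by inheritable axioms, exactly because $A(1|1)^{(2)}$ is the intersection of the real roots of a larger twisted affine superalgebra with a subspace; so no package of axioms of the kind you list can separate good components from bad ones. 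Finally, there is a whole family of configurations your proposal never sees: the sets $R(a,d)$ with $a>2d$ of \S\ref{C11} satisfy all the axioms inside $\Delta^{\ree}$ and are not real root systems of any Kac--Moody superalgebra. What actually proves (iii) is the case-by-case control of the $\delta$-string sets $F(\alpha)$ over each Serganova type of $\cl_K(R)$ (\S\S\ref{Falpha}--\ref{C11}), showing the string lengths are coherent and identifying $R$ with a known affine system; your proposal contains no substitute for this analysis.

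The same omission weakens (iv). Your identification of $\fg$ is in the right spirit, but the working mechanisms are: the failure of (GR2) for $\cl(R)\cong C(1|1)$ propagates to $\cl(\Delta^{\ree})$, which by Serganova's classification must then be of type $C(m|n)$ or $BC(m|n)$; the $BC$-types are excluded by \Cor{cor:Delta2beta} applied to a root $\gamma$ with $\cl(\gamma)=2\delta_j$ (in the $BC$ ambient systems $\gamma/2$ would be forced into $\Delta_{\lambda}$, changing the component type); this leaves $C(m|n)$, i.e. $\fg=A(2m-1|2n-1)^{(2)}$. For the level one needs the explicit parametrization $R=R(a,d)$ and the comparison of its string rule $a\mid s$ with the integrality conditions ($2\mid s$ and $sk\in\mathbb{Z}$), together with $q\mid d$ coming from $d\delta\in\mathbb{Z}R$; this forces $a=2q$, $d=q$ and $q$ odd. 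Your sketch asserts the conclusion (``consistent only when $q$ is odd'') without this computation. In summary: (i), (ii) and the finiteness of $I$ stand; (iii) in the affine case and (iv) are not proved as written.
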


\subsubsection{Remark}\label{rem:bib}
\Thm{thm:Deltalambda}  was stated in~\cite{GKadm} without  proof
and the case 
$A(1|1)^{(2)}$ was not excluded.

\subsubsection{Proof of (i)}
Let $\psi: \mathbb{C}\Delta_{\lambda}\to \mathbb{C} \Delta^{\ree}(\fg')$
be the linear map which gives  the bijection $\Delta_{\lambda}\to\Delta^{\ree}(\fg')$.

Take an odd root  $\alpha\in \Delta_{\lambda}$. If
$\alpha$ is isotropic, then  $\psi(\alpha)$ is isotropic, so it is odd.
If $\alpha$ is anisotropic, then, by (d) in~$\S$~\ref{abcd},
$2\alpha\in  \Delta_{\lambda}$. Then
 $2\psi(\alpha)\in \Delta^{\ree}(\fg')$, so 
$\psi(\alpha)$ is odd.

Take an even root $\alpha\in \Delta_{\lambda}$.
By~\Lem{lem:chainiso}, $\Delta_{\lambda}$ contains an isotropic $\beta$
such that $(\beta,\alpha)\not=0$. By (c) in~$\S$~\ref{abcd}, $\Delta_{\lambda}$ contains 
$\beta-\alpha$ or $\beta+\alpha$. Without loss of generality we can assume
$\beta-\alpha\in \Delta_{\lambda}$. Since $\beta$ is odd and $\alpha$ is even,
$\beta-\alpha$ is odd. By above, $\psi(\beta)$ and $\psi(\beta-\alpha)$
are odd, so $\psi(\alpha)$ is even. \qed

\subsubsection{}\label{observB6}
Assume that  $R\subset \Delta^{\ree}(\fg)$
satisfies (GR0)--(GR2) and each indecomposable component $R_i$ of
$R$ is  a bijective quotient of   the  system of real roots 
of a symmetrizable Kac-Moody superalgebra $\fg'_i$ of at most countable rank. Then $R$ is a  bijective quotient of 
$\Delta^{\ree}(\fg')$, where $\fg'=\prod\limits_{i}\fg'_i$
is  a symmetrizable Kac-Moody superalgebra $\fg'$ of at most countable rank.

\subsubsection{Proof of (ii)}
 This statement follows from~$\S$~\ref{observB6} and~\Prop{prop:RR'an} (i) below.

\subsubsection{}
\begin{lem}{lem:decompositionR}
Let $\fg$ be a symmetrizable affine Kac-Moody superalgebra. 
Then for each $\lambda\in\fh^*$ the set $\Delta_{\lambda}$
has finitely many indecomposable components  not isomorphic
to $\Delta(\fsl(1|1))$. If $(\lambda,\delta)\not=0$, then 
the set $\Delta_{\lambda}$
has finitely many indecomposable components.
\end{lem}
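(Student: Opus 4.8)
The plan is to control the indecomposable components of $\Delta_{\lambda}$ through the projection $\cl\colon\mathbb{C}\Delta\to\mathbb{C}\Delta/\mathbb{C}\delta=\mathbb{C}\dot{\Delta}$. Since $\fg$ is affine, $\Rem{rem:delta}$ gives $\mathbb{Z}\delta=\{\nu\in\mathbb{Z}\Delta\mid(\nu,\Delta)=0\}$, so the radical of $(-,-)$ on $\mathbb{C}\Delta$ is exactly $\mathbb{C}\delta$. Hence $(-,-)$ descends to a nondegenerate form on the finite-dimensional space $\mathbb{C}\dot{\Delta}$ with $(\cl(x),\cl(y))=(x,y)$ for all $x,y\in\mathbb{C}\Delta$; in particular $(\cl(\alpha),\cl(\alpha))=(\alpha,\alpha)$, and $\cl(\alpha)\ne0$ for every real root $\alpha$ (as $\Delta^{\ree}\cap\mathbb{Z}\delta=\emptyset$). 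Moreover $\cl(\Delta^{\ree})\subseteq\cl(\Delta)$ is a finite subset of $\mathbb{C}\dot{\Delta}$ avoiding $0$ (see $\S$~\ref{dotgclg}).

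For the first assertion I would first observe that an indecomposable component $R$ of $\Delta_{\lambda}$ not isomorphic to $\Delta(\fsl(1|1))=\{\pm\beta\}$ must contain an anisotropic root. Indeed, the only indecomposable real root system failing (GR3) is $\{\pm\beta\}$ with $\beta$ isotropic, so such $R$ satisfies (GR3): choosing $\alpha\in R$ and $\beta\in R$ with $(\alpha,\beta)\ne0$, either one of $\alpha,\beta$ is already anisotropic, or both are isotropic and then by (GR2) one of $\alpha\pm\beta$ lies in $R$ with $(\alpha\pm\beta,\alpha\pm\beta)=\pm2(\alpha,\beta)\ne0$. Picking one anisotropic root $\gamma_R$ in each such component, and using that distinct components of $\Delta_{\lambda}$ are mutually orthogonal, the $\gamma_R$ are pairwise orthogonal anisotropic roots; by the first paragraph the $\cl(\gamma_R)$ are then pairwise orthogonal anisotropic vectors in $\mathbb{C}\dot{\Delta}$, hence linearly independent. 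As $\dim_{\mathbb{C}}\mathbb{C}\dot{\Delta}<\infty$, there are only finitely many such components.

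For the second assertion I must, in addition, bound the number of $\Delta(\fsl(1|1))$-components, i.e. of isolated isotropic roots $\beta$ (those whose component in $\Delta_{\lambda}$ equals $\{\pm\beta\}$). The key step, which I expect to be the main obstacle, is to show that such a $\beta$ lies in $\ol{R}_{\lambda}$, equivalently $(\lambda,\beta)=0$. I would argue using the filtration $X_0\subset X_1\subset\cdots$ of $\S$~\ref{Xi} with $X_0=\ol{R}_{\lambda}$. Suppose $\beta\notin X_0$ and let $i+1$ be the first stage with $\beta\in X_{i+1}\setminus X_i$. If $\beta=s_{\alpha}\beta''$ for some anisotropic $\alpha\in X_i$ and $\beta''\in X_i$, then $(\alpha,\beta)=0$ would force $\beta''=s_{\alpha}\beta=\beta\in X_i$, a contradiction; hence $(\alpha,\beta)\ne0$. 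If instead $\beta\in\{\alpha'\pm\beta'\}$ with $\alpha'$ isotropic and $(\alpha',\beta')\ne0$, then $(\beta,\beta)=0$ forces $\beta'$ anisotropic, whence $(\alpha',\beta)=\pm(\alpha',\beta')\ne0$. Finally property (d) produces only anisotropic roots, so it cannot yield $\beta$. In each nontrivial case $\beta$ acquires a neighbour in $\Delta_{\lambda}$, contradicting isolation; therefore $\beta\in\ol{R}_{\lambda}$ and $(\lambda,\beta)=0$.

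To finish I would count the isolated isotropic roots through their $\cl$-images. Each such $\beta$ has $\cl(\beta)$ in the finite set $\cl(\Delta^{\ree})$; and since $\ker\cl=\mathbb{C}\delta$, all real roots in a fixed fibre are of the form $\beta_0+c\delta$. The function $c\mapsto(\lambda,\beta_0+c\delta)=(\lambda,\beta_0)+c(\lambda,\delta)$ has nonzero slope when $(\lambda,\delta)\ne0$, so it vanishes for at most one value of $c$; thus each fibre contains at most one isolated isotropic root. As there are finitely many fibres, there are finitely many $\Delta(\fsl(1|1))$-components, and together with the first assertion this shows that $\Delta_{\lambda}$ has finitely many indecomposable components.
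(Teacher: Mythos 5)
Your proof is correct and follows essentially the same route as the paper's: split the indecomposable components into those containing an anisotropic root (finitely many, by finiteness of $\cl(\Delta^{\ree})$) and the purely isotropic ones, identify the latter as isolated isotropic pairs $\{\pm\beta\}$ with $(\lambda,\beta)=0$, and conclude by a pigeonhole argument on the fibres of $\cl$ when $(\lambda,\delta)\neq 0$. The only substantive difference is that where the paper invokes \Lem{lem:Xi}~(ii) to see that every component meets $R_{\lambda}$, you re-derive the needed special case directly by induction on the filtration $X_0\subset X_1\subset\cdots$ of $\S$~\ref{Xi}, which is the same underlying computation.
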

\begin{proof}
Set $R_{\lambda,i}:=\Delta_{\lambda,i}\cap R_{\lambda}$.
Note that $(\Delta_{\lambda,i}, R_{\lambda,j})=0$ for $i\not=j\in I$.
If $R_{\lambda,i}=\emptyset$, then for any $\gamma\in \Delta_{\lambda_i}$
one has $\gamma\not\in R_{\lambda}$ and $(\gamma,R_{\lambda})=0$ which contradicts to~\Lem{lem:Xi} (ii). Hence $R=\coprod_{i\in I} R_{\lambda,i}$ where $R_{\lambda,i}$ are pairwise orthogonal non-empty sets.

Let $J$ be the set of indices $j$ such that  $\Delta_{\lambda,j}\not\subset\Delta^{\is}$.

For $j\in J$ the sets 
 $\Delta_{\lambda_j}$  contains a non-isotropic root  $\gamma_j$, and
 for $j_1,j_2\in J$ one has 
 $\cl(\gamma_{j_1})\not=\cl(\gamma_{j_2})$. Since
 $\cl(\Delta^{\ree})$ is finite, the set $J$ is finite.

 Take $i\in I\setminus J$. Then $\Delta_{\lambda,j}\subset\Delta^{\is}$.
  For each $i\in I\setminus J$
fix $\beta_i\in\ R_{\lambda,i}$; note that $\beta_i\in\Delta^{\is}$
(since $\Delta_{\lambda,i}\subset\Delta^{\is}$), so
$(\lambda,\beta_i)=0$ (by definition of $R_{\lambda}$).

Let us show that $\Delta_{\lambda,i}$ is the root system of $\fsl(1|1)$
for each  $i\in I\setminus J$.
Indeed, if $\Delta_{\lambda,i}\not=\{\pm\beta_i\}$, then
$\Delta_{\lambda,i}$ contains $\beta$ which is not orthogonal
to $\beta_i$ (since $\Delta_{\lambda,i}$ is indecomposable).
Then, by (GR2),
 $\Delta_{\lambda,j}$ contains $\gamma\in \{\beta_i+\beta,\beta_i-\beta\}$.
 Since $\beta,\beta_i$ are isotropic and non-orthogonal, $\gamma$
 is anisotropic, a contradiction.

Suppose that $I\setminus J$ is infinite.
Since $\cl(\Delta^{\ree})$ is finite, one has $\cl(\beta_{i_1})=\cl(\beta_{i_2})$
for some $i_1,i_2\in I\setminus J$, so $\beta_{i_1}-\beta_{i_2}\in\mathbb{Z}_{\not=0}\delta$.
Hence $(\lambda,\delta)=0$ as required.
\end{proof}

\subsubsection{}
We will prove  (iv)  in~$\S$~\ref{DeltalambdaA112} below.
Using~$\S$~\ref{observB6} and~\Lem{lem:decompositionR}
we reduce~\Thm{thm:Deltalambda} (iii)  to the following statement
which will be proved in~$\S\S$~\ref{Falpha}--
\ref{DeltalambdaA112}  below.

 \subsubsection{}
\begin{prop}{prop:RsubsetDelta}
Let $\fg$ be a finite-dimensional or affine
symmetrizable Kac-Moody superalgebra.

\begin{enumerate}
\item
If $R\subset \Delta^{\ree}(\fg)$ 
satisfying (GR0)--(GR2) is indecomposable, contains an isotropic root, 
 and $\cl(R)$ 
is not isomorphic to $\Delta(\mathfrak{psl}(2|2))$, then $R$ is isomorphic
to  $\Delta^{\ree}(\fg')$, where 
$\fg'$ is a finite-dimensional or affine symmetrizable Kac-Moody superalgebra.
\item
If $\lambda\in\fh^*$ and  $R$ is an indecomposable component of $\Delta_{\lambda}$
 and $\cl(R)$ 
is isomorphic to $\Delta(\mathfrak{psl}(2|2))$, then $R$
is isomorphic to $A(1|1)^{(2)}$ and $(\lambda,\delta)=\frac{p}{q}$,
where $q$ is odd.
\end{enumerate}
\end{prop}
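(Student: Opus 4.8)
The plan is to pass from $R$ to its projection $\cl(R)$, classify the latter as a finite real root system by Serganova's theorem recalled in~$\S$\ref{finR}, and then reconstruct $R$ over $\cl(R)$ by controlling, for each projected root, the set of $\delta$-levels that occur. I would first dispose of the finite-dimensional case, where $\cl=\id$ and $R$ is itself a finite indecomposable set satisfying (GR0)--(GR2) with an isotropic root (the trivial case $R=\Delta(\fgl(1|1))$ being immediate). Passing to the quotient of $\mathbb{C}R$ by the radical of $(-,-)$, which meets $R$ in nothing by (GR3) — valid since $R$ is indecomposable and $\neq\Delta(\fgl(1|1))$ — gives a non-degenerate real root system, and~$\S$\ref{finR} forces it to be the root system of a finite-dimensional Kac-Moody superalgebra or $\Delta(\fpsl(m|m))$. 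The $\fpsl$ alternative with $m\geq 3$ is barred by~\Rem{rem:pslnn}, and the case $m=2$ is the excluded $\cl(R)\cong\Delta(\fpsl(2|2))$; when the radical is non-zero the same classification identifies the original $R$ with a $\fgl$-type root system, still finite-dimensional Kac-Moody.

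For affine $\fg$ I would first note that a finite $R$ is handled verbatim as above, yielding a finite-dimensional $\fg'$, so I may assume $R$ is infinite. Then, since $\cl(\Delta^{\ree})$ is finite, $\cl(R)$ is a finite indecomposable set satisfying (GR0), (GR1), (WGR2), (GR3), containing an isotropic element (as $\cl$ sends the isotropic root $\beta\in R$ to a non-zero isotropic vector). Applying~$\S$\ref{finR} to the non-degenerate quotient of $\cl(R)$ shows it is the root system of a finite-dimensional Kac-Moody superalgebra or one of $C(m|n),BC(m|n)$ (the $\fpsl(m|m)$, $m\geq 3$, alternative being excluded by~\Cor{cor:moddelta}), the residual possibility $\cl(R)\cong\Delta(\fpsl(2|2))=C(1|1)$ being exactly what is deferred to part~(ii). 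To lift, for each anisotropic $\bar\alpha\in\cl(R)$ I would show the level set $\{i\mid\alpha+i\delta\in R\}$ is a single coset of a subgroup $q_{\bar\alpha}\mathbb{Z}$: this follows from $W[R]R=R$ together with the identity $s_{\alpha+i\delta}(\alpha)=-\alpha-2i\delta$, which propagates levels in arithmetic progression, while for isotropic roots the level sets are pinned down through~\Lem{lem:chainiso} and (GR2). The resulting common period $q$ (with $\psi(\delta')\in\mathbb{Z}_{>0}\delta$, cf.~$\S$\ref{psidelta}) together with the parities inherited from $\Delta^{\ree}(\fg)$ then identifies $R$ with $\Delta^{\ree}(\fg')$ for a finite-dimensional or affine $\fg'$ read off from the table in~$\S$\ref{dotgclg}.

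For~(ii) I would start from $\cl(R)=C(1|1)=\{\pm 2\vareps_1,\pm 2\delta_1,\pm\vareps_1\pm\delta_1\}$. Since $C(m|n)$ occurs as $\cl(\Delta^{\ree})$ only for $\fg=A(2m-1|2n-1)^{(2)}$ (by the table in~$\S$\ref{dotgclg}), this fixes the ambient type and embeds $R$ in the span of $\vareps_1,\delta_1,\delta$. In $\Delta^{\ree}(A(2m-1|2n-1)^{(2)})=R_{m,n}$ the long roots $\pm 2\vareps_1$ occur only at odd $\delta$-levels and $\pm 2\delta_1$ only at even levels, while the isotropic roots $\pm\vareps_1\pm\delta_1$ occur at all integer levels; running the level analysis of the previous paragraph on these three orbits, using that $R$ is closed under the operations (b)--(d) defining $\Delta_\lambda$ (see~$\S$\ref{abcd}), shows that $R$ reproduces precisely the set $A(1|1)^{(2)}$ of~$\S$\ref{A112}. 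Finally, writing $(\lambda,\delta)=p/q$ in lowest terms, the membership conditions placing $i\delta\pm 2\delta_1$ (even $i$) and $i\delta\pm 2\vareps_1$ (odd $i$) in $\ol{R}_{\lambda}$ are a pair of parity conditions compatible for a single $\lambda$ only when $q$ is odd, which yields the level assertion.

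The main obstacle is the lifting step in the affine case: unlike real root subsystems of anisotropic superalgebras (\Prop{prop:RR'an}), an affine real root system satisfying all of (GR0)--(GR2) need not be the set of real roots of any contragredient superalgebra — the set $A(1|1)^{(2)}$ of~$\S$\ref{A112notKM} is the obstruction, and it is genuinely realized inside $\Delta^{\ree}(A(2m-1|2n-1)^{(2)})$. Thus the delicate point is to prove that once $\cl(R)\not\cong C(1|1)$ the base $\Sigma(R)$ of indecomposable elements of $R\cap\Delta^+$ (for a generic triangular decomposition) really carries a possibly twisted symmetrizable Cartan datum, i.e. that the level-period data match one of the admissible affine patterns of~$\S$\ref{dotgclg} and never degenerate into the non-contragredient $A(1|1)^{(2)}$ configuration; it is precisely here that excluding $\cl(R)\cong\Delta(\fpsl(2|2))$ is used.
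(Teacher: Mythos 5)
Your overall route --- project $R$ along $\delta$, classify the projection via Serganova's theorem, then reconstruct $R$ by analyzing the sets of $\delta$-levels over each projected root --- is exactly the paper's strategy (the sets $F(\alpha)$ of \Lem{lem:Falpha}). But there are two genuine gaps. First, you conflate $\cl(R)$ with the non-degenerate quotient of $\mathbb{C}R$ (the paper's $\cl_K(R)$ of \S~\ref{phi(R)}), and your case division leaks precisely at the point the hypothesis of (i) is designed to control. Serganova's classification in \S~\ref{finR} applies only to the non-degenerate quotient, and that quotient can be $C(1|1)\cong\Delta(\fpsl(2|2))$ while $\cl(R)$ is the \emph{degenerate} system $A(1|1)=\Delta(\fgl(2|2))$: take $R\cong\Delta(\fgl(2|2))$ or $R\cong A(1|1)^{(1)}$ inside $\fg$. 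Such $R$ satisfy all hypotheses of part (i), so the conclusion must be proved for them; yet your trichotomy routes ``quotient $=C(1|1)$'' to part (ii), whose hypothesis $\cl(R)\cong\Delta(\fpsl(2|2))$ fails for them. The paper devotes \S~\ref{C11} to exactly this quotient type, producing the exhaustive list $A(1|1)$, $A(1|1)^{(1)}$, $A(1|1)^{(2)}$, $R(a,d)$, and only then does the hypothesis on $\cl(R)$ eliminate the last two. Relatedly, your closing claim that excluding $\cl(R)\cong\Delta(\fpsl(2|2))$ is ``precisely'' what makes the lifting work for the other types is misplaced: when the quotient is not $C(1|1)$ the lifting succeeds unconditionally (\S~\ref{pfRsubsetDeltai}); the exclusion is needed only in the $C(1|1)$ case.

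Second, the lifting itself --- which is the bulk of the paper's proof --- is not carried out, and it does not follow from the observation that anisotropic level sets are groups. One must determine the \emph{relative} periods of long, short and isotropic roots and, in the $BC$-type cases, the coset (not subgroup) structure of the level set $Y$ of $2\vareps_1$ together with the (GR2)-forced partition $F(\vareps_1-\delta_1)=F(2\delta_1)\coprod Y$; this is what separates $B(m|n)^{(1)}$ from $A(2m-1|2n-1)^{(2)}$, $A(2m|2n)^{(4)}$ and $D(m+1|n)^{(2)}$, and it occupies \S\S~\ref{FA01}--\ref{FBC11}, the construction of $R'\cong A(m-1|n-1)$ and $\dot R$, \Lem{lem:mngeq2}, and the case analysis of \S~\ref{pfRsubsetDeltai}. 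Finally, in part (ii) the step ``this fixes the ambient type'' is unjustified: $C(1|1)$ also embeds in $BC(m|n)$, so from $\cl(R)\cong C(1|1)$ alone the ambient $\fg$ could be $A(2m|2n-1)^{(2)}$ or $A(2m|2n)^{(4)}$ --- indeed the set of all roots of $\Delta^{\ree}(A(2|1)^{(2)})$ whose image under $\cl$ lies in $C(1|1)$ satisfies (GR0)--(GR2) and is a copy of $A(1|1)^{(2)}$, so nothing at the level of abstract real root systems excludes the $BC$ ambients. What excludes them is that $R$ is a component of $\Delta_{\lambda}$: for $\gamma\in R$ with $\cl(\gamma)=2\delta_j$ one has $\gamma/2\notin\Delta_{\lambda}$, hence $\gamma/2\notin\Delta^{\ree}$ by \Cor{cor:Delta2beta}, whereas in the $BC$ ambients $\gamma/2$ is always a real root (end of \S~\ref{FBC11}). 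This use of \Cor{cor:Delta2beta} is the missing idea in your part (ii).
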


\subsubsection{Remark}
In~\cite{VGRS} V.~Serganova defined ``generalized root systems'' (GRS) as finite sets
satisfying the condition that $(-,-)$ is non-degenerate on the span of $R$, and  axioms (GR0)--(GR2),
 and showed that any indecomposable  GRS 
is isomorphic to the root system of  a basic classical Lie superalgebra different from $\mathfrak{psl}(2|2)$.
Several attempts to generalize this approach to affine Lie superalgebras were made,
but, to the best of our knowledge, root systems of   affine Lie superalgebras
are not singled out by a set of axioms. 
In~\cite{You} (cf.~\cite{Yos}) M.~Yousofzadeh  introduced and classified the
extended affine supersystems (with the ``string property'' instead of the axiom (GR2)),
which include root systems of affine Lie superalgebras.
\Prop{prop:RsubsetDelta} (i)
can be deduced from the classification theorem~\cite{You}, Theorem 2.2 (which is based on the Serganova's result).
Since the latter classification is rather complicated 
we deduce~\Prop{prop:RsubsetDelta} (i)
directly from  Serganova's result.

\subsection{Gram matrices}\label{Gramm}
For a finite  subset 
$\alpha_1,\ldots,\alpha_s$ of $L$
we denote by 
$$\Gamma_{\alpha_1,\ldots,\alpha_s}=\bigl((\alpha_i,\alpha_j)\bigr)$$ the corresponding Gram matrix.
We will use the following fact.

\subsubsection{}\begin{lem}{lem:Gramm}
Let $\fg$ be a symmetrizable finite-dimensional or affine Kac-Moody superalgebra and let $\alpha_1,\ldots,\alpha_s\in\Delta^{\an}(\fg)$ be such that
$(\alpha_i,\alpha_{i+1})\not=0$ for $i=1,\ldots, s-1$.
Then the matrix $(\alpha_1,\alpha_1)^{-1}\Gamma_{\alpha_1,\ldots,\alpha_s}$ is positive semidefinite and  $\cl(\alpha_1),\ldots,\cl(\alpha_s)$ are linearly dependent
 if $\det\Gamma_{\alpha_1,\ldots,\alpha_s}=0$.
\end{lem}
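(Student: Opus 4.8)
The plan is to reduce everything to \emph{even} anisotropic roots and then exploit that the even real roots of a finite-dimensional or affine $\fg$ form the root system of a finite-dimensional or affine Kac-Moody \emph{algebra}, on each indecomposable component of which the invariant form is semidefinite. First I would normalize each entry of the chain to be even. If $\alpha_i$ is an odd anisotropic real root then $\langle\alpha_i,\alpha_i^{\vee}\rangle=2$, so by~$\S$~\ref{Sigmaproperties} the root $2\alpha_i$ is an even anisotropic real root; set $\alpha_i':=\alpha_i$ when $\alpha_i$ is even and $\alpha_i':=2\alpha_i$ when $\alpha_i$ is odd. Then each $\alpha_i'$ is even anisotropic, $(\alpha_i',\alpha_i')$ has the same sign as $(\alpha_i,\alpha_i)$, the consecutive non-orthogonality is preserved, and $\Gamma_{\alpha_1',\ldots,\alpha_s'}=D\,\Gamma_{\alpha_1,\ldots,\alpha_s}\,D$ for the positive diagonal matrix $D=\diag(d_i)$ with $d_i\in\{1,2\}$. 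Congruence by $D$ preserves both positive semidefiniteness and the vanishing of the determinant, and $\cl(\alpha_i')=d_i\cl(\alpha_i)$, so it suffices to prove both assertions for the $\alpha_i'$.

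Next I would invoke the structure of $\fg_{\ol{0}}$. Since $\fg$ is finite-dimensional or affine, the set $\Delta_{\ol{0}}^{\ree}=\Delta_{\ol{0}}^{\an}$ of even real roots is the real root system of the reductive (resp.\ affine) Lie algebra $\fg_{\ol{0}}$ (see~\cite{Ksuper},\cite{vdLeur}); it decomposes into finitely many mutually orthogonal indecomposable components, each being the real root system of a finite-dimensional simple or an affine Kac-Moody algebra. On such a component the restriction of the invariant form is semidefinite, and all of its real roots have square length of one fixed sign. The hypothesis $(\alpha_i',\alpha_{i+1}')\neq 0$ forces consecutive $\alpha_i'$ into the same component, hence all of $\alpha_1',\ldots,\alpha_s'$ lie in a single indecomposable component $\Delta_j$, and $(\alpha_i',\alpha_i')$ has the sign of $(\alpha_1,\alpha_1)$ for every $i$.

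Granting this, both assertions follow quickly. For positive semidefiniteness: on the real span of $\Delta_j$ the form multiplied by $(\alpha_1,\alpha_1)^{-1}$ is positive semidefinite (positive definite in the finite case, positive semidefinite with one-dimensional radical in the affine case), hence so is its restriction to the span of the $\alpha_i'$; therefore $(\alpha_1,\alpha_1)^{-1}\Gamma_{\alpha_1',\ldots,\alpha_s'}$ is positive semidefinite, and by the reduction so is $(\alpha_1,\alpha_1)^{-1}\Gamma_{\alpha_1,\ldots,\alpha_s}$. For the linear dependence: if $\det\Gamma_{\alpha_1,\ldots,\alpha_s}=0$ then $\det\Gamma_{\alpha_1',\ldots,\alpha_s'}=0$, so there is a nonzero vector $(c_i)$ with $\sum_j(\alpha_i',\alpha_j')c_j=0$ for all $i$. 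Setting $w:=\sum_j c_j\alpha_j'$ gives $(w,w)=\sum_i c_i(\alpha_i',w)=0$. Applying $\cl$ and using that $\cl$ is compatible with the form, $\cl(w)=\sum_j c_jd_j\cl(\alpha_j)$ is an isotropic vector lying in $\mathbb{C}\,\cl(\Delta_j)$, the classical part of $\Delta_j$. This classical part is a finite-dimensional root system whose roots all have square length of one sign, so the form is definite there; an isotropic vector in a definite space is zero, whence $\sum_j c_jd_j\cl(\alpha_j)=0$. Since the $c_i$ are not all zero and the $d_j$ are nonzero, the $\cl(\alpha_i)$ are linearly dependent (when $w=0$ the same conclusion is immediate from $\sum_j c_jd_j\alpha_j=0$).

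The hard part is the structural input used in the second paragraph, and in particular the claim invoked in the last paragraph that the descended form is \emph{definite} on the classical part $\cl(\Delta_j)$ of an affine even component — equivalently, that the minimal imaginary root of $\Delta_j$ is proportional to $\delta$, so that passing to $\cl$ kills exactly the radical of $\Delta_j$. I expect this to be the main obstacle: it rests on the fact that for an affine Kac-Moody superalgebra the even affinization shares the imaginary root $\delta$, which I would deduce from the orthogonal decomposition of $\fg_{\ol{0}}$ together with the characterization $\mathbb{Z}\delta=\{\nu\in\mathbb{Z}\Delta\mid(\nu,\Delta)=0\}$ in~\Rem{rem:delta} (the only delicate point being orthogonality of $\delta_j$ to the odd isotropic roots, which I would handle using that odd anisotropic roots double to even ones and that isotropic odd roots are non-orthogonal to even roots). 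Everything else — the evenization, the confinement of the chain to one component, and the ``isotropic vector in a definite space is zero'' step — is routine once this is in place.
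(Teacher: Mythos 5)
Your proof is correct, but it takes a more laborious route than the paper's, and the ``hard part'' you single out is precisely what the paper's argument is built to avoid. The paper's proof is one observation plus the same confinement idea you use: since $(\delta,\Delta)=0$, the bilinear form descends along $\cl$, so $\Gamma_{\alpha_1,\ldots,\alpha_s}=\Gamma_{\cl(\alpha_1),\ldots,\cl(\alpha_s)}$ on the nose; one then works entirely inside $\cl(\Delta^{\an})$, a \emph{finite} set whose indecomposable components are root systems of finite-dimensional anisotropic Kac--Moody superalgebras (e.g.\ $B_m$, $C_n$, $B(0|n)$, $G_2$), and on the span of the component containing $\cl(\alpha_1)$ the form $(\alpha_1,\alpha_1)^{-1}(-,-)$ is positive \emph{definite}. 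Both conclusions then drop out at once: a Gram matrix of vectors in a definite space is positive semidefinite, and it is singular if and only if the vectors are linearly dependent. By contrast, you evenize first (doubling the odd anisotropic roots), confine the chain to an indecomposable component of $\Delta^{\ree}_{\ol{0}}$ --- which in the affine case is itself affine, hence only \emph{semi}definite --- and then must identify the radical of that component with $\mathbb{C}\delta$ before applying $\cl$. That structural input is true, and can be proved more cleanly than your sketch suggests: an infinite component lies in $\dot{\Delta}+\mathbb{Z}\delta$, so it contains some $\alpha$ and $\alpha+k\delta$ with $k\neq 0$, hence $\delta$ lies in its span and sits in the radical of the form there (as $(\delta,\Delta)=0$); since the radical of the span of an affine real root system is one-dimensional, it equals $\mathbb{C}\delta$. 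So your argument does close. What your detour buys is that you only ever cite facts about root systems of Lie algebras; what it costs is the evenization bookkeeping, the affine/semidefinite case, and the radical identification --- all of which evaporate if one applies $\cl$ at the very start, because the Gram matrix is unchanged under $\cl$ and no affine components survive the quotient.
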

\begin{proof}
Set $\gamma_i:=\cl(\alpha_i)$ and note that 
 $\Gamma_{\alpha_1,\ldots,\alpha_s}=\Gamma_{\gamma_1,\ldots,\gamma_s}$.
 Since $(\gamma_i,\gamma_{i+1})\not=0$ for $i=1,\ldots, s-1$, 
the vectors $\gamma_1,\ldots,\gamma_s$ lie in the same indecomposable component of the set $\cl(\Delta^{\an})$.
These components are the sets of real roots of
finite-dimensional anisotropic Kac-Moody superalgebras.
The restriction of the bilinear form $(\gamma_1,\gamma_1)^{-1}(-,-)$ to the span of such component is positive definite.
This implies the statement.
\end{proof}

 \subsection{}
\begin{prop}{prop:RR'an}
Let $\fg$ be a  symmetrizable Kac-Moody superalgebra, possibly of countable rank.
 Let $R\subset \Delta^{\an}(\fg)$ be an indecomposable set (in the sense of~\Defn{def:indec})
satisfying $s_{\alpha}\beta\in R$ for all $\alpha,\beta\in R$
and $2\alpha\in R$ for all odd $\alpha\in R$. Then
\begin{enumerate}
\item
There exists an indecomposable symmetrizable  
anisotropic Kac-Moody superalgebra $\fg'$, 
possibly of countable rank, such that $R$ is a 
bijective quotient of $\Delta(\fg')^{\ree}$. 

\item If $\fg$ is finite-dimensional or affine Kac-Moody superalgebra, then
  $\fg'$ in (i) is finite-dimensional or affine Kac-Moody superalgebra, and $R$ is isomorphic to $\Delta(\fg')^{\ree}$.
 \end{enumerate}
 \end{prop}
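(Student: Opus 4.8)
The plan is to build a base for $R$ and to recognise it as the Cartan datum of an anisotropic Kac--Moody superalgebra. First I would fix a triangular decomposition: as in \S\ref{Sherman}, since $\mathbb{Z}R$ is a free module of at most countable rank, choose a homomorphism $\omega\colon\mathbb{Z}R\to\mathbb{R}$ with $\omega(\alpha)\ne 0$ for all $\alpha\in R$, and set $R^+:=\{\alpha\in R\mid\omega(\alpha)>0\}$. Let $\Sigma$ be the set of indecomposable elements of $R^+$, i.e.\ those not expressible as a sum of two or more members of $R^+$; note that for odd $\alpha\in\Sigma$ the root $2\alpha=\alpha+\alpha$ is decomposable, so $\Sigma$ records only the non-doubled odd roots. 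Since $R\subset\Delta^{\an}(\fg)$ contains no isotropic roots, the reflection-and-induction argument of \Lem{lem:standard} applies to $R$ with base $\Sigma$ and shows that each $\gamma\in R^+$ with $\tfrac12\gamma\notin R$ has the form $\gamma=s_{\alpha_j}\cdots s_{\alpha_2}\alpha_1$ with $\alpha_1,\dots,\alpha_j\in\Sigma$; hence $R=W[R](\Sigma)\cup\{2\alpha\mid\alpha\in W[R](\Sigma)\text{ odd}\}$ and $R^+\subset\mathbb{Z}_{\ge 0}\Sigma$.

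Next I would set $A':=\bigl(\langle\alpha^\vee,\beta\rangle\bigr)_{\alpha,\beta\in\Sigma}$ and let $\tau'$ be the set of odd roots in $\Sigma$. For anisotropic $\alpha$ one has $\langle\alpha,\alpha^\vee\rangle=2$ by \S\ref{Sigmaproperties}, so all diagonal entries are $2$; for distinct $\alpha,\beta\in\Sigma$ the entry $\langle\alpha^\vee,\beta\rangle$ is an integer by (\ref{eq:alphaveebeta}), is non-positive by the usual indecomposability argument for a base, and is even when $\alpha\in\tau'$, again by (\ref{eq:alphaveebeta}). The restriction of $(-,-)$ symmetrises $A'$. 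By the characterisation of anisotropic Kac--Moody superalgebras (the Example in Section~2), $(A',\tau')$ is then the Cartan datum of an anisotropic Kac--Moody superalgebra $\fg':=\fg(A',\tau')$ of at most countable rank. Choosing a realisation with simple roots $\Sigma'$ and the tautological bijection $\Sigma'\to\Sigma$, I extend it linearly to $\psi\colon\mathbb{C}\Sigma'\to\mathbb{C}R$, which is compatible with the bilinear forms by the definition of $A'$ and its symmetrisation, and which preserves parity by the choice of $\tau'$.

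For part (i) I would transport the root combinatorics. As in the proof of \Lem{lem:W'L}, $s_{\alpha'}\mapsto s_{\psi(\alpha')}$ extends to a group isomorphism $W(\fg')\iso W[R]$ intertwining $\psi$, because $\mathbb{C}\Delta(\fg')$ is a faithful $W(\fg')$-module and $W[R]$ is generated by the $s_\beta$, $\beta\in\Sigma$. Describing both $\Delta(\fg')^{\ree}$ and $R$ as the Weyl orbit of the respective simple roots together with the odd-root doubling then gives $\psi\bigl(\Delta(\fg')^{\ree}\bigr)=R$; and $\psi$ is injective on $\Delta(\fg')^{\ree}$ by a length/reduced-expression comparison against $\omega$, which rules out two distinct real roots of $\fg'$ with equal image. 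This realises $R$ as a bijective quotient of $\Delta(\fg')^{\ree}$, proving (i).

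The delicate point is part (ii), where I must promote this to an isomorphism and identify the type of $\fg'$. Here I would use \Lem{lem:Gramm}: as $R$ is indecomposable, any chain $\alpha_1,\dots,\alpha_s\in\Sigma$ with consecutive non-orthogonality has $(\alpha_1,\alpha_1)^{-1}\Gamma_{\alpha_1,\dots,\alpha_s}$ positive semidefinite, and a vanishing Gram determinant forces linear dependence of the $\cl(\alpha_i)$. Since $\cl(\Delta^{\an})$ is finite, the radical of the scaled form on $\mathbb{C}R$ lies in $\mathbb{C}\delta$, so has dimension $0$ when $\fg$ is finite-dimensional and at most $1$ in the affine case; moreover it equals $1$ exactly when $R$ is infinite, because then $\delta\in\mathbb{C}R$ and $\delta$ lies in the radical by the identity $\mathbb{Z}\delta=\{\nu\mid(\nu,\Delta)=0\}$ of \Rem{rem:delta}. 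Consequently the symmetrised $A'$ is positive semidefinite of corank $\le 1$, whence $\Sigma$ is finite and $A'$ is of finite or affine type. Finally the identity $\corank A'=\dim\ker\psi+\dim\bigl(\text{radical of }(-,-)\text{ on }\mathbb{C}R\bigr)$, read against the radical dimension just computed, forces $\ker\psi=0$; thus $\psi$ is bijective on spans and $R\cong\Delta(\fg')^{\ree}$. I expect this corank/radical bookkeeping, resting on \Lem{lem:Gramm} and \Rem{rem:delta}, to be the main obstacle.
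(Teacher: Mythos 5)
Your construction of $\Sigma$, $A'$, $\fg'$ and $\psi$ runs along the same lines as the paper's proof (which defines $\Sigma'$ by the reflection condition of \cite{KT98} --- equivalent to indecomposability --- and quotes \cite{KT98} for the sign of the off-diagonal entries and for the orbit formula $R=W[\Sigma'](\Sigma'\coprod\{2\alpha\})$), so part (i) and the first half of part (ii) are essentially the paper's argument. The genuine gap is at the step you yourself call the crux. Your identity $\corank A'=\dim\Ker\psi+\dim\operatorname{rad}\bigl((-,-)|_{\mathbb{C}R}\bigr)$ is correct, but it cannot by itself ``force $\Ker\psi=0$'': when the symmetrized $A'$ is of affine type the identity reads $1=\dim\Ker\psi+\dim\operatorname{rad}$, and nothing you have proved excludes the scenario $\dim\Ker\psi=1$, $\operatorname{rad}=0$. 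Your claim that the radical ``equals $1$ exactly when $R$ is infinite'' is only justified in one direction ($R$ infinite $\Rightarrow\delta\in\mathbb{C}R\Rightarrow$ radical nonzero); in the bad scenario $R$ would simply be \emph{finite}, the image of the infinite set $\Delta^{\ree}(\fg')$ under a non-injective $\psi$, and no contradiction results --- whether $R$ is infinite is not known independently of the injectivity you are trying to prove, so the argument is circular. There is also an order-of-quantifiers problem just before this: you deduce ``corank $\le 1$, whence $\Sigma$ is finite'', but the corank identity only makes sense once $\Sigma$ is known to be finite; the paper proves finiteness first and directly (if $\alpha,\alpha+j\delta\in\Sigma$ with $j\ne0$, then $\langle\alpha+j\delta,\alpha^{\vee}\rangle=2>0$, contradicting non-positivity of off-diagonal entries).

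To close the gap you need an argument specific to the case ``$A'$ affine, $\Sigma$ linearly dependent''. The paper's: if $\Ker\psi\ne0$, then $\Ker\psi=\mathbb{C}\delta'$, so $R\cap\ol{\Delta}^{\ree}=\psi\bigl(\ol{\Delta}^{\ree}(\fg')\bigr)$ is a finite classical root system having $\Sigma'$ as its set of simple roots, whence its Cartan matrix $A'$ is invertible --- contradicting that an affine Cartan matrix is singular. Alternatively, a one-line patch inside your own setup: $\Ker\psi\subset\operatorname{rad}''=\mathbb{C}\delta'$, and $\delta'=\sum_i a_i\alpha_i''$ with all marks $a_i>0$; hence $\psi(\delta')=\sum_i a_i\alpha_i'$ with every $\alpha_i'\in R^+$, so $\omega\bigl(\psi(\delta')\bigr)=\sum_i a_i\,\omega(\alpha_i')>0$ and $\psi(\delta')\ne0$, forcing $\Ker\psi=0$. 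Either of these (both resting on positivity of $\Sigma$, an ingredient absent from your proposal) completes the proof.
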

 \begin{proof}
If $\alpha\in\Delta^{\an}(\fg)$,
then $\fg_{\pm\alpha}$ act locally nilpotently,
so for each $\gamma\in\Delta(\fg)$, 
$\frac{2(\alpha,\gamma)}{(\alpha,\alpha)}$ is integral
and is even if $\alpha$ is odd. Set $R^+:=\Delta^{+}(\fg)\cap R$ and
\begin{equation}\label{eq:KTformula}
\Sigma':=\{\alpha\in R|\ s_{\alpha}\bigl(R^+\setminus \{\alpha,2\alpha\}\bigr)\subset R^+\}.
\end{equation}
Arguing as in~\cite{KT98} we obtain $\frac{2(\alpha,\beta)}{(\beta,\beta)} \in\mathbb{Z}_{\leq 0}$
 for all $\alpha,\beta\in\Sigma'$ with $\alpha\not=\beta$
 and 
 \begin{equation}\label{eq:KTR}
 R=W[\Sigma']\bigl(\Sigma'\coprod \{2\alpha| \alpha\in\Sigma', p(\alpha)=\ol{1}\}\bigr).\end{equation}
 By above, $\frac{2(\alpha,\beta)}{(\beta,\beta)}$
 is even if $\beta$ is odd. Therefore
  the matrix $A':=(\langle \alpha,\beta^{\vee}\rangle )_{\alpha,\beta\in \Sigma'}$ 
 is a Cartan matrix for  a anisotropic symmetrizable generalized Kac-Moody superalgebra $\fg'$
 with a base $\Sigma''$
  and the parity function $p':\Sigma''\to \mathbb{Z}_2$ induced
 by the parity function $p: \Delta(\fg)\to \mathbb{Z}_2$.  Moreover, the map $\Sigma''\to \Sigma'$ extends
 to a linear homomorphism $\psi: \mathbb{Z}\Sigma''\to\mathbb{Z}\Sigma'$ which preserves the parity and
 the bilinear forms,
 and maps $\ol{\Delta}^{\ree}(\fg')=W[\Sigma''](\Sigma'')$ 
 to $R\cap \ol{\Delta}^{\ree}(\fg)=W[\Sigma'](\Sigma')$.
 By~(\ref{eq:real}), this implies 
 $\psi \bigl(\Delta^{\ree}(\fg')\bigr)=
 R$. Using  formula~(\ref{eq:KTR}) and the assumption that $R$ is indecomposable, we conclude
 that the Dynkin diagram of
 $\Sigma''$ is connected.  This establishes (i).

For (ii) assume that $\fg$ is finite-dimensional or affine
Kac-Moody superalgebra.

 Let us show that $\fg'$ is a Kac-Moody superalgebra, i.e. that $\Sigma'$ is finite. Indeed, if $\Delta$ is finite, then $R$ is finite.
If $\Delta$ is affine, then the image of $\Delta$ in $\mathbb{Z}\Delta/\mathbb{Z}\delta$ is finite, so,
if $\Sigma'$ is infinite, then 
 $\alpha,\alpha+j\delta\in \Sigma'$ for some $j\not=0$, but $\frac{2(\alpha,\alpha+j\delta)}{(\alpha,\alpha)}=2$,
a contradiction (by above, this number is non-positive).
 Hence $\Sigma'$ is finite, so $\fg'$ is a Kac-Moody superalgebra.
 
Renormalize the bilinear form on $\Delta(\fg)$ in such a way 
that $(\gamma,\gamma)=2$ for some $\gamma\in\Sigma'$.
By (i), the Dynkin diagram of $\Sigma''$  is connected.
By~\Lem{lem:Gramm}, the Gram matrix $\Gamma_{\Sigma'}=\Gamma_{\Sigma''}$ 
is either positive definite or
 positive semidefinite; the algebra $\fg'$ is finite-dimensional in the first case
 and affine in the second case (see~\cite{Kbook}, Theorem 4.3 and~\cite{K78}).

 It remains to verify that the linear map 
 $\psi:\mathbb{C}\Sigma''\to\mathbb{C}\Sigma'$ is injective.  
 Since $\psi$ preserves the bilinear forms, $\Ker\psi$ lies
 in the kernel of the bilinear form on $\mathbb{C}\Sigma''$. In particular, $\Ker\psi=0$ if
 $\fg'$ is finite-dimensional.  In the remaining case $\fg'$ is affine. Suppose the contrary, that $\Ker\psi\not=0$, so
   $\Ker\psi=\mathbb{C}\delta'$
 where $\delta'$ is the minimal imaginary root in $\Delta(\fg')$.  Then 
 $R\cap \ol{\Delta}^{\ree}(\fg)=\psi(\ol{\Delta}^{\ree}(\fg'))$  
 is a classical finite root system 
 and $\Sigma'$ is a set of simple roots of this root system.
 Thus the Cartan matrix $A'$ is invertible, so $\fg'$ is not affine, a contradiction. This completes the proof of (ii).
\end{proof}

\subsection{Set $F(\alpha)$}\label{Falpha}
Let $R\subset L$ be a set satisfying (GR0), (GR1) and (WGR2). 
Let $K$ be the kernel of the restriction of $(-,-)$ to $\mathbb{C}R$.
Fix any   element $\delta_0\in K$ 
and let $\cl_0$ be the canonical map  
$\cl_0: \mathbb{C}R\to \mathbb{C}R/\mathbb{C}\delta_0$. 
For example, if $R\subset \Delta^{\ree}(\fg)$, then 
for $\delta_0=\delta$ we have $\cl=\cl_0$.
For any 
$\alpha\in R$ we set
$$F(\alpha):=\{c\in\mathbb{C}|\ \alpha+c\delta_0\in R\}.$$
  Since $\mathbb{Z}R$ is a free module of finite rank,
  we have $F(\alpha)\subset \mathbb{Z}a$ for some $a\in\mathbb{C}$.

\subsubsection{}
\begin{lem}{lem:Falpha}
\begin{enumerate}
\item For all $\alpha\in R$ we have  $0\in F(\alpha)$, $F(-\alpha)=-F(\alpha)$
and $F(w\alpha)=F(\alpha)$ for all $w\in W[R]$.

\item  Let $\alpha\in R$ be such that $(\alpha,\alpha)\not=0$.
Then $F(\alpha)=\mathbb{Z}a$ for some $a\in\mathbb{C}$. Moreover, 
for $\gamma\in R$ the set $F(\gamma)$
has a structure of $\mathbb{Z}$-module with respect to the action
$j*c:=c-j\langle \gamma,\alpha^{\vee}\rangle a$
(for $j\in \mathbb{Z}$ and $c\in F(\gamma)$). In particular, 
$$\langle \gamma,\alpha^{\vee}\rangle F(\alpha)\subset F(\gamma)\ \ 
\text{ for all }\alpha,\gamma\in R\ \text{ with } (\alpha,\alpha)\not=0.$$
\item  If $\beta,\gamma\in R$ are such that 
$$(\beta,\beta)=0,\ \ \ (\gamma,\beta)\not=0,\ 
\ \ \bigl(\cl_0(\gamma)-\cl_0(\beta)\bigr)\not\in\cl_0(R),
$$
then $\beta+\gamma\in R$ and $F(\beta), F(\gamma)\subset F(\beta+\gamma)$.
\end{enumerate}
\end{lem}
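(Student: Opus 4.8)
The plan is to exploit the single structural fact underlying all three parts: since $\delta_0$ lies in the kernel $K$ of $(-,-)$ restricted to $\mathbb{C}R$, one has $(\delta_0,\mu)=0$ for every $\mu\in\mathbb{C}R$. Consequently, adding a multiple of $\delta_0$ changes neither inner products nor square lengths, so it preserves isotropy and anisotropy, and it interacts trivially with reflections: for anisotropic $v\in R$ and any $\mu$, $s_v(\mu+c\delta_0)=s_v(\mu)+c\delta_0$. With this in hand, part (i) is immediate. Here $0\in F(\alpha)$ because $\alpha\in R$; $F(-\alpha)=-F(\alpha)$ follows from $-R=R$; and since $W[R]$ is generated by the $s_v$ with $v$ anisotropic, the displayed identity gives $c\in F(\alpha)\Leftrightarrow c\in F(s_v\alpha)$, whence $F(w\alpha)=F(\alpha)$ for all $w\in W[R]$ by induction on word length.

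For part (ii) I would first establish the module action assuming $F(\alpha)=\mathbb{Z}a$, and treat the generation claim separately. Fix anisotropic $\alpha$ and $a\in F(\alpha)$, so $\alpha+a\delta_0\in R$ is again anisotropic of the same square length. The point is to compare two elements of $R$ obtained by reflecting $\gamma+c\delta_0$: on one hand $s_\alpha(\gamma+c\delta_0)=s_\alpha(\gamma)+c\delta_0$, on the other $s_{\alpha+a\delta_0}(\gamma+c\delta_0)=s_\alpha(\gamma)+(c-\langle\gamma,\alpha^\vee\rangle a)\delta_0$, the relevant inner products being insensitive to the $\delta_0$-components. Both lie in $R$, so $c$ and $c-\langle\gamma,\alpha^\vee\rangle a$ both belong to $F(s_\alpha\gamma)=F(\gamma)$, the last equality by (i). Using $a,-a\in F(\alpha)$ gives closure under translation by $\pm\langle\gamma,\alpha^\vee\rangle a$, so $j*c:=c-j\langle\gamma,\alpha^\vee\rangle a$ is a well-defined $\mathbb{Z}$-action on $F(\gamma)$; taking $c=0$ then yields $\langle\gamma,\alpha^\vee\rangle F(\alpha)\subset F(\gamma)$.

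The main obstacle is the remaining claim $F(\alpha)=\mathbb{Z}a$ for anisotropic $\alpha$, i.e. that $F(\alpha)$ is a full rank-one lattice and not merely an even sublattice. Here I would first record the reflection-closure $2c'-c\in F(\alpha)$ for $c,c'\in F(\alpha)$, which comes from $s_{\alpha+c'\delta_0}(\alpha+c\delta_0)=-\alpha+(c-2c')\delta_0\in R$ together with $-R=R$. Composing the involution $c\mapsto -c$ (available since $-F(\alpha)=F(\alpha)$) with $c\mapsto 2c'-c$ produces translation by $2c'$, so $F(\alpha)+2c'\subset F(\alpha)$ for every $c'\in F(\alpha)$; starting from $0$ this shows $2\langle F(\alpha)\rangle\subseteq F(\alpha)$. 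Writing $\langle F(\alpha)\rangle=\mathbb{Z}d$ (a rank-one lattice, since $F(\alpha)\subset\mathbb{Z}b$ is given), $F(\alpha)$ already contains all even multiples of $d$; and because $F(\alpha)$ generates $\mathbb{Z}d$ it contains some odd multiple of $d$, which upon translation by the even multiples yields every odd multiple as well. Hence $F(\alpha)=\mathbb{Z}d$, and I set $a:=d$.

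Finally, part (iii) should follow cleanly from (WGR2) and the hypothesis $\cl_0(\gamma)-\cl_0(\beta)\notin\cl_0(R)$. For $\beta+\gamma\in R$: applying (WGR2) to the isotropic $\beta$ and to $\gamma$ gives $\{\gamma+\beta,\gamma-\beta\}\cap R\neq\emptyset$, and $\gamma-\beta\notin R$ since $\cl_0(\gamma-\beta)=\cl_0(\gamma)-\cl_0(\beta)\notin\cl_0(R)$, forcing $\gamma+\beta\in R$. For the inclusions, given $c\in F(\beta)$ the element $\beta+c\delta_0$ is still isotropic with $(\gamma,\beta+c\delta_0)=(\gamma,\beta)\neq0$, so (WGR2) gives $\gamma\pm(\beta+c\delta_0)\in R$ for at least one sign; the minus sign is excluded exactly as before via $\cl_0$, so $\gamma+\beta+c\delta_0\in R$, i.e. $c\in F(\beta+\gamma)$. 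The inclusion $F(\gamma)\subset F(\beta+\gamma)$ is symmetric, pairing $\gamma+c\delta_0$ with $\beta$. I expect the only delicate points to be the bookkeeping with $\cl_0$ and confirming that the square-length and inner-product computations are genuinely unaffected by the $\delta_0$-shifts.
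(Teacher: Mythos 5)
Your proposal is correct and follows essentially the same route as the paper: part (i) from (GR0)/(GR1), part (ii) via the composition of reflections in $\alpha$ and $\alpha+a\delta_0$ (the paper writes this as $s_{x\delta_0-\alpha}s_\alpha(\gamma)=\gamma-x\langle\gamma,\alpha^\vee\rangle\delta_0$, which is the same computation since $\delta_0\in K$), and part (iii) from (WGR2) combined with the $\cl_0$-hypothesis. The only difference is one of detail: where the paper tersely asserts ``hence $F(\alpha)$ is a subgroup of $\mathbb{C}$,'' you spell out the needed argument (closure under translation by $2\langle F(\alpha)\rangle$, plus the presence of an odd multiple of the generator), which is a genuine and welcome filling-in of the paper's gap rather than a different method.
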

\begin{proof}
(i) follows from (GR0) and (GR1) and (iii) follows from (GR2).
For
(ii) take $\alpha\in R$ such that $(\alpha,\alpha)\not=0$. For $x\in F(\alpha)$ we have
$x\delta_0-\alpha\in R$. Thus for 
$\gamma\in R$ we have 
$$s_{x\delta_0-\alpha}s_{\alpha} (\gamma)=\gamma-x\langle \gamma,\alpha^{\vee}\rangle \delta_0\in R,$$ 
so $c\in F(\gamma)$ forces $(c-x \langle \gamma,\alpha^{\vee}\rangle)\in F(\gamma)$.
In particular, fo all $x,c\in F(\alpha)$ we have $c-2x\in F(\alpha)$. Hence
 $F(\alpha)$ is a subgroup of $\mathbb{C}$.
  Since $R\subset \Delta$
  we have $F(\alpha)\subset \mathbb{Z}a$ for some $a\in\mathbb{C}$. 
  By above,
  $c-x\langle \gamma,\alpha^{\vee}\rangle\in R$. Since $F(\gamma)$
contains $0$ this gives $\langle \gamma,\alpha^{\vee}\rangle 
F(\alpha)\subset F(\gamma)$.
\end{proof}

\subsubsection{}
By (ii) we have $F(\alpha)=F(-\alpha)$ for a non-isotropic 
$\alpha\in R$.
In~$\S\S$~\ref{FA01}--\ref{FBC11} we will see that
the same formula holds for isotropic $\alpha$ if 
$\cl(R)\not=C(1|1)$, using that in all other cases  any isotropic 
root $\alpha$ can be included in $A(1|0)$, $B(1|1)$ or $BC(1|1)$.

\subsubsection{}\label{FA01}
Consider the case when 
$\cl_0 (R)$ is of type $A(1|0)$. Take $\alpha,\beta\in R$
such that $\cl(\alpha)=\vareps_1-\vareps_2$,
$\cl(\beta)=\vareps_2-\delta_1$. Then $\alpha\in R^{\an}$, 
$\beta\in R^{\is}$
and $s_{\alpha}\beta=\beta+\alpha$. Let $E$ be the 
span of $\alpha,\beta$. By~\Lem{lem:Falpha} (i), 
$R^{\an}=\{\pm \alpha+\mathbb{Z}a\delta_0\}$
for some $a\in\mathbb{C}$. Then
$W[R]\beta=
\{\beta+\mathbb{Z}a\delta_0,\beta+\alpha+\mathbb{Z}a\delta_0\}$.
The Gram matrix $\Gamma_{\alpha,\beta}$
is equal to the Gram matrix $\Gamma_{\cl(\alpha),\cl(\beta)}$
which is non-degenerate, so 
$E\cap\mathbb{C}\delta_0=0$ which forces $-\beta\not\in W[R]\beta$.
Using~\Lem{lem:chainiso} (ii) we obtain
$R^{\is}=-W[R]\beta\coprod W[R]\beta$, that is
$$R=\{\pm\alpha+\mathbb{Z}a\delta_0, \pm\beta+\mathbb{Z}a\delta_0,
\pm(\beta+\alpha)+\mathbb{Z}a\delta_0\}.$$
 Hence $E\cap R$
 can be identified with $A(1|0)$ and 
$R=(E\cap R)+\mathbb{Z}a\delta_0$ 
is isomorphic to  $A(1|0)$ if $a\delta_0=0$ and to 
$A(1|0)^{(1)}$  otherwise.

\subsubsection{}\label{FB11}
Consider the case when  $\cl_0 (R)$ is of type $B(1|1)$, i.e.
$$\cl_0 (R)=\{\pm \vareps_1, \pm\vareps_1\pm \delta_1, 
\pm\delta_1, \pm 2\delta_1\}.$$

Take $\alpha,\beta\in R$ such that  $\cl_0(\alpha)=\delta_1$ 
and  $\cl_0(\beta)=\vareps_1-\delta_1$. 
By~\Lem{lem:Falpha}  (iii), $R$ contains $\alpha+\beta$
and $F(\alpha), F(\beta)\subset F(\alpha+\beta)$. 
 Applying~\Lem{lem:Falpha} (iii) to the pair $-\beta,\alpha+\beta$
we obtain $F(\alpha+\beta)\subset F(\alpha)$, that is
$F(\alpha)=F(\alpha+\beta)$. By (GR1), $s_{\alpha}\beta=\beta+2\alpha\in R$ and
 $F(\beta+2\alpha)=F(\beta)$ (by~\Lem{lem:Falpha} (i)). 
Applying~\Lem{lem:Falpha} (iii) to the pair
$-\beta,\beta+2\alpha$ we obtain $2\alpha\in R$
and $F(\beta)\subset F(2\alpha)$.
 Using~\Lem{lem:Falpha} (ii)
we get $F(\beta)=F(2\alpha)=\mathbb{Z}c$ for some $c\in\mathbb{C}$.
By above, $R$ contains $\pm \beta$, $\pm \alpha$
$\pm 2\alpha$ and $\pm (\beta+2\alpha)$; these elements form $B(1|1)$
and we identify this subset of $R$ with $\cl_0(R)$.
By~\Lem{lem:Falpha} (ii),  
$F(\delta_1)=\mathbb{Z}a$ for some $a\in\mathbb{C}$.
Summarizing we have
$$F(\vareps_1)=F(\delta_1)=\mathbb{Z}a,\ \ 
F(\pm\vareps_1\pm\delta_1)=F(\vareps_1-\delta_1)=F(2\delta_1)=\mathbb{Z}c\subset \mathbb{Z}a.$$
By~\Lem{lem:Falpha} (ii), for any $b\in F(\vareps_1-\delta_1)$ 
we have $(b+2a)\in F(\vareps_1-\delta_1)$. 
Thus either $c=\pm a$ or $c=\pm 2a$ for $a\not=0$. In the first case  $R=B(1|1)+\mathbb{Z}a$, 
that is $R\cong B(1|1)$ or
$R\cong B(1|1)^{(1)}$; in the second case 
$R\cong D(2|1)^{(1)}$. 

\subsubsection{}\label{FBC11}
Consider the  case when  $R$ satisfies (GR2) and
$\cl_0 (R)$ is of type $BC(1|1)$ that is
$$\cl_0 (R)=\{\pm \vareps_1, \pm\vareps_1\pm \delta_1, 
\pm\delta_1, \pm 2\delta_1, \pm 2\vareps_1\}$$
where $(\vareps_1,\delta_1)=0$, $(\vareps_1,\vareps_1)=-(\delta_1,\delta_1)=1$.

Take $\beta,\alpha$ in $R$ such that $\cl_0(\beta)=\vareps_1-\delta_1$
and $\cl_0(\alpha)=\delta_1$. Notice that $R$ lies in the span
of $\alpha,\beta$ and $\delta_0$. We identify
$\alpha,\beta$
with $\delta_1$ and $\vareps_1-\delta_1$.
Then $R$ contains $\pm\delta_1$ and $\pm\vareps_1\pm\delta_1$.
By~\Lem{lem:Falpha} (i), $F(\delta_1)=\mathbb{Z}a$ for some $a\in\mathbb{C}$.
\Lem{lem:Falpha} (ii) gives $\vareps_1\in R$
and $F(\delta_1), F(\vareps_1-\delta_1)\subset F(\vareps_1)$.
Similarly, $F(\vareps_1)\subset F(\delta_1)$. Hence
$F(\vareps_1)=\mathbb{Z}a$ and  $F(\vareps_1-\delta_1)\subset \mathbb{Z}a$. By~\Lem{lem:Falpha} (i), 
$$F(\vareps_1-\delta_1)+2\mathbb{Z}a=F(\vareps_1-\delta_1).$$
Therefore 
\begin{equation}\label{eq:BC112a}
F(\pm\vareps_1\pm \delta_1)=F(\vareps_1-\delta_1)
\text{  is $\mathbb{Z}a$ or $2\mathbb{Z}a$},\ \ \ \ F(\vareps_1)=F(\delta_1)=\mathbb{Z}a.
\end{equation}
for some $a\in\mathbb{C}$. 
Since $R$ contains
$\vareps_1\pm\delta_1$ and satisfies (GR2), $R$ contains either
$2\delta_1$ or $2\vareps_1$. The case  $2\vareps_1\in R$ reduces to the case
$2\delta_1\in R$ by
 permuting the roles of $\vareps_1$ and $\delta_1$.
 Thus, without loss of generality, we can (and will) 
assume that $2\delta_1\in R$.  Applying~\Lem{lem:Falpha} (i)
to the pair $2\delta_1$ and $\delta_1$ we get
$4\mathbb{Z}a\subset F(2\delta_1)$. On the other hand, \Lem{lem:Falpha} (ii) gives  $F(2\delta_1)\subset F(\vareps_1+\delta_1)$.
Thus
\begin{equation}\label{eq:BC114a}
4\mathbb{Z}a\subset F(2\delta_1) \subset F(\vareps_1-\delta_1).
\end{equation}

Set $Y:=\{y\in\mathbb{C}|\ 2\vareps_1+y\delta_0\in R\}$.
Since $2\vareps_1\in \cl_0(R)$,  $Y$
is not empty. For $y\in Y$ one has
$$s_{2\vareps_1+y\delta}(\delta_1-\vareps_1)=\delta_1+\vareps_1+y\delta\in R$$
so $y\in  F(\vareps_1-\delta_1)$.
For each $x\in F(\vareps_1-\delta_1)$,
$R$  contains $\vareps_1-\delta_1$
and $\vareps_1+\delta_1+x\delta_0$, so, by (GR2),  the set
$\{2\vareps_1+x\delta_0,2\delta_1+x\delta_0\}\cap R$
has the cardinality one. Therefore 
$F(\vareps_1-\delta_1)=F(2\delta_1)\coprod Y$.  
By~\Lem{lem:Falpha} (i)  we have
$F(2\delta_1)=\mathbb{Z}c$
and $Y=y_0+\mathbb{Z}d$ for some $c,d,y\in\mathbb{C}$.
Applying~\Lem{lem:Falpha} (i)
to the pair $2\vareps_1+y\delta_0$ and $\vareps_1$ we get
$4\mathbb{Z}a\subset \mathbb{Z}d$.
Using~(\ref{eq:BC112a}) and~(\ref{eq:BC112a})
we obtain
$$4\mathbb{Z}a\subset \mathbb{Z}c,\ \ \  c,d,y_0\in\mathbb{Z}a,\ \ \ 
2a\in F(\pm\vareps_1\pm \delta_1)=\mathbb{Z}c\coprod (y_0+\mathbb{Z}d).$$ 
This gives the following possibilities:
$$\begin{array}{lllll}
(1) & F(2\delta_1)=2\mathbb{Z}a, & F(\vareps_1-\delta_1)=\mathbb{Z}a, &
 2\vareps_1+\delta_0\in R, & F(2\vareps_1+\delta_0)=2\mathbb{Z}a\\
(2) & F(2\delta_1)=4\mathbb{Z}a, & F(\vareps_1-\delta_1)=2\mathbb{Z}a, &
 2\vareps_1+2\delta_0\in R, & F(2\vareps_1+\delta_0)=4\mathbb{Z}a.\\
\end{array}$$
We have $R\cong\Delta^{\ree}(\fg')$, where 
$\fg'$ is of type $A(2|1)^{(2)}$ for (1),
and of type $A(2|2)^{(4)}$ for (2).

We will use the following observation: if $\gamma\in R$ is such that
$\cl(\gamma)=\pm 2\delta_1$,
then $\gamma/2\in R$ (this holds,
because $s/2\in F(\delta_1)$ for any $s\in F(2\delta_1)$).

\subsubsection{}\label{FAmn}
Let $\cl_0(R)$ be of the type $A(m-1|n-1)$ for $m,n\geq 1$
with $mn>1$, or $\Delta(\mathfrak{psl}(n|n))$ for $n\geq 3$.

We consider the distinguished base 
$\{\alpha'_i\}_{i=1}^{m+n-1}$
in $\cl_0(R)=A(m-1|n-1)$:
 $\alpha'_i:=\vareps_i-\vareps_{i+1}$
for $i=1,\ldots, m$,  $\alpha'_{m+j})=\delta_j-\delta_{j+1}$
for $j=1,\ldots,n$ and $\alpha'_m:=\vareps_m-\delta_1$.
Let $\alpha_i\in R$
be such that $\cl_0(\alpha_i)=\alpha'_i$.
Let $E$ be the span of $\{\alpha_i\}_{i=1}^{m+n-1}$.

Set $\dot{R}:=R\cap E$. Then $\dot{R}$
satisfies (GR0)--(GR2) and contains $\{\alpha_i\}_{i=1}^{m+n-1}$.
The Gram matrix of $\{\alpha_i\}_{i=1}^{m+n-1}$ is the
Gram matrix of $\{\alpha'_i\}_{i=1}^{m+n-1}$. Therefore
 $s_{\alpha_i}$ for $i=1,\ldots,m+n-1$, $i\not=m$,
generate the group isomorphic to $W[\cl_0(R)]$.
Since any element in $\cl_0(R)=A(m-1|n-1)$
is conjugated to one of the elements in
$\{\alpha'_i\}_{i=1}^{m+n-1}\cup\{-\alpha_m\}$,
the restriction of $\cl_0$ to $\dot{R}$
gives a surjective map $\dot{R}\to \cl_0(R)$.
If the restriction of
$\cl_0$ to $\mathbb{C}\dot{R}$ is injective, then
$\dot{R}\cong \cl_0(R)$.

For $m\not=n$, 
the Gram matrix of $\{\alpha_i\}_{i=1}^{m+n-1}$ is
non-degenerate, so  $\dot{R}\cong A(m-1|n-1)$.
For $m=n$ the Gram matrix of $\{\alpha_i\}_{i=1}^{m+n-1}$ 
has a one-dimensional kernel,
so $\dot{R}$ is isomorphic to 
$\Delta(\mathfrak{psl}(n|n))$ 
or to $A(n-1|n-1)=\Delta(\mathfrak{gl}(n|n))$.
Notice that if $R\subset \Delta^{\ree}(\fg)$, then
$\dot{R}\cong A(m-1|n-1)$, by~\Rem{rem:pslnn}.

\subsection{Set $\cl_K(R)$}\label{phi(R)}
From now on $\fg$ is  an indecomposable  symmetrizable  
finite-dimensional or affine
Kac-Moody superalgebra with the set of real roots
 $\Delta^{\ree}$.  If $\fg$ is finite-dimensional, we set $\delta:=0$.
In all cases 
$\cl: \mathbb{C}\Delta^{\ree}\to
\mathbb{C}\Delta^{\ree}/\mathbb{C}\delta$ is the canonical map.

From now on  $R\subset \Delta^{\ree}$ is  an 
 indecomposable set 
 satisfying (GR0)--(GR3) and  containing an isotropic root.
As before, $K=\{v\in \mathbb{C}R|\ (v,R)=0\}$ and
we let $\cl_K:\mathbb{C}R\to \mathbb{C}R/K$ be the canonical map.
The bilinear form $(-,-)$ 
induces a non-degenerate bilinear form on $\mathbb{C}R/K$. 

\subsubsection{}
\begin{lem}{lem:phi(R)}
Let $R\subset \Delta^{\ree}$ be  an 
 indecomposable set 
 satisfying (GR0)--(GR3). 
Then 
$\cl_K(R)\subset \mathbb{C}R/K$ is a finite indecomposable set
satisfying (GR0), (GR1), (WGR2) and (GR3).
\end{lem}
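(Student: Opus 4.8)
The plan is to exploit that, by construction, $K$ is the radical of the restriction of $(-,-)$ to $\mathbb{C}R$; hence the form descends to a \emph{non-degenerate} form on $\mathbb{C}R/K$, and $\cl_K$ is form-preserving in the sense that $(\cl_K(u),\cl_K(v))=(u,v)$ for all $u,v\in\mathbb{C}R$. Since each of the axioms (GR0), (GR1), (GR3) and the weak form (WGR2) is expressed purely through the bilinear form and the reflections $s_\alpha$ (which are themselves built from it), I expect these to transfer along $\cl_K$ almost formally. The genuinely structural point is finiteness, and this is the step I would single out as the main obstacle; it is the only place where the hypothesis that $\fg$ is finite-dimensional or affine is used.

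For finiteness, first I would observe that $\cl_K$ factors through $\cl$. Indeed, $\mathbb{C}\delta$ lies in the radical of $(-,-)$ on all of $\mathbb{C}\Delta$, so $\mathbb{C}R\cap\mathbb{C}\delta\subseteq K$; consequently there is a unique linear map $\phi\colon\cl(\mathbb{C}R)\to\mathbb{C}R/K$ with $\cl_K=\phi\circ\cl|_{\mathbb{C}R}$, whence $\cl_K(R)=\phi(\cl(R))$. As $\cl(R)\subseteq\cl(\Delta^{\ree})$ and $\cl(\Delta^{\ree})$ is finite (it equals $\Delta^{\ree}$ when $\fg$ is finite-dimensional, since there $\cl$ is the identity, and is contained in the finite set $\cl(\Delta)$ of the table in~$\S$~\ref{dotgclg} when $\fg$ is affine), the set $\cl(R)$ is finite and therefore so is $\cl_K(R)$.

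Next I would verify (GR0): nonemptiness, the symmetry $-\cl_K(R)=\cl_K(R)$ (from $-R=R$ and linearity of $\cl_K$), and freeness of $\mathbb{Z}\cl_K(R)$ of finite rank (it is a finitely generated, hence free, subgroup of a vector space, being finite) are immediate. The point $0\notin\cl_K(R)$ amounts to $R\cap K=\emptyset$: an $\alpha\in R\cap K$ would satisfy $(\alpha,R)=0$, contradicting (GR3) for $R$. Axiom (GR3) for $\cl_K(R)$ then follows in the same way from (GR3) for $R$, using that the form is preserved. For (GR1), given $\bar\alpha,\bar\beta\in\cl_K(R)$ with $(\bar\alpha,\bar\alpha)\neq0$ and preimages $\alpha,\beta\in R$, one has $(\alpha,\alpha)=(\bar\alpha,\bar\alpha)\neq0$, so (GR1) for $R$ gives $\tfrac{2(\alpha,\beta)}{(\alpha,\alpha)}\in\mathbb{Z}$ and $s_\alpha\beta\in R$; applying $\cl_K$ yields $\tfrac{2(\bar\alpha,\bar\beta)}{(\bar\alpha,\bar\alpha)}\in\mathbb{Z}$ and $s_{\bar\alpha}\bar\beta=\cl_K(s_\alpha\beta)\in\cl_K(R)$.

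For (WGR2), if $\bar\alpha,\bar\beta\in\cl_K(R)$ satisfy $(\bar\alpha,\bar\alpha)=0\neq(\bar\alpha,\bar\beta)$, preimages $\alpha,\beta$ have $(\alpha,\alpha)=0\neq(\alpha,\beta)$, so (GR2) for $R$ forces $\beta+\alpha$ or $\beta-\alpha$ into $R$; pushing forward gives $\bar\beta+\bar\alpha$ or $\bar\beta-\bar\alpha$ in $\cl_K(R)$, which is exactly (WGR2). Here it is worth stressing that only the weak axiom survives: distinct preimages differing by an element of $K$ can make \emph{both} $\bar\beta\pm\bar\alpha$ lie in $\cl_K(R)$, so the cardinality-one clause of (GR2) need not persist --- this is precisely the degeneration seen in the $A(1|1)^{(2)}$ example of~$\S$~\ref{A112}. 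Finally, for indecomposability I would argue by contraposition: a decomposition $\cl_K(R)=S_1\sqcup S_2$ into nonempty mutually orthogonal sets satisfying (GR0) pulls back, via $R_i:=R\cap\cl_K^{-1}(S_i)$, to a decomposition $R=R_1\sqcup R_2$ of the same kind (disjointness because $0\notin\cl_K(R)$, orthogonality because $\cl_K$ preserves the form, and each $R_i$ inherits (GR0)), contradicting indecomposability of $R$ in the sense of~\Defn{def:indec}.
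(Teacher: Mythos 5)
Your proof is correct and follows essentially the same route as the paper: the key step in both is finiteness of $\cl_K(R)$, obtained by factoring $\cl_K$ through $\cl$ (using that $\mathbb{C}R\cap\mathbb{C}\delta\subseteq K$) and invoking finiteness of $\cl(\Delta^{\ree})$, while the remaining axioms and indecomposability transfer formally because $\cl_K$ preserves the bilinear form --- the paper declares these ``clear'' where you write them out. Your unconditional factorization is marginally tidier than the paper's case split (which assumes $R$ infinite to get $\delta\in\mathbb{C}R$, hence $\delta\in K$), but it is the same argument.
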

\begin{proof}
It is clear that $\cl_K(R)\subset \mathbb{C}R/K$ 
is an indecomposable set
satisfying (GR0), (GR1), (WGR2) and (GR3). It remains to verify
that $\cl_K(R)$ is finite. Suppose that $R$ is infinite.
Since $\cl(\Delta^{\ree})$ is a finite set, 
one has $\cl(\alpha)=\cl(\gamma)$
for some $\alpha,\gamma\in R$, so $\delta\in\mathbb{C}R$.
Then $\delta\in K$, so $\cl_K$ can be decomposed
as $\cl_K=\cl_K'\circ \cl$, where 
$\cl_K': \mathbb{C}R/\mathbb{C}\delta\to \mathbb{C}R/K$
is the canonical map. Since $\cl(R)\subset \cl(\Delta^{\ree})$ is a finite set, $\cl_K(R)$ is finite.
\end{proof}

 \subsubsection{}\label{phiRlist}
By~\Lem{lem:phi(R)}, 
$\cl_K(R)$ is one of the sets 
described in~$\S$~\ref{finR}, i.e.
 $\cl_K(R)$ lies in the list:
$$A(m-1|n-1) \text{ for }m\not=n,\ 
B(m|n), D(m|n), C(m|n), BC(m|n), D(2|1,a), G(3), F(4),$$
where $m,n\geq 1$,
or $\cl_K(R)\cong \Delta(\mathfrak{psl}(n|n))$ for $n\geq 2$.
(Recall that  $A(n-1|n-1)$
stands for $\Delta(\fgl(n|n))$).
For $D(2|1,a), G(3), F(4)$ we set $n=1$ and $m=1,2,3$ respectively.
If $\cl_K(R)\not=D(2|1,a), F(4)$, we view $\cl_K(R)$
as a subset of  $L_{m,n}$ (see~$\S$~\ref{basismn} for notation).

\subsection{Case $\cl_K(R)\not=C(1|1)$}\label{pfRsubsetDeltai}
Our goal is to show that in this case
$R\cong \Delta^{\ree}(\fg')$.
The cases when $\cl_K(R)$ is $A(1|0)$, $B(1|1)$, $BC(1|1)$
were treated in~$\S$~\ref{Falpha}. Now we consider the remaining cases,
i.e.  $mn\geq 2$ (and $mn\geq 3$ for $A(m|n)$, since $A(1|0)$
corresponds to $m=2$, $n=1$).

\subsubsection{}\label{ER'}
If $\cl_K(R)\not=D(2|1,a), F(4)$ we view $\cl_K(R)$
as a subset of  $L_{m,n}$ and set
$$E':=\{\sum_{i=1}^m x_i \vareps_i+\sum_{j=1}^n y_j \delta_j|\ \sum_{i=1}^m x_i+\sum_{j=1}^n y_j=0, x_i,y_j\in\mathbb{C}\}.$$
For $\cl_K(R)=D(2|1,a)$ take $E'$ spanned by two isotropic roots
in $\cl_K(R)$, and 
for $\cl_K(R)=F(4)$ take $E'$ spanned by  
 $(\delta_1-\vareps_1+\vareps_2-\vareps_3)/2$,
$\vareps_1-\vareps_2$  and $\vareps_2-\vareps_3$.

One readily sees that if $\cl_K(R)\not\cong \Delta(\mathfrak{psl}(n|n))$,
then 
$\cl_K(R)\cap E'\cong A(m-1|n-1)$
and this is a maximal root subsystem of type
$A(i|j)$ contained in $\cl_K(R)$.  For $\cl_K(R)\cong \Delta(\mathfrak{psl}(n|n))$
we have $\cl_K(R)\cap E'=\cl_K(R)$.

Let $\{\alpha'_i\}_{i=1}^{m+n-1}$
be as in~$\S$~\ref{FAmn}. Fix $\alpha_i\in R$
such that $\cl_K(\alpha_i)=\alpha'_i$
and let $E$ be the span of $\alpha_1,\ldots,\alpha_{m+n}$.
By~$\S$~\ref{FAmn}, $R\cap E$
contains $R'$ such that
$$R'\cong A(m-1|n-1).$$

\subsubsection{}
\begin{lem}{lem:mngeq2}
We have
\begin{enumerate}
\item $K\cap \mathbb{C}R'=0$
 if 
$\cl_K(R)\not\cong
 \Delta(\mathfrak{psl}(n|n))$ for $n\geq 3$;
 \item $\{\gamma\in R|\ 
 \cl_K(\gamma)\in E'\}=R'+\mathbb{Z}a\delta\ \ \text{ for some }a\in\mathbb{Z}$.
\end{enumerate}
\end{lem}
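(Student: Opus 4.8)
The plan is a dimension count for part (i) and a fibre analysis for part (ii), both resting on the $A$-type core $R'\cong A(m-1|n-1)$ constructed in \S~\ref{FAmn} with base $\{\alpha_i\}$ satisfying $\cl_K(\alpha_i)=\alpha_i'$, where $\{\alpha_i'\}$ is the base of $\cl_K(R)\cap E'$ from \S~\ref{ER'}. For part (i), since $\cl_K(R)\not\cong\Delta(\mathfrak{psl}(n|n))$, \S~\ref{ER'} gives $\cl_K(R)\cap E'\cong A(m-1|n-1)$, so its base $\{\alpha_i'\}_{i=1}^{m+n-1}$ consists of $m+n-1$ linearly independent vectors; and by \S~\ref{FAmn} one has $\dim\mathbb{C}R'=m+n-1$ (this is exactly where $\cl_K(R)\not\cong\Delta(\mathfrak{psl}(n|n))$ is needed, as it forces $R'\cong A(m-1|n-1)$ rather than a $\mathfrak{psl}$-quotient). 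As $\cl_K$ maps $\mathbb{C}R'$ onto a space containing the $m+n-1$ independent vectors $\alpha_i'$, the map $\cl_K|_{\mathbb{C}R'}$ has rank $\geq m+n-1=\dim\mathbb{C}R'$, hence is injective; that is, $K\cap\mathbb{C}R'=0$. The exclusion is sharp, since for $\cl_K(R)\cong\Delta(\mathfrak{psl}(n|n))$ one gets $\cl_K(R)\cap E'=\cl_K(R)$ of span-dimension $2n-2<2n-1$, forcing a nonzero kernel.

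For part (ii), set $S:=\{\gamma\in R\mid\cl_K(\gamma)\in E'\}$. First I would establish $K=\mathbb{C}\delta$ in the affine case (and $K=0$ in the finite case): a set of roots of $R$ whose $\cl_K$-images form a basis of $\mathbb{C}R/K$, together with $\delta\in K$, is linearly independent, and comparing with $\dim\mathbb{C}R=\dim(\mathbb{C}R/K)+\dim K$ pins down $\dim K=1$ (resp.\ $0$). Hence $\cl_K=\cl$, and for $\gamma\in S$ we obtain $\cl(\gamma)\in\cl(R)\cap E'=\cl(R')$, so $\cl(\gamma)=\cl(\beta')$ for a unique $\beta'\in R'$ (uniqueness by part (i)); thus $\gamma=\beta'+c\delta$ with $c\in F(\beta')$. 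This yields $S=\bigcup_{\beta'\in R'}(\beta'+F(\beta')\delta)$, and it remains to prove $F(\beta')=\mathbb{Z}a$ for one common $a$.

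The crux is this coincidence of $F$-values. By \Lem{lem:Falpha}(i), $F$ is constant on $W[R']$-orbits and satisfies $F(-\gamma)=F(\gamma)$, so on $R'\cong A(m-1|n-1)$ it takes at most three values: $F_{\vareps}$ on $\{\vareps_i-\vareps_j\}$, $F_{\delta}$ on $\{\delta_i-\delta_j\}$, and $F_{\mathrm{iso}}$ on $\{\pm(\vareps_i-\delta_j)\}$. To merge them I would take $\alpha=\vareps_1-\vareps_2$ and $\beta=\vareps_2-\delta_1$; then $\{\pm\alpha,\pm\beta,\pm(\alpha+\beta)\}$ is of type $A(1|0)$, and applying the analysis of \S~\ref{FA01} to $R\cap\mathrm{span}(\alpha,\beta,\delta)$ (whose intrinsic $F$ agrees with that of $R$, as the $\delta$-fibre lies in this span) gives $F(\alpha)=F(\beta)=F(\alpha+\beta)=\mathbb{Z}a$, i.e.\ $F_{\vareps}=F_{\mathrm{iso}}=\mathbb{Z}a$. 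Repeating with $\alpha'=\delta_1-\delta_2$ and $\beta=\vareps_1-\delta_1$ (where the triple is of type $A(0|1)\cong A(1|0)$, so \S~\ref{FA01} applies verbatim) merges $F_{\delta}$ with $F_{\mathrm{iso}}$. Hence all $F(\beta')=\mathbb{Z}a$ and $S=R'+\mathbb{Z}a\delta$.

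I expect the main obstacle to be this coincidence of $F$-values in part (ii): it requires bridging the three $W[R']$-orbits through a common isotropic root by two separate $A(1|0)$-computations, with care that \Lem{lem:Falpha} and \S~\ref{FA01} are applied to the intrinsic function $F$ (unchanged on passing to the sub-configuration $R\cap\mathrm{span}(\alpha,\beta,\delta)$, since the $\delta$-fibre lies in that span). The supporting identity $K=\mathbb{C}\delta$, which guarantees that $\cl_K$-fibres are exactly $\delta$-shifts, is the one routine but genuine rank computation, resting on $\cl_K(R)$ having full rank; part (i), by contrast, is a clean and immediate dimension count.
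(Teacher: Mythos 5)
Your part (i) is correct, and it is a genuinely different (and shorter) argument than the paper's: you run a rank count, using that $\mathbb{C}R'$ is spanned by the $m+n-1$ vectors $\alpha_i$ while their images $\alpha_i'$ are linearly independent (which is where the exclusion of $\Delta(\mathfrak{psl}(n|n))$ actually enters — not, as you say, to force $R'\cong A(m-1|n-1)$, which holds unconditionally by~\Rem{rem:pslnn} since $R'\subset\Delta^{\ree}(\fg)$). The paper instead splits into $m\not=n$ (non-degeneracy of the form on $\mathbb{C}R'$) and $m=n$, where it identifies the potential kernel vector $\mu=\sum_i\beta_i$ and kills it by pairing against a preimage of $2\delta_1$, available exactly because $\cl_K(R)$ is $B(n|n)$ or $D(n|n)$ in the non-psl case. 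Both work; yours is cleaner, the paper's is more explicit about what goes wrong in the psl case.

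Part (ii), however, has a genuine gap: your opening claim that $K=\mathbb{C}\delta$ (affine case) is both unjustified and false, and the statement of (ii) carries no psl exclusion, so your proof must cover precisely the cases where it fails. Concretely, take $\fg=A(n-1|n-1)^{(1)}$ with $n\geq 3$ and $R=\dot{\Delta}\cong\Delta(\fgl(n|n))$: then $K=\mathbb{C}\mu$ with $\mu=\sum_{i=1}^n(\vareps_i-\delta_i)$, so $K\not\subset\mathbb{C}\delta$ and $\delta\notin K$ (indeed $\delta\notin\mathbb{C}R$). Or take $R=\Delta^{\ree}(\fg)$ itself: then $K=\mathbb{C}\delta\oplus\mathbb{C}\mu$ is two-dimensional. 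Both examples have $\cl_K(R)\cong\Delta(\mathfrak{psl}(n|n))$ — exactly the case excluded from (i) but not from (ii) — and in them your chain ``$K=\mathbb{C}\delta$, hence $\cl_K=\cl$, hence fibres are $\delta$-shifts'' collapses, as does your appeal to part (i) for uniqueness of $\beta'$. Your justification is also circular as stated: exhibiting a basis of $\mathbb{C}R/K$ lifted to $R$ together with $\delta$ only yields $\dim K\geq 1$, an inequality in the wrong direction. The paper's proof sidesteps all of this: it never computes $K$; for $\gamma\in R$ with $\cl_K(\gamma)\in E'$ it chooses $\alpha\in R'$ with $\cl_K(\gamma)=\cl_K(\alpha)$, sets $\delta_0:=\gamma-\alpha\in K$ (an \emph{arbitrary} kernel vector), and then proves a posteriori that $\delta_0\in\mathbb{C}\delta$, using that $1\in F(\alpha)$ forces $\mathbb{Z}\subset F(\alpha)$ by the group structure of~\Lem{lem:Falpha}/\S~\ref{FA01}, whence $\cl(\alpha)+\mathbb{Z}\cl(\delta_0)\subset\cl(\Delta^{\ree})$, which is finite, so $\cl(\delta_0)=0$. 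That finiteness argument is the missing idea in your write-up; your subsequent merging of the $F$-values across the three $W[R']$-orbits via $A(1|0)$-triples is fine (and is more detailed than what the paper spells out), but it only becomes available after the fibres have been shown to be $\delta$-shifts.
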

\begin{proof}
For $m\not=n$, $R'$ isomorphic to
$A(m-1|n-1)$ and the restriction of $(-,-)$ to
$R'$ is non-degenerate, so $K\cap \mathbb{C}R'=0$.
Consider the case $m=n$. Let $\beta_i\in R'$ be such that
$\cl_K(\beta_i)=\vareps_i-\delta_i$. 
The  kernel of the restriction of $(-,-)$ to $\mathbb{C}R'$
is spanned by 
 $\mu:=\sum\limits_{i=1}^n \beta_i$. 
 Therefore $K\cap \mathbb{C}R'=K\cap \mathbb{C}\mu$.
Hence the restriction of $\cl_K$ to $R'$ is injective
(since $n>1$, so $\alpha-\beta\not\in\mathbb{C}\mu$
if $\alpha,\beta\in R'$ and $\alpha\not=\beta$). 

 If 
 $\cl_K(R)\not\cong \Delta(\mathfrak{psl}(n|n))$, then
 $\cl_K(R)$ is $D(n|n)$ or $B(n|n)$, so
 $2\delta_1\in \cl_K(R)$. Taking
 $\gamma\in R$ such that $\cl_K(\gamma)=2\delta_1$
 we obtain $(\gamma,\beta_i)=(2\delta_1,\vareps_1-\delta_1)\not=0$
and $(\gamma,\beta_i)=0$ for $i>1$, that is 
$(\mu, \gamma)\not=0$, so  $\mu\not\in K$.
Hence $\cl_K(R)\not\cong \Delta(\mathfrak{psl}(n|n))$ forces
 $K\cap \mathbb{C}R'=0$. This completes the proof of (i).

 For (ii) recall that $\cl_K(R)\cap E'$
is isomorphic to  $A(m-1|n-1)$ 
if $\cl_K(R)\not\cong \Delta(\mathfrak{psl}(n|n))$
and $\cl_K(R)\cap E'=\cl_K(R)$ if  
$\cl_K(R)\cong \Delta(\mathfrak{psl}(n|n))$.
Since $R'\cong A(m-1|n-1)$, we have $\cl_K(R')=\cl_K(R)$.
 Assume that 
 $\{\gamma\in R|\ \cl_K(\gamma)\in \cl_K(R')\}\not=R'$. 
 Take $\gamma\in R$
 such that $\cl_K(\gamma)\in E'$ and $\gamma\not\in R'$.
Since $\cl_K(R')=\cl_K(R)$,
 $\cl_K(\gamma)=\cl_K(\alpha)$ for some $\alpha\in R'$.
 Set  $\delta_0:=\gamma-\alpha$ and define 
 $F(\alpha)$ as in~$\S$~\ref{Falpha}.  Since $\gamma\in R$
 we have $1\ F(\alpha)$, so
 $\mathbb{Z}\subset F(\alpha)$ by $\S$~\ref{FA01}.
 Therefore $\cl(\alpha+\mathbb{Z}\delta_0)=
 \cl(\alpha)+\mathbb{Z}\cl(\delta_0)$
lies in $\cl(R)\subset\cl(\Delta^{\ree})$ which is finite.
 Hence $\cl(\delta_0)=0$, so $\delta_0\in\mathbb{C}\delta$.
This establishes (ii).
\end{proof}

\subsubsection{} 
Consider the case $\cl_K(R)=A(m-1|n-1)$. 
Since $R'\cong A(m-1|n-1)$ and $mn\geq 2$,
the restriction of $\cl_K$ 
to $R'$ is injective.
Since $R'$ and $\cl_K(R)$ contain 
the same number of elements, we have
$\cl_K(R)=\cl_K(R')$.  By~\Lem{lem:mngeq2} (ii),
 we obtain
$R=R'+\mathbb{Z}j\delta$, so 
$R$ is isomorphic to $\Delta^{\ree}(\fg')$
for $\fg'$ of type $A(m-1|n-1)$.

\subsubsection{}\label{dotR}
Now consider the cases when $\cl_K(R)\not\cong A(m-1|n-1)$.

Retain notation of~\ref{ER'}. Recall that $R'\cong A(m-1|n-1)$.
By~\Lem{lem:mngeq2} (i), $K\cap \mathbb{C}R'=0$, so the span of
$\cl_K(R')$ is of dimension $m+n-1$. Since 
$\cl_K(R)\not\cong A(m-1|n-1)$, 
 the span of $\cl_K(R)$ is of  dimension $m+n$.
In each case we  choose $\alpha'_{m+n}\in \cl_K(R)$ 
which does not lie in $\cl_K(R')$:
if $\cl_K(R)$ is $B(m|n)$ or $BC(m|n)$ 
we take $\alpha'_{m+n}:=\delta_n$;
in other cases we take $\alpha'_{m+n}$ arbitrary.

Then we choose $\alpha_{m+n}$
such that $\cl_K(\alpha_{m+n})=\alpha'_{m+n}$.
We denote by $V$ the span of $\{\alpha_i\}_{i=1}^{m+n}$.
We set 
$$\dot{R}:=V\cap R.$$

Since $\{\alpha'_i\}_{i=1}^{m+n}$ is a basis of $\cl_K(R)$
we identify $\mathbb{C}R/K$ with $V$.  Then
$\dot{R}\subset \cl_K(R)$.

We have $\dim V=m+n$ and 
$\dot{R}\subset V$ satisfies (GR0)--(GR2) 
(since $R$ satisfies these properties). 
The Gram matrices of $\{\alpha_i\}_{i=1}^{m+n}$
and of $\{\alpha'_i\}_{i=1}^{m+n}$ coincide
and the latter is the Gram matrix of a distinguished base
of a root system which is not of type $A(m-1|n-1)$.
Thus this matrix is invertible, so the restriction
of $(-,-)$ to $V$ is non-degenerate. By Serganova's classification ($\S$~\ref{finR})
$\dot{R}$ is the root system of a finite-dimensional Kac-Moody 
superalgebra. Recall that 
\begin{equation}\label{eq:dotRphiR}
\dot{R}\subset R,\ \ A(m-1|n-1)\cong R'\subsetneq \dot{R}\subset \cl_K(R),\ \ \ 
\mathbb{C}\dot{R}=\mathbb{C}\cl_K(R)
\end{equation}
and  that $\cl_K(R)$ is not  the types 
$A(i|j)$ or $\Delta(\mathfrak{psl}(i|i))$ for all $i,$.
If $\dot{R}\not=\cl_K(R)$, then $\dot{R}\subset\cl_K(R)$
is one of the following inclusions:
$$D(m|n)\subset B(m|n),\ \ D(m|n)\subset C(m|n),\ \ D(m|n)\subset BC(m|n), \ \ 
B(m|n)\subset BC(m|n).$$

If $\cl_K(R)$ is $B(m|n)$ or $BC(m|n)$, $\dot{R}$
contains $\alpha_{m+n}=\delta_n$, so $\dot{R}$
is not of the type $D(m|n)$.
Hence $\cl_K(R)=\dot{R}$
if $\cl_K(R)\not=C(m|n)$, $BC(m|n)$,
$\dot{R}=D(m|n)$  if $\cl_K(R)\not=C(m|n)$, 
and  $\dot{R}=B(m|n)$  if $BC(m|n)$.

\subsubsection{}\label{FalphadotR}
Take a non-zero $\delta_0\in K$.
By~\Lem{lem:chainiso}, any isotropic root in $\beta\in\cl_K(R)$
is $W[\dot{R}]$-conjugate to a root
in $R'$. By~\Lem{lem:Falpha}, $F(\beta)=0$ if $\delta_0\not\in\mathbb{C}\delta$ 
and
$F(\beta)=\mathbb{Z}a$  for $\delta_0=\delta$, where $a$
is as in~\Lem{lem:mngeq2} (ii).

If $\gamma\in R$ is anisotropic, then, by~\Lem{lem:Falpha},
$F(\gamma)=\mathbb{Z}c\delta_0$ for some $c\in\mathbb{C}$.
Since $\cl(R)\subset \cl(\Delta^{\ree})$
is finite, $\cl(c\delta_0)=0$. 
We conclude that $F(\alpha)=0$ for each $\alpha\in R$ if
$\delta_0\in\mathbb{C}\delta$. Since $\cl_K(R)\subset V=\mathbb{C}\dot{R}$
we obtain
$$R\subset (V+\mathbb{C}\delta).$$

\subsubsection{}
\begin{cor}{}
Let $\dot{\fg}'$
be the finite-dimensional Kac-Moody superalgebra
with the root system $\dot{R}$.
If $\dot{R}=\cl_K(R)$, then
$R\cong \Delta^{\ree}({\fg'})$, where $\fg'$ is either
$\dot{\fg}'$, or 
 the affinization of $\dot{\fg}'$,
or $\fg'\cong D(m+1|n)^{(2)}$.
\end{cor}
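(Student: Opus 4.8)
The plan is to reconstruct $R$ from the finite root system $\dot R=\cl_K(R)$ by determining, for each $\gamma\in\dot R$, the set $F(\gamma)=\{c\in\mathbb{C}\mid \gamma+c\delta\in R\}$ of $\delta$-translates that occur, drawing on the computations of $\S\S$\ref{Falpha}--\ref{FalphadotR}. I would first dispose of the finite case. If $R$ is finite, then by \Rem{rem:pslnn} it is not isomorphic to $\Delta(\mathfrak{psl}(n|n))$, so by Serganova's classification ($\S$\ref{finR}) the form $(-,-)$ is non-degenerate on $\mathbb{C}R$; hence $K=0$, $\cl_K=\id$, and $R=\dot R\cong\Delta^{\ree}(\dot\fg')$. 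This is the case $\fg'=\dot\fg'$.

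Assume now $R$ is infinite, so $\delta\in\mathbb{C}R$ by \Lem{lem:phi(R)}. Since $R\subset V+\mathbb{C}\delta$ with $(-,-)$ non-degenerate on $V=\mathbb{C}\dot R$ ($\S$\ref{dotR}), one has $K=\mathbb{C}\delta$ and $\mathbb{C}R=V\oplus\mathbb{C}\delta$, so $\cl_K=\cl$ and every element of $R$ is of the form $\gamma+c\delta$ with $\gamma\in\dot R$ and $c\in F(\gamma)$. By $\S$\ref{FalphadotR} all isotropic $\gamma$ share a single generator, $F(\gamma)=\mathbb{Z}a_0$, and by \Lem{lem:Falpha}(i) the function $F$ is constant on $W[\dot R]$-orbits. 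It therefore suffices to evaluate $F$ on a set of orbit representatives, which I would place inside the rank-two subconfigurations already analysed: every even root of length $2$ lies in a copy of $A(1|0)$ together with isotropic roots, and, when $\dot R$ contains an odd non-isotropic root, each root of minimal length together with the corresponding even long root lies in a copy of $B(1|1)$.

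Feeding these into $\S\S$\ref{FA01} and \ref{FB11}, the $A(1|0)$ computation ties the length-$2$ roots and the long roots to the isotropic generator $\mathbb{Z}a_0$, while the $B(1|1)$ computation gives the minimal-length roots a generator $\mathbb{Z}a_s$ with $\mathbb{Z}a_0\subset\mathbb{Z}a_s$ and forces $a_0\in\{a_s,2a_s\}$. If $a_0=a_s$, then $F\equiv\mathbb{Z}a_0$ and $R=\dot R+a_0\mathbb{Z}\delta$, which is the system of real roots of the non-twisted affinization of $\dot\fg'$ with minimal imaginary root $a_0\delta$. If $a_0=2a_s$, the minimal-length roots carry the finer spacing $\mathbb{Z}a_s$; comparing with~(\ref{eq:Dmn2q}) this is precisely $\Delta^{\ree}(D(m+1|n)^{(2)})$, a branch available only when $\dot R$ has an odd non-isotropic root, i.e. $\dot R=B(m|n)$. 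In either case $\mathbb{C}R=V\oplus\mathbb{C}\delta$ has the expected dimension, so the bijection of roots extends to a form-preserving linear isomorphism and $R\iso\Delta^{\ree}(\fg')$.

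The main obstacle will be establishing that the choice $a_0=a_s$ versus $a_0=2a_s$ is globally uniform — that a single alternative holds across all orbits simultaneously — and that for $\dot R\in\{D(m|n),D(2|1,a),G(3),F(4)\}$ the twisted branch cannot occur, leaving only the affinization. I would control uniformity through the compatibility relation $\langle\gamma,\alpha^{\vee}\rangle F(\alpha)\subset F(\gamma)$ of \Lem{lem:Falpha}(ii) together with axiom (GR2), which couple the generators on adjacent roots and so propagate one alternative along the connected Dynkin diagram of $\dot R$; and I would check against the van de Leur list in Appendix~\ref{affinebases} that $D(m+1|n)^{(2)}$ is the only twisted affine Kac-Moody superalgebra whose $\dot\Delta$ equals $\dot R=\cl_K(R)$, ruling out any other twisted type.
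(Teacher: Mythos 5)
Your proposal follows the same skeleton as the paper's proof (the translation sets $F(\gamma)$, the rank-two computations of \S\ref{FA01} and \S\ref{FB11}, and the dichotomy between uniform spacing and halved spacing on short roots), but it has a genuine gap exactly where the paper has to do real work: the case $\dot R=G(3)$. You claim the halved-spacing branch is ``available only when $\dot R$ has an odd non-isotropic root, i.e.\ $\dot R=B(m|n)$''; this equivalence is false, since $G(3)$ also has odd non-isotropic roots ($\pm\delta_1$, with $\pm 2\delta_1\in G(3)$), so $B(1|1)$-configurations, and hence the alternative $a_0=2a_s$, arise for $G(3)$ as well and are not excluded by your rank-two analysis. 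Your proposed remedy --- checking against the van de Leur list that no twisted affine Kac-Moody superalgebra has $\dot\Delta=G(3)$ --- cannot close this gap: if the halved-spacing configuration on $G(3)$ were consistent with the axioms, that check would only show that the resulting $R$ is not isomorphic to $\Delta^{\ree}(\fg')$ for any $\fg'$, i.e.\ it would refute the corollary rather than prove it. What is needed is an \emph{internal} contradiction, and this is exactly what the paper extracts from \Lem{lem:Falpha}\,(ii): for a long root $\alpha_l$ and a short root $\alpha_s$ of the $G_2$-part one has $\langle\alpha_l,\alpha_s^{\vee}\rangle=\pm 3$, hence $3F(\alpha_s)\subset F(\alpha_l)$, which is incompatible with $F(\alpha_s)=\mathbb{Z}\frac{a}{2}$ and $F(\alpha_l)=\mathbb{Z}a$ for $a\neq 0$. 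You cite precisely this lemma for your ``uniformity'' step, but never deploy it where it is indispensable. (For $D(m|n)$, $D(2|1,a)$, $F(4)$ no list argument is needed either: these types have no odd anisotropic roots, so no $B(1|1)$-configuration occurs and the analysis of \S\ref{FA01} and \S\ref{FB11} already forces the uniform branch.)

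A secondary flaw: your disposal of the finite case is circular. Serganova's classification as stated in \S\ref{finR} \emph{assumes} that $(-,-)$ is non-degenerate on $\mathbb{C}R$, so you cannot invoke it together with \Rem{rem:pslnn} to deduce non-degeneracy. The paper avoids a separate finite case altogether: since $F(\gamma)=\mathbb{Z}c$ is a group for anisotropic $\gamma$ and $F(\beta)=\mathbb{Z}a$ for isotropic $\beta$, the case $a=0$ forces $c\in\{\pm a,\pm a/2\}=\{0\}$, whence $R=\dot R\cong\Delta^{\ree}(\dot{\fg}')$ with no appeal to the classification.
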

\begin{proof}
By~\Lem{lem:chainiso}, $\dot{R}$ contains an isotropic
$\beta$ such that $(\beta,\gamma)\not=0$.
By~$\S\S$~\ref{FA01}, \ref{FB11} we have
$c=\pm  a$ if  $\cl_K(R)\cap (\mathbb{C}\beta+\mathbb{C}\gamma)$
is of type $A(1|0)$, and $c\in \{\pm a, \pm a/2\}$
if this is of type $B(1|1)$.

If $c=\pm a$, then
$R=\dot{R}+\mathbb{Z}a\delta$, and 
$R=\dot{R}\cong \Delta^{\ree}(\dot{\fg'})$
if $a=0$ and $R\cong \Delta^{\ree}({\fg'})$
where $\fg'$ is the affinization of $\dot{\fg}'$
if $a\not=0$.  

If $a\not=0$ and
$c\not=\pm a$, then $\dot{R}=B(m|n)$ or $G(3)$. 
In this case~$\S$~\ref{FB11} gives 
$F(\alpha_s)=\mathbb{Z}\frac{a}{2}$ if $\alpha_s\in \dot{R}$
is a short root and $F(\alpha)=\mathbb{Z}a$ 
otherwise. If  $\dot{R}=\cl_K(R)$
is of  type $B(m|n)$, this gives  $R\cong \Delta^{\ree}({\fg'})$
where $\fg'=D(m+1|n)^{(2)}$.
If $\dot{R}=G(3)$, then, for a long root $\alpha_l$
in $G(2)$ we have $\langle \alpha_l,\alpha_s^{\vee}\rangle=\pm 3$, 
so~\Lem{lem:Falpha}
(ii) gives  $3 F(\alpha_s)\subset F(\alpha_l)$
which contradicts to $F(\alpha_s)=\mathbb{Z}\frac{a}{2}$
and $F(\alpha_l)=\mathbb{Z}a$. 
\end{proof}

\subsubsection{Case $C(m|n)$ with $mn\geq 2$}\label{Cmn}
Without loss  of generality we can (and will) assume $n\geq 2$.
Recall that $R\subset V\oplus\mathbb{C}\delta$.
Set $\delta_0:=\delta$.
By~\Lem{lem:Falpha},  $F(\alpha)=\mathbb{Z}a$
for any $\alpha\in W[\dot{R}]R'$ that is 
for all $\alpha\in\dot{R}$ with $\alpha\not=\pm 2\delta_j$
for $i=1,\ldots,m$.

Set 
$$\beta=\delta_1-\vareps_1.$$

Applying~\Lem{lem:Falpha} (ii) to the pair $\beta$
and $2\delta_1$,
we obtain $F(2\delta_1)\subset \mathbb{Z}a$.
Since $\langle 2\delta_1, (\delta_1-\delta_2)^{\vee}\rangle=2$
we get $2\mathbb{Z}a\subset F(2\delta_1)$.
Hence $F(2\delta_1)$ is $\mathbb{Z}a$ or $2\mathbb{Z}a$.

Set 
$$Y:=\{y\in \mathbb{C}|\ 2\vareps_1+y\delta\in R\}.$$
Since $\cl_K(R)=C(m|n)$ and $R\subset V+\mathbb{C}\delta$, $Y$
is not empty. For $y\in Y$ one has
$$s_{2\vareps_1+y\delta}(\beta_1)=\beta_1+(2\vareps_1+y\delta)=\delta_1+\vareps_1+y\delta\in R$$
so $y\in\mathbb{Z}a$ (since $F(\delta_1+\vareps_1)=\mathbb{Z}a$).
Thus $Y$ is a non-empty subset of $\mathbb{Z}a$. 
For each $i\in\mathbb{Z}$,
$R$  contains $\vareps_1-\delta_1$
and $\vareps_1+\delta_1+ai\delta$, so  the set
$\{2\vareps_1+ai\delta,2\delta_1+ai\delta\}\cap R$
has the cardinality one. Therefore 
$$\mathbb{Z}a=F(2\delta_1)\coprod Y.$$ 
Since $F(2\delta_1)$ is $\mathbb{Z}a$ or $2\mathbb{Z}a$,
and $Y$ is non-empty, we obtain 
 $F(2\delta_1)=2\mathbb{Z}a$, 
 $Y=a+2\mathbb{Z}a$ and $a\not=0$.

 Acting by $W[\dot{R}]$
 we obtain 
 $$F(2\delta_j)=2\mathbb{Z}a,\ \ \  \
 \{y\in \mathbb{C}|\ 2\vareps_i+y\delta\in R\}a+2\mathbb{Z}a$$
 for  $j=1,\ldots,n$
 and $i=1,\ldots,m$. This gives 
 $$R=\{\alpha+\mathbb{Z}a\delta|\ \alpha\in D(m|n),\ \alpha\not=\pm 2\delta_j\}\cup \{\pm 2\delta_j+\mathbb{Z}a\delta,
 \pm 2\vareps_i+a\delta+2\mathbb{Z}a\delta\}$$
 (where $i=1,\ldots,m$ and 
 $j=1,\ldots,n$). Hence
 $R\cong \Delta^{\ree}(\fg')$
for $\fg'=A(2m-1|2n-1)^{(2)}$.

\subsubsection{Case $BC(m|n)$ with $mn\geq 2$}
Without loss  of generality we can (and will) assume $n\geq 2$.
By~$\S\S$~\ref{dotR},\ref{FalphadotR}, $R$ contains $\dot{R}\cong B(m|n)$
and $R\subset (\mathbb{C}\dot{R}\oplus\mathbb{C}\delta)$.
Set $\delta_0:=\delta$. Using~\Lem{lem:mngeq2} (ii)
and the fact
that $F(w\alpha)=F(\alpha)$ for 
$w\in W[R]$ and $\alpha\in R$ 
we get
$$F(\pm\vareps_i\pm \delta_j)=F(\pm \vareps_i\pm \vareps_{i_1})=F(\pm\delta_j\pm\delta_{j_1})=\mathbb{Z}a$$
for $1\leq i\not=i_1\leq m$ and $1\leq j\not= j_1\leq n$.
Now~$\S$~\ref{FBC11} gives the following possibilities 
$$\begin{array}{lllll}
(1) & F(2\delta_j)=2\mathbb{Z}a,  & F(\vareps_i)=F(\delta_i)=\mathbb{Z}a
& 
 2\vareps_i+\delta\in R, & F(2\vareps_i+\delta)=2\mathbb{Z}a\\
(2) & F(2\delta_j)=2\mathbb{Z}a,  &  F(\vareps_i)=F(\delta_j)=\mathbb{Z}a/2 &
 2\vareps_i+2\delta\in R, & F(2\vareps_i+\delta)=2\mathbb{Z}a.\\
\end{array}$$
Hence $R\cong\Delta^{\ree}(\fg')$, where 
$\fg'$ is of type $A(2m+1|2n)^{(2)}$ for (1),
and of type $A(2m|2n)^{(4)}$ for (2).

\subsection{Case $\cl_K(R)=C(1|1)$}\label{C11}
Take $\alpha_1$, $\alpha_2$, and $\beta$ in $R$ such that $\cl_K(\alpha_1)=-2\vareps_1$, 
$\cl_K(\alpha_2)=2\delta_1$, and 
$\cl_K(\beta)=\vareps_1-\delta_1$. 
We normalize the form $(-,-)$ by the condition
$(\alpha_1,\alpha_1)=2$.
Let $V$ be the span of $\alpha_1$, $\alpha_2$ and $\beta$. Set
$$\xi:=2\beta+\alpha_1+\alpha_2.$$ 

\subsubsection{}
\begin{lem}{lem:C11}
The vectors $\alpha_1$, $\alpha_2$, $\beta$ are linearly independent.
If $\xi$ is not proportional to $\delta$, then 
$\alpha_1$, $\alpha_2$,
$\beta$ and $\delta$ are linearly independent. 
\end{lem}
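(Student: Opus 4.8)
The plan is to reduce everything to a single linear-algebra computation in $\mathbb{C}R$, using that the canonical map $\cl_K\colon \mathbb{C}R\to\mathbb{C}R/K$ is compatible with the bilinear form (so $(\cl_K x,\cl_K y)=(x,y)$) and that $K$ is precisely the radical of $(-,-)$ on $\mathbb{C}R$. First I would record the relevant inner products. Since $\cl_K(\alpha_1)=-2\vareps_1$ and $(\alpha_1,\alpha_1)=2$, the normalization forces $(\vareps_1,\vareps_1)=\tfrac12=-(\delta_1,\delta_1)$; feeding the images $\cl_K(\alpha_2)=2\delta_1$ and $\cl_K(\beta)=\vareps_1-\delta_1$ through $\cl_K$ then gives $(\alpha_1,\alpha_2)=0$, $(\alpha_1,\beta)=-1$, $(\alpha_2,\beta)=1$ and $(\beta,\beta)=0$. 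The crucial observation is that $\cl_K(2\beta+\alpha_1+\alpha_2)=2(\vareps_1-\delta_1)-2\vareps_1+2\delta_1=0$, so $\xi=2\beta+\alpha_1+\alpha_2$ lies in $K$.

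For the first assertion I would argue that $\alpha_1,\alpha_2,\beta$ are dependent if and only if $\xi=0$. Indeed, applying $\cl_K$ to a relation $c_1\alpha_1+c_2\alpha_2+c_3\beta=0$ and using the independence of $\vareps_1,\delta_1$ yields $c_3=2c_1=2c_2$, so every relation is a scalar multiple of $\xi$. It then remains to exclude $\xi=0$. Here I would invoke the root-system axioms: since $\alpha_1$ is anisotropic, (GR1) gives $s_{\alpha_1}\beta=\beta+\alpha_1\in R$, an isotropic root with $(\beta+\alpha_1,\beta)=-1\neq 0$. Applying (GR2) to the isotropic root $\beta$ and to $\beta+\alpha_1$ shows that exactly one of $\{2\beta+\alpha_1,\ \alpha_1\}$ lies in $R$; as $\alpha_1\in R$, this forces $2\beta+\alpha_1\notin R$. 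But $\xi=0$ would give $2\beta+\alpha_1=-\alpha_2\in R$, a contradiction. Hence $\xi\neq 0$ and $\alpha_1,\alpha_2,\beta$ are linearly independent.

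For the second assertion, note that $\cl_K(V)=\mathbb{C}\vareps_1+\mathbb{C}\delta_1$ is two-dimensional while $\dim V=3$ by the first part, so $V\cap K=\ker(\cl_K|_V)$ is one-dimensional and therefore equals $\mathbb{C}\xi$. If $\delta\in V$, then since $(\delta,\Delta)=0$ (the defining property of the minimal imaginary root, see~\Rem{rem:delta}) we have $\delta\in K$, whence $\delta\in V\cap K=\mathbb{C}\xi$; as $\delta\neq 0$ this forces $\xi\in\mathbb{C}\delta$, contrary to the hypothesis. Thus $\delta\notin V$, and $\alpha_1,\alpha_2,\beta,\delta$ are linearly independent. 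The only genuinely nonformal step is the exclusion of $\xi=0$, where the interplay of (GR1) and (GR2) is essential; everything else is bookkeeping with the Gram matrix and the radical $K$.
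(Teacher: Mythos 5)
Your proof is correct and follows essentially the same route as the paper: the crucial step — excluding $\xi=0$ by combining (GR1) (which puts $\beta+\alpha_i\in R$) with (GR2) (which then forbids $2\beta+\alpha_i\in R$) — is exactly the paper's argument, up to swapping the roles of $\alpha_1$ and $\alpha_2$. The remaining linear algebra is equivalent as well: where the paper computes the kernel of the Gram matrix of $\alpha_1,\alpha_2,\beta,\delta$ (spanned by $\xi$ and $\delta$), you work modulo the radical $K$ via $\cl_K$ together with a dimension count, which amounts to the same computation.
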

\begin{proof}
Since $R$ contains $\beta$ and $s_{\alpha_2}\beta=\beta+\alpha_2$,
one has $2\beta+\alpha_2\not\in R$  (by (GR2)), so
$\xi\not=0$.

Notice that the kernel of the Gram matrix of $\alpha_1,\alpha_2,\beta,\delta$
is spanned by $\xi$ and $\delta$. Since $\xi\not=0$, the vectors
$\alpha_1,\alpha_2,\beta$ are linearly independent.
If $\xi$ is not proportional to $\delta$, then 
$\alpha_1$, $\alpha_2$,
$\beta$ and $\delta$ are linearly independent. 
\end{proof}

\subsubsection{}\label{RC11}

The non-isotropic elements in $\cl_K(R)$ are $\pm \cl_K(\alpha_i)$
for $i=1,2$. Using~\Lem{lem:Falpha} we obtain
\begin{equation}\label{eq:C11an}
R^{\an}=\{\pm \alpha_1+\mathbb{Z}a_1\delta\}\coprod
\{\pm \alpha_1+\mathbb{Z}a_1\delta\}
\end{equation}
for some $a_1,a_2\in\mathbb{Z}_{\geq 0}$.Let $W'$ be the image of $W[R]$ in $GL(\mathbb{C}R)$.
By~\Lem{lem:chainiso} (ii), 
$$R^{\is}=-W'\beta\cup W'\beta.$$

Since $s_{\alpha_i}s_{\alpha_i+s\delta}\beta=\beta+s\delta$
for any $s\in\mathbb{Z}$ 
we have
\begin{equation}\label{eq:W'beta}
W'\beta=\{\beta+\mathbb{Z} a\delta, \beta+\alpha_1+\mathbb{Z} a\delta,\beta+\alpha_2+\mathbb{Z}a\delta, \beta+\alpha_1+\alpha_2+\mathbb{Z}a\delta\},\end{equation}
where $a\in\mathbb{Z}_{>0}$ is 
such that $\mathbb{Z}a_1+\mathbb{Z}a_2=\mathbb{Z}a$
(i.e., $a=0$ if $a_1=a_2=0$, and 
$a$ is the greatest common divisor of $a_1,a_2$ otherwise).

Since $R$ contains $\beta+\alpha_1$, $\beta+\alpha_2$ and
$\beta+aj\delta\in R$
for any $j\in\mathbb{Z}$, (GR2) gives 
 \begin{equation}\label{eq:C11eq2}
|\{\alpha_1-aj\delta,
\xi-\alpha_2+aj\delta\}\cap R=
 |\{\alpha_2-aj\delta,\xi-\alpha_1+aj\delta\}\cap R |=1
 \end{equation}
 where $|X|$ stands for the cardinality of $X$.

\subsubsection{Case $\xi\not\in\mathbb{C}\delta$}\label{xinot}
By~\Lem{lem:C11}, $\beta$, $\alpha_1$, $\alpha_2$, $\delta$
are linearly independent. This implies
 $W'\beta\cap (-W'\beta)=\emptyset$.
Moreover, $\xi-\alpha_i+aj\delta\not\in R$ by~(\ref{eq:C11an}) 
for $i=1,2$. Combining~(\ref{eq:C11an}) and~(\ref{eq:C11eq2}) we conclude
 $a\in\mathbb{Z}a_i$ for $i=1,2$, that is $a=a_1=a_2$.
 Hence
 $$R=\{\pm \alpha_i+\mathbb{Z}a\delta,\ \pm\beta+a\delta, \ \pm(\beta+\alpha_i+\mathbb{Z} a\delta),\ \pm(\beta+\alpha_1+\alpha_2)+\mathbb{Z}a\delta\}_{i=1,2}.$$
 Since $\beta$, $\alpha_1$, $\alpha_2$, $\delta$ are linearly independent, the map given by $\alpha_1\mapsto \vareps_1-\vareps_2$,
 $\beta\mapsto \vareps_2-\delta_1$, 
 $\alpha_2\mapsto \delta_1-\delta_2$,  $a\delta\mapsto \delta$
induces the isomorphism 
 $R\cong \Delta(\fgl(2|2))=A(1|1)$
 if $a=0$ and $R\cong A(1|1)^{(1)}$ if $a\not=0$.

 Note that $\cl(R)\cong A(1|1)$ ($\cong \Delta(\fgl(2|2))$).

  \subsubsection{Case $a=0$}
  In this case $a_1=a_2=0$.
  By~\Lem{lem:C11}, $\beta$, $\alpha_1$, $\alpha_2$ 
are linearly independent. This implies
 $W'\beta\cap (-W'\beta)=\emptyset$ and
 $$R=\{\pm \alpha_i, \pm\beta,   \pm(\beta+\alpha_i),  \pm(\beta+\alpha_1+\alpha_2)\}_{i=1,2}\cong A(1|1),$$
where the isomorphism is the same as in~$\S$~\ref{xinot}.
Note that $\cl(R)\cong A(1|1)$ ($\cong \Delta(\fgl(2|2))$).

\subsubsection{Case $\xi\in\mathbb{C}\delta$, $a\not=0$}
Then $\xi=d\delta$ for some  $d\in\mathbb{Z}_{\not=0}$. 

Recall that $a_1,a_2\in\mathbb{Z}_{\geq 0}$, $a_1+a_2\not=0$,
and $a$
is the greatest common divisor of $a_1,a_2$.
 Combining~(\ref{eq:C11an}) and~(\ref{eq:C11eq2}) we conclude
 that for each $j\in\mathbb{Z}$ either $aj\in\mathbb{Z}a_2$
or $aj+d\in\mathbb{Z}a_1$, and, similarly, 
 either $aj\in\mathbb{Z}a_1$
or $aj+d\in\mathbb{Z}a_2$. Taking $j=0$ we obtain
$d\not\in \mathbb{Z}a_i$ for $i=1,2$.
Then $aa_1+d\not\in \mathbb{Z}a_1$, so $aa_1\in\mathbb{Z}a_2$.
Similarly, $aa_2\in\mathbb{Z}a_1$. Therefore $a_1,a_2\not=0$,
so $k_1:=\frac{a_1}{a}$, $k_2:=\frac{a_2}{a}$ are coprime
positive integers.
By above, $ak_1|k_2$ and $ak_2|k_1$, which implies $k_1=k_2=1$.
Hence $a_1=a_2=a$ and $a\not|d$.

Let us show that
$-W'\beta\cap W'\beta=\emptyset$.
We will use formula~(\ref{eq:W'beta}).
Take $\gamma\in -W'\beta\cap W'\beta$.
Then $\cl(\gamma)=\cl(w\beta)$ for some $w\in W'$
and $w^{-1}\gamma\in -W'\beta\cap W'\beta$.
Thus, without loss of generality, we can assume that 
 $\cl(\gamma)=\cl(\beta)$. In this case
$$\gamma=\beta+j_1a\delta=-\beta+\alpha_1+\alpha_2+j_2a\delta$$
so 
$2\beta+\alpha_1+\alpha_2=d\delta=(j_2-j_1)a\delta$ which contradicts to $a\not|d$. 

In the light of~$\S$~\ref{RC11} we conclude that $R=R(a,d)$
where
\begin{equation}\label{eq:Rad}
R(a,d)=\{\pm \alpha_i+\mathbb{Z}a\delta,\ \pm\beta+a\delta, \ \pm(\beta+\alpha_i+\mathbb{Z} a\delta),\ \pm(\beta+\alpha_1+\alpha_2)+\mathbb{Z}a\delta\}_{i=1,2}
\end{equation}
where $\beta=\frac{d\delta-\alpha_1-\alpha_2}{2}$
and $a\not|d$. One has $R(a,d)\cong R(a,d\pm 2a)$
(the isomorphism is given by $\beta\mapsto \beta+a\delta$),
 $R(a,d)\cong R(a,-d)$ (the isomorphism is given by 
$\beta\mapsto -\beta$, $\alpha_i\mapsto -\alpha_i$),
so we can assume that $a\geq 2d$.

One has 
$R(2d,d)\cong A(1|1)^{(2)}$ (with the isomorphism given
by  $\alpha_1\mapsto \delta-2\vareps_1$, $\beta\mapsto \vareps_1-\delta_1$ and $\alpha_2\mapsto 2\delta_1$).

For $a\not=2d$, one has $R(a,d)\not\cong A(1|1)^{(2)}$
(and  $R$ is not a bijective quotient of $\Delta^{\ree}(\fg')$
for any $\fg'$), since 
for any isotropic root $\gamma\in R$ and any non-zero $\delta_0\in K$
we have 
$$F(\gamma)=\mathbb{Z}xa\cup (\mathbb{Z}a-d)x,$$
where $\delta_0=x\delta$, and $F(\gamma)$
forms a subgroup of $\mathbb{C}$
if $a=2d$ (or if $R$ is  a bijective quotient of $\Delta^{\ree}(\fg')$),
and does not form a subgroup if $a\not=2d$.

 \subsubsection{}
 \begin{cor}{}
Let $R\subset \Delta^{\ree}(\fg)$ be such that $\cl_K(R)\cong C(1|1)$
($\cong \Delta(\mathfrak{psl}(2|2))$. Then we have the following possibilities:
$$\begin{array}{lll}
R\cong A(1|1) (=\Delta(\fgl(2|2)) & \cl(R)=R,\\
 R\cong A(1|1)^{(1)} & \cl(R)\cong A(1|1)\\
 R\cong A(1|1)^{(2)} & \cl(R)=\cl_K(R)\cong C(1|1)\\
  R\cong R(a,d), a,d\in \mathbb{Z}_{>0}, a>2d & \cl(R)=\cl_K(R)\cong C(1|1).
  \end{array}$$
  \end{cor}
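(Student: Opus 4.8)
The plan is to obtain this corollary by reading off the three sub-cases already carried out in~$\S$~\ref{C11}, where $R$ has been pinned down up to isomorphism; the only genuinely new task is to compute the image $\cl(R)$ (as opposed to $\cl_K(R)$) in each case. First I would fix $\alpha_1,\alpha_2,\beta\in R$ with $\cl_K(\alpha_1)=-2\vareps_1$, $\cl_K(\alpha_2)=2\delta_1$, $\cl_K(\beta)=\vareps_1-\delta_1$ as there, set $\xi:=2\beta+\alpha_1+\alpha_2$, and recall from~\Lem{lem:C11} that the radical of $(-,-)$ restricted to the span of $\alpha_1,\alpha_2,\beta,\delta$ is $\operatorname{span}(\xi,\delta)$; intersecting with $\mathbb{C}R$ yields the subspace $K$ that $\cl_K$ annihilates.

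The key to the whole corollary is the comparison of the two quotient maps $\cl$ (modding by $\mathbb{C}\delta$) and $\cl_K$ (modding by $K$), and the clean criterion that $\cl(R)\cong A(1|1)$ precisely when $\cl(\xi)\not=0$, while $\cl(R)\cong C(1|1)$ precisely when $\cl(\xi)=0$. So I would split on whether $\xi\in\mathbb{C}\delta$. If $\xi\notin\mathbb{C}\delta$, then $\cl(\xi)\not=0$ and $\S$~\ref{xinot} gives $R\cong A(1|1)$ when $a=0$ and $R\cong A(1|1)^{(1)}$ when $a\not=0$. Here $\delta\in\mathbb{C}R$ exactly when $a\not=0$ (then $a\delta=(\beta+a\delta)-\beta\in\mathbb{C}R$), while for $a=0$ the radical $K$ is one-dimensional, forcing $\delta\notin\mathbb{C}R$. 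Thus for $a=0$ the map $\cl$ is injective on $\mathbb{C}R$ and $\cl(R)=R\cong A(1|1)$ (first row, covering in particular finite-dimensional $\fg$ where $\delta=0$); for $a\not=0$, $\cl$ kills the affine direction but, since $\cl(\xi)\not=0$, retains the finite relation, so $\cl(R)\cong A(1|1)$ (second row).

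When $\xi\in\mathbb{C}\delta$ the radical of $(-,-)$ on $\mathbb{C}R$ collapses to $\mathbb{C}\delta$, so $K=\mathbb{C}\delta$ and the two quotients coincide, $\cl=\cl_K$; hence $\cl(R)=\cl_K(R)\cong C(1|1)$. The sub-case $\xi\in\mathbb{C}\delta$, $a\not=0$ shows $R=R(a,d)$ with $a\nmid d$; using the isomorphisms $R(a,d)\cong R(a,d\pm 2a)\cong R(a,-d)$ I would normalise to $a\geq 2d$, the boundary $a=2d$ giving $A(1|1)^{(2)}$ (third row) and $a>2d$ giving the remaining family $R(a,d)$ (fourth row). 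Assembling the four outcomes produces the table.

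The step I expect to be the main obstacle is the book-keeping that separates $\cl(R)=R$, $\cl(R)\cong A(1|1)$ and $\cl(R)\cong C(1|1)$, i.e.\ tracking exactly which one-dimensional direction $\cl$ annihilates relative to $K$. The genuinely delicate point is the degenerate combination $a=0$ together with $\xi\in\mathbb{C}\delta$: abstractly this yields a \emph{finite} $R\cong A(1|1)$ whose image is nevertheless $\cl(R)\cong C(1|1)$ (because $\cl(\xi)=0$ imposes the $\mathfrak{psl}$-type identification), and this possibility is not listed. I would rule it out in the relevant setting by observing that such an $R$ cannot be an indecomposable component of $\Delta_{\lambda}$: the closure property~(c) of~$\S$~\ref{Deltalambda}, applied to the isotropic roots lying over $\vareps_1\pm\delta_1$, forces the infinite extension up to all of $A(1|1)^{(2)}$, in agreement with~\Prop{prop:RsubsetDelta}(ii). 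This is precisely the obstruction (absence of a finite contragredient model) isolated in~$\S$~\ref{A112notKM}.
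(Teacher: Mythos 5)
Your reduction to the sub-cases of $\S$~\ref{C11} is exactly the paper's own proof --- the corollary there is stated as a summary of $\S$~\ref{xinot}, the case $a=0$, and the case $\xi\in\mathbb{C}\delta$, $a\not=0$ --- and your criterion that $\cl(R)$ is of type $A(1|1)$ or $C(1|1)$ according to whether $\cl(\xi)\not=0$ or $\cl(\xi)=0$ is correct. You have also isolated the one genuinely delicate point, which the paper itself glosses over: in its case $a=0$ the paper asserts ``$\cl(R)\cong A(1|1)$'' without argument, and this is justified only when $\xi\not\in\mathbb{C}\delta$.

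Your resolution of that point, however, fails on two counts. First, the degenerate combination $a=0$, $\xi\in\mathbb{C}\delta\setminus\{0\}$ cannot be ``ruled out'' under the hypotheses of the corollary: these concern an arbitrary indecomposable $R\subset\Delta^{\ree}(\fg)$ satisfying (GR0)--(GR3) with $\cl_K(R)\cong C(1|1)$, not only components of $\Delta_{\lambda}$. And the combination really occurs: for $\fg$ of type $A(2m-1|2n-1)^{(2)}$ take $\alpha_1=\delta-2\vareps_1$, $\alpha_2=2\delta_1$, $\beta=\vareps_1-\delta_1$ (so $\xi=2\beta+\alpha_1+\alpha_2=\delta$), and let $R$ be the twelve-element set $\{\pm\alpha_1,\pm\alpha_2,\pm\beta,\pm(\beta+\alpha_1),\pm(\beta+\alpha_2),\pm(\beta+\alpha_1+\alpha_2)\}$. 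One checks (GR0)--(GR3) directly: every excluded vector, e.g. $2\beta+\alpha_1=\delta-2\delta_1$ or $2\beta+\alpha_2=2\vareps_1$, fails to be a real root because its $\delta$-coefficient has the wrong parity. Here $R\cong A(1|1)$ while $\cl(R)=\cl_K(R)\cong C(1|1)$, so the pair in the first row of the table is inaccurate as stated (the first column, i.e. the classification of $R$ up to isomorphism, is unaffected, which is all that \Prop{prop:RsubsetDelta}(i) needs); no argument can close this ``gap'', since the annotation you are trying to prove is false. Second, even in the one setting where the degenerate case must and can be excluded --- $R$ an indecomposable component of $\Delta_{\lambda}$, i.e. \Prop{prop:RsubsetDelta}(ii) --- property (c) of $\S$~\ref{Deltalambda} does not do the job: in the configuration above, for every pair $\alpha,\gamma\in R$ with $\alpha$ isotropic and $(\alpha,\gamma)\not=0$, the unique element of $\{\gamma\pm\alpha\}\cap\Delta^{\ree}$ already lies in $R$, so (c) imposes nothing new. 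What actually excludes it (and is the argument of $\S$~\ref{DeltalambdaA112}) is integrality: $d\delta=2\beta+\alpha_1+\alpha_2\in\mathbb{Z}R$ together with $(\lambda,\gamma)\in\mathbb{Z}$ for all $\gamma\in R$ gives $dk\in\mathbb{Z}$, hence $k=\frac{p}{q}$ is rational; then $\alpha_2+s\delta\in\ol{R}_{\lambda}\subset\Delta_{\lambda}$ for every $s\in 2q\mathbb{Z}$ by property (a), and these roots pair nontrivially with $\alpha_2$, hence lie in the same component, contradicting $a=0$.
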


This completes the proof of~\Prop{prop:RsubsetDelta} (i).

\subsection{Proof of~\Prop{prop:RsubsetDelta} (ii)}\label{DeltalambdaA112}
Now assume that $\Delta_{\lambda}$ has an indecomposable component
$R$ such that $\cl(R)=C(1|1)$.

\subsubsection{}
Since $\cl(R)$ does not satisfy (GR2), 
$\cl(\Delta^{\ree})$ does not satisfy (GR2), so
$\cl(\Delta^{\ree})$ is isomorphic either to $BC(m|n)$
for $mn\geq 1$ or to $C(m|n)$ for $mn>1$, and 
$$\cl(R)=\{\pm 2\vareps_i,\pm \vareps_i\pm \delta_j,\pm 2\delta_j\}$$
for some $i, j$ ($1\leq i\leq m$, $1\leq j\leq n$).

Take $\gamma\in R$ such that  $\cl(\gamma)=2\delta_j$.
Note that $\gamma/2\not\in R$ (since $\cl(\gamma/2)=\delta_i\not\in\cl(R)$), so
 $\gamma/2\not\in \Delta_{\lambda}$.
By~\Cor{cor:Delta2beta}, $\gamma/2\not\in\Delta^{\ree}$.

If $\cl(\Delta^{\ree})\cong BC(m|n)$, then
$\cl(\mathbb{C}R\cap \Delta^{\ree})\cong BC(1|1)$, so
$\mathbb{C}R\cap \Delta^{\ree}$ is  one of the real root systems 
(i.e., $A(2|1)^{(2)}$, $A(2|2)^{(4)}$)
described in~$\S$~\ref{FBC11}. For these root 
system for any root $\gamma$ satisfying $\cl(\gamma)=2\delta_j$, $\gamma/2$ is a root 
(see the observation in the end of~$\S$~\ref{FBC11}), 
so $\gamma/2\in \Delta^{\ree}$, a contradiction.

Hence $\cl(\Delta^{\ree})=C(m|n)$,  so, by~$\S$~\ref{Cmn},
$\fg$ is of the type $A(2m-1|2n-1)^{(2)}$.

\subsubsection{}
Set  $k:=(\lambda,\delta)$. 
Since $(\alpha,\alpha)\in \{0,\pm 2\}$ for any $\alpha\in R$, 
we have $(\lambda,\alpha)\in\mathbb{Z}$ for all $\alpha\in R_{\lambda}$,
so $(\lambda,\alpha)\in\mathbb{Z}$ for all $\alpha\in R$.

Take a non-isotropic $\alpha\in R$.
Then $(\alpha,\alpha)=\pm 2$, so 
$\alpha+s\delta\in \Delta_{\lambda}$ 
if and only if $\alpha+s\delta\in \Delta^{\ree}$ 
and $(\lambda,\alpha+s\delta)\in\mathbb{Z}$.
The first condition gives $2|s$ and the second condition
gives $sk\in\mathbb{Z}$.

Recall that $R=R(a,d)$ for some $a,d\in\mathbb{Z}_{>0}$,
$a\geq 2d$. By~(\ref{eq:Rad}), $\alpha+s\delta\in R$
if and only if $a|s$. 
We conclude that $a|s$ is equivalent to $2|s$
and $sk\in\mathbb{Z}$. Therefore $a$ is even and
$k=\frac{p}{q}$ where $p,q$ are coprime and either $q$ is odd and
$a=2q$ or $q$ is even and $a=q$.
Since $d\delta\in\mathbb{Z}R$ we have 
$(\lambda,d\delta)=dk\in\mathbb{Z}$. Thus $d$ is divisible by $q$.
Hence $a=2q$ and $d=q$. Thus $R(a,d)\cong A(1|1)^{(2)}$
and $q$ is odd. This completes the proof of~\Prop{prop:RsubsetDelta} (ii).

\end{document}